\newtheorem{theorem}{Theorem}
\newtheorem{lemma}[theorem]{Lemma}
\newtheorem{proposition}[theorem]{Proposition}
\newtheorem{hypothesis}[theorem]{Hypothesis}
\newtheorem{conjecture}[theorem]{Conjecture}
\newtheorem{corollary}[theorem]{Corollary}
\newtheorem{definition}[theorem]{Definition}
\newtheorem{problem}[theorem]{Problem}
\newtheorem{procedure}[theorem]{Procedure}
\newtheorem{example}[theorem]{Example}
\newtheorem{remark}[theorem]{Remark}
\newtheorem{namedalgorithmtheorem}{Algorithm}
\newenvironment{namedalgorithm}[2][]
{\begin{namedalgorithmtheorem}[#1]}
	{\end{namedalgorithmtheorem}}
\newcommand{\abs}[1]{|#1|}
\newcommand{\set}[1]{\left\{#1\right\}}
\newcommand{\arrowsv}[0]{\overset{v}{\rightarrow}}
\newcommand{\arrowse}[0]{\overset{e}{\rightarrow}}
\newcommand{\mH}[0]{\mathcal{H}}
\newcommand{\mL}[0]{\mathcal{L}}
\newcommand{\mA}[0]{\mathcal{A}}
\newcommand{\mB}[0]{\mathcal{B}}
\newcommand{\mM}[0]{\mathcal{M}}
\newcommand{\mR}[0]{\mathcal{R}}
\newcommand{\uni}[2]{{#1}\big\vert_{#2}}
\newcommand{\wHv}[3]{\widetilde{\mH}_v(\uni{#1}{#2}; {#3})}
\newcommand{\wHvn}[4]{\widetilde{\mH}_v(\uni{#1}{#2}; {#3}; {#4})}
\newcommand{\wFv}[3]{\widetilde{F}_v(\uni{#1}{#2}; {#3})}
\newcommand{\rp}[0]{{r'_0}}
\newcommand{\rpp}[0]{{r''_0}}
\DeclareMathOperator{\G}{G}
\DeclareMathOperator{\V}{V}
\DeclareMathOperator{\E}{E}
\DeclareMathOperator{\N}{N}
\begin{document}

\begin{center}

{\setstretch{2.0}
	
\noindent\\
\vspace{0em}

\mbox{\textsc{\large Sofia University "St. Kliment Ohridski"}}

\mbox{\textsc{\large Faculty of Mathematics and Informatics}}

\vspace{4em}

{\Large Aleksandar Sotirov Bikov}

\vspace{4em}

\textbf{\MakeUppercase{\large Computation and Bounding of\break Folkman Numbers}}

\vspace{4em}

{\MakeUppercase{\large Thesis}}

\vspace{2.5em}

\vspace{2.5em}

\vspace{2.5em}

\vspace{4em}

{\large PhD supervisor}

{\Large Prof. DSc Nedyalko Nenov}

\vspace{2em}

{\large Sofia, 2018}

}

\end{center}

\thispagestyle{empty}

\clearpage

\pagenumbering{roman}

\setcounter{page}{1}

\tableofcontents

\clearpage

\pagenumbering{arabic}

\setcounter{page}{1}

\addcontentsline{toc}{chapter}{Introduction}
\chapter*{\LARGE Introduction}
\vspace{-0.5em}
Only simple graphs are considered in this thesis, i.e. finite, non-oriented graphs without loops and multiple edges. The vertex set and the edge set of a graph $G$ are denoted by $\V(G)$ and $\E(G)$ respectively. The complete graph on $n$ vertices is denoted by $K_n$.

One fundamental problem in graph theory is the following:
Let $\mH$ be a class of graphs. What is the minimum number of vertices of the graphs in $\mH$?
$$\min\set{\abs{\V(G)} : G \in \mH} = ?$$

For many important classes $\mH$ this problem is still unsolved. In this thesis we consider such problems. In some cases we will compute $\min\set{\abs{\V(G)}}$ exactly, and in other cases we will obtain new bounds on $\min\set{\abs{\V(G)}}$.

It is well known that in every coloring of the edges of the graph $K_6$ in two colors there is a monochromatic triangle. We will denote this property by $K_6 \arrowse (3, 3)$. It is clear that if $G$ contains $K_6$ as a subgraph, then $G \arrowse (3, 3)$. In 1967 Erd\"os and Hajnal posed the following problem:
\begin{center}
	\emph{Does there exist a graph $G \arrowse (3, 3)$ which does not contain $K_6$ ?}
\end{center}

Denote:

$\mH_e(3, 3; q) = \set{G : G \arrowse (3, 3) \mbox{ and } G \not\supseteq K_q}$.\\

\vspace{-1em}
The edge Folkman number $F_e(3, 3; q)$ is defined with:

$F_e(3, 3; q) = \min{\set{\abs{\V(G)} : G \in \mH_e(3, 3; q)}}$\\

\vspace{-0.5em}
From $K_6 \arrowse (3, 3)$ and $K_5 \not\arrowse (3, 3)$ it follows that $F_v(3, 3; q) = 6, \ q \geq 7$. The first example of a graph $G$, such that $G \not\supseteq K_6$ and $G \arrowse (3, 3)$, was given by van Lint. Later, Graham showed that $K_3 + C_5 \arrowse (3, 3)$ and proved $F_e(3, 3; 6) = 8$.

The computation of the numbers $F_e(3, 3; 5)$ and $F_e(3, 3; 4)$ is very hard. The number $F_e(3, 3; 5)$ was finally computed in 1998 after 30 years of history. The upper bound $F_e(3, 3; 5) \leq 15$ was obtained by Nenov in \cite{Nen81a}, who constructed the first 15-vertex graph in $\mH_e(3, 3; 5)$. The lower bound $F_e(3, 3; 5) \geq 15$ was obtained much later with the help of a computer by Piwakowski, Radziszowski and Urbanski in \cite{PRU99}. Without a computer, it is impossible to prove $F_e(3, 3; 5) \geq 15$. A more detailed view on the results related to the number $F_e(3, 3; 5)$ is given in the paper \cite{PRU99} and the book \cite{Soi08}.

\vspace{1em}

The number $F_e(3, 3; 4)$ is not computed. It is sometimes referred to as the most wanted Folkman number. In 1970 Folkman \cite{Fol70} proved that $\mH_e(3, 3; 4) \neq \emptyset$. The graph obtained by the construction of Folkman has a very large number of vertices. Because of this, in 1975 Erd\"os \cite{Erd75} posed the problem to prove the inequality $F_e(3, 3; 4) < 10^{10}$. In 1986 Frankl and R\"odl \cite{FR86} almost solved this problem by showing that $F_e(3, 3; 4) < 7.02 \times 10^{11}$. In 1988 Spencer \cite{Spe88} proved the inequality $F_e(3, 3; 4) < 3 \times 10^9$ by using probabilistic methods. In 2008 Lu \cite{Lu08} constructed a 9697-vertex graph in $\mH_e(3, 3; 4)$, thus considerably improving the upper bound on $F_e(3, 3; 4)$. Soon after that, Lu`s result was improved by Dudek and R\"odl \cite{DR08b}, who proved $F_e(3, 3; 4) \leq 941$. The best known upper bound on this number is $F_e(3, 3; 4) \leq 786$, obtained in 2012 by Lange, Radziszowski and Xu \cite{LRX14}.

In 1972 Lin \cite{Lin72} proved that $F_e(3, 3; 4) \geq 11$. The lower bound was improved by Nenov \cite{Nen83}, who showed in 1981 that $F_e(3, 3; 4) \geq 13$. In 1984 Nenov \cite{Nen84} proved that every 5-chromatic $K_4$-free graph has at least 11 vertices, from which it is easy to derive that $F_e(3, 3; 4) \geq 14$. From $F_e(3, 3; 5) = 15$ \cite{Nen81a}\cite{PRU99} it follows easily, that $F_e(3, 3; 4) \geq 16$. The best lower bound known on $F_e(3, 3; 4)$ was obtained in 2007 by Radziszowski and Xu \cite{RX07}, who proved with the help of a computer that $F_e(3, 3; 4) \geq 19$. According to Radziszowski and Xu \cite{RX07}, any method to improve the bound $F_e(3, 3; 4) \geq 19$ would likely be of significant interest.\\

A summary of the history of $F_e(3, 3; 4)$ is given in Table \ref{table: history of F_e(3, 3; 4)}.\\

\begin{table}[t]
	\centering
	\begin{tabular}{  l  l  l  r  }
		\hline
		\hline
		Year	& Reference										& Lower				& Upper			\\
		\hline
		1970	& Folkman \cite{Fol70}							& 					& exists		\\
		1972	& Lin \cite{Lin72}								& 11				&				\\
		1981	& Nenov \cite{Nen83}							& 13				&				\\
		1984	& Nenov \cite{Nen84}							& 14				&				\\
		1986	& Frankl, R\"odl \cite{FR86}					&			& $7.02 \times 10^{11}$	\\
		1988	& Spencer \cite{Spe88}							&			& $3 \times 10^9$		\\
		1999	& Piwakowski, Radziszowski, Urbanski \cite{PRU99}& 16				&				\\
		2007	& Radziszowski, Xu \cite{RX07}					& 19				& 				\\
		2008	& Lu \cite{Lu08}								& 					& 9697			\\
		2008	& Dudek, R\"odl \cite{DR08b}					& 					& 941			\\
		2012	& Lange, Radziszowski, Xu \cite{LRX14}			& 					& 786			\\
		\hline
	\end{tabular}
	\caption{History of the Folkman number $F_e(3, 3; 4)$}
	\label{table: history of F_e(3, 3; 4)}
\end{table}

In the last Chapter 9 of this thesis we improve the lower bound on $F_e(3, 3; 4)$ by proving $F_e(3, 3; 4) \geq 20$. This is one of the main results in the thesis.

\vspace{1em}

The computation of $F_e(3, 3; q)$ is a special case of the following more general problem:

\textit{
For given positive integers $a_1, ..., a_s$, $q$, $a_i \geq 2, i = 1, ..., s$, determine the minimum number of vertices of the graphs which do not contain the complete graph on $q$ vertices $K_q$ and have the following property: in every coloring of the edges in $s$ colors there exist $i \in \set{1, ..., s}$ such that there is a monochromatic $a_i$-clique of color $i$.
}

This minimum is denoted by $F_e(a_1, ..., a_s; q)$ and is called edge Folkman number. It is known that
\begin{equation}
\label{(introduction)F_e(a_1, ..., a_s; q) exists}
F_e(a_1, ..., a_s; q) \mbox{ exists } \Leftrightarrow q > \max\set{a_1, ..., a_s}.
\end{equation}
In the case $s = 2$, (\ref{(introduction)F_e(a_1, ..., a_s; q) exists}) is proved by Folkman in \cite{Fol70}, and in the general case (\ref{(introduction)F_e(a_1, ..., a_s; q) exists}) is proved by Nesetril and R{\"o}dl in \cite{NR76}.

The numbers $F_e(a_1, ..., a_s; q)$ are a generalization of the classic Ramsey numbers $R(a_1, ..., a_s)$. Furthermore, $F_e(a_1, ..., a_s; q) = R(a_1, ..., a_s)$, if $q > R(a_1, ..., a_s)$.

\vspace{1em}

The vertex Folkman numbers $F_v(a_1, ..., a_s; q)$ are defined in the same way as the edge Folkman numbers $F_e(a_1, ..., a_s; q)$, but instead of coloring the edges, the vertices of the graphs are colored. Very often results for vertex Folkman numbers $F_v(a_1, ..., a_s; q)$ are used in the computation and bounding of the edge Folkman numbers. Let us also note, that the numbers of the form $F_v(2, ..., 2; q)$ determine the minimum number of vertices of the graphs with given chromatic number and given clique number (see \cite{Nen84}, \cite{JR95} and \cite{Goe17}).

This thesis consists of two parts. The first part is dedicated to the vertex Folkman numbers, and the second - to the edge Folkman number $F_e(3, 3; 4)$, which we discussed at the beginning of the introduction. The relation between the two parts is not obvious. The complicated computations for the proof of the inequality $F_e(3, 3; 4) \geq 20$ are done with the help of Algorithm A8. However, this algorithm is a modification of the algorithms from the first part. Therefore, to better understand the connection between the two parts, the algorithms A1, ..., A8 must be carefully followed.

The results in this thesis are obtained using both theoretical and computer methods. Some of the main theoretical results that we obtain are the results in Theorem \ref{theorem: rp}, Theorem \ref{theorem: rpp}, and Theorem \ref{theorem: m_0}. According to these theorems, the computation of some infinite sequences of Folkman numbers is reduced to computing only the first several members. Other main theoretical result is the introduction of the modified vertex Folkman numbers. With the help of these numbers we obtain upper bounds on the vertex Folkman numbers (Theorem \ref{theorem: F_v(2_(m - p), p; q) leq F_v(a_1, ..., a_s; q) leq wFv(m)(p)(q)}).

We develop eight new computer algorithms for computing and bounding Folkman numbers, which we denote by A1, ..., A8. These algorithms are optimized for high performance in terms of computational time. Using the new algorithms, we obtain results which are considered beyond the reach of existing algorithms, even if very powerful computer hardware is used. For example, the computation of the lower bound $F_e(3, 3; 4) \geq 19$ in \cite{RX07} was completed in a few hours, but it is practically impossible to use the same method to further improve this bound. In comparison, using Algorithm A8 on a similarly capable computer, we obtained the result $F_e(3, 3; 4) \geq 19$ in less than a second, and we needed just several hours of computational time to prove the new bound $F_e(3, 3; 4) \geq 20$. At first glance, some of the presented algorithms seem similar, but they have important distinctions. A particular algorithm can produce good results in the problems where it is used, and not be effective in others. Therefore, we had to develop specific algorithms for the different problems considered in this thesis.\\

A more precise description of the main results in each chapter follows:\\

{\large\textbf{Chapter 1}}\\

In this chapter, the necessary graph theory definitions and definitions related to vertex Folkman numbers are given.
 
Let $a_1, ..., a_s$ be positive integers. The expression $G \arrowsv (a_1, ..., a_s)$ means that in every coloring of $\V(G)$ in $s$ colors ($s$-coloring) there exists $i \in \set{1, ..., s}$ such that there is a monochromatic $a_i$-clique of color $i$.

Define:
$$\mH_v(a_1, ..., a_s; q) = \set{ G : G \arrowsv (a_1, ..., a_s) \mbox{ and } G \not\supseteq K_q }.$$

The vertex Folkman numbers $F_v(a_1, ..., a_s; q)$ are defined by the equality:

$$F_v(a_1, ..., a_s; q) = \min\set{\abs{\V(G)} : G \in \mH_v(a_1, ..., a_s; q)}.$$

Folkman proved in \cite{Fol70} that
\begin{equation}
\label{(introduction)equation: F_v(a_1, ..., a_s; q) exists}
F_v(a_1, ..., a_s; q) \mbox{ exists } \Leftrightarrow q > \max\set{a_1, ..., a_s}.
\end{equation}

For arbitrary positive integers $a_1, ..., a_s$ the following terms are defined
\begin{equation}
\label{(introduction)equation: m and p}
m(a_1, ..., a_s) = m = \sum\limits_{i=1}^s (a_i - 1) + 1 \quad \mbox{ and } \quad p = \max\set{a_1, ..., a_s}.
\end{equation}

It is easy to see that $K_m \arrowsv (a_1, ..., a_s)$ and $K_{m - 1} \not\arrowsv (a_1, ..., a_s)$. Therefore
$$F_v(a_1, ..., a_s; q) = m, \ q \geq m + 1.$$

In \cite{LU96} it was proved that
$$F_v(a_1, ..., a_s; m) = m + p.$$

Not much is known about the vertex Folkman numbers $F_v(a_1, ..., a_s; q)$ when $q < m$. Thanks to the work of different authors, the exact values of all numbers of the form $F_v(a_1, ..., a_s; m - 1)$ where $\max\set{a_1, ..., a_s} \leq 4$ were obtained. The only other known number of this form is $F_v(3, 5; 6) = 16$, \cite{SLPX12}.\\

The expression $G \arrowsv \uni{m}{p}$ means that for every choice of positive integers $a_1, ..., a_s$ ($s$ is not fixed), such that $m = \sum\limits_{i=1}^s (a_i - 1) + 1$ and $\max\set{a_1, ..., a_s} \leq p$, we have $G \arrowsv (a_1, ..., a_s)$.

\vspace{1em}
Define:
$$\wHv{m}{p}{q} = \set{G : G \arrowsv \uni{m}{p} \mbox{ and } G \not\supseteq K_q}.$$

The modified vertex Folkman numbers are defined by the equality:
\begin{equation*}
\wFv{m}{p}{q} = \min\set{\abs{\V(G)} : G \in \wHv{m}{p}{q}}.
\end{equation*}

At the end of the chapter, we prove the following main result. For convenience, instead of $F_v(\underbrace{2, ..., 2}_{m - p}, p; q)$ we write $F_v(2_{m - p}, p; q)$.

\vspace{1em}
\textbf{Theorem \ref{theorem: F_v(2_(m - p), p; q) leq F_v(a_1, ..., a_s; q) leq wFv(m)(p)(q)}.}\textit{
	Let $a_1, ..., a_s$ be positive integers, let $m$ and $p$ be defined by (\ref{(introduction)equation: m and p}), and $q > p$. Then,
	$$F_v(2_{m - p}, p; q) \leq F_v(a_1, ..., a_s; q) \leq \wFv{m}{p}{q}.$$
}

Further, we will compute and bound the numbers $F_v(a_1, ..., a_s; q)$ by computing and obtaining bounds on the border numbers in Theorem \ref{theorem: F_v(2_(m - p), p; q) leq F_v(a_1, ..., a_s; q) leq wFv(m)(p)(q)}, $F_v(2_{m - p}, p; q)$ and $\wFv{m}{p}{q}$.

\vspace{4em}

{\large\textbf{Chapter 2}}\\

The vertex Folkman numbers of the form $F_v(a_1, ..., a_s; m - 1)$ where $\max\set{a_1, ..., a_s} = 5$ are considered. By (\ref{(introduction)equation: F_v(a_1, ..., a_s; q) exists}), these numbers exist when $m \geq 7$. In the border case $m = 7$, the only numbers of this form are $F_v(2, 2, 5; 6)$ and $F_v(3, 5; 6)$. It is known that $F_v(3, 5; 6) = 16$ \cite{SLPX12}. We prove

\vspace{1em}
\textbf{Theorem \ref{theorem: F_v(2, 2, 5; 6) = 16}.}\textit{
$F_v(2, 2, 5; 6) = 16$.
}
\vspace{1em}

With the help of the number $F_v(2, 2, 5; 6)$ we compute all other numbers in the infinite sequence $F_v(2_{m - 5}, 5; m - 1), \ m \geq 7,$ by proving

\vspace{1em}
\textbf{Theorem \ref{theorem: rp(5) = 2}.}\textit{
$F_v(2_{m - 5}, 5; m - 1) = m + 9, \ m \geq 7$.
}
\vspace{1em}

\vspace{1em}
We obtain the exact values of all modified vertex Folkman numbers in the form $\wFv{m}{5}{m - 1}$:

\vspace{1em}
\textbf{Theorem \ref{theorem: wFv(m)(5)(m - 1) = ...}.}\textit{
	The following equalities are true:
	$$\wFv{m}{5}{m - 1} =
	\begin{cases}
	17, & \emph{if $m = 7$}\\
	m + 9, & \emph{if $m \geq 8$}.
	\end{cases}
	$$
}
\vspace{1em}

At the end of this chapter, using Theorem \ref{theorem: rp(5) = 2} and Theorem \ref{theorem: wFv(m)(5)(m - 1) = ...}, we complete the computation of the numbers $F_v(a_1, ..., a_s; m - 1)$ where $\max\set{a_1, ..., a_s} = 5$ by obtaining the following main result:

\vspace{2em}
\textbf{Theorem \ref{theorem: F_v(a_1, ..., a_s; m - 1) = m + 9, max set(a_1, ..., a_s) = 5}.}\textit{
	Let $a_1, ..., a_s$ be positive integers, $m = \sum\limits_{i=1}^s (a_i - 1) + 1$, $\max\set{a_1, ..., a_s} = 5$ and $m \geq 7$. Then,
	\begin{equation*}
	F_v(a_1, ..., a_s; m - 1) = m + 9.
	\end{equation*}
}
\vspace{4em}

{\large\textbf{Chapter 3}}\\

According to (\ref{(introduction)equation: F_v(a_1, ..., a_s; q) exists}), the vertex Folkman numbers of the form $F_v(a_1, ..., a_s; m - 1)$ where $\max\set{a_1, ..., a_s} = 6$ exist when $m \geq 8$. In the border case $m = 8$, the only numbers of this form are $F_v(2, 2, 6; 7)$ and $F_v(3, 6; 7)$. We compute the exact values of these numbers by showing that

\vspace{1em}
\textbf{Theorem \ref{theorem: F_v(2, 2, 6; 7) = 17 and abs(mH_v(2, 2, 6; 7; 17)) = 3}.}\textit{
$F_v(2, 2, 6; 7) = 17$.
}
\vspace{1em}

\textbf{Theorem \ref{theorem: F_v(3, 6; 7) = 18}.}\textit{
$F_v(3, 6; 7) = 18$.
}
\vspace{1em}

In \cite{KN06c} Nenov and Kolev pose the following question:

\emph{Does there exist a positive integer $p$ for which $F_v(2, 2, p; p + 1) \neq F_v(3, p; p + 1)$?}

Theorem \ref{theorem: F_v(2, 2, 6; 7) = 17 and abs(mH_v(2, 2, 6; 7; 17)) = 3} and Theorem \ref{theorem: F_v(3, 6; 7) = 18} give a positive answer to this question, and 6 is the smallest possible value for $p$ for which $F_v(2, 2, p; p + 1) \neq F_v(3, p; p + 1)$.\\

With the help of the number $F_v(2, 2, 6; 7)$ we compute all other numbers in the infinite sequence $F_v(2_{m - 6}, 6; m - 1), \ m \geq 8$. We also use the number $F_v(3, 6; 7)$ to compute all other numbers in the infinite sequence $F_v(2_{m - 8}, 3, 6; m - 1)$:

\vspace{1em}
\textbf{Theorem \ref{theorem: rp(6) = 2}.}\textit{
$F_v(2_{m - 6}, 6; m - 1) = m + 9, \ m \geq 8$.
}
\vspace{1em}

\textbf{Theorem \ref{theorem: rpp(6) = 0}.}\textit{
$F_v(2_{m - 8}, 3, 6; m - 1) = m + 10, \ m \geq 8$.
}
\vspace{1em}

We obtain the exact values of all modified vertex Folkman numbers in the form $\wFv{m}{6}{m - 1}$:

\vspace{1em}
\textbf{Theorem \ref{theorem: wFv(m)(6)(m - 1) = m + 10}.}\textit{
	$\wFv{m}{6}{m - 1} = m + 10, \ m \geq 8.$
}
\vspace{1em}

\vspace{1em}

Using Theorem \ref{theorem: rp(6) = 2}, Theorem \ref{theorem: rpp(6) = 0}, and Theorem \ref{theorem: wFv(m)(6)(m - 1) = m + 10}, we complete the computation of the numbers $F_v(a_1, ..., a_s; m - 1)$ where $\max\set{a_1, ..., a_s} = 6$ by obtaining the following main result:

\vspace{1em}
\textbf{Theorem \ref{theorem: F_v(a_1, ..., a_s; m - 1) = ..., max set(a_1, ..., a_s) = 6}.}\textit{
Let $a_1, ..., a_s$ be positive integers such that	$2 \leq a_1 \leq ... \leq a_s = 6$ and $m = \sum\limits_{i=1}^s (a_i - 1) + 1 \geq 8$. Then:
\vspace{1em}\\
(a)	$F_v(a_1, ..., a_s; m - 1) = m + 9$, if $a_1 = ... = a_{s - 1} = 2$.
\vspace{1em}\\
(b)	$F_v(a_1, ..., a_s; m - 1) = m + 10$, if $a_{s - 1} \geq 3$.
}
\vspace{1em}

Since $F_v(a_1, ..., a_s; q)$ is a symmetric function of $a_1, ..., a_s$, in Theorem \ref{theorem: F_v(a_1, ..., a_s; m - 1) = ..., max set(a_1, ..., a_s) = 6} we actually compute all numbers of the form $F_v(a_1, ..., a_s; m - 1)$, where $\max\set{a_1, ..., a_s} = 6$. 

\vspace{4em}

{\large\textbf{Chapter 4}}\\

In this chapter we consider the vertex Folkman numbers of the form $F_v(a_1, ..., a_s; m - 1)$ where $\max\set{a_1, ..., a_s} = 7$. By (\ref{(introduction)equation: F_v(a_1, ..., a_s; q) exists}), these numbers exist when $m \geq 9$. We compute the number

\vspace{1em}
\textbf{Theorem \ref{theorem: F_v(2, 2, 7; 8) = 20}.}\textit{
$F_v(2, 2, 7; 8) = 20$.
}
\vspace{1em}

With the help of the number $F_v(2, 2, 6; 7)$ we compute all other numbers in the infinite sequence $F_v(2_{m - 7}, 7; m - 1), \ m \geq 9$:

\vspace{1em}
\textbf{Theorem \ref{theorem: rp(7) = 2}.}\textit{
$F_v(2_{m - 7}, 7; m - 1) = m + 11, \ m \geq 9$.
}
\vspace{1em}

We obtain the following bounds:

\vspace{1em}
\textbf{Theorem \ref{theorem: F_v(a_1, ..., a_s; m - 1) leq m + 12, max set(a_1, ..., a_s) = 7}.}\textit{
	Let $a_1, ..., a_s$ be positive integers such that $\max\set{a_1, ..., a_s} = 7$ and $m = \sum\limits_{i=1}^s (a_i - 1) + 1 \geq 9$. Then,
	$$m + 11 \leq F_v(a_1, ..., a_s; m - 1) \leq m + 12.$$ 
}

\vspace{4em}

{\large\textbf{Chapter 5}}\\

Very little is known about the vertex Folkman numbers of the form $F_v(a_1, ..., a_s; m - 2)$. The exact values of these numbers are not computed when $\max\set{a_1, ..., a_s} \geq 3$. We obtain new bounds on the two smallest unknown numbers of this form, namely $F_v(2, 2, 2, 3; 4)$ and $F_v(2, 3, 3; 4)$:

\vspace{1em}
\textbf{Theorem \ref{theorem: 20 leq F_v(2, 2, 2, 3; 4) leq 22}.}\textit{
	$20 \leq F_v(2, 2, 2, 3; 4) \leq 22$.
}

\vspace{1em}

\vspace{1em}
\textbf{Theorem \ref{theorem: 20 leq F_v(2, 3, 3; 4) leq 24}.}\textit{							 
	$20 \leq F_v(2, 3, 3; 4) \leq 24.$
}
\vspace{1em}

\vspace{4em}

{\large\textbf{Chapter 6}}\\

Let us remind that:
$$F_v(a_1, ..., a_s; q) \mbox{ exist } \Leftrightarrow q > \max\set{a_1, ..., a_s}.$$
The computation of the numbers of the form $F_v(a_1, ..., a_s; q)$ in the border case $q = \max\set{a_1, ..., a_s} + 1$ is very hard. The numbers $F_v(2, 2, p; p + 1), \ p \leq 4$, and $F_v(3, p; p + 1), \ p \leq 5$, are already computed. We prove that $F_v(2, 2, 5; 6) = 16$ (Theorem \ref{theorem: F_v(2, 2, 5; 6) = 16}), $F_v(2, 2, 6; 7) = 17$ (Theorem \ref{theorem: F_v(2, 2, 6; 7) = 17 and abs(mH_v(2, 2, 6; 7; 17)) = 3}), $F_v(3, 6; 7) = 18$ (Theorem \ref{theorem: F_v(3, 6; 7) = 18}), and $F_v(2, 2, 7; 8) = 20$ (Theorem \ref{theorem: F_v(2, 2, 7; 8) = 20}). The only other computed number of this form is $F_v(2, 2, 2, 2; 3) = 22$, \cite{JR95}.

\vspace{1em}

The numbers $F_v(p, p; p + 1)$ are of significant interest. The only known numbers of this form are $F_v(2, 2; 3) = 5$ and $F_v(3, 3; 4) = 14$, \cite{Nen81a} \cite{PRU99}. The following general lower bounds on these numbers are known:
\begin{equation}
\label{(introduction)equation: F_v(p, p; p + 1) geq 4p - 1}
F_v(p, p; p + 1) \geq 4p - 1. \cite{XS10}
\end{equation}

\vspace{1em}

We obtain the following bounds:
\begin{equation}
\label{(introduction)equation: F_v(p, p; p + 1) geq F_v(2_(p - 1), p; p + 1) geq F_v(2, 2, p; p + 1) + 2p - 6, p geq 3}
F_v(p, p; p + 1) \geq F_v(2, 2, p; p + 1) + 2p - 6, \ p \geq 3.
\end{equation}
In \cite{Nen00} it is proved that $F_v(2, 2, p; p + 1) \geq 2p + 4$. If $F_v(2, 2, p; p + 1) = 2p + 4$, then the inequality (\ref{(introduction)equation: F_v(p, p; p + 1) geq 4p - 1}) gives a better bound for $F_v(p, p; p + 1)$ than the inequality (\ref{(introduction)equation: F_v(p, p; p + 1) geq F_v(2_(p - 1), p; p + 1) geq F_v(2, 2, p; p + 1) + 2p - 6, p geq 3}). It is interesting to note that it is not known whether the equality $F_v(2, 2, p; p + 1) = 2p + 4$ holds for any $p$. If $F_v(2, 2, p; p + 1) = 2p + 5$, then the bounds for $F_v(p, p; p + 1)$ from (\ref{(introduction)equation: F_v(p, p; p + 1) geq 4p - 1}) and (\ref{(introduction)equation: F_v(p, p; p + 1) geq F_v(2_(p - 1), p; p + 1) geq F_v(2, 2, p; p + 1) + 2p - 6, p geq 3}) coincide, and if $F_v(2, 2, p; p + 1) > 2p + 5$, then the inequality (\ref{(introduction)equation: F_v(p, p; p + 1) geq F_v(2_(p - 1), p; p + 1) geq F_v(2, 2, p; p + 1) + 2p - 6, p geq 3}) gives a better bound for $F_v(p, p; p + 1)$.\\

We improve the bounds on the numbers of the form $F_v(p, p; p + 1)$ which follow from (\ref{(introduction)equation: F_v(p, p; p + 1) geq 4p - 1}) and (\ref{(introduction)equation: F_v(p, p; p + 1) geq F_v(2_(p - 1), p; p + 1) geq F_v(2, 2, p; p + 1) + 2p - 6, p geq 3}) in the cases $p = 4, 5, 6, 7$:

\vspace{1em}
\textbf{Theorem \ref{theorem: F_v(4, 4; 5) geq F_v(2, 3, 4; 5) geq F_v(2, 2, 2, 4; 5) geq 19}.}\textit{
	$F_v(4, 4; 5) \geq F_v(2, 3, 4; 5) \geq F_v(2, 2, 2, 4; 5) \geq 19$.
}
\vspace{1em}

\vspace{1em}
\textbf{Theorem \ref{theorem: F_v(5, 5; 6) geq F_v(2, 2, 2, 2, 5; 6) geq 23}.}\textit{
	$F_v(5, 5; 6) \geq F_v(2, 2, 2, 2, 5; 6) \geq 23$.
}
\vspace{1em}

\vspace{1em}
\textbf{Theorem \ref{theorem: F_v(a_1, ..., a_s; 7) leq F_v(6, 6; 7) leq 60}.}\textit{
	Let $a_1, ..., a_s$ be positive integers such that $\max\set{a_1, ..., a_s} = 6$ and $m = \sum\limits_{i=1}^s (a_i - 1) + 1$. Then:
\vspace{1em}\\
	(a) $22 \leq F_v(a_1, ..., a_s; 7) \leq F_v(4, 6; 7) \leq 35$ if $m = 9$.
\vspace{1em}\\
	(b) $28 \leq F_v(a_1, ..., a_s; 7) \leq F_v(6, 6; 7) \leq 70$ if $m = 11$.
}
\vspace{1em}
\vspace{1em}

\textbf{Theorem \ref{theorem: F_v(a_1, ..., a_s; 8) geq 3m - 10}.}\textit{
	If $m \geq 13$ and $\max\set{a_1, ..., a_s} = 7$, then
	$$F_v(a_1, ..., a_s; 8) \geq 3m - 10.$$
	In particular, $F_v(7, 7; 8) \geq 29$.
}

\vspace{4em}

{\large\textbf{Chapter 7}}\\

In this chapter the necessary definitions related to edge Folkman numbers are given. 

Let $a_1, ..., a_s$ be positive integers. The expression $G \arrowse (a_1, ..., a_s)$ means that in every coloring of $\E(G)$ in $s$ colors ($s$-coloring) there exists $i \in \set{1, ..., s}$ such that there is a monochromatic $a_i$-clique of color $i$.

Define:

$\mH_e(a_1, ..., a_s; q) = \set{ G : G \arrowse (a_1, ..., a_s) \mbox{ and } G \not\supseteq K_q }.$

The edge Folkman numbers $F_e(a_1, ..., a_s; q)$ are defined by the equality:
\begin{equation*}
F_e(a_1, ..., a_s; q) = \min\set{\abs{\V(G)} : G \in \mH_e(a_1, ..., a_s; q)}.
\end{equation*}

The numbers $F_e(3, 3; q)$ are of significant interest.

\vspace{4em}

{\large\textbf{Chapter 8}}\\

The graph $G$ is a minimal graph in $\mH_e(3, 3)$ if $G \arrowse (3, 3)$, and its every proper subgraph $H \not\arrowse (3, 3)$. All minimal graphs in $\mH_e(3, 3)$ with up to 9 vertices are known.

We find all minimal graphs in $\mH_e(3, 3)$ with up to 13 vertices.\hspace{2em} (\textbf{Theorem \ref{theorem: 10-vertex minimal graphs in mH_e(3, 3)}}, \textbf{Theorem \ref{theorem: 11-vertex minimal graphs in mH_e(3, 3)}}, \textbf{Theorem \ref{theorem: 12-vertex minimal graphs in mH_e(3, 3)}}, and \textbf{Theorem \ref{theorem: 13-vertex minimal graphs in mH_e(3, 3)}})\\

We also obtain all minimal graphs $G \in \mH_e(3, 3)$ with $\alpha(G) \geq \abs{\V(G)} - 8$, and all minimal graphs in $G \in \mH_e(3, 3; 5)$ with $\alpha(G) \geq \abs{\V(G)} - 9$. Using these results, we derive the following upper bounds on the independence number of the graphs in $\mH_e(3, 3)$:

\vspace{1em}
\textbf{Corollary \ref{corollary: alpha(G) leq abs(V(G)) - 9, G in mH_e(3, 3; 6)}.}\textit{
	Let $G$ be a minimal graph in $\mH_e(3, 3)$ and $\abs{\V(G)} \geq 27$. Then $\alpha(G) \leq \abs{\V(G)} - 9$.
}
\vspace{1em}

\textbf{Corollary \ref{corollary: alpha(G) leq abs(V(G)) - 10, G in mH_e(3, 3; 5)}.}\textit{
	Let $G$ be a minimal graph in $\mH_e(3, 3)$ such that $G \not\supseteq K_5$ and $\abs{\V(G)} \geq 30$. Then $\alpha(G) \leq \abs{\V(G)} - 10$.
}
\vspace{1em}

At the end of the chapter, we obtain the following lower bounds on the minimum degree of the minimal graphs in $\mH_e(3, 3)$.

\vspace{1em}
\textbf{Theorem \ref{theorem: delta(G) geq 5, G in mH_e(3, 3; 5)}.}\textit{
	Let $G$ be a minimal graph in $\mH_e(3, 3; 5)$. Then $\delta(G) \geq 5$. If $v \in \V(G)$ and $d(v) = 5$, then $G(v) = N_{5.i}$ for some $i \in \set{1, 2, 3}$ (see Figure \ref{figure: N_5_1 N_5_2 N_5_3}).
}
\vspace{1em}

\textbf{Theorem \ref{theorem: delta(G) geq 8, G in mH_e(3, 3; 4)}.}\textit{
	Let $G$ be a minimal graph in $\mH_e(3, 3; 4)$. Then $\delta(G) \geq 8$. If $v \in \V(G)$ and $d(v) = 8$, then $G(v) = N_{8.i}$ for some $i \in \set{1, ..., 7}$ (see Figure \ref{figure: N_8_1 N_8_2 N_8_3 N_8_4 N_8_5 N_8_6 N_8_7}).
}

\vspace{4em}

{\large\textbf{Chapter 9}}\\

At the begining of the introduction we presented in more details the history of the edge Folkman number $F_e(3, 3; 4)$. In this chapter we obtain а new lower bound on the number $F_e(3, 3; 4)$:

\vspace{1em}
\textbf{Theorem \ref{theorem: F_e(3, 3; 4) geq 20}.}\textit{
	$F_e(3, 3; 4) \geq 20.$
}
\vspace{1em}

\vspace{4em}
I was introduced to this field by my supervisor Professor N. Nenov. I would like to thank Professor Nenov for his help and support throughout the research and preparation of this thesis.

\vspace{4em}

\clearpage

{\Large\textbf{Publications}}\\

All results of the thesis are published in \cite{Bik16}, \cite{BN15a}, \cite{BN15b}, \cite{BN16}, \cite{Bik17}, \cite{BN17a}, and \cite{BN17b}. The paper $\cite{BN17a}$ is accepted for publication. All other papers are published. Preprints of the papers are published in ResearchGate and arXiv.
Five of these papers are published jointly with my supervisor Professor N. Nenov.

\vspace{2em}

{\Large\textbf{Citations}}\\

The paper \cite{BN16} is cited in \cite{XLR17a}, \cite{XLR17b}, \cite{Rad17}, \cite{RXL17a}, and \cite{RXL17b}.

The paper \cite{BN15a} is cited in \cite{XLR18}.

The paper \cite{BN17a} is cited in \cite{XLR18}.

The paper \cite{BN17b} is cited in \cite{XLR18}.

In \cite{XLR18}, unpublished joint results with N. Nenov are cited as personal communications.

\vspace{2em}

{\Large\textbf{Approbation of the results}}\\

The results of the thesis are presented in the following conferences:

1. Spring Scientific Session of the Faculty of Mathematics and Informatics at Sofia University. Sofia, Bulgaria, March 2016.

2. 45th Spring Conference of the Union of Bulgarian Mathematicians. Pleven, Bulgaria, April 2016.
 
3. VI Congress of Mathematicians of Macedonia. Ohrid, Macedonia, June 2016.

4. 46th Spring Conference of the Union of Bulgarian Mathematicians. Borovets, Bulgaria, April 2017. 

5. Conference dedicated to the 100th Birthday Anniversary of Professor Yaroslav Tagamlitzki. Sofia, Bulgaria, September 2017.

6. National Seminar on Coding Theory "Professor Stefan Dodunekov", Chiflika, Bulgaria, December 2017.

\vspace{2em}

{\Large\textbf{Author's reference}}\\

In the author's opinion, the main contributions of the thesis are:\\

1. A new method for bounding the Folkman numbers $F_v(a_1, ..., a_s; q)$ by $F_v(2_{m - p}, p; q)$ and the modified Folkman numbers $\wFv{m}{p}{q}$ is presented.\\

2. New algorithms for computation and bounding of Folkman numbers are developed (algorithms A1, ..., A8).\\

3. The following Folkman numbers are computed:

$F_v(2, 2, 5; 6) = 16$ (Theorem \ref{theorem: F_v(2, 2, 5; 6) = 16}),

$F_v(2, 2, 6; 7) = 17$ (Theorem \ref{theorem: F_v(2, 2, 6; 7) = 17 and abs(mH_v(2, 2, 6; 7; 17)) = 3}),

$F_v(3, 6; 7) = 18$ (Theorem \ref{theorem: F_v(3, 6; 7) = 18}),

$F_v(2, 2, 7; 8) = 20$ (Theorem \ref{theorem: F_v(2, 2, 7; 8) = 20}).\\

4. The following infinite series of Folkman numbers are computed:

$F_v(a_1, ..., a_s; m - 1)$, where $\max\set{a_1, ..., a_s} = 5$ (Theorem \ref{theorem: F_v(a_1, ..., a_s; m - 1) = m + 9, max set(a_1, ..., a_s) = 5}),

$F_v(a_1, ..., a_s; m - 1)$, where $\max\set{a_1, ..., a_s} = 6$ (Theorem \ref{theorem: F_v(a_1, ..., a_s; m - 1) = ..., max set(a_1, ..., a_s) = 6}),

$F_v(\underbrace{2, ..., 2}_{m - 7}, 7; m - 1)$, where $m \geq 9$ (Theorem \ref{theorem: rp(7) = 2}).\\

5. New bounds on the following Folkman numbers are obtained:

$20 \leq F_v(2, 2, 2, 3; 4) \leq 22$ (Theorem \ref{theorem: 20 leq F_v(2, 2, 2, 3; 4) leq 22}),

$20 \leq F_v(2, 3, 3; 4) \leq 24$ (Theorem \ref{theorem: 20 leq F_v(2, 3, 3; 4) leq 24}).\\

6. New lower bounds on the numbers of the form $F_v(p, p; p + 1)$ are obtained in the cases $p = 4, 5, 6, 7$:

$F_v(4, 4; 5) \geq 19$ (Theorem \ref{theorem: F_v(4, 4; 5) geq F_v(2, 3, 4; 5) geq F_v(2, 2, 2, 4; 5) geq 19}),

$F_v(5, 5; 6) \geq 23$ (Theorem \ref{theorem: F_v(5, 5; 6) geq F_v(2, 2, 2, 2, 5; 6) geq 23}),

$F_v(6, 6; 7) \geq 28$ (Theorem \ref{theorem: F_v(a_1, ..., a_s; 7) leq F_v(6, 6; 7) leq 60}),

$F_v(7, 7; 8) \geq 29$ (Theorem \ref{theorem: F_v(a_1, ..., a_s; 8) geq 3m - 10}).\\

7. All minimal graphs in $\mH_e(3, 3)$ with up to 13 vertices are obtained (Theorem \ref{theorem: 10-vertex minimal graphs in mH_e(3, 3)}, Theorem \ref{theorem: 11-vertex minimal graphs in mH_e(3, 3)}, Theorem \ref{theorem: 12-vertex minimal graphs in mH_e(3, 3)}, and Theorem \ref{theorem: 13-vertex minimal graphs in mH_e(3, 3)}).\\

8. New upper bounds on the independence number of the minimal graphs in $\mH_e(3, 3)$ are obtained.\\

9. New lower bounds on the minimum degree of the minimal graphs in $\mH_e(3, 3)$ are obtained.\\

10. The new lower bound $F_e(3, 3; 4) \geq 20$ is proved.\\

\numberwithin{equation}{chapter}
\numberwithin{theorem}{chapter}

\part{Vertex Folkman Numbers}

\chapter{Definition and properties of the vertex Folkman numbers}

\section{Graph theory notations}

Only finite, non-oriented graphs without loops and multiple edges are considered in this thesis. 

The following notations are used:

$\V(G)$ - the vertex set of $G$;

$\E(G)$ - the edge set of $G$;

$\overline{G}$ - the complement of $G$;

$\omega(G)$ - the clique number of $G$;

$\alpha(G)$ - the independence number of $G$;

$\chi(G)$ - the chromatic number of $G$;

$\N(v), \N_G(v), v \in \V(G)$ - the set of all vertices of G adjacent to $v$;

$d(v), v \in \V(G)$ - the degree of the vertex $v$, i.e. $d(v) = \abs{\N(v)}$;

$\Delta(G)$ - the maximum degree of $G$;

$\delta(G)$ - the minimum degree of $G$;

$G-v, v \in \V(G)$ - subgraph of $G$ obtained from $G$ by deleting the vertex $v$ and all edges incident to $v$;

$G-e, e \in \E(G)$ - subgraph of $G$ obtained from $G$ by deleting the edge $e$;

$G+e, e \in \E(\overline{G})$ - supergraph of G obtained by adding the edge $e$ to $\E(G)$.

$G[W]$ - subgraph of $G$ induced by $W \subseteq \V(G)$;

$G(v)$ - subgraph of $G$ induced by $N_G(v)$;

$K_n$ - complete graph on $n$ vertices;

$C_n$ - simple cycle on $n$ vertices;

$G_1 + G_2$ - graph $G$ for which $\V(G) = \V(G_1) \cup \V(G_2)$ and $\E(G) = \E(G_1) \cup \E(G_2) \cup E'$, where $E' = \set{[x, y] : x \in \V(G_1), y \in \V(G_2)}$, i.e. $G$ is obtained by connecting every vertex of $G_1$ to every vertex of $G_2$.

$Aut(G)$ - the automorphism group of $G$;

$R(p, q)$ - Ramsey number;

$\mR(p, q) = \set{G : \alpha(G) < p \mbox{ and } \omega(G) < q}$.

$\mR(p, q; n) = \set{G : G \in \mR(p, q) \mbox{ and } \abs{\V(G)} = n}$.

All undefined terms can be found in \cite{Wes01}.\\

$\mH_v(a_1, ..., a_s; q)$ and $\mH_v(a_1, ..., a_s; q; n)$ - Section 1.2;

$F_v(a_1, ..., a_s)$ - Section 1.2;

$\mH_{extr}(a_1, ..., a_s; q)$ - Section 1.3;

$\mH_{max}(a_1, ..., a_s; q)$ and $\mH_{max}(a_1, ..., a_s; q; n)$ - Section 1.3;

$\mH_{+K_p}(a_1, ..., a_s; q)$ and $\mH_{+K_p}(a_1, ..., a_s; q; n)$ - Section 1.3;

$\mH_{max}^t(a_1, ..., a_s; q; n)$ and $\mH_{+K_p}^t(a_1, ..., a_s; q; n)$ - Section 1.3;

$\wHv{m}{p}{q}$ and $\wHvn{m}{p}{q}{n}$ - Section 1.4;

$\wFv{m}{p}{q}$ - Section 1.4;

$\mH_e(a_1, ..., a_s; q)$ and $\mH_e(a_1, ..., a_s; q; n)$ - Chapter 7;

$F_e(a_1, ..., a_s)$ - Chapter 7;

$\rp(p) = \rp$ - see Theorem \ref{theorem: rp};

$\rpp(p) = \rpp$ - see Theorem \ref{theorem: rpp};

$m_0(p) = m_0$ - see Theorem \ref{theorem: m_0};

\section{Definition of vertex Folkman numbers and some known results}

Let $a_1, ..., a_s$ be positive integers. The expression $G \arrowsv (a_1, ..., a_s)$ means that in every coloring of $\V(G)$ in $s$ colors ($s$-coloring) there exists $i \in \set{1, ..., s}$ such that there is a monochromatic $a_i$-clique of color $i$.

In particular, $G \arrowsv (a_1)$ means that $\omega(G) \geq a_1$. Further, for convenience, instead of $G \arrowsv (\underbrace{2, ..., 2}_r)$ we write $G \arrowsv (2_r)$ and instead of $G \arrowsv (\underbrace{2, ..., 2}_r, a_1, ..., a_s)$ we write $G \arrowsv (2_r, a_1, ..., a_s)$. It is easy to see that
\begin{equation}
\label{equation: G arrowsv (2_r) Leftrightarrow chi(G) geq r + 1}
G \arrowsv (2_r) \Leftrightarrow \chi(G) \geq r + 1.
\end{equation}

Define:

$\mH_v(a_1, ..., a_s; q) = \set{ G : G \arrowsv (a_1, ..., a_s) \mbox{ and } \omega(G) < q }.$

$\mH_v(a_1, ..., a_s; q; n) = \set{ G : G \in \mH_v(a_1, ..., a_s; q) \mbox{ and } \abs{\V(G)} = n }.$

\begin{remark}
	\label{remark: mH_v(a_1; q; n) = ...}
	In the special case $s = 1$, $G \arrowsv (a_1)$ means that $\omega(G) \geq a_1$, and therefore
	
	$\mH_v(a_1; q; n) = \set{ G : a_1 \leq \omega(G) < q \mbox{ and } \abs{\V(G)} = n }$.
\end{remark}

The vertex Folkman numbers $F_v(a_1, ..., a_s; q)$ are defined by the equality:
\begin{equation*}
F_v(a_1, ..., a_s; q) = \min\set{\abs{\V(G)} : G \in \mH_v(a_1, ..., a_s; q)}.
\end{equation*}

Folkman proves in \cite{Fol70} that:
\begin{equation}
	\label{equation: F_v(a_1, ..., a_s; q) exists}
	F_v(a_1, ..., a_s; q) \mbox{ exists } \Leftrightarrow q > \max\set{a_1, ..., a_s}.
\end{equation}
Other proofs of (\ref{equation: F_v(a_1, ..., a_s; q) exists}) are given in \cite{DR08a} and \cite{LRU01}. In the special case $s = 2$, a very simple proof of this result is given in \cite{Nen85} with the help of corona product of graphs.\\

Obviously, if $a_i = 1$, then
\begin{equation}
	\label{equation: G arrowsv (a_1, ..., a_(i-1), a_(i+1), ..., a_s) Rightarrow G arrowsv (a_1, ..., a_s)}
	G \arrowsv (a_1, ..., a_{i-1}, a_{i+1}, ..., a_s) \Rightarrow G \arrowsv (a_1, ..., a_s),
\end{equation}
and therefore
\begin{equation*}
	F_v(a_1, ..., a_s; q) = F_v(a_1, ..., a_{i-1}, a_{i+1}, ..., a_s; q).
\end{equation*}
It is also clear that
\begin{equation}
\label{equation: G arrowsv (a_1, ..., a_s) Rightarrow G arrowsv (a_(sigma(1)), ..., a_(sigma(s)))}
G \arrowsv (a_1, ..., a_s) \Rightarrow G \arrowsv (a_{\sigma(1)}, ..., a_{\sigma(s)})
\end{equation}
for every permutation $\sigma$ of the numbers $1, ..., s$. Therefore, $F_v(a_1, ..., a_s; q)$ is a symmetric function of $a_1, ..., a_s$, and it is enough to consider only such Folkman numbers $F_v(a_1, ..., a_s; q)$ for which
\begin{equation}
	\label{equation: 2 leq a_1 leq ... leq a_s}
	2 \leq a_1 \leq ... \leq a_s.
\end{equation}
The numbers $F_v(a_1, ..., a_s; q)$ for which the inequalities (\ref{equation: 2 leq a_1 leq ... leq a_s}) hold are called canonical vertex Folkman numbers.

In \cite{LU96} for arbitrary positive integers $a_1, ..., a_s$ the following terms are defined
\begin{equation}
	\label{equation: m and p}
	m(a_1, ..., a_s) = m = \sum\limits_{i=1}^s (a_i - 1) + 1 \quad \mbox{ and } \quad p = \max\set{a_1, ..., a_s}.
\end{equation}
It is easy to see that $K_m \arrowsv (a_1, ..., a_s)$ and $K_{m - 1} \not\arrowsv (a_1, ..., a_s)$. Therefore
\begin{equation*}
	F_v(a_1, ..., a_s; q) = m, \ q \geq m + 1.
\end{equation*}

The following theorem for the numbers $F_v(a_1, ..., a_s; m)$ is true:
\begin{theorem}
	\label{theorem: F_v(a_1, ..., a_s; m) = m + p}
	Let $a_1, ..., a_s$ be positive integers and let $m$ and $p$ be defined by the equalities (\ref{equation: m and p}). If $m \geq p + 1$, then:
	\vspace{1em}\\
	(a)	$F_v(a_1, ..., a_s; m) = m + p$, \cite{LU96},\cite{LRU01}. 
	\vspace{1em}\\
	(b)	If $G \in \mH_v(a_1, ..., a_s; m)$ and $\abs{\V(G)} = m + p$, then $G = K_{m+p} - C_{2p + 1} = K_{m - p - 1} + \overline{C}_{2p + 1}$, \cite{LRU01}.
\end{theorem}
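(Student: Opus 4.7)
The plan is to prove the value $F_v(a_1, \ldots, a_s; m) = m + p$ and to characterize the extremal graph; the upper bound is the straightforward direction, while the lower bound and the uniqueness in (b) are the main obstacles.

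For the upper bound I take $G_0 = K_{m - p - 1} + \overline{C}_{2p+1}$. Then $\abs{\V(G_0)} = m + p$, and since $\omega(\overline{C}_{2p+1}) = p$ we get $\omega(G_0) = (m - p - 1) + p = m - 1$, so $G_0 \not\supseteq K_m$. To verify $G_0 \arrowsv (a_1, \ldots, a_s)$, fix any $s$-coloring $V_1, \ldots, V_s$ and, for each $i$, set $n_i = \abs{V_i \cap \V(K_{m - p - 1})}$, $n_i' = \abs{V_i \cap \V(\overline{C}_{2p+1})}$, and $k_i = \omega(\overline{C}_{2p+1}[V_i \cap \V(\overline{C}_{2p+1})])$. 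Because the two parts of $G_0$ are completely joined, the largest color-$i$ clique of $G_0$ has exactly $n_i + k_i$ vertices. Assume for contradiction that $n_i + k_i \leq a_i - 1$ for every $i$. If $k_j = p$ for some $j$ then, using $a_j \leq p$, we get $n_j \leq a_j - 1 - p \leq -1$, which is absurd. Otherwise $k_i \leq p - 1$ for every $i$, so no color class covers all of $\V(\overline{C}_{2p+1})$; hence each color class induces in $C_{2p+1}$ a disjoint union of paths, and its independence number $k_i$ satisfies $k_i \geq \lceil n_i' / 2 \rceil$. Summing gives $\sum_i k_i \geq \lceil (2p + 1)/2 \rceil = p + 1$, while combining $n_i + k_i \leq a_i - 1$ with $\sum_i n_i = m - p - 1$ and $\sum_i (a_i - 1) = m - 1$ yields $\sum_i k_i \leq p$, a contradiction.

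For the lower bound I take a minimal graph $G \in \mH_v(a_1, \ldots, a_s; m)$ and first establish $\delta(G) \geq m - 1$ via the standard deletion/extension argument: for each $v$, minimality supplies a good $s$-coloring of $G - v$; since $G$ arrows, extending by coloring $v$ with color $i$ must produce a monochromatic $a_i$-clique through $v$, so $N(v)$ contains an $(a_i - 1)$-clique of color $i$ for every $i$. As these cliques lie in distinct color classes they are pairwise disjoint in $\V(G)$, yielding $d(v) \geq \sum_i (a_i - 1) = m - 1$. The main obstacle is to boost this, together with $\omega(G) \leq m - 1$, to $\abs{\V(G)} \geq m + p$. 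My plan is to apply the same deletion argument to $G[N(v)]$, which has clique number at most $m - 2$ (else $\{v\}$ joined to an $(m - 1)$-clique there would give an $m$-clique of $G$), and to show it arrows a suitably reduced tuple such as $(a_1 - 1, a_2, \ldots, a_s)$; an induction on $m + p$ with base case the classical $F_v(2, 2; 3) = 5$ then forces $\abs{N(v)}$ to be large, and analyzing how the few non-neighbors of $v$ attach to $N(v)$ subject to $\omega(G) < m$ completes $\abs{\V(G)} \geq m + p$. For the uniqueness in (b), one chases equality throughout the induction and the upper-bound inequalities: tightness forces each $\lceil n_i'/2 \rceil$ to be attained and no color class to sit entirely on one side, pinning down a copy of $\overline{C}_{2p+1}$ on the low-degree vertices completely joined to an $(m - p - 1)$-clique, producing $G = K_{m - p - 1} + \overline{C}_{2p+1}$.
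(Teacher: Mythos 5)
The thesis itself does not prove this theorem: it is quoted as a known result with references to \cite{LU96} and \cite{LRU01} (other proofs in \cite{Nen00} and \cite{Nen01a}), so your proposal must stand on its own. Its upper-bound half does. The verification that $K_{m-p-1}+\overline{C}_{2p+1}$ has $m+p$ vertices, is $K_m$-free, and arrows $(a_1,\ldots,a_s)$ — via the counts $n_i+k_i$, the separate treatment of $k_j=p$, the estimate $k_i\geq\lceil n_i'/2\rceil$ for proper colour classes, and the clash between $\sum_i k_i\geq\lceil(2p+1)/2\rceil=p+1$ and $\sum_i k_i\leq p$ — is correct and complete, as is the standard deletion argument giving $\delta(G)\geq m-1$ for a minimal graph $G$.

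The lower bound, which is the substantive content of (a), and all of (b), rest on a step that fails. Your plan is that for a minimal $G\in\mH_v(a_1,\ldots,a_s;m)$ the neighbourhood graph $G(v)=G[N(v)]$ arrows the reduced tuple $(a_1-1,a_2,\ldots,a_s)$, so that induction forces $d(v)\geq (m-1)+p$ and hence $\abs{\V(G)}\geq m+p$. The deletion/extension argument does not deliver this: it only shows that the restriction to $N(v)$ of a colouring extending an $(a_1,\ldots,a_s)$-free colouring of $G-v$ contains the required monochromatic cliques, which says nothing about arbitrary colourings of $G(v)$. The implication is in fact false: the Gr\"otzsch graph is a minimal graph in $\mH_v(2,2,2;4)$ (triangle-free, $4$-chromatic, edge-critical), yet every $G(v)$ in it is an independent set and so does not arrow $(1,2,2)$, i.e.\ $(2,2)$. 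Even the extremal graph $K_1+\overline{C}_{5}$ of the theorem violates the claim: for a rim vertex $u$ the graph $G(u)$ is a path on three vertices, which is bipartite and does not arrow $(2,2)$. Consequently the induction cannot be run on neighbourhoods, the inequality $\abs{\V(G)}\geq m+p$ is not established, and part (b) (``chase equality through the induction'') remains a declaration of intent. A correct proof of the lower bound and of the uniqueness requires a different mechanism — for instance arguments based on $\chi(G)\geq m$ and on deleting independent sets as in \cite{LRU01} and \cite{Nen00} — rather than a reduction to vertex neighbourhoods.
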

The condition $m \geq p + 1$ is necessary according to (\ref{equation: F_v(a_1, ..., a_s; q) exists}). Other proofs of Theorem \ref{theorem: F_v(a_1, ..., a_s; m) = m + p} are given in \cite{Nen00} and \cite{Nen01a}.\\

Little is known about the numbers $F_v(a_1, ..., a_s; m - 1)$. According to (\ref{equation: F_v(a_1, ..., a_s; q) exists}), we have
\begin{equation}
	\label{equation: F_v(a_1, ..., a_s; m - 1) exists}
	F_v(a_1, ..., a_s; m - 1) \mbox{ exists } \Leftrightarrow m \geq p + 2.
\end{equation}

The following general bounds are known:
\begin{equation}
	\label{equation: m + p + 2 leq F_v(a_1, ..., a_s; m - 1) leq m + 3p}
	m + p + 2 \leq F_v(a_1, ..., a_s; m - 1) \leq m + 3p,
\end{equation}
where the lower bound is true if $p \geq 2$, and the upper bound is true if $p \geq 3$. The lower bound is obtained in \cite{Nen00} and the upper bound is obtained in \cite{KN06a}. In the border case $m = p + 2$ the upper bounds in (\ref{equation: m + p + 2 leq F_v(a_1, ..., a_s; m - 1) leq m + 3p}) are significantly improved in \cite{SXP09}.

Let $m$ and $p$ be defined by (\ref{equation: m and p}). Then,
\begin{equation}
	\label{equation: F_v(a_1, ..., a_s, m - 1) = ...}
	F_v(a_1, ..., a_s, m - 1) = \begin{cases}
		m + 4, & \emph{if $p = 2$ and $m \geq 6$, \cite{Nen83}}\\
		m + 6, & \emph{if $p = 3$ and $m \geq 6$, \cite{Nen02b}}\\
		m + 7, & \emph{if $p = 4$ and $m \geq 6$, \cite{Nen02b}}.
	\end{cases}
\end{equation}
The remaining canonical numbers $F_v(a_1, ..., a_s; m - 1)$, $p \leq 4$ are: $F_v(2, 2, 2; 3) = 11$, \cite{Myc55} and \cite{Chv79}, $F_v(2, 2, 2, 2; 4) = 11$, \cite{Nen84} (see also \cite{Nen98}), $F_v(2, 2, 3; 4) = 14$, \cite{Nen81a} and \cite{CR06}, $F_v(3, 3; 4) = 14$, \cite{Nen81a} and \cite{PRU99}. From these facts it becomes clear that we know all Folkman numbers of the form $F_v(a_1, ..., a_s; m - 1)$ when $\max\set{a_1, ..., a_s} \leq 4$. It is also known that $F_v(3, 5; 6) = 16$, \cite{SLPX12}. \\

It is easy to see that in the border case $m = p + 2$ when $p \geq 3$ there are only two canonical numbers of the form $F_v(a_1, ..., a_s; m - 1)$, namely $F_v(2, 2, p; p + 1)$ and $F_v(3, p; p+1)$. Some graphs, with which upper bounds for $F_v(3, p; p + 1)$ are obtained, can be used for obtaining general upper bounds for $F_v(a_1, ..., a_s; m - 1)$. For example, the graph $\Gamma_p$ from \cite{Nen00}, which witnesses the bound $F_v(3, p; p + 1) \leq 4p + 2, p \geq 3$, helps to obtain the upper bound in (\ref{equation: m + p + 2 leq F_v(a_1, ..., a_s; m - 1) leq m + 3p}). With the help of the numbers $F_v(2, 2, p; p + 1)$, the general lower bound in (\ref{equation: m + p + 2 leq F_v(a_1, ..., a_s; m - 1) leq m + 3p}) can be improved (see Theorem \ref{theorem: F_v(2, 2, p; p + 1) leq 2p + 5, then ...}). In this thesis we will clarify more thoroughly the role of the numbers $F_v(2, 2, p; p + 1)$ in obtaining lower bounds on the numbers $F_v(a_1, ..., a_s; m - 1)$. Thus, obtaining bounds for the numbers $F_v(a_1, ..., a_s; m - 1)$ and computing some of them is related with computation and obtaining bounds for the numbers $F_v(2, 2, p; p + 1)$ and $F_v(3, p; p + 1)$.
It is easy to see that $G \arrowsv (3, p) \Rightarrow G \arrowsv (2, 2, p)$. Therefore, the following inequality holds:
\begin{equation}
\label{equation: F_v(2, 2, p; p + 1) leq F_v(3, p; p + 1)}
F_v(2, 2, p; p + 1) \leq F_v(3, p; p + 1), \ p \geq 3.
\end{equation}

\vspace{1em}

In \cite{KN06c} Nenov and Kolev pose the following problem:
\begin{problem}
	\label{problem: F_v(2, 2, p; p + 1) < F_v(3, p; p + 1)}
	\cite{KN06c}
	Does there exist a positive integer $p$ for which the inequality (\ref{equation: F_v(2, 2, p; p + 1) leq F_v(3, p; p + 1)}) is strict?
\end{problem}
As we noted after (\ref{equation: F_v(a_1, ..., a_s, m - 1) = ...}), if $p = 3$ or $4$, $F_v(2, 2, p; p + 1) = F_v(3, p; p + 1)$. Further, we will prove that $F_v(2, 2, 5; 6) = F_v(3, 5; 6) = 16$ (Theorem \ref{theorem: F_v(2, 2, 5; 6) = 16} and Corollary \ref{corollary: F_v(3, 5; 6) = 16}), but $F_v(2, 2, 6; 7) = 17$ while $F_v(3, 6; 7) = 18$ (Theorem \ref{theorem: F_v(2, 2, 6; 7) = 17 and abs(mH_v(2, 2, 6; 7; 17)) = 3} and Theorem \ref{theorem: F_v(3, 6; 7) = 18}). Therefore, $p = 6$ is the smallest positive integer for which the inequality (\ref{equation: F_v(2, 2, p; p + 1) leq F_v(3, p; p + 1)}) is strict.\\

In the following Chapters 2 and 3 we compute the numbers $F_v(a_1, ..., a_s; m - 1)$ where $\max\set{a_1, ..., a_s} =$ $5$ or $6$ (Theorem \ref{theorem: F_v(a_1, ..., a_s; m - 1) = m + 9, max set(a_1, ..., a_s) = 5} and Theorem \ref{theorem: F_v(a_1, ..., a_s; m - 1) = ..., max set(a_1, ..., a_s) = 6}). In Chapter 4 we compute an infinite sequence of numbers of the form $F_v(a_1, ..., a_s; m - 1)$ where $\max\set{a_1, ..., a_s} = 7$.

Not much is known about the numbers $F_v(a_1, ..., a_s; q)$ when $q \leq m - 2$. In Chapter 5 we obtain new bounds on some numbers of the form $F_v(a_1, ..., a_s; m - 2)$.

New bounds on some numbers of the form $F_v(a_1, ..., a_s; q)$ where $q = \max\set{a_1, ..., a_s} + 1$ are obtained in Chapter 6.

\section{Auxiliary definitions and propositions}

Let $a_1, ..., a_s$ be positive integers and $m = \sum\limits_{i=1}^s (a_i - 1) + 1$.

Obviously, if $a_i \geq t \geq 2$, then
\begin{equation}
\label{equation: G arrowsv (a_1, ..., a_s) Rightarrow G arrowsv (a_1, ..., a_(i - 1), t, a_i - t + 1, a_(i + 1), ..., a_s)}
G \arrowsv (a_1, ..., a_s) \Rightarrow G \arrowsv (a_1, ..., a_{i - 1}, t, a_i - t + 1, a_{i + 1}, ..., a_s).
\end{equation}
In the special case $t = 2$ we have
\begin{equation}
\label{equation: G arrowsv (a_1, ..., a_s) Rightarrow G arrowsv (a_1, ..., a_(i - 1), 2, a_i - 1, a_(i + 1), ..., a_s)}
G \arrowsv (a_1, ..., a_s) \Rightarrow G \arrowsv (a_1, ..., a_{i - 1}, 2, a_i - 1, a_{i + 1}, ..., a_s).
\end{equation}
If some of the numbers $a_1, ..., a_{i - 1}, 2, a_i - 1, a_{i + 1}, ..., a_s$ are greater than 2, we apply (\ref{equation: G arrowsv (a_1, ..., a_s) Rightarrow G arrowsv (a_1, ..., a_(i - 1), 2, a_i - 1, a_(i + 1), ..., a_s)}) again. Thus, by repeatedly applying (\ref{equation: G arrowsv (a_1, ..., a_s) Rightarrow G arrowsv (a_1, ..., a_(i - 1), t, a_i - t + 1, a_(i + 1), ..., a_s)}) we obtain
\begin{equation}
\label{equation: G arrowsv (a_1, ..., a_s) Rightarrow G arrowsv (2_(m - 1))}
G \arrowsv (a_1, ..., a_s) \Rightarrow G \arrowsv (2_{m - 1}).
\end{equation}
From (\ref{equation: G arrowsv (a_1, ..., a_s) Rightarrow G arrowsv (2_(m - 1))}) and (\ref{equation: G arrowsv (2_r) Leftrightarrow chi(G) geq r + 1}) it follows
\begin{equation}
\label{equation: G arrowsv (a_1, ..., a_s) Rightarrow chi(G) geq m}
G \arrowsv (a_1, ..., a_s) \Rightarrow \chi(G) \geq m, \cite{Nen01a}.
\end{equation}
Simple examples of graphs for which equality in (\ref{equation: G arrowsv (a_1, ..., a_s) Rightarrow chi(G) geq m}) is reached are obtained in \cite{Nen85} and \cite{XLR18}.

\vspace{1em}

If $a_s = p$, by applying the method for obtaining (\ref{equation: G arrowsv (a_1, ..., a_s) Rightarrow G arrowsv (2_(m - 1))}) only for the numbers $a_1, ..., a_{s - 1}$ we obtain
\begin{equation}
\label{equation: G arrowsv (a_1, ..., a_s) Rightarrow G arrowsv (2_(m - p), p)}
G \arrowsv (a_1, ..., a_s) \Rightarrow G \arrowsv (2_{m - p}, p),
\end{equation}
If additionally $a_{s - 1} \geq 3$, using the same method we obtain
\begin{equation}
\label{equation: G arrowsv (a_1, ..., a_s) Rightarrow G arrowsv (2_(m - p - 2), 3, p)}
G \arrowsv (a_1, ..., a_s) \Rightarrow G \arrowsv (2_{m - p - 2}, 3, p).
\end{equation}

\vspace{1em}

It is easy to prove the following
\begin{proposition}
	\label{proposition: G - A arrowsv (a_1, ..., a_(i - 1), a_i - 1, a_(i + 1_, ..., a_s)}
	Let $G$ be a graph, $G \arrowsv (a_1, ..., a_s)$, and $A \subseteq \V(G)$ be an independent set. Then, if $a_i \geq 2$
	\begin{equation*}
	G - A \arrowsv (a_1, ..., a_{i - 1}, a_i - 1, a_{i + 1}, ..., a_s).
	\end{equation*}
\end{proposition}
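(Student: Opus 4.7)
The plan is to reduce the statement to the hypothesis $G \arrowsv (a_1, \dots, a_s)$ by lifting an arbitrary $s$-coloring of $\V(G-A)$ to an $s$-coloring of $\V(G)$ in a controlled way. Concretely, starting from any $s$-coloring $\chi'$ of $\V(G-A)$, I would define an $s$-coloring $\chi$ of $\V(G)$ that agrees with $\chi'$ on $\V(G-A)$ and assigns color $i$ to every vertex of $A$. Applying $G \arrowsv (a_1, \dots, a_s)$ to $\chi$ produces some index $j \in \set{1, \dots, s}$ and a monochromatic $a_j$-clique $Q$ of color $j$ in $G$, and the remainder of the argument is a case split on whether $j = i$.

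If $j \neq i$, then no vertex of $Q$ can lie in $A$, since vertices of $A$ were colored $i$ under $\chi$. Hence $Q \subseteq \V(G-A)$ is a monochromatic $a_j$-clique under $\chi'$ in $G-A$, which is already stronger than what is needed for the target arrow relation $G - A \arrowsv (a_1, \dots, a_{i-1}, a_i - 1, a_{i+1}, \dots, a_s)$ in the $j$-th coordinate. If $j = i$, then $Q$ is an $a_i$-clique in color $i$, but $Q$ may contain vertices of $A$; here I would use the fact that $A$ is independent in $G$, so $Q \cap A$ contains at most one vertex (otherwise two vertices of $A$ would be joined by an edge of $Q$). Removing this at-most-one vertex leaves an $(a_i - 1)$-clique contained in $\V(G-A)$, all of whose vertices have the same color $i$ under $\chi'$, which gives the desired monochromatic clique in the $i$-th coordinate.

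Since $\chi'$ was an arbitrary $s$-coloring of $\V(G-A)$, this establishes $G - A \arrowsv (a_1, \dots, a_{i-1}, a_i - 1, a_{i+1}, \dots, a_s)$. The only real content is the observation that an independent set meets any clique in at most one vertex, which is what makes the $j = i$ case drop the parameter $a_i$ by exactly one; the hypothesis $a_i \geq 2$ is used here to ensure that $a_i - 1 \geq 1$ so that the resulting tuple is still a valid target for the arrow relation. There is no real obstacle — the proof is a one-step coloring extension plus a trivial combinatorial observation — so I expect the write-up to be short and essentially mechanical.
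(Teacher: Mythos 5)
Your proof is correct and is precisely the standard argument that the paper leaves to the reader (the proposition is stated with ``It is easy to prove'' and no written proof): extend an arbitrary coloring of $\V(G-A)$ by assigning color $i$ to all of $A$, invoke $G \arrowsv (a_1, \dots, a_s)$, and use that the independent set $A$ meets any clique in at most one vertex. No gaps.
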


\vspace{1em}

We will need the following theorem which gives an upper bound on the independence number of the graphs in $\mH_v(a_1, ..., a_s; m - 1)$.
\begin{theorem} \cite{Nen02a}
\label{theorem: alpha(G) leq V(G) + m + p - 1}
Let $a_1, ..., a_s$ be positive integers and $m$ and $p$ defined by (\ref{equation: m and p}). Let $G \in \mH_v(a_1, ..., a_s; m - 1)$. Then,
\vspace{1em}\\
(a) $\alpha(G) \leq \abs{\V(G)} - m - p + 1$.
\vspace{1em}\\
(b) If $\abs{\V(G)} < m + 3p$, then $\alpha(G) \leq \abs{\V(G)} - m - p$.
\end{theorem}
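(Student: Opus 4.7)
The plan is to deduce (a) directly from Theorem \ref{theorem: F_v(a_1, ..., a_s; m) = m + p}(a) by peeling off a maximum independent set, and then to obtain (b) by analysing the case of equality in (a).

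For (a), I start with the implication (\ref{equation: G arrowsv (a_1, ..., a_s) Rightarrow G arrowsv (2_(m - p), p)}), which gives $G \arrowsv (2_{m-p}, p)$, and fix a maximum independent set $A$ of $G$. Since $m \geq p + 2$ (so $m - p \geq 2$), applying Proposition \ref{proposition: G - A arrowsv (a_1, ..., a_(i - 1), a_i - 1, a_(i + 1_, ..., a_s)} with $i$ chosen so that $a_i = 2$ in the tuple $(2_{m-p}, p)$ yields $G - A \arrowsv (2_{m-p-1}, p)$. The reduced tuple has $m' = m - 1$ and $p' = p$, and $G - A$ is $K_{m-1}$-free as an induced subgraph of $G$, so $G - A \in \mH_v(2_{m-p-1}, p; m-1)$. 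Theorem \ref{theorem: F_v(a_1, ..., a_s; m) = m + p}(a) then gives $F_v(2_{m-p-1}, p; m-1) = m + p - 1$, whence $|\V(G - A)| \geq m + p - 1$ and $\alpha(G) = |A| \leq |\V(G)| - m - p + 1$.

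For (b), I argue by contradiction: suppose $\alpha(G) = |\V(G)| - m - p + 1$, so $|\V(G - A)| = m + p - 1$ exactly. Then $G - A$ is an extremal graph in $\mH_v(2_{m-p-1}, p; m-1)$, and Theorem \ref{theorem: F_v(a_1, ..., a_s; m) = m + p}(b) applied to the reduced tuple $(2_{m-p-1}, p)$ pins it down as $G - A = K_{m+p-1} - C_{2p+1} = K_{m-p-2} + \overline{C}_{2p+1}$. Write $K$ for the $K_{m-p-2}$ part and $V_C$ for the $2p+1$ vertices of the $\overline{C}_{2p+1}$ part, and let $C$ denote the underlying $(2p+1)$-cycle on $V_C$. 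The goal reduces to showing $|A| \geq 2p + 1$, because then $|\V(G)| = |\V(G-A)| + |A| \geq (m + p - 1) + (2p + 1) = m + 3p$, contradicting the assumption $|\V(G)| < m + 3p$.

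The bound $|A| \geq 2p + 1$ is to be obtained by combining two ingredients. First, maximality of $A$: for every edge $\set{u, v}$ of $C$ the pair $\set{u, v}$ is independent in $G - A$, so $\set{u, v} \cup (A \setminus (\N(u) \cup \N(v)))$ is independent in $G$, forcing $|A \cap (\N(u) \cup \N(v))| \geq 2$; similarly $|A \cap \N(w)| \geq 1$ for every $w \in K$. Second, the $K_{m-1}$-freeness of $G$: every $(m-2)$-clique of $G - A$ has the form $K \cup C'$ with $C'$ a $p$-clique of $\overline{C}_{2p+1}$ (equivalently, a maximum independent set of $C$), so no $a \in A$ is adjacent to the whole of any such clique; in particular, any $a \in A$ adjacent to all of $K$ must satisfy that $V_C \setminus \N(a)$ meets every maximum independent set of $C$, which (together with $|V_C| = 2p + 1$) caps $|\N(a) \cap V_C|$. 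A bipartite double-counting argument between $A$ and the $2p + 1$ edges of $C$, exploiting the cap on $|\N(a) \cap V_C|$ for the subset $A_0 \subseteq A$ of vertices adjacent to all of $K$, then yields $|A| \geq 2p + 1$.

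The main obstacle is the tightness of this count: the crude lower bound from maximality alone gives only $\sum_{x \in \V(G - A)} |\N(x) \cap A| \geq m + p - 1$, which is far too weak. The $K_{m-1}$-freeness must be exploited sharply — each $a \in A_0$ covers only a controlled number of the $2p + 1$ cycle edges, since its non-neighbours in $V_C$ form a hitting set for the maximum independent sets of $C$ — and a case split according to whether $a \in A$ lies in $A_0$ or not is likely to be needed to extract the correct constant $2p + 1$.
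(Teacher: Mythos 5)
The paper does not prove this theorem at all — it is imported from \cite{Nen02a} as a known result — so there is no in-text proof to compare against; I am judging your argument on its own merits. Your part (a) is correct and complete: the reduction $G \arrowsv (2_{m-p},p)$, the removal of a maximum independent set via Proposition \ref{proposition: G - A arrowsv (a_1, ..., a_(i - 1), a_i - 1, a_(i + 1_, ..., a_s)}, the identification of the new parameters as $m'=m-1$, $p'=p$, and the application of Theorem \ref{theorem: F_v(a_1, ..., a_s; m) = m + p}(a) all check out (note $m\geq p+2$ holds automatically since $\mH_v(a_1,\ldots,a_s;m-1)\neq\emptyset$). The reduction in part (b) is also right: tightness in (a) forces $G-A=K_{m-p-2}+\overline{C}_{2p+1}$ by Theorem \ref{theorem: F_v(a_1, ..., a_s; m) = m + p}(b), and the whole statement then amounts to $\abs{A}\geq 2p+1$.

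The gap is in how you propose to prove $\abs{A}\geq 2p+1$. Your two ingredients — maximality of $A$ and $K_{m-1}$-freeness of $G$ — do not suffice, and no double-counting or case split can repair this, because there are configurations satisfying both ingredients with $\abs{A}$ as small as $3$. Concretely, take $p=3$, $m=5$ (tuple $(2,2,3)$, $q=4$), so $K=\emptyset$ and $G-A=\overline{C}_7$ on vertices $0,\ldots,6$. Let $A=\set{a_1,a_2,a_3}$ with $\N(a_1)=\set{0,1,2,3}$, $\N(a_2)=\set{3,4,5,6}$, $\N(a_3)=\set{5,6,0,1}$. Then $G$ is $K_4$-free (each $G[\N(a_i)]$ is triangle-free), $A$ is a maximum independent set, every edge $\set{i,i+1}$ of $C_7$ has at least two vertices of $A$ adjacent to its endpoints, and each $V_C\setminus\N(a_i)$ is three consecutive vertices, hence a transversal of the maximum independent sets of $C_7$ meeting your cap $\abs{\N(a)\cap V_C}\leq 2p-2$ with equality. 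All of your constraints hold, yet $\abs{A}=3$, not $7$. Quantitatively the same failure shows in the count: each $a$ can cover up to $2p-1$ of the $2p+1$ cycle edges, so the demand $2(2p+1)$ yields only $\abs{A}\geq\lceil(4p+2)/(2p-1)\rceil=3$. Of course this $G$ does not arrow $(2,2,3)$ — it has only $10<F_v(2,2,3;4)$ vertices — which is exactly the point: the information that $G$ itself arrows $(2_{m-p},p)$ (and not merely that $G-A$ arrows the reduced tuple) must enter the proof of (b), for instance by exhibiting an explicit $(2_{m-p},p)$-free coloring of $A\cup K\cup V_C$ whenever $\abs{A}\leq 2p$. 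As written, your plan for (b) omits this essential ingredient and cannot be completed.
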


\vspace{1em}

The graph $G$ is called an extremal graph in $\mH_v(a_1, ..., a_s; q)$ if $G \in \mH_v(a_1, ..., a_s; q)$ and $\abs{\V(G)} = F_v(a_1, ..., a_s; q)$. We denote by $\mH_{extr}(a_1, ..., a_s; q)$ the set of all extremal graphs in $\mH_v(a_1, ..., a_s; q)$.

\begin{conjecture}
	\label{conjecture: chi(G) leq m + 1}
	If $G \in \mH_{extr}(a_1, ..., a_s; m - 1)$, then $\chi(G) \leq m + 1$.
\end{conjecture}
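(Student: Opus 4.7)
The plan is to proceed by contradiction: assume some extremal $G \in \mH_{extr}(a_1, ..., a_s; m-1)$ satisfies $\chi(G) \geq m+2$, and attempt to construct a proper subgraph of $G$ still lying in $\mH_v(a_1, ..., a_s; m-1)$, which would contradict the minimality of $\abs{\V(G)}$.

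First I would exploit vertex-criticality. Because $G$ is extremal, for every $v \in \V(G)$ the graph $G - v$ has strictly fewer vertices and still satisfies $\omega(G-v) \leq \omega(G) < m-1$, so necessarily $G - v \not\arrowsv (a_1, ..., a_s)$. Fix for each $v$ a witnessing $s$-coloring $\phi_v$ of $\V(G-v)$ containing no monochromatic $a_i$-clique of color $i$; extending $\phi_v$ arbitrarily to $v$ must, by $G \arrowsv (a_1, ..., a_s)$, produce a forbidden monochromatic clique, and this clique is forced to contain $v$ (otherwise it would already appear in $\phi_v$). Hence every vertex of $G$ participates in a critical monochromatic clique under some almost-good coloring, which is a strong structural constraint.

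Next I would exploit the chromatic slack $\chi(G) \geq m + 2$. Fix a proper $\chi(G)$-coloring with classes $V_1, ..., V_{\chi(G)}$. The natural attempt is to identify a color class $V_j$ (or more generally a carefully chosen independent set $A$) whose removal preserves the arrowing. By Proposition \ref{proposition: G - A arrowsv (a_1, ..., a_(i - 1), a_i - 1, a_(i + 1_, ..., a_s)} the deletion $G - A$ only weakens the arrowing parameters in a controlled way, and $\chi(G - A) \geq \chi(G) - 1 \geq m + 1$ preserves the basic chromatic lower bound. The plan is to combine this chromatic slack with the upper bound $\alpha(G) \leq \abs{\V(G)} - m - p + 1$ from Theorem \ref{theorem: alpha(G) leq V(G) + m + p - 1} (which forces $V_j$ to be small, so relatively few vertices are lost) and the criticality analysis above to show that either $G - A$, or a minor modification of it obtained by reattaching a small gadget, already lies in $\mH_v(a_1, ..., a_s; m-1)$, contradicting extremality of $G$.

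The main obstacle is precisely that $\arrowsv (a_1, ..., a_s)$ is strictly stronger than $\chi(G) \geq m$, so having two spare chromatic classes does not automatically translate into a vertex deletion that preserves arrowing; Proposition \ref{proposition: G - A arrowsv (a_1, ..., a_(i - 1), a_i - 1, a_(i + 1_, ..., a_s)} only guarantees a weaker arrowing for $G - A$, and upgrading this back to $(a_1, ..., a_s)$ seems to demand fine structural information about extremal graphs that is presently available only for the small cases computed in Chapters 2--4. This is why the statement is formulated as a conjecture rather than a theorem; a general proof appears to require either a new reduction from $\chi$-criticality to $\arrowsv$-criticality, or an inductive argument that bootstraps from the explicit lists of extremal graphs already known.
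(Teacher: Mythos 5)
This statement is a \emph{conjecture} in the thesis, not a theorem: the paper offers no proof at all, only the remark that the inequality $\chi(G) \leq m+1$ holds for every extremal graph computed so far (the explicit lists in $\mH_{extr}(2,2,5;6)$, $\mH_{extr}(2,2,6;7)$, $\mH_{extr}(3,6;7)$, etc.). Your proposal likewise does not constitute a proof, and to your credit you say so explicitly in the final paragraph. So the honest verdict is that there is nothing to compare: neither you nor the author closes the argument.

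That said, your diagnosis of where the difficulty lies is accurate and worth recording. The preliminary observations are all correct: extremality forces vertex-criticality of the arrowing property (since $\omega(G-v) < m-1$ persists, $G - v \not\arrowsv (a_1,\dots,a_s)$ must fail for cardinality reasons), removing an independent set $A$ drops the chromatic number by at most one while Proposition \ref{proposition: G - A arrowsv (a_1, ..., a_(i - 1), a_i - 1, a_(i + 1_, ..., a_s)} only guarantees the \emph{weakened} arrowing $(a_1,\dots,a_i - 1,\dots,a_s)$ for $G - A$, and Theorem \ref{theorem: alpha(G) leq V(G) + m + p - 1} bounds how much is lost. The genuine gap is exactly the one you name: the implication $G \arrowsv (a_1,\dots,a_s) \Rightarrow \chi(G) \geq m$ from (\ref{equation: G arrowsv (a_1, ..., a_s) Rightarrow chi(G) geq m}) is one-directional, and two units of chromatic slack give no mechanism for restoring the full arrowing property after a vertex deletion. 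Indeed the thesis itself exhibits extremal graphs for which (\ref{equation: G arrowsv (a_1, ..., a_s) Rightarrow chi(G) geq m}) is strict (e.g.\ the graphs in $\mH_v(3,5;6;16)$ have $\chi = 8 = m+1$, and $\Gamma_3 \in \mH_v(3,6;7;18)$ has $\chi = 9 = m+1$), so the conjectured bound is tight and any proof must engage with the arrowing structure rather than with chromatic number alone. Your sketch should therefore be presented as a discussion of the obstruction, not as a proof attempt.
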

For all known examples of extremal graphs, including the extremal graphs obtained in this thesis, this inequality holds.\\

The graph $G$ is called a $K_q$-free graph if $\omega(G) < q$. $G$ is a maximal $K_q$-free graph if $\omega(G) < q$ and $\omega(G + e) = q, \forall e \in \E(\overline{G})$.

Let $W \subseteq \V(G)$. $W$ is a maximal $K_q$-free subset of $\V(G)$ if $W$ does not contain a $q$-clique and $W \cup \set{v}$ contains a $q$-clique for all $v \in \V(G) \setminus W$.

We say that $G$ is a maximal graph in $\mH_v(a_1, ..., a_s; q)$ if $G \in \mH_v(a_1, ..., a_s; q)$ but $G + e \not\in \mH_v(a_1, ..., a_s; q), \forall e \in \E(\overline{G})$, i.e. $\omega(G + e) = q, \forall e \in \E(\overline{G})$. $G$ is a minimal graph in $\mH_v(a_1, ..., a_s; q)$ if $G \in \mH_v(a_1, ..., a_s; q)$ but $G - e \not\in \mH_v(a_1, ..., a_s; q), \forall e \in \E(G)$, i.e. $G - e \not\arrowsv (a_1, ..., a_s), \forall e \in \E(G)$.

If $G$ is both maximal and minimal graph in $\mH_v(a_1, ..., a_s; q)$, then we say that $G$ is a bicritical graph in $\mH_v(a_1, ..., a_s; q)$. 

\vspace{1em}

For convenience, we also define the following term:
\begin{definition}
	\label{definition: (+K_p)}
	The graph $G$ is called a $(+K_p)$-graph if $G + e$ contains a new $p$-clique for all $e \in \E(\overline{G})$.
\end{definition}
Obviously, $G \in \mH_v(a_1, ..., a_s; q)$ is a maximal graph in $\mH_v(a_1, ..., a_s; q)$ if and only if $G$ is a $(+K_q)$-graph. We denote by $\mH_{max}(a_1, ..., a_s; q)$ the set of all maximal graphs in $\mH_v(a_1, ..., a_s; q)$ and by $\mH_{+K_p}(a_1, ..., a_s; q)$ the set of all $(+K_p)$-graphs in $\mH_v(a_1, ..., a_s; q)$. We will also use the following notations:

\vspace{0.5em}

$\mH_{+K_p}(a_1, ..., a_s; q; n) = \set{G \in \mH_{+K_p}(a_1, ..., a_s; q) : \abs{\V(G)} = n}$

\vspace{0.5em}

$\mH_{max}(a_1, ..., a_s; q; n) = \set{G \in \mH_{max}(a_1, ..., a_s; q) : \abs{\V(G)} = n}$

\vspace{0.5em}

$\mH_{+K_p}^t(a_1, ..., a_s; q; n) = \set{G \in \mH_{+K_p}(a_1, ..., a_s; q; n) : \alpha(G) \leq t}$

\vspace{0.5em}

$\mH_{max}^t(a_1, ..., a_s; q; n) = \set{G \in \mH_{max}(a_1, ..., a_s; q; n) : \alpha(G) \leq t}$

\begin{remark}
	\label{remark: mH_(max)(a_1; q; n)}
	If $a_1 \leq n \leq q - 1$ then $\mH_{max}(a_1; q; n) = \set{K_n}$.
	
	If $a_1 \leq q - 1 \leq n$ and $G \in \mH_{max}(a_1; q; n)$, then $\omega(G) = q - 1$, and therefore $\mH_{max}(a_1; q; n) = \mH_{max}(q - 1; q; n).$
\end{remark}

Further, we will need the following 
\begin{proposition}
	\label{proposition: H is a (+K_(q - 1))-graph}
	Let $G$ be a maximal $K_q$-free graph, $A$ be an independent set of vertices of $G$, and $H = G - A$. Then $H$ is a $(+K_{q - 1})$-graph.
\end{proposition}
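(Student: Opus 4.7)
The plan is to unfold the definition of a $(+K_{q-1})$-graph: for every non-edge $e = [u,v]$ of $H$, I must exhibit a new $(q-1)$-clique in $H+e$, equivalently a $(q-1)$-clique containing the edge $[u,v]$. This in turn amounts to finding a $(q-3)$-clique inside $N_H(u) \cap N_H(v)$.

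First I would observe that since $V(H) = V(G) \setminus A$ and edges of $H$ are inherited from $G$, any non-edge $[u,v]$ of $H$ is also a non-edge of $G$ with $u,v \notin A$. Applying the maximality of $G$ as a $K_q$-free graph, $G + e$ contains a $K_q$, and any such $K_q$ is forced to contain the newly added edge $[u,v]$. This supplies a $(q-2)$-clique $C \subseteq V(G)$ with $C \subseteq N_G(u) \cap N_G(v)$.

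The one thing that could go wrong is that $C$ might meet $A$, in which case $C$ does not live inside $H$. Here the hypothesis that $A$ is independent is essential: since $C$ is a clique and $A$ is independent, $|C \cap A| \leq 1$. Therefore $C' := C \setminus A$ has at least $q - 3$ vertices, is contained in $V(H)$, remains a clique in $H$, and every vertex of $C'$ is still adjacent in $H$ to both $u$ and $v$. Choosing any $q - 3$ vertices of $C'$ and adjoining $u$ and $v$ produces a $(q-1)$-clique of $H + e$ that contains $e$; this clique is new because $[u,v] \notin E(H)$.

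I do not expect any real obstacle. The whole argument is essentially a one-line reduction once the correct common-neighbor clique is identified, with the independence of $A$ entering only to guarantee $|C \cap A| \leq 1$. Degenerate small values of $q$ cause no trouble: when $q = 3$ the required $(q-3)$-clique is the empty set, and $\{u,v\}$ itself is the new $K_2$ in $H + e$.
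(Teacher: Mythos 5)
Your proof is correct and is essentially the argument the paper intends: the paper states it as a terse contrapositive (``if $H+e$ gains no new $(q-1)$-clique then $G+e$ gains no $K_q$''), while you unfold the same two ingredients directly --- maximality of $G$ forces a $K_q$ through the new edge, and independence of $A$ lets that clique lose at most one vertex when restricted to $H$.
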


\begin{proof}
Suppose that for some $e \in \overline{\E}(H)$, $e + H$ does not contain a new $(q - 1)$-clique. Then $e + G$ does not contain a $q$-clique.
\end{proof}

For convenience, we will formulate the following proposition which follows directly from Proposition \ref{proposition: G - A arrowsv (a_1, ..., a_(i - 1), a_i - 1, a_(i + 1_, ..., a_s)} and Proposition \ref{proposition: H is a (+K_(q - 1))-graph}:
\begin{proposition}
	\label{proposition: G - A in mH_(+K_(q - 1))(a_1 - 1, a_2, ..., a_s; q; n - abs(A))}
	Let $G \in \mH_{max}(a_1, ..., a_s; q; n)$, $a_1 \geq 2$, and $A$ be an independent set of vertices of $G$. Then,
	$$G - A \in \mH_{+K_{q - 1}}(a_1 - 1, a_2, ..., a_s; q; n - \abs{A}).$$
\end{proposition}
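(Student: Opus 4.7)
The plan is to unpack the membership condition $G - A \in \mH_{+K_{q - 1}}(a_1 - 1, a_2, \ldots, a_s; q; n - \abs{A})$ into four items and verify each in turn: (i) the arrowing property $G - A \arrowsv (a_1 - 1, a_2, \ldots, a_s)$; (ii) $K_q$-freeness $\omega(G - A) < q$; (iii) the $(+K_{q-1})$ property; (iv) the vertex count $\abs{\V(G - A)} = n - \abs{A}$. Items (ii) and (iv) are immediate: since $G - A$ is an induced subgraph of $G$ we have $\omega(G - A) \leq \omega(G) < q$, and deleting $\abs{A}$ vertices from an $n$-vertex graph leaves $n - \abs{A}$ vertices.

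For (i), I would apply Proposition \ref{proposition: G - A arrowsv (a_1, ..., a_(i - 1), a_i - 1, a_(i + 1_, ..., a_s)} with the index $i = 1$. The hypotheses are met because $a_1 \geq 2$ by assumption, $A$ is independent in $G$ by assumption, and $G \arrowsv (a_1, \ldots, a_s)$ since $G \in \mH_{max}(a_1, \ldots, a_s; q) \subseteq \mH_v(a_1, \ldots, a_s; q)$. The conclusion of that proposition is exactly $G - A \arrowsv (a_1 - 1, a_2, \ldots, a_s)$.

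For (iii), I would invoke Proposition \ref{proposition: H is a (+K_(q - 1))-graph} with $H = G - A$. The one small check is that $G$ qualifies as a \emph{maximal $K_q$-free graph} in the sense required there; this is precisely the remark made in the paragraph preceding Definition \ref{definition: (+K_p)}, where maximality in $\mH_v(a_1, \ldots, a_s; q)$ is reformulated as $\omega(G + e) = q$ for every $e \in \E(\overline{G})$, which is the defining property of a maximal $K_q$-free graph. Once this identification is made, Proposition \ref{proposition: H is a (+K_(q - 1))-graph} delivers (iii) immediately.

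The main "obstacle" is essentially absent: the statement is flagged in the text as a direct combination of the two cited propositions, and the only point requiring a moment's attention is recognizing that maximality in $\mH_v(\cdot; q)$ coincides with maximal $K_q$-freeness, so that Proposition \ref{proposition: H is a (+K_(q - 1))-graph} can be applied to $G$.
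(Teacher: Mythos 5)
Your proof is correct and matches the paper exactly: the thesis gives no separate argument but states that the proposition follows directly from Proposition \ref{proposition: G - A arrowsv (a_1, ..., a_(i - 1), a_i - 1, a_(i + 1_, ..., a_s)} and Proposition \ref{proposition: H is a (+K_(q - 1))-graph}, which are precisely the two results you combine. Your explicit verification of the clique-number, vertex-count, and maximality conditions simply fills in the routine details the paper leaves implicit.
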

We will use Proposition \ref{proposition: G - A in mH_(+K_(q - 1))(a_1 - 1, a_2, ..., a_s; q; n - abs(A))} in many of the algorithms.\\

We will also need the following improvement of the lower bound in (\ref{equation: m + p + 2 leq F_v(a_1, ..., a_s; m - 1) leq m + 3p})
\begin{theorem}
	\label{theorem: F_v(a_1, ..., a_s; m - 1) geq m + p + 3}
	\cite{Nen02a}
	Let $a_1, ..., a_s$ be positive integers, let $m$ and $p$ be defined by the equalities (\ref{equation: m and p}), $p \geq 3$, and $m \geq p + 2$. If $F_v(2, 2, p; p + 1) \geq 2p + 5$, then
	\begin{equation*}
	F_v(a_1, ..., a_s; m - 1) \geq m + p + 3.
	\end{equation*}
\end{theorem}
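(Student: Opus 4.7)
The plan is to reduce to the hypothesis via Proposition \ref{proposition: G - A arrowsv (a_1, ..., a_(i - 1), a_i - 1, a_(i + 1_, ..., a_s)} and induct on $r := m - p$. By (\ref{equation: G arrowsv (a_1, ..., a_s) Rightarrow G arrowsv (2_(m - p), p)}), every $G \in \mH_v(a_1, \ldots, a_s; m - 1)$ already lies in $\mH_v(2_r, p; m - 1)$, so it suffices to show $F_v(2_r, p; r + p - 1) \geq r + 2p + 3$ for all $r \geq 2$. The base case $r = 2$ is precisely the assumed bound $F_v(2, 2, p; p + 1) \geq 2p + 5$.

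For the inductive step, fix $r \geq 3$, take $G \in \mH_v(2_r, p; r + p - 1)$ with $n := \abs{\V(G)}$, and suppose for contradiction that $n \leq r + 2p + 2$. Since $n < (r + p) + 3p$, Theorem \ref{theorem: alpha(G) leq V(G) + m + p - 1}(b) yields $\alpha(G) \leq n - r - 2p \leq 2$. Moreover $\alpha(G) = 2$: otherwise $G = K_n$, but $K_n \arrowsv (2_r, p)$ forces $n \geq r + p$, whereas $\omega(G) \leq r + p - 2$ would force $n \leq r + p - 2$, a contradiction. The aim is to remove either a single vertex or an independent pair so that the resulting graph belongs to $\mH_v(2_{r - 1}, p; r + p - 2)$; the inductive hypothesis will then give $\abs{\V(G - A)} \geq r + 2p + 2$, contradicting $n \leq r + 2p + 2$.

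By Proposition \ref{proposition: G - A arrowsv (a_1, ..., a_(i - 1), a_i - 1, a_(i + 1_, ..., a_s)}, the arrow $(2_{r - 1}, p)$ is preserved after deleting any single vertex or any independent pair, so the only non-automatic requirement is the clique bound $\omega \leq r + p - 3$. If some vertex $v$ belongs to every maximum clique of $G$, then $\omega(G - v) \leq r + p - 3$, whence $G - v \in \mH_v(2_{r - 1}, p; r + p - 2)$, closing the step.

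The main obstacle is the remaining case in which no single vertex lies in all maximum cliques of $G$. Here I would look for an independent pair $\set{u, v}$ that meets every maximum clique of $G$, which then yields $\omega(G - \set{u, v}) \leq r + p - 3$ and $G - \set{u, v} \in \mH_v(2_{r - 1}, p; r + p - 2)$. The existence of such a transversal pair is forced by the structural rigidity coming from $\alpha(G) = 2$ (equivalently, $\overline{G}$ triangle-free), together with $\omega(G) \leq r + p - 2$ and $n \leq r + 2p + 2$, which leave very little room for maximum cliques to be pairwise disjoint. A careful case analysis, picking a maximum clique $K$ and noting that every vertex outside $K$ has a non-neighbour in $K$ (otherwise a larger clique would arise) then examining how two maximum cliques can share no common vertex under $\alpha(G) = 2$, produces the required independent transversal pair; the inductive hypothesis then gives $n - 2 \geq r + 2p + 2$ and completes the contradiction.
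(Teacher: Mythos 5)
The paper does not actually prove this statement: it is quoted from \cite{Nen02a} without proof, so there is no internal argument to compare against. Judging your proposal on its own merits: the outer reduction is fine. Using (\ref{equation: G arrowsv (a_1, ..., a_s) Rightarrow G arrowsv (2_(m - p), p)}) to reduce to $F_v(2_r,p;r+p-1)\geq r+2p+3$ for $r=m-p\geq 2$, taking the hypothesis as the base case $r=2$, and then deriving $\alpha(G)=2$ and $n=r+2p+2$ from Theorem \ref{theorem: alpha(G) leq V(G) + m + p - 1}(b) is all correct and is exactly the style of argument used elsewhere in this thesis (compare the proofs of Theorem \ref{theorem: rp}(c),(d)). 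The arithmetic of the intended contradiction is also right.

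The gap is the step you yourself flag as ``the main obstacle'': the existence of an independent set $A$ with $\abs{A}\leq 2$ meeting every maximum clique of $G$. You assert this is ``forced'' by $\alpha(G)=2$ together with the bounds on $\omega(G)$ and $n$, but no argument is given, and the principle you are leaning on is false in general: $C_5$ has $\alpha=2$, yet no single vertex and no independent pair meets all five of its maximum cliques (its edges). So the claim, if true here, must use the arrowing hypotheses in an essential way, not just the numerology. Note also that the soft structural facts available do not obviously suffice: two maximum cliques of size $\omega=r+p-2$ in an $(r+2p+2)$-vertex graph are only forced to intersect when $r\geq 7$, and even pairwise intersection does not yield a common vertex (cliques do not have the Helly property), let alone an independent transversal pair. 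Until you either prove this transversal lemma under the actual hypotheses ($G\arrowsv(2_r,p)$, hence $\chi(G)\geq r+p$, which via the complement bounds the matching number of $\overline{G}$ by $p+2$ and may be the lever you need) or replace the inductive step by a different descent, the proof is incomplete.
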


\section{Modified vertex Folkman numbers}

In this section we will define the modified vertex Folkman numbers, which help us to obtain upper bounds on the vertex Folkman numbers.

\begin{definition}
	\label{definition: G arrowsv uni(m)(p)}
	Let $G$ be a graph and let $m$ and $p$ be positive integers. The expression
	\begin{equation*}
	G \arrowsv \uni{m}{p}
	\end{equation*}
	means that for every choice of positive integers $a_1, ..., a_s$ ($s$ is not fixed), such that $m = \sum\limits_{i=1}^s (a_i - 1) + 1$ and $\max\set{a_1, ..., a_s} \leq p$, we have
	\begin{equation*}
	G \arrowsv (a_1, ..., a_s).
	\end{equation*}
\end{definition}

\begin{example}
	\label{example: K_m arrowsv uni(m)(p)}
	Obviously, $K_m \arrowsv \uni{m}{p}, \ \forall p$.
\end{example}

\begin{example}
	\label{example: overline(C)_(2p + 1) arrowsv uni(p + 1)(p)}
	\cite{LRU01}
	Let us notice that $\overline{C}_{2p + 1} \arrowsv \uni{(p + 1)}{p}$. Indeed, let $b_1, ..., b_s$ be positive integers such that $\sum\limits_{i = 1}^{s}(b_i - 1) + 1 = p + 1$ and $\max\set{b_1, ..., b_s} \leq p$. Assume that there exists $s$-coloring $\V(G) = V_1 \cup ... \cup V_s$ such that $V_i$ does not contain a $b_i$-clique. Then $\abs{V_i} \leq 2b_i - 2$ and $\abs{\V(G)} = \sum\limits_{i = 1}^{s}\abs{V_i} \leq 2\sum\limits_{i = 1}^{s}(b_i - 1) = 2p$, which is a contradiction.
\end{example}

Further, we will need the following
\begin{lemma}
	\label{lemma: K_(m - m_0) + G arrowsv uni(m)(p)}
	\cite{Nen02b}
	Let $m_0$ and $p$ be positive integers and $G \arrowsv \uni{m_0}{p}$. Then for every positive integer $m \geq m_0$ it is true that $K_{m - m_0} + G \arrowsv \uni{m}{p}$.
\end{lemma}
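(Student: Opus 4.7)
The plan is to reduce the property $K_{m - m_0} + G \arrowsv \uni{m}{p}$ to the assumed property $G \arrowsv \uni{m_0}{p}$ by absorbing into the $K_{m - m_0}$ factor whatever portion of each desired clique already appears there. Fix any positive integers $a_1, \ldots, a_s$ with $\sum_{i=1}^s (a_i - 1) + 1 = m$ and $\max\set{a_1, \ldots, a_s} \leq p$, and fix any $s$-coloring of $\V(K_{m - m_0} + G)$. Let $k_i$ denote the number of vertices of the $K_{m - m_0}$-factor that receive color $i$; then $\sum_{i=1}^s k_i = m - m_0$, and because $K_{m - m_0}$ is complete, these $k_i$ vertices already form a monochromatic $k_i$-clique of color $i$.

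Now split into two cases. If some $k_i \geq a_i$, we are immediately done. Otherwise $k_i \leq a_i - 1$ for every $i$, so set $b_i := a_i - k_i \geq 1$. A direct computation gives
\begin{equation*}
\sum_{i=1}^s (b_i - 1) + 1 = \sum_{i=1}^s (a_i - 1) - \sum_{i=1}^s k_i + 1 = (m - 1) - (m - m_0) + 1 = m_0,
\end{equation*}
while plainly $b_i \leq a_i \leq p$. Thus $(b_1, \ldots, b_s)$ is an admissible tuple for the hypothesis $G \arrowsv \uni{m_0}{p}$. Applying this hypothesis to the restriction of the given coloring to $\V(G)$ produces some index $j$ and a monochromatic $b_j$-clique of color $j$ inside $G$. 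Since every vertex of the $K_{m - m_0}$-factor is adjacent in $K_{m - m_0} + G$ to every vertex of $G$, the union of this $b_j$-clique with the $k_j$ color-$j$ vertices from $K_{m - m_0}$ is a monochromatic clique of size $k_j + b_j = a_j$ of color $j$, as required.

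The main thing to double-check is the arithmetic identity above and the fact that the resulting tuple $(b_1, \ldots, b_s)$ is genuinely admissible in Definition \ref{definition: G arrowsv uni(m)(p)}; in particular the case $b_i = 1$ causes no problem, since the definition permits positive integers (not merely integers $\geq 2$), and a monochromatic $1$-clique together with $k_i = a_i - 1$ color-$i$ vertices of $K_{m - m_0}$ still yields the desired $a_i$-clique through the join. Beyond this minor book-keeping, no genuine obstacle appears: the argument is essentially a one-line application of the join structure to the two pieces of the coloring.
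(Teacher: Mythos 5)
Your proof is correct. The arithmetic identity $\sum_{i=1}^s (b_i - 1) + 1 = (m-1) - (m - m_0) + 1 = m_0$ checks out, the tuple $(b_1, \ldots, b_s)$ is admissible since each $b_i = a_i - k_i$ is a positive integer with $b_i \leq a_i \leq p$, and the join structure does turn a color-$j$ $b_j$-clique of $G$ together with the $k_j$ color-$j$ vertices of the complete factor into a color-$j$ $a_j$-clique. The degenerate case $b_i = 1$ is handled exactly as you say, because Definition \ref{definition: G arrowsv uni(m)(p)} quantifies over tuples of positive integers, not integers $\geq 2$.

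The route is genuinely different from the one the thesis points to. The thesis does not prove the lemma itself; it cites \cite{Nen02b}, where (as the text notes) the statement is proved by induction on $m \geq m_0$. That induction amounts to iterating the single-vertex instance of your argument: one shows $G' \arrowsv \uni{m}{p}$ implies $K_1 + G' \arrowsv \uni{(m+1)}{p}$ by absorbing the one new vertex into whichever color class it lands in, and then stacks $m - m_0$ such steps. Your proof handles all $m - m_0$ added vertices in a single absorption step, splitting the complete factor by color into cliques of sizes $k_1, \ldots, k_s$ and reducing directly to an admissible tuple at level $m_0$. What the direct argument buys is a self-contained, induction-free proof in which the reduction $(a_1, \ldots, a_s) \mapsto (a_1 - k_1, \ldots, a_s - k_s)$ is exhibited explicitly; what the inductive formulation buys is that each step is nearly trivial to verify and the statement for all intermediate $m$ comes for free. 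Either way the content is the same, and your write-up has no gap.
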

This lemma is formulated in an obviously equivalent way and is proved by induction with respect to $m \geq m_0$ in \cite{Nen02b} as Lemma 3.\\

Let $G$ be a graph and $m$ and $p$ be positive integers. If $m < p$, then from Example \ref{example: K_m arrowsv uni(m)(p)} it follows easily that $G \arrowsv \uni{m}{p}$ if and only if $\omega(G) \geq m$. With the help of the following proposition we can significantly reduce the number of checks required to determine if a graph $G$ has the property $G \arrowsv \uni{m}{p}$ (see the proofs of Theorem \ref{theorem: wFv(m)(5)(m - 1) = ...} and Theorem \ref{theorem: wFv(m)(6)(m - 1) = m + 10}).

\begin{proposition}
	\label{proposition: a_1 + a_2 - 1 > p}
	Let $G$ be a graph and let $m > p$ be positive integers. If $G \arrowsv (a_1, ..., a_s)$ for every choice of positive integers $a_1, ..., a_s, \ s \geq 2$ ($s$ is not fixed), such that $m = \sum\limits_{i=1}^s (a_i - 1) + 1$, $2 \leq a_1 \leq ... \leq a_s \leq p$, and $a_1 + a_2 - 1 > p$, then $G \arrowsv \uni{m}{p}$.
\end{proposition}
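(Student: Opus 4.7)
The plan is to prove the statement by induction on the sequence length, using a merging argument to reduce longer sequences to shorter ones. The key observation is that if the two smallest entries $a_1, a_2$ satisfy $a_1 + a_2 - 1 \leq p$, they can be combined into a single entry $c := a_1 + a_2 - 1 \leq p$, yielding a strictly shorter valid sequence.

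To establish $G \arrowsv \uni{m}{p}$ it suffices to verify $G \arrowsv (a_1, \ldots, a_s)$ for sequences with $a_i \geq 2$ for all $i$, because by (\ref{equation: G arrowsv (a_1, ..., a_(i-1), a_(i+1), ..., a_s) Rightarrow G arrowsv (a_1, ..., a_s)}) any entry equal to $1$ may be dropped without altering the sum $\sum(a_i - 1) + 1 = m$ or the maximum. After sorting, we may therefore assume $2 \leq a_1 \leq \ldots \leq a_s \leq p$, and observe that $s \geq 2$, since $s = 1$ would force $a_1 = m > p$, contradicting $a_1 \leq p$.

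The heart of the argument is the following merging lemma: if $c := a_1 + a_2 - 1 \leq p$, then
\[
G \arrowsv (c, a_3, \ldots, a_s) \implies G \arrowsv (a_1, a_2, a_3, \ldots, a_s).
\]
Given any $s$-coloring $\V(G) = V_1 \cup \ldots \cup V_s$, merge $V_1$ and $V_2$ into a single class $W := V_1 \cup V_2$ and apply the hypothesized arrow to the $(s-1)$-coloring $W, V_3, \ldots, V_s$. Either some $V_i$ with $i \geq 3$ contains an $a_i$-clique (and we are done), or $W$ contains a clique $K$ with $|K| = c = a_1 + a_2 - 1$; since $K \subseteq V_1 \cup V_2$, pigeonhole forces $|K \cap V_1| \geq a_1$ or $|K \cap V_2| \geq a_2$, producing the required monochromatic clique in the original coloring.

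With this lemma in hand, I induct on $s$. For the base case $s = 2$, the identity $(a_1 - 1) + (a_2 - 1) + 1 = m$ combined with $m > p$ gives $a_1 + a_2 - 1 > p$, so the assumption of the proposition applies directly to $(a_1, a_2)$. For the inductive step $s \geq 3$: if $a_1 + a_2 - 1 > p$, apply the assumption directly; otherwise set $c := a_1 + a_2 - 1$ (so $c \geq 3$) and re-sort $(c, a_3, \ldots, a_s)$ into an $(s-1)$-term sequence still satisfying all constraints (every entry between $2$ and $p$, and the same sum identity $\sum(b_i - 1) + 1 = m$). The inductive hypothesis then supplies $G \arrowsv (c, a_3, \ldots, a_s)$, and the merging lemma finishes. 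The argument is essentially elementary; the only mildly delicate point is the pigeonhole split inside the merging lemma, where the precise choice $c = a_1 + a_2 - 1$ is exactly what balances the two colors and makes the induction close.
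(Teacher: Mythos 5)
Your proof is correct and follows essentially the same route as the paper's: the same induction on $s$ with base case $s=2$, and the same merging of the two smallest entries $a_1, a_2$ into $a_1+a_2-1$. Your pigeonhole formulation of the merging lemma is just the contrapositive of the paper's observation that merging the color classes $V_1$ and $V_2$ of an $(a_1,\ldots,a_s)$-free coloring yields an $(a_1+a_2-1, a_3, \ldots, a_s)$-free coloring.
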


\begin{proof}
	Let the graph $G$ satisfy the condition of the proposition. We will prove via induction by $s$ that $G \arrowsv (a_1, ..., a_s)$ for every choice of $s$ positive integers $a_1, ..., a_s$ such that $m = \sum\limits_{i=1}^s (a_i - 1) + 1$, $\max\set{a_1, ..., a_s} \leq p$, and $\min\set{a_1, ..., a_s} \geq 2$. Then from (\ref{equation: G arrowsv (a_1, ..., a_(i-1), a_(i+1), ..., a_s) Rightarrow G arrowsv (a_1, ..., a_s)}) it follows that $G \arrowsv \uni{m}{p}$. According to (\ref{equation: G arrowsv (a_1, ..., a_s) Rightarrow G arrowsv (a_(sigma(1)), ..., a_(sigma(s)))}), it is enough to prove that $G \arrowsv (a_1, ..., a_s)$ when $2 \leq a_1 \leq ... \leq a_s \leq p$.
		
	Since $\sum\limits_{i=1}^s (a_i - 1) + 1 = m > p$ and $a_1 \leq p$, it follows that $s \geq 2$. Therefore, the basis of the induction is the case $s = 2$.
	
	If $s = 2$, then $(a_1 - 1) + (a_2 - 1) + 1 = a_1 + a_2 - 1 = m > p$, and by the condition of the proposition, $G \arrowsv (a_1, a_2)$.
	
	Let $s > 3$. If $a_1 + a_2 - 1 > p$, then by the condition of the proposition, $G \arrowsv (a_1, ..., a_s)$. Now, consider the case $a_1 + a_2 - 1 \leq p$. Let $\set{b_1, ..., b_{s - 1}} = \set{a_1 + a_2 - 1, a_3, ..., a_s}$. Then $\sum\limits_{i=1}^{s - 1} (b_i - 1) + 1 = \sum\limits_{i=1}^{s} (a_i - 1) + 1 = m$. By the induction hypothesis, $G \arrowsv (b_1, ..., b_{s - 1})$. Obviously, if $V_1 \cup V_2 \cup ... \cup V_s$ is an $(a_1, a_2, ..., a_s)$-free coloring of $\V(G)$, i.e. $V_i$ does not contain an $a_i$-clique, then $(V_1 \cup V_2) \cup ... \cup V_s$ is an $(a_1 + a_2 - 1, a_3, ..., a_s)$-free coloring of $\V(G)$. Therefore, from $G \arrowsv (b_1, ..., b_{s - 1})$ it follows that $G \arrowsv (a_1, ..., a_s)$. Thus, the proposition is proved.
\end{proof}

Define:

$\wHv{m}{p}{q} = \set{G : G \arrowsv \uni{m}{p} \mbox{ and } \omega(G) < q}$.

$\wHvn{m}{p}{q}{n} = \set{G : G \in \wHv{m}{p}{q} \mbox{ and } \abs{\V(G)} = n}$.\\

The modified vertex Folkman numbers are defined by the equality:
\begin{equation*}
\wFv{m}{p}{q} = \min\set{\abs{\V(G)} : G \in \wHv{m}{p}{q}}.
\end{equation*}

\begin{proposition}
	\label{proposition: wFv(m)(p)(q) exists}
	$\wHv{m}{p}{q} \neq \emptyset$, i.e. $\wFv{m}{p}{q}$ exists $\Leftrightarrow$ $q > \min\set{m, p}$.
\end{proposition}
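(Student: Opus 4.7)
The plan is to prove both directions of the equivalence, splitting each into the cases $m \le p$ and $m > p$ so that $\min\{m,p\}$ becomes a concrete quantity.

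For the forward implication, let $G \in \wHv{m}{p}{q}$. In the case $m \le p$, I would apply Definition \ref{definition: G arrowsv uni(m)(p)} to the single-entry tuple $(m)$: it is admissible because $s=1$, $\sum(a_i-1)+1 = m$ and $\max = m \le p$. Then $G \arrowsv (m)$ simply says $\omega(G) \ge m$, and combining with $\omega(G) < q$ yields $q > m = \min\{m,p\}$. In the case $m > p$, I would use instead the tuple $(p, 2, \ldots, 2)$ with $m-p$ twos, which is admissible since $(p-1)+(m-p)+1 = m$ and the maximum entry equals $p$. This gives $G \in \mH_v(p, 2_{m-p}; q)$, so (\ref{equation: F_v(a_1, ..., a_s; q) exists}) forces $q > p = \min\{m,p\}$.

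For the reverse implication, assume $q > \min\{m,p\}$. When $m \le p$ we have $q > m$, so take $G = K_m$: Example \ref{example: K_m arrowsv uni(m)(p)} gives $K_m \arrowsv \uni{m}{p}$, and $\omega(K_m) = m < q$. When $m > p$ we have $q > p$, and the idea is to build a single $K_q$-free graph arrowing every admissible tuple. Let $T$ be the finite set of tuples $\tau = (a_1, \ldots, a_s)$ with $a_i \ge 2$, $\sum(a_i-1)+1 = m$, and $\max a_i \le p$. For every $\tau \in T$, $\max \tau \le p < q$, so (\ref{equation: F_v(a_1, ..., a_s; q) exists}) guarantees some $G_\tau \in \mH_v(\tau; q)$. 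Take $G$ to be the vertex-disjoint union of $\{G_\tau : \tau \in T\}$. Then $\omega(G) = \max_\tau \omega(G_\tau) < q$, and for each $\tau$ any $s$-coloring of $\V(G)$ restricts to a coloring of the component $G_\tau$, producing a monochromatic $K_{a_i}$ which still lives in $G$. Hence $G \arrowsv \tau$ for every $\tau \in T$; tuples containing an entry equal to $1$ reduce to shorter admissible tuples by (\ref{equation: G arrowsv (a_1, ..., a_(i-1), a_(i+1), ..., a_s) Rightarrow G arrowsv (a_1, ..., a_s)}), so we conclude $G \arrowsv \uni{m}{p}$ and $G \in \wHv{m}{p}{q}$.

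The only non-routine point is the reverse implication when $m > p$: a single extremal graph for one tuple in $T$ generally fails to arrow all others, because there need not be a common refinement among the maximal admissible tuples (for instance, with $p=4$ and $m=5$, the tuples $(4,2)$ and $(3,3)$ are refinement-incomparable). The disjoint-union trick sidesteps this obstacle by combining separate witnesses, and the existence of each individual witness is precisely what (\ref{equation: F_v(a_1, ..., a_s; q) exists}) delivers from the hypothesis $q > p$.
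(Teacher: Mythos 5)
Your proof is correct and follows essentially the same route as the paper's: the same case split on $m \le p$ versus $m > p$, the same witness tuples $(m)$ and $(2,\ldots,2,p)$ for the forward direction, and the same constructions ($K_m$, respectively the union of one witness graph per admissible tuple) for the reverse direction. The only cosmetic differences are that you invoke (\ref{equation: F_v(a_1, ..., a_s; q) exists}) to get $q>p$ where the paper argues $\omega(G)\ge p$ directly, and that you spell out the disjoint-union verification that the paper leaves as ``clear.''
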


\begin{proof}
	Let $\wHv{m}{p}{q} \neq \emptyset$ and $G \in \wHv{m}{p}{q}$. If $m \leq p$, then $G \arrowsv (m)$, and it follows $\omega(G) \geq m$. Since $\omega(G) < q$, we obtain $q > m$. Let $m > p$. Then there exist positive integers $a_1, ..., a_s$, such that $m = \sum_{i = 1}^s (a_i - 1) + 1$ and $p = \max\set{a_1, ..., a_s}$, for example $a_1 = ... = a_{m - p} = 2$ and $a_{m - p + 1} = p$. Since $G \arrowsv (a_1, ..., a_s)$, it follows that $\omega(G) \geq p$ and $q > p$. Therefore, if $\wHv{m}{p}{q} \neq \emptyset$, then $q > \min\set{m, p}$.
	
	Let $q > \min\set{m, p}$. If $m \geq p$, then $q > p$. According to (\ref{equation: F_v(a_1, ..., a_s; q) exists}), for every choice of positive integers $a_1, ..., a_s$, such that $m = \sum_{i = 1}^s (a_i - 1) + 1$ and $\max\set{a_1, ..., a_s} \leq p$, there exists a graph $G(a_1, ..., a_s) \in \mH_v(a_1, ..., a_s; q)$. Let $G$ be the union of all graphs $G(a_1, ..., a_s)$. It is clear that $G \in \wHv{m}{p}{q}$. If $m \leq p$, then $m < q$, and therefore $K_m \in \wHv{m}{p}{q}$.
\end{proof}

The following theorem for the modified Folkman numbers is true:
\begin{theorem}
	\label{theorem: wFv(m)(p)(m - m_0 + q) leq wFv(m_0)(p)(q) + m - m_0}
	Let $m$, $m_0$, $p$ and $q$ be positive integers, $m \geq m_0$, and $q > \min\set{m_0, p}$. Then,
	\begin{equation*}
	\wFv{m}{p}{m - m_0 + q} \leq \wFv{m_0}{p}{q} + m - m_0.
	\end{equation*}
\end{theorem}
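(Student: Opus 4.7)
The plan is to produce an explicit witness graph that establishes the desired upper bound: I would take an extremal graph for $\wFv{m_0}{p}{q}$ and join it with a complete graph of size $m - m_0$. Since the join adds exactly $m - m_0$ vertices and raises the clique number by exactly $m - m_0$, while Lemma \ref{lemma: K_(m - m_0) + G arrowsv uni(m)(p)} ensures the arrow property lifts from $\uni{m_0}{p}$ to $\uni{m}{p}$, this construction should place the resulting graph in $\wHv{m}{p}{m - m_0 + q}$ with the required number of vertices.

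First, I would verify that the class $\wHv{m_0}{p}{q}$ is nonempty so that an extremal graph exists. The hypothesis $q > \min\set{m_0, p}$ is exactly what Proposition \ref{proposition: wFv(m)(p)(q) exists} requires, so I can pick $G_0 \in \wHv{m_0}{p}{q}$ with $\abs{\V(G_0)} = \wFv{m_0}{p}{q}$. Define $G = K_{m - m_0} + G_0$, which has $\abs{\V(G)} = (m - m_0) + \wFv{m_0}{p}{q}$.

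Next, I would check the two defining properties of membership in $\wHv{m}{p}{m - m_0 + q}$ for $G$. Since $G_0 \arrowsv \uni{m_0}{p}$ and $m \geq m_0$, Lemma \ref{lemma: K_(m - m_0) + G arrowsv uni(m)(p)} immediately yields $G \arrowsv \uni{m}{p}$. For the clique condition, the join structure gives $\omega(G) = (m - m_0) + \omega(G_0)$, and from $\omega(G_0) \leq q - 1$ we obtain $\omega(G) \leq m - m_0 + q - 1 < m - m_0 + q$. Therefore $G \in \wHv{m}{p}{m - m_0 + q}$, which gives $\wFv{m}{p}{m - m_0 + q} \leq \abs{\V(G)} = \wFv{m_0}{p}{q} + (m - m_0)$.

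There is no real obstacle in this argument; it is a one-shot construction whose content is entirely absorbed by Lemma \ref{lemma: K_(m - m_0) + G arrowsv uni(m)(p)}. The only points worth double-checking are that the hypothesis $q > \min\set{m_0, p}$ is precisely what is needed to invoke Proposition \ref{proposition: wFv(m)(p)(q) exists} (so an extremal $G_0$ exists), and that the analogous inequality $m - m_0 + q > \min\set{m, p}$ holds automatically, so the left-hand side of the claimed inequality is itself well-defined. Both are immediate from $m \geq m_0$.
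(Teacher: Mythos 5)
Your proof is correct and follows essentially the same route as the paper: take an extremal $G_0 \in \wHv{m_0}{p}{q}$, form $K_{m-m_0} + G_0$, apply Lemma \ref{lemma: K_(m - m_0) + G arrowsv uni(m)(p)}, and bound the clique number of the join. The only addition is your explicit check via Proposition \ref{proposition: wFv(m)(p)(q) exists} that an extremal graph exists, which the paper leaves implicit.
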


\begin{proof}
Let $G_0 \in \wHv{m_0}{p}{q}$, $\abs{V(G_0)} = \wFv{m_0}{p}{q}$ and $G = K_{m - m_0} + G_0$. According to Lemma \ref{lemma: K_(m - m_0) + G arrowsv uni(m)(p)}, $G \overset{v}{\rightarrow} \uni{m}{p}$. Since $\omega(G) = m - m_0 + \omega(G_0) < m - m_0 + q$, it follows that $G \in \wHv{m}{p}{m - m_0 + q}$. Therefore, $\wFv{m}{p}{m - m_0 + q} \leq \abs{\V(G)} = \wFv{m_0}{p}{q} + m - m_0$.
\end{proof}

We see that if we know the value of one number $\wFv{m_0}{p}{q}$ we can obtain an upper bound for $\wFv{m}{p}{m - m_0 + q}$ where $m \geq m_0$.

From the definition of the modified Folkman numbers it becomes clear that if $a_1, ..., a_s$ are positive integers and $m$ and $p$ are defined by (\ref{equation: m and p}), then 
\begin{equation}
\label{equation: F_v(a_1, ..., a_s; q) leq wFv(m)(p)(q)}
F_v(a_1, ..., a_s; q) \leq \wFv{m}{p}{q}.
\end{equation}
Defining and computing the modified Folkman numbers is appropriate because of the following reasons:

1) On the left side of (\ref{equation: F_v(a_1, ..., a_s; q) leq wFv(m)(p)(q)}) there is actually a whole class of numbers, which are bound by only one number $\wFv{m}{p}{q}$.

2) The upper bound for $\wFv{m}{p}{q}$ is easier to compute than the numbers $F_v(a_1, ..., a_s)$, according to Theorem \ref{theorem: wFv(m)(p)(m - m_0 + q) leq wFv(m_0)(p)(q) + m - m_0}.

3) As we will see further in Theorem \ref{theorem: m_0}, the computation of the numbers $\wFv{m}{p}{m - 1}$ is reduced to finding the exact values of the first several of these numbers (bounds for the number of exact values needed are given in Theorem \ref{theorem: m_0} (c)).\\

The following theorem gives bounds for the numbers $F_v(a_1, ..., a_s; q)$. Let us remind that $F_v(2_{m - p}, p; q) = F_v(\underbrace{2, ..., 2}_{m - p}, p, q)$.

\begin{theorem}
	\label{theorem: F_v(2_(m - p), p; q) leq F_v(a_1, ..., a_s; q) leq wFv(m)(p)(q)}
	Let $a_1, ..., a_s$ be positive integers, let $m$ and $p$ be defined by (\ref{equation: m and p}), and $q > p$. Then,
	\begin{equation*}
	F_v(2_{m - p}, p; q) \leq F_v(a_1, ..., a_s; q) \leq \wFv{m}{p}{q}.
	\end{equation*}
\end{theorem}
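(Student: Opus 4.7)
The plan is to verify the two inequalities separately, with both essentially reducing to results already established earlier in the chapter.

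For the upper bound $F_v(a_1,\dots,a_s;q) \leq \wFv{m}{p}{q}$, I would note that this is exactly inequality (\ref{equation: F_v(a_1, ..., a_s; q) leq wFv(m)(p)(q)}). To make it self-contained, let $G$ be any extremal graph realizing $\wFv{m}{p}{q}$, so $G \arrowsv \uni{m}{p}$ and $\omega(G) < q$. Since $(a_1,\dots,a_s)$ is one particular choice of positive integers satisfying $m = \sum_{i=1}^s(a_i-1)+1$ and $\max\{a_1,\dots,a_s\}=p$, Definition \ref{definition: G arrowsv uni(m)(p)} yields $G \arrowsv (a_1,\dots,a_s)$. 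Combined with $\omega(G)<q$, this says $G \in \mH_v(a_1,\dots,a_s;q)$, so $F_v(a_1,\dots,a_s;q) \leq |\V(G)| = \wFv{m}{p}{q}$.

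For the lower bound $F_v(2_{m-p},p;q) \leq F_v(a_1,\dots,a_s;q)$, I would let $G$ be an extremal graph in $\mH_v(a_1,\dots,a_s;q)$, so $G \arrowsv (a_1,\dots,a_s)$ and $\omega(G)<q$. The desired conclusion is that $G \in \mH_v(2_{m-p},p;q)$, which amounts to showing $G \arrowsv (2_{m-p},p)$. But this is precisely implication (\ref{equation: G arrowsv (a_1, ..., a_s) Rightarrow G arrowsv (2_(m - p), p)}), which was derived by iteratively applying (\ref{equation: G arrowsv (a_1, ..., a_s) Rightarrow G arrowsv (a_1, ..., a_(i - 1), 2, a_i - 1, a_(i + 1), ..., a_s)}) to split off the entries of size larger than $2$ except for one occurrence of $a_s = p$. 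Hence $G \in \mH_v(2_{m-p},p;q)$, and so $F_v(2_{m-p},p;q) \leq |\V(G)| = F_v(a_1,\dots,a_s;q)$.

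There is essentially no obstacle here: the theorem is a clean packaging of two previously established implications, namely the monotonicity of the arrow relation under ``splitting'' a color class (giving the lower bound) and the direct specialization from the universal arrow $\uni{m}{p}$ to a particular tuple (giving the upper bound). The role of the hypothesis $q>p$ is just to guarantee that both numbers in the inequality exist, via (\ref{equation: F_v(a_1, ..., a_s; q) exists}) and Proposition \ref{proposition: wFv(m)(p)(q) exists}; once existence is in hand, the comparison of the defining classes $\wHv{m}{p}{q} \subseteq \mH_v(a_1,\dots,a_s;q) \subseteq \mH_v(2_{m-p},p;q)$ (where the first inclusion uses Definition \ref{definition: G arrowsv uni(m)(p)} and the second uses (\ref{equation: G arrowsv (a_1, ..., a_s) Rightarrow G arrowsv (2_(m - p), p)})) immediately yields the reverse chain of inequalities for the minimum vertex counts.
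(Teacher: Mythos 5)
Your proposal is correct and follows essentially the same route as the paper: the upper bound comes from the inclusion $\wHv{m}{p}{q} \subseteq \mH_v(a_1, \dots, a_s; q)$ (which you unpack via an extremal graph), and the lower bound comes from the implication (\ref{equation: G arrowsv (a_1, ..., a_s) Rightarrow G arrowsv (2_(m - p), p)}), exactly as in the paper's two-line proof. Your additional remarks on existence via $q > p$ and $m \geq p$ are a harmless elaboration of what the paper leaves implicit.
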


\begin{proof}
	The right inequality follows from the inclusion
	\begin{equation*}
	\wHv{m}{p}{q} \subseteq \mH_v(a_1, ..., a_s; q).
	\end{equation*}
	From (\ref{equation: G arrowsv (a_1, ..., a_s) Rightarrow G arrowsv (2_(m - p), p)}) it follows
	\begin{equation*}
	F_v(a_1, ..., a_s; q) \geq F_v(2_{m - p}, p; q).
	\qedhere
	\end{equation*}
\end{proof}

\begin{remark}
	It is easy to see that if $q > m$, then $F_v(a_1, ..., a_s; q) = \wFv{m}{p}{q} = m$. From Theorem \ref{theorem: F_v(a_1, ..., a_s; m) = m + p} it follows $F_v(a_1, ..., a_s; m) = \wFv{m}{p}{m} = m + p$. If $q = m - 1$ and $p \leq 4$, according to (\ref{equation: F_v(a_1, ..., a_s, m - 1) = ...}), we also have $F_v(a_1, ..., a_s; q) = \wFv{m}{p}{q}$. The first case in which the upper bound in Theorem \ref{theorem: F_v(2_(m - p), p; q) leq F_v(a_1, ..., a_s; q) leq wFv(m)(p)(q)} is not reached is $m = 7, p = 5, q = 6$, since $\wFv{7}{5}{6} = 17$ (see Theorem \ref{theorem: wFv(m)(5)(m - 1) = ...}) and the corresponding numbers $F_v(a_1, ..., a_s; q)$ are not greater than 16.
\end{remark}

Other modifications of the Folkman numbers are defined and studied in \cite{XLR17a}, \cite{XLR17b}, \cite{RXL17b}, \cite{EG18}, \cite{LL15}, \cite{LL17}, and \cite{KWR17}. These modifications of the Folkman numbers are called generalized Folkman numbers. In this thesis we do not consider generalized Folkman numbers.

\vspace{1em}
Theorem \ref{theorem: wFv(m)(p)(m - m_0 + q) leq wFv(m_0)(p)(q) + m - m_0} and Theorem \ref{theorem: F_v(2_(m - p), p; q) leq F_v(a_1, ..., a_s; q) leq wFv(m)(p)(q)} are published in \cite{BN15a}.

\chapter{Computation of the numbers $F_v(a_1, ..., a_s; m - 1)$ where $\max\{a_1, ..., a_s\} = 5$}

In this chapter we will prove the following main result:

\begin{theorem}
	\label{theorem: F_v(a_1, ..., a_s; m - 1) = m + 9, max set(a_1, ..., a_s) = 5}
	Let $a_1, ..., a_s$ be positive integers, $m = \sum\limits_{i=1}^s (a_i - 1) + 1$, $\max\set{a_1, ..., a_s} = 5$ and $m \geq 7$. Then,
	\begin{equation*}
	F_v(a_1, ..., a_s; m - 1) = m + 9.
	\end{equation*}
\end{theorem}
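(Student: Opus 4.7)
The plan is to apply the sandwich inequality provided by Theorem \ref{theorem: F_v(2_(m - p), p; q) leq F_v(a_1, ..., a_s; q) leq wFv(m)(p)(q)} with $p = 5$ and $q = m-1$. Since $\max\set{a_1,\ldots,a_s} = 5$, this immediately gives
$$F_v(2_{m-5}, 5; m-1) \leq F_v(a_1, \ldots, a_s; m-1) \leq \wFv{m}{5}{m-1}.$$
The strategy is then to split on the value of $m$ and show that the two bounds coincide at $m+9$ whenever possible.

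For $m \geq 8$, the lower bound equals $m+9$ by Theorem \ref{theorem: rp(5) = 2}, and the upper bound equals $m+9$ by the second case of Theorem \ref{theorem: wFv(m)(5)(m - 1) = ...}. The sandwich then collapses to the desired equality with no further work.

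For the borderline case $m = 7$ the sandwich is not tight, since Theorem \ref{theorem: wFv(m)(5)(m - 1) = ...} only yields $\wFv{7}{5}{6} = 17$ while we need the value $16$. This case must be treated by enumerating the canonical tuples $(a_1,\ldots,a_s)$ with $\sum(a_i-1)+1 = 7$ and $\max\set{a_i} = 5$; there are only two, namely $(2,2,5)$ and $(3,5)$. For these, the equality $F_v(2,2,5;6) = 16$ is exactly Theorem \ref{theorem: F_v(2, 2, 5; 6) = 16}, and $F_v(3,5;6) = 16$ is the known result from \cite{SLPX12}. Both equal $m+9 = 16$, completing this case.

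The main obstacles do not lie in the reduction performed here, but in the two ingredients drawn from earlier in the chapter: the exact evaluation $F_v(2,2,5;6) = 16$ (Theorem \ref{theorem: F_v(2, 2, 5; 6) = 16}), which anchors the $m=7$ case, and the infinite-family identity $F_v(2_{m-5},5;m-1) = m+9$ for $m \geq 7$ (Theorem \ref{theorem: rp(5) = 2}), which provides the lower bound across the whole range. Once both are in hand, the present theorem follows by essentially bookkeeping, since the modified Folkman number $\wFv{m}{5}{m-1}$ supplies the matching upper bound for all $m \geq 8$ and the two special tuples exhaust the situation when $m = 7$.
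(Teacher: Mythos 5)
Your proposal is correct and follows exactly the paper's own argument: the case $m \geq 8$ is handled by the sandwich of Theorem \ref{theorem: F_v(2_(m - p), p; q) leq F_v(a_1, ..., a_s; q) leq wFv(m)(p)(q)} combined with Theorem \ref{theorem: rp(5) = 2} and Theorem \ref{theorem: wFv(m)(5)(m - 1) = ...}, and the borderline case $m = 7$ is settled by enumerating the two canonical tuples $(2,2,5)$ and $(3,5)$ and invoking Theorem \ref{theorem: F_v(2, 2, 5; 6) = 16} and the known value $F_v(3,5;6)=16$ from \cite{SLPX12}. No gaps.
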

According to (\ref{equation: F_v(a_1, ..., a_s; m - 1) exists}), the condition $m \geq 7$ in Theorem \ref{theorem: F_v(a_1, ..., a_s; m - 1) = m + 9, max set(a_1, ..., a_s) = 5} is necessary.

\vspace{1em}

\section{Algorithm A1}

In the border case $m = 7$ of Theorem $\ref{theorem: F_v(a_1, ..., a_s; m - 1) = m + 9, max set(a_1, ..., a_s) = 5}$ there are only two canonical numbers of the form $F_v(a_1, ..., a_s; m - 1)$, namely $F_v(2, 2, 5; 6)$ and $F_v(3, 5; 6)$. It is known that $F_v(3, 5; 6) = 16$ \cite{SLPX12}, and it follows that $F_v(2, 2, 5; 6) \leq 16$. We will prove that $F_v(2, 2, 5; 6) = 16$ and we will also obtain all extremal graphs in $\mH_v(2, 2, 5; 6)$.

The naive approach for finding all graphs in $\mH_v(2, 2, 5; 6; 16)$ suggests to check all graphs of order 16 for inclusion in $\mH_v(2, 2, 5; 6)$. However, this is practically impossible because the number of non-isomorphic graphs of order 16 is too large (see \cite{McK_c}). The method that is described uses an algorithm for effective generation of all maximal graphs in $\mH_v(2, 2, 5; 6; 16)$. The remaining graphs in $\mH_v(2, 2, 5; 6; 16)$ are obtained by removing edges from the maximal graphs.

The idea for Algorithm \ref{algorithm: A1} comes from \cite{PRU99}. Similar algorithms are used in \cite{CR06}, \cite{XLS10}, \cite{LR11}, and \cite{SLPX12}. Also, with the help of computer, results for Folkman numbers are obtained in \cite{JR95}, \cite{SXP09}, \cite{SXL09}, and \cite{DLSX13}.

Let us remind that the notations $\mH_{max}$ and $\mH_{+K_{q-1}}$ are defined in Section 1.3.

\begin{namedalgorithm}{A1}
	\label{algorithm: A1}
	The input of the algorithm is the set $\mA = \mH_{max}(2_{r - 1}, p; q; n - k)$, where $r, p, q, n, k$ are fixed positive integers.
	
	The output of the algorithm is the set $\mB$ of all graphs $G \in \mH_{max}(2_r, p; q; n)$ with $\alpha(G) \geq k$.
	
	\emph{1.} By removing edges from the graphs in $\mA$ obtain the set
	
	$\mA' = \mH_{+K_{q - 1}}(2_{r - 1}, p; q; n - k)$.
	
	\emph{2.} For each graph $H \in \mA'$:
	
	\emph{2.1.} Find the family $\mM(H) = \set{M_1, ..., M_l}$ of all maximal $K_{q - 1}$-free subsets of $\V(H)$.
	
	\emph{2.2.} For each $k$-element multiset $N = \set{M_{i_1}, ..., M_{i_k}}$ of elements of $\mM(H)$ construct the graph $G = G(N)$ by adding new independent vertices $v_1, ..., v_k$ to $\V(H)$ such that $N_G(v_j) = M_{i_j}, j = 1, ..., k$. If $\omega(G + e) = q, \forall e \in \E(\overline{G})$, then add $G$ to $\mB$.
	
	\emph{3.} Remove the isomorphic copies of graphs from $\mB$.
	
	\emph{4.} Remove from $\mB$ all graphs $G$ for which $G \not\arrowsv (2_r, p)$.
	
\end{namedalgorithm}

\begin{theorem}
	\label{theorem: algorithm A1}
	After the execution of Algorithm \ref{algorithm: A1}, the obtained set $\mB$ coincides with the set of all graphs $G \in \mH_{max}(2_r, p; q; n)$ with $\alpha(G) \geq k$.
\end{theorem}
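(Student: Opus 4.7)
The plan is to prove the theorem by establishing both inclusions: every $G$ remaining in $\mathcal{B}$ after step 4 belongs to $\mathcal{H}_{max}(2_r, p; q; n)$ with $\alpha(G) \geq k$ (soundness), and conversely every such $G$ is produced by the algorithm up to isomorphism (completeness). The soundness direction is essentially a check of the construction in step 2.2: the graph $G$ obtained by attaching $k$ pairwise non-adjacent vertices $v_1, \ldots, v_k$ to a $K_q$-free $H$, with neighborhoods $M_{i_j}$ that are themselves $K_{q-1}$-free inside $H$, is still $K_q$-free -- any $K_q$ in $G$ would have to contain some $v_j$, forcing a $K_{q-1}$ in its neighborhood $M_{i_j}$ -- and it has $n$ vertices and independence number at least $k$. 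The explicit check $\omega(G + e) = q$ for every $e \in \E(\overline{G})$ in step 2.2 enforces the $(+K_q)$-property, while step 4 enforces $G \arrowsv (2_r, p)$, so every surviving graph lies in $\mathcal{H}_{max}(2_r, p; q; n)$.

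For completeness I would fix an arbitrary $G \in \mathcal{H}_{max}(2_r, p; q; n)$ with $\alpha(G) \geq k$, choose an independent set $A = \{v_1, \ldots, v_k\} \subseteq \V(G)$, and set $H = G - A$. Proposition \ref{proposition: G - A in mH_(+K_(q - 1))(a_1 - 1, a_2, ..., a_s; q; n - abs(A))} yields $H \in \mathcal{H}_{+K_{q-1}}(2_{r-1}, p; q; n - k)$. To see that $H$ is actually captured by step 1, I would greedily add edges to $H$ while preserving $K_q$-freeness, obtaining a maximal $K_q$-free supergraph $H' \supseteq H$; the arrow property $H' \arrowsv (2_{r-1}, p)$ is inherited from $H$, so $H' \in \mathcal{H}_{max}(2_{r-1}, p; q; n - k) = \mathcal{A}$, and step 1 recovers $H$ as one of its edge-subgraphs. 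It remains to show that $G$ itself is reconstructed from $H$ in step 2.2.

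The crux, and the place where some care is required, is to identify each $N_G(v_j)$ as an element of $\mathcal{M}(H)$, i.e.\ as a maximal $K_{q-1}$-free subset of $\V(H)$. That $N_G(v_j)$ is $K_{q-1}$-free is immediate, since otherwise $\{v_j\} \cup N_G(v_j)$ would contain a $K_q$ in $G$. For maximality, take any $u \in \V(H) \setminus N_G(v_j)$; since $G$ is maximal $K_q$-free and $v_j u \notin \E(G)$, the graph $G + v_j u$ contains a $K_q$ which must use the new edge $v_j u$. Independence of $A$ forces the remaining $q - 2$ vertices to lie in $N_G(v_j) \cap \V(H)$, and together with $u$ they form a $K_{q-1}$ inside $N_G(v_j) \cup \{u\}$, as required. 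Granted this, the multiset $\{N_G(v_1), \ldots, N_G(v_k)\}$ is exactly one of those iterated in step 2.2, the reconstructed graph is isomorphic to $G$, it passes the $(+K_q)$-test because $G$ itself is maximal $K_q$-free, and it survives step 4 because $G \arrowsv (2_r, p)$. I expect this maximality argument to be the main obstacle, since it uses both the $K_q$-maximality of $G$ and the independence of $A$ in an essential way; the remaining steps reduce to bookkeeping.
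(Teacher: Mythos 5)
Your proposal is correct and follows essentially the same route as the paper's own proof: soundness by direct inspection of the construction and the checks in steps 2.2 and 4, and completeness by deleting an independent $k$-set, invoking Proposition \ref{proposition: G - A in mH_(+K_(q - 1))(a_1 - 1, a_2, ..., a_s; q; n - abs(A))}, and recognizing each $N_G(v_j)$ as a maximal $K_{q-1}$-free subset of $\V(H)$. The only difference is that you spell out two details the paper leaves implicit — that $H$ is genuinely recovered in step 1 via a maximal $K_q$-free supergraph $H'\in\mA$, and the explicit maximality argument for $N_G(v_j)$ using the independence of $A$ — both of which are correct and welcome elaborations rather than deviations.
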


\begin{proof}
Suppose that after the execution of Algorithm \ref{algorithm: A1} the graph $G \in \mB$. Then $G = G(N)$ and $G - \set{v_1, ..., v_k} = H \in \mA'$, where $N$, $v_1, ..., v_k$, and $H$ are the same as in step 2.2. Since $H \in \mA'$, we have $\omega(H) < q$. Since $N_G(v_i)$ are $K_{q - 1}$-free sets for all $i \in \set{1, ..., k}$, it follows that $\omega(G) < q$. The check at the end of step 2.2 guarantees that $G$ is a maximal $K_q$-free graph, and the check in step 4 guarantees that $G \arrowsv (2_r, p)$. Therefore, $G \in \mH_{max}(2_r, p; q; n)$. Since the vertices $v_1, ..., v_k$ are independent, it follows that $\alpha(G) \geq k$.

Let $G \in \mH_{max}(2_r, p; q; n)$ and $\alpha(G) \geq k$. We will prove that, after the execution of Algorithm \ref{algorithm: A1}, $G \in \mB$. Let $v_1, ... v_k$ be independent vertices in $G$ and $H = G - \set{v_1, ..., v_k}$. Using Proposition \ref{proposition: G - A in mH_(+K_(q - 1))(a_1 - 1, a_2, ..., a_s; q; n - abs(A))} we derive that $H \in \mA'$. Since $G$ is a maximal $K_q$-free graph, $N_G(v_i)$ are maximal $K_{q - 1}$-free subsets of $V(H)$ for all $i \in \set{1, ..., k}$. Therefore, $N_G(v_i) \in \mM(H)$ (see step 2.1). Thus, $G = G(N)$ where $N = \set{N_G(v_1), ..., N_G(v_k)}$, and in step 2.2 $G$ is added to $\mB$. Clearly, after step 4 the graph $G$ remains in $\mB$.
\end{proof}

\begin{remark}
	\label{remark: algorithm A1 k = 2}
	Note that if $G \in \mH_{max}(2_r, p; q; n)$ and $n \geq q$, then $G$ is not a complete graph and $\alpha(G) \geq 2$. Therefore, if $n \geq q$ and $k = 2$, Algorithm \ref{algorithm: A1} finds all graphs in $\mH_{max}(2_r, p; q; n)$.
\end{remark}

\vspace{1em}
Theorem \ref{theorem: algorithm A1} is published in \cite{BN15a}. Algorithm \ref{algorithm: A1} is a slightly modified version of Algorithm 2.2 in \cite{BN15a}. In the special case $p = 5$ and $k = 2$, Algorithm \ref{algorithm: A1} coincides with Algorithm 6.2 in \cite{BN15a}.

\section{Computation of the number $F_v(2, 2, 5; 16)$}

Intermediate problems, that are solved, are finding all graphs in $\mH_v(5; 6; 10)$ and $\mH_v(2, 5; 6; 13)$.

\begin{theorem}
\label{theorem: abs(mH_v(5; 6; 10)) = 1 724 440}
$\abs{\mH_v(5; 6; 10)} = 1\ 724\ 440.$
\end{theorem}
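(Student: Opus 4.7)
The statement is a pure counting result: by Remark \ref{remark: mH_v(a_1; q; n) = ...}, $\mH_v(5; 6; 10)$ is the set of all non-isomorphic graphs on $10$ vertices whose clique number equals exactly $5$, and we have to show that there are $1\,724\,440$ such graphs. Since there is no evident closed-form enumeration for this family, the plan is an exhaustive computer generation with isomorph rejection.

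The approach I would take is the standard one based on McKay's canonical construction path (as implemented in \emph{nauty/geng}). The cleanest route is to generate, up to isomorphism, all $10$-vertex graphs $G$ satisfying $\omega(G) < 6$, and then retain exactly those for which $\omega(G) \geq 5$. Concretely, I would run \emph{geng} with the built-in clique-size restriction to directly output all $10$-vertex $K_6$-free graphs, and in a second pass discard every graph not containing a $K_5$. The $K_5$-containment test is cheap (one may, for each vertex $v$, check whether $G(v)$ contains a $K_4$, and short-circuit as soon as one is found), and is therefore not the bottleneck.

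An alternative generation strategy, which doubles as an independent verification, is to build the family incrementally from smaller orders: starting from the families $\mH_v(5; 6; n)$ for $n = 5, 6, \dots, 9$, each obtained from the previous one by attaching a new vertex whose neighborhood ranges over all subsets of the current vertex set that do not complete a $K_6$ and (at the final step $n = 10$) still leave at least one $K_5$ present somewhere in the graph. Canonical-form isomorph rejection must be applied after each extension step to keep the intermediate lists at non-isomorphic counts only. The two independent computations must produce the same total $1\,724\,440$, which serves as the correctness check.

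The main obstacle is purely computational rather than conceptual: the output size is of order $1.7 \times 10^6$, and the intermediate family of all $10$-vertex $K_6$-free graphs is considerably larger, so memory management, efficient canonical labeling, and careful pruning (rejecting graphs as soon as a forbidden $K_6$ appears during extension) are essential to make the enumeration finish in reasonable time. No deep graph-theoretic idea is needed beyond correctness of the enumeration scheme and its isomorph rejection; the theorem is essentially a report of the number produced by a correctly implemented generator, cross-checked by the two independent strategies above.
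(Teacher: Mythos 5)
Your proposal is correct and matches the paper's method: the paper simply generates all $12\,005\,168$ non-isomorphic graphs of order $10$ with \emph{nauty} and counts those with clique number $5$, which is the same exhaustive-enumeration-plus-filter strategy you describe (whether the $K_6$-free restriction is imposed during generation or afterwards is only an implementation detail). The incremental cross-check you suggest is a reasonable extra safeguard but is not part of the paper's argument.
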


\begin{proof}
It is clear that $\mH_v(5; 6; 10)$ is the set of 10 vertex graphs with clique number 5. Using \emph{nauty} \cite{MP13} it is easy to generate all 12 005 168 non-isomorphic graphs of order 10. Among these graphs 1 724 440 have clique number 5.
\end{proof}

\begin{theorem}
	\label{theorem: abs(mH_v(2, 5; 6; 13)) = 20 013 726}
	$\abs{\mH_v(2, 5; 6; 13)} = 20\ 013\ 726.$
\end{theorem}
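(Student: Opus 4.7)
The plan is to enumerate $\mH_v(2, 5; 6; 13)$ in two stages. First, produce the (much smaller) subset $\mH_{max}(2, 5; 6; 13)$ of maximal members by applying Algorithm \ref{algorithm: A1} on top of the enumeration of $\mH_v(5; 6; 10)$ furnished by Theorem \ref{theorem: abs(mH_v(5; 6; 10)) = 1 724 440}. Second, starting from each maximal graph, traverse the subgraph lattice by edge deletions, retaining every graph that still satisfies $G \arrowsv (2, 5)$ and canonically rejecting isomorphs; every member of $\mH_v(2, 5; 6; 13)$ will be captured, since any such $G$ can be enlarged to a maximal one by greedily adding edges while remaining in $\mH_v(2, 5; 6; 13)$. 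A direct tally of the resulting non-isomorphic class is expected to yield $20\,013\,726$.

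For the first stage, I will use the equivalence that $G \arrowsv (2, 5)$ iff $\omega(G - I) \geq 5$ for every independent set $I \subseteq \V(G)$ (assigning one color to $I$ and the other to its complement); in particular $\omega(G) = 5$ for every $G \in \mH_v(2, 5; 6; 13)$. Run Algorithm \ref{algorithm: A1} with $r = 1$, $p = 5$, $q = 6$, $n = 13$, $k = 3$; its input $\mA = \mH_{max}(5; 6; 10)$ is extracted from the $1\,724\,440$ graphs of Theorem \ref{theorem: abs(mH_v(5; 6; 10)) = 1 724 440} by keeping those that are maximal $K_6$-free, using Remark \ref{remark: mH_(max)(a_1; q; n)}. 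This produces exactly those $G \in \mH_{max}(2, 5; 6; 13)$ with $\alpha(G) \geq 3$. The residual case $\alpha(G) = 2$ corresponds to $\overline{G}$ triangle-free with $\alpha(\overline{G}) \leq 5$, i.e.\ $\overline{G} \in \mR(6, 3; 13)$, a small and well-studied class of Ramsey graphs which can be listed directly and then filtered by the arrow relation.

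For the second stage, carry out a breadth-first search on the subgraph lattice: for each $G$ already found and each $e \in \E(G)$, test whether $G - e \arrowsv (2, 5)$ by iterating over the maximal independent sets $I$ of $G - e$ and verifying that $\omega((G - e) - I) \geq 5$ in every case; if the test succeeds, compute a \emph{nauty} canonical form and enqueue $G - e$ when it has not been seen. When the queue is exhausted the collected graphs constitute exactly $\mH_v(2, 5; 6; 13)$, and their count is the claimed $20\,013\,726$.

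The main obstacle is computational scale rather than mathematical depth. The first stage must enumerate unordered triples of maximal $K_5$-free subsets for each of $\sim 1.7 \times 10^6$ input graphs, and the second stage must deduplicate and process roughly $2 \times 10^7$ non-isomorphic graphs. Fast canonical labelling with \emph{nauty}, incremental maintenance of clique and independent-set data under single-edge edits, and the built-in maximality pruning of step 2.2 of Algorithm \ref{algorithm: A1} are what I expect to make the total runtime tractable.
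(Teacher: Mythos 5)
Your proposal matches the paper's proof: it runs Algorithm \ref{algorithm: A1} with the same parameters ($n = 13$, $r = 1$, $p = 5$, $q = 6$, $k = 3$) on $\mA = \mH_{max}(5; 6; 10)$ to get the maximal graphs with $\alpha(G) \geq 3$, handles the $\alpha(G) = 2$ case separately via the known catalogue of Ramsey graphs $\mR(3, 6; 13)$ (your complement formulation $\overline{G} \in \mR(6,3;13)$ is the same class), and then enumerates all of $\mH_v(2, 5; 6; 13)$ by deleting edges from the resulting $387$ maximal graphs with isomorph rejection. This is exactly the paper's two-stage computation, so the argument is correct and essentially identical.
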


\begin{proof}
	
The graphs in $\mH_{max}(2, 5; 6; 13)$ with independence number 2 are a subset of $\mR(3, 6; 13)$. All 275 086 graphs in $\mR(3, 6; 13)$ are known and are available on \cite{McK_r}. Among these graphs we find all 61 graphs in $\mH_{max}(2, 5; 6; 13)$ with independence number 2.
	
Next, we find all graphs in $\mH_{max}(2, 5; 6; 13)$ with independence number greater than 2. All 1 724 440 graphs in $\mH_v(5; 6; 10)$ were obtained in Theorem \ref{theorem: abs(mH_v(5; 6; 10)) = 1 724 440}. Among them there are 18 maximal graphs in $\mH_{max}(5; 6; 10)$. We execute Algorithm \ref{algorithm: A1}($n = 13;\ r = 1;\ p = 5;\ q = 6;\ k = 3$) with input the set $\mA = \mH_{max}(5; 6; 10)$ to obtain the set $\mB$ of all 326 graphs in $\mH_{max}(2, 5; 6; 13)$ with independence number greater than 2.
	
Thus, we obtained all 387 graphs in $\mH_{max}(2, 5; 6; 13)$. By removing edges from them, we obtain all 20 013 726 graphs in $\mH_v(2, 5; 6; 13)$. 
\end{proof}

\begin{theorem}
	\label{theorem: abs(mH_v(2, 2, 5; 6; 16)) = 147}
	$\abs{\mH_v(2, 2, 5; 6; 16)} = 147$.
\end{theorem}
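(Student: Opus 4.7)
The plan is to follow exactly the same two-step pattern used in the proof of Theorem \ref{theorem: abs(mH_v(2, 5; 6; 13)) = 20 013 726}: first generate the maximal graphs in $\mH_{max}(2, 2, 5; 6; 16)$, and then recover every graph in $\mH_v(2, 2, 5; 6; 16)$ by removing edges from them and discarding those that fail to arrow $(2,2,5)$. The main input is the set $\mH_{max}(2, 5; 6; 13)$, which was produced in the previous theorem, together with the list $\mR(3, 6; 16)$ of Ramsey graphs available from \cite{McK_r}.

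First I would bound the independence number. Since $G \in \mH_v(2, 2, 5; 6; 16)$ gives $m = 7$, $p = 5$ and $\abs{\V(G)} = 16 < m + 3p = 22$, Theorem \ref{theorem: alpha(G) leq V(G) + m + p - 1}(b) yields $\alpha(G) \leq 4$, so every such $G$ has $\alpha(G) \in \set{2, 3, 4}$. This splits the search into the $\alpha(G) = 2$ case and the $\alpha(G) \geq 3$ case. In the former, $G \in \mR(3, 6; 16)$, and the maximal graphs in $\mH_{max}(2, 2, 5; 6; 16)$ with independence number $2$ are obtained by filtering the list of $\mR(3, 6; 16)$ by the conditions $G \arrowsv (2,2,5)$ and maximality. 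In the latter, I would invoke Algorithm \ref{algorithm: A1} with parameters $r = 2$, $p = 5$, $q = 6$, $n = 16$, $k = 3$, supplying $\mA = \mH_{max}(2, 5; 6; 13)$ (which, via Proposition \ref{proposition: G - A in mH_(+K_(q - 1))(a_1 - 1, a_2, ..., a_s; q; n - abs(A))}, is the correct reduction target when three independent vertices are removed); by Theorem \ref{theorem: algorithm A1} this outputs exactly the maximal graphs with $\alpha(G) \geq 3$.

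After merging the two sublists and discarding isomorphic copies I obtain the full set $\mH_{max}(2, 2, 5; 6; 16)$. From each such maximal graph $G$ I would enumerate all subgraphs $G - F$ for $F \subseteq \E(G)$, keep only those still in $\mH_v(2, 2, 5; 6; 16)$ (so $\omega < 6$ is automatic and only $\arrowsv (2,2,5)$ must be rechecked), again discard isomorphic copies across the full union, and count the result. The expected output is $147$.

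The main obstacle is the cost of the edge-removal step: even though the list of maximal graphs should be short, each one has up to $\binom{16}{2}$ edges, so a naive subgraph enumeration is infeasible. I would mitigate this by reversing the direction of removal — extending the standard idea of a bicritical graph — and prune aggressively using the necessary condition $\chi(G) \geq 7$ from (\ref{equation: G arrowsv (a_1, ..., a_s) Rightarrow chi(G) geq m}) together with the fact that removing an edge cannot increase $\chi$. The $\arrowsv (2, 2, 5)$ test itself is performed by enumerating all $3$-colorings of $\V(G)$ whose first two color classes are independent and whose third color class is $K_5$-free, which is the same subroutine used in Theorem \ref{theorem: abs(mH_v(2, 5; 6; 13)) = 20 013 726}. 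All other steps (maximality checks, canonical labelling via \emph{nauty} \cite{MP13}) are standard.
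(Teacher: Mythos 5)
Your proposal is correct and follows essentially the same route as the paper: the graphs in $\mH_{max}(2, 2, 5; 6; 16)$ with independence number $2$ are filtered from $\mR(3, 6; 16)$ (the paper finds 5 of them among the 2\,576 Ramsey graphs), those with independence number at least $3$ are produced by Algorithm \ref{algorithm: A1}($n = 16;\ r = 2;\ p = 5;\ q = 6;\ k = 3$) applied to $\mH_{max}(2, 5; 6; 13)$ (the paper finds 32, for 37 maximal graphs in total), and the full set of 147 graphs is then recovered by removing edges. Your additional bound $\alpha(G) \leq 4$ from Theorem \ref{theorem: alpha(G) leq V(G) + m + p - 1}(b) is valid but not needed for the split, and your remarks on pruning the edge-removal step are implementation details rather than a mathematical difference.
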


\begin{proof}
The graphs in $\mH_{max}(2, 2, 5; 6; 16)$ with independence number 2 are a subset of $\mR(3, 6; 16)$. All 2 576 graphs in $\mR(3, 6; 16)$ are known and are available on \cite{McK_r}. Among these graphs we find all 5 graphs in $\mH_{max}(2, 2, 5; 6; 16)$ with independence number 2.

Next, we find all graphs in $\mH_{max}(2, 2, 5; 6; 16)$ with independence number greater than 2. In the proof of Theorem \ref{theorem: abs(mH_v(2, 5; 6; 13)) = 20 013 726} we obtained all 387 graphs in $\mH_{max}(2, 5; 6; 13)$. We execute Algorithm \ref{algorithm: A1}($n = 16;\ r = 2;\ p = 5;\ q = 6;\ k = 3$) with input the set $\mA = \mH_{max}(2, 5; 6; 13)$ to obtain the set $\mB$ of all 32 graphs in $\mH_{max}(2, 2, 5; 6; 16)$ with independence number greater than 2.

Thus, we obtained all 37 graphs in $\mH_{max}(2, 2, 5; 6; 16)$. By removing edges from them, we obtain all 147 graphs in $\mH_v(2, 2, 5; 6; 16)$. 
\end{proof}

\begin{figure}
	\centering
	\begin{subfigure}[b]{0.5\textwidth}
		\centering
		\includegraphics[height=256px,width=128px]{./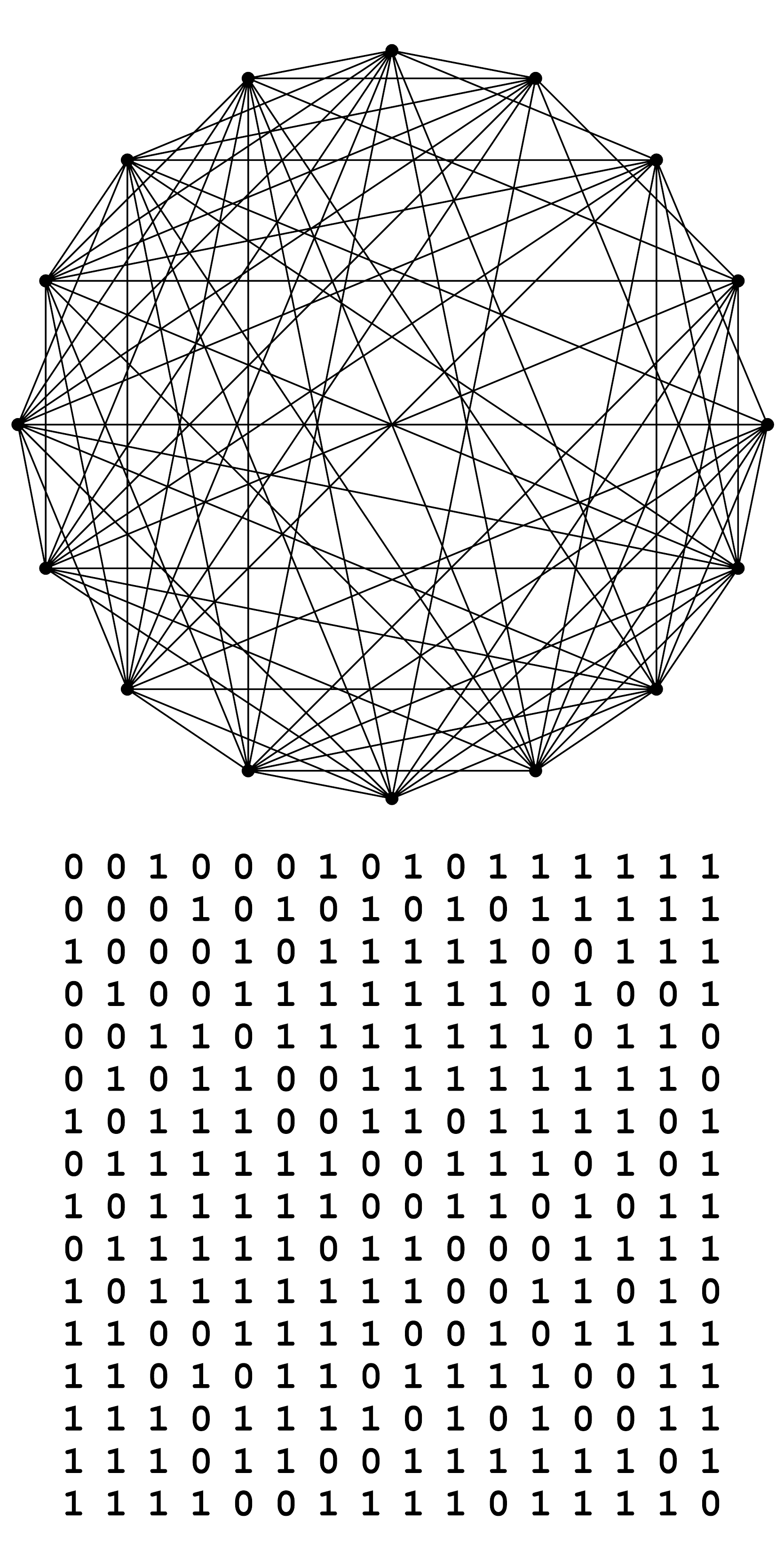}
		\caption*{\emph{$G_{74}$}}
		\label{figure: G_74}
	\end{subfigure}%
	\begin{subfigure}[b]{0.5\textwidth}
		\centering
		\includegraphics[height=256px,width=128px]{./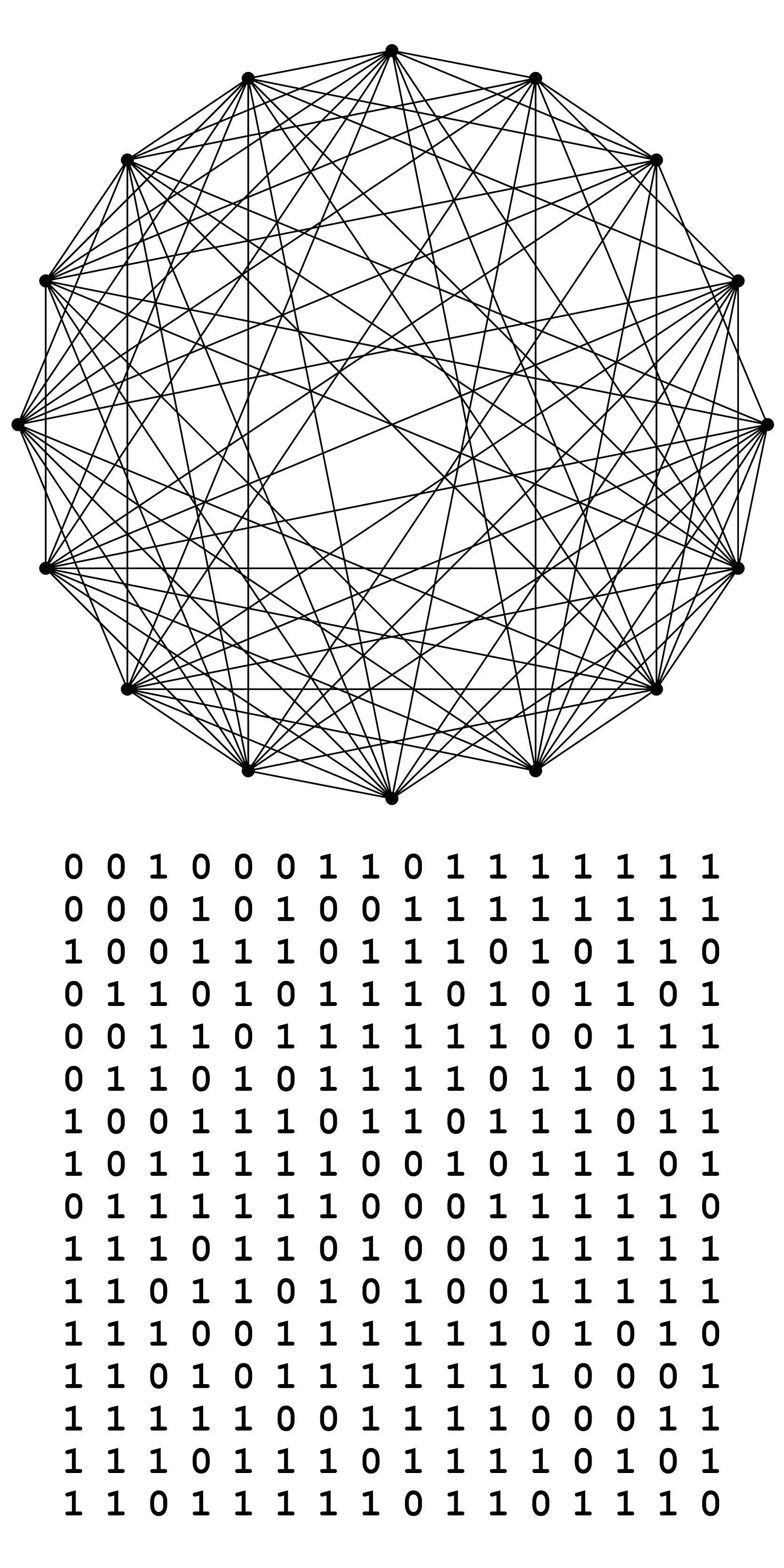}
		\caption*{\emph{$G_{78}$}}
		\label{figure: G_78}
	\end{subfigure}
	
	\begin{subfigure}[b]{0.5\textwidth}
		\centering
		\includegraphics[height=256px,width=128px]{./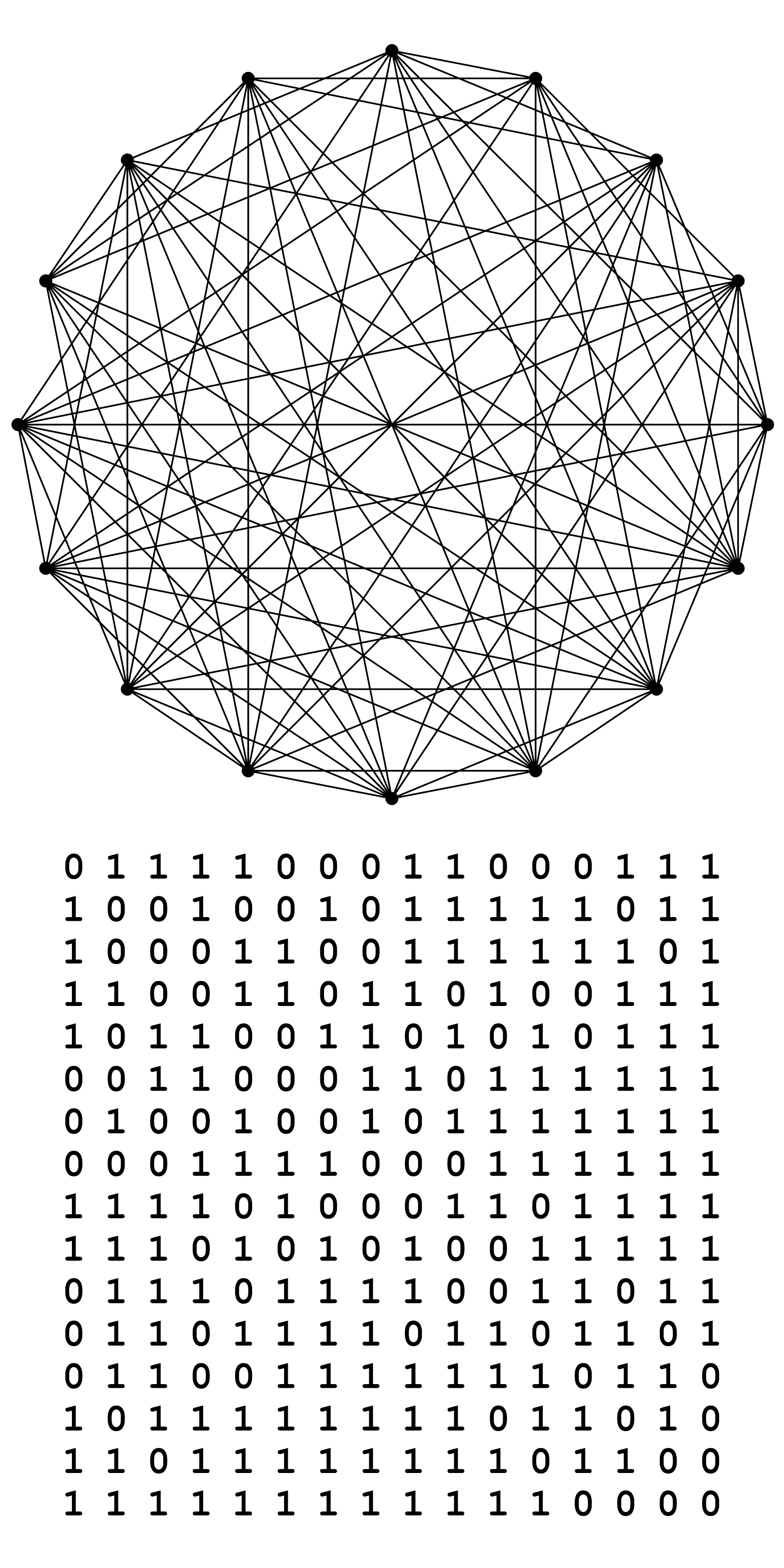}
		\caption*{\emph{$G_{134}$}}
		\label{figure: G_134}
	\end{subfigure}%
	\begin{subfigure}[b]{0.5\textwidth}
		\centering
		\includegraphics[height=256px,width=128px]{./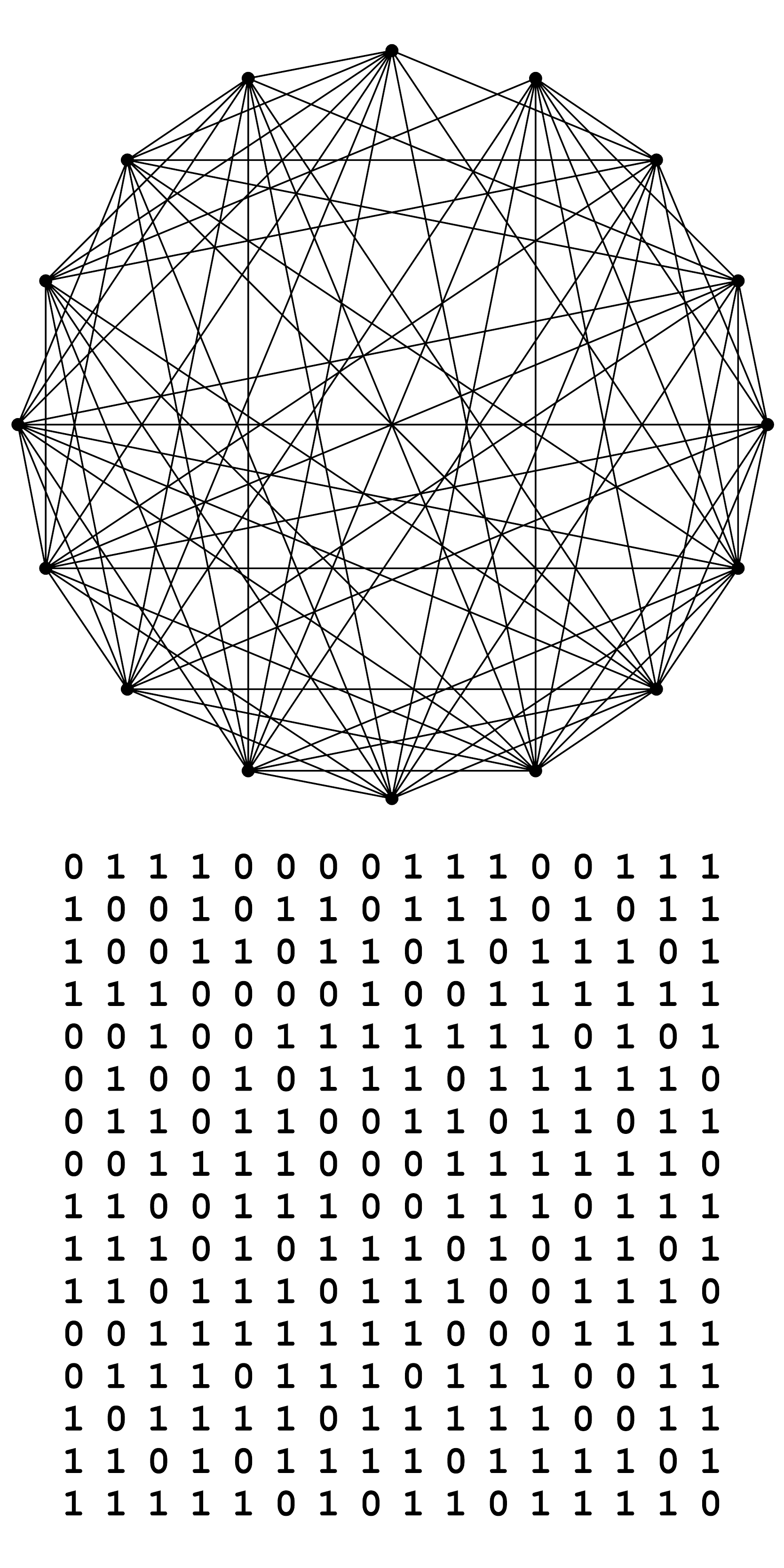}
		\caption*{\emph{$G_{135}$}}
		\label{figure: G_135}
	\end{subfigure}
	
	\vspace{1em}
	\caption{All 4 bicritical graphs in $\mH_v(2, 2, 5; 6; 16)$}
	\label{figure: H_v(2, 2, 5; 6; 16) bicritical}
\end{figure}

We denote by $G_1, ..., G_{147}$ the graphs in $\mH_v(2, 2, 5; 6; 16)$. The indexes correspond to the order defined in the \emph{nauty} programs. In Table \ref{table: H_v(2, 2, 5; 6; 16) properties} are listed some properties of the graphs in $\mH_v(2, 2, 5; 6; 16)$. It is interesting to note that there are no graphs with independence number greater than 3 in $\mH_v(2, 2, 5; 6; 16)$. In our opinion this is a non-trivial fact which cannot be obtained without a computer. Among the graphs in $\mH_v(2, 2, 5; 6; 16)$ there are 37 maximal, 41 minimal, and 4 bicritical graphs, i.e. graphs that are both maximal and minimal (see Figure \ref{figure: H_v(2, 2, 5; 6; 16) bicritical}). Some properties of the bicritical graphs are listed in Table \ref{table: H_v(2, 2, 5; 6; 16) bicritical properties}.

All computations were done on a personal computer. The slowest part was the step of finding all graphs in $\mH_{max}(2, 2, 5; 6; 16)$, which took several days to complete.

\begin{table}
	\centering
	\resizebox{\textwidth}{!}{
		\begin{tabular}{ | l r | l r | l r | l r | l r | l r | }
			\hline
			\multicolumn{2}{|c|}{\parbox{5em}{$\abs{\E(G)}$ \hfill $\#$}}&
			\multicolumn{2}{|c|}{\parbox{5em}{$\delta(G)$ \hfill $\#$}}&
			\multicolumn{2}{|c|}{\parbox{5em}{$\Delta(G)$ \hfill $\#$}}&
			\multicolumn{2}{|c|}{\parbox{5em}{$\alpha(G)$ \hfill $\#$}}&
			\multicolumn{2}{|c|}{\parbox{5em}{$\chi(G)$ \hfill $\#$}}&
			\multicolumn{2}{|c|}{\parbox{5em}{$\abs{Aut(G)}$ \hfill $\#$}}\\
			\hline
			83			&  7		& 7			& 2			& 11		& 24		& 2			& 21		& 7			& 65		& 1			& 84		\\
			84			&  25		& 8			& 36		& 12		& 123		& 3			& 126		& 8			& 82		& 2			& 44		\\
			85			&  42		& 9			& 61		& 			& 			& 			& 			& 			& 			& 3			& 1			\\
			86			&  37		& 10		& 47		& 			& 			& 			& 			& 			& 			& 4			& 8			\\
			87			&  29		& 11		& 1			& 			& 			& 			& 			& 			& 			& 6			& 8			\\
			88			&  6		& 			& 			& 			& 			& 			& 			& 			& 			& 8			& 1			\\
			89			&  1		& 			& 			& 			& 			& 			& 			& 			& 			& 96		& 1			\\
			\hline
		\end{tabular}
	}
	\vspace{-0.5em}
	\caption{Some properties of the graphs in $\mH_v(2, 2, 5; 6; 16)$}
	\label{table: H_v(2, 2, 5; 6; 16) properties}
	
	\vspace{0.5em}
	
	\centering
	\resizebox{\textwidth}{!}{
		\begin{tabular}{ | l | r | r | r | r | r | r | }
			\hline
			{\parbox{4em}{Graph}}&
			{\parbox{4em}{\hfill$\abs{\E(G)}$}}&
			{\parbox{4em}{\hfill$\delta(G)$}}&
			{\parbox{4em}{\hfill$\Delta(G)$}}&
			{\parbox{4em}{\hfill$\alpha(G)$}}&
			{\parbox{4em}{\hfill$\chi(G)$}}&
			{\parbox{4em}{\hfill$\abs{Aut(G)}$}}\\
			\hline
			$G_{74}$		&  86		& 9			& 12		& 3			& 7			& 1			\\
			$G_{78}$		&  87		& 10		& 12		& 3			& 7			& 2			\\
			$G_{134}$		&  85		& 9			& 12		& 3			& 7			& 2			\\
			$G_{135}$		&  85		& 9			& 12		& 3			& 7			& 1			\\
			\hline
		\end{tabular}
	}
	\vspace{-0.5em}
	\caption{Some properties of the bicritical graphs in $\mH_v(2, 2, 5; 6; 16)$}
	\label{table: H_v(2, 2, 5; 6; 16) bicritical properties}
\end{table}

\vspace{0.5em}

\begin{theorem}
	\label{theorem: F_v(2, 2, 5; 6) = 16}
	$F_v(2, 2, 5; 6) = 16$ and the graphs from Theorem \ref{theorem: abs(mH_v(2, 2, 5; 6; 16)) = 147} are all the graphs in $\mH_{extr}(2, 2, 5; 6)$.
\end{theorem}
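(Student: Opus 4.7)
The plan is to split the theorem into the value assertion and the identification of extremal graphs. The upper bound $F_v(2, 2, 5; 6) \leq 16$ is immediate from Theorem~\ref{theorem: abs(mH_v(2, 2, 5; 6; 16)) = 147}, which exhibits 147 non-isomorphic members of $\mH_v(2, 2, 5; 6; 16)$. The real content lies in the lower bound $F_v(2, 2, 5; 6) \geq 16$, equivalently, in showing $\mH_v(2, 2, 5; 6; n) = \emptyset$ for every $n \leq 15$. Invoking the general estimate (\ref{equation: m + p + 2 leq F_v(a_1, ..., a_s; m - 1) leq m + 3p}) with $m = 7$ and $p = 5$ already gives $F_v(2, 2, 5; 6) \geq 14$, so only $n = 14$ and $n = 15$ need to be ruled out.

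For any candidate $G \in \mH_v(2, 2, 5; 6; n)$ with $n < m + 3p = 22$, Theorem~\ref{theorem: alpha(G) leq V(G) + m + p - 1}(b) yields $\alpha(G) \leq n - 12$. Hence $n = 14$ forces $\alpha(G) = 2$, and $n = 15$ permits $\alpha(G) \in \set{2, 3}$. Since every graph in $\mH_v(2, 2, 5; 6; n)$ extends (by adding edges while preserving $K_6$-freeness) to a member of $\mH_{max}(2, 2, 5; 6; n)$ of the same order, it suffices to verify $\mH_{max}(2, 2, 5; 6; n) = \emptyset$ for $n = 14, 15$.

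I would execute the same two-pronged scheme used in Theorem~\ref{theorem: abs(mH_v(2, 2, 5; 6; 16)) = 147}. For $\alpha(G) = 2$, the candidate $G$ must lie in $\mR(3, 6; n)$; the catalogues of $\mR(3, 6; 14)$ and $\mR(3, 6; 15)$ are both available from \cite{McK_r}, and each graph can be tested directly for the arrow property $\arrowsv (2, 2, 5)$. For $\alpha(G) = 3$ at $n = 15$, I would invoke Algorithm~\ref{algorithm: A1} with parameters $(r, p, q, k) = (2, 5, 6, 3)$ and input $\mH_{max}(2, 5; 6; 12)$; the latter must in turn be produced by a one-level-deeper cascade, combining $\mR(3, 6; 12)$ with the output of Algorithm~\ref{algorithm: A1} applied to a suitable subset of $\mH_{max}(5; 6; \cdot)$, which is accessible from the $1\,724\,440$ graphs of Theorem~\ref{theorem: abs(mH_v(5; 6; 10)) = 1 724 440}.

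The main obstacle is the cascading combinatorial blow-up in these enumerations together with the correctness of the arrow-property check in step~4 of Algorithm~\ref{algorithm: A1}, which is expected to dominate the running time. Once the machine verifies $\mH_{max}(2, 2, 5; 6; 14) = \mH_{max}(2, 2, 5; 6; 15) = \emptyset$, the equality $F_v(2, 2, 5; 6) = 16$ follows, and the identification $\mH_{extr}(2, 2, 5; 6) = \mH_v(2, 2, 5; 6; 16)$ with the 147 graphs from Theorem~\ref{theorem: abs(mH_v(2, 2, 5; 6; 16)) = 147} is then automatic from the definition of an extremal graph.
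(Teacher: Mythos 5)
Your proposal is correct, but it takes a different route from the paper's own proof. The paper gets the lower bound essentially for free from the completeness of the enumeration in Theorem~\ref{theorem: abs(mH_v(2, 2, 5; 6; 16)) = 147}: if some $H$ belonged to $\mH_v(2, 2, 5; 6; 15)$, then $H$ together with one isolated vertex would be a $16$-vertex member of $\mH_v(2, 2, 5; 6; 16)$, and since none of the $147$ catalogued graphs has an isolated vertex, $\mH_v(2, 2, 5; 6; 15) = \emptyset$ with no further computation. You instead rerun the enumeration machinery at $n = 14$ and $n = 15$ (independence-number bound from Theorem~\ref{theorem: alpha(G) leq V(G) + m + p - 1}(b), the $\mR(3,6;n)$ catalogues for $\alpha = 2$, and Algorithm~\ref{algorithm: A1} for $\alpha = 3$). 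This is sound, and in fact the paper explicitly remarks that proving $\mH_v(2, 2, 5; 6; 15) = \emptyset$ directly via Algorithm~\ref{algorithm: A1} is faster than the full $16$-vertex enumeration and was carried out as an independent check; so your route duplicates the paper's verification rather than its proof. Two small points: checking $n = 14$ is redundant, since emptiness at level $15$ already forces emptiness at every smaller order (again by adding isolated vertices); and for the $\alpha = 3$ branch at $n = 15$ the cascade bottoms out at $\mH_{max}(5; 6; 9)$, which is obtained directly from the $9$-vertex graph catalogue rather than from the $10$-vertex set of Theorem~\ref{theorem: abs(mH_v(5; 6; 10)) = 1 724 440} as your wording suggests. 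Your handling of the upper bound and of the identification of $\mH_{extr}(2, 2, 5; 6)$ matches the paper.
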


\begin{proof}
From $\mH_v(2, 2, 5; 6; 16) \neq \emptyset$, it follows that $F_v(2, 2, 5; 6) \leq 16$. Since none of the graphs in $\mH_v(2, 2, 5; 6; 16)$ have an isolated vertex, we have $\mH_v(2, 2, 5; 6; 15) = \emptyset$, and therefore $F_v(2, 2, 5; 6) \geq 16$. Thus, the theorem is proved.
\end{proof}

To check the correctness of our computer programs, we used a simple algorithm to remove one vertex from each graph in $\mH_v(2, 2, 5; 6; 16)$ and confirmed that none of the obtained graphs belong to $\mH_v(2, 2, 5; 6)$. Thus, we derived in another way that $\mH_v(2, 2, 5; 6; 15) = \emptyset$.

\begin{remark}
The lower bound $F_v(2, 2, 5; 6) \geq 16$ can be proved simpler in terms of computational time. With the help of Algorithm \ref{algorithm: A1}, we proved independently that $\mH_v(2, 2, 5; 6; 15) = \emptyset$, which is a lot faster.
\end{remark}

\begin{figure}
	\centering
	\begin{subfigure}[b]{0.5\textwidth}
		\centering
		\includegraphics[height=256px,width=128px]{./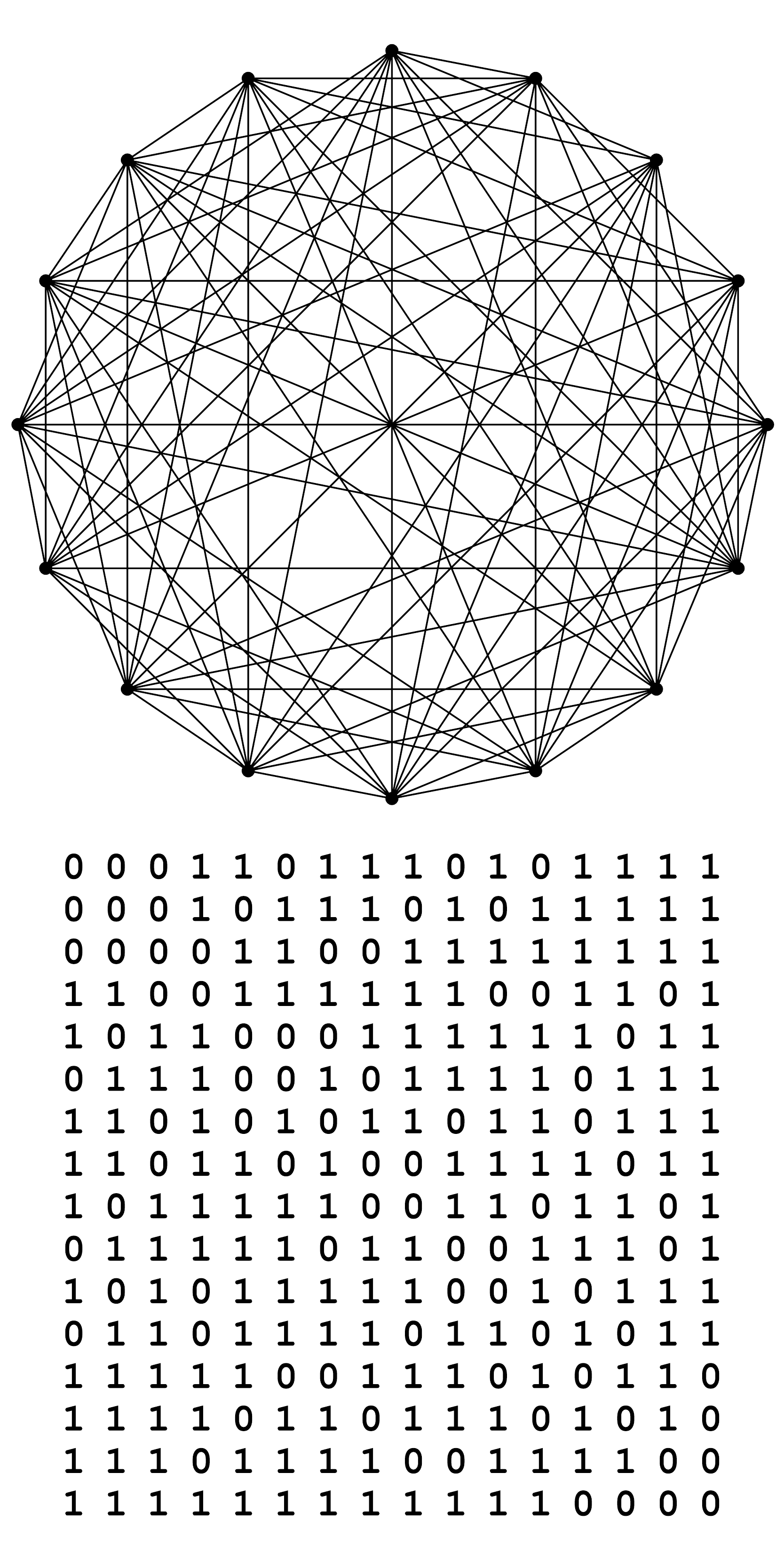}
		\caption*{\emph{$G_{50}$}}
		\label{figure: G_50}
	\end{subfigure}%
	\begin{subfigure}[b]{0.5\textwidth}
		\centering
		\includegraphics[height=256px,width=128px]{./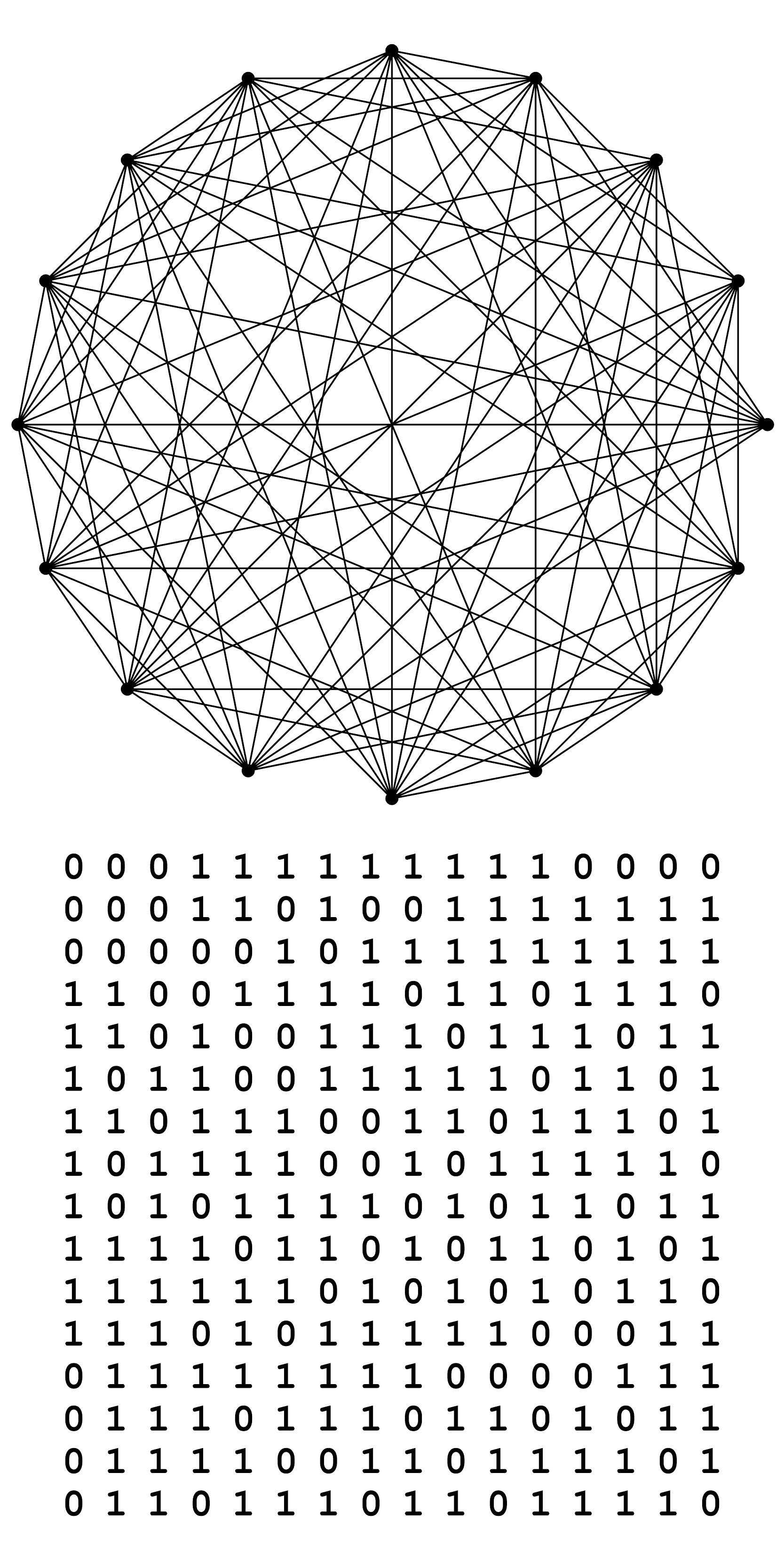}
		\caption*{\emph{$G_{51}$}}
		\label{figure: G_51}
	\end{subfigure}
	
	\begin{subfigure}[b]{0.5\textwidth}
		\centering
		\includegraphics[height=256px,width=128px]{./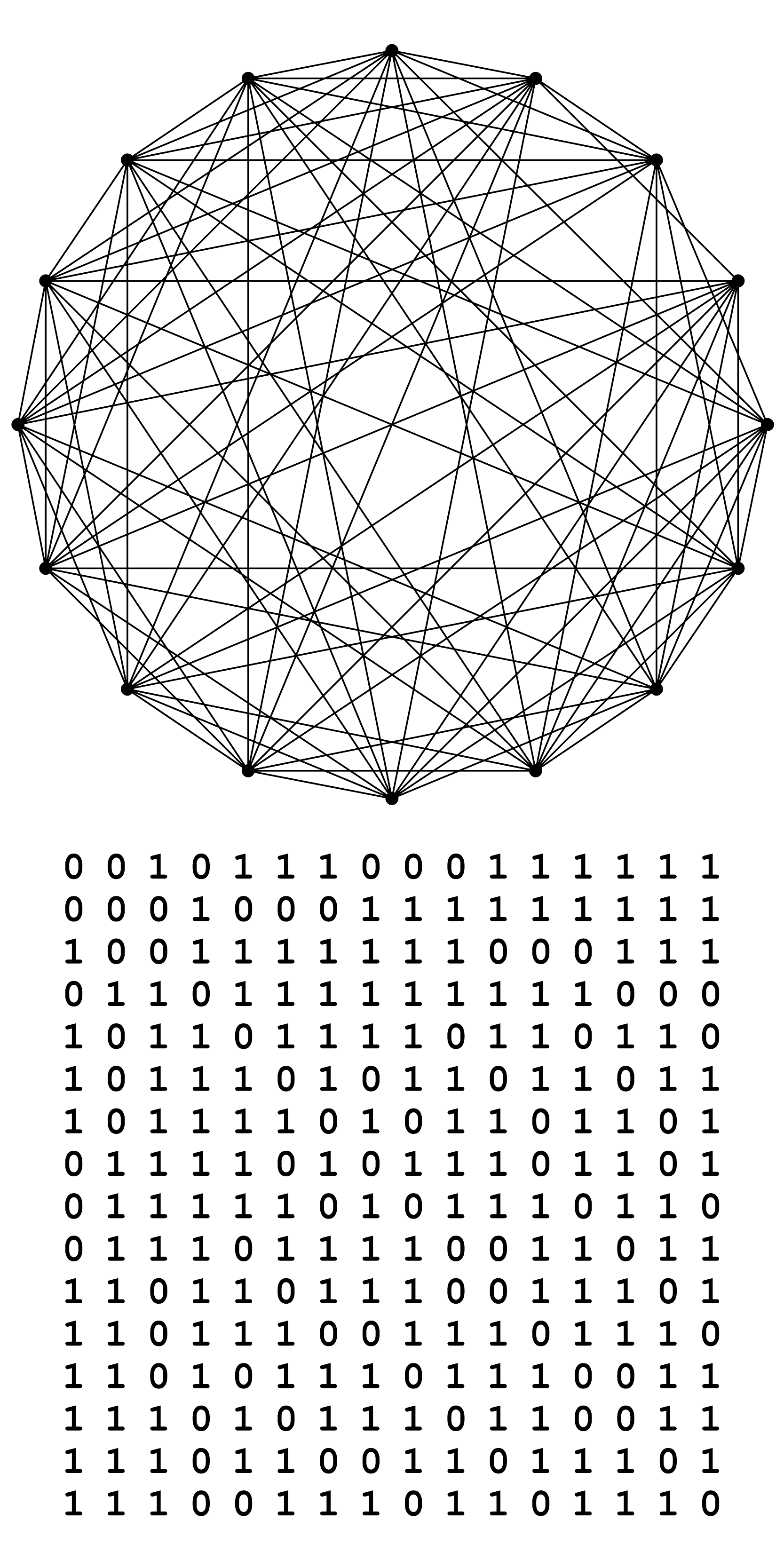}
		\caption*{\emph{$G_{81}$}}
		\label{figure: G_81}
	\end{subfigure}%
	\begin{subfigure}[b]{0.5\textwidth}
		\centering
		\includegraphics[height=256px,width=128px]{./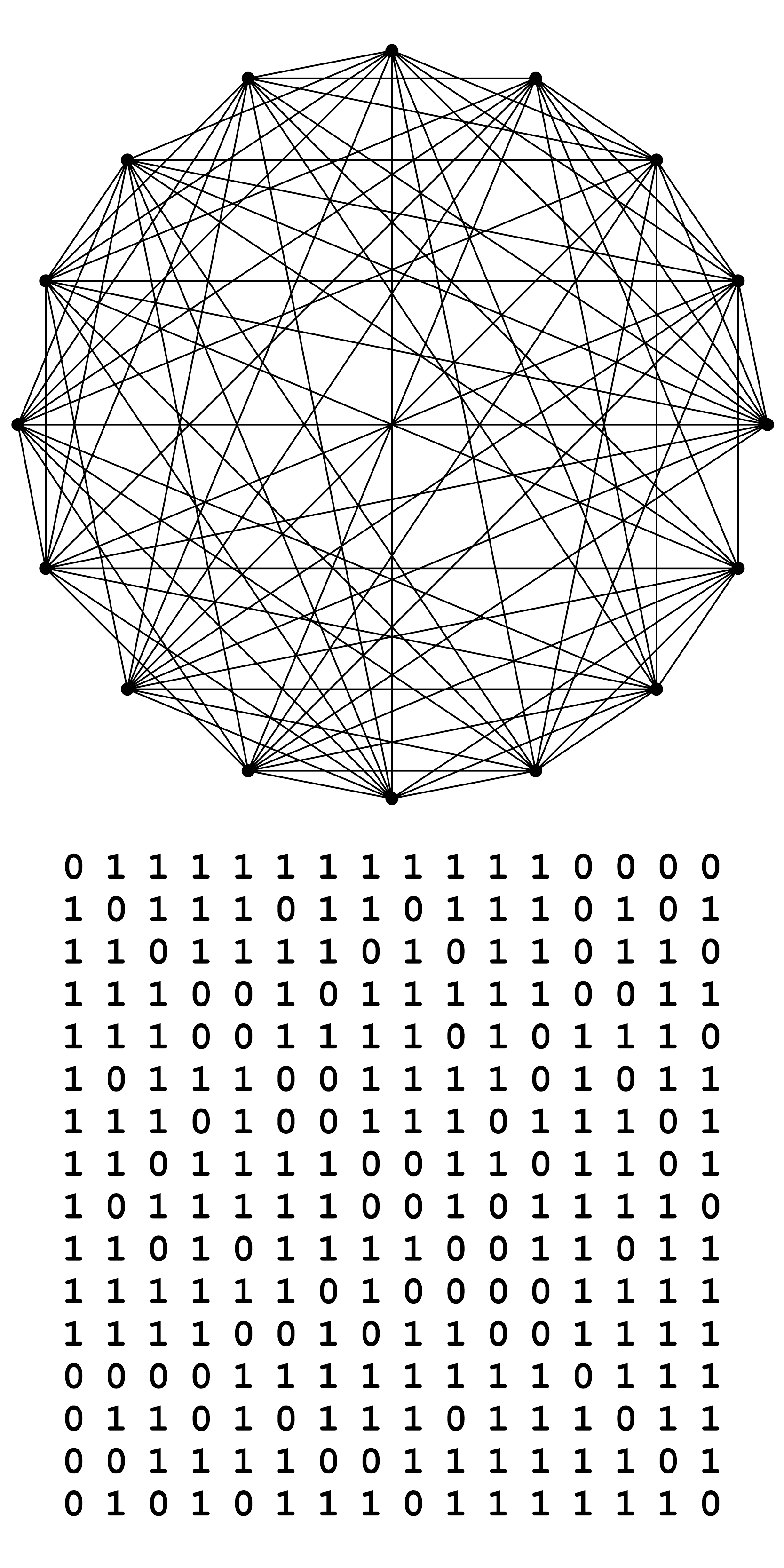}
		\caption*{\emph{$G_{146}$}}
		\label{figure: G_146}
	\end{subfigure}
	
	\vspace{1em}
	\caption{All 4 graphs in $\mH_v(3, 5; 6; 16)$}
	\label{figure: H_v(3, 5; 6; 16)}
\end{figure}

\begin{theorem}
	\label{theorem: abs(mH_v(3, 5; 6; 16)) = 4}
	$\mH_v(3, 5; 6; 16) = \set{G_{50}, G_{51}, G_{81}, G_{146}}$, see Figure \ref{figure: H_v(3, 5; 6; 16)}.
\end{theorem}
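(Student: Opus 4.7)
The plan is to reduce the computation of $\mH_v(3,5;6;16)$ to a filtering step applied to the already-enumerated set $\mH_v(2,2,5;6;16)$. The key observation is that by equation (\ref{equation: G arrowsv (a_1, ..., a_s) Rightarrow G arrowsv (a_1, ..., a_(i - 1), 2, a_i - 1, a_(i + 1), ..., a_s)}) applied with $(a_1,a_2)=(3,5)$ and $i=1$, one has the implication
$$G \arrowsv (3,5) \Rightarrow G \arrowsv (2,2,5).$$
Since both classes share the same constraints $G \not\supseteq K_6$ and $\abs{\V(G)}=16$, this yields the inclusion $\mH_v(3,5;6;16) \subseteq \mH_v(2,2,5;6;16)$. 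Theorem \ref{theorem: abs(mH_v(2, 2, 5; 6; 16)) = 147} already supplies an explicit list of the 147 graphs in the right-hand set, so it remains only to test each candidate for the stronger arrow property.

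The test I would implement checks the negation of $G \arrowsv (3,5)$: namely, whether there exists a partition $\V(G) = V_1 \cup V_2$ such that $G[V_1]$ is triangle-free and $\omega(G[V_2]) \leq 4$. Since only $V_1$ has to be searched (and the check on $V_2$ is then immediate), I would run a backtracking enumeration of maximal triangle-free subsets $V_1 \subseteq \V(G)$, pruning as soon as a triangle would be created, and for each such $V_1$ test whether the induced subgraph on $\V(G) \setminus V_1$ contains a $K_5$. If no valid $V_1$ is found, then $G \arrowsv (3,5)$. For $n=16$ with 147 input graphs, this is very tractable and well within the scope of the computer methods already used in the chapter.

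Running this filter on the 147 graphs from Theorem \ref{theorem: abs(mH_v(2, 2, 5; 6; 16)) = 147} should single out exactly the four graphs $G_{50}, G_{51}, G_{81}, G_{146}$ listed in Figure \ref{figure: H_v(3, 5; 6; 16)}, proving the equality. As consistency checks I would note that $F_v(3,5;6) = 16$ is already established in \cite{SLPX12}, so the list is guaranteed to be non-empty, and the four surviving graphs can also be verified to satisfy $G \not\arrowsv (2, 4)$ is not required but that they do satisfy $G \arrowsv (3,5)$ can be double-checked by exhibiting no $(3,5)$-free partition of their vertex set.

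The main obstacle here is not mathematical but implementational: the triangle-free search has to be pruned carefully to remain fast, and the $K_5$-freeness test on the complement has to be integrated into the pruning (e.g., never placing a vertex into $V_2$ if it already completes a $K_5$ there). Since the search space shrinks rapidly under these constraints and 16 vertices is small, I expect the whole filter to run in seconds on a personal computer, in line with the computational effort reported elsewhere in this chapter.
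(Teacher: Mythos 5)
Your proposal is correct and follows essentially the same route as the paper: the paper likewise uses the inclusion $\mH_v(3, 5; 6; 16) \subseteq \mH_v(2, 2, 5; 6; 16)$ and then checks by computer which of the 147 graphs from Theorem \ref{theorem: abs(mH_v(2, 2, 5; 6; 16)) = 147} satisfy $G \arrowsv (3, 5)$. Your additional implementation details (enumerating triangle-free parts $V_1$ and testing $K_5$-freeness of the complement) are a reasonable way to realize the same check.
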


\begin{proof}
	Using $\mH_v(3, 5; 6; 16) \subseteq \mH_v(2, 2, 5; 6; 16)$, by checking with a computer which of the graphs in $\mH_v(2, 2, 5; 6; 16)$ belong to $\mH_v(3, 5; 6; 16)$, we obtain $\abs{\mH_v(3, 5; 6; 16)} = 4$. The graphs in $\mH_v(3, 5; 6; 16)$ are shown in Figure \ref{figure: H_v(3, 5; 6; 16)}.
\end{proof}

\begin{corollary}
	\label{corollary: F_v(3, 5; 6) = 16}
	\cite{SLPX12}
	$F_v(3, 5; 6) = 16$.
\end{corollary}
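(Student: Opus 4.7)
The plan is to sandwich $F_v(3,5;6)$ between matching upper and lower bounds of $16$.

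For the upper bound, I would directly invoke the preceding theorem: since $\mH_v(3,5;6;16) = \set{G_{50}, G_{51}, G_{81}, G_{146}}$ is nonempty, there exists a graph $G$ on $16$ vertices with $G \arrowsv (3,5)$ and $\omega(G) < 6$. By the definition of the vertex Folkman number, this immediately gives $F_v(3,5;6) \leq 16$.

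For the lower bound, I would apply the general inequality $F_v(2,2,p;p+1) \leq F_v(3,p;p+1)$ for $p \geq 3$ stated earlier in the chapter, which follows from the obvious implication $G \arrowsv (3,p) \Rightarrow G \arrowsv (2,2,p)$. Setting $p = 5$, this yields $F_v(3,5;6) \geq F_v(2,2,5;6)$. Combining this with Theorem~\ref{theorem: F_v(2, 2, 5; 6) = 16}, which gives $F_v(2,2,5;6) = 16$, we conclude $F_v(3,5;6) \geq 16$.

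Both bounds match, so $F_v(3,5;6) = 16$. There is essentially no obstacle here since this corollary is a bookkeeping consequence of the two main computations just performed; the real work was hidden in enumerating $\mH_v(2,2,5;6;16)$ and then filtering it to obtain $\mH_v(3,5;6;16)$, which has already been done. The only subtlety worth noting is that the lower bound does not require independently reverifying $\mH_v(3,5;6;15) = \emptyset$ on the computer: it is deduced purely from the already-established value of $F_v(2,2,5;6)$ via the monotonicity inequality.
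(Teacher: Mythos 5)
Your proof is correct and matches the paper's own argument exactly: the lower bound comes from $F_v(2,2,5;6)=16$ together with the inequality $F_v(2,2,p;p+1)\leq F_v(3,p;p+1)$, and the upper bound from $\mH_v(3,5;6;16)\neq\emptyset$. Your closing remark about not needing to reverify $\mH_v(3,5;6;15)=\emptyset$ by computer is also consistent with how the paper proceeds.
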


\begin{proof}
	From Theorem \ref{theorem: F_v(2, 2, 5; 6) = 16} and (\ref{equation: F_v(2, 2, p; p + 1) leq F_v(3, p; p + 1)}) we obtain $F_v(3, 5; 6) \geq 16$. Since $\mH_v(3, 5; 6; 16) \neq \emptyset$, it follows that $F_v(3, 5; 6) \leq 16$.
\end{proof}

Some properties of the graphs in $\mH_v(3, 5; 6; 16)$ are listed in Table \ref{table: H_v(3, 5; 6; 16) properties}.

It is interesting to note that for all these graphs the inequality (\ref{equation: G arrowsv (a_1, ..., a_s) Rightarrow chi(G) geq m}) is strict. Interesting results for Folkman graphs for which we have equality in (\ref{equation: G arrowsv (a_1, ..., a_s) Rightarrow chi(G) geq m}) are obtained in \cite{XLR18}.

The graphs $G_{50}$ and $G_{146}$ are maximal and the other two graphs $G_{51}$ and $G_{81}$ are their subgraphs and are obtained by removing one edge. In \cite{SXP09} the inequality $F_v(3, 5; 6) \leq 16$ is proved with the help of the graph $G_{146}$. Let us note that $\abs{Aut(G_{146})} = 96$, and among all graphs in $\mH_v(2, 2, 5; 6; 16)$ it has the most automorphisms.

\begin{table}
	\centering
	\resizebox{\textwidth}{!}{
		\begin{tabular}{ | l | r | r | r | r | r | r | }
			\hline
			{\parbox{4em}{Graph}}&
			{\parbox{4em}{\hfill$\abs{\E(G)}$}}&
			{\parbox{4em}{\hfill$\delta(G)$}}&
			{\parbox{4em}{\hfill$\Delta(G)$}}&
			{\parbox{4em}{\hfill$\alpha(G)$}}&
			{\parbox{4em}{\hfill$\chi(G)$}}&
			{\parbox{4em}{\hfill$\abs{Aut(G)}$}}\\
			\hline
		\hline
		$G_{50}$		&  87		& 10		& 12		& 3			& 8			& 6			\\
		$G_{51}$		&  86		& 9			& 11		& 3			& 8			& 6			\\
		$G_{81}$		&  87		& 10		& 11		& 2			& 8			& 6			\\
		$G_{146}$		&  88		& 11		& 11		& 2			& 8			& 96		\\
		\hline
	\end{tabular}
	}
	\caption{Some properties of the graphs in $\mH_v(3, 5; 6; 16)$}
	\label{table: H_v(3, 5; 6; 16) properties}
\end{table}

\vspace{1em}
Theorem \ref{theorem: abs(mH_v(5; 6; 10)) = 1 724 440}, Theorem \ref{theorem: abs(mH_v(2, 5; 6; 13)) = 20 013 726}, Theorem \ref{theorem: abs(mH_v(2, 2, 5; 6; 16)) = 147}, Theorem \ref{theorem: F_v(2, 2, 5; 6) = 16} and Theorem \ref{theorem: abs(mH_v(3, 5; 6; 16)) = 4} are published in \cite{BN15a}.

\vspace{1em}

\section{Bounds on the numbers $F_v(2_r, p; r + p - 1)$}

The bounds from Theorem \ref{theorem: F_v(2_(m - p), p; q) leq F_v(a_1, ..., a_s; q) leq wFv(m)(p)(q)} are useful because in general they are easier to estimate and compute than the numbers $F_v(a_1, ..., a_s)$ themselves. Further, we compute the exact value of the numbers $F_v(2_{m - 5}, 5; m - 1)$ (see Theorem \ref{theorem: rp(5) = 2}) and the numbers $\wFv{m}{5}{m - 1}$ (see Theorem \ref{theorem: wFv(m)(5)(m - 1) = ...}). Since $F_v(2_{m - 5}, 5; m - 1) = \wFv{m}{5}{m - 1}$, with the help of Theorem \ref{theorem: F_v(2_(m - p), p; q) leq F_v(a_1, ..., a_s; q) leq wFv(m)(p)(q)}, we prove Theorem \ref{theorem: F_v(a_1, ..., a_s; m - 1) = m + 9, max set(a_1, ..., a_s) = 5}.

In this section we prove that the computation of the lower bound in Theorem \ref{theorem: F_v(2_(m - p), p; q) leq F_v(a_1, ..., a_s; q) leq wFv(m)(p)(q)} in the case $q = m - 1$, i.e. the computation of the infinite sequence of numbers $F_v(2_r, p, r + p - 1)$ where $p$ is fixed, is reduced to the computation of a finite number of these numbers (Theorem \ref{theorem: rp}).\\
It is easy to prove that
\begin{equation*}
G \arrowsv (a_1, ..., a_s) \Rightarrow K_1 + G \arrowsv (2, a_1, ..., a_s).
\end{equation*}
Therefore,
\begin{equation}
\label{equation: G arrowsv (a_1, ..., a_s) Rightarrow K_t + G arrowsv (2_t, a_1, ..., a_s)}
G \arrowsv (a_1, ..., a_s) \Rightarrow K_t + G \arrowsv (2_t, a_1, ..., a_s).
\end{equation}

\begin{lemma}
	\label{lemma: F_v(2_r, p; r + p - 1) leq F_v(2_s, p; s + p - 1) + r - s}
	Let $2 \leq s \leq r$. Then,
	\begin{equation*}
	F_v(2_r, p; r + p - 1) \leq F_v(2_s, p; s + p - 1) + r - s.
	\end{equation*}
\end{lemma}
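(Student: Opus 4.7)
The plan is to prove the lemma by explicit construction: starting from an extremal graph witnessing the right-hand side, I would append a complete graph via the join operation and verify that the resulting graph lies in the appropriate Folkman class.

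More concretely, let $G$ be an extremal graph for $F_v(2_s, p; s+p-1)$, i.e.\ $G \arrowsv (2_s, p)$, $\omega(G) < s+p-1$, and $\abs{\V(G)} = F_v(2_s, p; s+p-1)$. Set $t = r - s \geq 0$ and consider the graph $G' = K_t + G$. First I would apply equation (\ref{equation: G arrowsv (a_1, ..., a_s) Rightarrow K_t + G arrowsv (2_t, a_1, ..., a_s)}) with the sequence $(2_s, p)$ in the role of $(a_1, \ldots, a_s)$, yielding
\begin{equation*}
K_t + G \arrowsv (2_t, 2_s, p) = (2_r, p).
\end{equation*}
Second, since every vertex of $K_t$ is joined to every vertex of $G$, a maximum clique of $G'$ consists of the $t$ vertices of $K_t$ together with a maximum clique of $G$, so $\omega(G') = t + \omega(G) < (r-s) + (s+p-1) = r + p - 1$.

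Combining these two facts gives $G' \in \mH_v(2_r, p; r+p-1)$, and hence
\begin{equation*}
F_v(2_r, p; r+p-1) \leq \abs{\V(G')} = (r-s) + \abs{\V(G)} = F_v(2_s, p; s+p-1) + r - s,
\end{equation*}
which is exactly the desired inequality. There is no genuine obstacle here: the argument is a routine reduction using the join operation, and both properties required (the arrow and the clique bound) follow directly from results quoted earlier in the chapter, namely (\ref{equation: G arrowsv (a_1, ..., a_s) Rightarrow K_t + G arrowsv (2_t, a_1, ..., a_s)}) and the elementary identity $\omega(K_t + H) = t + \omega(H)$. The only minor care needed is the trivial boundary case $s = r$, where $t = 0$ and $G' = G$, so the inequality becomes an equality.
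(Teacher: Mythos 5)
Your proof is correct and is essentially identical to the paper's own argument: both take an extremal graph $G \in \mH_v(2_s, p; s+p-1)$, form $K_{r-s} + G$, apply (\ref{equation: G arrowsv (a_1, ..., a_s) Rightarrow K_t + G arrowsv (2_t, a_1, ..., a_s)}) to get the arrow property, and use $\omega(K_{r-s}+G) = r-s+\omega(G)$ to bound the clique number. No differences worth noting.
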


\begin{proof}
	Let $G$ be an extremal graph in $\mH_v(2_s, p; s + p - 1)$. Consider $G_1 = K_{r - s} + G$. According to (\ref{equation: G arrowsv (a_1, ..., a_s) Rightarrow K_t + G arrowsv (2_t, a_1, ..., a_s)}), $G_1 \arrowsv (2_r, p)$. Since $\omega(G_1) = r - s + \omega(G) < r + p - 1$, it follows that $G_1 \in \mH_v(2_r, p; r + p - 1)$. Therefore,
	
	$F_v(2_r, p; r + p - 1) \leq \abs{\V(G_1)} = F_v(2_s, p; s + p - 1) + r - s$.
\end{proof}

\vspace{1em}

\begin{theorem}
	\label{theorem: rp}
	Let $p$ be a fixed positive integer and $\rp(p) = \rp$ be the smallest positive integer for which
	$$\min_{r \geq 2} \set{F_v(2_r, p; r + p - 1) - r} = F_v(2_{\rp}, p; \rp + p - 1) - \rp.$$
	Then:
	\vspace{1em}\\
	(a) $F_v(2_r, p; r + p - 1) = F(2_{\rp}, p; \rp + p - 1) + r - \rp, \ r \geq \rp$.
	\vspace{1em}\\
	(b) If $\rp = 2$, then \hfill\break $F_v(2_r, p; r + p - 1) = F_v(2, 2, p; p + 1) + r - 2, \ r \geq 2$.
	\vspace{1em}\\
	(c) If $\rp > 2$ and $G$ is an extremal graph in $\mH (2_{\rp}, p; \rp + p - 1)$, then \hfill\break $G \arrowsv (2, \rp + p - 2)$.
	\vspace{1em}\\
	(d) $\rp < F_v(2, 2, p; p + 1) - 2p$.
\end{theorem}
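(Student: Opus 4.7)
For parts (a) and (b), the plan is to directly combine Lemma \ref{lemma: F_v(2_r, p; r + p - 1) leq F_v(2_s, p; s + p - 1) + r - s} with the definition of $\rp$. Applying the lemma with $s = \rp$ immediately yields the upper bound $F_v(2_r, p; r + p - 1) \leq F_v(2_\rp, p; \rp + p - 1) + r - \rp$ for $r \geq \rp$, while the matching lower bound is just the defining minimality property of $\rp$ rewritten. Part (b) is then the specialization of (a) to $\rp = 2$.

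For part (c), I would argue by contradiction. Assuming $G \not\arrowsv (2, \rp + p - 2)$, there exists a $2$-coloring $V(G) = V_1 \cup V_2$ with $V_1$ independent and $\omega(G[V_2]) \leq \rp + p - 3$. One may assume $V_1 \neq \emptyset$, since otherwise moving any single vertex into $V_1$ preserves the bad coloring. Setting $A = V_1$ and applying Proposition \ref{proposition: G - A arrowsv (a_1, ..., a_(i - 1), a_i - 1, a_(i + 1_, ..., a_s)}, together with (\ref{equation: G arrowsv (a_1, ..., a_(i-1), a_(i+1), ..., a_s) Rightarrow G arrowsv (a_1, ..., a_s)}) to discard the resulting $a_i = 1$ entry, yields $G - A \arrowsv (2_{\rp - 1}, p)$. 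Since $\omega(G - A) < \rp + p - 2$, we obtain $G - A \in \mH_v(2_{\rp - 1}, p; \rp + p - 2)$, and hence $F_v(2_{\rp - 1}, p; \rp + p - 2) \leq F_v(2_\rp, p; \rp + p - 1) - 1$. For $\rp > 2$ (so that $\rp - 1 \geq 2$ is admissible), this contradicts the minimality of $\rp$.

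For part (d), I would split on $\rp = 2$ versus $\rp > 2$. The case $\rp = 2$ amounts to $F_v(2, 2, p; p + 1) > 2p + 2$, which is immediate from the known bound $F_v(2, 2, p; p + 1) \geq 2p + 4$ of Nenov. For $\rp > 2$, part (c) places every extremal graph $G$ inside $\mH_v(2, \rp + p - 2; \rp + p - 1)$, and Theorem \ref{theorem: F_v(a_1, ..., a_s; m) = m + p} pins down $F_v(2, \rp + p - 2; \rp + p - 1) = 2\rp + 2p - 3$ with unique extremal graph $\overline{C}_{2\rp + 2p - 3}$. If equality held at $F_v(2_\rp, p; \rp + p - 1) = 2\rp + 2p - 3$, then $G$ would coincide with this odd antihole; but $\chi(\overline{C}_{2\rp + 2p - 3}) = \rp + p - 1$ contradicts the bound $\chi(G) \geq \rp + p$ forced by $G \arrowsv (2_\rp, p)$ via (\ref{equation: G arrowsv (a_1, ..., a_s) Rightarrow chi(G) geq m}). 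Hence $F_v(2_\rp, p; \rp + p - 1) \geq 2\rp + 2p - 2$, and invoking the minimality of $\rp$ at $r = 2$ gives $F_v(2, 2, p; p + 1) \geq F_v(2_\rp, p; \rp + p - 1) - \rp + 3 \geq \rp + 2p + 1$, which rearranges to the desired strict inequality. The main obstacle is this off-by-one refinement in (d): without combining the uniqueness clause of Theorem \ref{theorem: F_v(a_1, ..., a_s; m) = m + p}(b) with the chromatic gap for odd antiholes, the argument delivers only $\rp \leq F_v(2, 2, p; p + 1) - 2p + 1$, one unit weaker than claimed.
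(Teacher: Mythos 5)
Your proposal is correct and follows essentially the same route as the paper's own proof: parts (a) and (b) combine Lemma \ref{lemma: F_v(2_r, p; r + p - 1) leq F_v(2_s, p; s + p - 1) + r - s} with the minimality of $\rp$, part (c) is the same contradiction via Proposition \ref{proposition: G - A arrowsv (a_1, ..., a_(i - 1), a_i - 1, a_(i + 1_, ..., a_s)}, and part (d) uses exactly the paper's combination of part (c), the uniqueness clause of Theorem \ref{theorem: F_v(a_1, ..., a_s; m) = m + p}(b), and the chromatic bound (\ref{equation: G arrowsv (a_1, ..., a_s) Rightarrow chi(G) geq m}) applied to $\overline{C}_{2\rp + 2p - 3}$ to gain the extra unit. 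The ``off-by-one refinement'' you single out at the end is precisely the step the paper takes.
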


\begin{proof}
	(a) According to the definition of $\rp = \rp(p)$, if $r \geq 2$, then
	
	$F_v(2_r, p; r + p - 1) - r \geq F_v(2_{\rp}, p; \rp + p - 1) - \rp$,\\
	i.e.
	
	$F_v(2_r, p; r + p - 1) \geq F_v(2_{\rp}, p; \rp + p - 1) + r - \rp$.\\
	If $r \geq \rp$, according to Lemma \ref{lemma: F_v(2_r, p; r + p - 1) leq F_v(2_s, p; s + p - 1) + r - s}, the opposite inequality is also true.
	
	(b) This equality is the special case $\rp = 2$ of the equality (a).
	
	(c) Suppose the opposite is true, and let $G$ be an extremal graph in $\mH_v(2_{\rp}, p; \rp + p - 1)$ and $V(G) = V_1 \cup V_2, \ V_1 \cap V_2 = \emptyset$, where $V_1$ is an independent set and $V_2$ does not contain an $(\rp + p - 2)$-clique. We can suppose that $V_1 \neq \emptyset$. Let $G_1 = \G[V_2]$. Then $\omega(G_1) < r + p - 2$, and from $G \arrowsv (2_{\rp}, p)$ and Proposition \ref{proposition: G - A arrowsv (a_1, ..., a_(i - 1), a_i - 1, a_(i + 1_, ..., a_s)} it follows $G_1 \arrowsv (2_{\rp - 1}, p)$. Therefore, $G_1 \in \mH_v(2_{\rp - 1}, p; \rp + p - 2)$ and
	
	$\abs{\V(G_1)} \geq F_v(2_{\rp - 1}, p; \rp + p - 2)$.\\
	Since $\abs{\V(G)} = F_v(2_{\rp}, p; \rp + p - 1)$ and $\abs{\V(G_1)} \leq \abs{\V(G)} - 1$, we obtain
	
	$F_v(2_{\rp - 1}, p; \rp + p - 2) - (\rp - 1) \leq F_v(2_{\rp}, p; \rp + p - 1) - \rp$,\\
	which contradicts the definition of $\rp = \rp(p)$.
	
	(d) According to (\ref{equation: m + p + 2 leq F_v(a_1, ..., a_s; m - 1) leq m + 3p}), $F_v(2, 2, p; p + 1) \geq 2p + 4$. Therefore, if $\rp = 2$, the inequality (d) is obvious.
	
	Let $\rp \geq 3$ and $G$ be an extremal graph in $\mH_v(2_{\rp}, p; \rp + p - 1)$. According to (c), $G \in \mH_v(2, \rp + p - 2; \rp + p - 1)$, and by Theorem \ref{theorem: F_v(a_1, ..., a_s; m) = m + p}, $\abs{\V(G)} \geq 2\rp + 2p - 3$. Let us notice that $\chi(\overline{C}_{2\rp + 2p - 3}) = \rp + p - 1$, but since $G \arrowsv (2_{\rp}, p)$, from (\ref{equation: G arrowsv (a_1, ..., a_s) Rightarrow chi(G) geq m}) it follows $\chi(G) \geq \rp + p = m$. Therefore, $G \neq \overline{C}_{2\rp + 2p - 3}$ and from Theorem \ref{theorem: F_v(a_1, ..., a_s; m) = m + p} we obtain
	
	$\abs{\V(G)} = F_v(2_{\rp}, p; \rp + p - 1) \geq 2\rp + 2p - 2$.\\
	Since $\rp \geq 3$, from the definition of $\rp$ we have
	
	$F_v(2_{\rp}, p; \rp + p - 1) < F_v(2, 2, p; p + 1) + \rp - 2$.\\
	Thus, we proved that
	
	$2\rp + 2p - 2 < F_v(2, 2, p; p + 1) + \rp - 2$,
	i.e.
	
	$\rp < F_v(2, 2, p; p + 1) - 2p$.
\end{proof}

\begin{remark}
	Since we suppose that $r \geq 2$, according to (\ref{equation: F_v(a_1, ..., a_s; q) exists}) all Folkman numbers in the proof of Theorem \ref{theorem: rp} exist.
\end{remark}

\begin{corollary}
	\label{corollary: F_v(a_1, ... a_s; m - 1) geq F_v(2_(rp), p; rp + p - 1) + r - rp}
	Let $a_1, ..., a_s$ be positive integers, let $m$ and $p$ be defined by (\ref{equation: m and p}), $m \geq p + 2$ and $r = m - p \geq \rp(p)$. Then,
	\begin{equation*}
	F_v(a_1, ... a_s; m - 1) \geq F_v(2_{\rp}, p; \rp + p - 1) + r - \rp.
	\end{equation*}
	In particular, if $\rp = 2$, then
	\begin{equation*}
	F_v(a_1, ..., a_s; m - 1) \geq F_v(2, 2, p; p + 1) + r - 2.
	\end{equation*}
\end{corollary}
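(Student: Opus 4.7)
The plan is to combine the lower bound from Theorem \ref{theorem: F_v(2_(m - p), p; q) leq F_v(a_1, ..., a_s; q) leq wFv(m)(p)(q)} with part (a) of Theorem \ref{theorem: rp}. No new machinery is required; the corollary is essentially a direct chaining of these two results, together with the substitution $r = m - p$ and $q = m - 1$.

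First I would set $r = m - p$, so the hypothesis $m \geq p + 2$ becomes $r \geq 2$ and the condition $q = m - 1$ becomes $q = r + p - 1$. By Theorem \ref{theorem: F_v(2_(m - p), p; q) leq F_v(a_1, ..., a_s; q) leq wFv(m)(p)(q)} applied with this $q$, I obtain
\begin{equation*}
F_v(a_1, \ldots, a_s; m - 1) \geq F_v(2_{m - p}, p; m - 1) = F_v(2_r, p; r + p - 1).
\end{equation*}

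Next, since the hypothesis $r \geq \rp(p)$ is in force, Theorem \ref{theorem: rp}(a) applies and gives
\begin{equation*}
F_v(2_r, p; r + p - 1) = F_v(2_{\rp}, p; \rp + p - 1) + r - \rp.
\end{equation*}
Chaining the previous inequality with this equality yields the main bound. The special case $\rp = 2$ then follows by substituting $\rp = 2$ and noting that $F_v(2_2, p; p + 1) = F_v(2, 2, p; p + 1)$.

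There is no serious obstacle; the only thing to verify carefully is the bookkeeping of parameters, namely that the Folkman number $F_v(2_r, p; r + p - 1)$ which appears on the right-hand side of the first inequality is exactly the one covered by Theorem \ref{theorem: rp}(a), i.e.\ that $r \geq \rp$ guarantees its existence and the validity of the equality. Both facts are immediate from (\ref{equation: F_v(a_1, ..., a_s; q) exists}) and the statement of Theorem \ref{theorem: rp}.
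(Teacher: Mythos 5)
Your proposal is correct and follows exactly the paper's own argument: apply Theorem \ref{theorem: F_v(2_(m - p), p; q) leq F_v(a_1, ..., a_s; q) leq wFv(m)(p)(q)} with $q = m - 1$ to get $F_v(a_1, ..., a_s; m - 1) \geq F_v(2_r, p; r + p - 1)$, then chain with the equality from Theorem \ref{theorem: rp}(a). The parameter bookkeeping you flag is handled the same way in the paper, so nothing is missing.
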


\begin{proof}
	According to Theorem \ref{theorem: F_v(2_(m - p), p; q) leq F_v(a_1, ..., a_s; q) leq wFv(m)(p)(q)},
	
	$F_v(a_1, ..., a_s; m - 1) \geq F_v(2_r, p; r + p - 1)$.\\
	From this inequality and Theorem \ref{theorem: rp}(a) we obtain the desired inequality.
\end{proof}

\begin{example}
	\label{example: rp(2) = 4, rp(3) = 3, rp(4) = 2}
	From (\ref{equation: F_v(a_1, ..., a_s, m - 1) = ...}) and $F_v(2, 2, 2; 3) = F_v(2, 2, 2, 2; 4) = 11$ it follows $\rp(2) = 4$, and from (\ref{equation: F_v(a_1, ..., a_s, m - 1) = ...}) and $F_v(2, 2, 3; 4) = 14$ it follows $\rp(3) = 3$. Also, from (\ref{equation: F_v(a_1, ..., a_s, m - 1) = ...}) we see that $\rp(4) = 2$.
\end{example}

We suppose that the following is true:
\begin{conjecture}
	\label{conjecture: rp(p) = 2, p geq 4}
	Let $p \geq 4$ be a fixed integer. Then,
	\begin{equation*}
	\min_{r \geq 2} \set{F_v(2_r, p; r + p - 1) - r} = F_v(2, 2, p; p + 1) - 2,
	\end{equation*}
	i.e. $\rp(p) = 2$, and
	\begin{equation*}
	F_v(2_r, p; r + p - 1) = F_v(2, 2, p; p + 1) + r - 2, \ r \geq 2.
	\end{equation*}
\end{conjecture}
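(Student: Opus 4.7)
The plan is to prove the equivalent formulation $\rp(p)=2$ for every $p\geq 4$. Since the upper bound $F_v(2_r, p; r+p-1) \leq F_v(2, 2, p; p+1) + r - 2$ follows immediately from Lemma~\ref{lemma: F_v(2_r, p; r + p - 1) leq F_v(2_s, p; s + p - 1) + r - s} applied with $s=2$, the real content is the matching lower bound, which by Theorem~\ref{theorem: rp}(a)--(b) is equivalent to excluding $\rp(p)\geq 3$. I would argue by contradiction: assume $\rp=\rp(p)\geq 3$ and let $G$ be an extremal graph in $\mH_v(2_{\rp},p;\rp+p-1)$, so that $\abs{\V(G)} < F_v(2,2,p;p+1)+\rp-2$ by the minimality defining $\rp$.

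The first natural step is to exploit Theorem~\ref{theorem: rp}(c), which forces $G \arrowsv (2,\rp+p-2)$. Together with Theorem~\ref{theorem: F_v(a_1, ..., a_s; m) = m + p} and the chromatic bound $\chi(G)\geq \rp+p$ coming from (\ref{equation: G arrowsv (a_1, ..., a_s) Rightarrow chi(G) geq m}), this already yields $\abs{\V(G)}\geq 2\rp+2p-2$, recovering Theorem~\ref{theorem: rp}(d). The next step would be an iterative refinement: for every independent set $A\subseteq \V(G)$, Proposition~\ref{proposition: G - A arrowsv (a_1, ..., a_(i - 1), a_i - 1, a_(i + 1_, ..., a_s)} yields $G-A\arrowsv (2_{\rp-1},p)$. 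If one could locate an independent set $A$ with $\omega(G-A)<\rp+p-2$, then $G-A\in \mH_v(2_{\rp-1},p;\rp+p-2)$, and the very definition of $\rp$ would force $\abs{\V(G-A)} > \abs{\V(G)} - 1$, contradicting $\abs{A}\geq 1$.

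The hard part will therefore be the degenerate case in which every maximum independent set leaves a $(\rp+p-2)$-clique behind, meaning that the $(\rp+p-2)$-cliques of $G$ form a family that hits every maximum independent set. Here I would fix such a clique $Q$ and analyze the bipartite structure between $Q$ and $\V(G)\setminus Q$: every $v\notin Q$ misses at least one vertex of $Q$ (because $\omega(G)<\rp+p-1$), and the two arrowing relations $G\arrowsv (2_{\rp},p)$ and $G\arrowsv (2,\rp+p-2)$ together should impose strong covering constraints on the non-neighborhoods. The goal is to extract from $G-Q$ an induced structure essentially witnessing $\mH_v(2,2,p;p+1)$, after absorbing $\rp-2$ of the ``extra'' vertices into an iterated join via (\ref{equation: G arrowsv (a_1, ..., a_s) Rightarrow K_t + G arrowsv (2_t, a_1, ..., a_s)}), thereby forcing $\abs{\V(G)}\geq F_v(2,2,p;p+1)+\rp-2$ and contradicting the assumed extremality.

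The main obstacle I foresee is that no general closed-form or tight upper bound for $F_v(2,2,p;p+1)$ is available, so quantitative comparisons between $\rp$ and $F_v(2,2,p;p+1)-2p$ cannot be made in a $p$-uniform way. Every verification of $\rp(p)=2$ in the present paper (see Theorems~\ref{theorem: rp(5) = 2}, \ref{theorem: rp(6) = 2}, \ref{theorem: rp(7) = 2}) first pins down $F_v(2,2,p;p+1)$ and then invokes Theorem~\ref{theorem: rp}(d) to reduce to finitely many residual values of $\rp$ that are excluded by Algorithm~\ref{algorithm: A1}--style enumeration. A uniform proof for all $p\geq 4$ therefore appears to require a qualitative structural theorem for extremal graphs in $\mH_v(2,2,p;p+1)$, which is precisely what is presently missing and why the statement is formulated as a conjecture rather than a theorem.
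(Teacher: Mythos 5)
The statement you were asked to prove is stated in the paper as Conjecture \ref{conjecture: rp(p) = 2, p geq 4}; the paper contains no proof of it, and your closing paragraph correctly diagnoses why: every verified instance ($p=4$ from (\ref{equation: F_v(a_1, ..., a_s, m - 1) = ...}), $p=5,6,7$ from Theorems \ref{theorem: rp(5) = 2}, \ref{theorem: rp(6) = 2}, \ref{theorem: rp(7) = 2}) first pins down $F_v(2,2,p;p+1)$, then uses Theorem \ref{theorem: rp}(d) to reduce to finitely many candidate values of $\rp$, and finally excludes those by computer enumeration. The partial steps you do carry out are sound and match the paper's supporting machinery: the upper bound is indeed Lemma \ref{lemma: F_v(2_r, p; r + p - 1) leq F_v(2_s, p; s + p - 1) + r - s} with $s=2$, and your contradiction argument in the second paragraph is precisely the proof of Theorem \ref{theorem: rp}(c)--(d) (for $r<\rp$ the strict inequality $F_v(2_r,p;r+p-1)-r>F_v(2_{\rp},p;\rp+p-1)-\rp$ rules out the independent set $A$ with $\omega(G-A)<\rp+p-2$).

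The genuine gap is your third paragraph. The plan to fix a clique $Q$ of size $\rp+p-2$ hitting every maximum independent set and to ``extract from $G-Q$ an induced structure essentially witnessing $\mH_v(2,2,p;p+1)$'' is not an argument: nothing in the two arrowing properties $G\arrowsv(2_{\rp},p)$ and $G\arrowsv(2,\rp+p-2)$ is shown to produce such a substructure, and no mechanism is given for ``absorbing'' $\rp-2$ vertices into a join. You should also be aware that the paper does prove a clean conditional result you did not mention, Theorem \ref{theorem: F_v(2, 2, p; p + 1) leq 2p + 5, then ...}: if $F_v(2,2,p;p+1)\leq 2p+5$ then $\rp(p)=2$, obtained by combining the general lower bound (\ref{equation: m + p + 2 leq F_v(a_1, ..., a_s; m - 1) leq m + 3p}) with Theorem \ref{theorem: F_v(a_1, ..., a_s; m - 1) geq m + p + 3}. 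So the genuinely open territory is the case $F_v(2,2,p;p+1)\geq 2p+6$ (which already occurs at $p=7$, where $F_v(2,2,7;8)=20$), and any uniform proof would have to handle exactly that regime; your proposal, as you concede, does not.
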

It is not difficult to see that Conjecture \ref{conjecture: rp(p) = 2, p geq 4} is true if and only if  for fixed $p$ the sequence $\set{F_v(2_r, p; r + p - 1)}$ is strictly increasing with respect to $r \geq 2$. From (\ref{equation: F_v(a_1, ..., a_s, m - 1) = ...}) we have $\rp(4) = 2$. Further, we prove that Conjecture \ref{conjecture: rp(p) = 2, p geq 4} is also true in the cases $p = 5$ (Theorem \ref{theorem: rp(5) = 2}), $p = 6$ (Theorem \ref{theorem: rp(6) = 2}) and $p = 7$ (Theorem \ref{theorem: rp(7) = 2}).

\vspace{1em}
Theorem \ref{theorem: rp} is published in \cite{BN15a}.

\section{Computation of the numbers $F_v(2_{m - 5}, 5; m - 1)$}

In this section we prove the following

\begin{theorem}
	\label{theorem: rp(5) = 2}
	$\rp(5) = 2$ and $F_v(2_{m - 5}, 5; m - 1) = m + 9, \ m \geq 7$.
\end{theorem}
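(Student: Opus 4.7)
The equality $F_v(2_{m-5}, 5; m-1) = m + 9$ for $m \geq 7$, after substituting $r = m - 5$, is the same as $F_v(2_r, 5; r+4) = r + 14$ for $r \geq 2$. The upper bound follows at once from Lemma \ref{lemma: F_v(2_r, p; r + p - 1) leq F_v(2_s, p; s + p - 1) + r - s} (with $p = 5$, $s = 2$) and Theorem \ref{theorem: F_v(2, 2, 5; 6) = 16}. For the matching lower bound I plan to prove $F_v(2_r, 5; r+4) \geq r + 14$ by induction on $r$; this forces the minimum in the definition of $\rp(5)$ to occur at $r = 2$, whence $\rp(5) = 2$, and Theorem \ref{theorem: rp}(b) then delivers the closed-form equality.

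The base case $r = 2$ is Theorem \ref{theorem: F_v(2, 2, 5; 6) = 16}. For the inductive step with $r \geq 3$, let $G \in \mH_v(2_r, 5; r+4)$ satisfy $n := |V(G)| \leq r + 13$. If some non-empty independent set $A \subseteq V(G)$ has $\omega(G - A) \leq r + 2$, then Proposition \ref{proposition: G - A arrowsv (a_1, ..., a_(i - 1), a_i - 1, a_(i + 1_, ..., a_s)} places $G - A$ in $\mH_v(2_{r-1}, 5; r+3)$, and the induction hypothesis forces $|V(G - A)| \geq r + 13$, so $n \geq r + 14$, a contradiction. Hence $\omega(G - A) = r + 3$ for every independent $A$ (in particular $\omega(G) = r + 3$), i.e., $G \arrowsv (2, r + 3)$; therefore $G \in \mH_v(2, r+3; r+4)$, and Theorem \ref{theorem: F_v(a_1, ..., a_s; m) = m + p} gives $n \geq 2r + 7$. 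Since the unique extremal graph $\overline{C}_{2r+7}$ has chromatic number $r + 4$ while $\chi(G) \geq r + 5$ by (\ref{equation: G arrowsv (a_1, ..., a_s) Rightarrow chi(G) geq m}), we in fact have $n \geq 2r + 8$. For $r \geq 6$ this already contradicts $n \leq r + 13$ and the induction closes.

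The three residual cases $r \in \{3, 4, 5\}$ are not dispatched by the theoretical argument and will be settled by computer. The remaining structural constraints $\omega(G) = r + 3$, $\chi(G) \geq r + 5$, $\alpha(G) \leq 3$ (from Theorem \ref{theorem: alpha(G) leq V(G) + m + p - 1}(b)), and $2r + 8 \leq n \leq r + 13$ leave only the windows $n \in \{15, 16\}$ for $r = 3$, $n \in \{16, 17\}$ for $r = 4$, and $n = 18$ for $r = 5$, placing every candidate $G$ in a narrow Ramsey class such as $\mR(4, 7; 16)$ or $\mR(3, 7; 15)$. I plan to verify $\mH_v(2_r, 5; r+4; n) = \emptyset$ for each such $(r, n)$ by running Algorithm \ref{algorithm: A1}, bootstrapped from \emph{nauty}-generated sets $\mH_{max}(5; r+4; \cdot)$ through the successive layers $\mH_{max}(2_k, 5; r+4; \cdot)$ for $k = 1, \dots, r$, and then testing the arrow property on every candidate graph produced. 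The principal obstacle will be the computational cost of the case $r = 3$, where the Ramsey class $\mR(4, 7; 16)$ is largest; the structural pruning above, together with the inductive bootstrapping inherent in Algorithm \ref{algorithm: A1}, are what keep the enumeration feasible. Once these three cases are confirmed, the induction closes, $\rp(5) = 2$ is established, and Theorem \ref{theorem: rp}(b) finishes the proof.
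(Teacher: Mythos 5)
Your proposal is correct and follows essentially the same route as the paper: your induction on $r$ is a repackaging of Theorem \ref{theorem: rp}(c)--(d) (the extremal graph must satisfy $G \arrowsv (2, r+3)$, forcing $n \geq 2r+8$ and hence $\rp(5) \leq 5$), and the residual cases $r = 3, 4, 5$ are exactly the three inequalities $F_v(2,2,2,5;7) > 16$, $F_v(2,2,2,2,5;8) > 17$, $F_v(2,2,2,2,2,5;9) > 18$ that the paper verifies by bootstrapping Algorithm \ref{algorithm: A1} from \emph{nauty}-generated maximal graphs. The only cosmetic difference is that the paper observes, via Lemma \ref{lemma: F_v(2_r, p; r + p - 1) leq F_v(2_s, p; s + p - 1) + r - s}, that the $r=5$ computation alone suffices.
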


\begin{proof}
	First, we will prove that $\rp(5) = 2$. From Theorem \ref{theorem: rp}(d) we have $\rp(5) \leq 5$. Therefore, we have to prove that $\rp(5) \neq 3$, $\rp(5) \neq 4$ and $\rp(5) \neq 5$, i.e. we have to prove the inequalities $F_v(2, 2, 2, 5; 7) > 16$, $F_v(2, 2, 2, 2, 5; 8) > 17$, $F_v(2, 2, 2, 2, 2, 5; 9) > 18$.
	
	According to Lemma \ref{lemma: F_v(2_r, p; r + p - 1) leq F_v(2_s, p; s + p - 1) + r - s}, it is enough to prove the last inequality. We will prove all three inequalities, because further we will use the results of these computations to check the correctness of Algorithm \ref{algorithm: A2}.\\

\vspace{-0.5em}
	
1. Proof of $F_v(2, 2, 2, 5; 7) > 16$.
	
	Using \emph{nauty} \cite{MP13} we generate all 10-vertex non-isomorphic graphs and among them we find all graphs in $\mH_{max}(5; 7; 10)$.
	
	We execute Algorithm \ref{algorithm: A1}($n = 12;\ r = 1;\ p = 5;\ q = 7;\ k = 2$) with input $\mA = \mH_{max}(5; 7; 10)$ to obtain all graphs in $\mB = \mH_{max}(2, 5; 7; 12)$. (see Remark \ref{remark: algorithm A1 k = 2})
		
	We execute Algorithm \ref{algorithm: A1}($n = 14;\ r = 2;\ p = 5;\ q = 7;\ k = 2$) with input $\mA = \mH_{max}(2, 5; 7; 12)$ to obtain all graphs in $\mB = \mH_{max}(2, 2, 5; 7; 14)$.
	
	By executing Algorithm \ref{algorithm: A1}($n = 16;\ r = 3;\ p = 5;\ q = 7;\ k = 2$) with input $\mA = \mH_{max}(2, 2, 5; 7; 14)$, we obtain $\mB = \emptyset$. According to Theorem \ref{theorem: algorithm A1}, $\mH_v(2, 2, 2, 5; 7; 16) = \emptyset$, and therefore $F_v(2, 2, 2, 5; 7) > 16$.
			
	The number of graphs obtained on each step of the proof is given in Table \ref{table: F_v(2, 2, 2, 5; 7) > 16}.\\
	
\vspace{-0.5em}

2. Proof of $F_v(2, 2, 2, 2, 5; 8) > 17$.
	
	Using \emph{nauty} \cite{MP13} we generate all 9-vertex non-isomorphic graphs and among them we find all graphs in $\mH_{max}(5; 8; 9)$. Actually, it is easy to see that $\mH_{max}(5; 8; 9) = \set{\overline{K}_3 + K_6, C_4 + K_5}$.
	
	We execute Algorithm \ref{algorithm: A1}($n = 11;\ r = 1;\ p = 5;\ q = 8;\ k = 2$) with input $\mA = \mH_{max}(5; 8; 9)$ to obtain all graphs in $\mB = \mH_{max}(2, 5; 8; 11)$ (see Remark \ref{remark: algorithm A1 k = 2}).
	
	We execute Algorithm \ref{algorithm: A1}($n = 13;\ r = 2;\ p = 5;\ q = 8;\ k = 2$) with input $\mA = \mH_{max}(2, 5; 8; 11)$ to obtain all graphs in $\mB = \mH_{max}(2, 2, 5; 8; 13)$.
		
	We execute Algorithm \ref{algorithm: A1}($n = 15;\ r = 3;\ p = 5;\ q = 8;\ k = 2$) with input $\mA = \mH_{max}(2, 2, 5; 8; 13)$ to obtain all graphs in $\mB = \mH_{max}(2, 2, 2, 5; 8; 15)$.
	
	By executing Algorithm \ref{algorithm: A1}($n = 17;\ r = 4;\ p = 5;\ q = 8;\ k = 2$) with input $\mA = \mH_{max}(2, 2, 2, 5; 8; 15)$, we obtain $\mB = \emptyset$. According to Theorem \ref{theorem: algorithm A1}, $\mH_v(2, 2, 2, 2, 5; 8; 17) = \emptyset$, and therefore $F_v(2, 2, 2, 2, 5; 8) > 17$.
	
	The number of graphs obtained on each step of the proof is given in Table \ref{table: F_v(2, 2, 2, 2, 5; 8) > 17}.\\
	
\vspace{-0.5em}

3. Proof of $F_v(2, 2, 2, 2, 2, 5; 9) > 18$.
	
	Using \emph{nauty} \cite{MP13} we generate all 10-vertex non-isomorphic graphs and among them we find all graphs in $\mH_{max}(2, 5; 9; 10)$. Actually, it is easy to see that $\mH_{max}(2, 5; 9; 10) = \set{\overline{K}_3 + K_7, C_4 + K_6}$.
	
	We execute Algorithm \ref{algorithm: A1}($n = 12;\ r = 2;\ p = 5;\ q = 9;\ k = 2$) with input $\mA = \mH_{max}(2, 5; 9; 10)$ to obtain all graphs in $\mB = \mH_{max}(2, 2, 5; 9; 12)$ (see Remark \ref{remark: algorithm A1 k = 2}).
	
	We execute Algorithm \ref{algorithm: A1}($n = 14;\ r = 3;\ p = 5;\ q = 9;\ k = 2$) with input $\mA = \mH_{max}(2, 2, 5; 9; 12)$ to obtain all graphs in $\mB = \mH_{max}(2, 2, 2, 5; 9; 14)$.
	
	We execute Algorithm \ref{algorithm: A1}($n = 16;\ r = 4;\ p = 5;\ q = 9;\ k = 2$) with input $\mA = \mH_{max}(2, 2, 2, 5; 9; 14)$ to obtain all graphs in $\mB = \mH_{max}(2, 2, 2, 2, 5; 9; 16)$.
	
	By executing Algorithm \ref{algorithm: A1}($n = 18;\ r = 5;\ p = 5;\ q = 9;\ k = 2$) with input $\mA = \mH_{max}(2, 2, 2, 2, 5; 9; 16)$, we obtain $\mB = \emptyset$. According to Theorem \ref{theorem: algorithm A1}, $\mH_v(2, 2, 2, 2, 2, 5; 9; 18) = \emptyset$, and therefore $F_v(2, 2, 2, 2, 2, 5; 9) > 18$.
	
	The number of graphs obtained on each step of the proof is given in Table \ref{table: F_v(2, 2, 2, 2, 2, 5; 9) > 18}.\\
	
We proved that $\rp(5) = 2$. Now, from Theorem \ref{theorem: rp}(b) we obtain
$$F_v(2_{m - 5}, 5; m - 1) = m + 9, \ m \geq 7.$$

Thus, the proof of Theorem \ref{theorem: rp(5) = 2} is finished.
\end{proof}

\begin{table}[h]
	\centering
	\resizebox{0.7\textwidth}{!}{
		\begin{tabular}{ | l | r | r | }
			\hline
			{\parbox{10em}{set}}&
			{\parbox{4em}{maximal\\ graphs}}&
			{\parbox{6em}{\hfill $(+K_6)$-graphs}}\\
			\hline
		$\mH_v(5; 7; 10)$						& 8					& 324		\\
		$\mH_v(2, 5; 7; 12)$					& 56				& 104 283	\\
		$\mH_v(2, 2, 5; 7; 14)$				& 420				& 2 614 547	\\
		$\mH_v(2, 2, 2, 5; 7; 16)$			& 0					& 0			\\
		\hline
	\end{tabular}
}
	\vspace{-0.5em}
	\caption{Steps in the proof of $\mH_v(2, 2, 2, 5; 7; 16) = \emptyset$}
	\label{table: F_v(2, 2, 2, 5; 7) > 16}
	
	\vspace{1em}
	\centering
	\resizebox{0.7\textwidth}{!}{
		\begin{tabular}{ | l | r | r | }
			\hline
			{\parbox{10em}{set}}&
			{\parbox{4em}{maximal\\ graphs}}&
			{\parbox{6em}{\hfill $(+K_7)$-graphs}}\\
			\hline
		$\mH_v(5; 8; 9)$					& 2					& 13		\\
		$\mH_v(2, 5; 8; 11)$				& 8					& 326		\\
		$\mH_v(2, 2, 5; 8; 13)$			& 56				& 105 138	\\
		$\mH_v(2, 2, 2, 5; 8; 15)$		& 423				& 2 616 723	\\
		$\mH_v(2, 2, 2, 2, 5; 8; 17)$		& 0					& 0			\\
		\hline
	\end{tabular}
}
	\vspace{-0.5em}
	\caption{Steps in the proof of $\mH_v(2, 2, 2, 2, 5; 8; 17) = \emptyset$}
	\label{table: F_v(2, 2, 2, 2, 5; 8) > 17}
	
	\vspace{1em}
	\centering
	\resizebox{0.7\textwidth}{!}{
		\begin{tabular}{ | l | r | r | }
			\hline
			{\parbox{10em}{set}}&
			{\parbox{4em}{maximal\\ graphs}}&
			{\parbox{6em}{\hfill $(+K_8)$-graphs}}\\
			\hline
		$\mH_v(2, 5; 9; 10)$				& 2					& 13		\\
		$\mH_v(2, 2, 5; 9; 12)$			& 8					& 327		\\
		$\mH_v(2, 2, 2, 5; 9; 14)$		& 56				& 105 294	\\
		$\mH_v(2, 2, 2, 2, 5; 9; 16)$		& 423				& 2 616 741	\\
		$\mH_v(2, 2, 2, 2, 2, 5; 9; 18)$	& 0					& 0			\\
		\hline
	\end{tabular}
}
	\vspace{-0.5em}
	\caption{Steps in the proof of $\mH_v(2, 2, 2, 2, 2, 5; 9; 18) = \emptyset$}
	\label{table: F_v(2, 2, 2, 2, 2, 5; 9) > 18}
\end{table}

\vspace{-0.5em}

All computations were done on a personal computer. The slowest part was the proof of $F_v(2, 2, 2, 2, 2, 5; 9) > 18$, which took several days to complete.

\vspace{-0.5em}

\vspace{1em}
Theorem \ref{theorem: rp(5) = 2} is published in \cite{BN15a}.

\vspace{-0.5em}

\section{Algorithm A2}

\vspace{-0.5em}

Algorithm \ref{algorithm: A1} is related to the special Folkman numbers of the form $F_v(2_r, p; q; n)$. In this section we present Algorithm \ref{algorithm: A2} with the help of which we can compute and obtain bounds on arbitrary Folkman numbers of the form $F_v(a_1, ..., a_s; q)$. This Algorithm A2 outputs all graphs $G \in \mH_{max}(a_1, ..., a_s; q; n)$ with $k \leq \alpha(G) \leq t$. Let us remind that the set of all graphs $G \in \mH_{max}(a_1, ..., a_s; q; n)$ with $\alpha(G) \leq t$ is denoted by $\mH_{max}^t(a_1, ..., a_s; q; n)$.

\begin{namedalgorithm}{A2}
	\label{algorithm: A2}
	The input of the algorithm is the set $\mA = \mH_{max}^t(a_1 - 1, a_2, ..., a_s; q; n - k)$, where $a_1, ..., a_s, q, n, k, t$ are fixed positive integers, $a_1 \geq 2$ and $k \leq t$.
	 
	The output of the algorithm is the set $\mB$ of all graphs $G \in \mH_{max}^t(a_1, ..., a_s; q; n)$ with $\alpha(G) \geq k$.
	
	\emph{1.} By removing edges from the graphs in $\mA$ obtain the set
	
	$\mA' = \mH_{+K_{q - 1}}^t(a_1 - 1, a_2, ..., a_s; q; n - k)$.
	
	\emph{2.} For each graph $H \in \mA'$:
	
	\emph{2.1.} Find the family $\mM(H) = \set{M_1, ..., M_l}$ of all maximal $K_{q - 1}$-free subsets of $\V(H)$.
	
	\emph{2.2.} For each $k$-element multiset $N = \set{M_{i_1}, ..., M_{i_k}}$ of elements of $\mM(H)$ construct the graph $G = G(N)$ by adding new independent vertices $v_1, ..., v_k$ to $\V(H)$ such that $N_G(v_j) = M_{i_j}, j = 1, ..., k$. If $\alpha(G) \leq t$ and $\omega(G + e) = q, \forall e \in \E(\overline{G})$, then add $G$ to $\mB$.
	
	\emph{3.} Remove the isomorphic copies of graphs from $\mB$.
	
	\emph{4.} Remove from $\mB$ all graphs $G$ for which $G \not\arrowsv (a_1, ..., a_s)$.
	
\end{namedalgorithm}

Clearly, the special case $a_1 = ... = a_{s - 1} = 2, a_s = p$ of Algorithm \ref{algorithm: A2} for sufficiently large $t$ coincides with Algorithm \ref{algorithm: A1}. In this sense, Algorithm \ref{algorithm: A2} is a generalization of Algorithm \ref{algorithm: A1}.

\begin{theorem}
	\label{theorem: algorithm A2}
	After the execution of Algorithm \ref{algorithm: A2}, the obtained set $\mB$ coincides with the set of all graphs $G \in \mH_{max}^t(a_1, ..., a_s; q; n)$ with $\alpha(G) \geq k$.
\end{theorem}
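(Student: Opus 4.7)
The plan is to mirror the structure of the proof of Theorem \ref{theorem: algorithm A1}, since Algorithm \ref{algorithm: A2} is built along the same lines but with two new features: the first parameter $a_1$ is no longer required to be $2$, and the bound $\alpha(G) \leq t$ is tracked throughout. Accordingly, I would prove the two inclusions $\mB \subseteq \{G \in \mH_{max}^t(a_1, \ldots, a_s; q; n) : \alpha(G) \geq k\}$ and $\{G \in \mH_{max}^t(a_1, \ldots, a_s; q; n) : \alpha(G) \geq k\} \subseteq \mB$ separately.

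For soundness, suppose $G \in \mB$ after the algorithm terminates. Then $G = G(N)$ for some multiset $N$ as in step 2.2, and $H := G - \{v_1, \ldots, v_k\} \in \mA'$. From $H \in \mA'$ we have $\omega(H) < q$, and since every $N_G(v_i) \in \mM(H)$ is $K_{q-1}$-free, no new $q$-clique through any $v_i$ is created, so $\omega(G) < q$. The $v_i$ are pairwise non-adjacent, giving $\alpha(G) \geq k$. The acceptance test at the end of step 2.2 enforces $\alpha(G) \leq t$ and that $G$ is a maximal $K_q$-free graph (i.e.\ a $(+K_q)$-graph), and step 4 ensures $G \arrowsv (a_1, \ldots, a_s)$. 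Together these give $G \in \mH_{max}^t(a_1, \ldots, a_s; q; n)$ with $\alpha(G) \geq k$.

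For completeness, let $G \in \mH_{max}^t(a_1, \ldots, a_s; q; n)$ with $\alpha(G) \geq k$. Choose an independent set $A = \{v_1, \ldots, v_k\} \subseteq \V(G)$ and set $H = G - A$. By Proposition \ref{proposition: G - A in mH_(+K_(q - 1))(a_1 - 1, a_2, ..., a_s; q; n - abs(A))}, $H \in \mH_{+K_{q - 1}}(a_1 - 1, a_2, \ldots, a_s; q; n - k)$; since $\alpha(H) \leq \alpha(G) \leq t$, in fact $H \in \mH_{+K_{q - 1}}^t(a_1 - 1, a_2, \ldots, a_s; q; n - k) = \mA'$. Because $G$ is a maximal $K_q$-free graph, each neighborhood $N_G(v_i)$ must be a maximal $K_{q-1}$-free subset of $\V(H)$ (otherwise the edge from $v_i$ to a vertex that could be joined without creating a $q$-clique would contradict maximality of $G$), so $N_G(v_i) \in \mM(H)$. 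Therefore $G = G(N)$ with $N = \{N_G(v_1), \ldots, N_G(v_k)\}$ arises in step 2.2 and, passing the tests there and in step 4, is added to $\mB$; the isomorph pruning in step 3 preserves it up to isomorphism.

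The only subtlety compared with the proof of Theorem \ref{theorem: algorithm A1} is the bookkeeping for the extra hypothesis $\alpha \leq t$: one must check that this bound propagates from $G$ to $H$ in the completeness direction (immediate, since deleting vertices cannot increase the independence number), and conversely that it is verified for $G = G(N)$ in step 2.2 before insertion into $\mB$. Everything else — the use of Proposition \ref{proposition: G - A in mH_(+K_(q - 1))(a_1 - 1, a_2, ..., a_s; q; n - abs(A))} to pass between $\mH_{max}$ with parameters $(a_1, \ldots, a_s)$ and $\mH_{+K_{q-1}}$ with parameters $(a_1 - 1, a_2, \ldots, a_s)$, and the characterization of neighborhoods of added vertices as maximal $K_{q-1}$-free sets — is formally identical to the Algorithm \ref{algorithm: A1} case. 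I do not expect any genuine obstacle, only care to keep the $t$-bound in each step.
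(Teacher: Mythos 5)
Your proof is correct and follows essentially the same route as the paper's: both directions are argued exactly as in the proof of Theorem \ref{theorem: algorithm A1}, with $\omega(G)<q$ deduced from $H\in\mA'$ and the $K_{q-1}$-freeness of the added neighborhoods, the two checks in step 2.2 supplying maximality and $\alpha(G)\leq t$, step 4 supplying the arrowing property, and Proposition \ref{proposition: G - A in mH_(+K_(q - 1))(a_1 - 1, a_2, ..., a_s; q; n - abs(A))} used in the converse direction to place $H$ in $\mA'$. Your explicit remark that $\alpha(H)\leq\alpha(G)\leq t$ is the only bookkeeping needed for the $t$-bound is a point the paper leaves implicit, but there is no substantive difference.
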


\begin{proof}
	Naturally, the proof follows the same pattern as the proof of Theorem \ref{theorem: algorithm A1}.
	Suppose that after the execution of Algorithm \ref{algorithm: A2} the graph $G \in \mB$. Then $G = G(N)$ where $N$ and the following notations are the same as in step 2.2. Since $H = G - \set{v_1, ..., v_k} \in \mA'$, we have $\omega(H) < q$. Since $N_G(v_i)$ are $K_{q - 1}$-free sets for all $i \in \set{1, ..., k}$, it follows that $\omega(G) < q$. The two checks at the end of step 2.2 guarantees that $G$ is a maximal $K_q$-free graph and $\alpha(G) \leq t$. The check in step 4 guarantees that $G \arrowsv (a_1, ..., a_s)$. We obtained $G \in \mH_{max}^t(a_1, ..., a_s; q; n)$. Since the vertices $v_1, ..., v_k$ are independent, it follows that $\alpha(G) \geq k$.
	
	Let $G \in \mH_{max}^t(a_1, ..., a_s; q; n)$ and $\alpha(G) \geq k$. We will prove that, after the execution of Algorithm \ref{algorithm: A2}, $G \in \mB$. Let $v_1, ... v_k$ be independent vertices in $G$ and $H = G - \set{v_1, ..., v_k}$. Using Proposition \ref{proposition: G - A in mH_(+K_(q - 1))(a_1 - 1, a_2, ..., a_s; q; n - abs(A))} we derive that $H \in \mA'$. Since $G$ is a maximal $K_q$-free graph, $N_G(v_i)$ are maximal $K_{q - 1}$-free subsets of $V(H)$ for all $i \in \set{1, ..., k}$, and therefore $N_G(v_i) \in \mM(H)$, see step 2.1. Thus, $G = G(N)$ where $N = \set{N_G(v_1), ..., N_G(v_k)}$ and in step 2.2 $G$ is added to $\mB$. Clearly, after step 4 the graph $G$ remains in $\mB$.
\end{proof}

\begin{remark}
	\label{remark: algorithm A2 k = 2}
	Note that if $G \in \mH_{max}^t(a_1, ..., a_s; q; n)$ and $n \geq q$, then $G$ is not a complete graph and $\alpha(G) \geq 2$. Therefore, if $n \geq q$ and $k = 2$, Algorithm \ref{algorithm: A2} finds all graphs in $\mH_{max}^t(a_1, ..., a_s; q; n)$.
\end{remark}

We will demonstrate the advantages of Algorithm \ref{algorithm: A2} by giving a second proof of Theorem \ref{theorem: rp(5) = 2}, which requires less computational time compared to Algorithm \ref{algorithm: A1}:

\subsection*{Second proof of Theorem \ref{theorem: rp(5) = 2}}
	
	As we already showed in the first proof of Theorem \ref{theorem: rp(5) = 2}, it is enough to prove the inequality $F_v(2, 2, 2, 2, 2, 5; 9) > 18$.
	
	Suppose that $\mH_{max}(2, 2, 2, 2, 2, 5; 9; 18) \neq \emptyset$ and let $G \in \mH_{max}(2, 2, 2, 2, 2, 5; 9; 18)$. It is clear that $\alpha(G) \geq 2$. From Theorem \ref{theorem: alpha(G) leq V(G) + m + p - 1}(b) it follows that $\alpha(G) \leq 3$. Using Algorithm \ref{algorithm: A2} we will prove separately that there are no graphs with independence number 2 and no graphs with independence number 3 in $\mH_{max}(2, 2, 2, 2, 2, 5; 9; 18)$.\\
	
	\vspace{-0.5em}
	
	First, we prove that there are no graphs in $\mH_{max}(2, 2, 2, 2, 2, 5; 9; 18)$ with independence number 3:
	
	The only graph in $\mH_{max}(5; 9; 7)$ is $K_7$.
	
	We execute Algorithm \ref{algorithm: A2}($n = 9;\ k = 2;\ t = 3$) with input $\mA = \mH_{max}^3(1, 5; 9; 7) = \mH_{max}^3(5; 9; 7) = \set{K_7}$ to obtain all graphs in $\mB = \mH_{max}^3(2, 5; 9; 9)$. (see Remark \ref{remark: algorithm A2 k = 2})
	
	We execute Algorithm \ref{algorithm: A2}($n = 11;\ k = 2;\ t = 3$) with input $\mA = \mH_{max}^3(1, 2, 5; 9; 9) = \mH_{max}^3(2, 5; 9; 9)$ to obtain all graphs in $\mB = \mH_{max}^3(2, 2, 5; 9; 11)$.
	
	We execute Algorithm \ref{algorithm: A2}($n = 13;\ k = 2;\ t = 3$) with input $\mA = \mH_{max}^3(1, 2, 2, 5; 9; 11) = \mH_{max}^3(2, 2, 5; 9; 11)$ to obtain all graphs in $\mB = \mH_{max}^3(2, 2, 2, 5; 9; 13)$.
		
	We execute Algorithm \ref{algorithm: A2}($n = 15;\ k = 2;\ t = 3$) with input $\mA = \mH_{max}^3(1, 2, 2, 2, 5; 9; 13) = \mH_{max}^3(2, 2, 2, 5; 9; 13)$ to obtain all graphs in $\mB = \mH_{max}^3(2, 2, 2, 2, 5; 9; 15)$.
	
	By executing Algorithm \ref{algorithm: A2}($n = 18;\ k = 3;\ t = 3$) with input $\mA = \mH_{max}^3(1, 2, 2, 2, 2, 5; 9; 15) = \mH_{max}^3(2, 2, 2, 2, 5; 9; 15)$, we obtain $\mB = \emptyset$. On the other hand, according to Theorem \ref{theorem: algorithm A2}, $\mB$ consists of all graphs with independence number 3 in $\mH_{max}(2, 2, 2, 2, 2, 5; 9; 18)$. We obtained that there are no graphs in $\mH_{max}(2, 2, 2, 2, 2, 5; 9; 18)$ with independence number 3.\\
					
	\vspace{-0.5em}
	
	It remains to be proved that there are no graphs in $\mH_{max}(2, 2, 2, 2, 2, 5; 9; 18)$ with independence number 2:
	
	The only graph in $\mH_{max}(5; 9; 8)$ is $K_8$.
	
	We execute Algorithm \ref{algorithm: A2}($n = 10;\ k = 2;\ t = 2$) with input $\mA = \mH_{max}^2(1, 5; 9; 8) = \mH_{max}^2(5; 9; 8) = \set{K_8}$ to obtain all graphs in $\mB = \mH_{max}^2(2, 5; 9; 10)$. (see Remark \ref{remark: algorithm A2 k = 2})
		
	We execute Algorithm \ref{algorithm: A2}($n = 12;\ k = 2;\ t = 2$) with input $\mA = \mH_{max}^2(1, 2, 5; 9; 10) = \mH_{max}^2(2, 5; 9; 10)$ to obtain all graphs in $\mB = \mH_{max}^2(2, 2, 5; 9; 12)$.
		
	We execute Algorithm \ref{algorithm: A2}($n = 14;\ k = 2;\ t = 2$) with input $\mA = \mH_{max}^2(1, 2, 2, 5; 9; 12) = \mH_{max}^2(2, 2, 5; 9; 12)$ to obtain all graphs in $\mB = \mH_{max}^2(2, 2, 2, 5; 9; 14)$.
		
	We execute Algorithm \ref{algorithm: A2}($n = 16;\ k = 2;\ t = 2$) with input $\mA = \mH_{max}^2(1, 2, 2, 2, 5; 9; 14) = \mH_{max}^2(2, 2, 2, 5; 9; 14)$ to obtain all graphs in $\mB = \mH_{max}^2(2, 2, 2, 2, 5; 9; 16)$.
		
	By executing Algorithm \ref{algorithm: A2}($n = 18;\ k = 2;\ t = 2$) with input $\mA = \mH_{max}^2(1, 2, 2, 2, 2, 5; 9; 16) = \mH_{max}^2(2, 2, 2, 2, 5; 9; 16)$, we obtain $\mB = \emptyset$. On the other hand, according to Theorem \ref{theorem: algorithm A2}, $\mB$ consists of all graphs with independence number 2 in $\mH_{max}(2, 2, 2, 2, 2, 5; 9; 18)$. We obtained that there are no graphs in $\mH_{max}(2, 2, 2, 2, 2, 5; 9; 18)$ with independence number 2.\\
					
	\vspace{-1em}
		
	Thus, we proved $\mH_{max}(2, 2, 2, 2, 2, 5; 9; 18) = \emptyset$, and therefore $F_v(2, 2, 2, 2, 2, 5; 9) > 18$. \qed
	
	\vspace{1em}
	
	The number of graphs obtained in each step of the proof is given in Table \ref{table: finding all graphs in H_v(2, 2, 2, 2, 2, 5; 9; 18)}.
	
	Let us note that the fact that there are no graphs in $\mH_{max}(2, 2, 2, 2, 2, 5; 9; 18)$ with independence number greater than 3 can be proved without using Theorem \ref{theorem: alpha(G) leq V(G) + m + p - 1}(b), but by applying Algorithm \ref{algorithm: A2} with $k > 3$ instead. We performed this check to test Algorithm \ref{algorithm: A2}.
	
	We also used Algorithm \ref{algorithm: A2} to give similar second proofs of the inequalities $F_v(2, 2, 2, 5; 7) > 16$ and $F_v(2, 2, 2, 2, 5; 8) > 17$, see Table \ref{table: finding all graphs in H_v(2, 2, 2, 5; 7; 16)} and Table \ref{table: finding all graphs in H_v(2, 2, 2, 2, 5; 8; 17)} respectively. We compared the results of the computations to the results obtained in the previous proofs of these inequalities to check the correctness of our implementation of Algorithm \ref{algorithm: A1} and Algorithm \ref{algorithm: A2}. To demonstrate the advantages of Algorithm \ref{algorithm: A2} we will note that the slowest step in the proof of Theorem \ref{theorem: rp(5) = 2} with Algorithm \ref{algorithm: A1} is finding all 2 616 741 graphs in $\mH_{+K_8}(2, 2, 2, 2, 5; 9; 16)$ (Table \ref{table: F_v(2, 2, 2, 2, 2, 5; 9) > 18}), while the slowest step using Algorithm \ref{algorithm: A3} is finding all 230 370 graphs with independence number 2 in $\mH_{+K_8}(2, 2, 2, 2, 5; 9; 16)$ (Table \ref{table: finding all graphs in H_v(2, 2, 2, 2, 2, 5; 9; 18)}). Thus, the total computational time needed for the proof of Theorem \ref{theorem: rp(5) = 2} is reduced from several days to several hours.
	
	We see that if the independence number of the emerging graphs is taken into account, we obtain a faster algorithm. In Algorithm \ref{algorithm: A2} we account for the independence number with the check $\alpha(G) \leq t$ in step 2.2. In some of the following problems this is not effective enough. Therefore, in the next Algorithm \ref{algorithm: A3} the check $\alpha(G) \leq t$ is replaced with other conditions (see step 2.2 of Algorithm \ref{algorithm: A3}). In this thesis Algorithm \ref{algorithm: A2} has no significance by itself, but it helps to obtain Algorithm \ref{algorithm: A3} in a natural way.
	
\begin{table}
	\centering
	\vspace{-2em}
	\resizebox{0.75\textwidth}{!}{
		\begin{tabular}{ | l | l | r | r | }
			\hline
			{\parbox{9em}{set}}&
			{\parbox{5.5em}{\small independence\\ number}}&
			{\parbox{4em}{maximal\\ graphs}}&
			{\parbox{6em}{\hfill $(+K_6)$-graphs}}\\
			\hline
		$\mH_v(3; 7; 5)$					& $\leq 3$				& 1						& 1					\\
		$\mH_v(4; 7; 7)$					& $\leq 3$				& 1						& 4					\\
		$\mH_v(5; 7; 9)$					& $\leq 3$				& 3						& 45				\\
		$\mH_v(2, 5; 7; 11)$				& $\leq 3$				& 12					& 3 036				\\
		$\mH_v(2, 2, 5; 7; 13)$			& $\leq 3$				& 14					& 1 120				\\
		$\mH_v(2, 2, 2, 5; 7; 16)$		& $= 3$					& 0						&					\\
		\hline
		$\mH_v(3; 7; 6)$					& $\leq 2$				& 1						& 1					\\
		$\mH_v(4; 7; 8)$					& $\leq 2$				& 1						& 8					\\
		$\mH_v(5; 7; 10)$					& $\leq 2$				& 3						& 82				\\
		$\mH_v(2, 5; 7; 12)$				& $\leq 2$				& 10					& 5 046				\\
		$\mH_v(2, 2, 5; 7; 14)$			& $\leq 2$				& 84					& 229 077			\\
		$\mH_v(2, 2, 2, 5; 7; 16)$		& $= 2$					& 0						&					\\
		\hline
		$\mH_v(2, 2, 2, 5; 7; 16)$		& 						& 0						&					\\
		\hline
	\end{tabular}
}
	\vspace{-0.5em}
	\caption{\small Steps in finding all maximal graphs in $\mH_v(2, 2, 2, 5; 7; 16)$}
	\label{table: finding all graphs in H_v(2, 2, 2, 5; 7; 16)}
	\vspace{1em}
	\centering
	\resizebox{0.75\textwidth}{!}{
		\begin{tabular}{ | l | l | r | r | }
			\hline
			{\parbox{9em}{set}}&
			{\parbox{5.5em}{\small independence\\ number}}&
			{\parbox{4em}{maximal\\ graphs}}&
			{\parbox{6em}{\hfill $(+K_7)$-graphs}}\\
			\hline
		$\mH_v(4; 8; 6)$					& $\leq 3$				& 1						& 1					\\
		$\mH_v(5; 8; 8)$					& $\leq 3$				& 1						& 4					\\
		$\mH_v(2, 5; 8; 10)$				& $\leq 3$				& 3						& 45				\\
		$\mH_v(2, 2, 5; 8; 12)$			& $\leq 3$				& 12					& 3 068				\\
		$\mH_v(2, 2, 2, 5; 8; 14)$		& $\leq 3$				& 14					& 1 121				\\
		$\mH_v(2, 2, 2, 2, 5; 8; 17)$		& $= 3$					& 0						&					\\
		\hline
		$\mH_v(4; 8; 7)$					& $\leq 2$				& 1						& 1					\\
		$\mH_v(5; 8; 9)$					& $\leq 2$				& 1						& 8					\\
		$\mH_v(2, 5; 8; 11)$				& $\leq 2$				& 3						& 84				\\
		$\mH_v(2, 2, 5; 8; 13)$			& $\leq 2$				& 10					& 5 380				\\
		$\mH_v(2, 2, 2, 5; 8; 15)$		& $\leq 2$				& 87					& 230 356			\\
		$\mH_v(2, 2, 2, 2, 5; 8; 17)$		& $= 2$					& 0						&					\\
		\hline
		$\mH_v(2, 2, 2, 2, 5; 8; 17)$		& 						& 0						&					\\
		\hline
	\end{tabular}
}
	\vspace{-0.5em}
	\caption{\small Steps in finding all maximal graphs in $\mH_v(2, 2, 2, 2, 5; 8; 17)$}
	\label{table: finding all graphs in H_v(2, 2, 2, 2, 5; 8; 17)}
	\vspace{1em}
	\centering
	\resizebox{0.75\textwidth}{!}{
		\begin{tabular}{ | l | l | r | r | }
			\hline
			{\parbox{5em}{set}}&
			{\parbox{5.5em}{\small independence\\ number}}&
			{\parbox{4em}{maximal\\ graphs}}&
			{\parbox{6em}{\hfill $(+K_8)$-graphs}}\\
			\hline
		$\mH_v(5; 9; 7)$					& $\leq 3$				& 1						& 1					\\
		$\mH_v(2, 5; 9; 9)$				& $\leq 3$				& 1						& 4					\\
		$\mH_v(2, 2, 5; 9; 11)$			& $\leq 3$				& 3						& 45				\\
		$\mH_v(2, 2, 2, 5; 9; 13)$		& $\leq 3$				& 12					& 3 077				\\
		$\mH_v(2, 2, 2, 2, 5; 9; 15)$		& $\leq 3$				& 14					& 1 121				\\
		$\mH_v(2, 2, 2, 2, 2, 5; 9; 18)$	& $= 3$					& 0						&					\\
		\hline
		$\mH_v(5; 9; 8)$					& $\leq 2$				& 1						& 1					\\
		$\mH_v(2, 5; 9; 10)$				& $\leq 2$				& 1						& 8					\\
		$\mH_v(2, 2, 5; 9; 12)$			& $\leq 2$				& 3						& 85				\\
		$\mH_v(2, 2, 2, 5; 9; 14)$		& $\leq 2$				& 10					& 5 459				\\
		$\mH_v(2, 2, 2, 2, 5; 9; 16)$		& $\leq 2$				& 87					& 230 370			\\
		$\mH_v(2, 2, 2, 2, 2, 5; 9; 18)$	& $= 2$					& 0						&					\\
		\hline
		$\mH_v(2, 2, 2, 2, 2, 5; 9; 18)$	& 						& 0						&					\\
		\hline
	\end{tabular}
}
	\vspace{-0.5em}
	\caption{\small Steps in finding all maximal graphs in $\mH_v(2, 2, 2, 2, 2, 5; 9; 18)$}
	\label{table: finding all graphs in H_v(2, 2, 2, 2, 2, 5; 9; 18)}
\end{table}

\vspace{1em}
Algorithm \ref{algorithm: A2} is a simplified version of Algorithm 3.7 in \cite{BN17a}. Theorem \ref{theorem: algorithm A2} follows from Theorem 3.8 published in \cite{BN17a}.

\section{Computation of the numbers $\wFv{m}{5}{m - 1}$}

Let us remind that $\wHv{m}{p}{q}$ and $\wFv{m}{p}{q}$ are defined in Section 1.4.

According to Proposition \ref{proposition: wFv(m)(p)(q) exists}, we have
\begin{equation}
\label{equation: wFv(m)(5)(m - 1) exists}
\wFv{m}{5}{m - 1} \mbox{ exists } \Leftrightarrow m \geq 7.
\end{equation}

We prove the following
\begin{theorem}
	\label{theorem: wFv(m)(5)(m - 1) = ...}
	The following equalities are true:
	\begin{equation*}
	\wFv{m}{5}{m - 1} = 
	\begin{cases}
	17, & \emph{if $m = 7$}\\
	m + 9, & \emph{if $m \geq 8$}.
	\end{cases}
	\end{equation*}
\end{theorem}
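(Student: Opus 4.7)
The plan is to treat the case $m = 7$ separately from $m \geq 8$, and within each to handle lower and upper bounds separately. The key preliminary step, which I will use throughout, is to apply Proposition \ref{proposition: a_1 + a_2 - 1 > p} to reduce $G \arrowsv \uni{m}{5}$ to arrowing only a short list of ``irreducible'' tuples $(a_1, \ldots, a_s)$ satisfying $s \geq 2$, $2 \leq a_1 \leq \cdots \leq a_s \leq 5$, $\sum_i(a_i - 1) = m - 1$, and $a_1 + a_2 - 1 > 5$. An elementary enumeration shows that for $m = 7$ the only such tuples are $(3, 5)$ and $(4, 4)$, while for $m = 8$ the only irreducible tuple is $(4, 5)$; hence $\wHv{7}{5}{6} = \mH_v(3, 5; 6) \cap \mH_v(4, 4; 6)$ and $\wFv{8}{5}{7} = F_v(4, 5; 7)$.

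For the lower bounds, in the case $m \geq 8$ I will use the chain $\wFv{m}{5}{m-1} \geq F_v(2_{m-5}, 5; m - 1) = m + 9$, where the first inequality comes from Theorem \ref{theorem: F_v(2_(m - p), p; q) leq F_v(a_1, ..., a_s; q) leq wFv(m)(p)(q)} and the equality from Theorem \ref{theorem: rp(5) = 2}. In the case $m = 7$, I will assume for contradiction that some $G \in \wHvn{7}{5}{6}{16}$ exists; then by the reduction $G \in \mH_v(3, 5; 6; 16)$, so by Theorem \ref{theorem: abs(mH_v(3, 5; 6; 16)) = 4}, $G$ is one of the four known graphs $G_{50}, G_{51}, G_{81}, G_{146}$, and I will close the argument by directly verifying (by enumerating 2-colorings and searching for a monochromatic $K_4$) that none of these four graphs arrows $(4, 4)$, yielding $\wFv{7}{5}{6} \geq 17$.

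For the upper bounds, my plan is to exhibit explicit 17-vertex witnesses. For $m = 7$, I need a 17-vertex graph with $\omega < 6$ that arrows both $(3, 5)$ and $(4, 4)$; such a graph will be produced by a computer search, e.g.\ running Algorithm \ref{algorithm: A1} to enumerate candidates in $\mH_{max}(2, 2, 5; 6; 17)$ and filtering for both arrow properties (justified since $\wHv{7}{5}{6} \subseteq \mH_v(2, 2, 5; 6)$). For $m \geq 8$ it suffices to produce a 17-vertex graph with $\omega < 7$ arrowing $(4, 5)$, which will be searched for among the extremal graphs witnessing $F_v(2, 2, 2, 5; 7) = 17$. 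Once $\wFv{8}{5}{7} \leq 17$ is in hand, Theorem \ref{theorem: wFv(m)(p)(m - m_0 + q) leq wFv(m_0)(p)(q) + m - m_0} with $m_0 = 8$, $p = 5$, $q = 7$ immediately gives $\wFv{m}{5}{m - 1} \leq 17 + (m - 8) = m + 9$ for every $m \geq 8$, matching the lower bound.

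The hard part will be the upper bound for $m = 8$, namely $F_v(4, 5; 7) \leq 17$. The most natural construction $K_1 + H$ with $H$ a 16-vertex member of $\mH_v(3, 5; 6)$ would require $H \arrowsv (4, 4)$ as well, but the lower-bound argument for $m = 7$ rules this out on 16 vertices. Consequently the 17-vertex witness must come from a qualitatively different, essentially computer-assisted construction.
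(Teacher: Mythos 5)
Your proposal is correct and follows essentially the same route as the paper: reduce $\arrowsv \uni{m}{5}$ to the irreducible tuples $(3,5),(4,4)$ for $m=7$ and $(4,5)$ for $m=8$ via Proposition \ref{proposition: a_1 + a_2 - 1 > p}, get the lower bounds from Theorem \ref{theorem: F_v(2_(m - p), p; q) leq F_v(a_1, ..., a_s; q) leq wFv(m)(p)(q)} together with Theorem \ref{theorem: F_v(2, 2, 5; 6) = 16}, Theorem \ref{theorem: rp(5) = 2} and the computer check that none of the four graphs of Theorem \ref{theorem: abs(mH_v(3, 5; 6; 16)) = 4} arrows $(4,4)$, and close with explicit $17$-vertex witnesses plus Theorem \ref{theorem: wFv(m)(p)(m - m_0 + q) leq wFv(m_0)(p)(q) + m - m_0}. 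The only deviations are in where you hunt for the witnesses (the paper adds a vertex to $\mH_v(2,2,5;6;16)$ for $\Gamma_1$ and derives $\Gamma_2$ from the $18$-vertex graph $\Gamma_3$ via Procedure \ref{procedure: extending a set of maximal graphs in mH_v(a_1, ..., a_s; q; n)}, whereas you search $\mH_{max}(2,2,5;6;17)$ and $\mH_{extr}(2,2,2,5;7)$, both of which do contain suitable graphs), which is an implementation detail rather than a different argument.
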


\begin{proof}
	\emph{Case 1.} $m = 7$. According to Theorem \ref{theorem: F_v(2_(m - p), p; q) leq F_v(a_1, ..., a_s; q) leq wFv(m)(p)(q)} and Theorem \ref{theorem: F_v(2, 2, 5; 6) = 16}, $\wFv{7}{5}{6} \geq F_v(2, 2, 5; 6) = 16$. With the help of the computer we check that none of the 4 graphs in $\mH_v(3, 5; 6; 16)$ (see Figure \ref{figure: H_v(3, 5; 6; 16)}) belongs to $\mH_v(4, 4; 6; 16)$. Therefore, $\wHvn{7}{5}{6}{16} = \emptyset$ and $\wFv{7}{5}{6} \geq 17$.
	
	By adding one vertex to the graphs in $\mH_v(2, 2, 5; 6; 16)$, and then removing edges from the obtained 17-vertex graphs, we find 353 graphs which belong to both $\mH_v(3, 5; 6; 17)$ and $\mH_v(4, 4; 6; 17)$. The graph $\Gamma_1$, given in Figure \ref{figure: H_v(3, 5; 6; 17) cap H_v(4, 4; 6; 17)}, is one of these graphs (it is the only one with independence number 4). We will prove that $\Gamma_1 \in \wHv{7}{5}{6}$. Since $\omega(\Gamma_1) = 5$, it remains to be proved that if $2 \leq b_1 \leq ... \leq b_s \leq 5$ (see (\ref{equation: 2 leq a_1 leq ... leq a_s})) are positive integers such that $\sum_{i = 1}^s (b_1 - 1) + 1 = 7$, then $\Gamma_1 \arrowsv (b_1, ..., b_s)$. The following cases are possible:
	
	$s = 2:\ b_1 = 3, b_2 = 5$.
	
	$s = 2:\ b_1 = 4, b_2 = 4$.
	
	$s = 3:\ b_1 = 2, b_2 = 2, b_3 = 5$.
	
	$s = 3:\ b_1 = 2, b_2 = 3, b_3 = 4$.
	
	$s = 3:\ b_1 = 3, b_2 = 3, b_3 = 3$.
	
	$s = 4:\ b_1 = 2, b_2 = 2, b_3 = 2, b_4 = 4$.
	
	$s = 4:\ b_1 = 2, b_2 = 2, b_3 = 3, b_4 = 3$.
	
	$s = 5:\ b_1 = 2, b_2 = 2, b_3 = 2, b_4 = 2, b_5 = 3$.
	
	$s = 6:\ b_1 = 2, b_2 = 2, b_3 = 2, b_4 = 2, b_5 = 2, b_6 = 2$.
	
	According to Proposition \ref{proposition: a_1 + a_2 - 1 > p}, from $\Gamma_1 \arrowsv (3, 5)$ and $\Gamma_1 \arrowsv (4, 4)$ it follows $\Gamma_1 \arrowsv \uni{7}{5}$.
	We proved that $\Gamma_1 \in \wHv{7}{5}{6}$. Therefore, $\wFv{7}{5}{6} \leq \abs{\V(\Gamma_1)} = 17$.\\
	
	\emph{Case 2.} $m = 8$. According to Theorem \ref{theorem: F_v(2_(m - p), p; q) leq F_v(a_1, ..., a_s; q) leq wFv(m)(p)(q)} and Theorem \ref{theorem: rp(5) = 2}, $\wFv{8}{5}{7} \geq F_v(2, 2, 2, 5; 7) = 17$. To prove the upper bound, consider the 17-vertex graph $\Gamma_2 \in \mH_v(4, 5; 7; 17)$, which is given in Figure \ref{figure: H_v(4, 5; 7; 17)}. The method to obtain this graph is described below. By construction, $\omega(\Gamma_2) = 6$ and $\Gamma_2 \arrowsv (4, 5)$. According to Proposition \ref{proposition: a_1 + a_2 - 1 > p}, from $\Gamma_2 \arrowsv (4, 5)$ it follows $\Gamma_2 \arrowsv \uni{8}{5}$. Therefore, $\Gamma_2 \in \wHv{8}{5}{7}$ and $\wFv{8}{5}{7} \leq \abs{\V(\Gamma_2)} = 17$.\\
	
	\emph{Case 3.} $m > 8$. From Theorem \ref{theorem: F_v(2_(m - p), p; q) leq F_v(a_1, ..., a_s; q) leq wFv(m)(p)(q)} and Theorem \ref{theorem: rp(5) = 2} it follows $\wFv{m}{5}{m - 1} \geq m + 9$. From Theorem \ref{theorem: wFv(m)(p)(m - m_0 + q) leq wFv(m_0)(p)(q) + m - m_0}($m_0 = 8, p = 5, q = 7$) and $\wFv{8}{5}{7} = 17$ it follows $\wFv{m}{5}{m - 1} \leq m + 9$.
\end{proof}
\vspace{1em}

\vspace{-0.5em}
\subsection*{Obtaining the graph $\Gamma_2 \in \mH_v(4, 5; 7; 17)$}

\vspace{1em}
Consider the 18-vertex graph $\Gamma_3$ (Figure \ref{figure: H_v(3, 6; 7; 18) cap H_v(4, 5; 7; 18)}). As mentioned, this is the graph with the help of which in \cite{SXP09} they prove the inequality $F_v(3, 6; 7) \leq 18$. With а computer we check that $\Gamma_3$ is a maximal graph in $\mH_v(4, 5; 7; 18)$. We will use the following procedure to obtain other maximal graphs in $\mH_v(4, 5; 7; 18)$:

\begin{procedure}
	\label{procedure: extending a set of maximal graphs in mH_v(a_1, ..., a_s; q; n)}
	Extending a set of maximal graphs in $\mH_v(a_1, ..., a_s; q; n)$.
	
	1. Let $\mA$ be a set of maximal graphs in $\mH_v(a_1, ..., a_s; q; n)$.
	
	2. By removing edges from the graphs in $\mA$, find all their subgraphs which are in $\mH_v(a_1, ..., a_s; q; n)$. This way a set of non-maximal graphs in $\mH_v(a_1, ..., a_s; q; n)$ is obtained.
	
	3. Add edges to the non-maximal graphs to find all their supergraphs which are maximal in $\mH_v(a_1, ..., a_s; q; n)$. Extend the set $\mA$ by adding the new maximal graphs.
\end{procedure}

Starting from a set containing a single element the graph $\Gamma_3$ and executing Procedure \ref{procedure: extending a set of maximal graphs in mH_v(a_1, ..., a_s; q; n)}, we find 12 new maximal graphs in $\mH_v(4, 5; 7; 18)$. Again, we execute Procedure \ref{procedure: extending a set of maximal graphs in mH_v(a_1, ..., a_s; q; n)} on the new set to find 110 more maximal graphs in $\mH_v(4, 5; 7; 18)$. By removing one vertex from these graphs, we obtain 17-vertex graphs, one of which is $\Gamma_2 \in \mH_v(4, 5; 7; 17)$ given in Figure \ref{figure: H_v(4, 5; 7; 17)}.

\vspace{1em}
Theorem \ref{theorem: wFv(m)(5)(m - 1) = ...} and Procedure \ref{procedure: extending a set of maximal graphs in mH_v(a_1, ..., a_s; q; n)} are published in \cite{BN15a}.

\begin{figure}
	\begin{minipage}{0.55\textwidth}
		\hspace{4em}
		\begin{subfigure}[b]{0.5\textwidth}
			\centering
			\includegraphics[height=210px,width=105px]{./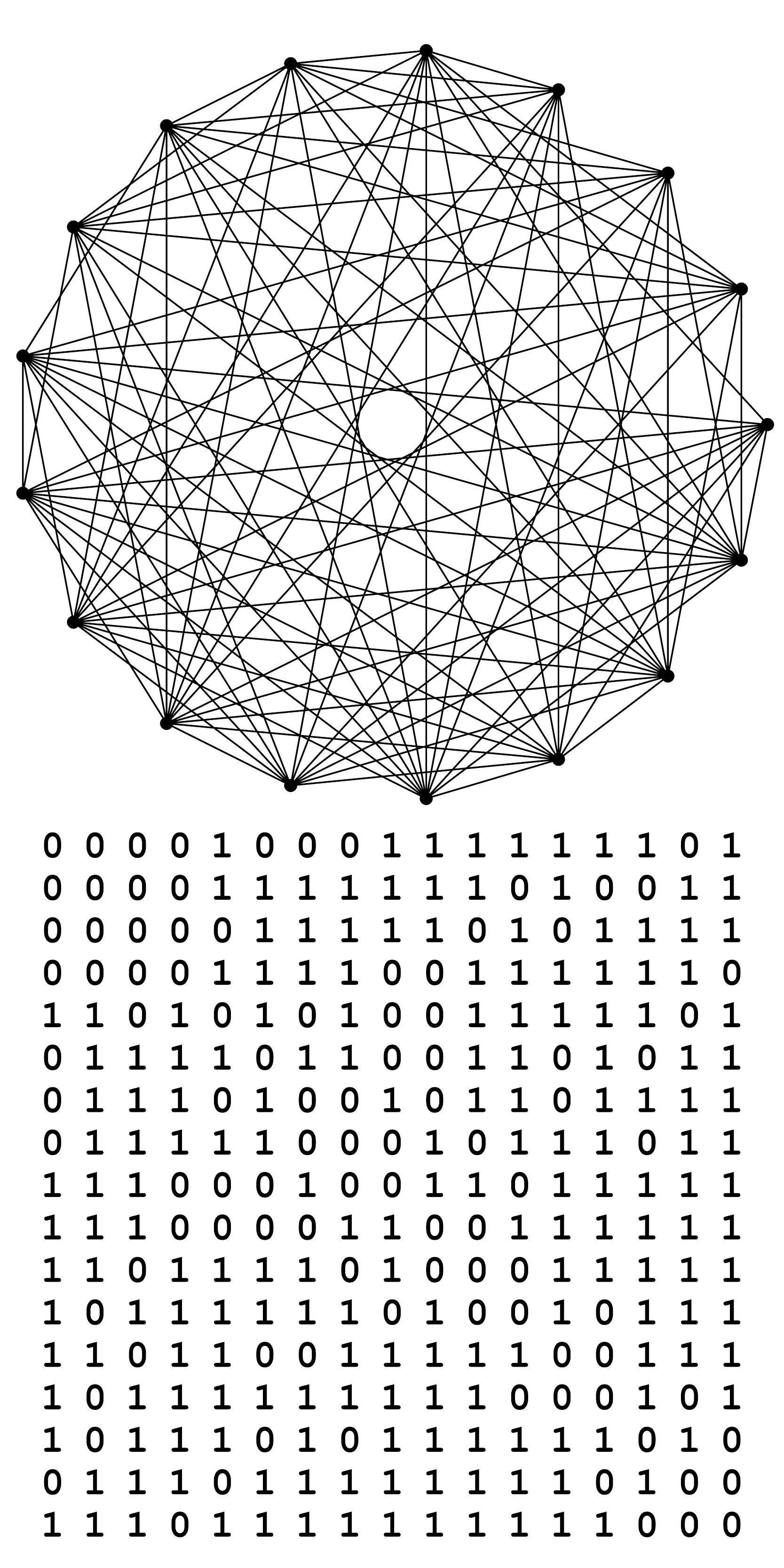}
		\end{subfigure}
		\caption*{\emph{$\Gamma_1$}}
		\label{figure: Gamma_1}
		
		\caption{\\\hspace{-4.5em}Graph $\Gamma_1 \in \mH(3, 5; 6; 17) \cap \mH(4, 4; 6; 17)$}
		\label{figure: H_v(3, 5; 6; 17) cap H_v(4, 4; 6; 17)}
	\end{minipage}
	\begin{minipage}{0.5\textwidth}
		\hspace{3em}
		\begin{subfigure}[b]{0.5\textwidth}
			\centering
			\includegraphics[height=210px,width=105px]{./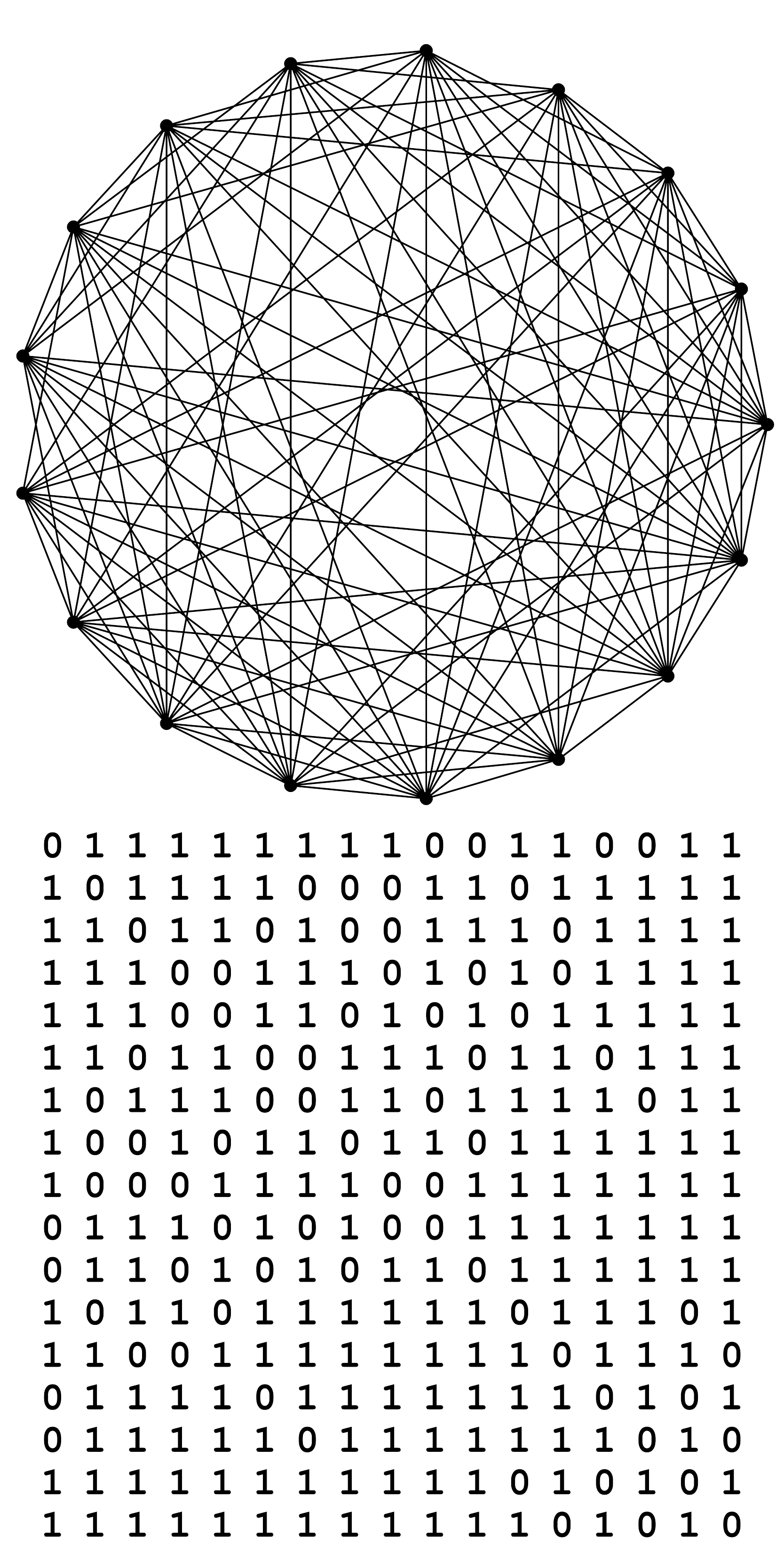}
		\end{subfigure}
		\caption*{\emph{$\Gamma_2$}}
		\label{figure: Gamma_2}
		
		\caption{\\\hspace{-0.5em}Graph $\Gamma_2 \in \mH_v(4, 5; 7; 17)$}
		\label{figure: H_v(4, 5; 7; 17)}
	\end{minipage}
	
	\centering
	\begin{subfigure}[b]{0.5\textwidth}
		\centering
		\includegraphics[height=240px,width=120px]{./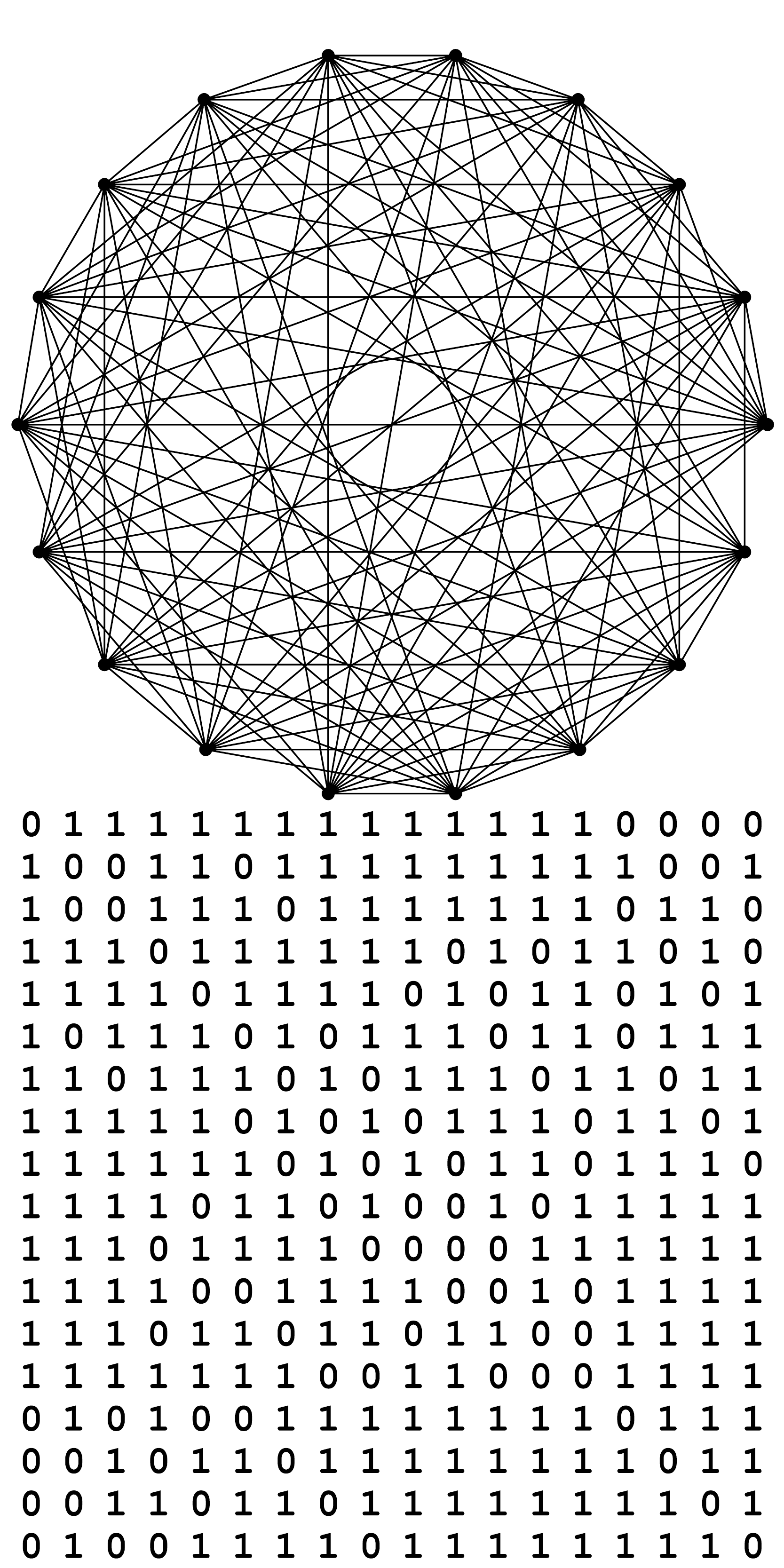}
	\end{subfigure}
	\caption*{\emph{$\Gamma_3$}}
	\label{figure: Gamma_3}
	
	\caption{\\Graph $\Gamma_3 \in \mH_v(3, 6; 7; 18) \cap \mH_v(4, 5; 7; 18)$ from \cite{SXP09}}
	\label{figure: H_v(3, 6; 7; 18) cap H_v(4, 5; 7; 18)}
\end{figure}

\section{Proof of Theorem \ref{theorem: F_v(a_1, ..., a_s; m - 1) = m + 9, max set(a_1, ..., a_s) = 5}}

Since $m \geq 7$, only the following two cases are possible:\\

\emph{Case 1.} $m = 7$. In this case $F_v(2, 2, 5; 6)$ and $F_v(3, 5; 6)$ are the only canonical vertex Folkman numbers of the form $F_v(a_1, ..., a_s; m - 1)$. The equality $F_v(2, 2, 5; 6) = 16$ is proved in Theorem \ref{theorem: F_v(2, 2, 5; 6) = 16}, and the equality $F_v(3, 5; 6) = 16$ is proved in \cite{SLPX12} (see also Corollary \ref{corollary: F_v(3, 5; 6) = 16}).\\

\emph{Case 2.} $m \geq 8$. According to Theorem \ref{theorem: wFv(m)(5)(m - 1) = ...} and Theorem \ref{theorem: F_v(2_(m - p), p; q) leq F_v(a_1, ..., a_s; q) leq wFv(m)(p)(q)}($q = m - 1$), $F_v(a_1, ..., a_s; m - 1) \leq m + 9$. From Theorem \ref{theorem: rp(5) = 2} and Theorem \ref{theorem: F_v(2_(m - p), p; q) leq F_v(a_1, ..., a_s; q) leq wFv(m)(p)(q)}($q = m - 1$) it follows that $F_v(a_1, ..., a_s; m - 1) \geq m + 9$.

\vspace{1em}
Theorem \ref{theorem: F_v(a_1, ..., a_s; m - 1) = m + 9, max set(a_1, ..., a_s) = 5} is published in \cite{BN15a}.

\chapter{Computation of the numbers $F_v(a_1, ..., a_s; m - 1)$ where $\max\{a_1, ..., a_s\} = 6$}

In this chapter we will prove the following main result:

\begin{theorem}
	\label{theorem: F_v(a_1, ..., a_s; m - 1) = ..., max set(a_1, ..., a_s) = 6}
	Let $a_1, ..., a_s$ be positive integers such that
	$$2 \leq a_1 \leq ... \leq a_s = 6,$$
	and $m = \sum\limits_{i=1}^s (a_i - 1) + 1 \geq 8$. Then:
	\vspace{1em}\\
	(a) $F_v(a_1, ..., a_s; m - 1) = m + 9$, if $a_1 = ... = a_{s - 1} = 2$. 
	\vspace{1em}\\
	(b) $F_v(a_1, ..., a_s; m - 1) = m + 10$, if $a_{s - 1} \geq 3$. 
\end{theorem}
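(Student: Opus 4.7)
The plan is to assemble the statement from the three theorems that immediately precede it in this chapter: Theorem \ref{theorem: rp(6) = 2}, Theorem \ref{theorem: rpp(6) = 0}, and Theorem \ref{theorem: wFv(m)(6)(m - 1) = m + 10}. Together with Theorem \ref{theorem: F_v(2_(m - p), p; q) leq F_v(a_1, ..., a_s; q) leq wFv(m)(p)(q)} and the implication (\ref{equation: G arrowsv (a_1, ..., a_s) Rightarrow G arrowsv (2_(m - p - 2), 3, p)}), they sandwich $F_v(a_1, \ldots, a_s; m - 1)$ between values that already match, so the proof reduces to a short logical assembly rather than any new graph-theoretic work.

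For part (a), the assumption $a_1 = \ldots = a_{s-1} = 2$ together with $a_s = 6$ forces $(a_1, \ldots, a_s) = (2_{m-6}, 6)$, so $F_v(a_1, \ldots, a_s; m - 1)$ is literally the number computed in Theorem \ref{theorem: rp(6) = 2}, which equals $m + 9$ for $m \geq 8$. No further argument is required.

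For part (b), I would establish the upper and lower bounds separately. The upper bound is immediate: with $p = \max\{a_1, \ldots, a_s\} = 6$, Theorem \ref{theorem: F_v(2_(m - p), p; q) leq F_v(a_1, ..., a_s; q) leq wFv(m)(p)(q)} (applied with $q = m - 1$) gives $F_v(a_1, \ldots, a_s; m - 1) \leq \wFv{m}{6}{m - 1}$, which equals $m + 10$ by Theorem \ref{theorem: wFv(m)(6)(m - 1) = m + 10}. The matching lower bound uses the hypothesis $a_{s - 1} \geq 3$: by (\ref{equation: G arrowsv (a_1, ..., a_s) Rightarrow G arrowsv (2_(m - p - 2), 3, p)}), every graph $G$ with $G \arrowsv (a_1, \ldots, a_s)$ satisfies $G \arrowsv (2_{m - 8}, 3, 6)$, hence $F_v(a_1, \ldots, a_s; m - 1) \geq F_v(2_{m - 8}, 3, 6; m - 1)$, and Theorem \ref{theorem: rpp(6) = 0} evaluates this last number to $m + 10$.

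Since $m \geq 8$ is precisely the range in which Theorems \ref{theorem: rp(6) = 2}, \ref{theorem: rpp(6) = 0}, and \ref{theorem: wFv(m)(6)(m - 1) = m + 10} are valid, the argument goes through with no restriction beyond the hypothesis of the theorem. There is no genuine obstacle here; the heavy lifting has already been done in obtaining $F_v(2, 2, 6; 7) = 17$ and $F_v(3, 6; 7) = 18$ and in deducing the infinite-sequence statements from them. The only point to check carefully is that the two cases $(a_1 = \ldots = a_{s-1} = 2)$ and $(a_{s-1} \geq 3)$ exhaust all canonical $(a_1, \ldots, a_s)$ with $a_s = 6$, which is obvious from the ordering $2 \leq a_1 \leq \ldots \leq a_{s-1}$.
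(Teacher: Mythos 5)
Your proposal is correct and follows essentially the same route as the paper: part (a) is read off from Theorem \ref{theorem: rp(6) = 2}, and part (b) is sandwiched between the lower bound $F_v(2_{m-8}, 3, 6; m-1) = m + 10$ from Theorem \ref{theorem: rpp(6) = 0} and the upper bound $\wFv{m}{6}{m-1} = m + 10$ from Theorem \ref{theorem: wFv(m)(6)(m - 1) = m + 10} via Theorem \ref{theorem: F_v(2_(m - p), p; q) leq F_v(a_1, ..., a_s; q) leq wFv(m)(p)(q)}. If anything, you are slightly more careful than the paper in explicitly invoking (\ref{equation: G arrowsv (a_1, ..., a_s) Rightarrow G arrowsv (2_(m - p - 2), 3, p)}) (which needs $a_{s-1} \geq 3$) to justify the lower bound in (b).
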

According to (\ref{equation: F_v(a_1, ..., a_s; m - 1) exists}), the condition $m \geq 8$ in Theorem \ref{theorem: F_v(a_1, ..., a_s; m - 1) = ..., max set(a_1, ..., a_s) = 6} is necessary. Since $F_v(a_1, ..., a_s; q)$ is a symmetric function of $a_1, ..., a_s$, in Theorem \ref{theorem: F_v(a_1, ..., a_s; m - 1) = ..., max set(a_1, ..., a_s) = 6} we actually compute all numbers of the form $F_v(a_1, ..., a_s; m - 1)$ where $\max\set{a_1, ..., a_s} = 6$.

\vspace{1em}

\section{Algorithm A3}

We will present an optimization to Algorithm \ref{algorithm: A2}. The optimized algorithm is based on the following propositions:

\begin{proposition}
	\label{proposition: K_(q - 2) subseteq N_G(u) cap N_G(v)}
	Let $G$ be a maximal $K_q$-free graph and $u \neq v$ are non-adjacent vertices of $G$. Then,
	
	$K_{q - 2} \subseteq N_G(u) \cap N_G(v)$.
\end{proposition}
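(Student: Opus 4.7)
The plan is to use the maximality of $G$ in a direct way. Since $u$ and $v$ are non-adjacent, the edge $e = [u,v]$ lies in $\E(\overline{G})$, and by the assumption that $G$ is a maximal $K_q$-free graph we have $\omega(G + e) = q$. So $G + e$ contains a $q$-clique $Q$.

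Next, I would observe that $Q$ must use the new edge $e$: otherwise $Q$ would already be a $q$-clique in $G$ itself, contradicting $\omega(G) < q$. Therefore both endpoints $u$ and $v$ belong to $Q$, and the remaining $q - 2$ vertices $W = \V(Q) \setminus \set{u, v}$ induce a $K_{q-2}$ in $G$ (since all edges among them are old edges of $G$). Moreover, every $w \in W$ is adjacent in $Q$ to both $u$ and $v$, and the edges $[w, u]$ and $[w, v]$ are edges of $G$ (not the new edge $e$). Hence $W \subseteq \N_G(u) \cap \N_G(v)$, which gives $K_{q-2} \subseteq \N_G(u) \cap \N_G(v)$ as required.

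There is no real obstacle here: the whole argument is just the observation that a newly created clique must contain the newly added edge, together with the fact that the other $q - 2$ vertices of that clique are common neighbours of its endpoints in the original graph.
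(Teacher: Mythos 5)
Your proof is correct and is essentially the paper's own argument spelled out in full: the paper simply notes that otherwise $G + [u,v]$ would not contain $K_q$, and you have filled in the details of why the new $q$-clique must use the added edge and why its remaining $q-2$ vertices are common neighbours of $u$ and $v$ in $G$.
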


\begin{proof}
Otherwise, $G + [u,v]$ would not contain $K_q$.
\end{proof}

\begin{proposition}
	\label{proposition: alpha(G) leq t Leftrightarrow alpha(H - bigcup_(v in A') N_G((v)) leq t - abs(A')}
	Let $A$ be an independent set of vertices of $G$, $H = G - A$, and $t$ be a positive integer such that $t \geq \abs{A}$. Then,
	
	$\alpha(G) \leq t \Leftrightarrow \alpha(H - \bigcup_{v \in A'} N_G(v)) \leq t - \abs{A'}, \ \forall A' \subseteq A$.
\end{proposition}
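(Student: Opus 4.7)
The plan is to prove the equivalence by treating the two directions separately, in each case exploiting the natural decomposition $V(G) = V(H) \sqcup A$ of any independent set into its piece in $A$ and its piece in $H$.

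For the forward direction, I would fix an arbitrary $A' \subseteq A$ and let $B$ be a maximum independent set of $H - \bigcup_{v \in A'} N_G(v)$. The key observation is that $B \cup A'$ is then independent in $G$: the set $A'$ is independent because $A$ is; $B$ is independent because it is a subset of $H$; and no vertex of $B$ is adjacent in $G$ to any vertex of $A'$, since $B$ was chosen to avoid $\bigcup_{v \in A'} N_G(v)$. Hence $|B| + |A'| \leq \alpha(G) \leq t$, which rearranges to the required bound $\alpha(H - \bigcup_{v \in A'} N_G(v)) \leq t - |A'|$.

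For the backward direction, I would start with a maximum independent set $I$ of $G$ and split it as $A' = I \cap A$ and $B = I \setminus A' \subseteq V(H)$. Since $B$ is independent in $H$ and no vertex of $B$ is adjacent to any vertex of $A'$, we have $B \subseteq V(H) \setminus \bigcup_{v \in A'} N_G(v)$, so $B$ is independent in the graph $H - \bigcup_{v \in A'} N_G(v)$. Applying the hypothesis to this particular $A' \subseteq A$ gives $|B| \leq t - |A'|$, whence $\alpha(G) = |I| = |A'| + |B| \leq t$.

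There is no real obstacle here; the only subtlety is the role of the hypothesis $t \geq |A|$, which merely ensures that the right-hand inequality $\alpha(\cdot) \leq t - |A'|$ is not vacuously contradictory in the extreme case $A' = A$ (where the graph on the right is simply $H - \bigcup_{v \in A} N_G(v)$ and has nonnegative independence number). Both directions reduce to the same bookkeeping: an independent set of $G$ meets $A$ in some subset $A'$, and its complementary part lies precisely in $H$ with all neighbors of $A'$ removed. The proof is essentially a one-line bijective correspondence between independent sets of $G$ and pairs $(A', B)$ with $A' \subseteq A$ and $B$ independent in $H - \bigcup_{v \in A'} N_G(v)$.
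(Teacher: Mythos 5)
Your proof is correct and follows essentially the same route as the paper's: the backward direction is identical (decompose a maximum independent set of $G$ as $A_1 \cup A_2$ with $A_1 \subseteq A$), and your forward direction is just the direct, contrapositive-free phrasing of the paper's contradiction argument (the paper assumes $|A''| > t - |A'|$ and derives an independent set of $G$ larger than $t$, which is the same bookkeeping as your $|B| + |A'| \leq \alpha(G) \leq t$). No gaps.
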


\begin{proof}
	Let $\alpha(G) \leq t$. Suppose that for some $A' \subseteq A$ we have $\alpha(H - \bigcup_{v \in A'} N_G(v)) > t - \abs{A'}$. Consequently, there exists an independent set $A''$ of vertices of $H - \bigcup_{v \in A'} N_G(v)$ such that $\abs{A''} > t - \abs{A'}$. We obtained that the independent set $A' \cup A''$ has more than $t$ vertices, which is a contradiction.
	
	Now, let $\alpha(H - \bigcup_{v \in A'} N_G(v)) \leq t - \abs{A'}, \ \forall A' \subseteq A$. Let $\widetilde{A}$ be an independent set of vertices of $G$ and $\abs{\widetilde{A}} = \alpha(G)$. Then $\widetilde{A} = A_1 \cup A_2$ where $A_1 \subseteq A$ and $A_2$ is an independent set in $H - \bigcup_{v \in A_1} N_G(v)$. Since $\abs{A_2} \leq \alpha(H - \bigcup_{v \in A_1} N_G(v)) \leq t - \abs{A_1}$, we obtain $\alpha(G) = \abs{\widetilde{A}} = \abs{A_1} + \abs{A_2} \leq t$. 
\end{proof}

In some cases, a possible drawback of Algorithm \ref{algorithm: A2} is that in step 2.2 a large number ($l^k$) of multisets $N$ are considered, and therefore for a large number of graphs $G = G(N)$ it is checked if $\alpha(G) \leq t$. In the following Algorithm \ref{algorithm: A3} we modify step 2 in such a way that a smaller number of multisets $N$ are considered and it is guaranteed that the constructed graphs $G = G(N)$ satisfy the condition $\alpha(G) \leq t$.
\begin{namedalgorithm}{A3}
	\label{algorithm: A3}
	The input of the algorithm is the set $\mA = \mH_{max}^t(a_1 - 1, a_2, ..., a_s; q; n - k)$, where $a_1, ..., a_s, q, n, k, t$ are fixed positive integers, $a_1 \geq 2$ and $k \leq t$.
	
	The output of the algorithm is the set $\mB$ of all graphs $G \in \mH_{max}^t(a_1, ..., a_s; q; n)$ with $\alpha(G) \geq k$.
	
	\emph{1.} By removing edges from the graphs in $\mA$ obtain the set
	
	$\mA' = \mH_{+K_{q - 1}}^t(a_1 - 1, a_2, ..., a_s; q; n - k)$.
	
	\emph{2.} For each graph $H \in \mA'$:
	
	\emph{2.1.} Find the family $\mM(H) = \set{M_1, ..., M_l}$ of all maximal $K_{q - 1}$-free subsets of $\V(H)$.
	
	\emph{2.2.} Find all $k$-element multisets $N = \set{M_{i_1}, ..., M_{i_k}}$ of elements of $\mM(H)$ which fulfill the conditions:
	
	(a) $K_{q - 2} \subseteq M_{i_j} \cap M_{i_h}$ for every $M_{i_j}, M_{i_h} \in N, \ j \neq h$.
	
	(b) $\alpha(H - \bigcup_{M_{i_j} \in N'} M_{i_j}) \leq t - \abs{N'}$ for every subset $N'$ of $N$.
	
	\emph{2.3.} For each of the found in step 2.2 $k$-element multisets $N = \set{M_{i_1}, ..., M_{i_k}}$ of elements of $\mM(H)$ construct the graph $G = G(N)$ by adding new independent vertices $v_1, ..., v_k$ to $\V(H)$ such that $N_G(v_j) = M_{i_j}, j = 1, ..., k$. If $\omega(G + e) = q, \forall e \in \E(\overline{G})$, then add $G$ to $\mB$.
	
	\emph{3.} Remove the isomorphic copies of graphs from $\mB$.
	
	\emph{4.} Remove from $\mB$ all graphs $G$ for which $G \not\arrowsv (a_1, ..., a_s)$.
	
\end{namedalgorithm}

We will prove the correctness of Algorithm \ref{algorithm: A3} with the help of the following
\begin{lemma}
	\label{lemma: algorithm A3}
	After the execution of step 2.3 of Algorithm \ref{algorithm: A3}, the obtained set $\mB$ coincides with the set of all maximal $K_q$-free graphs $G$ with $k \leq \alpha(G) \leq t$ which have an independent set of vertices $A \subseteq \V(G), \abs{A} = k$ such that $G - A \in \mA'$.
\end{lemma}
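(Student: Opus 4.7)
The plan is to prove the two inclusions separately, following the pattern of the proof of Theorem \ref{theorem: algorithm A2} but with the filtering conditions (a) and (b) of step 2.2 replacing the explicit a-posteriori test $\alpha(G) \leq t$ used in the earlier algorithm. The two new propositions in this section, Proposition \ref{proposition: K_(q - 2) subseteq N_G(u) cap N_G(v)} and Proposition \ref{proposition: alpha(G) leq t Leftrightarrow alpha(H - bigcup_(v in A') N_G((v)) leq t - abs(A')}, will carry essentially all of the content.

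For the easy direction, I would take a graph $G$ added to $\mB$ at step 2.3 and set $A = \set{v_1, \ldots, v_k}$. By construction $A$ is independent, $\abs{A} = k$, and $G - A = H \in \mA'$. Because $H$ is $K_{q-1}$-free and each $N_G(v_j) \in \mM(H)$ is $K_{q-1}$-free, no $K_q$ can appear in $G$; the explicit test $\omega(G + e) = q$ at the end of step 2.3 then provides maximality. Applying Proposition \ref{proposition: alpha(G) leq t Leftrightarrow alpha(H - bigcup_(v in A') N_G((v)) leq t - abs(A')} to condition (b) yields $\alpha(G) \leq t$, and $\alpha(G) \geq k$ is automatic from the independence of $A$.

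For the reverse direction, I would start with a maximal $K_q$-free graph $G$ with $k \leq \alpha(G) \leq t$ possessing an independent set $A = \set{v_1, \ldots, v_k}$ with $H = G - A \in \mA'$, and show that the multiset $N = \set{N_G(v_1), \ldots, N_G(v_k)}$ survives the filter of step 2.2 and produces $G = G(N)$ in step 2.3. The key observation is that each $N_G(v_j)$ is a \emph{maximal} $K_{q-1}$-free subset of $\V(H)$: if some $u \in \V(H) \setminus N_G(v_j)$ could be adjoined without producing a $K_{q-1}$ in $H$, then adding $[v_j, u]$ to $G$ would not create a $K_q$, contradicting the maximality of $G$. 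Hence $N_G(v_j) \in \mM(H)$ from step 2.1. Condition (a) now follows from Proposition \ref{proposition: K_(q - 2) subseteq N_G(u) cap N_G(v)} applied to the non-adjacent pairs $v_j, v_h \in A$, and condition (b) follows from Proposition \ref{proposition: alpha(G) leq t Leftrightarrow alpha(H - bigcup_(v in A') N_G((v)) leq t - abs(A')} applied to each $A' \subseteq A$; the maximality test in step 2.3 is passed by hypothesis.

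The work is essentially bookkeeping, and I do not expect a serious obstacle; the one point deserving explicit attention is the maximality of $N_G(v_j)$ as a $K_{q-1}$-free subset of $\V(H)$, which is what guarantees that the enumeration $\mM(H)$ in step 2.1 really captures every candidate neighborhood and thereby matches the forward inclusion with the reverse one.
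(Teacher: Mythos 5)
Your proposal is correct and follows the paper's own proof essentially step for step: the same two inclusions, with Proposition~\ref{proposition: K_(q - 2) subseteq N_G(u) cap N_G(v)} and Proposition~\ref{proposition: alpha(G) leq t Leftrightarrow alpha(H - bigcup_(v in A') N_G((v)) leq t - abs(A')} discharging conditions (a) and (b), and the same key observation that maximality of $G$ forces each $N_G(v_j)$ to be a \emph{maximal} $K_{q-1}$-free subset of $\V(H)$ (which you justify in more detail than the paper does). The only slip is in the forward direction, where you assert that $H \in \mA'$ is $K_{q-1}$-free: the graphs in $\mA'$ are only guaranteed to be $K_q$-free, but the argument is unaffected, since any $K_q$ in $G$ would have to contain exactly one of the pairwise non-adjacent new vertices $v_j$ and hence a $K_{q-1}$ inside the $K_{q-1}$-free set $N_G(v_j)$.
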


\begin{proof}
	Suppose that in step 2.3 of Algorithm \ref{algorithm: A3} the graph $G$ is added to $\mB$. Then $G = G(N)$ and $G - \set{v_1, ..., v_k} = H \in \mA'$, where $N$, $v_1, ..., v_k$, and $H$ are the same as in step 2.3. Since $v_1, ..., v_k$ are independent, $\alpha(G) \geq k$. From the condition (b) in step 2.2 and Proposition \ref{proposition: alpha(G) leq t Leftrightarrow alpha(H - bigcup_(v in A') N_G((v)) leq t - abs(A')} it follows that $\alpha(G) \leq t$. By $H \in \mA'$ we have $\omega(H) < q$. Since $N_G(v_j), j = 1, ..., k$, are $K_{q - 1}$-free sets, it follows that $\omega(G) < q$. The check at the end of step 2.3 guarantees that $G$ is a maximal $K_q$-free graph. 
	
	Let $G$ be a maximal $K_q$-free graph, $k \leq \alpha(G) \leq t$, and $A = \set{v_1, ..., v_k}$ be an independent set of vertices of $G$ such that $H = G - A \in \mA'$. We will prove that, after the execution of step 2.3 of Algorithm \ref{algorithm: A3}, $G \in \mB$. Since $G$ is a maximal $K_q$-free graph, $N_G(v_j), j = 1, ..., k$, are maximal $K_{q - 1}$-free subsets of $V(H)$, and therefore $N_G(v_j) \in \mM(H), j = 1, ..., k$, see step 2.1. Let $N = \set{N_G(v_1), ..., N_G(v_k)}$. By Proposition \ref{proposition: K_(q - 2) subseteq N_G(u) cap N_G(v)}, $N$ fulfills the condition (a) in step 2.3, and by Proposition \ref{proposition: alpha(G) leq t Leftrightarrow alpha(H - bigcup_(v in A') N_G((v)) leq t - abs(A')}, $N$ also fulfills (b). Thus, we showed that $N$ fulfills all conditions in step 2.2, and since $G = G(N)$ is a maximal $K_q$-free graph, in step 2.3 $G$ is added to $\mB$.
\end{proof}

\begin{theorem}
	\label{theorem: algorithm A3}
	After the execution of Algorithm \ref{algorithm: A3}, the obtained set $\mB$ coincides with the set of all graphs $G \in \mH_{max}^t(a_1, ..., a_s; q; n)$ with $\alpha(G) \geq k$.
\end{theorem}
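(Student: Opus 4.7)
The plan is to obtain this theorem as a direct corollary of Lemma~\ref{lemma: algorithm A3}, which already characterizes the content of $\mB$ immediately after step~2.3: namely, it contains exactly the maximal $K_q$-free graphs $G$ with $k \leq \alpha(G) \leq t$ that admit an independent $k$-set $A$ with $G - A \in \mA'$. What remains is to reconcile this description with the definition of $\mH_{max}^t(a_1, \ldots, a_s; q; n)$ by accounting for the isomorphism pruning in step~3 and the arrow-check in step~4.

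For the forward inclusion, I would start with an arbitrary $G \in \mH_{max}^t(a_1, \ldots, a_s; q; n)$ with $\alpha(G) \geq k$ and fix any independent set $A \subseteq \V(G)$ of size exactly $k$. Applying Proposition~\ref{proposition: G - A in mH_(+K_(q - 1))(a_1 - 1, a_2, ..., a_s; q; n - abs(A))} to $H := G - A$ yields $H \in \mH_{+K_{q-1}}(a_1 - 1, a_2, \ldots, a_s; q; n - k)$, and since $\alpha(H) \leq \alpha(G) \leq t$ we obtain $H \in \mA'$. Lemma~\ref{lemma: algorithm A3} then places $G$ into $\mB$ at the end of step~2.3; step~3 only removes isomorphic duplicates, and because $G \arrowsv (a_1, \ldots, a_s)$ by hypothesis, $G$ survives step~4 as well.

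For the reverse inclusion, I would take any $G$ in the final $\mB$. The Lemma gives that $G$ is a maximal $K_q$-free graph of order $n$ with $k \leq \alpha(G) \leq t$. Step~4 enforces $G \arrowsv (a_1, \ldots, a_s)$, so $G \in \mH_v(a_1, \ldots, a_s; q; n)$; combining this with maximality and the bound on $\alpha(G)$ gives exactly $G \in \mH_{max}^t(a_1, \ldots, a_s; q; n)$ with $\alpha(G) \geq k$.

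I do not anticipate any serious obstacle in the passage from the Lemma to the Theorem; the substantive work was already discharged inside the Lemma, where Propositions~\ref{proposition: K_(q - 2) subseteq N_G(u) cap N_G(v)} and~\ref{proposition: alpha(G) leq t Leftrightarrow alpha(H - bigcup_(v in A') N_G((v)) leq t - abs(A')} ensure that the admissibility conditions (a) and (b) in step~2.2 cut the family of $k$-multisets down to exactly those $N$ that produce a maximal $K_q$-free graph with independence number at most $t$. The only point requiring a bit of care is checking that the $(+K_{q-1})$-property on $H$ built into the definition of $\mA'$ is precisely the one supplied by Proposition~\ref{proposition: G - A in mH_(+K_(q - 1))(a_1 - 1, a_2, ..., a_s; q; n - abs(A))} when $A$ is an independent set in a maximal $K_q$-free graph, so that the hypothesis of the Lemma is met in the forward direction.
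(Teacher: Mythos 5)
Your proof is correct and follows essentially the same route as the paper: both directions are reduced to Lemma \ref{lemma: algorithm A3}, with Proposition \ref{proposition: G - A in mH_(+K_(q - 1))(a_1 - 1, a_2, ..., a_s; q; n - abs(A))} supplying $G - A \in \mA'$ in the forward direction and step 4 supplying the arrowing property in the reverse direction. Your explicit remark that $\alpha(G - A) \leq \alpha(G) \leq t$ is needed to land in the superscript-$t$ class $\mA'$ is a small but welcome point of care that the paper leaves implicit.
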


\begin{proof}
	Suppose that, after the execution of Algorithm \ref{algorithm: A3}, $G \in \mB$. According to Lemma \ref{lemma: algorithm A3}, $G$ is a maximal $K_q$-free graph and $k \leq \alpha(G) \leq t$. From step 4, $G \arrowsv(a_1, ..., a_s)$, therefore $G \in \mH_{max}^t(a_1, ..., a_s; q; n)$. 
	
	Conversely, let $G \in \mH_{max}^t(a_1, ..., a_s; q; n)$ and $\alpha(G) \geq k$. Let $A \subseteq \V(G)$ be an independent set of vertices of $G$, $\abs{A} = k$, and $H = G - A$. Using Proposition \ref{proposition: G - A in mH_(+K_(q - 1))(a_1 - 1, a_2, ..., a_s; q; n - abs(A))} we derive that $H \in \mA'$. From Lemma \ref{lemma: algorithm A3} it follows that after the execution of step 2.3, $G \in \mB$. Clearly, after step 4, $G$ remains in $\mB$.
\end{proof}

\begin{remark}
	\label{remark: algorithm A3 k = 2}
	Note that if $G \in \mH_{max}^t(a_1, ..., a_s; q; n)$ and $n \geq q$, then $G$ is not a complete graph and $\alpha(G) \geq 2$. Therefore, if $n \geq q$ and $k = 2$, Algorithm \ref{algorithm: A3} finds all graphs in $\mH_{max}^t(a_1, ..., a_s; q; n)$.
\end{remark}

We tested our implementation of Algorithm \ref{algorithm: A3} by reproducing the graphs obtained with the help of Algorithm \ref{algorithm: A2} in the second proof of Theorem \ref{theorem: rp(5) = 2} (see Table \ref{table: finding all graphs in H_v(2, 2, 2, 5; 7; 16)}, Table \ref{table: finding all graphs in H_v(2, 2, 2, 2, 5; 8; 17)}, and Table \ref{table: finding all graphs in H_v(2, 2, 2, 2, 2, 5; 9; 18)}). In this case, Algorithm \ref{algorithm: A3} is about 4 times faster than Algorithm \ref{algorithm: A2} and the total computational time for the proof of Theorem \ref{theorem: rp(5) = 2} is reduced about 2 times. Further, we will apply Algorithm \ref{algorithm: A3} to solve problems which cannot be solved in a reasonable amount of time using Algorithm \ref{algorithm: A2}. Some examples for such problems are Theorem \ref{theorem: abs(mH_v(2, 2, 6; 7; 18)) = 76515}, Theorem \ref{theorem: F_v(2, 2, 7; 8) = 20}, and Theorem \ref{theorem: F_v(4, 4; 5) geq F_v(2, 3, 4; 5) geq F_v(2, 2, 2, 4; 5) geq 19}.\\

\vspace{1em}

At the end of this section, we will propose a method to improve Algorithm \ref{algorithm: A3} which is based on the following proposition:
\begin{proposition}
	Let $G \in \mH_v(2, 2, p; p + 1)$ and $v \in \V(G)$. Then all non-neighbors of $v$ induce a graph with chromatic number greater than 2. In particular, from $G \in \mH_v(2, 2, p; p + 1)$ it follows that $\Delta(G) \leq \abs{\V(G)} - 4$.
\end{proposition}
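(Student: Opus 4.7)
The plan is to argue by contradiction: assume the set $W = \V(G) \setminus (N_G(v) \cup \set{v})$ of non-neighbors of $v$ induces a subgraph $G[W]$ with $\chi(G[W]) \leq 2$, and use this to construct an explicit $(2,2,p)$-free $3$-coloring of $\V(G)$, contradicting $G \arrowsv (2, 2, p)$.

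First I would fix a proper $2$-coloring $W = V_1' \cup V_2'$ witnessing $\chi(G[W]) \leq 2$. Since $v$ has no neighbors in $W$, I can freely place $v$ into $V_1'$ (or leave it out and add a third color class containing only $v$ together with $N_G(v)$); either way the resulting two classes $V_1, V_2$ remain independent sets, so neither contains an edge. For the third color class I take $V_3 = N_G(v)$ (possibly together with $v$, depending on where $v$ was placed). The key observation is that $G[V_3] \subseteq G[N_G(v)]$ has clique number at most $\omega(G) - 1 \leq p - 1 < p$, because any $p$-clique in $N_G(v)$ together with $v$ would give a $(p+1)$-clique in $G$, contradicting $\omega(G) < p + 1$. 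Hence $V_3$ contains no $p$-clique, and $V_1 \cup V_2 \cup V_3$ is a $(2, 2, p)$-free coloring of $\V(G)$, contradicting $G \arrowsv (2, 2, p)$.

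For the second assertion, I use that any graph of chromatic number at least $3$ has at least $3$ vertices (since every graph on at most $2$ vertices is bipartite). Applying the first part to an arbitrary $v \in \V(G)$, I conclude $\abs{W} \geq 3$, i.e. $\abs{\V(G)} - 1 - d(v) \geq 3$, so $d(v) \leq \abs{\V(G)} - 4$. Taking the maximum over $v$ yields $\Delta(G) \leq \abs{\V(G)} - 4$.

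There is no serious obstacle here; the only subtlety is checking that the extension of the $2$-coloring of $W$ to $\V(G)$ really produces no monochromatic edge in colors $1, 2$ and no monochromatic $p$-clique in color $3$, which reduces to the clique-bound argument using $\omega(G) < p + 1$. The strict inequality $\chi(G[W]) > 2$ (rather than merely $\geq 2$) is exactly what gives the $-4$ rather than $-3$ in the bound on $\Delta(G)$.
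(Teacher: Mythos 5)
Your proof is correct and follows essentially the same route as the paper: assume the non-neighbors of $v$ split into two independent sets, note that $N_G(v)$ contains no $p$-clique because $\omega(G) \leq p$, and conclude that $\V(G)$ admits a $(2,2,p)$-free coloring, a contradiction. The paper's version is just a one-line compression of this argument, and your derivation of $\Delta(G) \leq \abs{\V(G)} - 4$ from $\abs{W} \geq 3$ matches the intended deduction.
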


\begin{proof}
Let $W$ be the set of non-neighbors of $v$. If we assume that $W = V_1 \cup V_2$ where $V_1$ and $V_2$ are independent sets, then since $\N(v)$ does not contain a $p$-clique, from $\V(G) = V_1 \cup V_2 \cup \N(v)$ it follows that $G \not\arrowsv (2, 2, p)$. 
\end{proof}
As we will see further (see Table \ref{table: H_v(2, 2, 6; 7; 17) properties} and Table \ref{table: H_v(2, 2, 6; 7; 18) properties}), the inequality $\Delta(G) \leq \abs{\V(G)} - 4$ is exact. In some special cases, for example the proofs of Theorem \ref{theorem: F_v(2, 2, 6; 7) = 17 and abs(mH_v(2, 2, 6; 7; 17)) = 3}, Theorem \ref{theorem: abs(mH_v(2, 2, 6; 7; 18)) = 76515} and Theorem \ref{theorem: F_v(2, 2, 7; 8) = 20}, we can use the inequality $\Delta(G) \leq \abs{\V(G)} - 4$ to speed up computations in some parts of the proofs. We used this inequality only to make sure that the obtained results are correct.

\vspace{1em}

\vspace{1em}
Theorem \ref{theorem: algorithm A3} and Algorithm \ref{algorithm: A3} are published in \cite{BN17a}.

\vspace{1em}

\section{Computation of the numbers $F_v(2, 2, 6; 7)$ and $F_v(3, 6; 7)$}

\vspace{1em}

Let $a_1, ..., a_s$ be positive integers and let $m$ and $p$ be defined by (\ref{equation: m and p}). According to (\ref{equation: F_v(a_1, ..., a_s; m - 1) exists}), $F_v(a_1, ..., a_s; m - 1)$ exists if and only if $m \geq p + 2$. In the border case $m = p + 2, \ p \geq 3$, there are only two canonical numbers in the form $F_v(a_1, ..., a_s; m - 1)$, namely $F_v(2, 2, p; p + 1)$ and $F_v(3, p; p + 1)$. The computation of the numbers $F_v(a_1, ..., a_s; m - 1)$ where $\max\set{a_1, ..., a_s} = 6$, i.e. the proof of Theorem \ref{theorem: F_v(a_1, ..., a_s; m - 1) = ..., max set(a_1, ..., a_s) = 6}, will be done with the help of the numbers $F_v(2, 2, 6; 7)$ and $F_v(3, 6; 7)$. Because of this, we will first compute these two numbers. In \cite{SXP09} it is proved $F_v(3, 6; 7) \leq 18$. According to Theorem \ref{theorem: F_v(a_1, ..., a_s; m - 1) = m + 9, max set(a_1, ..., a_s) = 5}, $F_v(2, 2, 2, 5; 7) = 17$. From the inclusion $\mH_v(3, 6; 7) \subseteq \mH_v(2, 2, 6; 7) \subseteq \mH_v(2, 2, 2, 5; 7)$ it follows that
	\begin{equation}
	\label{equation: 17 leq F_v(2, 2, 6; 7) leq F_v(3, 6; 7) leq 18}
	17 = F_v(2, 2, 2, 5; 7) \leq F_v(2, 2, 6; 7) \leq F_v(3, 6; 7) \leq 18.
	\end{equation}

\begin{figure}
	\centering
	\begin{subfigure}{0.5\textwidth}
		\centering
		\includegraphics[height=252px,width=126px]{./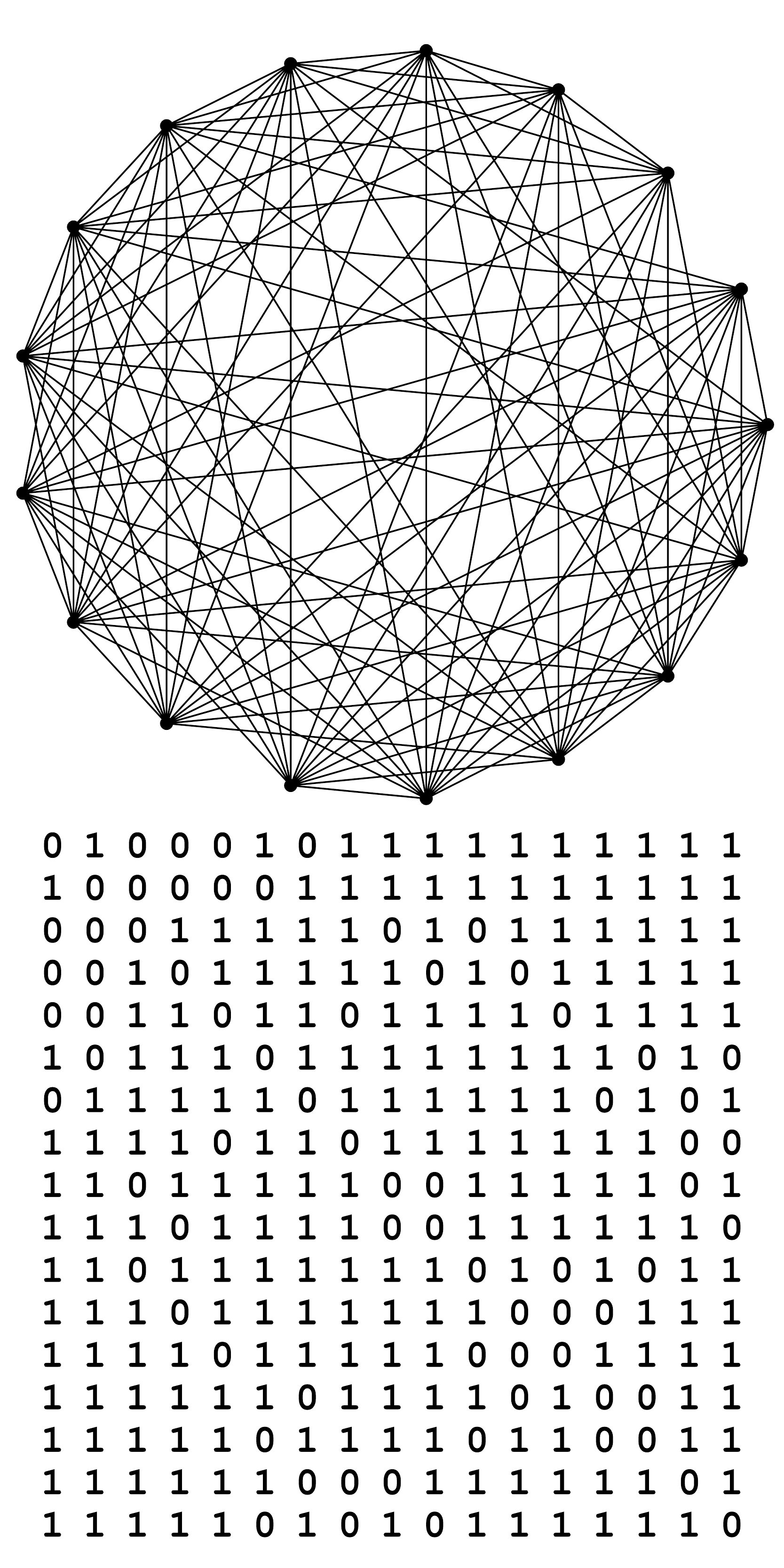}
		\caption*{\emph{$G_{1}$}}
		\label{figure: G_1}
	\end{subfigure}%
	\begin{subfigure}{0.5\textwidth}
		\centering
		\includegraphics[height=252px,width=126px]{./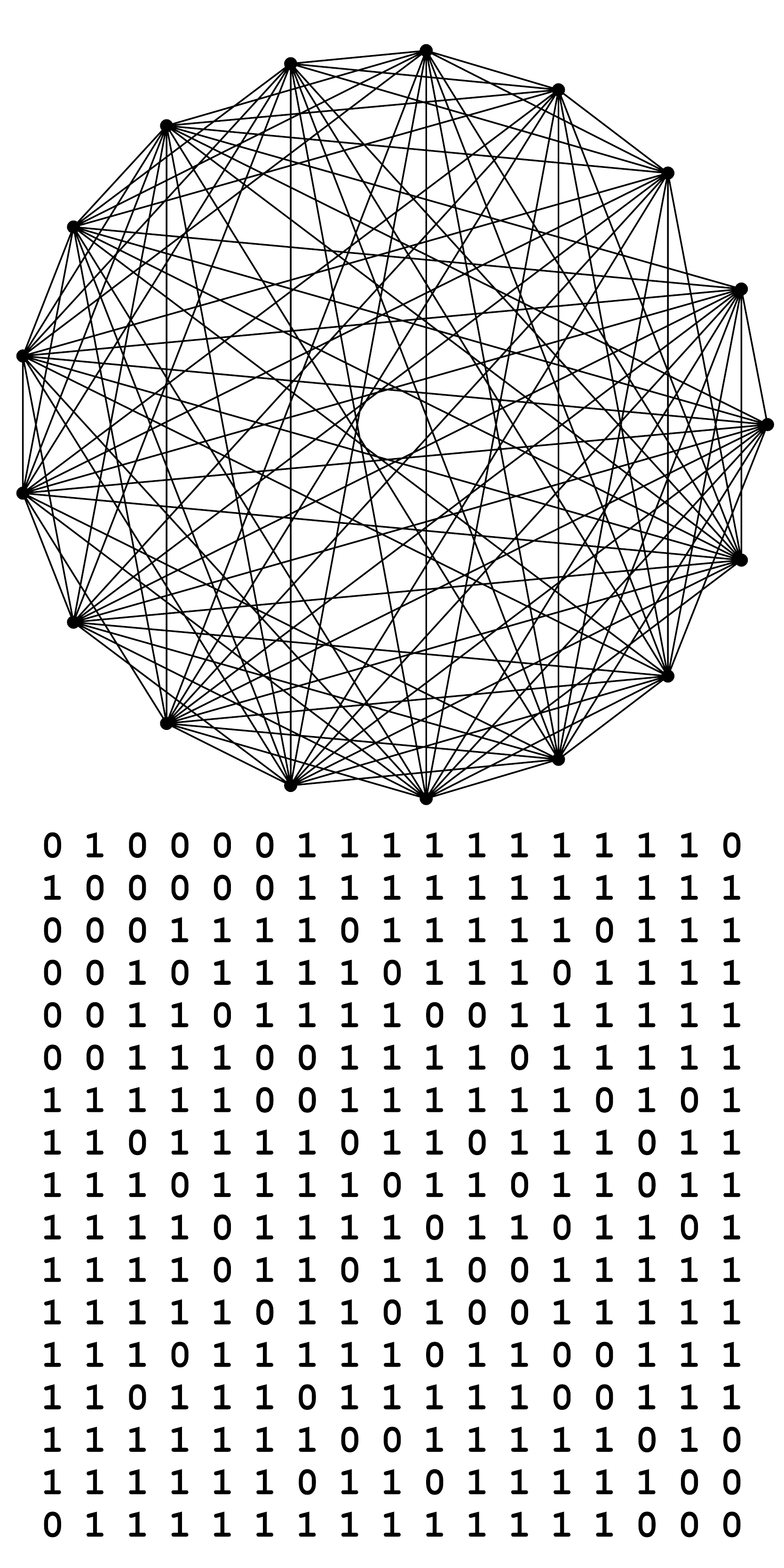}
		\caption*{\emph{$G_{2}$}}
		\label{figure: G_2}
	\end{subfigure}
	
	\begin{subfigure}{0.5\textwidth}
		\centering
		\includegraphics[height=252px,width=126px]{./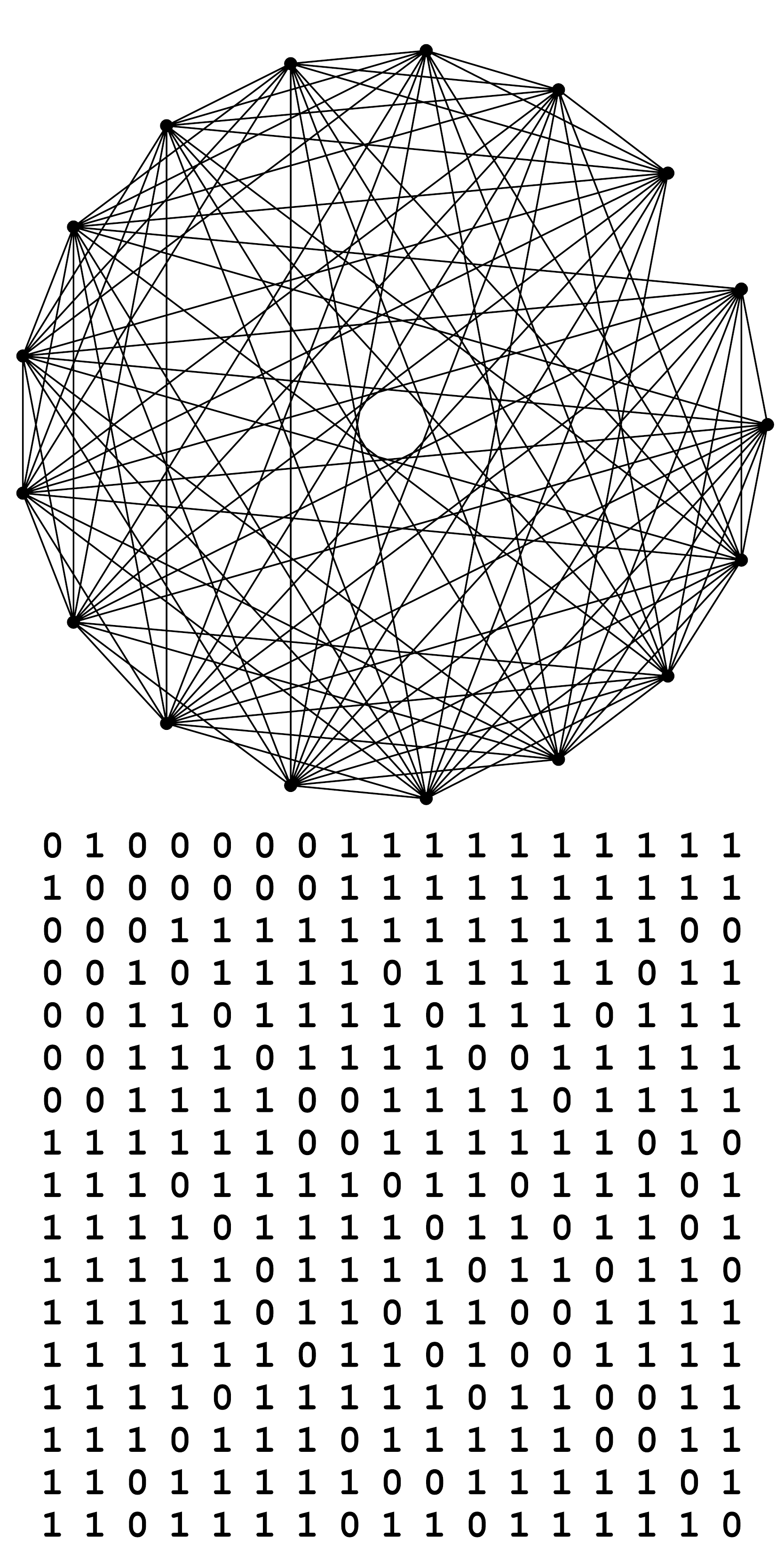}
		\caption*{\emph{$G_{3}$}}
		\label{figure: G_3}
	\end{subfigure}%
	\caption{All 3 graphs in $\mH_v(2, 2, 6; 7; 17)$}
	\label{figure: H_v(2, 2, 6; 7; 17)}
\end{figure}

\begin{theorem}
	\label{theorem: F_v(2, 2, 6; 7) = 17 and abs(mH_v(2, 2, 6; 7; 17)) = 3}
	$\mH_{max}(2, 2, 6; 7; 17) = \set{G_1}$ and $F_v(2, 2, 6; 7) = 17$. Furthermore, $\mH_{extr}(2, 2, 6; 7) = \set{G_1, G_2, G_3}$ (see Figure \ref{figure: H_v(2, 2, 6; 7; 17)}).
\end{theorem}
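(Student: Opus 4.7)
The plan is to prove all three assertions simultaneously by enumerating $\mH_{max}(2,2,6;7;17)$ with a computer search and then recovering the remaining extremal graphs by edge-deletion. The lower bound $F_v(2,2,6;7) \geq 17$ is already in hand from (\ref{equation: 17 leq F_v(2, 2, 6; 7) leq F_v(3, 6; 7) leq 18}), so the whole task reduces to showing that $\mH_v(2,2,6;7;17)$ is nonempty and pinning down its maximal and extremal members. The strategy closely mirrors that used for Theorem \ref{theorem: abs(mH_v(2, 2, 5; 6; 16)) = 147}, but the sets that arise now are substantially larger, so Algorithm \ref{algorithm: A3} rather than Algorithm \ref{algorithm: A1} will be the workhorse.

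First I would bound the independence number: applying Theorem \ref{theorem: alpha(G) leq V(G) + m + p - 1}(b) with $m=8$, $p=6$, $n=17<m+3p=26$, every $G \in \mH_v(2,2,6;7;17)$ satisfies $\alpha(G) \leq 3$, while $\alpha(G) \geq 2$ is automatic since $G$ is not complete. Therefore it suffices to determine $\mH_{max}^3(2,2,6;7;17)$, and by Remark \ref{remark: algorithm A3 k = 2} this is exactly what Algorithm \ref{algorithm: A3} with $t=3$, $k=2$ outputs. I would split the enumeration into two tracks, in parallel with what was done for $F_v(2,2,5;6)$: the $\alpha=2$ case, obtained by scanning the catalogued set $\mR(3,7;17)$ for graphs that satisfy $G \arrowsv (2,2,6)$, and the $\alpha=3$ case, obtained by running Algorithm \ref{algorithm: A3}.

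The iterative part would start from an easily enumerated base, for example $\mH_{max}(6;7;n_0)$ for small $n_0$, produced by filtering \emph{nauty}'s list of $n_0$-vertex graphs for $\omega=6$ and $K_7$-freeness, and then climb the chain
\[
\mH_{max}(6;7;n_0) \;\longrightarrow\; \mH_{max}^3(2,6;7;n_0+2) \;\longrightarrow\; \mH_{max}^3(2,2,6;7;n_0+4),
\]
with each arrow an invocation of Algorithm \ref{algorithm: A3}; the final arrow lands at $n=17$. Proposition \ref{proposition: G - A in mH_(+K_(q - 1))(a_1 - 1, a_2, ..., a_s; q; n - abs(A))} is what legitimizes each step, since deleting a $k$-element independent set from a graph in $\mH_{max}(a_1,\dots,a_s;q;n)$ lands in the $(+K_{q-1})$-class that A3 takes as input. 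Once the computation confirms $\mH_{max}(2,2,6;7;17)=\{G_1\}$, I would systematically delete edges of $G_1$, retaining only those 17-vertex subgraphs that still arrow $(2,2,6)$; the resulting list should be exactly $\{G_1,G_2,G_3\}$, giving $\mH_{extr}(2,2,6;7)$, and the nonemptiness of this list will upgrade (\ref{equation: 17 leq F_v(2, 2, 6; 7) leq F_v(3, 6; 7) leq 18}) to the equality $F_v(2,2,6;7)=17$.

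The main obstacle will be the sheer size of the intermediate family $\mH_{+K_6}^3(2,6;7;15)$ and the subsequent explosion of $2$-element multisets of maximal $K_6$-free subsets to test when gluing two independent vertices. This is exactly why Algorithm \ref{algorithm: A3} rather than A2 is required: the compatibility condition $K_{q-2} \subseteq M_{i_j}\cap M_{i_h}$ from Proposition \ref{proposition: K_(q - 2) subseteq N_G(u) cap N_G(v)} and the recursive independence test of Proposition \ref{proposition: alpha(G) leq t Leftrightarrow alpha(H - bigcup_(v in A') N_G((v)) leq t - abs(A')} prune the candidate multisets drastically, turning an otherwise infeasible search into a tractable one. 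As an independent correctness check, I would cross-verify the $\alpha=2$ output obtained via A3 against the direct scan of $\mR(3,7;17)$, the same cross-check pattern used earlier in the proof of Theorem \ref{theorem: abs(mH_v(2, 5; 6; 13)) = 20 013 726}.
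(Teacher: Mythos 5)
Your overall strategy is the paper's: bound $\alpha(G)$ by Theorem \ref{theorem: alpha(G) leq V(G) + m + p - 1}(b), split into the cases $\alpha = 2$ and $\alpha = 3$, generate $\mH_{max}(2,2,6;7;17)$ by iterating Algorithm \ref{algorithm: A3}, and finish by deleting edges from the unique maximal graph $G_1$ and invoking (\ref{equation: 17 leq F_v(2, 2, 6; 7) leq F_v(3, 6; 7) leq 18}) for the lower bound. However, two concrete steps of your plan would fail as written. First, the $\alpha = 2$ track cannot be done by ``scanning the catalogued set $\mR(3,7;17)$'': unlike $\mR(3,6;13)$ and $\mR(3,6;16)$, the family $\mR(3,7;n)$ is only fully enumerated for large $n$ close to $R(3,7)=23$ (the thesis itself uses $\mR(3,7;21)$, with about $1.1$ million graphs, in Chapter 6); at $n=17$ this family is astronomically large and no catalogue exists. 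The paper instead handles $\alpha=2$ by a second, parallel chain of Algorithm \ref{algorithm: A3} invocations with $t=2$, starting from $\mH_{max}(3;7;7)=\set{K_7-e}$ and climbing through $\mH_{max}^2(4;7;9)$, $\mH_{max}^2(5;7;11)$, $\mH_{max}^2(6;7;13)$, $\mH_{max}^2(2,6;7;15)$ up to $n=17$; the restriction $t=2$ is what keeps the intermediate $(+K_6)$-families (about $10^7$ graphs at the 15-vertex stage) within reach.

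Second, the base of your $\alpha=3$ chain is not obtainable the way you describe. The chain you display requires $\mH_{max}^3(6;7;12)$ (so that two more A3 steps land at $14$ and then $17$), and a $12$-vertex base cannot be ``produced by filtering \emph{nauty}'s list'' — exhaustive generation is only feasible up to roughly $10$ or $11$ vertices. The paper bootstraps from $\mH_{max}(3;7;6)=\set{K_6}$ and uses three additional A3 rounds, exploiting Remark \ref{remark: mH_(max)(a_1; q; n)} to pass through $\mH_{max}^3(4;7;8)$, $\mH_{max}^3(5;7;10)$ and $\mH_{max}^3(6;7;12)$ before the two steps you wrote down. With these two repairs — both of which stay entirely inside your framework — your argument coincides with the paper's proof, including the final edge-deletion step producing $\mH_{extr}(2,2,6;7)=\set{G_1,G_2,G_3}$.
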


\begin{proof}
	We will find all graphs in $\mH_v(2, 2, 6; 7; 17)$ with the help of a computer. Let $G \in \mH_v(2, 2, 6; 7; 17)$. Clearly, $\alpha(G) \geq 2$, and according to Theorem \ref{theorem: alpha(G) leq V(G) + m + p - 1}(b), $\alpha(G) \leq 3$.\\

	\vspace{-1.5em}
	
	First, we prove that there are no graphs in $\mH_{max}(2, 2, 6; 7; 17)$ with independence number 3:
		
	It is clear that $K_6$ is the only graph in $\mH_{max}(3; 7; 6)$.
		
	We execute Algorithm \ref{algorithm: A3}($n = 8;\ k = 2;\ t = 3$) with input $\mA = \mH_{max}^3(3; 7; 6) = \set{K_6}$ to obtain all graphs in $\mB = \mH_{max}^3(4; 7; 8)$. (see Remark \ref{remark: algorithm A3 k = 2})
		
	We execute Algorithm \ref{algorithm: A3}($n = 10;\ k = 2;\ t = 3$) with input $\mA = \mH_{max}^3(4; 7; 8)$ to obtain all graphs in $\mB = \mH_{max}^3(5; 7; 10)$.
		
	We execute Algorithm \ref{algorithm: A3}($n = 12;\ k = 2;\ t = 3$) with input $\mA = \mH_{max}^3(5; 7; 10)$ to obtain all graphs in $\mB = \mH_{max}^3(6; 7; 12)$.
		
	We execute Algorithm \ref{algorithm: A3}($n = 14;\ k = 2;\ t = 3$) with input $\mA = \mH_{max}^3(1, 6; 7; 12) = \mH_{max}^3(6; 7; 12)$ to obtain all graphs in $\mB = \mH_{max}^3(2, 6; 7; 14)$.
		
	By executing Algorithm \ref{algorithm: A3}($n = 17;\ k = 3;\ t = 3$) with input $\mA = \mH_{max}^3(1, 2, 6; 7; 14) = \mH_{max}^3(2, 6; 7; 14)$, we obtain $\mB = \emptyset$. According to Theorem \ref{theorem: algorithm A3}, there are no graphs in $\mH_{max}(2, 2, 6; 7; 17)$ with independence number 3.\\
		
	\vspace{-1.5em}
	
	It remains to find all graphs in $\mH_{max}(2, 2, 6; 7; 17)$ with independence number 2:
	
	It is clear that $K_7 - e$ is the only graph in $\mH_{max}(3; 7; 7)$.
	
	We execute Algorithm \ref{algorithm: A3}($n = 9;\ k = 2;\ t = 2$) with input $\mA = \mH_{max}^2(3; 7; 7) = \set{K_7 - e}$ to obtain all graphs in $\mB = \mH_{max}^2(4; 7; 9)$. (see Remark \ref{remark: algorithm A3 k = 2})
	
	We execute Algorithm \ref{algorithm: A3}($n = 11;\ k = 2;\ t = 2$) with input $\mA = \mH_{max}^2(4; 7; 9)$ to obtain all graphs in $\mB = \mH_{max}^2(5; 7; 11)$.
	
	We execute Algorithm \ref{algorithm: A3}($n = 13;\ k = 2;\ t = 2$) with input $\mA = \mH_{max}^2(5; 7; 11)$ to obtain all graphs in $\mB = \mH_{max}^2(6; 7; 13)$.
	
	We execute Algorithm \ref{algorithm: A3}($n = 15;\ k = 2;\ t = 2$) with input $\mA = \mH_{max}^2(1, 6; 7; 13) = \mH_{max}^2(6; 7; 13)$ to obtain all graphs in $\mB = \mH_{max}^2(2, 6; 7; 15)$.
	
	By executing Algorithm \ref{algorithm: A3}($n = 17;\ k = 3;\ t = 2$) with input $\mA = \mH_{max}^2(1, 2, 6; 7; 15) = \mH_{max}^2(2, 6; 7; 15)$, we obtain $\mB = \set{G_1}$, where the graph $G_1$ is shown in Figure \ref{figure: H_v(2, 2, 6; 7; 17)}. According to Theorem \ref{theorem: algorithm A3}, $G_1$ is the only graph in $\mH_{max}(2, 2, 6; 7; 17)$ with independence number 2. Since there are no graphs in $\mH_{max}(2, 2, 6; 7; 17)$ with independence number greater than 2, we proved that $\mH_{max}(2, 2, 6; 7; 17) = \set{G_1}$.

	The number of maximal $K_7$-free graphs and $(+K_6)$-graphs obtained in each step of the proof is given in Table \ref{table: finding all graphs in H_v(2, 2, 6; 7; 17)}. By removing edges from $G_1$ we find that there are only two other graphs in $\mH_v(2, 2, 6; 7; 17)$, which we will denote by $G_2$ and $G_3$ (see Figure \ref{figure: H_v(2, 2, 6; 7; 17)}). We proved that $\mH_{extr}(2, 2, 6; 7) = \set{G_1, G_2, G_3}$. Thus, we finish the proof of Theorem \ref{theorem: F_v(2, 2, 6; 7) = 17 and abs(mH_v(2, 2, 6; 7; 17)) = 3}.
\end{proof}
	
	Let us also note, that $G_1 \supset G_2 \supset G_3$, and for the graphs $G_1$, $G_2$, and $G_3$ the inequality (\ref{equation: G arrowsv (a_1, ..., a_s) Rightarrow chi(G) geq m}) is strict (see Conjecture \ref{conjecture: chi(G) leq m + 1}). It is clear that $G_3$ is the only minimal graph in $\mH_v(2, 2, 6; 7; 17)$. Some properties of the graphs $G_1$, $G_2$, and $G_3$ are listed in Table \ref{table: H_v(2, 2, 6; 7; 17) properties}.

\begin{table}
	\centering
	\resizebox{\textwidth}{!}{
		\begin{tabular}{ | l | r | r | r | r | r | r | }
			\hline
			{\parbox{4em}{Graph}}&
			{\parbox{4em}{\hfill$\abs{\E(G)}$}}&
			{\parbox{4em}{\hfill$\delta(G)$}}&
			{\parbox{4em}{\hfill$\Delta(G)$}}&
			{\parbox{4em}{\hfill$\alpha(G)$}}&
			{\parbox{4em}{\hfill$\chi(G)$}}&
			{\parbox{4em}{\hfill$\abs{Aut(G)}$}}\\
			\hline
			$G_{1}$		&  108			& 12			& 13			& 2				& 9				& 2		\\
			$G_{2}$		&  107			& 11			& 13			& 2				& 9				& 4		\\
			$G_{3}$		&  106			& 11			& 13			& 2				& 9				& 40	\\
			\hline
		\end{tabular}
	}
	\caption{Some properties of the graphs in $\mH_v(2, 2, 6; 7; 17)$}
	\label{table: H_v(2, 2, 6; 7; 17) properties}
\end{table}

\begin{theorem}
	\label{theorem: F_v(3, 6; 7) = 18}
	$F_v(3, 6; 7) = 18$.
\end{theorem}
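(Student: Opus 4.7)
The plan is as follows. The upper bound $F_v(3,6;7) \leq 18$ is already recorded in the chain of inequalities (\ref{equation: 17 leq F_v(2, 2, 6; 7) leq F_v(3, 6; 7) leq 18}); it is witnessed by the $18$-vertex graph $\Gamma_3$ from \cite{SXP09} shown in Figure \ref{figure: H_v(3, 6; 7; 18) cap H_v(4, 5; 7; 18)}, which lies in $\mH_v(3,6;7;18)$. So everything reduces to proving the matching lower bound $F_v(3,6;7) \geq 18$, i.e.\ $\mH_v(3,6;7;17) = \emptyset$.

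The crucial reduction is that $G \arrowsv (3,6) \Rightarrow G \arrowsv (2,2,6)$, which is the special case $a_1 = 3$, $t = 2$ of (\ref{equation: G arrowsv (a_1, ..., a_s) Rightarrow G arrowsv (a_1, ..., a_(i - 1), t, a_i - t + 1, a_(i + 1), ..., a_s)}). This immediately gives the inclusion
$$\mH_v(3,6;7;17) \subseteq \mH_v(2,2,6;7;17),$$
and by Theorem \ref{theorem: F_v(2, 2, 6; 7) = 17 and abs(mH_v(2, 2, 6; 7; 17)) = 3} the right-hand side consists of exactly the three graphs $G_1, G_2, G_3$ of Figure \ref{figure: H_v(2, 2, 6; 7; 17)}. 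Hence it suffices to verify that none of $G_1, G_2, G_3$ arrows $(3,6)$.

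I would exploit edge-monotonicity to cut the work down to a single graph. Since removing edges can only destroy monochromatic cliques, $H \subseteq G$ on the same vertex set forces $H \arrowsv (3,6) \Rightarrow G \arrowsv (3,6)$; equivalently $G \not\arrowsv (3,6) \Rightarrow H \not\arrowsv (3,6)$. Theorem \ref{theorem: F_v(2, 2, 6; 7) = 17 and abs(mH_v(2, 2, 6; 7; 17)) = 3} identifies $G_1$ as the unique maximal element of $\mH_v(2,2,6;7;17)$ (with $G_3 \subset G_2 \subset G_1$), so once I show $G_1 \not\arrowsv (3,6)$, the same conclusion for $G_2$ and $G_3$ follows for free.

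The remaining task is therefore to exhibit a partition $\V(G_1) = V_1 \cup V_2$ such that $G_1[V_1]$ is triangle-free and $G_1[V_2]$ is $K_6$-free. On only 17 vertices this is a routine computer check: either brute force the $2^{17}$ partitions, or restrict first to the independent subsets of $V(G_1)$ of the right size and prune by the $K_6$-freeness condition on the complement. A single exhibited 2-coloring verifies $G_1 \not\arrowsv (3,6)$, yields $\mH_v(3,6;7;17) = \emptyset$, and combined with the upper bound gives $F_v(3,6;7) = 18$. The only real obstacle is the mechanical search for the witness coloring, since there is no obvious structural reason forcing its existence beyond the computed fact that $G_1$ is the unique maximal $17$-vertex graph arrowing $(2,2,6)$ but not $(3,6)$.
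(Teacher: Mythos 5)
Your proposal is correct and follows essentially the same route as the paper: the upper bound comes from the known chain (\ref{equation: 17 leq F_v(2, 2, 6; 7) leq F_v(3, 6; 7) leq 18}), the lower bound reduces via $\mH_v(3,6;7;17) \subseteq \mH_v(2,2,6;7;17) = \set{G_1, G_2, G_3}$ to a computer check, and the paper makes exactly your monotonicity observation that it suffices to verify $G_1 \not\arrowsv (3,6)$ since $G_1$ is the unique maximal graph among the three.
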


\begin{proof}
According to (\ref{equation: 17 leq F_v(2, 2, 6; 7) leq F_v(3, 6; 7) leq 18}), it remains to be proved that $F_v(3, 6; 7) \neq 17$. From (\ref{equation: G arrowsv (a_1, ..., a_s) Rightarrow G arrowsv (a_1, ..., a_(i - 1), t, a_i - t + 1, a_(i + 1), ..., a_s)}) we have $\mH_v(3, 6; 7) \subseteq \mH_v(2, 2, 6; 7)$. By Theorem \ref{theorem: F_v(2, 2, 6; 7) = 17 and abs(mH_v(2, 2, 6; 7; 17)) = 3}, $\mH_v(2, 2, 6; 7) = \set{G_1, G_2, G_3}$ (see Figure \ref{figure: H_v(2, 2, 6; 7; 17)}). We check with a computer that none of the graphs $G_1$, $G_2$, and $G_3$ belongs to $\mH_v(3, 6; 7; 17)$, and therefore we obtain $\mH_v(3, 6; 7; 17) = \emptyset$. Since the only maximal graph in $\mH_v(2, 2, 6; 7)$ is the graph $G_1$, it is enough to check that $G_1 \not\in \mH_v(3, 6; 7)$.
\end{proof}

Using Algorithm \ref{algorithm: A3}, we were also able to obtain all graphs in $\mH_v(2, 2, 6; 7; 18)$:
\begin{theorem}
	\label{theorem: abs(mH_v(2, 2, 6; 7; 18)) = 76515}
	$\abs{\mH_v(2, 2, 6; 7; 18)} = 76515$.
\end{theorem}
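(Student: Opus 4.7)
The plan is to mimic the proof of Theorem \ref{theorem: F_v(2, 2, 6; 7) = 17 and abs(mH_v(2, 2, 6; 7; 17)) = 3}, but now generating everything at order $18$, and then to recover the non-maximal members of $\mH_v(2,2,6;7;18)$ by a controlled edge-removal pass. First I would bound the independence number: for $G \in \mH_v(2,2,6;7;18)$ we have $m=8$, $p=6$, and since $\abs{\V(G)}=18 < m+3p=26$, Theorem \ref{theorem: alpha(G) leq V(G) + m + p - 1}(b) gives $\alpha(G) \leq 18-8-6 = 4$. Combined with the trivial lower bound $\alpha(G) \geq 2$ (the graph is not complete because $\omega(G) < 7$), the range is $\alpha(G) \in \set{2,3,4}$.

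For each value $t \in \set{2,3,4}$ I would then repeat the chain of Algorithm \ref{algorithm: A3} calls used for the 17-vertex case, stopping two steps later in order to reach order $18$. Concretely, starting from $\mH_{max}^t(3;7;7-\varepsilon_t)$ (that is, from a single explicit complete-type seed), I would successively compute
\begin{equation*}
\mH_{max}^t(4;7;\cdot) \to \mH_{max}^t(5;7;\cdot) \to \mH_{max}^t(6;7;\cdot) \to \mH_{max}^t(2,6;7;\cdot) \to \mH_{max}^t(2,2,6;7;18),
\end{equation*}
where at each step Algorithm \ref{algorithm: A3} is called with $k=2$ (or $k=t$ at the final step, as in the 17-vertex proof) and with the appropriate $n$ chosen so that the $a_1$-parameter increments by one after each invocation. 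By Theorem \ref{theorem: algorithm A3} the union of the three outputs (over $t=2,3,4$) is exactly $\mH_{max}(2,2,6;7;18)$.

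Once $\mH_{max}(2,2,6;7;18)$ is in hand, I would recover all of $\mH_v(2,2,6;7;18)$ by the standard downward sweep: for each maximal graph $G$, iteratively delete edges and keep every proper subgraph $H$ with $H \arrowsv (2,2,6)$ and $\omega(H) < 7$, using a canonical form (\emph{nauty}) to eliminate isomorphic copies across the whole pool. Summing and deduplicating should yield the claimed total of $76515$ graphs. As a sanity check I would verify that all $3$ graphs of Theorem \ref{theorem: F_v(2, 2, 6; 7) = 17 and abs(mH_v(2, 2, 6; 7; 17)) = 3} re-emerge by attaching an isolated vertex and then saturating, and that every element of $\mH_v(2,2,6;7;18)$ found has no isolated vertex of degree $0$ (since otherwise it would force $\mH_v(2,2,6;7;17)$ to contain an unseen graph).

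The main obstacle I anticipate is purely computational: the $(+K_6)$-step from $\mH_{max}^t(2,6;7;16)$ to order $18$ is expected to produce an intermediate pool of $(+K_6)$-graphs that is orders of magnitude larger than in the 17-vertex proof (compare the $5046$, $229\,077$ rows in Table \ref{table: finding all graphs in H_v(2, 2, 2, 5; 7; 16)}), especially for $t=3$ and $t=4$ where the choice of independent triples/quadruples of maximal $K_6$-free sets explodes. The innovations of Algorithm \ref{algorithm: A3} over Algorithm \ref{algorithm: A2} — the common-$K_{q-2}$ filter of Proposition \ref{proposition: K_(q - 2) subseteq N_G(u) cap N_G(v)} and the local independence-number test of Proposition \ref{proposition: alpha(G) leq t Leftrightarrow alpha(H - bigcup_(v in A') N_G((v)) leq t - abs(A')} — are exactly what is needed to prune this explosion. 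Thus the proof is conceptually a carbon copy of the previous one, and the real work lies in the careful engineering of the $t=4$ branch and in the edge-removal pass that realises the final count $76515$.
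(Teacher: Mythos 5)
Your proposal is correct and follows essentially the same route as the paper: bound $\alpha(G)\in\{2,3,4\}$ via Theorem \ref{theorem: alpha(G) leq V(G) + m + p - 1}(b), run the Algorithm \ref{algorithm: A3} chains separately for each independence number (seeds $K_6$, $K_7-e$, $C_4+K_4$ at orders $6,7,8$, intermediate steps with $k=2$, final step with $k=t$) to obtain the $392$ maximal graphs, and then sweep downward by edge removal with isomorph rejection to reach the count $76515$. The only minor quibble is that the computational bottleneck actually sits in the $t=2$ branch (the roughly $9.2\times 10^8$ graphs in $\mH_{+K_6}^2(2,6;7;16)$), not in the $t=3,4$ branches as you anticipated.
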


\begin{proof}
	Similarly to the proof of Theorem \ref{theorem: F_v(2, 2, 6; 7) = 17 and abs(mH_v(2, 2, 6; 7; 17)) = 3}, we will find all graphs in $\mH_v(2, 2, 6; 7; 18)$ with the help of a computer. Some of the graphs that we obtain in the steps of this proof were already obtained in the proof of Theorem \ref{theorem: F_v(2, 2, 6; 7) = 17 and abs(mH_v(2, 2, 6; 7; 17)) = 3} (compare Table \ref{table: finding all graphs in H_v(2, 2, 6; 7; 17)} to Table \ref{table: finding all graphs in H_v(2, 2, 6; 7; 18)}).
	
	Let $G \in \mH_v(2, 2, 6; 7; 18)$. Clearly, $\alpha(G) \geq 2$, and according to Theorem \ref{theorem: alpha(G) leq V(G) + m + p - 1}(b), $\alpha(G) \leq 4$.\\
	
	\vspace{-1em}
	
	First, we prove that there are no graphs in $\mH_{max}(2, 2, 6; 7; 18)$ with independence number 4:
	
	The only graph in $\mH_{max}(3; 7; 6)$ is $K_6$.
	
	We execute Algorithm \ref{algorithm: A3}($n = 8;\ k = 2;\ t = 4$) with input $\mA = \mH_{max}^4(3; 7; 6) = \set{K_6}$ to obtain all graphs in $\mB = \mH_{max}^4(4; 7; 8)$. (see Remark \ref{remark: algorithm A3 k = 2})
	
	We execute Algorithm \ref{algorithm: A3}($n = 10;\ k = 2;\ t = 4$) with input $\mA = \mH_{max}^4(4; 7; 8)$ to obtain all graphs in $\mB = \mH_{max}^4(5; 7; 10)$.
	
	We execute Algorithm \ref{algorithm: A3}($n = 12;\ k = 2;\ t = 4$) with input $\mA = \mH_{max}^4(5; 7; 10)$ to obtain all graphs in $\mB = \mH_{max}^4(6; 7; 12)$.
	
	We execute Algorithm \ref{algorithm: A3}($n = 14;\ k = 2;\ t = 4$) with input $\mA = \mH_{max}^4(1, 6; 7; 12) = \mH_{max}^4(6; 7; 12)$ to obtain all graphs in $\mB = \mH_{max}^4(2, 6; 7; 14)$.
	
	By executing Algorithm \ref{algorithm: A3}($n = 18;\ k = 4;\ t = 4$) with input $\mA = \mH_{max}^4(1, 2, 6; 7; 14) = \mH_{max}^4(2, 6; 7; 14)$, we obtain $\mB = \emptyset$. According to Theorem \ref{theorem: algorithm A3}, there are no graphs in $\mH_{max}(2, 2, 6; 7; 18)$ with independence number 4.\\
		
	\vspace{-1em}
	
	Next, we find all graphs in $\mH_{max}(2, 2, 6; 7; 18)$ with independence number 3:
	
	The only graph in $\mH_{max}(3; 7; 7)$ is $K_7 - e$
	
	We execute Algorithm \ref{algorithm: A3}($n = 9;\ k = 2;\ t = 3$) with input $\mA = \mH_{max}^3(3; 7; 7) = \set{K_7 - e}$ to obtain all graphs in $\mB = \mH_{max}^3(4; 7; 9)$. (see Remark \ref{remark: algorithm A3 k = 2})
	
	We execute Algorithm \ref{algorithm: A3}($n = 11;\ k = 2;\ t = 3$) with input $\mA = \mH_{max}^3(4; 7; 9)$ to obtain all graphs in $\mB = \mH_{max}^3(5; 7; 11)$.
	
	We execute Algorithm \ref{algorithm: A3}($n = 13;\ k = 2;\ t = 3$) with input $\mA = \mH_{max}^3(5; 7; 11)$ to obtain all graphs in $\mB = \mH_{max}^3(6; 7; 13)$.
	
	We execute Algorithm \ref{algorithm: A3}($n = 15;\ k = 2;\ t = 3$) with input $\mA = \mH_{max}^3(1, 6; 7; 13) = \mH_{max}^3(6; 7; 13)$ to obtain all graphs in $\mB = \mH_{max}^3(2, 6; 7; 15)$.
	
	By executing Algorithm \ref{algorithm: A3}($n = 18;\ k = 3;\ t = 3$) with input $\mA = \mH_{max}^3(1, 2, 6; 7; 15) = \mH_{max}^3(2, 6; 7; 15)$, we obtain all 308 graphs in $\mB$. According to Theorem \ref{theorem: algorithm A3}, the graphs in $\mB$ are all graphs in $\mH_{max}(2, 2, 6; 7; 18)$ with independence number 3.\\
	
	\vspace{-1em}
	
	The last step, is to find all graphs in $\mH_{max}(2, 2, 6; 7; 18)$ with independence number 2:
	
	It is easy to see that $\mH_{max}(3; 7; 8) = \set{\overline{K}_3 + K_5, C_4 + K_4}$, and therefore $C_4 + K_4$ is the only graph in $\mH_{max}(3; 7; 8)$ with independence number 2.

	We execute Algorithm \ref{algorithm: A3}($n = 10;\ k = 2;\ t = 2$) with input $\mA = \mH_{max}^2(3; 7; 8) = \set{C_4 + K_4}$ to obtain all graphs in $\mB = \mH_{max}^2(4; 7; 10)$. (see Remark \ref{remark: algorithm A3 k = 2})
	
	We execute Algorithm \ref{algorithm: A3}($n = 12;\ k = 2;\ t = 2$) with input $\mA = \mH_{max}^2(4; 7; 10)$ to obtain all graphs in $\mB = \mH_{max}^2(5; 7; 12)$.
	
	We execute Algorithm \ref{algorithm: A3}($n = 14;\ k = 2;\ t = 2$) with input $\mA = \mH_{max}^2(5; 7; 12)$ to obtain all graphs in $\mB = \mH_{max}^2(6; 7; 14)$.
	
	We execute Algorithm \ref{algorithm: A3}($n = 16;\ k = 2;\ t = 2$) with input $\mA = \mH_{max}^2(1, 6; 7; 14) = \mH_{max}^2(6; 7; 14)$ to obtain all graphs in $\mB = \mH_{max}^2(2, 6; 7; 16)$.
	
	By executing Algorithm \ref{algorithm: A3}($n = 18;\ k = 2;\ t = 2$) with input $\mA = \mH_{max}^2(1, 2, 6; 7; 16) = \mH_{max}^2(2, 6; 7; 16)$, we obtain all 84 graphs in $\mB$. According to Theorem \ref{theorem: algorithm A3}, the graphs in $\mB$ are all graphs in $\mH_{max}(2, 2, 6; 7; 18)$ with independence number 2.\\
	
	\vspace{-1em}
	
	Thus, we obtained all 392 graphs in $\mH_{max}(2, 2, 6; 7; 18)$. By removing edges from these graphs we find all 76 515 graphs in $\mH_v(2, 2, 6; 7; 18)$. Some properties of these graphs are listed in Table \ref{table: H_v(2, 2, 6; 7; 18) properties}.	The number of maximal $K_7$-free graphs and $(+K_6)$-graphs obtained in each step of the proof is given in Table \ref{table: finding all graphs in H_v(2, 2, 6; 7; 18)}. Because of the large number of graphs in $\mH_{+K_6}^2(2, 6; 7; 16)$, we needed about two weeks to complete the computations.
\end{proof}

\begin{table}
	\centering
	\resizebox{\textwidth}{!}{
		\begin{tabular}{ | l r | l r | l r | l r | l r | l r | }
			\hline
			\multicolumn{2}{|c|}{\parbox{5em}{$\abs{\E(G)}$ \hfill $\#$}}&
			\multicolumn{2}{|c|}{\parbox{5em}{$\delta(G)$ \hfill $\#$}}&
			\multicolumn{2}{|c|}{\parbox{5em}{$\Delta(G)$ \hfill $\#$}}&
			\multicolumn{2}{|c|}{\parbox{5em}{$\alpha(G)$ \hfill $\#$}}&
			\multicolumn{2}{|c|}{\parbox{5em}{$\chi(G)$ \hfill $\#$}}&
			\multicolumn{2}{|c|}{\parbox{5em}{$\abs{Aut(G)}$ \hfill $\#$}}\\
			\hline
			106	& 1				& 0		& 3			& 13	& 65			& 2	& 290				& 8	& 84				& 1		& 72 335	\\
			107	& 4				& 1		& 20			& 14	& 76 450		& 3	& 76 225			& 9	& 76 431			& 2		& 3 699	\\
			108	& 19				& 2		& 124			& 		& 				& 	& 					& 	& 					& 4		& 430		\\
			109	& 88				& 3		& 571			& 		& 				& 	& 					& 	& 					& 8		& 33		\\
			110	& 369				& 4		& 1 943		& 		& 				& 	& 					& 	& 					& 10	& 2		\\
			111	& 1 240			& 5		& 4 986		& 		& 				& 	& 					& 	& 					& 16	& 2		\\
			112	& 3 303			& 6		& 9 826		& 		& 				& 	& 					& 	& 					& 20	& 6		\\
			113	& 6 999			& 7		& 14 896		& 		& 				& 	& 					& 	& 					& 24	& 1		\\
			114	& 11 780			& 8		& 17 057		& 		& 				& 	& 					& 	& 					& 36	& 1		\\
			115	& 15 603			& 9		& 14 288		& 		& 				& 	& 					& 	& 					& 40	& 6		\\
			116	& 15 956			& 10	& 8 397		& 		& 				& 	& 					& 	& 					& 		&			\\
			117	& 12 266			& 11	& 3 504		& 		& 				& 	& 					& 	& 					& 		&			\\
			118	& 6 575			& 12	& 876			& 		& 				& 	& 					& 	& 					& 		&			\\
			119	& 2 044			& 13	& 24			& 		& 				& 	& 					& 	& 					& 		&			\\
			120	& 261				& 		& 				& 		& 				& 	& 					& 	& 					& 		&			\\
			121	& 7				& 		& 				& 		& 				& 	& 					& 	& 					& 		&			\\
			\hline
		\end{tabular}
	}
	\caption{Some properties of the graphs in $\mH_v(2, 2, 6; 7; 18)$}
	\label{table: H_v(2, 2, 6; 7; 18) properties}
	\vspace{-1em}
\end{table}

We check with a computer that among the 76 515 graphs in $\mH_v(2, 2, 6; 7; 18)$, only the graph $\Gamma_3$ (see Figure \ref{figure: H_v(3, 6; 7; 18) cap H_v(4, 5; 7; 18)}) belongs to $\mH_v(3, 6; 7; 18)$. This is the graph that gives the upper bound $F_v(3, 6; 7) \leq 18$ in \cite{SXP09}. Now, from Theorem \ref{theorem: F_v(3, 6; 7) = 18} it follows
\begin{theorem}
	\label{theorem: abs(mH_v(3, 6; 7; 18)) = 1}
	$\mH_{extr}(3, 6; 7) = \mH_v(3, 6; 7; 18) = \set{\Gamma_3}$ (see Figure \ref{figure: H_v(3, 6; 7; 18) cap H_v(4, 5; 7; 18)}).
\end{theorem}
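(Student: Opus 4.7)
The plan is to reduce the problem to a finite search by combining Theorem~\ref{theorem: F_v(3, 6; 7) = 18} with Theorem~\ref{theorem: abs(mH_v(2, 2, 6; 7; 18)) = 76515}, and then to dispose of that finite search with a direct computer check of the arrow property.

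First I would note that Theorem~\ref{theorem: F_v(3, 6; 7) = 18} gives $F_v(3, 6; 7) = 18$, so by definition of $\mH_{extr}$ every extremal graph has exactly $18$ vertices; hence $\mH_{extr}(3, 6; 7) = \mH_v(3, 6; 7; 18)$ and it suffices to identify $\mH_v(3, 6; 7; 18)$. By the implication~(\ref{equation: G arrowsv (a_1, ..., a_s) Rightarrow G arrowsv (a_1, ..., a_(i - 1), 2, a_i - 1, a_(i + 1), ..., a_s)}) applied to the split $3 = 2 + 1$, we have $G \arrowsv (3, 6) \Rightarrow G \arrowsv (2, 2, 6)$, which yields the inclusion $\mH_v(3, 6; 7; 18) \subseteq \mH_v(2, 2, 6; 7; 18)$. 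Theorem~\ref{theorem: abs(mH_v(2, 2, 6; 7; 18)) = 76515} then supplies the $76515$ explicit candidates. For the reverse inclusion, $\Gamma_3$ is already known from \cite{SXP09} to satisfy $\Gamma_3 \arrowsv (3, 6)$ and $\omega(\Gamma_3) < 7$, so $\Gamma_3 \in \mH_v(3, 6; 7; 18)$; it therefore remains to show that $\Gamma_3$ is the only such graph.

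The bulk of the work will be the filtering step: for each of the $76515$ candidates $G$, decide whether $G \arrowsv (3, 6)$, i.e.\ whether every partition $\V(G) = V_1 \cup V_2$ produces either a triangle inside $V_1$ or a $K_6$ inside $V_2$. To keep the total cost modest I would first restrict to the $392$ graphs in $\mH_{max}(2, 2, 6; 7; 18)$ (recorded during the proof of Theorem~\ref{theorem: abs(mH_v(2, 2, 6; 7; 18)) = 76515}): if $G \in \mH_v(3, 6; 7; 18)$, there exists a maximal $K_7$-free supergraph $G' \supseteq G$, and $G'$ inherits $G' \arrowsv (3, 6)$ by monotonicity, so $G' \in \mH_{max}(2, 2, 6; 7; 18)$. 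Assuming the filter returns only $\Gamma_3$ among these $392$ maximal graphs, every element of $\mH_v(3, 6; 7; 18)$ is a spanning subgraph of $\Gamma_3$, and a final sweep over proper subgraphs of $\Gamma_3$ confirms that each edge-deletion destroys the $(3, 6)$-arrowing property.

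The main obstacle is algorithmic rather than conceptual, namely making the arrow test fast enough to run through the candidate list. For $18$-vertex graphs with $\omega \leq 6$, a straightforward enumeration over colorings of the independent set complement of a maximum $K_6$ should be quite tractable, and can be accelerated by branching first on the vertices of high degree; this is the standard style of check used in the earlier computations of the thesis, so no new machinery is needed. Once the sweep completes and pinpoints $\Gamma_3$ as the unique survivor, the theorem follows.
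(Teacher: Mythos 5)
Your proposal is correct and follows essentially the same route as the paper: both deduce $\mH_{extr}(3,6;7)=\mH_v(3,6;7;18)$ from Theorem \ref{theorem: F_v(3, 6; 7) = 18}, use $G \arrowsv (3,6) \Rightarrow G \arrowsv (2,2,6)$ to reduce to the $76\,515$ graphs of Theorem \ref{theorem: abs(mH_v(2, 2, 6; 7; 18)) = 76515}, and finish by a computer check that only $\Gamma_3$ arrows $(3,6)$. Your refinement of first testing only the $392$ maximal graphs and then the subgraphs of $\Gamma_3$ is a sound efficiency tweak, and is in fact the same trick the paper itself uses in the proof of Theorem \ref{theorem: F_v(3, 6; 7) = 18}.
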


Let us note that $\chi(\Gamma_3) = 9$, and for this graph the inequality (\ref{equation: G arrowsv (a_1, ..., a_s) Rightarrow chi(G) geq m}) is strict. However, from Theorem \ref{theorem: abs(mH_v(3, 6; 7; 18)) = 1} it follows that in this special case Conjecture \ref{conjecture: chi(G) leq m + 1} is true.

There are two 13-regular graphs in $\mH_v(2, 2, 6; 7; 18)$, one of them being $\Gamma_3$. The graph $\Gamma_3$ is the only vertex transitive graph in $\mH_v(2, 2, 6; 7; 18)$ and it has 36 automorphisms. The other 13-regular graph has 24 automorphisms.

Let us also note that 2 467 of the graphs in $\mH_v(2, 2, 6; 7; 18)$ do not contain a subgraph in $\mH_v(2, 2, 6; 7; 17)$. We obtained the remaining 74048 graphs in $\mH_v(2, 2, 6; 7; 18)$ in another way by adding one vertex to the graphs in $\mH_v(2, 2, 6; 7; 17)$. This also testifies to the correctness of our programs.

\vspace{1em}
Theorem \ref{theorem: F_v(2, 2, 6; 7) = 17 and abs(mH_v(2, 2, 6; 7; 17)) = 3}, Theorem \ref{theorem: F_v(3, 6; 7) = 18}, Theorem \ref{theorem: abs(mH_v(2, 2, 6; 7; 18)) = 76515} and Theorem \ref{theorem: abs(mH_v(3, 6; 7; 18)) = 1} are published in \cite{BN17a}.

\begin{table}
	\centering
	\resizebox{0.8\textwidth}{!}{
		\begin{tabular}{ | l | l | r | r | }
			\hline
			{\parbox{10em}{set}}&
			{\parbox{6em}{\small independence\\ number}}&
			{\parbox{4em}{maximal\\ graphs}}&
			{\parbox{6em}{\hfill $(+K_6)$-graphs}}\\
			\hline
			$\mH_v(3; 7; 6)$				& $\leq 3$				& 1						& 2					\\
			$\mH_v(4; 7; 8)$				& $\leq 3$				& 2						& 12				\\
			$\mH_v(5; 7; 10)$				& $\leq 3$				& 6						& 274				\\
			$\mH_v(6; 7; 12)$				& $\leq 3$				& 37					& 78 926			\\
			$\mH_v(2, 6; 7; 14)$			& $\leq 3$				& 20					& 5 291				\\
			$\mH_v(2, 2, 6; 7; 17)$		& $= 3$					& 0						&					\\
			\hline
			$\mH_v(3; 7; 7)$				& $\leq 2$				& 1						& 3					\\
			$\mH_v(4; 7; 9)$				& $\leq 2$				& 2						& 22				\\
			$\mH_v(5; 7; 11)$				& $\leq 2$				& 5						& 468				\\
			$\mH_v(6; 7; 13)$				& $\leq 2$				& 24					& 97 028			\\
			$\mH_v(2, 6; 7; 15)$			& $\leq 2$				& 473					& 10 018 539		\\
			$\mH_v(2, 2, 6; 7; 17)$		& $= 2$					& 1						&					\\
			\hline
			$\mH_v(2, 2, 6; 7; 17)$		& 						& 1						&					\\
			\hline
		\end{tabular}
	}
	\caption{Steps in finding all maximal graphs in $\mH_v(2, 2, 6; 7; 17)$}
	\label{table: finding all graphs in H_v(2, 2, 6; 7; 17)}
	\vspace{2em}
	\centering
	\resizebox{0.8\textwidth}{!}{
		\begin{tabular}{ | l | l | r | r | }
			\hline
			{\parbox{10em}{set}}&
			{\parbox{6em}{\small independence\\ number}}&
			{\parbox{4em}{maximal\\ graphs}}&
			{\parbox{6em}{\hfill $(+K_6)$-graphs}}\\
			\hline
			$\mH_v(3; 7; 6)$				& $\leq 4$				& 1						& 2					\\
			$\mH_v(4; 7; 8)$				& $\leq 4$				& 2						& 13				\\
			$\mH_v(5; 7; 10)$				& $\leq 4$				& 7						& 317				\\
			$\mH_v(6; 7; 12)$				& $\leq 4$				& 50					& 102 387			\\
			$\mH_v(2, 6; 7; 14)$			& $\leq 4$				& 20					& 5 293				\\
			$\mH_v(2, 2, 6; 7; 18)$		& $= 4$					& 0						&					\\
			\hline
			$\mH_v(3; 7; 7)$				& $\leq 3$				& 1						& 4					\\
			$\mH_v(4; 7; 9)$				& $\leq 3$				& 3						& 45				\\
			$\mH_v(5; 7; 11)$				& $\leq 3$				& 12					& 3 071				\\
			$\mH_v(6; 7; 13)$				& $\leq 3$				& 168					& 4 691 237			\\
			$\mH_v(2, 6; 7; 15)$			& $\leq 3$				& 1627					& 70 274 176		\\
			$\mH_v(2, 2, 6; 7; 18)$		& $= 3$					& 308					&					\\
			\hline
			$\mH_v(3; 7; 8)$				& $\leq 2$				& 1						& 8					\\
			$\mH_v(4; 7; 10)$				& $\leq 2$				& 3						& 82				\\
			$\mH_v(5; 7; 12)$				& $\leq 2$				& 10					& 5 057				\\
			$\mH_v(6; 7; 14)$				& $\leq 2$				& 96					& 2 799 416			\\
			$\mH_v(2, 6; 7; 16)$			& $\leq 2$				& 7509					& 920 112 878		\\
			$\mH_v(2, 2, 6; 7; 18)$		& $= 2$					& 84					&					\\
			\hline
			$\mH_v(2, 2, 6; 7; 18)$		& 						& 392					&					\\
			\hline
		\end{tabular}
	}
	\caption{Steps in finding all maximal graphs in $\mH_v(2, 2, 6; 7; 18)$}
	\label{table: finding all graphs in H_v(2, 2, 6; 7; 18)}
\end{table}

\section{Computation of the numbers $F_v(2_{m - 6}, 6; m - 1)$}

In support to Conjecture \ref{conjecture: rp(p) = 2, p geq 4} we will prove the following theorem:
\begin{theorem}
	\label{theorem: F_v(2, 2, p; p + 1) leq 2p + 5, then ...}
	Let $F_v(2, 2, p; p + 1) \leq 2p + 5$. Then $\rp(p) = 2$ and
	\begin{equation*}
	F_v(2_r, p; r + p - 1) = F_v(2, 2, p; p + 1) + r - 2, \ r \geq 2.
	\end{equation*}
\end{theorem}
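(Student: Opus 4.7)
The plan is to reduce the entire theorem to showing $\rp(p) = 2$, since by Theorem \ref{theorem: rp}(b) this equality immediately yields $F_v(2_r, p; r+p-1) = F_v(2, 2, p; p+1) + r - 2$ for all $r \geq 2$. The general lower bound from (\ref{equation: m + p + 2 leq F_v(a_1, ..., a_s; m - 1) leq m + 3p}) applied to $(a_1, a_2, a_3) = (2, 2, p)$ gives $F_v(2, 2, p; p+1) \geq 2p+4$, so combined with the hypothesis, $F_v(2, 2, p; p+1) \in \{2p+4, 2p+5\}$, and I will treat the two cases separately.

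In the case $F_v(2, 2, p; p+1) = 2p+5$, I intend to invoke Theorem \ref{theorem: F_v(a_1, ..., a_s; m - 1) geq m + p + 3} with the parameters $a_1 = \ldots = a_r = 2$, $a_{r+1} = p$ (so $m = r+p$). Since the hypothesis of that theorem is precisely $F_v(2,2,p;p+1) \geq 2p+5$, it yields the lower bound $F_v(2_r, p; r+p-1) \geq m + p + 3 = r + 2p + 3$ for every $r \geq 2$. Combining this with the upper bound from Lemma \ref{lemma: F_v(2_r, p; r+p-1) leq F_v(2_s, p; s+p-1) + r - s} (with $s = 2$), namely $F_v(2_r, p; r+p-1) \leq r + 2p + 3$, forces equality. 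Consequently, $F_v(2_r, p; r+p-1) - r = 2p+3$ is constant in $r$, the minimum is attained at $r = 2$, and $\rp(p) = 2$.

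In the case $F_v(2, 2, p; p+1) = 2p+4$, Theorem \ref{theorem: rp}(d) already restricts us to $\rp(p) < F_v(2,2,p;p+1) - 2p = 4$, so it suffices to rule out $\rp(p) = 3$. Suppose for contradiction that $\rp = 3$, and let $G$ be an extremal graph in $\mH_v(2_3, p; p+2)$. By the minimality of $\rp$, $F_v(2_3, p; p+2) - 3 < F_v(2,2,p;p+1) - 2 = 2p+2$, whence $|\V(G)| \leq 2p+4$; the lower bound $|\V(G)| \geq 2\cdot 3 + 2p - 2 = 2p+4$ established inside the proof of Theorem \ref{theorem: rp}(d) then pins down $|\V(G)| = 2p+4$. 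Here $G \in \mH_v(2, 2, 2, p; p+2)$ with $m = p+3$ and $m - 1 = p+2$, so Theorem \ref{theorem: alpha(G) leq V(G) + m + p - 1} applies; since $|\V(G)| = 2p+4 < m + 3p = 4p+3$, part (b) forces $\alpha(G) \leq |\V(G)| - m - p = 1$, hence $G = K_{2p+4}$. But then $\omega(G) = 2p+4 > p+1$, contradicting $\omega(G) \leq p+1$ (which comes from $G \in \mH_v(\ldots; p+2)$). Therefore $\rp(p) \neq 3$, so $\rp(p) = 2$.

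The main obstacle lies in the case $F_v(2, 2, p; p+1) = 2p+4$, where Theorem \ref{theorem: F_v(a_1, ..., a_s; m - 1) geq m + p + 3} is unavailable and one must instead exploit the tight independence-number bound from Theorem \ref{theorem: alpha(G) leq V(G) + m + p - 1}(b) together with the refined clique-number argument from the proof of Theorem \ref{theorem: rp}(d) to collapse the hypothetical extremal graph to a complete graph and derive a contradiction.
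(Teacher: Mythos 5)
Your proof is correct, and your treatment of the case $F_v(2, 2, p; p + 1) = 2p + 5$ coincides with the paper's: both invoke Theorem \ref{theorem: F_v(a_1, ..., a_s; m - 1) geq m + p + 3} to get $F_v(2_r, p; r + p - 1) - r \geq 2p + 3$ and conclude $\rp = 2$. Where you diverge is the case $F_v(2, 2, p; p + 1) = 2p + 4$. The paper disposes of it in one line: the general lower bound (\ref{equation: m + p + 2 leq F_v(a_1, ..., a_s; m - 1) leq m + 3p}) gives $F_v(2_r, p; r + p - 1) \geq m + p + 2 = r + 2p + 2 = F_v(2, 2, p; p + 1) + r - 2$, so the minimum of $F_v(2_r, p; r + p - 1) - r$ is already attained at $r = 2$. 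You instead narrow to $\rp \in \{2, 3\}$ via Theorem \ref{theorem: rp}(d) and then rule out $\rp = 3$ by a structural argument: you pin a hypothetical extremal graph $G \in \mH_v(2_3, p; p + 2)$ to exactly $2p + 4$ vertices (upper bound from the definition of $\rp$, lower bound recycled from the proof of Theorem \ref{theorem: rp}(d)) and then use Theorem \ref{theorem: alpha(G) leq V(G) + m + p - 1}(b) to force $\alpha(G) \leq 1$, i.e.\ $G$ complete, contradicting $\omega(G) < p + 2$. Every step checks out (in particular $2p + 4 < 4p + 3$ holds, so part (b) of the independence-number bound does apply), but the detour is unnecessary: once you have $F_v(2_3, p; p + 2) \leq 2p + 4$, the same inequality (\ref{equation: m + p + 2 leq F_v(a_1, ..., a_s; m - 1) leq m + 3p}) with $m = p + 3$ gives $F_v(2_3, p; p + 2) \geq 2p + 5$ and the contradiction is immediate, with no need for Theorem \ref{theorem: rp}(c)--(d) or the independence-number machinery. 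What your version buys is a more explicit picture of why no extremal graph can exist at $r = 3$; what it costs is reliance on several auxiliary results where the paper needs only the single general bound.
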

\begin{proof}
	From Theorem \ref{theorem: rp}(b) it follows that it is enough to prove the equality $\rp(p) = 2$. According to (\ref{equation: m + p + 2 leq F_v(a_1, ..., a_s; m - 1) leq m + 3p}), $F_v(2, 2, p; p + 1) \geq 2p + 4$. Therefore, only the following two cases are possible:\\
	
	\emph{Case 1.} $F_v(2, 2, p; p + 1) = 2p + 4$. According to (\ref{equation: m + p + 2 leq F_v(a_1, ..., a_s; m - 1) leq m + 3p}),
	
	$F_v(2_r, p; r + p - 1) \geq m + p + 2 = r + 2p + 2$.\\
	Therefore,
	
	$F_v(2_r, p; r + p - 1) - r \geq 2p + 2 = F_v(2, 2, p; p + 1) - 2, \ r \geq 2$,\\
	and we have $\rp(p) = 2$.\\
	
	\emph{Case 2.} $F_v(2, 2, p; p + 1) = 2p + 5$. From Theorem \ref{theorem: F_v(a_1, ..., a_s; m - 1) geq m + p + 3} we have
	
	$F_v(2_r, p; r + p - 1) \geq r + 2p + 3, \ r \geq 2$.\\
	From this inequality we obtain
	
	$F_v(2_r, p; r + p - 1) - r \geq 2p + 3 = F_v(2, 2, p; p + 1) - 2, \ r \geq 2$.\\
	Therefore, in this case we also have $\rp(p) = 2$.
\end{proof}

\begin{remark}
	\label{remark: F_v(2, 2, p; p + 1) = 2p + 4?}
	It is unknown whether the first case is possible, i.e. if $F_v(2, 2, p; p + 1) = 2p + 4$ for some $p$. If $p \leq 7$ this equality is not true.
\end{remark}

\begin{theorem}
	\label{theorem: rp(6) = 2}
	$\rp(6) = 2$ and $F_v(2_{m - 6}, 6; m - 1) = m + 9, \ m \geq 8$.
\end{theorem}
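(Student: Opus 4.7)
The plan is to reduce this theorem to a direct application of the already-established Theorem \ref{theorem: F_v(2, 2, p; p + 1) leq 2p + 5, then ...} with $p = 6$, using the recently computed value $F_v(2, 2, 6; 7) = 17$ from Theorem \ref{theorem: F_v(2, 2, 6; 7) = 17 and abs(mH_v(2, 2, 6; 7; 17)) = 3}. First I would observe the key arithmetic coincidence: $F_v(2, 2, 6; 7) = 17 = 2 \cdot 6 + 5$, so the hypothesis $F_v(2, 2, p; p + 1) \leq 2p + 5$ of Theorem \ref{theorem: F_v(2, 2, p; p + 1) leq 2p + 5, then ...} is satisfied for $p = 6$.

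Applying that theorem immediately yields $\rp(6) = 2$ together with the identity
$$F_v(2_r, 6; r + 5) = F_v(2, 2, 6; 7) + r - 2 = 17 + r - 2 = r + 15, \quad r \geq 2.$$
The second step is a trivial reindexing: setting $r = m - 6$ (so $r + 5 = m - 1$ and $r \geq 2$ corresponds to $m \geq 8$) converts the formula into
$$F_v(2_{m - 6}, 6; m - 1) = (m - 6) + 15 = m + 9, \quad m \geq 8,$$
which is exactly the claimed equality.

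There is no real obstacle here; the entire content of the theorem lives in the two prerequisite results. The only thing worth being careful about is verifying that the hypothesis of Theorem \ref{theorem: F_v(2, 2, p; p + 1) leq 2p + 5, then ...} matches precisely (we are in \emph{Case 2} of its proof, with $F_v(2, 2, 6; 7) = 2 \cdot 6 + 5$), and checking that the range $m \geq 8$ in the statement is the same as $r \geq 2$ after substitution, which is exactly the range in which all the Folkman numbers involved are guaranteed to exist by (\ref{equation: F_v(a_1, ..., a_s; m - 1) exists}).
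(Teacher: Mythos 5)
Your proposal is correct and is essentially identical to the paper's own proof: the paper likewise deduces $\rp(6) = 2$ and the formula $F_v(2_{m-6}, 6; m-1) = m + 9$ for $m \geq 8$ by combining $F_v(2, 2, 6; 7) = 17$ (Theorem \ref{theorem: F_v(2, 2, 6; 7) = 17 and abs(mH_v(2, 2, 6; 7; 17)) = 3}) with Theorem \ref{theorem: F_v(2, 2, p; p + 1) leq 2p + 5, then ...} for $p = 6$. The explicit reindexing $r = m - 6$ that you spell out is left implicit in the paper but is exactly the intended step.
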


\begin{proof}
By Theorem \ref{theorem: F_v(2, 2, 6; 7) = 17 and abs(mH_v(2, 2, 6; 7; 17)) = 3}, $F_v(2, 2, 6; 7) = 17$. From this fact and Theorem \ref{theorem: F_v(2, 2, p; p + 1) leq 2p + 5, then ...} we obtain $\rp(6) = 2$ and the equality $F_v(2_{m - 6}, 6; m - 1) = m + 9, \ m \geq 8$.
\end{proof}

\vspace{1em}
Theorem \ref{theorem: F_v(2, 2, p; p + 1) leq 2p + 5, then ...} and Theorem \ref{theorem: rp(6) = 2} are published in \cite{BN17a}.

\section{Proof of Theorem \ref{theorem: F_v(a_1, ..., a_s; m - 1) = ..., max set(a_1, ..., a_s) = 6}(a)}

\vspace{1em}

Since $a_1 = ... = a_{s-1} = 2$ and $a_s = 6$, we have $m = s + 5$, and therefore

$F_v(a_1, ..., a_s; m - 1) = F_v(2_{s - 1}, 6; m - 1) = F_v(2_{m - 6}, 6; m - 1)$.\\
From Theorem \ref{theorem: rp(6) = 2} it now follows that $F_v(a_1, ..., a_s; m - 1) = m + 9$.
\qed

\vspace{1em}
Theorem \ref{theorem: F_v(a_1, ..., a_s; m - 1) = ..., max set(a_1, ..., a_s) = 6} is published in \cite{BN17a}.

The lower bound $F_v(a_1, ..., a_s; m - 1) \geq m + 9$ in Theorem \ref{theorem: F_v(a_1, ..., a_s; m - 1) = ..., max set(a_1, ..., a_s) = 6} is first proved and published in \cite{BN15a}.

\vspace{1em}

\section{Bounds on the numbers $F_v(2_r, 3, p; r + p + 1)$}

\vspace{1em}

The proof of Theorem \ref{theorem: F_v(a_1, ..., a_s; m - 1) = ..., max set(a_1, ..., a_s) = 6}(b) is more complex. First, we will compute the numbers $F_v(2_{m - 8}, 3, 6; m - 1), \ m \geq 8$ (Theorem \ref{theorem: rpp(6) = 0}). For the computation of the numbers $F_v(2_{m - 8}, 3, 6; m - 1), \ m \geq 8$ we will need the following theorem:

\begin{theorem}
	\label{theorem: rpp}
	Let $p$ be a fixed positive integer and $\rpp(p) = \rpp$ be the smallest positive integer for which
	$$\min_{r \geq 0} \set{F_v(2_r, 3, p; r + p + 1) - r} = F_v(2_{\rpp}, 3, p; \rpp + p + 1) - \rpp.$$
	Then:
	\vspace{1em}\\
	(a) $F_v(2_r, 3, p; r + p + 1) = F(2_{\rpp}, 3, p; \rpp + p + 1) + r - \rpp, \ r \geq \rpp$.
	\vspace{1em}\\
	(b) If $\rpp = 0$, then \hfill\break
	$F_v(2_r, 3, p; r + p + 1) = F_v(3, p; p + 1) + r, \ r \geq 0$.
	\vspace{1em}\\
	(c) If $\rpp > 0$ and $G$ is an extremal graph in $\mH (2_{\rpp}, 3, p; \rpp + p + 1)$, then \hfill\break
	$G \arrowsv (2, \rpp + p).$
	\vspace{1em}\\
	(d) $\rpp < F_v(3, p; p + 1) - 2p - 2$.
\end{theorem}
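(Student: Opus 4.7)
The plan is to mirror the proof of Theorem \ref{theorem: rp} part by part, transferring the role of the family $(2_r, p)$ with parameters $m = r+p$, $q = m - 1 = r + p - 1$ to the family $(2_r, 3, p)$ with parameters $m = r + p + 2$, $q = m - 1 = r + p + 1$. The key identities to keep in mind are $m(2_r, 3, p) = r + p + 2$ and $\max\{2, \ldots, 2, 3, p\} = p$ (assuming $p \geq 3$).

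For part (a), the inequality $F_v(2_r, 3, p; r + p + 1) \geq F_v(2_{\rpp}, 3, p; \rpp + p + 1) + r - \rpp$ is immediate from the definition of $\rpp$. For the reverse direction when $r \geq \rpp$, I would first prove a monotonicity lemma analogous to Lemma~\ref{lemma: F_v(2_r, p; r + p - 1) leq F_v(2_s, p; s + p - 1) + r - s}: namely, for $0 \leq s \leq r$,
$$F_v(2_r, 3, p; r + p + 1) \leq F_v(2_s, 3, p; s + p + 1) + r - s.$$
Taking $G$ extremal in $\mH_v(2_s, 3, p; s + p + 1)$ and forming $K_{r - s} + G$, this graph arrows $(2_r, 3, p)$ by (\ref{equation: G arrowsv (a_1, ..., a_s) Rightarrow K_t + G arrowsv (2_t, a_1, ..., a_s)}), and its clique number is $(r - s) + \omega(G) < r + p + 1$, which gives the bound. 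Part (b) is then the immediate specialization to $\rpp = 0$, since $F_v(2_0, 3, p; p + 1) = F_v(3, p; p + 1)$.

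Part (c) follows by contradiction exactly as in Theorem~\ref{theorem: rp}(c). Assuming some extremal $G \in \mH_v(2_{\rpp}, 3, p; \rpp + p + 1)$ does not arrow $(2, \rpp + p)$, we decompose $V(G) = V_1 \cup V_2$ with $V_1$ a nonempty independent set and $G_1 := G[V_2]$ containing no $(\rpp + p)$-clique. By Proposition~\ref{proposition: G - A arrowsv (a_1, ..., a_(i - 1), a_i - 1, a_(i + 1_, ..., a_s)}, $G_1 \arrowsv (2_{\rpp - 1}, 3, p)$, hence $G_1 \in \mH_v(2_{\rpp - 1}, 3, p; \rpp + p)$, so $|V(G_1)| \geq F_v(2_{\rpp - 1}, 3, p; \rpp + p)$. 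Combining with $|V(G_1)| \leq |V(G)| - 1$ gives
$$F_v(2_{\rpp - 1}, 3, p; \rpp + p) - (\rpp - 1) \leq F_v(2_{\rpp}, 3, p; \rpp + p + 1) - \rpp,$$
contradicting the minimality of $\rpp$.

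For part (d), if $\rpp = 0$ the claim $0 < F_v(3, p; p+1) - 2p - 2$ follows from the lower bound in (\ref{equation: m + p + 2 leq F_v(a_1, ..., a_s; m - 1) leq m + 3p}), which gives $F_v(3, p; p+1) \geq 2p + 4$. If $\rpp \geq 1$, let $G$ be extremal in $\mH_v(2_{\rpp}, 3, p; \rpp + p + 1)$. By (c), $G \in \mH_v(2, \rpp + p; \rpp + p + 1)$, and since $m(2, \rpp + p) = \rpp + p + 1$ matches $q$, Theorem~\ref{theorem: F_v(a_1, ..., a_s; m) = m + p}(a) yields $|V(G)| \geq 2\rpp + 2p + 1$. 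The chromatic-number argument of Theorem~\ref{theorem: rp}(d) transfers verbatim: by (\ref{equation: G arrowsv (a_1, ..., a_s) Rightarrow chi(G) geq m}), $\chi(G) \geq \rpp + p + 2$, whereas $\chi(\overline{C}_{2\rpp + 2p + 1}) = \rpp + p + 1$, so $G$ is not this graph and Theorem~\ref{theorem: F_v(a_1, ..., a_s; m) = m + p}(b) upgrades the bound to $|V(G)| \geq 2\rpp + 2p + 2$. The definition of $\rpp$ together with $\rpp \geq 1$ yields $F_v(2_{\rpp}, 3, p; \rpp + p + 1) < F_v(3, p; p + 1) + \rpp$, and chaining the inequalities gives $\rpp < F_v(3, p; p + 1) - 2p - 2$. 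The proof is essentially routine parameter bookkeeping; the only genuine care required is in part (d), where one must correctly identify the role of $p$ as the maximum of $\{2, \rpp + p\}$ to apply Theorem~\ref{theorem: F_v(a_1, ..., a_s; m) = m + p} cleanly.
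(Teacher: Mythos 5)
Your proposal is correct and follows essentially the same route as the paper's own proof: the lower bound in (a) from the definition of $\rpp$, the upper bound via $K_{r-\rpp}+G$ with $G$ extremal (you merely package this as a standalone monotonicity lemma before specializing, which is a cosmetic difference), the contradiction argument for (c) via Proposition \ref{proposition: G - A arrowsv (a_1, ..., a_(i - 1), a_i - 1, a_(i + 1_, ..., a_s)}, and for (d) the combination of (c), Theorem \ref{theorem: F_v(a_1, ..., a_s; m) = m + p}, the chromatic-number exclusion of $\overline{C}_{2\rpp+2p+1}$, and the strict inequality from the minimality of $\rpp$. No gaps; the parameter bookkeeping (in particular $m(2,\rpp+p)=\rpp+p+1$ and the strictness of $F_v(2_{\rpp},3,p;\rpp+p+1)-\rpp<F_v(3,p;p+1)$ when $\rpp\geq 1$) is handled exactly as in the paper.
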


\begin{proof}
	(a) According to the definition of $\rpp = \rpp(p)$ we have
	
	$F_v(2_r, 3, p; r + p + 1) \geq F_v(2_{\rpp}, 3, p; \rpp + p + 1) + r - \rpp, \ r \geq 0$.\\
	Now we will prove that if $r \geq \rpp$ the opposite inequality is also true. 
	Let $G \in \mH_{extr}(2_{\rpp}, 3, p; \rpp + p + 1)$. Then from (\ref{equation: G arrowsv (a_1, ..., a_s) Rightarrow K_t + G arrowsv (2_t, a_1, ..., a_s)}) it follows that $K_{r - \rpp} + G \in \mH_v(2_r, 3, p; r + p + 1), \ r \geq \rpp$. Therefore
	
	$F_v(2_r, 3, p; r + p + 1) \leq \abs{\V(K_{r - \rpp} + G)} = F_v(2_{\rpp}, 3, p; \rpp + p + 1) + r - \rpp, \ r \geq \rpp$.\\
	Thus, (a) is proved.\\
	
	(b) If $\rpp(p) = 0$, then obviously the equality (b) follows from (a).\\
	
	(c) Assume the opposite is true and let $G$ be an extremal graph in $\mH_v(2_{\rpp}, 3, p; \rpp + p + 1)$ such that $\V(G) = V_1 \cup V_2$ where $V_1$ is an independent set and $V_2$ does not contain $(\rpp + p)$-clique. We can assume that $V_1 \neq \emptyset$. Let $G_1 = \G[V_2] = G - V_1$. Then $\omega(G_1) < \rpp + p$ and since $\rpp \geq 1$, from Proposition \ref{proposition: G - A arrowsv (a_1, ..., a_(i - 1), a_i - 1, a_(i + 1_, ..., a_s)} it follows that $G_1 \arrowsv (2_{\rpp - 1}, 3, p)$. Therefore, $G_1 \in \mH_v(2_{\rpp - 1}, 3, p; \rpp + p)$ and
	
	$\abs{\V(G)} - 1 \geq \abs{\V(G_1)} \geq F_v(2_{\rpp - 1}, 3, p; \rpp + p)$.\\
	Since $\abs{\V(G)} = F_v(2_{\rpp}, 3, p; \rpp + p + 1)$, we obtain
	
	$F_v(2_{\rpp - 1}, 3, p; \rpp + p) - (\rpp - 1) \leq F_v(2_{\rpp}, 3, p; \rpp + p + 1) - \rpp$,\\
	which contradicts the definition of $\rpp$.\\
	
	(d) According to (\ref{equation: m + p + 2 leq F_v(a_1, ..., a_s; m - 1) leq m + 3p}) $F_v(3, p; p + 1) \geq 2p + 4$, and therefore in the case $\rpp = 0$ the inequality holds. Let $\rpp > 0$ and $G$ be an extremal graph in $\mH_v(2_{\rpp}, 3, p; \rpp + p + 1)$. According to (\ref{equation: G arrowsv (a_1, ..., a_s) Rightarrow chi(G) geq m}),
	\begin{equation}
	\label{equation: chi(G) geq rpp + p + 2}
	\chi(G) \geq \rpp + p + 2.
	\end{equation}
	According to (c), $G \in \mH_v(2, \rpp + p; \rpp + p + 1)$, and by Theorem \ref{theorem: F_v(a_1, ..., a_s; m) = m + p}
	
	$\abs{\V(G)} \geq 2\rpp + 2p + 1$.\\
	Since $\chi(\overline{C}_{2\rpp + 2p + 1}) = \rpp + p + 1$, from (\ref{equation: chi(G) geq rpp + p + 2}) it follows $G \neq \overline{C}_{2\rpp + 2p + 1}$. By Theorem \ref{theorem: F_v(a_1, ..., a_s; m) = m + p}(b),
	
	$\abs{\V(G)} = F_v(2_{\rpp}, 3, p; \rpp + p + 1) \geq 2\rpp + 2p + 2$.\\
	Since $\rpp > 0$, we have
	
	$F_v(2_{\rpp}, 3, p; \rpp + p + 1) - \rpp < F_v(3, p; p + 1).$\\
	From the last two inequalities it follows that
	
	$\rpp < F_v(3, p; p + 1) - 2p - 2$.
\end{proof}

\vspace{1em}

Since $F_v(3, 3; 4) = 14$, from (\ref{equation: F_v(a_1, ..., a_s, m - 1) = ...}) we obtain $\rpp(3) = 1$. Also from (\ref{equation: F_v(a_1, ..., a_s, m - 1) = ...}) we see that $\rpp(4) = 0$. From Theorem \ref{theorem: F_v(a_1, ..., a_s; m - 1) = m + 9, max set(a_1, ..., a_s) = 5} it follows that $\rpp(5) = 0$. We suppose the following conjecture is true
\begin{conjecture}
	\label{conjecture: rpp(p) = 0, p geq 4}
	Let $p \geq 4$ be a fixed integer. Then,
	\begin{equation*}
	\min_{r \geq 0} \set{F_v(2_r, 3, p; r + p - 1) - r} = F_v(3, p; p + 1),
	\end{equation*}
	i.e. $\rpp(p) = 0$, and
	\begin{equation*}
	F_v(2_r, 3, p; r + p + 1) = F_v(3, p; p + 1) + r.
	\end{equation*}
\end{conjecture}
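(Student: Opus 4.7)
The plan is to follow the structure of Theorem \ref{theorem: F_v(2, 2, p; p + 1) leq 2p + 5, then ...}, which established the analogous result $\rp(p) = 2$ under a mild hypothesis on $F_v(2,2,p;p+1)$. The upper bound $F_v(2_r, 3, p; r+p+1) \leq F_v(3,p;p+1) + r$ is immediate: taking an extremal $G \in \mH_v(3,p;p+1)$, (\ref{equation: G arrowsv (a_1, ..., a_s) Rightarrow K_t + G arrowsv (2_t, a_1, ..., a_s)}) places $K_r + G$ in $\mH_v(2_r, 3, p; r+p+1)$, and $\omega(K_r + G) = r + \omega(G) \leq r + p$. By Theorem \ref{theorem: rpp}(a), the conjecture then reduces to proving $\rpp(p) = 0$, i.e.\ the lower bound $F_v(2_r, 3, p; r+p+1) \geq F_v(3,p;p+1) + r$ for every $r \geq 0$.

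I would split into cases according to the value of $F_v(3,p;p+1)$, since the bound becomes harder as this number grows. Writing $m = r + p + 2$, the universal inequality (\ref{equation: m + p + 2 leq F_v(a_1, ..., a_s; m - 1) leq m + 3p}) yields $F_v(2_r, 3, p; r+p+1) \geq m + p + 2 = r + 2p + 4$. So if $F_v(3,p;p+1) = 2p+4$, we are done immediately. If $F_v(3,p;p+1) = 2p+5$, then (\ref{equation: F_v(2, 2, p; p + 1) leq F_v(3, p; p + 1)}) gives $F_v(2,2,p;p+1) \leq 2p+5$; when equality holds, Theorem \ref{theorem: F_v(a_1, ..., a_s; m - 1) geq m + p + 3} improves the bound to $m + p + 3 = r + 2p + 5$, and the case closes. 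The only troublesome subcase here is $F_v(2,2,p;p+1) = 2p+4$, which by Remark \ref{remark: F_v(2, 2, p; p + 1) = 2p + 4?} is not known to occur for any $p$.

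For $F_v(3,p;p+1) \geq 2p+6$ --- a regime that actually appears, since $F_v(3,6;7) = 18 = 2(6)+6$ --- the general bounds no longer suffice, and one would proceed as in the proof of Theorem \ref{theorem: rpp(6) = 0}: assume $\rpp(p) \geq 1$, take an extremal $G \in \mH_v(2_{\rpp}, 3, p; \rpp + p + 1)$, and exploit Theorem \ref{theorem: rpp}(c), which forces $G \arrowsv (2, \rpp + p)$, i.e.\ $G - A \supseteq K_{\rpp + p}$ for every independent set $A \subseteq \V(G)$. Combined with the upper bound on $\alpha(G)$ supplied by Theorem \ref{theorem: alpha(G) leq V(G) + m + p - 1} and the enumeration machinery of Algorithm \ref{algorithm: A3}, one would hope to rule out all candidate graphs at the required order.

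The principal obstacle is that the gap between $F_v(3,p;p+1)$ and the universal bound $m+p+2$ appears to widen with $p$, so no single argument seems to settle all $p$ at once. Each new $p$ for which $F_v(3,p;p+1) > 2p+5$ would demand either a computer-assisted verification in the style of Theorem \ref{theorem: rpp(6) = 0}, or a substantially strengthened general lower bound on $F_v(2_r, 3, p; r+p+1)$ that captures the asymmetric shape of the argument list $(2_r, 3, p)$ --- and it is not clear in advance what form such a strengthening should take.
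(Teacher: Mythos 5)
This statement is stated in the paper as a \emph{conjecture}, not a theorem: the paper offers no general proof, only the observations that $\rpp(4)=0$ and $\rpp(5)=0$ follow from previously computed values, and a computer-assisted verification of the single further case $p=6$ (Theorem \ref{theorem: rpp(6) = 0}). Your proposal correctly recognizes this and does not overclaim. Your reduction is sound: the upper bound $F_v(2_r,3,p;r+p+1)\leq F_v(3,p;p+1)+r$ via $K_r+G$ and (\ref{equation: G arrowsv (a_1, ..., a_s) Rightarrow K_t + G arrowsv (2_t, a_1, ..., a_s)}) is exactly the argument inside Theorem \ref{theorem: rpp}(a), and your case split on the value of $F_v(3,p;p+1)$ is the precise analogue of Theorem \ref{theorem: F_v(2, 2, p; p + 1) leq 2p + 5, then ...}. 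One caveat worth noting: the two cases you can close by general bounds ($F_v(3,p;p+1)=2p+4$ or $2p+5$) are vacuous for every $p\geq 4$ where the number is known, since $F_v(3,4;5)=14$, $F_v(3,5;6)=16$, $F_v(3,6;7)=18$ and $F_v(3,7;8)\geq 20$ all satisfy $F_v(3,p;p+1)\geq 2p+6$; so the computer-assisted regime you describe last is in practice the only one that occurs, which is exactly why the paper leaves the general statement open and proves only $p=6$. Your assessment that no uniform argument is currently available matches the paper's own position.
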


\vspace{1em}

It is not difficult to see that Conjecture \ref{conjecture: rpp(p) = 0, p geq 4} is true if and only if for fixed $p$ the sequence $\set{F_v(2_r, 3, p; r + p + 1)}$ is strictly increasing with respect to $r$. We will prove that when $p = 6$ Conjecture \ref{conjecture: rpp(p) = 0, p geq 4} is also true. Theorem \ref{theorem: F_v(a_1, ..., a_s; m - 1) = ..., max set(a_1, ..., a_s) = 6} (b) follows easily from this fact.

\vspace{1em}

\vspace{1em}
Theorem \ref{theorem: rpp} is published in \cite{BN17a}.

\vspace{3em}

\section{Computation of the numbers $F_v(2_{m - 8}, 3, 6; m - 1)$}

\vspace{1em}

\begin{theorem}
	\label{theorem: rpp(6) = 0}
	$\rpp(6) = 0$ and $F_v(2_{m - 8}, 3, 6; m - 1) = m + 10, \ m \geq 8$.
\end{theorem}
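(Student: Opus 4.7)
The plan is to combine the reduction from Theorem \ref{theorem: rpp} with the value $F_v(3,6;7)=18$ (Theorem \ref{theorem: F_v(3, 6; 7) = 18}) and one further computer verification. Writing $F(r)=F_v(2_r,3,6;r+7)$ and $f(r):=F(r)-r$, the claim becomes $\rpp(6)=0$ together with $F(r)=r+18$ for all $r\ge 0$; the substitution $r=m-8$ then turns this into $F_v(2_{m-8},3,6;m-1)=m+10$ for $m\ge 8$. The upper bound $F(r)\le r+18$ is immediate from the graph $K_r+\Gamma_3$, where $\Gamma_3$ is the unique extremal graph of Theorem \ref{theorem: abs(mH_v(3, 6; 7; 18)) = 1}: by (\ref{equation: G arrowsv (a_1, ..., a_s) Rightarrow K_t + G arrowsv (2_t, a_1, ..., a_s)}) this graph lies in $\mH_v(2_r,3,6;r+7)$ and has order $r+18$. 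More generally, starting from any $G\in\mH_{extr}(2_s,3,6;s+7)$ with $0\le s\le r$ and joining with $K_{r-s}$ gives $F(r)\le F(s)+r-s$, so $f$ is non-increasing on $r\ge 0$ (the direct analog of Lemma \ref{lemma: F_v(2_r, p; r + p - 1) leq F_v(2_s, p; s + p - 1) + r - s}, and exactly the construction already used in the proof of Theorem \ref{theorem: rpp}(a)).

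For the lower bound, Theorem \ref{theorem: rpp}(d) combined with $F_v(3,6;7)=18$ gives $\rpp(6)\le 3$. The crucial step is then the computational claim $\mH_v(2,2,2,3,6;10;20)=\emptyset$, which yields $F(3)\ge 21$, equivalently $f(3)\ge 18$. Combined with the non-increasing property and $f(0)=18$, this forces $f(r)=18$ for every $r\in\set{0,1,2,3}$. If the minimum of $f$ over $r\ge 0$ were strictly below $18$, the smallest index achieving that minimum would exceed $3$, contradicting $\rpp(6)\le 3$. Hence the minimum of $f$ equals $18$, it is attained at $r=0$, and $\rpp(6)=0$; Theorem \ref{theorem: rpp}(b) then completes the proof.

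The main obstacle is the computational step $\mH_v(2,2,2,3,6;10;20)=\emptyset$. The plan is to run a cascade of Algorithm \ref{algorithm: A3} calls in the style of the proof of Theorem \ref{theorem: abs(mH_v(2, 2, 6; 7; 18)) = 76515}: start from $\mH_{max}(6;10;n_0)$ for a small $n_0$, repeatedly enlarge the argument list with A3, and prune the search by independence number using Theorem \ref{theorem: alpha(G) leq V(G) + m + p - 1}(b), which forces $\alpha(G)\le 4$ for any candidate $G$ on $20$ vertices (here $m=10$, $p=6$). The delicate point is the final enlargement, which introduces the argument $3$ rather than $2$: the check $G\arrowsv(a_1,\dots,a_s)$ in step 4 of A3 becomes significantly more expensive, and splitting the enumeration by $\alpha(G)$ (exactly as was done for Theorem \ref{theorem: abs(mH_v(2, 2, 6; 7; 18)) = 76515}) will be essential to keep the computation within reach of the hardware used elsewhere in the thesis.
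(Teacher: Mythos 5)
Your proposal is correct and follows essentially the same route as the paper: Theorem \ref{theorem: rpp}(d) together with $F_v(3,6;7)=18$ gives $\rpp(6)\le 3$, the monotonicity of $F_v(2_r,3,6;r+7)-r$ (via the $K_1+G$ construction) reduces everything to the single computational claim $\mH_v(2,2,2,3,6;10;20)=\emptyset$, and that claim is verified by a cascade of Algorithm \ref{algorithm: A3} calls split by independence number, exactly as in the thesis. One small slip: for the tuple $(2,2,2,3,6)$ one has $m=11$ (you substituted $q=10$ for $m$), so Theorem \ref{theorem: alpha(G) leq V(G) + m + p - 1}(b) actually forces $\alpha(G)\le 20-11-6=3$ rather than $4$; this only makes your pruning weaker than necessary and does not affect the correctness of the argument.
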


\begin{proof}
	From Theorem \ref{theorem: rpp} (d) we obtain $\rpp(6) < 4$. Therefore, we have to prove $\rpp(6) \neq 1$, $\rpp(6) \neq 2$, and $\rpp(6) \neq 3$. Since $F_v(3, 6; 7) = 18$, we have to prove the inequalities $F_v(2, 3, 6; 8) > 18$, $F_v(2, 2, 3, 6; 9) > 19$, and $F_v(2, 2, 2, 3, 6; 10) > 20$. We will prove these inequalities with the help of a computer. From (\ref{equation: G arrowsv (a_1, ..., a_s) Rightarrow K_t + G arrowsv (2_t, a_1, ..., a_s)}) (t = 1) it is easy to see that $F_v(2_{r - 1}, 3; p) + 1 \geq F_v(2_r, 3; p + 1)$, and therefore it is enough to prove $F_v(2, 2, 2, 3, 6; 10) > 20$. We will present the proof of this inequality only, but we also proved the other two inequalities in the same way with a computer, since the obtained additional information is interesting and useful.
	
	\vspace{1em}

	Similarly to the proof of Theorem \ref{theorem: F_v(2, 2, 6; 7) = 17 and abs(mH_v(2, 2, 6; 7; 17)) = 3}, we will use Algorithm \ref{algorithm: A3} to prove that $\mH_v(2, 2, 2, 3, 6; 10; 20) = \emptyset$. According to Theorem \ref{theorem: alpha(G) leq V(G) + m + p - 1}(b), there are no graphs in $\mH_v(2, 2, 2, 3, 6; 10; 20)$ with independence number greater than 3.\\
	
	First, we prove that there are no graphs in $\mH_{max}(2, 2, 2, 3, 6; 10; 20)$ with independence number 3:
	
	The only graph in $\mH_{max}(6; 10; 9)$ is $K_9$.
	
	We execute Algorithm \ref{algorithm: A3}($n = 11;\ k = 2;\ t = 3$) with input $\mA = \mH_{max}^3(1, 6; 10; 9) = \mH_{max}^3(6; 10; 9) = \set{K_9}$ to obtain all graphs in $\mB = \mH_{max}^3(2, 6; 10; 11)$. (see Remark \ref{remark: algorithm A3 k = 2})
	
	We execute Algorithm \ref{algorithm: A3}($n = 13;\ k = 2;\ t = 3$) with input $\mA = \mH_{max}^3(2, 6; 10; 11)$ to obtain all graphs in $\mB = \mH_{max}^3(3, 6; 10; 13)$.
	
	We execute Algorithm \ref{algorithm: A3}($n = 15;\ k = 2;\ t = 3$) with input $\mA = \mH_{max}^3(1, 3, 6; 10; 13) = \mH_{max}^3(3, 6; 10; 13)$ to obtain all graphs in $\mB = \mH_{max}^3(2, 3, 6; 10; 15)$.
	
	We execute Algorithm \ref{algorithm: A3}($n = 17;\ k = 2;\ t = 3$) with input $\mA = \mH_{max}^3(1, 2, 3, 6; 10; 15) = \mH_{max}^3(2, 3, 6; 10; 15)$ to obtain all graphs in $\mB = \mH_{max}^3(2, 2, 3, 6; 10; 17)$.
	
	By executing Algorithm \ref{algorithm: A3}($n = 20;\ k = 3;\ t = 3$) with input $\mA = \mH_{max}^3(1, 2, 2, 3, 6; 10; 17) = \mH_{max}^3(2, 2, 3, 6; 10; 17)$, we obtain $\mB = \emptyset$. According to Theorem \ref{theorem: algorithm A3}, there are no graphs in $\mH_{max}(2, 2, 2, 3, 6; 10; 20)$ with independence number 3.\\
	
	It remains to be proved that there are no graphs in $\mH_{max}(2, 2, 2, 3, 6; 10; 20)$ with independence number 2:
	
	The only graph in $\mH_{max}(6; 10; 10)$ is $K_{10} - e$.

	We execute Algorithm \ref{algorithm: A3}($n = 12;\ k = 2;\ t = 2$) with input $\mA = \mH_{max}^2(1, 6; 10; 10) = \mH_{max}^2(6; 10; 10) = \set{K_{10} - e}$ to obtain all graphs in $\mB = \mH_{max}^2(2, 6; 10; 12)$. (see Remark \ref{remark: algorithm A3 k = 2})
	
	We execute Algorithm \ref{algorithm: A3}($n = 14;\ k = 2;\ t = 2$) with input $\mA = \mH_{max}^2(2, 6; 10; 12)$ to obtain all graphs in $\mB = \mH_{max}^2(3, 6; 10; 14)$.
	
	We execute Algorithm \ref{algorithm: A3}($n = 16;\ k = 2;\ t = 2$) with input $\mA = \mH_{max}^2(1, 3, 6; 10; 14) = \mH_{max}^2(3, 6; 10; 14)$ to obtain all graphs in $\mB = \mH_{max}^2(2, 3, 6; 10; 16)$.
	
	We execute Algorithm \ref{algorithm: A3}($n = 18;\ k = 2;\ t = 2$) with input $\mA = \mH_{max}^2(1, 2, 3, 6; 10; 16) = \mH_{max}^2(2, 3, 6; 10; 16)$ to obtain all graphs in $\mB = \mH_{max}^2(2, 2, 3, 6; 10; 18)$.
	
	By executing Algorithm \ref{algorithm: A3}($n = 20;\ k = 2;\ t = 2$) with input $\mA = \mH_{max}^2(1, 2, 2, 3, 6; 10; 18) = \mH_{max}^2(2, 2, 3, 6; 10; 18)$, we obtain $\mB = \emptyset$. According to Theorem \ref{theorem: algorithm A3}, there are no graphs in $\mH_{max}(2, 2, 2, 3, 6; 10; 20)$ with independence number 2.

	Thus, we proved $\mH_{max}(2, 2, 2, 3, 6; 10; 20) = \emptyset$, and therefore $F_v(2, 2, 2, 3, 6; 10) > 20$ and $\rpp(6) = 0$.\\
	
From $\rpp(6) = 0$ and Theorem \ref{theorem: rpp}(b) we obtain
$$F_v(2_{m - 8}, 3, 6; m - 1) = m + 10, \ m \geq 8.$$

Thus, Theorem \ref{theorem: rpp(6) = 0} is proved.
\end{proof}

\vspace{1em}

The number of graphs obtained in each step is given in Table \ref{table: finding all graphs in H_v(2, 2, 2, 3, 6; 10; 20)} (see also Table \ref{table: finding all graphs in H_v(2, 3, 6; 8; 18)} and Table \ref{table: finding all graphs in H_v(2, 2, 3, 6; 9; 19)}).

\vspace{1em}
Theorem \ref{theorem: rpp(6) = 0} is published in \cite{BN17a}.

\vspace{1em}

\begin{table}
	\centering
	\vspace{-2em}
	\resizebox{0.75\textwidth}{!}{
		\begin{tabular}{ | l | l | r | r | }
			\hline
			{\parbox{10em}{set}}&
			{\parbox{5.5em}{\small independence\\ number}}&
			{\parbox{4em}{maximal\\ graphs}}&
			{\parbox{6em}{\hfill $(+K_7)$-graphs}}\\
			\hline
		$\mH_v(4; 8; 7)$				& $\leq 3$				& 1						& 2					\\
		$\mH_v(5; 8; 9)$				& $\leq 3$				& 2						& 12				\\
		$\mH_v(6; 8; 11)$				& $\leq 3$				& 6						& 276				\\
		$\mH_v(2, 6; 8; 13)$			& $\leq 3$				& 37					& 79 749			\\
		$\mH_v(3, 6; 8; 15)$			& $\leq 3$				& 21					& 3 458				\\
		$\mH_v(2, 3, 6; 8; 18)$		& $= 3$					& 0						&					\\
		\hline
		$\mH_v(4; 8; 8)$				& $\leq 2$				& 1						& 3					\\
		$\mH_v(5; 8; 10)$				& $\leq 2$				& 2						& 22				\\
		$\mH_v(6; 8; 12)$				& $\leq 2$				& 5						& 489				\\
		$\mH_v(2, 6; 8; 14)$			& $\leq 2$				& 25					& 119 126			\\
		$\mH_v(3, 6; 8; 16)$			& $\leq 2$				& 509					& 3 582 157			\\
		$\mH_v(2, 3, 6; 8; 18)$		& $= 2$					& 0						&					\\
		\hline
		$\mH_v(2, 3, 6; 8; 18)$		& 						& 0						&					\\
		\hline
	\end{tabular}
}
	\vspace{-0.5em}
	\caption{\small Steps in finding all maximal graphs in $\mH_v(2, 3, 6; 8; 18)$}
	\label{table: finding all graphs in H_v(2, 3, 6; 8; 18)}
	\vspace{1em}
	\centering
	\resizebox{0.75\textwidth}{!}{
		\begin{tabular}{ | l | l | r | r | }
			\hline
			{\parbox{10em}{set}}&
			{\parbox{5.5em}{\small independence\\ number}}&
			{\parbox{4em}{maximal\\ graphs}}&
			{\parbox{6em}{\hfill $(+K_8)$-graphs}}\\
			\hline
		$\mH_v(5; 9; 8)$				& $\leq 3$				& 1						& 2					\\
		$\mH_v(6; 9; 10)$				& $\leq 3$				& 2						& 12				\\
		$\mH_v(2, 6; 9; 12)$			& $\leq 3$				& 6						& 277				\\
		$\mH_v(3, 6; 9; 14)$			& $\leq 3$				& 37					& 79 901			\\
		$\mH_v(2, 3, 6; 9; 16)$		& $\leq 3$				& 21					& 3 459				\\
		$\mH_v(2, 2, 3, 6; 9; 19)$	& $= 3$					& 0						&					\\
		\hline
		$\mH_v(5; 9; 9)$				& $\leq 2$				& 1						& 3					\\
		$\mH_v(6; 9; 11)$				& $\leq 2$				& 2						& 22				\\
		$\mH_v(2, 6; 9; 13)$			& $\leq 2$				& 5						& 496				\\
		$\mH_v(3, 6; 9; 15)$			& $\leq 2$				& 25					& 121 499			\\
		$\mH_v(2, 3, 6; 9; 17)$		& $\leq 2$				& 512					& 3 585 530			\\
		$\mH_v(2, 2, 3, 6; 9; 19)$	& $= 2$					& 0						&					\\
		\hline
		$\mH_v(2, 2, 3, 6; 9; 19)$	& 						& 0						&					\\
		\hline
	\end{tabular}
}
	\vspace{-0.5em}
	\caption{\small Steps in finding all maximal graphs in $\mH_v(2, 2, 3, 6; 9; 19)$}
	\label{table: finding all graphs in H_v(2, 2, 3, 6; 9; 19)}
	\vspace{1em}
	\centering
	\resizebox{0.75\textwidth}{!}{
		\begin{tabular}{ | l | l | r | r | }
			\hline
			{\parbox{10em}{set}}&
			{\parbox{5.5em}{\small independence\\ number}}&
			{\parbox{4em}{maximal\\ graphs}}&
			{\parbox{6em}{\hfill $(+K_9)$-graphs}}\\
			\hline
		$\mH_v(6; 10; 9)$					& $\leq 3$				& 1						& 2					\\
		$\mH_v(2, 6; 10; 11)$				& $\leq 3$				& 2						& 12				\\
		$\mH_v(3, 6; 10; 13)$				& $\leq 3$				& 6						& 277				\\
		$\mH_v(2, 3, 6; 10; 15)$			& $\leq 3$				& 37					& 79 934			\\
		$\mH_v(2, 2, 3, 6; 10; 17)$		& $\leq 3$				& 21					& 3 459				\\
		$\mH_v(2, 2, 2, 3, 6; 10; 20)$	& $= 3$					& 0						&					\\
		\hline
		$\mH_v(6; 10; 10)$				& $\leq 2$				& 1						& 3					\\
		$\mH_v(2, 6; 10; 12)$				& $\leq 2$				& 2						& 22				\\
		$\mH_v(3, 6; 10; 14)$				& $\leq 2$				& 5						& 498				\\
		$\mH_v(2, 3, 6; 10; 16)$			& $\leq 2$				& 25					& 121 864			\\
		$\mH_v(2, 2, 3, 6; 10; 18)$		& $\leq 2$				& 512					& 3 585 546			\\
		$\mH_v(2, 2, 2, 3, 6; 10; 20)$	& $= 2$					& 0						&					\\
		\hline
		$\mH_v(2, 2, 2, 3, 6; 10; 20)$	& 						& 0						&					\\
		\hline
	\end{tabular}
}
	\vspace{-0.5em}
	\caption{\small Steps in finding all maximal graphs in $\mH_v(2, 2, 2, 3, 6; 10; 20)$}
	\label{table: finding all graphs in H_v(2, 2, 2, 3, 6; 10; 20)}
\end{table}

\section{Computation of the numbers $\wFv{m}{6}{m - 1}$}

Let us remind that $\wHv{m}{p}{q}$ and $\wFv{m}{p}{q}$ are defined in Section 1.4.\\

\vspace{-1em}

According to Proposition \ref{proposition: wFv(m)(p)(q) exists}, we have
\begin{equation}
	\label{equation: wFv(m)(6)(m - 1) exists}
	\wFv{m}{6}{m - 1} \mbox{ exists } \Leftrightarrow m \geq 8.
\end{equation}

We will prove the following
\begin{theorem}
	\label{theorem: wFv(m)(6)(m - 1) = m + 10}
	$\wFv{m}{6}{m - 1} = m + 10, \ m \geq 8.$
\end{theorem}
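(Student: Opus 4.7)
The plan is to establish matching lower and upper bounds of $m+10$.

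For the lower bound $\wFv{m}{6}{m-1} \geq m+10$, I will apply the left inequality of Theorem \ref{theorem: F_v(2_(m - p), p; q) leq F_v(a_1, ..., a_s; q) leq wFv(m)(p)(q)} to the canonical tuple $(2_{m-8}, 3, 6)$, which satisfies $\sum_{i=1}^{s}(a_i-1)+1 = (m-8)+2+5+1 = m$ and $\max\{a_i\} = 6$. This gives $\wFv{m}{6}{m-1} \geq F_v(2_{m-8}, 3, 6; m-1)$, and the right-hand side equals $m+10$ by Theorem \ref{theorem: rpp(6) = 0}.

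For the upper bound, I will first reduce to the base case $m = 8$ via Theorem \ref{theorem: wFv(m)(p)(m - m_0 + q) leq wFv(m_0)(p)(q) + m - m_0} applied with $m_0 = 8$, $p = 6$, $q = 7$: this yields $\wFv{m}{6}{m-1} \leq \wFv{8}{6}{7} + (m - 8)$, so it suffices to prove $\wFv{8}{6}{7} \leq 18$. For this, I will take $\Gamma_3$ from Figure \ref{figure: H_v(3, 6; 7; 18) cap H_v(4, 5; 7; 18)} (the graph from \cite{SXP09} used to witness $F_v(3,6;7) \leq 18$). By construction $\omega(\Gamma_3) = 6 < 7$, and as recorded there $\Gamma_3 \arrowsv (3,6)$ and $\Gamma_3 \arrowsv (4,5)$.

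It remains to upgrade these two arrowing relations to the statement $\Gamma_3 \arrowsv \uni{8}{6}$. Here I will invoke Proposition \ref{proposition: a_1 + a_2 - 1 > p}, which reduces the verification to tuples $(a_1,\ldots,a_s)$ with $s \geq 2$, $2 \leq a_1 \leq \cdots \leq a_s \leq 6$, $\sum(a_i-1)+1 = 8$, and $a_1 + a_2 - 1 > 6$. A brief enumeration shows: for $s = 2$ the constraint $a_1 + a_2 = 9$ with $a_2 \leq 6$ leaves precisely $(3,6)$ and $(4,5)$; for $s = 3$ the requirement $a_1 + a_2 \geq 8$ together with $a_3 \geq a_2 \geq 4$ and $a_1+a_2+a_3 = 10$ admits no solution; and for $s \geq 4$ one gets $a_1 + a_2 \geq 8$ forcing $\sum(a_i-1)+1 \geq 9 > 8$. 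Thus only the cases $(3,6)$ and $(4,5)$ remain, and both are already known for $\Gamma_3$. Hence $\Gamma_3 \in \wHv{8}{6}{7}$ and $\wFv{8}{6}{7} \leq 18$.

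There is essentially no hard step: the genuine work (computing $F_v(3,6;7) = 18$, computing $F_v(2_{m-8},3,6;m-1) = m+10$, and constructing $\Gamma_3$) is already packaged in earlier theorems. The only mildly delicate point is the case enumeration that justifies applying Proposition \ref{proposition: a_1 + a_2 - 1 > p}, and even this is a three-line check.
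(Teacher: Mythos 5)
Your proposal is correct and follows essentially the same route as the paper's own proof: the lower bound via Theorem \ref{theorem: rpp(6) = 0} combined with the inequality $F_v(2_{m-8}, 3, 6; m-1) \leq \wFv{m}{6}{m-1}$ from Theorem \ref{theorem: F_v(2_(m - p), p; q) leq F_v(a_1, ..., a_s; q) leq wFv(m)(p)(q)}, and the upper bound via $\Gamma_3$, Proposition \ref{proposition: a_1 + a_2 - 1 > p}, and Theorem \ref{theorem: wFv(m)(p)(m - m_0 + q) leq wFv(m_0)(p)(q) + m - m_0} with $m_0 = 8$. The only blemish is that you call the inequality $F_v(a_1, \ldots, a_s; q) \leq \wFv{m}{p}{q}$ the ``left'' inequality of the sandwich when it is the right one; your explicit enumeration showing that only $(3,6)$ and $(4,5)$ need checking is a welcome detail the paper leaves implicit.
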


\begin{proof}
The lower bound $\wFv{m}{6}{m - 1} \geq m + 10$ follows from Theorem \ref{theorem: rpp(6) = 0} and Theorem \ref{theorem: F_v(2_(m - p), p; q) leq F_v(a_1, ..., a_s; q) leq wFv(m)(p)(q)}. 

To prove the upper bound consider the 18-vertex graph $\Gamma_3$ (Figure \ref{figure: H_v(3, 6; 7; 18) cap H_v(4, 5; 7; 18)}) with the help of which in \cite{SXP09} the authors proved the inequality $F_v(3, 6; 7) \leq 18$. In addition to the property $\Gamma_3 \arrowsv (3, 6)$, the graph $\Gamma_3$ also has the property $\Gamma_3 \arrowsv (4, 5)$. According to Proposition \ref{proposition: a_1 + a_2 - 1 > p}, from $\Gamma_3 \arrowsv (3, 6)$ and $\Gamma_3 \arrowsv (4, 5)$ it follows $\Gamma_3 \arrowsv \uni{8}{6}$. Since $\omega(\Gamma_3) = 6$, we obtain $\Gamma_3 \in \wHv{8}{6}{7}$ and $\wFv{8}{6}{7} \leq \abs{\V(\Gamma_3)} = 18$. From this inequality and Theorem \ref{theorem: wFv(m)(p)(m - m_0 + q) leq wFv(m_0)(p)(q) + m - m_0} ($m_0 = 8, p = 6; q = 7$) it follows $\wFv{m}{6}{m - 1} \leq m + 10, \ m \geq 8$.
\end{proof}

The numbers $\wFv{m}{6}{m - 1}$ can be computed directly with the help of the following Theorem \ref{theorem: m_0}. This is done in \cite{BN15b}.
\begin{theorem}
\label{theorem: m_0}
Let $m_0(p) = m_0$ be the smallest positive integer for which
$$\min_{m \geq p + 2}\set{\wFv{m}{p}{m - 1} - m} = \wFv{m_0}{p}{m_0 - 1} - m_0.$$
Then:
\vspace{1em}\\
(a) $\wFv{m}{p}{m - 1} = \wFv{m_0}{p}{m_0 - 1} + m - m_0, \ m \geq m_0$.
\vspace{1em}\\
(b) If $m_0 > p + 2$ and $G$ is an extremal graph in $\wHv{m_0}{p}{m_0 - 1}$, then \hfill\break
$G \overset{v}{\rightarrow} (2, m_0 - 2)$.
\vspace{1em}\\
(c) $m_0 < \wFv{(p + 2)}{p}{p + 1} - p$.
\end{theorem}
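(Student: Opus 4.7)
The plan is to imitate the structure of Theorem \ref{theorem: rp} and Theorem \ref{theorem: rpp}, transposing the arguments there into the modified-Folkman-number setting. For part (a), the inequality $\wFv{m}{p}{m-1} - m \geq \wFv{m_0}{p}{m_0-1} - m_0$ for all $m \geq p+2$ is immediate from the minimality of $m_0$. For the matching upper bound when $m \geq m_0$, I would apply Theorem \ref{theorem: wFv(m)(p)(m - m_0 + q) leq wFv(m_0)(p)(q) + m - m_0} with $q = m_0 - 1$, which yields $\wFv{m}{p}{m-1} \leq \wFv{m_0}{p}{m_0-1} + m - m_0$ directly.

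For part (b), I would argue by contradiction, mirroring Theorem \ref{theorem: rp}(c) and Theorem \ref{theorem: rpp}(c). Suppose an extremal $G \in \wHv{m_0}{p}{m_0-1}$ admits a partition $\V(G) = V_1 \cup V_2$ with $V_1$ a nonempty independent set and $\omega(G[V_2]) \leq m_0 - 3$, and set $G_1 = G - V_1$. The key auxiliary claim is that $G_1 \arrowsv \uni{m_0 - 1}{p}$: given any admissible tuple $(b_1, \ldots, b_s)$ for $\uni{m_0 - 1}{p}$, the tuple $(2, b_1, \ldots, b_s)$ is admissible for $\uni{m_0}{p}$ (using $p \geq 2$), so $G \arrowsv (2, b_1, \ldots, b_s)$, and Proposition \ref{proposition: G - A arrowsv (a_1, ..., a_(i - 1), a_i - 1, a_(i + 1_, ..., a_s)} applied to the independent set $V_1$ then gives $G_1 \arrowsv (b_1, \ldots, b_s)$. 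Combined with $\omega(G_1) < m_0 - 2$, this shows $G_1 \in \wHv{m_0 - 1}{p}{m_0 - 2}$; the hypothesis $m_0 > p + 2$ ensures this class is non-empty via Proposition \ref{proposition: wFv(m)(p)(q) exists}. Then $\wFv{m_0 - 1}{p}{m_0 - 2} \leq \abs{\V(G)} - 1 = \wFv{m_0}{p}{m_0 - 1} - 1$, contradicting the minimality of $m_0$.

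Part (c) splits into a base case and a secondary case. When $m_0 = p + 2$, the claim reduces to $\wFv{p+2}{p}{p+1} > 2p + 2$, which follows immediately from Theorem \ref{theorem: F_v(2_(m - p), p; q) leq F_v(a_1, ..., a_s; q) leq wFv(m)(p)(q)} together with the known bound $F_v(2, 2, p; p+1) \geq 2p + 4$ from (\ref{equation: m + p + 2 leq F_v(a_1, ..., a_s; m - 1) leq m + 3p}). When $m_0 > p + 2$, part (b) puts any extremal $G$ in $\mH_v(2, m_0 - 2; m_0 - 1)$, and Theorem \ref{theorem: F_v(a_1, ..., a_s; m) = m + p}(a) gives $\abs{\V(G)} \geq 2m_0 - 3$, with equality only if $G = \overline{C}_{2m_0 - 3}$ by part (b) of that theorem. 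However, (\ref{equation: G arrowsv (a_1, ..., a_s) Rightarrow chi(G) geq m}) forces $\chi(G) \geq m_0$, while $\chi(\overline{C}_{2m_0 - 3}) = m_0 - 1$, ruling out equality and yielding $\abs{\V(G)} \geq 2m_0 - 2$. Substituting into the strict inequality $\wFv{p+2}{p}{p+1} - (p+2) > \wFv{m_0}{p}{m_0 - 1} - m_0$ (which follows from $m_0 > p+2$ and the minimality of $m_0$) gives $\wFv{p+2}{p}{p+1} > p + m_0$, i.e., the bound (c).

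The main obstacle I anticipate is the transfer step in part (b): showing that the arrowing property $\uni{m}{p}$ descends under deletion of an independent set to the property $\uni{m-1}{p}$. This is the essential new input relative to the ordinary-Folkman-number arguments, since Proposition \ref{proposition: G - A arrowsv (a_1, ..., a_(i - 1), a_i - 1, a_(i + 1_, ..., a_s)} only addresses individual tuples. The prepend-a-$2$ trick reduces the problem cleanly, but I need to verify it preserves admissibility for $\uni{m}{p}$, which requires only $p \geq 2$. After this reduction, the remainder of the argument is a direct translation of the proofs of Theorem \ref{theorem: rp} and Theorem \ref{theorem: rpp} into the modified-Folkman-number framework.
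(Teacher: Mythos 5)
Your proposal is correct and follows essentially the same route as the paper, which gives no separate proof of Theorem \ref{theorem: m_0} but states explicitly that the argument is analogous to those of Theorem \ref{theorem: rp} and Theorem \ref{theorem: rpp} (the full proof being deferred to \cite{BN15b}). Your handling of the one genuinely new step --- descending from $G \arrowsv \uni{m_0}{p}$ to $G_1 \arrowsv \uni{m_0-1}{p}$ by prepending a $2$ to each admissible tuple and applying Proposition \ref{proposition: G - A arrowsv (a_1, ..., a_(i - 1), a_i - 1, a_(i + 1_, ..., a_s)} --- is exactly the right adaptation, and parts (a) and (c) line up with the corresponding parts of Theorems \ref{theorem: rp} and \ref{theorem: rpp} as intended.
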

Theorem \ref{theorem: m_0} is published in \cite{BN15b}. The proof is similar to the proof of Theorem \ref{theorem: rp} and Theorem \ref{theorem: rpp}. According to Theorem \ref{theorem: m_0}, the computation of the numbers $\wFv{m}{6}{m - 1}$ is reduced to finding the exact values of the first several of these numbers. In \cite{BN15b}, we use a modification of Algorithm \ref{algorithm: A2} to prove that $\wFv{8}{6}{7} = 18$, $\wFv{9}{6}{8} > 18$, $\wFv{10}{6}{9} > 19$, and $\wFv{11}{6}{10} > 20$. Thus, with the help of Theorem \ref{theorem: m_0}, we obtain $m_0(6) = 8$ and $\wFv{m}{6}{m - 1} = m + 10, \ m \geq 8$. We will not present this proof in detail.

\vspace{1em}

\section{Proof of Theorem \ref{theorem: F_v(a_1, ..., a_s; m - 1) = ..., max set(a_1, ..., a_s) = 6}(b)}

According to Theorem \ref{theorem: F_v(2_(m - p), p; q) leq F_v(a_1, ..., a_s; q) leq wFv(m)(p)(q)} and Theorem \ref{theorem: rpp(6) = 0}

$F_v(a_1, ..., a_s; m - 1) \geq F_v(2_{m - 8}, 3, 6; m - 1) = m + 10$. 

From Theorem \ref{theorem: F_v(2_(m - p), p; q) leq F_v(a_1, ..., a_s; q) leq wFv(m)(p)(q)} and Theorem \ref{theorem: wFv(m)(6)(m - 1) = m + 10} we obtain

$F_v(a_1, ..., a_s; m - 1) \leq \wFv{m}{6}{m - 1} = m + 10$.
\qed

\vspace{1em}
Theorem \ref{theorem: F_v(a_1, ..., a_s; m - 1) = ..., max set(a_1, ..., a_s) = 6} is published in \cite{BN17a}.

The upper bound $F_v(a_1, ..., a_s; m - 1) \leq m + 10$ in Theorem \ref{theorem: F_v(a_1, ..., a_s; m - 1) = ..., max set(a_1, ..., a_s) = 6} is first proved and published in \cite{BN15a}.

\chapter{Numbers of the form $F_v(a_1, ..., a_s; m - 1)$ where\\ $\max\{a_1, ..., a_s\} = 7$}

\vspace{-2em}

In this chapter we obtain the bounds:
\vspace{-0.5em}
\begin{theorem}
	\label{theorem: F_v(a_1, ..., a_s; m - 1) leq m + 12, max set(a_1, ..., a_s) = 7}
	Let $a_1, ..., a_s$ be positive integers such that $\max\set{a_1, ..., a_s} = 7$ and $m = \sum\limits_{i=1}^s (a_i - 1) + 1 \geq 9$. Then:
	$$m + 11 \leq F_v(a_1, ..., a_s; m - 1) \leq m + 12.$$
\end{theorem}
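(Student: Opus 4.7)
The plan is to split the proof into the lower bound and the upper bound, each deduced from the framework developed in Chapter~1 combined with results already established in this chapter.

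For the lower bound, I would apply Theorem~\ref{theorem: F_v(2_(m - p), p; q) leq F_v(a_1, ..., a_s; q) leq wFv(m)(p)(q)} with $p = 7$ and $q = m-1$ to get
$$F_v(a_1, \dots, a_s; m-1) \geq F_v(2_{m-7}, 7; m-1).$$
Theorem~\ref{theorem: rp(7) = 2} gives $F_v(2_{m-7}, 7; m-1) = m + 11$ for $m \geq 9$, yielding $F_v(a_1, \dots, a_s; m-1) \geq m + 11$ directly.

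For the upper bound, again by Theorem~\ref{theorem: F_v(2_(m - p), p; q) leq F_v(a_1, ..., a_s; q) leq wFv(m)(p)(q)} it suffices to establish $\wFv{m}{7}{m-1} \leq m + 12$ for all $m \geq 9$. The natural route is to first handle one base case $m_0$ and then invoke Theorem~\ref{theorem: wFv(m)(p)(m - m_0 + q) leq wFv(m_0)(p)(q) + m - m_0} (with $p = 7$, $q = m_0 - 1$) to propagate the bound: if $\wFv{m_0}{7}{m_0 - 1} \leq m_0 + 12$, then $\wFv{m}{7}{m-1} \leq m + 12$ for every $m \geq m_0$. The smallest admissible base is $m_0 = 9$, where we must exhibit a graph $G$ on $21$ vertices with $\omega(G) \leq 7$ and $G \arrowsv \uni{9}{7}$.

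To verify that such a $G$ has $G \arrowsv \uni{9}{7}$, I would use Proposition~\ref{proposition: a_1 + a_2 - 1 > p}: it is enough to check $G \arrowsv (a_1, a_2)$ for those canonical decompositions with $2 \leq a_1 \leq a_2 \leq 7$, $a_1 + a_2 - 1 = 9$, and $a_1 + a_2 > 8$. An easy combinatorial count shows that the only such pairs are $(3,7)$, $(4,6)$, and $(5,5)$, and (mirroring the argument used for $m_0 = 8, p = 6$ in the proof of Theorem~\ref{theorem: wFv(m)(6)(m - 1) = m + 10}) no decompositions with $s \geq 3$ need to be checked separately. So the base case reduces to finding a single $21$-vertex $K_8$-free graph arrowing $(3,7)$, $(4,6)$, and $(5,5)$ simultaneously.

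The main obstacle is the construction (and verification) of this witness graph. A promising strategy, analogous to how the graph $\Gamma_3$ supplied the upper bounds $\wFv{8}{6}{7} \leq 18$ and $F_v(3,6;7) \leq 18$, is to begin from a known $(3,7;8)$-arrowing graph witnessing $F_v(3,7;8)$ or from the extremal graph for $F_v(2,2,7;8) = 20$ given by Theorem~\ref{theorem: F_v(2, 2, 7; 8) = 20}, and then enlarge it by one vertex (or more generally apply a variant of Procedure~\ref{procedure: extending a set of maximal graphs in mH_v(a_1, ..., a_s; q; n)}) until the additional arrowing properties $(4,6)$ and $(5,5)$ are forced; at each step the properties $\omega \leq 7$ and $G \arrowsv (3,7)$ must be preserved. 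Once such a $G$ is produced and the three arrowings are confirmed by a computer check, the base case $\wFv{9}{7}{8} \leq 21$ is established, and combining this with Theorem~\ref{theorem: wFv(m)(p)(m - m_0 + q) leq wFv(m_0)(p)(q) + m - m_0} and the lower bound above completes the proof of Theorem~\ref{theorem: F_v(a_1, ..., a_s; m - 1) leq m + 12, max set(a_1, ..., a_s) = 7}.
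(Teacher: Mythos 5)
Your proposal follows essentially the same route as the paper: the lower bound from Theorem \ref{theorem: rp(7) = 2} combined with Theorem \ref{theorem: F_v(2_(m - p), p; q) leq F_v(a_1, ..., a_s; q) leq wFv(m)(p)(q)}, and the upper bound via a $21$-vertex $K_8$-free witness for $\wFv{9}{7}{8} \leq 21$ arrowing $(3,7)$, $(4,6)$, $(5,5)$ (reduced to these pairs by Proposition \ref{proposition: a_1 + a_2 - 1 > p}) and then propagated by Theorem \ref{theorem: wFv(m)(p)(m - m_0 + q) leq wFv(m_0)(p)(q) + m - m_0}. The paper's witness $\Gamma_4$ is indeed obtained exactly as you suggest, by adding one vertex to the graph $G_{2,2,7} \in \mH_v(2, 2, 7; 8; 20)$ and verifying the three arrowing properties by computer.
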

\vspace{-0.5em}
According to (\ref{equation: F_v(a_1, ..., a_s; m - 1) exists}), the condition $m \geq 9$ in Theorem \ref{theorem: F_v(a_1, ..., a_s; m - 1) leq m + 12, max set(a_1, ..., a_s) = 7} is necessary.

First, with the help of Algorithm \ref{algorithm: A3} we will find the exact value of the number $F_v(2, 2, 7; 8)$. Then, we compute the numbers $F_v(2_{m - 7}, 7; m - 1)$. For the computation of these numbers we will need a new improved Algorithm \ref{algorithm: A4}.
\vspace{1em}

\vspace{-2.5em}

\section{Computation of the number $F_v(2, 2, 7; 8)$}

\vspace{-0.5em}
Regarding the number $F_v(2, 2, 7; 8)$ we know that $18 \leq F_v(2, 2, 7; 8) \leq 22$. The lower bound follows from (\ref{equation: m + p + 2 leq F_v(a_1, ..., a_s; m - 1) leq m + 3p}, and the upper bound follows from $F_v(3, 7; 8) \leq 22$, \cite{SXP09}.

\begin{theorem}
	\label{theorem: F_v(2, 2, 7; 8) = 20}
	$F_v(2, 2, 7; 8) = 20$.
\end{theorem}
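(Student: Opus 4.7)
The plan is to establish matching bounds $F_v(2, 2, 7; 8) \leq 20$ and $F_v(2, 2, 7; 8) \geq 20$, following the same template that proved Theorem \ref{theorem: F_v(2, 2, 5; 6) = 16} and Theorem \ref{theorem: F_v(2, 2, 6; 7) = 17 and abs(mH_v(2, 2, 6; 7; 17)) = 3}. The only previously known estimates are $18 \leq F_v(2,2,7;8) \leq F_v(3,7;8) \leq 22$, so both directions require work: we must both exhibit an explicit $20$-vertex witness and rule out $19$-vertex witnesses computationally.

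For the upper bound, I would construct a graph $G \in \mH_v(2,2,7;8;20)$. A natural starting point is to apply Procedure \ref{procedure: extending a set of maximal graphs in mH_v(a_1, ..., a_s; q; n)} to candidate seeds such as cones $K_t + H$ built over the extremal graphs in $\mH_{extr}(2,2,6;7)$ (Theorem \ref{theorem: F_v(2, 2, 6; 7) = 17 and abs(mH_v(2, 2, 6; 7; 17)) = 3}) or modifications of the $18$-vertex graph $\Gamma_3$ of Figure \ref{figure: H_v(3, 6; 7; 18) cap H_v(4, 5; 7; 18)}; one iterates edge-removal/edge-addition to expand a set of maximal graphs in $\mH_v(2,2,7;8;20)$ until a member is found. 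Verification of $G \arrowsv (2,2,7)$ on $20$ vertices reduces to checking that no partition $\V(G) = V_1 \cup V_2 \cup V_3$ has $V_1, V_2$ independent and $V_3$ $K_7$-free, which is still feasible with a direct search.

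For the lower bound $F_v(2,2,7;8) \geq 20$, the task is to prove $\mH_{max}(2,2,7;8;19) = \emptyset$, and here I would invoke Algorithm \ref{algorithm: A3} exactly as in the proof of Theorem \ref{theorem: F_v(2, 2, 6; 7) = 17 and abs(mH_v(2, 2, 6; 7; 17)) = 3}. Any $G \in \mH_{max}(2,2,7;8;19)$ has $\alpha(G) \geq 2$, and since $19 < m + 3p = 30$ with $m = 9$, $p = 7$, Theorem \ref{theorem: alpha(G) leq V(G) + m + p - 1}(b) forces $\alpha(G) \leq 19 - m - p = 3$. One then splits into the cases $\alpha(G) = 3$ and $\alpha(G) = 2$. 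For $\alpha = 3$ I would chain Algorithm \ref{algorithm: A3} with $k = 2, t = 3$ through
\[
\{K_6\} = \mH_{max}^3(3;8;6) \to \mH_{max}^3(4;8;8) \to \cdots \to \mH_{max}^3(2,7;8;16),
\]
finishing with one call with $k = 3, t = 3$ to reach $\mH_{max}^3(2,2,7;8;19)$; the analogous $t = 2$ chain starting from $\{K_7 - e\}$ handles $\alpha = 2$. By Theorem \ref{theorem: algorithm A3} the union of the two outputs is $\mH_{max}(2,2,7;8;19)$, and the intended conclusion is that both outputs are empty.

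The main obstacle will be computational scale in the $\alpha = 2$ branch. Already at $p = 6$ the intermediate family $\mH_{+K_6}^2(2,6;7;16)$ had about $9.2 \times 10^8$ elements and required roughly two weeks to enumerate (Table \ref{table: finding all graphs in H_v(2, 2, 6; 7; 18)}); at $p = 7$ the analogous family $\mH_{+K_7}^2(2,7;8;17)$ is expected to be considerably larger, and the step feeding it into $\mH_{max}^2(2,2,7;8;19)$ is the likely bottleneck. Feasibility may depend on the pruning bound $\Delta(G) \leq |\V(G)| - 4$ for $G \in \mH_v(2,2,p;p+1)$ noted after Algorithm \ref{algorithm: A3}, on aggressive canonical-form isomorphism rejection at step 3, and possibly on processing the penultimate layer graph-by-graph so that intermediate $(+K_7)$-families never need to be stored in full.
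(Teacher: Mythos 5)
Your lower-bound argument is essentially the paper's own proof: the same reduction to $\mH_{max}(2,2,7;8;19)=\emptyset$, the same use of Theorem \ref{theorem: alpha(G) leq V(G) + m + p - 1}(b) to force $\alpha(G)\in\set{2,3}$, and the same two chains of Algorithm \ref{algorithm: A3} ending in a $k=3$ (resp.\ $k=2$) call; you also correctly identify $\mH_{+K_7}^2(2,7;8;17)$ as the bottleneck (the paper reports $380\,361\,736$ graphs there and about two weeks of computation). One detail needs repair: the seed of your $\alpha=2$ chain cannot be $\set{K_7-e}$, because $K_7-e$ is not a maximal $K_8$-free graph (adding the missing edge does not create a $K_8$), so Theorem \ref{theorem: algorithm A3} does not apply to that input. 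The correct seed at the $9$-vertex level is $\mH_{max}^2(4;8;9)=\set{C_4+K_5}$ (the paper's choice), or equivalently $\set{K_7}=\mH_{max}^2(3;8;7)$ two vertices earlier. This is a transcription slip from the $q=7$ case, not a conceptual error.

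The genuine gap is in the upper bound. Procedure \ref{procedure: extending a set of maximal graphs in mH_v(a_1, ..., a_s; q; n)} requires as input a set of graphs that are \emph{already} maximal members of $\mH_v(2,2,7;8;20)$; it cannot bootstrap a witness from graphs outside the class. And the seeds you propose provably lie outside it: for $H\in\mH_{extr}(2,2,6;7)$ one has $\abs{\V(H)}=17$ and $\omega(H)=6$, so $K_1+H$ and $K_2+H$ have $18$ and $19$ vertices (impossible once $F_v(2,2,7;8)\geq 20$ is established), while $K_3+H$ has $\omega=9>8$; the same dichotomy kills every cone over the $18$-vertex graph $\Gamma_3$. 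So no cone construction over the $p=6$ extremal graphs can produce a member of $\mH_v(2,2,7;8;20)$, and ``modifications of $\Gamma_3$'' is not a construction. The paper obtains the witness $G_{2,2,7}$ by a quite different route: it searches the catalogue of vertex-transitive graphs on at most $31$ vertices, finds four $24$-vertex members of $\mH_v(2,2,7;8)$, and descends to $20$ vertices by alternating vertex deletion with Procedure \ref{procedure: extending a set of maximal graphs in mH_v(a_1, ..., a_s; q; n)} at each intermediate order. Without some such explicit source of a $20$-vertex graph, your proof establishes only $20\leq F_v(2,2,7;8)\leq F_v(3,7;8)\leq 22$.
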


\begin{proof}

1. Proof of the lower bound $F_v(2, 2, 7; 8) \geq 20$.

We will prove that $\mH_v(2, 2, 7; 8; 19) = \emptyset$ using a similar method to the proof of Theorem \ref{theorem: F_v(2, 2, 6; 7) = 17 and abs(mH_v(2, 2, 6; 7; 17)) = 3}. Suppose that $G \in \mH_v(2, 2, 7; 8; 19)$. Clearly, $\alpha(G) \geq 2$, and according to Theorem \ref{theorem: alpha(G) leq V(G) + m + p - 1}(b), $\alpha(G) \leq 3$. It remains to be proved that there are no graphs in $\mH_{max}(2, 2, 7; 8; 19)$ with independence number 2 or 3.

\vspace{1em}

First, we prove that there are no graphs in $\mH_{max}(2, 2, 7; 8; 19)$ with independence number 3:
	
The only graph in $\mH_{max}(4; 8; 8)$ is $K_8 - e$.
	
We execute Algorithm \ref{algorithm: A3}($n = 10;\ k = 2;\ t = 3$) with input $\mA = \mH_{max}^3(4; 8; 8) = \set{K_8 - e}$ to obtain all graphs in $\mB = \mH_{max}^3(5; 8; 10)$. (see Remark \ref{remark: algorithm A3 k = 2})
	
We execute Algorithm \ref{algorithm: A3}($n = 12;\ k = 2;\ t = 3$) with input $\mA = \mH_{max}^3(5; 8; 10)$ to obtain all graphs in $\mB = \mH_{max}^3(6; 8; 12)$.
	
We execute Algorithm \ref{algorithm: A3}($n = 14;\ k = 2;\ t = 3$) with input $\mA = \mH_{max}^3(6; 8; 12)$ to obtain all graphs in $\mB = \mH_{max}^3(7; 8; 14)$.
	
We execute Algorithm \ref{algorithm: A3}($n = 16;\ k = 2;\ t = 3$) with input $\mA = \mH_{max}^3(1, 7; 8; 14) = \mH_{max}^3(7; 8; 14)$ to obtain all graphs in $\mB = \mH_{max}^3(2, 7; 8; 16)$.
	
By executing Algorithm \ref{algorithm: A3}($n = 19;\ k = 3;\ t = 3$) with input $\mA = \mH_{max}^3(1, 2, 7; 8; 16) = \mH_{max}^3(2, 7; 8; 16)$, we obtain $\mB = \emptyset$. According to Theorem \ref{theorem: algorithm A3}, there are no graphs in $\mH_{max}(2, 2, 7; 8; 19)$ with independence number 3.\\

\vspace{-1.5em}

It remains to prove that there are no graphs in $\mH_{max}(2, 2, 7; 8; 19)$ with independence number 2:

It is easy to see that $\mH_{max}(4; 8; 9) = \set{\overline{K}_3 + K_6, C_4 + K_5}$, and therefore $C_4 + K_5$ is the only graph in $\mH_{max}(4; 8; 9)$ with independence number 2.	
	
We execute Algorithm \ref{algorithm: A3}($n = 11;\ k = 2;\ t = 2$) with input $\mA = \mH_{max}^2(4; 8; 9) = \set{C_4 + K_5}$ to obtain all graphs in $\mB = \mH_{max}^2(5; 8; 11)$. (see Remark \ref{remark: algorithm A3 k = 2})

We execute Algorithm \ref{algorithm: A3}($n = 13;\ k = 2;\ t = 2$) with input $\mA = \mH_{max}^2(5; 8; 11)$ to obtain all graphs in $\mB = \mH_{max}^2(6; 8; 13)$.

We execute Algorithm \ref{algorithm: A3}($n = 15;\ k = 2;\ t = 2$) with input $\mA = \mH_{max}^2(6; 8; 13)$ to obtain all graphs in $\mB = \mH_{max}^2(7; 8; 15)$.

We execute Algorithm \ref{algorithm: A3}($n = 17;\ k = 2;\ t = 2$) with input $\mA = \mH_{max}^2(1, 7; 8; 15) = \mH_{max}^2(7; 8; 15)$ to obtain all graphs in $\mB = \mH_{max}^2(2, 7; 8; 17)$.

By executing Algorithm \ref{algorithm: A3}($n = 19;\ k = 2;\ t = 2$) with input $\mA = \mH_{max}^2(1, 2, 7; 8; 17) = \mH_{max}^2(2, 7; 8; 17)$, we obtain $\mB = \emptyset$. According to Theorem \ref{theorem: algorithm A3}, there are no graphs in $\mH_{max}(2, 2, 7; 8; 19)$ with independence number 2.\\
	
\vspace{-1.5em}

Thus, we proved $\mH_{max}(2, 2, 7; 8; 19) = \emptyset$, and therefore $F_v(2, 2, 7; 8) \geq 20$. The number of graphs obtained in each step of the proof is given in Table \ref{table: finding all graphs in H_v(2, 2, 7; 8; 19)}. The computer needed almost two weeks to complete the computations, because of the large number of graphs in $\mH_{+K_7}^2(2, 7; 8; 17)$.\\

\begin{table}
	\centering
	\resizebox{0.8\textwidth}{!}{
		\begin{tabular}{ | l | l | r | r | }
			\hline
			{\parbox{10em}{set}}&
			{\parbox{6em}{\small independence\\ number}}&
			{\parbox{4em}{maximal\\ graphs}}&
			{\parbox{6em}{\hfill $(+K_7)$-graphs}}\\
			\hline
		$\mH_v(4; 8; 8)$				& $\leq 3$				& 1						& 4					\\
		$\mH_v(5; 8; 10)$				& $\leq 3$				& 3						& 45				\\
		$\mH_v(6; 8; 12)$				& $\leq 3$				& 12					& 3 104				\\
		$\mH_v(7; 8; 14)$				& $\leq 3$				& 169					& 4 776 518			\\
		$\mH_v(2, 7; 8; 16)$			& $\leq 3$				& 34					& 22 896			\\
		$\mH_v(2, 2, 7; 8; 19)$		& $= 3$					& 0						&					\\
		\hline
		$\mH_v(4; 8; 9)$				& $\leq 2$				& 1						& 8					\\
		$\mH_v(5; 8; 11)$				& $\leq 2$				& 3						& 84				\\
		$\mH_v(6; 8; 13)$				& $\leq 2$				& 10					& 5 394				\\
		$\mH_v(7; 8; 15)$				& $\leq 2$				& 102					& 4 984 994			\\
		$\mH_v(2, 7; 8; 17)$			& $\leq 2$				& 2760					& 380 361 736		\\
		$\mH_v(2, 2, 7; 8; 19)$		& $= 2$					& 0						&					\\
		\hline
		$\mH_v(2, 2, 7; 8; 19)$		& 						& 0						&					\\
		\hline
	\end{tabular}
	}
	\vspace{-0.5em}
	\caption{Steps in finding all maximal graphs in $\mH_v(2, 2, 7; 8; 19)$}
	\label{table: finding all graphs in H_v(2, 2, 7; 8; 19)}
\end{table}

\vspace{-1em}

2. Proof of the upper bound $F_v(2, 2, 7; 8) \leq 20$.

We need to construct a 20-vertex graph in $\mH_v(2, 2, 7, 8; 20)$.
All vertex transitive graphs with up to 31 vertices are known and can be found in \cite{Roy_c}. With the help of a computer we check which of these graphs belong to $\mH_v(2, 2, 7; 8)$. This way, we find 4 24-vertex graphs in $\mH_v(2, 2, 7; 8)$.

By removing one vertex from the 24-vertex transitive graphs in $\mH_v(2, 2, 7; 8)$, we obtain 3 23-vertex graphs in $\mH_v(2, 2, 7; 8)$, and by removing two vertices, we obtain 8 22-vertex graphs in $\mH_v(2, 2, 7; 8)$. We add edges to one of the 8 22-vertex graphs to obtain one graph in $\mH_{max}(2, 2, 7; 8; 22)$. Using Procedure \ref{procedure: extending a set of maximal graphs in mH_v(a_1, ..., a_s; q; n)} we find 1696 more graphs in $\mH_{max}(2, 2, 7; 8; 22)$.

By removing one vertex from the obtained graphs in $\mH_{max}(2, 2, 7; 8; 22)$, we find 22 21-vertex graphs in $\mH_v(2, 2, 7; 8)$. We add edges to these graphs to obtain 22 graphs in $\mH_{max}(2, 2, 7; 8; 21)$. Then we apply Procedure \ref{procedure: extending a set of maximal graphs in mH_v(a_1, ..., a_s; q; n)} twice to obtain 15259 more graphs in $\mH_{max}(2, 2, 7; 8; 21)$.

By removing one vertex from the obtained graphs in $\mH_{max}(2, 2, 7; 8; 21)$, we find 9 20-vertex graphs in $\mH_v(2, 2, 7; 8)$. Again, by successively applying Procedure \ref{procedure: extending a set of maximal graphs in mH_v(a_1, ..., a_s; q; n)}, we obtain 39 graphs in $\mH_{max}(2, 2, 7; 8; 20)$. One of these graphs is the graph $G_{2,2,7}$ shown in Figure \ref{figure: H_v(2, 2, 7; 8; 20)}. Further, we will use the graph $G_{2,2,7}$ in the proof of Theorem \ref{theorem: F_v(a_1, ..., a_s; m - 1) leq m + 12, max set(a_1, ..., a_s) = 7}. 
The 20-vertex graphs in $\mH_{max}(2, 2, 7; 8; 20)$ that we found are obtained from one of the 24-vertex transitive graphs. However, at the beginning of the proof we use all 4 graphs, since we do not know in advance which one will be useful. In \cite{BN17a} we omitted to note that all 4 24-vertex transitive graphs are used.

We proved that $F_v(2, 2, 7; 20) \leq 20$, which finishes the proof of Theorem \ref{theorem: F_v(2, 2, 7; 8) = 20}.
\end{proof}

\vspace{-1em}

\vspace{1em}
Theorem \ref{theorem: F_v(2, 2, 7; 8) = 20} is published in \cite{BN17a}.

\begin{figure}
	\centering
	\begin{subfigure}{\textwidth}
		\centering
		\includegraphics[height=320px,width=160px]{./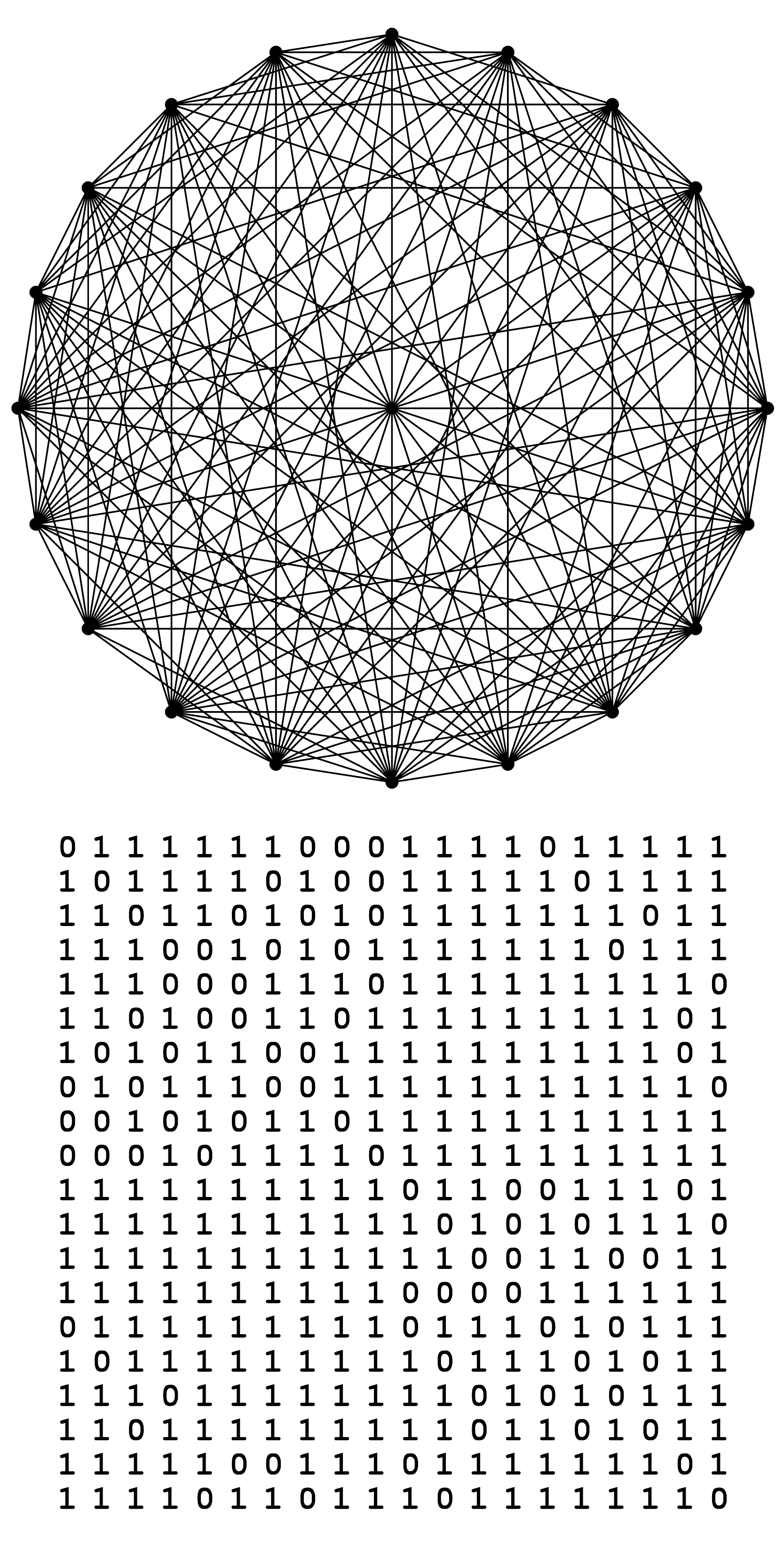}
		\vspace{-1em}
		\caption*{\emph{$G_{2,2,7}$}}
		\label{figure: G_227}
	\end{subfigure}%
	\vspace{-0.5em}
	\caption{20-vertex graph $G_{2,2,7} \in \mH_v(2, 2, 7; 8)$}
	\label{figure: H_v(2, 2, 7; 8; 20)}
\end{figure}

\clearpage

\section{Algorithm A4}

In this section we present one optimization to Algorithm \ref{algorithm: A3}.

We will use the following term:
\begin{definition}
	\label{definition: cone vertex}
	We say that $v$ is a cone vertex in the graph $G$ if $v$ is adjacent to all other vertices in $G$, i.e. $G = K_1 + H$.
\end{definition}

\begin{proposition}
\label{proposition: H in mH_(max)(a_1 - 1, a_2, ..., a_s; q - 1; n - 1)}
Let $G = K_1 + H \in \mH_{max}(a_1, ..., a_s; q; n)$ and $a_1 \geq 2$. Then $H \in \mH_{max}(a_1 - 1, a_2, ..., a_s; q - 1; n - 1)$.
\end{proposition}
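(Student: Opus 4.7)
I would establish the four conditions defining membership in $\mH_{max}(a_1 - 1, a_2, \ldots, a_s; q - 1; n - 1)$ one at a time, namely: (i) $|V(H)| = n - 1$, (ii) $H \arrowsv (a_1 - 1, a_2, \ldots, a_s)$, (iii) $\omega(H) < q - 1$, and (iv) $\omega(H + e) \geq q - 1$ for every $e \in \E(\overline{H})$.

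The first condition is immediate since $G = K_1 + H$ has $n$ vertices. For condition (ii), I would apply Proposition \ref{proposition: G - A arrowsv (a_1, ..., a_(i - 1), a_i - 1, a_(i + 1_, ..., a_s)} to the one-element independent set $A = \{v\}$, where $v$ is the cone vertex of $G$; this gives $H = G - v \arrowsv (a_1 - 1, a_2, \ldots, a_s)$ directly. For condition (iii), I would use the fact that $v$ is adjacent to every vertex of $H$, so any clique of $H$ extends by $v$ to a clique of $G$ of one larger size, whence $\omega(G) = \omega(H) + 1$; combined with $\omega(G) < q$ (which holds because $G \in \mH_{max}(a_1, \ldots, a_s; q; n)$), this yields $\omega(H) \leq q - 2$, which is the strict inequality needed.

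The main work is in condition (iv), so let me describe that step carefully. Fix $e \in \E(\overline{H})$; since $v$ is adjacent to every vertex of $H$, the endpoints of $e$ are non-adjacent in $G$ as well, so $e \in \E(\overline{G})$. By maximality of $G$ in $\mH_v(a_1, \ldots, a_s; q)$, the graph $G + e$ contains a $q$-clique $C$. Since $\omega(G) < q$, the clique $C$ must use the new edge $e$, so both endpoints of $e$ lie in $C$. I would then split into two cases according to whether $v \in C$: if $v \in C$, then $C \setminus \{v\}$ is a $(q-1)$-clique in $H + e$; if $v \notin C$, then $C$ is a $q$-clique in $H + e$ and in particular contains a $(q-1)$-clique. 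In either case $\omega(H + e) \geq q - 1$, which is exactly what is required for maximality.

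There is no serious obstacle here. The only subtlety worth double-checking is that the $q$-clique in $G + e$ really must contain the edge $e$ (so that we genuinely gain information about $H + e$), which follows from the $K_q$-freeness of $G$ itself. Once that is observed, the case analysis above closes the argument cleanly.
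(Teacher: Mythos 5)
Your proof is correct and follows essentially the same route as the paper: Proposition \ref{proposition: G - A arrowsv (a_1, ..., a_(i - 1), a_i - 1, a_(i + 1_, ..., a_s)} applied to the cone vertex gives the arrowing property, and $\omega(H) = \omega(G) - 1$ gives the clique bound. The only difference is that you spell out the maximality verification (which the paper dispatches with a single ``it follows that''), and your case analysis there is sound --- indeed the case $v \notin C$ is vacuous, since a $q$-clique avoiding the cone vertex would force $\omega(H+e) \geq q$, contradicting $\omega(H) \leq q-2$.
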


\begin{proof}
According to Proposition \ref{proposition: G - A arrowsv (a_1, ..., a_(i - 1), a_i - 1, a_(i + 1_, ..., a_s)}, $H \arrowsv (a_1 - 1, a_2, ..., a_s)$. It is clear that $\omega(H) = \omega(G) - 1$, and therefore $H \in \mH_v(a_1 - 1, a_2, ..., a_s; q - 1; n - 1)$. Since $G \in \mH_{max}(a_1, ..., a_s; q; n)$, it follows that $H \in \mH_{max}(a_1 - 1, a_2, ..., a_s; q - 1; n - 1)$.
\end{proof}

According to Proposition \ref{proposition: H in mH_(max)(a_1 - 1, a_2, ..., a_s; q - 1; n - 1)}, if we know all the graphs in $\mH_{max}(a_1 - 1, a_2, ..., a_s; q - 1; n - 1)$ we can easily obtain the graphs in $\mH_{max}(a_1, ..., a_s; q; n)$ which have a cone vertex. We will use this fact to modify Algorithm \ref{algorithm: A3} and make it faster in the cases where all graphs in $\mH_{max}(a_1 - 1, a_2, ..., a_s; q - 1; n - 1)$ are already known. To find the graphs in $\mH_{max}(a_1, ..., a_s; q; n)$ which have no cone vertices we will need the following:

\begin{proposition}
	\label{proposition: G in mH_(max)(a_1, ..., a_s; q; n) and alpha(G) > k}
	Let $G \in \mH_{max}(a_1, ..., a_s; q; n)$ be a graph without cone vertices and $A$ be an independent set in $G$ such that $G - A$ has a cone vertex, i.e. $G - A$ = $K_1 + H$. Then $G = \overline{K}_{k + 1} + H$, where $k = \abs{A}$, $H$ has no cone vertices, and $K_1 + H \in \mH_{max}(a_1, ..., a_s; q; n - k)$.
\end{proposition}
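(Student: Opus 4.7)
Let $v$ denote the cone vertex of $G-A$, so $v$ is adjacent in $G$ to every vertex of $V(H) := V(G-A) \setminus \set{v}$, and set $k = \abs{A}$. The plan is to show that $A \cup \set{v}$ is an independent set completely joined to $V(H)$; this gives the structural equality $G = \overline{K}_{k+1} + H$ at once, and the statements about $H$ and about $K_1+H$ will then follow cleanly. The single tool driving the argument is Proposition~\ref{proposition: K_(q - 2) subseteq N_G(u) cap N_G(v)}: in a maximal $K_q$-free graph, every non-edge has a $K_{q-2}$ inside the intersection of the neighborhoods of its endpoints.

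The first step is to establish that $[v, a] \notin \E(G)$ for every $a \in A$. Arguing by contradiction, suppose $[v, a] \in \E(G)$ for some $a$. Since $\N_G(v) \supseteq V(H) \cup \set{a}$ and $G$ has no cone vertex, $v$ must have a non-neighbor in $A \setminus \set{a}$; fix such an $a'$. The non-edge $[a, a']$ produces a $K_{q-2}$ inside $\N_G(a) \cap \N_G(a')$. This clique avoids $A$ by independence and avoids $v$ since $[v, a'] \notin \E(G)$, so it lies entirely in $V(H)$. But then $\set{v, a}$ together with this $K_{q-2}$ forms a $K_q$ in $G$, contradicting $\omega(G) < q$. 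Hence $A \cup \set{v}$ is independent.

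The second step, that every $a \in A$ is adjacent to every $w \in V(H)$, is an entirely parallel argument. A non-edge $[a, w]$ would force a $K_{q-2}$ in $\N_G(a) \cap \N_G(w)$; this clique cannot meet $A$ by independence and cannot contain $v$ by Step~1, so it sits inside $V(H) \setminus \set{w}$, and appending $v$ and $w$ to it gives a $K_q$ in $G$, a contradiction. Combining both steps yields $G = \overline{K}_{k+1} + H$, and the claim that $H$ has no cone vertex is immediate, since any cone vertex of $H$ would also be joined to all of $A \cup \set{v}$ and would therefore be a cone vertex of $G$.

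It then remains to verify that $K_1 + H = G - A \in \mH_{max}(a_1, ..., a_s; q; n-k)$. The bound $\omega(K_1 + H) < q$ and the vertex count are automatic. For $K_1 + H \arrowsv (a_1, ..., a_s)$, I will extend any $s$-coloring of $K_1 + H$ to $G$ by giving every vertex of $A$ the color of $v$; this is a legal extension because $A \cup \set{v}$ is independent, and any monochromatic $a_i$-clique in $G$ produced by $G \arrowsv (a_1, ..., a_s)$ can be translated back into $K_1 + H$ by swapping its unique vertex of $A$ (if any) for $v$, using that $v$ is adjacent to all of $V(H)$. For maximality, I will pick a non-edge $[u, w]$ in $K_1 + H$ (necessarily with $u, w \in V(H)$) and apply Proposition~\ref{proposition: K_(q - 2) subseteq N_G(u) cap N_G(v)} in $G$ to obtain a $K_{q-2} \subseteq \N_G(u) \cap \N_G(w)$ meeting $A \cup \set{v}$ in at most one vertex; if that vertex lies in $A$, swap it for $v$ to produce a $K_q$ inside $(K_1 + H) + [u, w]$. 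The main obstacle is Step~1: one must choose the non-edge $[a, a']$ so that its $K_{q-2}$ is forced into $V(H)$ rather than scattered across the independent set $A \cup \set{v}$, and once this confinement is in hand the remaining steps are routine variations on the same maximality argument.
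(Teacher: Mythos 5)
Your proof is correct, and it reaches the paper's structural conclusion by a genuinely different (though related) route. The paper's argument pivots on nested neighborhoods: since $G$ has no cone vertex, the cone vertex $u$ of $G-A$ must have a non-neighbor $v_i \in A$; then $\N_G(v_i) \subseteq \V(H) \subseteq \N_G(u)$, and maximality of $G$ forces $\N_G(v_i) = \N_G(u) = \V(H)$ (a maximal $K_q$-free graph cannot have $\N_G(x)$ properly contained in $\N_G(y)$ for non-adjacent $x, y$), after which every vertex of $A$ is handled by the same equality. You instead run two explicit contradiction arguments, each applying Proposition \ref{proposition: K_(q - 2) subseteq N_G(u) cap N_G(v)} to a carefully chosen non-edge --- first $[a, a']$ inside $A$ to show the cone vertex is joined to nothing in $A$, then $[a, w]$ between $A$ and $\V(H)$ to show each $a$ is joined to all of $\V(H)$ --- and in each case completing the forced $K_{q-2}$ to a $K_q$ in $G$ using the cone vertex of $G-A$. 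Both proofs rest on the same mechanism (maximality forces a $K_{q-2}$ across every non-edge, which the cone vertex then promotes to a $K_q$), but yours is fully explicit where the paper's nested-neighborhood step is asserted without detail, at the cost of some length; likewise your verification that $K_1 + H$ lies in $\mH_{max}(a_1, ..., a_s; q; n-k)$ spells out the color-extension and vertex-swapping arguments that the paper dismisses as clear. The only point worth tightening is the degenerate case $\abs{A} = 1$ in your Step 1, where no $a'$ exists; there the assumed edge $[v,a]$ already makes $v$ a cone vertex of $G$, so the contradiction is immediate.
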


\begin{proof}
	Let $A = \set{v_1, ..., v_k}$ be an independent set in $G$ and $G - A$ = $K_1 + H = \set{u} + H$. Since $G$ has no cone vertices, there exist $v_i \in A$ such that $v_i$ is not adjacent to $u$. Then $N_G(v_i) \subseteq N_G(u)$, and since $G$ is a maximal $K_q$-free graph, we obtain $N_G(v_i) = N_G(u) = \V(H)$. Hence, $u$ is not adjacent to any of the vertices in $A$, and therefore $N_G(v_j) = N_G(u) = \V(H), j = 1, ..., k$. We derived $G = \overline{K}_{k + 1} + H$. The graph $H$ has no cone vertices, since any cone vertex in $H$ would be a cone vertex in $G$. It is easy to see that from $\overline{K}_{k + 1} + H \arrowsv (a_1, ..., a_s)$ it follows $K_1 + H \arrowsv (a_1, ..., a_s)$. It is clear that $K_1 + H \in \mH_{max}(a_1, ..., a_s; q; n - k)$.
\end{proof}

Now we present the optimized algorithm:
\begin{namedalgorithm}{A4}
	\label{algorithm: A4}
	The input of the algorithm are the set $\mA_1 = \mH_{max}^t(a_1 - 1, a_2, ..., a_s; q; n - k)$ and the set $\mA_2 = \mH_{max}^t(a_1 - 1, a_2, ..., a_s; q - 1; n - 1)$, where $a_1, ..., a_s, q, n, k, t$ are fixed positive integers, $a_1 \geq 2$ and $k \leq t$.
	
	The output of the algorithm is the set $\mB$ of all graphs $G \in \mH_{max}^t(a_1, ..., a_s; q; n)$ with $\alpha(G) \geq k$.
	
	\emph{1.} By removing edges from the graphs in $\mA_1$ obtain the set
	
	$\mA_1' = \set{H \in \mH_{+K_{q - 1}}^t(a_1 - 1, a_2, ..., a_s; q; n - k) : \mbox{ $H$ has no cone vertices}}$.
	
	\emph{2.} Repeat step 2 of Algorithm \ref{algorithm: A3}.
	
	\emph{3.} Repeat step 3 of Algorithm \ref{algorithm: A3}.
	
	\emph{4.} Repeat step 4 of Algorithm \ref{algorithm: A3}.
	
	\emph{5.} If $t > k$, find the subset $\mA_1''$ of $\mA_1$ containing all graphs with exactly one cone vertex. For each graph $K_1 + H \in \mA_1''$, if $K_1 + H \arrowsv (a_1, ..., a_s)$, then add $\overline{K}_{k + 1} + H$ to $\mB$.
	
	\emph{6.} For each graph $H$ in $\mA_2$ such that $\alpha(H) \geq k$, if $K_1 + H \arrowsv (a_1, ..., a_s)$, then add $K_1 + H$ to $\mB$.
\end{namedalgorithm}

\begin{theorem}
	\label{theorem: algorithm A4}
	After the execution of Algorithm \ref{algorithm: A4}, the obtained set $\mB$ coincides with the set of all graphs $G \in \mH_{max}^t(a_1, ..., a_s; q; n)$ with $\alpha(G) \geq k$.
\end{theorem}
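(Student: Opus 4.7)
The plan is to follow the pattern of Theorem \ref{theorem: algorithm A3} and verify correctness in both directions: (i) every graph placed in $\mB$ lies in $\mH_{max}^t(a_1, \ldots, a_s; q; n)$ and has independence number at least $k$, and (ii) conversely, every such graph is produced by one of the steps of Algorithm \ref{algorithm: A4}. Direction (i) for the output of steps 2--4 is inherited verbatim from Lemma \ref{lemma: algorithm A3}, since these steps reproduce the body of Algorithm \ref{algorithm: A3} on the reduced input $\mA_1'$. For step 6, if $H \in \mA_2 = \mH_{max}^t(a_1 - 1, a_2, \ldots, a_s; q - 1; n - 1)$ satisfies $\alpha(H) \geq k$ and $K_1 + H \arrowsv (a_1, \ldots, a_s)$, then one checks directly that $K_1 + H$ is $K_q$-free (since $\omega(K_1 + H) = \omega(H) + 1 \leq q - 1$), maximal $K_q$-free (since adding any missing edge, necessarily inside $H$, creates a $(q-1)$-clique in $H$ which combines with the cone vertex into a $q$-clique), has $\alpha(K_1 + H) = \alpha(H) \leq t$, and of course $\alpha(K_1 + H) \geq k$. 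For step 5, when $K_1 + H \in \mA_1''$ satisfies $K_1 + H \arrowsv (a_1, \ldots, a_s)$, one verifies that $\overline{K}_{k+1} + H$ is maximal $K_q$-free (same argument for non-edges inside $H$; non-edges among the $k+1$ independent vertices give a $q$-clique using a $(q-2)$-clique of $H$), inherits the arrow relation from $K_1 + H$, and has independence number $\max(k+1, \alpha(H)) \leq t$ thanks to the condition $t > k$.

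For direction (ii), fix $G \in \mH_{max}^t(a_1, \ldots, a_s; q; n)$ with $\alpha(G) \geq k$ and split into three cases. \textbf{Case A:} $G$ has a cone vertex, so $G = K_1 + H$. By Proposition \ref{proposition: H in mH_(max)(a_1 - 1, a_2, ..., a_s; q - 1; n - 1)}, $H \in \mH_{max}(a_1 - 1, a_2, \ldots, a_s; q - 1; n - 1)$, and $\alpha(H) = \alpha(G)$ implies $k \leq \alpha(H) \leq t$, so $H \in \mA_2$ with $\alpha(H) \geq k$; step 6 then adds $G = K_1 + H$ to $\mB$. \textbf{Case B:} $G$ has no cone vertex and there exists an independent $k$-set $A \subseteq V(G)$ such that $H := G - A$ has no cone vertex either. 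Then by Proposition \ref{proposition: G - A in mH_(+K_(q - 1))(a_1 - 1, a_2, ..., a_s; q; n - abs(A))}, $H \in \mH_{+K_{q-1}}^t(a_1 - 1, a_2, \ldots, a_s; q; n - k)$, so $H \in \mA_1'$ after step 1, and the argument of Lemma \ref{lemma: algorithm A3} applied in steps 2--4 places $G$ in $\mB$. \textbf{Case C:} $G$ has no cone vertex but for \emph{every} independent $k$-set $A$, the graph $G - A$ has a cone vertex. Pick any such $A$; Proposition \ref{proposition: G in mH_(max)(a_1, ..., a_s; q; n) and alpha(G) > k} then forces $G = \overline{K}_{k+1} + H'$ with $H'$ coneless and $K_1 + H' \in \mH_{max}(a_1, \ldots, a_s; q; n - k)$. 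Since $\alpha(G) \geq k + 1$, the condition $t > k$ of step 5 holds.

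The delicate point, and the main obstacle of the proof, is to certify that this $K_1 + H'$ actually lies in $\mA_1'' \subseteq \mA_1 = \mH_{max}^t(a_1 - 1, a_2, \ldots, a_s; q; n - k)$ and moreover satisfies the check $K_1 + H' \arrowsv (a_1, \ldots, a_s)$ performed in step 5. Maximality and the $K_q$-freeness of $K_1 + H'$ follow as in the verification of step 5 above. The independence bound $\alpha(K_1 + H') = \alpha(H') \leq t$ comes from $\alpha(G) = \max(k+1, \alpha(H')) \leq t$, and the arrow relation $K_1 + H' \arrowsv (a_1 - 1, a_2, \ldots, a_s)$ is implied by $K_1 + H' \arrowsv (a_1, \ldots, a_s)$ together with (\ref{equation: G arrowsv (a_1, ..., a_s) Rightarrow G arrowsv (a_1, ..., a_(i - 1), t, a_i - t + 1, a_(i + 1), ..., a_s)}). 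The nontrivial step is $G \arrowsv (a_1, \ldots, a_s) \Rightarrow K_1 + H' \arrowsv (a_1, \ldots, a_s)$, where $K_1 + H'$ is only a \emph{subgraph} of $G$: given a coloring of $V(K_1 + H')$ witnessing $K_1 + H' \not\arrowsv (a_1, \ldots, a_s)$, extend it to $V(G)$ by assigning each of the $k$ additional independent vertices the same color, say color $1$, as the cone vertex $v_0$ of $K_1 + H'$; because these $k + 1$ vertices of color $1$ are pairwise non-adjacent but have identical neighborhoods $V(H')$ in $G$, any monochromatic color-$1$ $a_1$-clique in $G$ would contain exactly one of them, hence would correspond to a color-$1$ $a_1$-clique already present in $K_1 + H'$, contradicting the choice of coloring. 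Thus $G \not\arrowsv (a_1, \ldots, a_s)$, which is the desired contradiction, so the check passes and step 5 adds $\overline{K}_{k+1} + H' = G$ to $\mB$.

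Finally, the removal of isomorphic copies in step 3 and the verification $G \arrowsv (a_1, \ldots, a_s)$ in step 4 are purely administrative and preserve both directions; together the three cases exhaust all possibilities for $G$, completing the proof.
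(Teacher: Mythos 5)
Your proof is correct and follows essentially the same two-direction structure as the paper's: soundness for each of the three insertion points (steps 2--4, step 5, step 6), and completeness via the cone-vertex case analysis resting on Proposition \ref{proposition: H in mH_(max)(a_1 - 1, a_2, ..., a_s; q - 1; n - 1)}, Proposition \ref{proposition: G in mH_(max)(a_1, ..., a_s; q; n) and alpha(G) > k}, and Proposition \ref{proposition: G - A in mH_(+K_(q - 1))(a_1 - 1, a_2, ..., a_s; q; n - abs(A))}. The ``delicate point'' you single out --- that $G \arrowsv (a_1, \ldots, a_s)$ forces $K_1 + H' \arrowsv (a_1, \ldots, a_s)$, so that $K_1 + H'$ lies in $\mA_1''$ and passes the check in step 5 --- is precisely the implication the paper asserts as ``easy to see'' inside the proof of Proposition \ref{proposition: G in mH_(max)(a_1, ..., a_s; q; n) and alpha(G) > k}, and your explicit argument for it (any clique meets the $k+1$ twin vertices in at most one point) is correct.
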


\begin{proof}
	Suppose that after the execution of Algorithm \ref{algorithm: A4}, $G \in \mB$. If after step 4 $G \in \mB$, then according to the proof of Theorem \ref{theorem: algorithm A3}, $G \in \mH_{max}^t(a_1, ..., a_s; q; n)$ and $\alpha(G) \geq k$.
	
	If $G$ is added to $\mB$ in step 5, then $G = \overline{K}_{k + 1} + H$ and from $K_1 + H \arrowsv(a_1, ..., a_s)$ it follows $G \arrowsv (a_1, ..., a_s)$. Since $\alpha(H) \leq t$ and $t > k$, we have $\alpha(G) \leq t$. In this case it is clear that $\alpha(G) \geq k + 1$. Since $K_1 + H \in \mH_{max}(a_1, ..., a_s; q; n - k)$ and $K_1 + H$ is not a complete graph, it follows that $G = \overline{K}_{k + 1} + H \in \mH_{max}(a_1, ..., a_s; q; n)$.
	
	If $G$ is added to $\mB$ in step 6, then $G = K_1 + H \arrowsv (a_1, ..., a_s)$, where $H \in \mA_2$ and $\alpha(H) \geq k$. Since $k \leq \alpha(H) \leq t$, we have $k \leq \alpha(G) \leq t$. From $H \in \mH_{max}(a_1 - 1, a_2, ... a_s; q - 1; n - 1)$ it follows that $G = K_1 + H \in \mH_{max}(a_1, ..., a_s; q; n)$.
	
	Now let $G \in \mH_{max}^t(a_1, ..., a_s; q; n)$ and $\alpha(G) \geq k$. If $G = K_1 + H$ for some graph $H$, then, according to Proposition \ref{proposition: H in mH_(max)(a_1 - 1, a_2, ..., a_s; q - 1; n - 1)}, $H \in \mA_2$ and in step 6 $G$ is added to $\mB$.
	
	Suppose that $G$ has no cone vertices. Let $A \subseteq \V(G)$ be an independent set such that $\abs{A} = k$.
	
	If $G - A$ has a cone vertex, i.e. $G - A = K_1 + H$, then, according to Proposition \ref{proposition: G in mH_(max)(a_1, ..., a_s; q; n) and alpha(G) > k}, $G = \overline{K}_{k + 1} + H$, where $K_1 + H \in \mA_1''$. In this case it is clear that $t > k$, and therefore in step 5 $G$ is added to $\mB$.
	
	If $G - A$ has no cone vertices, then according to Proposition \ref{proposition: G - A in mH_(+K_(q - 1))(a_1 - 1, a_2, ..., a_s; q; n - abs(A))}, $G - A \in \mA_1'$. From the proof of Theorem \ref{theorem: algorithm A3} it follows that after the execution of step 4, $G \in \mB$.
\end{proof}

\vspace{1em}

\begin{remark}
	\label{remark: algorithm A4 k = 2}
	Note that if $G \in \mH_{max}^t(a_1, ..., a_s; q; n)$ and $n \geq q$, then $G$ is not a complete graph and $\alpha(G) \geq 2$. Therefore, if $n \geq q$ and $k = 2$, Algorithm \ref{algorithm: A4} finds all graphs in $\mH_{max}^t(a_1, ..., a_s; q; n)$.
\end{remark}

We will use Algorithm \ref{algorithm: A4} to prove Conjecture \ref{conjecture: rp(p) = 2, p geq 4} in the case $p = 7$ (Theorem \ref{theorem: rp(7) = 2}). We could prove Theorem \ref{theorem: rp(7) = 2} using Algorithm \ref{algorithm: A3}, but it would take us more than a month of computational time, while the presented proof was completed in just one day. To test our implementation of Algorithm \ref{algorithm: A4}, we proved again in a different way Conjecture \ref{conjecture: rp(p) = 2, p geq 4} in the case $p = 5$ (Theorem \ref{theorem: rp(5) = 2}) and Conjecture \ref{conjecture: rpp(p) = 0, p geq 4} in the case $p = 6$ (Theorem \ref{theorem: rp(6) = 2}), and compared the graphs obtained in each step of the proof to the graphs obtained in the proofs of these theorems.

\vspace{1em}

\vspace{1em}
Theorem \ref{theorem: algorithm A4} is published in \cite{BN17b}. Algorithm \ref{algorithm: A4} is published in \cite{BN17b} as Algorithm 3.9 (due to a typographical error, in step 5 of Algorithm 3.9 in \cite{BN17b} it is written \emph{$H \in \mA''_1$} instead of the correct \emph{$K_1 + H \in \mA''_1$}).

\vspace{2em}

\section{Computation of the numbers $F_v(2_{m - 7}, 7; m - 1)$}

\vspace{2em}

Let us remind that the numbers $\rp(p)$ are defined in the formulation of Theorem \ref{theorem: rp}.

\vspace{2em}

We will prove that Conjecture \ref{conjecture: rp(p) = 2, p geq 4} is true in the case $p = 7$.

\begin{theorem}
	\label{theorem: rp(7) = 2}
	$\rp(7) = 2$ and $F_v(2_{m - 7}, 7; m - 1) = m + 11, \ m \geq 9$.
\end{theorem}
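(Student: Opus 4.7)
The plan is to deduce $\rp(7) = 2$ and then appeal to Theorem \ref{theorem: rp}(b). From $F_v(2,2,7;8) = 20$ (Theorem \ref{theorem: F_v(2, 2, 7; 8) = 20}) together with Theorem \ref{theorem: rp}(d) one immediately gets $\rp(7) < 20 - 14 = 6$, so $\rp(7) \in \set{2, 3, 4, 5}$. By Lemma \ref{lemma: F_v(2_r, p; r + p - 1) leq F_v(2_s, p; s + p - 1) + r - s}, the sequence $a_r := F_v(2_r, 7; r + 6) - r$ is non-increasing in $r \geq 2$, with $a_2 = 18$. To conclude $\rp(7) = 2$ it therefore suffices to prove the single inequality $a_5 \geq 18$, i.e.\ $F_v(2_5, 7; 11) \geq 23$, or equivalently $\mH_v(2_5, 7; 11; 22) = \emptyset$; the monotonicity then forces $a_3 = a_4 = a_5 = 18$, so the minimum of the sequence is attained at $r = 2$. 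Once $\rp(7) = 2$ is established, Theorem \ref{theorem: rp}(b) delivers $F_v(2_{m - 7}, 7; m - 1) = F_v(2,2,7;8) + (m - 7) - 2 = m + 11$ for every $m \geq 9$.

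To verify $\mH_v(2_5, 7; 11; 22) = \emptyset$ I would argue by computer, mirroring the structure of the proofs of Theorems \ref{theorem: rp(5) = 2}, \ref{theorem: rpp(6) = 0}, and \ref{theorem: F_v(2, 2, 7; 8) = 20}. Any candidate $G$ in this set satisfies $\alpha(G) \geq 2$ trivially, and Theorem \ref{theorem: alpha(G) leq V(G) + m + p - 1}(b), applied with $m = 12$, $p = 7$, $\abs{\V(G)} = 22 < 33 = m + 3p$, gives $\alpha(G) \leq 3$. I would split the computation into the two cases $\alpha(G) \in \set{2, 3}$, and in each case run a chain of successive applications of Algorithm \ref{algorithm: A4} (with the fixed bound $t = \alpha(G)$ on the independence number) of the form
$$\mH_{max}^{t}(p_0; 11; n_0) \to \mH_{max}^{t}(p_0 + 1; 11; n_0 + 2) \to \dots \to \mH_{max}^{t}(2_4, 7; 11; 22 - t) \to \mH_{max}^{t}(2_5, 7; 11; 22),$$
where each transition either increments $a_1$ or prepends a new entry equal to $2$, uses $k = 2$ at every step except the last (where $k = t$), and the final output is shown to be empty. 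Running Algorithm \ref{algorithm: A4} requires, in addition to the main chain, the analogous chain with clique-freeness parameter $q - 1 = 10$, in order to supply the second input $\mA_2$ at each step; this auxiliary chain is computed in parallel.

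The principal obstacle is the sheer size of the intermediate families of $(+K_{10})$-graphs that must be enumerated in the next-to-last layer $\mH_{+K_{10}}^{t}(2_4, 7; 11; 22 - t)$. Using only Algorithm \ref{algorithm: A3} this enumeration would be prohibitive — the paper remarks that an A3-based proof would require more than a month of computation. Switching to Algorithm \ref{algorithm: A4} is what makes the argument tractable: Propositions \ref{proposition: H in mH_(max)(a_1 - 1, a_2, ..., a_s; q - 1; n - 1)} and \ref{proposition: G in mH_(max)(a_1, ..., a_s; q; n) and alpha(G) > k} allow one to recover cheaply all maximal graphs containing a cone vertex directly from the precomputed lower-$q$ families, bypassing the most expensive enumerations and bringing the total running time down to roughly a single day.
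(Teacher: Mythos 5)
Your proposal is correct and follows essentially the same route as the paper: reduce via Theorem \ref{theorem: rp}(d) and Lemma \ref{lemma: F_v(2_r, p; r + p - 1) leq F_v(2_s, p; s + p - 1) + r - s} to the single verification $\mH_v(2_5, 7; 11; 22) = \emptyset$, split by independence number using Theorem \ref{theorem: alpha(G) leq V(G) + m + p - 1}(b), run chains of Algorithm \ref{algorithm: A4}, and finish with Theorem \ref{theorem: rp}(b). Your ``auxiliary chain with $q - 1 = 10$ computed in parallel'' is, once unwound (it in turn needs the $q = 9$ and $q = 8$ families), exactly the paper's device of proving the three inequalities $F_v(2_3, 7; 9) > 20$, $F_v(2_4, 7; 10) > 21$, $F_v(2_5, 7; 11) > 22$ successively, so the two write-ups describe the same computation.
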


\vspace{1em}

\begin{proof}
Since $F_v(2, 2, 7; 8) = 20$, according to Theorem \ref{theorem: rp}(d), to prove that $\rp(7) = 2$ we have to prove the inequalities $F_v(2, 2, 2, 7; 9) > 20$, $F_v(2, 2, 2, 2, 7; 10) > 21$, and $F_v(2, 2, 2, 2, 2, 7; 11) > 22$. By Lemma \ref{lemma: F_v(2_r, p; r + p - 1) leq F_v(2_s, p; s + p - 1) + r - s}, it is enough to prove only the last of the three inequalities. Using Algorithm \ref{algorithm: A3}, it is possible to prove that $F_v(2, 2, 2, 2, 2, 7; 11) > 22$, but it would take a lot of time. Instead, we will prove the three inequalities successively with Algorithm \ref{algorithm: A4}. Only the proof of the first inequality is presented in details, since the proofs of the other two are very similar. We will show that $\mH_v(2, 2, 7; 8; 19) = \emptyset$. The proof uses the graphs $\mH_{max}^3(4; 8; 8)$, $\mH_{max}^3(5; 8; 10)$, $\mH_{max}^3(6; 8; 12)$, $\mH_{max}^3(7; 8; 14)$, $\mH_{max}^3(2, 7; 8; 16)$, $\mH_{max}^3(2, 2, 7; 8; 19)$, $\mH_{max}^2(4; 8; 9)$, $\mH_{max}^2(5; 8; 11)$, $\mH_{max}^2(6; 8; 13)$, $\mH_{max}^2(7; 8; 15)$, $\mH_{max}^2(2, 7; 8; 17)$, $\mH_{max}^2(2, 2, 7; 8; 19)$ obtained in the proof of the lower bound $F_v(2, 2, 7; 8) \geq 20$ (see Table \ref{table: finding all graphs in H_v(2, 2, 7; 8; 19)}).

Suppose that $G \in \mH_v(2, 2, 2, 7; 9; 20)$. According to Theorem \ref{theorem: alpha(G) leq V(G) + m + p - 1}(b), $\alpha(G) \leq 3$. It is clear that $\alpha(G) \geq 2$. Therefore, it is enough to prove that there are no graphs with independence number 2 or 3 in $\mH_{max}(2, 2, 2, 7; 9; 20)$.\\

First we prove that there are no graphs in $\mH_{max}(2, 2, 2, 7; 9; 20)$ with independence number 3:

It is clear that $K_7$ is the only graph in $\mH_{max}(4; 9; 7)$.

We execute Algorithm \ref{algorithm: A4}($n = 9;\ k = 2;\ t = 3$) with inputs $\mA_1 = \mH_{max}^3(4; 9; 7) = \set{K_7}$ and $\mA_2 = \mH_{max}^3(4; 8; 8)$ to obtain all graphs in $\mB = \mH_{max}^3(5; 9; 9)$ (see Remark \ref{remark: algorithm A4 k = 2}).

We execute Algorithm \ref{algorithm: A4}($n = 11;\ k = 2;\ t = 3$) with inputs $\mA_1 = \mH_{max}^3(5; 9; 9)$ and $\mA_2 = \mH_{max}^3(5; 8; 10)$ to obtain all graphs in $\mB = \mH_{max}^3(6; 9; 11)$.

We execute Algorithm \ref{algorithm: A4}($n = 13;\ k = 2;\ t = 3$) with inputs $\mA_1 = \mH_{max}^3(6; 9; 11)$ and $\mA_2 = \mH_{max}^3(6; 8; 12)$ to obtain all graphs in $\mB = \mH_{max}^3(7; 9; 13)$.

We execute Algorithm \ref{algorithm: A4}($n = 15;\ k = 2;\ t = 3$) with inputs $\mA_1 = \mH_{max}^3(1, 7; 9; 13) = \mH_{max}^3(7; 9; 13)$ and $\mA_2 = \mH_{max}^3(7; 8; 14)$ to obtain all graphs in $\mB = \mH_{max}^3(2, 7; 9; 15)$.

We execute Algorithm \ref{algorithm: A4}($n = 17;\ k = 2;\ t = 3$) with inputs $\mA_1 = \mH_{max}^3(1, 2, 7; 9; 15) = \mH_{max}^3(2, 7; 9; 15)$ and $\mA_2 = \mH_{max}^3(1, 2, 7; 8; 16) = \mH_{max}^3(2, 7; 8; 16)$ to obtain all graphs in $\mB = \mH_{max}^3(2, 2, 7; 9; 17)$.

By executing Algorithm \ref{algorithm: A4}($n = 20;\ k = 3;\ t = 3$) with inputs $\mA_1 = \mH_{max}^3(1, 2, 2, 7; 9; 17) = \mH_{max}^3(2, 2, 7; 9; 17)$ and $\mA_2 = \mH_{max}^3(1, 2, 2, 7; 8; 19) = \mH_{max}^3(2, 2, 7; 8; 19)$, we obtain $\mB = \emptyset$. According to Theorem \ref{theorem: algorithm A4}, there are no graphs in $\mH_{max}(2, 2, 2, 7; 9; 20)$ with independence number 3.\\

It remains to be proved that there are no graphs in $\mH_{max}(2, 2, 2, 7; 9; 20)$ with independence number 2:

It is clear that $K_8$ is the only graph in $\mH_{max}(4; 9; 8)$.

We execute Algorithm \ref{algorithm: A4}($n = 10;\ k = 2;\ t = 2$) with inputs $\mA_1 = \mH_{max}^2(4; 9; 8) = \set{K_8}$ and $\mA_2 = \mH_{max}^2(4; 8; 9)$ to obtain all graphs in $\mB = \mH_{max}^3(5; 9; 10)$ (see Remark \ref{remark: algorithm A4 k = 2}).

We execute Algorithm \ref{algorithm: A4}($n = 12;\ k = 2;\ t = 2$) with inputs $\mA_1 = \mH_{max}^2(5; 9; 10)$ and $\mA_2 = \mH_{max}^2(5; 8; 11)$ to obtain all graphs in $\mB = \mH_{max}^3(6; 9; 12)$.

We execute Algorithm \ref{algorithm: A4}($n = 14;\ k = 2;\ t = 2$) with inputs $\mA_1 = \mH_{max}^2(6; 9; 12)$ and $\mA_2 = \mH_{max}^2(6; 8; 13)$ to obtain all graphs in $\mB = \mH_{max}^3(7; 9; 14)$.

We execute Algorithm \ref{algorithm: A4}($n = 16;\ k = 2;\ t = 2$) with inputs $\mA_1 = \mH_{max}^2(1, 7; 9; 14) = \mH_{max}^2(7; 9; 14)$ and $\mA_2 = \mH_{max}^2(7; 8; 15)$ to obtain all graphs in $\mB = \mH_{max}^3(2, 7; 9; 16)$.

We execute Algorithm \ref{algorithm: A4}($n = 18;\ k = 2;\ t = 2$) with inputs $\mA_1 = \mH_{max}^2(1, 2, 7; 9; 16) = \mH_{max}^2(2, 7; 9; 16)$ and $\mA_2 = \mH_{max}^2(1, 2, 7; 8; 17) = \mH_{max}^2(2, 7; 8; 17)$ to obtain all graphs in $\mB = \mH_{max}^2(2, 2, 7; 9; 18)$.

By executing Algorithm \ref{algorithm: A4}($n = 20;\ k = 2;\ t = 2$) with inputs $\mA_1 = \mH_{max}^2(1, 2, 2, 7; 9; 18) = \mH_{max}^2(2, 2, 7; 9; 18)$ and $\mA_2 = \mH_{max}^2(1, 2, 2, 7; 8; 19) = \mH_{max}^2(2, 2, 7; 8; 19)$, we obtain $\mB = \emptyset$. According to Theorem \ref{theorem: algorithm A4}, there are no graphs in $\mH_{max}(2, 2, 2, 7; 9; 20)$ with independence number 2.\\

We proved that $\mH_{max}(2, 2, 2, 7; 9; 20) = \emptyset$ and $F_v(2, 2, 2, 7; 9) > 20$.\\

In the same way, the graphs obtained in the proof of the inequality $F_v(2, 2, 2, 7; 9) > 20$ are used to prove $F_v(2, 2, 2, 2, 7; 10) > 21$, and the graphs obtained in the proof of the inequality $F_v(2, 2, 2, 2, 7; 10) > 21$ are used to prove $F_v(2, 2, 2, 2, 2, 7; 11) > 22$. The number of graphs obtained in each step of the proofs is given in Table \ref{table: finding all graphs in H_v(2, 2, 2, 7; 9; 20)}, Table \ref{table: finding all graphs in H_v(2, 2, 2, 2, 7; 10; 21)}, and Table \ref{table: finding all graphs in H_v(2, 2, 2, 2, 2, 7; 11; 22)}. Notice that the number of graphs without cone vertices is relatively small, which reduces the computation time significantly.\\

Thus, we proved that $\rp(7) = 2$. From Theorem \ref{theorem: rp}(b) we obtain
$$F_v(2_{m - 7}, 7; m - 1) = m + 11, \ m \geq 9.$$
\end{proof}

\vspace{1em}

\vspace{1em}
Theorem \ref{theorem: rp(7) = 2} is published in \cite{BN17b}.

\vspace{2em}

	\begin{table}
		\centering
	\resizebox{0.85\textwidth}{!}{
		\begin{tabular}{ | l | l | r | r | r | r | }
			\hline
			{\parbox{9.5em}{set}}&
			{\parbox{3.5em}{\small indepen-\\dence\\ number}}&
			{\parbox{4em}{\small maximal\\ graphs}}&
			{\parbox{4em}{\small maximal\\ graphs\\ no cone v.}}&
			{\parbox{6em}{\small $(+K_8)$-graphs}}&
			{\parbox{6em}{\small $(+K_8)$-graphs\\ no cone v.}}\\
			\hline
				$\mH_v(4; 9; 7)$				& $\leq 3$				& 1					& 0		& 1					& 0			\\
				$\mH_v(5; 9; 9)$				& $\leq 3$				& 1					& 0		& 4					& 0			\\
				$\mH_v(6; 9; 11)$				& $\leq 3$				& 3					& 0		& 45				& 0			\\
				$\mH_v(7; 9; 13)$				& $\leq 3$				& 12				& 0		& 3 113				& 9			\\
				$\mH_v(2, 7; 9; 15)$			& $\leq 3$				& 169				& 0		& 4 783 615			& 7 097		\\
				$\mH_v(2, 2, 7; 9; 17)$		& $\leq 3$				& 36				& 2		& 22 918			& 22		\\
				$\mH_v(2, 2, 2, 7; 9; 20)$	& $= 3$					& 0					& 0		&					&			\\
				\hline
				$\mH_v(4; 9; 8)$				& $\leq 2$				& 1					& 0		& 1					& 0			\\
				$\mH_v(5; 9; 10)$				& $\leq 2$				& 1					& 0		& 8					& 0			\\
				$\mH_v(6; 9; 12)$				& $\leq 2$				& 3					& 0		& 85				& 1			\\
				$\mH_v(7; 9; 14)$				& $\leq 2$				& 10				& 0		& 5 474				& 80		\\
				$\mH_v(2, 7; 9; 16)$			& $\leq 2$				& 103				& 1		& 5 346 982			& 361 988	\\
				$\mH_v(2, 2, 7; 9; 18)$		& $\leq 2$				& 2845				& 85	& 387 948 338  		& 7 586 602	\\
				$\mH_v(2, 2, 2, 7; 9; 20)$	& $= 2$					& 0					& 0		&					&			\\
				\hline
				$\mH_v(2, 2, 2, 7; 9; 20)$	& 						& 0					& 0		&					&			\\
				\hline
			\end{tabular}
		}
		\vspace{-0.5em}
		\caption{\small Steps in finding all maximal graphs in $\mH_v(2, 2, 2, 7; 9; 20)$}
		\label{table: finding all graphs in H_v(2, 2, 2, 7; 9; 20)}
		\vspace{1em}

		\centering
	\resizebox{0.85\textwidth}{!}{
		\begin{tabular}{ | l | l | r | r | r | r | }
			\hline
			{\parbox{9.5em}{set}}&
			{\parbox{3.5em}{\small indepen-\\dence\\ number}}&
			{\parbox{4em}{\small maximal\\ graphs}}&
			{\parbox{4em}{\small maximal\\ graphs\\ no cone v.}}&
			{\parbox{6em}{\small $(+K_9)$-graphs}}&
			{\parbox{6em}{\small $(+K_9)$-graphs\\ no cone v.}}\\
			\hline
				$\mH_v(5; 10; 8)$					& $\leq 3$				& 1					& 0		& 1					& 0			\\
				$\mH_v(6; 10; 10)$				& $\leq 3$				& 1					& 0		& 4					& 0			\\
				$\mH_v(7; 10; 12)$				& $\leq 3$				& 3					& 0		& 45				& 0			\\
				$\mH_v(2, 7; 10; 14)$				& $\leq 3$				& 12				& 0		& 3 115				& 2			\\
				$\mH_v(2, 2, 7; 10; 16)$			& $\leq 3$				& 169				& 0		& 4 784 483			& 868		\\
				$\mH_v(2, 2, 2, 7; 10; 18)$		& $\leq 3$				& 36				& 0		& 22 919			& 1			\\
				$\mH_v(2, 2, 2, 2, 7; 10; 21)$	& $= 3$					& 0					& 0		&					&			\\
				\hline
				$\mH_v(5; 10; 9)$					& $\leq 2$				& 1					& 0		& 1					& 0			\\
				$\mH_v(6; 10; 11)$				& $\leq 2$				& 1					& 0		& 8					& 0			\\
				$\mH_v(7; 10; 13)$				& $\leq 2$				& 3					& 0		& 85				& 0			\\
				$\mH_v(2, 7; 10; 15)$				& $\leq 2$				& 10				& 0		& 5 495				& 21		\\
				$\mH_v(2, 2, 7; 10; 17)$			& $\leq 2$				& 103				& 0		& 5 371 651			& 24 669	\\
				$\mH_v(2, 2, 2, 7; 10; 19)$		& $\leq 2$				& 2848				& 3		& 387 968 658  		& 20 320	\\
				$\mH_v(2, 2, 2, 2, 7; 10; 21)$	& $= 2$					& 0					& 0		&					&			\\
				\hline
				$\mH_v(2, 2, 2, 2, 7; 10; 21)$	& 						& 0					& 0		&					&			\\
				\hline
			\end{tabular}
		}
		\vspace{-0.5em}
		\caption{\small Steps in finding all maximal graphs in $\mH_v(2, 2, 2, 2, 7; 10; 21)$}
		\label{table: finding all graphs in H_v(2, 2, 2, 2, 7; 10; 21)}
		\vspace{1em}

		\centering
	\resizebox{0.85\textwidth}{!}{
		\begin{tabular}{ | l | l | r | r | r | r | }
			\hline
			{\parbox{9.5em}{set}}&
			{\parbox{3.5em}{\small indepen-\\dence\\ number}}&
			{\parbox{4em}{\small maximal\\ graphs}}&
			{\parbox{4em}{\small maximal\\ graphs\\ no cone v.}}&
			{\parbox{6em}{\small $(+K_{10})$-graphs}}&
			{\parbox{6em}{\small $(+K_{10})$-graphs\\ no cone v.}}\\
			\hline
				$\mH_v(6; 11; 9)$						& $\leq 3$				& 1					& 0		& 1					& 0			\\
				$\mH_v(7; 11; 11)$					& $\leq 3$				& 1					& 0		& 4					& 0			\\
				$\mH_v(2, 7; 11; 13)$					& $\leq 3$				& 3					& 0		& 45				& 0			\\
				$\mH_v(2, 2, 7; 11; 15)$				& $\leq 3$				& 12				& 0		& 3 116				& 1			\\
				$\mH_v(2, 2, 2, 7; 11; 17)$			& $\leq 3$				& 169				& 0		& 4 784 638			& 155		\\
				$\mH_v(2, 2, 2, 2, 7; 11; 19)$		& $\leq 3$				& 36				& 0		& 22 919			& 0			\\
				$\mH_v(2, 2, 2, 2, 2, 7; 11; 22)$		& $= 3$					& 0					& 0		&					&			\\
				\hline
				$\mH_v(6; 11; 10)$					& $\leq 2$				& 1					& 0		& 1					& 0			\\
				$\mH_v(7; 11; 12)$					& $\leq 2$				& 1					& 0		& 8					& 0			\\
				$\mH_v(2, 7; 11; 14)$					& $\leq 2$				& 3					& 0		& 85				& 0			\\
				$\mH_v(2, 2, 7; 11; 16)$				& $\leq 2$				& 10				& 0		& 5 502				& 7			\\
				$\mH_v(2, 2, 2, 7; 11; 18)$			& $\leq 2$				& 103				& 0		& 5 374 143			& 2 492		\\
				$\mH_v(2, 2, 2, 2, 7; 11; 20)$		& $\leq 2$				& 2848				& 0		& 387 968 676  		& 18		\\
				$\mH_v(2, 2, 2, 2, 2, 7; 11; 22)$		& $= 2$					& 0					& 0		&					&			\\
				\hline
				$\mH_v(2, 2, 2, 2, 2, 7; 11; 22)$		& 						& 0					& 0		&					&			\\
				\hline
			\end{tabular}
		}
		\vspace{-0.5em}
		\caption{\small Steps in finding all maximal graphs in $\mH_v(2, 2, 2, 2, 2, 7; 11; 22)$}
		\label{table: finding all graphs in H_v(2, 2, 2, 2, 2, 7; 11; 22)}
	\end{table}

\section{Proof of Theorem \ref{theorem: F_v(a_1, ..., a_s; m - 1) leq m + 12, max set(a_1, ..., a_s) = 7}}

The lower bound follows from Theorem \ref{theorem: rp(7) = 2} and Theorem \ref{theorem: F_v(2_(m - p), p; q) leq F_v(a_1, ..., a_s; q) leq wFv(m)(p)(q)}.

To prove the upper bound, we will use the graph $\Gamma_4 \in \mH_v(3, 7; 8) \cap \mH_v(4, 6; 8) \cap \mH_v(5, 5; 8)$ (see Figure \ref{figure: wHv(9)(7)(8)(21)}) obtained by adding one vertex to the graph $G_{2,2,7} \in \mH_v(2, 2, 7; 8; 20)$ (see Figure \ref{figure: H_v(2, 2, 7; 8; 20)}). According to Proposition \ref{proposition: a_1 + a_2 - 1 > p}, from $\Gamma_4 \arrowsv (3, 7)$, $\Gamma_4 \arrowsv (4, 6)$, and $\Gamma_4 \arrowsv (5, 5)$ it follows $\Gamma_4 \arrowsv \uni{9}{7}$. Therefore, $\Gamma_4 \in \wHv{9}{7}{8}$ and $\wFv{9}{7}{8} \leq 21$. Now, from Theorem \ref{theorem: F_v(2_(m - p), p; q) leq F_v(a_1, ..., a_s; q) leq wFv(m)(p)(q)} and Theorem \ref{theorem: wFv(m)(p)(m - m_0 + q) leq wFv(m_0)(p)(q) + m - m_0} we derive
$$F_v(a_1, ..., a_s; m - 1) \leq \wFv{m}{7}{m - 1} \leq \wFv{9}{7}{8} + m - 9 \leq m + 12.$$
Thus, the theorem is proved. \qed

\vspace{1em}

Regarding the number $F_v(3, 7; 8)$, the following bounds were known:
$$18 \leq F_v(3, 7; 8) \leq 22.$$

The lower bound is true according to (\ref{equation: m + p + 2 leq F_v(a_1, ..., a_s; m - 1) leq m + 3p}) and the upper bound was proved in \cite{SXP09}. We improve these bounds by proving the following

\begin{theorem}
	\label{theorem: 20 leq F_v(3, 7; 8) leq 21}
	$20 \leq F_v(3, 7; 8) \leq 21$.
\end{theorem}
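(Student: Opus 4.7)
The plan is to derive both bounds almost immediately from results already established in this chapter, with essentially no new computation required.

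For the lower bound $F_v(3, 7; 8) \geq 20$, I would apply (\ref{equation: G arrowsv (a_1, ..., a_s) Rightarrow G arrowsv (a_1, ..., a_(i - 1), t, a_i - t + 1, a_(i + 1), ..., a_s)}) with $a_1 = 3$, $a_2 = 7$, and $t = 2$ to obtain the implication $G \arrowsv (3, 7) \Rightarrow G \arrowsv (2, 2, 7)$. This gives the inclusion $\mH_v(3, 7; 8) \subseteq \mH_v(2, 2, 7; 8)$, hence $F_v(3, 7; 8) \geq F_v(2, 2, 7; 8)$. By Theorem \ref{theorem: F_v(2, 2, 7; 8) = 20}, the right-hand side equals $20$, which yields the desired bound.

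For the upper bound $F_v(3, 7; 8) \leq 21$, I would simply invoke the $21$-vertex graph $\Gamma_4$ already exhibited during the proof of Theorem \ref{theorem: F_v(a_1, ..., a_s; m - 1) leq m + 12, max set(a_1, ..., a_s) = 7}. Recall that $\Gamma_4$ was constructed by adding one vertex to the graph $G_{2,2,7} \in \mH_v(2, 2, 7; 8; 20)$ from Figure \ref{figure: H_v(2, 2, 7; 8; 20)}, and it was verified (by computer) that $\Gamma_4 \in \mH_v(3, 7; 8) \cap \mH_v(4, 6; 8) \cap \mH_v(5, 5; 8)$. The membership $\Gamma_4 \in \mH_v(3, 7; 8; 21)$ alone suffices to conclude $F_v(3, 7; 8) \leq \abs{\V(\Gamma_4)} = 21$.

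There is no real obstacle in this proof: the hard work has already been done, namely the computer-assisted determination that $F_v(2, 2, 7; 8) = 20$ (which rests on Algorithm \ref{algorithm: A3} and the enumeration summarised in Table \ref{table: finding all graphs in H_v(2, 2, 7; 8; 19)}) and the explicit construction of the 22-vertex graphs in $\mH_{max}(2, 2, 7; 8; 22)$ via Procedure \ref{procedure: extending a set of maximal graphs in mH_v(a_1, ..., a_s; q; n)} that led to $\Gamma_4$. The remaining presentation is a two-line argument combining Theorem \ref{theorem: F_v(2, 2, 7; 8) = 20} with the arrowing property $\Gamma_4 \arrowsv (3, 7)$.
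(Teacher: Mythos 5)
Your proposal is correct and follows essentially the same route as the paper: the lower bound is obtained exactly as in the text from $F_v(3, 7; 8) \geq F_v(2, 2, 7; 8) = 20$ via Theorem \ref{theorem: F_v(2, 2, 7; 8) = 20}, and the upper bound rests on the same witness, since the paper's appeal to Theorem \ref{theorem: F_v(a_1, ..., a_s; m - 1) leq m + 12, max set(a_1, ..., a_s) = 7} with $m = 9$ is itself proved using $\Gamma_4 \arrowsv (3, 7)$. Citing $\Gamma_4$ directly instead of the intermediate theorem is only a cosmetic difference.
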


\begin{proof}
	The upper bound is true according to Theorem \ref{theorem: F_v(a_1, ..., a_s; m - 1) leq m + 12, max set(a_1, ..., a_s) = 7} and the lower bound follows from $F_v(3, 7; 8) \geq F_v(2, 2, 7; 8) = 20$.
\end{proof}

\vspace{1em}
Theorem \ref{theorem: F_v(a_1, ..., a_s; m - 1) leq m + 12, max set(a_1, ..., a_s) = 7} and Theorem \ref{theorem: 20 leq F_v(3, 7; 8) leq 21} are published in \cite{BN17b}.

\begin{figure}
	\centering
	\begin{subfigure}{\textwidth}
		\centering
		\includegraphics[height=336px,width=168px]{./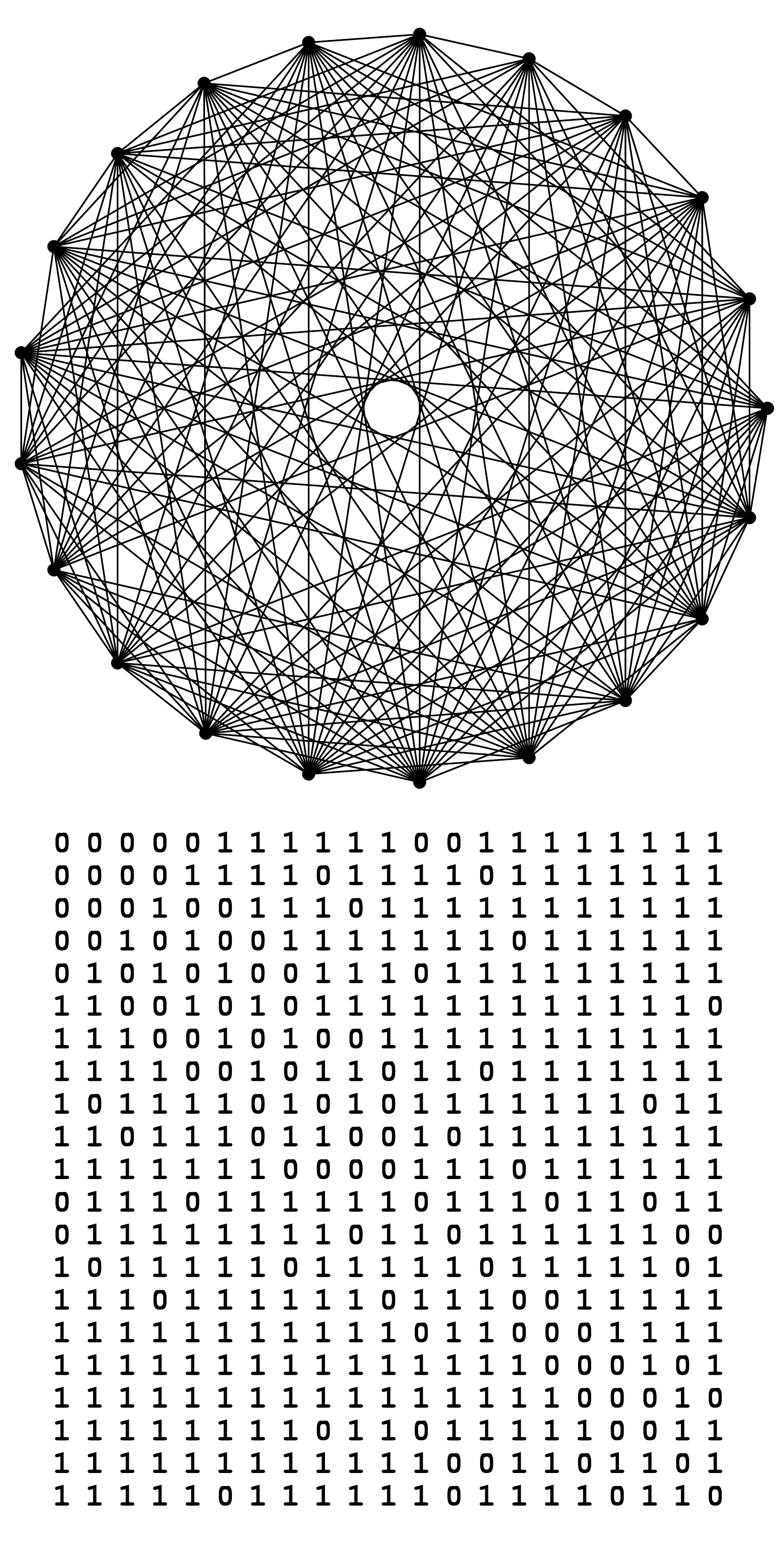}
		\caption*{\emph{$\Gamma_4$}}
		\label{figure: Gamma_4}
	\end{subfigure}%
	\caption{Graph $\Gamma_4 \in \mH_v(3, 7; 8; 21) \cap \mH_v(4, 6; 8; 21) \cap \mH_v(5, 5; 8; 21)$}
	\label{figure: wHv(9)(7)(8)(21)}
\end{figure}

\chapter{Numbers of the form $F_v(a_1, ..., a_s; m - 2)$}

Not much is known about the numbers $F_v(a_1, ..., a_s; q)$ when $q \leq m - 2$. In the case $q = m - 2$ we know all the numbers where $p = \max\set{a_1, ..., a_s} = 2$, i.e. all the numbers of the form $F_v(2_r; r - 1)$. Nenov proved in \cite{Nen83} that $F_v(2_r; r - 1) = r + 7$ if $r \geq 8$, and later in \cite{Nen07} he proved that the same equality holds when $r \geq 6$. Other proof of the same equality was given in \cite{Nen09}. By a computer aided research, Jensen and Royle \cite{JR95} obtained the result $F_v(2_4; 3) = 22$. All extremal graphs in $\mH_v(2_4; 3)$, as well as other interesting graphs, are available in \cite{BCGM_HoG}. The last remaining number of this form is $F_v(2_5; 4) = 16$. The upper bound was proved by Nenov in \cite{Nen07} and the lower bound was proved by Lathrop and Radziszowski in \cite{LR11} with the help of a computer. Also in \cite{LR11} it is proved that there are exactly two extremal graphs in $\mH_v(2_5; 4)$. The number $F_v(2_5; 3)$ is also of significant interest. The newest bounds for this number are $32 \leq F_v(2_5; 3) \leq 40$ obtained by Goedgebeur \cite{Goe17}.

The exact values of the numbers $F_v(a_1, ..., a_s; m - 2)$ for which $p \geq 3$ are unknown and no good general bounds have been obtained. According to (\ref{equation: F_v(a_1, ..., a_s; q) exists}), $F_v(a_1, ..., a_s; m - 2)$ exists if and only if $m \geq p + 3$. If $p = 3$, it is easy to see that in the border case $m = 6$ there are only two numbers of the form $F_v(a_1, ..., a_s; m - 2)$, namely $F_v(2, 2, 2, 3; 4)$ and $F_v(2, 3, 3; 4)$. Since $G \arrowsv (2, 3, 3) \Rightarrow G \arrowsv (2, 2, 2, 3)$, it follows that
\begin{equation*}
\label{equation: F_v(2, 2, 2, 3; 4) leq F_v(2, 3, 3; 4)}
F_v(2, 2, 2, 3; 4) \leq F_v(2, 3, 3; 4).
\end{equation*}
Computing and obtaining bounds on the numbers $F_v(2, 2, 2, 3; 4)$ and $F_v(2, 3, 3; 4)$ has an important role in relation to computing and obtaining bounds on the other numbers of the form $F_v(a_1, ..., a_s; m - 2)$ for which $p = 3$ (see \cite{Bik17} for more details).

Shao, Xu, and Luo \cite{SXL09} proved in 2009 that
\begin{equation*}
18 \leq F_v(2, 2, 2, 3; 4) \leq F_v(2, 3, 3; 4) \leq 30.
\end{equation*}
In 2011 Shao, Liang, He, and Xu \cite{SLHX11} raised the lower bound on $F_v(2, 3, 3; 4)$ to 19.

Our interest in the number $F_v(2, 3, 3; 4)$ is also motivated by our conjecture that the inequality $F_v(2, 3, 3; 4) \leq F_e(3, 3; 4)$ is true. If this inequality holds, then there would be another possible way to prove a lower bound on the number $F_e(3, 3; 4)$ by bounding the number $F_v(2, 3, 3; 4)$.

We improve the bounds on the numbers $F_v(2, 2, 2, 3; 4)$ and $F_v(2, 3, 3; 4)$ by proving the following theorems:
\begin{theorem}
	\label{theorem: 20 leq F_v(2, 2, 2, 3; 4) leq 22}
	$20 \leq F_v(2, 2, 2, 3; 4) \leq 22$.
\end{theorem}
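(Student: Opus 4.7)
The plan is to prove the two inequalities separately, relying on the algorithmic and constructive machinery developed earlier in the thesis.

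For the upper bound $F_v(2,2,2,3;4)\leq 22$, the goal is to exhibit a $K_4$-free graph $G$ on $22$ vertices with $G\arrowsv(2,2,2,3)$. A useful reformulation is that $G\arrowsv(2,2,2,3)$ holds precisely when $\chi(G-S)\geq 4$ for every triangle-free subset $S\subseteq \V(G)$. I would search for a witness by mimicking the strategy used to obtain $G_{2,2,7}$ in the proof of Theorem \ref{theorem: F_v(2, 2, 7; 8) = 20}: first screen small highly symmetric $K_4$-free graphs (for example, the vertex-transitive graphs of order up to $31$ from \cite{Roy_c}, along with suitable circulants) to locate a witness of convenient order; then descend by iterated vertex deletion followed by edge re-closure via Procedure \ref{procedure: extending a set of maximal graphs in mH_v(a_1, ..., a_s; q; n)}, producing successively smaller maximal graphs in $\mH_v(2,2,2,3;4)$ until a 22-vertex graph is obtained.

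For the lower bound $F_v(2,2,2,3;4)\geq 20$, I would prove $\mH_v(2,2,2,3;4;19)=\emptyset$ with the help of Algorithm \ref{algorithm: A3} (and, where beneficial, Algorithm \ref{algorithm: A4}). Suppose $G\in\mH_{max}(2,2,2,3;4;19)$ and set $k=\alpha(G)$; one splits the search into cases according to the value of $k$. For each feasible $k$, build the tower
$$\mH_{max}^{k}(3;4;n_{0})\ \to\ \mH_{max}^{k}(2,3;4;n_{1})\ \to\ \mH_{max}^{k}(2,2,3;4;n_{2})\ \to\ \mH_{max}^{k}(2,2,2,3;4;19),$$
applying Algorithm \ref{algorithm: A3} with an appropriate $k$-parameter at each step, and conclude by showing that the final output is empty. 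The arrow-test in step 4 is performed by enumerating the maximal triangle-free subsets $S\subseteq \V(G)$ and testing whether $G-S$ is $3$-colorable, which is the efficient dual of the partition enumeration.

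The main obstacle will be the combinatorial explosion at the middle of the tower, particularly when enumerating $\mH_{+K_3}^{k}(2,2,3;4;n)$ for the largest $n$. Here $q=4$, so Proposition \ref{proposition: K_(q - 2) subseteq N_G(u) cap N_G(v)} specializes to the weak pruning condition that any two nonadjacent vertices of a maximal $K_4$-free graph must share only a common edge in their neighborhoods; this is far less restrictive than the analogous conditions exploited in Chapters 2 and 3 with $q=6,7$, so the search trees will be wide. A second, subtler difficulty is to establish a good a priori bound on $\alpha(G)$ for $G\in\mH_v(2,2,2,3;4;19)$, since Theorem \ref{theorem: alpha(G) leq V(G) + m + p - 1} applies only to the case $q=m-1$ and thus does not cover our case $q=m-2$. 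Without such a bound one must run Algorithm \ref{algorithm: A3} across a wider range of $k$, so deriving even a modest improvement on the trivial bound $\alpha(G)\leq 19-3=16$ (coming from $K_4$-freeness combined with Ramsey-type estimates) is what I expect to dominate the work.
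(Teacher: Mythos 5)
Your overall architecture (computer search for the lower bound, constructive descent for the upper bound) matches the paper's, but there is a genuine gap at exactly the point you flag as the dominant difficulty: the a priori bound on $\alpha(G)$. The paper does not need to ``derive a modest improvement'' on $\alpha(G)\leq 16$ --- it gets $4\leq\alpha(G)\leq 5$ immediately and cheaply. The lower bound $\alpha(G)\geq 4$ follows from $R(4,4)=18$, since $G$ is $K_4$-free on $19$ vertices. The upper bound follows from Proposition \ref{proposition: G - A arrowsv (a_1, ..., a_(i - 1), a_i - 1, a_(i + 1_, ..., a_s)}: removing an independent set $A$ from $G\arrowsv(2,2,2,3)$ leaves $G-A\arrowsv(2,2,3)$, so $19-\alpha(G)\geq F_v(2,2,3;4)=14$ (the value from \cite{CR06}), i.e.\ $\alpha(G)\leq 5$. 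This is the same device used throughout Chapters 2--4 (there via Theorem \ref{theorem: alpha(G) leq V(G) + m + p - 1}, which as you note does not apply here, but the underlying deletion argument still does). Without this observation your plan is not executable, since running the tower for $k$ up to $16$ is hopeless; with it, only $k=4$ and $k=5$ remain, and the $k=5$ case reduces to extending the already-known $591$ graphs in $\mH_{max}^5(2,2,3;4;14)$.

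Two further points where the paper diverges from your plan, both driven by feasibility at $q=4$. First, it uses Algorithm \ref{algorithm: A5} rather than Algorithm \ref{algorithm: A3}: having first proved $F_v(2,2,2,3;4)\geq 19$, it invokes Proposition \ref{proposition: Sperner graphs in mH_v(2_r, 3; 4; n)} to exclude Sperner graphs from $\mH_v(2,2,2,3;4;19)$ outright, and handles the Sperner members of the intermediate set $\mH_{max}(2,2,3;4;15)$ separately by vertex duplication (Proposition \ref{proposition: maximal Sperner graphs}); this, together with anchoring the tower on the published lists $\mH_v(2,2,3;4;14)$ from \cite{CR06} and the Ramsey graphs from \cite{McK_r}, is what keeps the computation to about a week. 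Second, for the upper bound the paper does not start from vertex-transitive graphs (that is its strategy for $F_v(2,3,3;4)\leq 24$ and $F_v(2,2,7;8)\leq 20$) but from the $352\,366$ known $24$-vertex graphs with $\omega\leq 3$ and $\alpha\leq 4$, descending $24\to 23\to 22$ with Procedure \ref{procedure: extending a set of maximal graphs in mH_v(a_1, ..., a_s; q; n)}; your proposed search is plausible but you give no evidence that a vertex-transitive seed of convenient order exists, so the existence of a $22$-vertex witness remains unestablished in your write-up.
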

\begin{theorem}
	\label{theorem: 20 leq F_v(2, 3, 3; 4) leq 24}
	$20 \leq F_v(2, 3, 3; 4) \leq 24.$
\end{theorem}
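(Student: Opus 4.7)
The plan is to prove the two bounds separately, with the lower bound being a quick consequence of the preceding theorem and the upper bound requiring an explicit construction.

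For the lower bound $F_v(2, 3, 3; 4) \geq 20$, I would use the implication from (\ref{equation: G arrowsv (a_1, ..., a_s) Rightarrow G arrowsv (a_1, ..., a_(i - 1), t, a_i - t + 1, a_(i + 1), ..., a_s)}) with $a_i = 3$ and $t = 2$: namely $G \arrowsv (2, 3, 3) \Rightarrow G \arrowsv (2, 2, 2, 3)$. Thus $\mH_v(2, 3, 3; 4) \subseteq \mH_v(2, 2, 2, 3; 4)$, so
$$F_v(2, 3, 3; 4) \geq F_v(2, 2, 2, 3; 4) \geq 20$$
by Theorem~\ref{theorem: 20 leq F_v(2, 2, 2, 3; 4) leq 22}. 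This paragraph amounts to a one-line inclusion.

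For the upper bound $F_v(2, 3, 3; 4) \leq 24$, the plan is to exhibit a concrete $K_4$-free graph $G$ on $24$ vertices with $G \arrowsv (2, 3, 3)$. Since the previous upper bound of $30$ from \cite{SXL09} leaves a substantial gap, I would begin with one of the graphs realising the bound $F_v(2, 2, 2, 3; 4) \leq 22$ (Theorem~\ref{theorem: 20 leq F_v(2, 2, 2, 3; 4) leq 22}) and attempt to extend it, in the spirit of Procedure~\ref{procedure: extending a set of maximal graphs in mH_v(a_1, ..., a_s; q; n)}, by adding a few vertices to obtain the stronger arrowing property $\arrowsv (2, 3, 3)$ while preserving $K_4$-freeness. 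A parallel search direction is to scan the catalogue of small vertex-transitive $K_4$-free graphs on up to $24$ vertices from \cite{Roy_c} and check each for the property $\arrowsv (2, 3, 3)$; the high symmetry makes verification comparatively cheap and, as seen in the proof of Theorem~\ref{theorem: F_v(2, 2, 7; 8) = 20}, such searches have already produced useful Folkman witnesses in this thesis.

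The main obstacle is finding the witness graph and verifying the arrowing property efficiently. A brute force check of all $3^{24}$ colourings is out of the question, but the property $G \arrowsv (2, 3, 3)$ is equivalent to the statement that for every independent set $V_1 \subseteq \V(G)$, the induced subgraph $G - V_1$ cannot be written as the disjoint union of two triangle-free induced subgraphs. I would implement this as a branch-and-bound procedure: enumerate inclusion-maximal independent sets $V_1$ and, for each, try to 2-colour $\V(G - V_1)$ avoiding monochromatic triangles, backtracking as soon as a forced monochromatic triangle appears. Once a valid $24$-vertex example is located, the proof concludes by displaying the graph (its adjacency data) and noting that the verification has been performed; the $K_4$-freeness check is immediate. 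I expect that the construction itself, rather than its verification, will be the delicate step, so I would also run the search while relaxing to $25$ vertices as a fallback to confirm that the bound region is in the right range before committing computer time to the full $24$-vertex case.
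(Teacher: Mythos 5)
Your argument matches the paper's: the lower bound is obtained exactly as you describe, from $F_v(2, 3, 3; 4) \geq F_v(2, 2, 2, 3; 4) \geq 20$ via Theorem~\ref{theorem: 20 leq F_v(2, 2, 2, 3; 4) leq 22}, and the upper bound comes from your second search direction --- a computer scan of the catalogue of vertex-transitive graphs from \cite{Roy_c} (the paper scans up to 31 vertices), which yields exactly one 24-vertex transitive graph in $\mH_v(2, 3, 3; 4)$ (Figure~\ref{figure: G_233}). Your first search direction (extending a 22-vertex witness of $F_v(2, 2, 2, 3; 4) \leq 22$) would likely not have succeeded, since the paper notes that the 24-vertex witness has no proper subgraphs in $\mH_v(2, 3, 3; 4)$.
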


\vspace{1em}

\section{Algorithm A5}

According to Proposition \ref{proposition: G - A in mH_(+K_(q - 1))(a_1 - 1, a_2, ..., a_s; q; n - abs(A))}, if $G \arrowsv (2, 3, 3)$ and the graph $H$ is obtained by removing an independent set of vertices from $G$, then $H \arrowsv (3, 3)$. Shao, Liang, He, and Xu showed in \cite{SLHX11} that if $G \in \mH_v(2, 3, 3; 4; 18)$, then $G$ can be obtained by adding 4 independent vertices to 111 of all the 153 graphs in $\mH_v(3, 3; 4; 14)$ obtained in \cite{PRU99}. In this way, with the help of a computer, the authors proved that $\mH_v(2, 3, 3; 4; 18) = \emptyset$, and therefore $F_v(2, 3, 3; 4) \geq 19$. Similarly, if $G \in \mH_v(2, 2, 2, 3; 4; 18)$, then $G$ can be obtained by adding 4 independent vertices to 12064 of the 12227 graphs in $\mH_v(2, 2, 3; 4; 14)$, which were obtained in \cite{CR06}. Proving the bound $F_v(2, 2, 2, 3; 4) \geq 19$ is harder because of the larger number of graphs that have to be extended.

\begin{definition}
	\label{definition: Sperner graph}
	We say that $G$ is a Sperner graph if $N_G(u) \subseteq N_G(v)$ for some pair of vertices $u, v \in \V(G)$.
\end{definition}

\begin{proposition}
\label{proposition: Sperner graphs in mH_v(2_r, 3; 4; n)}
If $G \in \mH_v(2_r, 3; 4; n)$ is a Sperner graph and $N_G(u) \subseteq N_G(v)$, then $G - u \in \mH_v(2_r, 3; 4; n - 1)$.
\end{proposition}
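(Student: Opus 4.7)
The plan is to argue by contradiction. Since $G-u$ is a subgraph of $G$ and $\omega(G)<4$, the condition $\omega(G-u)<4$ is automatic, so the whole task reduces to showing $G-u \arrowsv (2_r,3)$.

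First I would record a simple observation: the Sperner condition $N_G(u)\subseteq N_G(v)$ forces $u\ne v$ and $u\not\sim v$, because otherwise $v\in N_G(u)\subseteq N_G(v)$ would give the forbidden loop $v\sim v$.

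Next, assume towards a contradiction that $G-u\not\arrowsv (2_r,3)$, and fix a coloring
$$\V(G-u)=V_1\cup\cdots\cup V_r\cup V_{r+1}$$
in which each of $V_1,\dots,V_r$ is independent and $V_{r+1}$ contains no triangle. Let $j$ be the index with $v\in V_j$, and extend this coloring to a coloring of $\V(G)$ by placing $u$ into $V_j$. The plan is to show that this extended coloring is still $(2_r,3)$-free, which contradicts $G\arrowsv (2_r,3)$ and finishes the proof.

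The verification splits into the two natural cases, and each is where the hypothesis $N_G(u)\subseteq N_G(v)$ is used. If $j\le r$, I would argue that $V_j\cup\{u\}$ stays independent: any neighbor $w$ of $u$ inside $V_j$ would satisfy $w\in N_G(u)\subseteq N_G(v)$, so $w$ would be adjacent to $v\in V_j$, contradicting the independence of $V_j$. If $j=r+1$, I would argue similarly that $V_{r+1}\cup\{u\}$ contains no triangle: any triangle through $u$ in $V_{r+1}\cup\{u\}$ would have the form $\{u,x,y\}$ with $x,y\in V_{r+1}$ and $x,y\in N_G(u)\subseteq N_G(v)$, which together with the edge $xy$ yields a triangle $\{v,x,y\}$ already contained in $V_{r+1}$, a contradiction. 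Neither step is a genuine obstacle; the only real content is the bookkeeping observation that the Sperner inclusion lets us "transplant" a forbidden configuration through $u$ into an equally forbidden configuration through $v$ inside $G-u$.
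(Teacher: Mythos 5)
Your proof is correct: the contrapositive argument of assigning $u$ the same color class as $v$ and using $N_G(u)\subseteq N_G(v)$ to transplant any forbidden edge or triangle through $u$ onto $v$ is exactly the right mechanism, and your preliminary observation that $u\not\sim v$ closes the only degenerate case. The paper states this proposition without proof, but your argument is the standard one and mirrors the edge-coloring analogue the paper does prove (Proposition \ref{proposition: minimal graphs in mH_e(p, q) are not Sperner}, where edges at $u$ are colored by copying the colors of the corresponding edges at $v$), so there is nothing to add.
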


In the special case when $G$ is a maximal Sperner graph in $\mH_v(2_r, 3; 4; n)$, from $N_G(u) \subseteq N_G(v)$ we derive $N_G(u) = N_G(v)$, and it follows that $G - u$ is a maximal graph in $\mH_v(2_r, 3; 4; n - 1)$. Thus, we proved the following proposition:

\begin{proposition}
	\label{proposition: maximal Sperner graphs}
	Every maximal Sperner graph in $\mH_v(2_r, 3; 4; n)$ is obtained by duplicating a vertex in some of the maximal graphs in $\mH_v(2_r, 3; 4; n - 1)$.
\end{proposition}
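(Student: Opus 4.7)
Let $G$ be a maximal Sperner graph in $\mH_v(2_r, 3; 4; n)$, and fix two distinct vertices $u, v \in \V(G)$ with $N_G(u) \subseteq N_G(v)$, whose existence is guaranteed by the Sperner property. My plan is to establish three intermediate claims: (i) $u$ and $v$ are non-adjacent and $N_G(u) = N_G(v)$; (ii) $G - u \in \mH_v(2_r, 3; 4; n - 1)$; and (iii) $G - u$ is maximal in $\mH_v(2_r, 3; 4; n - 1)$. Once these are in place, the proposition follows at once: $G$ is recovered from $G - u$ by adding a new vertex $u$ with $N_{G}(u) = N_{G - u}(v)$, which is precisely the operation of duplicating the vertex $v$ of $G - u$.

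For claim (i), non-adjacency is immediate, since $[u,v] \in \E(G)$ would force $v \in N_G(u) \subseteq N_G(v)$, contradicting the absence of loops. The equality $N_G(u) = N_G(v)$ is the main novelty introduced by the maximality hypothesis. Assuming for contradiction that there is some $w \in N_G(v) \setminus N_G(u)$, the edge $[u, w]$ is missing from $G$; maximality of $G$ in $\mH_v(2_r, 3; 4; n)$ forces $\omega(G + [u,w]) = 4$, so there exist $x, y \in \V(G)$ such that $\{u, w, x, y\}$ is a $4$-clique in $G + [u, w]$. All edges of this clique other than $[u, w]$ already lie in $G$, so in particular $x, y \in N_G(u) \subseteq N_G(v)$, while also $\{w, x, y\}$ is a triangle in $G$. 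Combined with $w \in N_G(v)$, this produces the $4$-clique $\{v, w, x, y\}$ in $G$ itself, contradicting $\omega(G) < 4$.

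For claim (ii), given any $(r + 1)$-coloring of $\V(G - u)$, extend it by assigning to $u$ the same color as $v$. Because $N_G(u) = N_G(v)$ and $[u, v] \notin \E(G)$, any monochromatic $a_i$-clique in $G$ involving $u$ yields a monochromatic $a_i$-clique in $G - u$ by replacing $u$ with $v$. Since $G \arrowsv (2_r, 3)$, the original coloring of $G - u$ must already contain a monochromatic forbidden clique, so $G - u \arrowsv (2_r, 3)$; and $\omega(G - u) \leq \omega(G) < 4$. For claim (iii), let $e = [x, y]$ be a non-edge of $G - u$. Then $e$ is a non-edge of $G$, so by maximality $G + e$ contains a $4$-clique $K$. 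If $u \notin K$, then $K \subseteq \V(G - u)$ and we are done. If $u \in K$, write $K = \{u, a, b, c\}$; then $\{a, b, c\} \subseteq N_G(u) = N_G(v)$, and replacing $u$ by $v$ produces a $4$-clique $\{v, a, b, c\}$ in $(G - u) + e$, as required.

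The only step that requires genuine argument is claim (i), more precisely the upgrade from $N_G(u) \subseteq N_G(v)$ to equality; the rest is bookkeeping. I expect this to be the only place where both the maximality assumption and $K_4$-freeness are used together, and everything else follows by straightforward case analysis using $N_G(u) = N_G(v)$ and $u \not\sim v$.
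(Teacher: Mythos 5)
Your proof is correct and takes essentially the same route as the paper, which derives the proposition in two asserted steps: maximality upgrades $N_G(u) \subseteq N_G(v)$ to $N_G(u) = N_G(v)$, and then $G - u$ is a maximal graph in $\mH_v(2_r, 3; 4; n-1)$ (membership itself coming from Proposition \ref{proposition: Sperner graphs in mH_v(2_r, 3; 4; n)}). Your claims (i)--(iii) supply exactly the details the paper leaves implicit, and each of the three arguments --- in particular the $K_4$-freeness contradiction giving equality of neighborhoods --- is sound.
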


\vspace{0.5em}

\begin{proposition}
	\label{proposition: N_G(u) neq N_H(v)}
	Let $G$ be a non-Sperner graph, $A \subseteq \V(G)$ be an independent set of vertices of $G$, and $H = G - A$. Then,
	
	$N_G(u) \not\subseteq N_H(v), \ \forall u \in A \mbox{ and } \forall v \in \V(H)$.
\end{proposition}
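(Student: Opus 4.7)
The plan is a short proof by contradiction, leveraging just two structural facts: that $A$ is independent (so $u$ has no neighbors inside $A$) and the trivial containment $N_H(v) \subseteq N_G(v)$ for every $v \in \V(H)$.

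First I would fix $u \in A$ and $v \in \V(H)$ and assume, toward a contradiction, that $N_G(u) \subseteq N_H(v)$. Note that $u \neq v$ because $u \in A$ while $v \in \V(H) = \V(G) \setminus A$. The key observation is that, since $A$ is an independent set and $u \in A$, every neighbor of $u$ in $G$ lies outside $A$; equivalently $N_G(u) \cap A = \emptyset$, so $N_G(u) \subseteq \V(H)$. In particular the vertices appearing in $N_G(u)$ are precisely the same whether we compute the neighborhood in $G$ or in $H$ (for vertices of $\V(H)$ that are neighbors of $v$), which makes the containment $N_G(u) \subseteq N_H(v)$ meaningful on the common vertex set $\V(H)$.

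Next I would chain the inclusions. By definition of the induced subgraph, $N_H(v) = N_G(v) \cap \V(H) \subseteq N_G(v)$. Combining this with the assumed inclusion gives
\[
N_G(u) \;\subseteq\; N_H(v) \;\subseteq\; N_G(v).
\]
Since $u$ and $v$ are distinct vertices of $G$ with $N_G(u) \subseteq N_G(v)$, the graph $G$ is a Sperner graph in the sense of Definition \ref{definition: Sperner graph}, contradicting the hypothesis that $G$ is non-Sperner.

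There is no real obstacle here; the argument is essentially a one-line deduction from the definitions. The only point that could be mishandled is the distinction between $N_G(v)$ and $N_H(v)$, but this is precisely resolved by observing that $N_G(u) \subseteq \V(H)$, so passing from the ambient graph $G$ to the induced subgraph $H$ does not alter the picture on the side of $u$. The conclusion $N_G(u) \not\subseteq N_H(v)$ for all $u \in A$ and $v \in \V(H)$ then follows immediately.
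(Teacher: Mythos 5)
Your argument is correct and is essentially identical to the paper's proof, which also assumes $N_G(u) \subseteq N_H(v)$ and immediately derives $N_G(u) \subseteq N_G(v)$, contradicting the non-Sperner hypothesis. Your additional remarks (that $u \neq v$ and that $N_G(u) \subseteq \V(H)$ by independence of $A$) are harmless elaborations of the same one-line deduction.
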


\begin{proof}
	If we suppose the opposite is true, it follows that $N_G(v) \supseteq N_G(u)$, which is a contradiction.
\end{proof}

\vspace{0.5em}

We will use the following specialized version of Algorithm \ref{algorithm: A3} to efficiently generate all non-Sperner graphs $G \in \mH_{max}(2_r, 3; 4; n)$ with $\alpha(G) = k$:
\begin{namedalgorithm}{A5}
	\label{algorithm: A5}
	The input of the algorithm is the set $\mA = \mH_{max}^k(2_{r - 1}, 3; 4; n - k)$, where $r, n, k$ are fixed positive integers.
	
	The output of the algorithm is the set $\mB$ of all non-Sperner graphs $G \in \mH_{max}(2_r, 3; 4; n)$ with $\alpha(G) = k$.
	
	\emph{1.} By removing edges from the graphs in $\mA$ obtain the set
	
	$\mA' = \mH_{+K_3}^k(2_{r - 1}, 3; 4; n - k)$.
	
	\emph{2.} For each graph $H \in \mA'$:
	
	\emph{2.1.} Find the family $\mM(H) = \set{M_1, ..., M_l}$ of all maximal $K_3$-free subsets of $\V(H)$.
	
	\emph{2.2.} Find all $k$-element subsets $N = \set{M_{i_1}, ..., M_{i_k}}$ of $\mM(H)$ which fulfill the conditions:
	
	(a) $M_{i_j} \neq N_H(v)$ for every $v \in \V(H)$ and for every $M_{i_j} \in N$.
	
	(b) $K_2 \subseteq M_{i_j} \cap M_{i_h}$ for every $M_{i_j}, M_{i_h} \in N, \ j \neq h$.
	
	(c) $\alpha(H - \bigcup_{M_{i_j} \in N'} M_{i_j}) \leq k - \abs{N'}$ for every $N' \subseteq N$.
	
	\emph{2.3.} For each of the found in step 2.2 $k$-element subsets $N = \set{M_{i_1}, ..., M_{i_k}}$ of $\mM(H)$ construct the graph $G = G(N)$ by adding new independent vertices $v_1, ..., v_k$ to $\V(H)$ such that $N_G(v_j) = M_{i_j}, j = 1, ..., k$. If $G$ is not a Sperner graph and $\omega(G + e) = 4, \forall e \in \E(\overline{G})$, then add $G$ to $\mB$.
	
	\emph{3.} Remove the isomorphic copies of graphs from $\mB$.
	
	\emph{4.} Remove from $\mB$ all graphs $G$ for which $G \not\arrowsv (2_r, 3)$.
\end{namedalgorithm}

We will prove the correctness of Algorithm \ref{algorithm: A5} with the help of the following
\begin{lemma}
	\label{lemma: algorithm A5}
	After the execution of step 2.3 of Algorithm \ref{algorithm: A5}, the obtained set $\mB$ coincides with the set of all maximal $K_4$-free non-Sperner graphs $G$ with $\alpha(G) = k$ which have an independent set of vertices $A \subseteq \V(G), \abs{A} = k$ such that $G - A \in \mA'$.
\end{lemma}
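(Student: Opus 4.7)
The plan is to follow the same two-direction template used in the proof of Lemma \ref{lemma: algorithm A3}, adapting the arguments to account for the non-Sperner condition and the fact that Algorithm \ref{algorithm: A5} uses $k$-element subsets (rather than multisets) in step 2.2.

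For the forward direction, I would take an arbitrary $G \in \mB$ obtained after step 2.3. By construction $G = G(N)$ with $N = \set{M_{i_1}, \ldots, M_{i_k}}$ and $H = G - \set{v_1, \ldots, v_k} \in \mA'$, so $A = \set{v_1, \ldots, v_k}$ is an independent set of size $k$ with $G - A = H \in \mA'$. Since $H$ is $K_3$-free and each $N_G(v_j) = M_{i_j}$ is a $K_3$-free subset of $\V(H)$, it follows that $\omega(G) < 4$. Condition (c) in step 2.2 together with Proposition \ref{proposition: alpha(G) leq t Leftrightarrow alpha(H - bigcup_(v in A') N_G((v)) leq t - abs(A')} (with $t = k$) gives $\alpha(G) \leq k$, and the independent set $A$ forces $\alpha(G) = k$. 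The two explicit checks at the end of step 2.3 guarantee that $G$ is non-Sperner and maximal $K_4$-free.

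For the backward direction, I would take a maximal $K_4$-free non-Sperner graph $G$ with $\alpha(G) = k$ and an independent set $A = \set{v_1, \ldots, v_k} \subseteq \V(G)$ such that $H = G - A \in \mA'$. Maximality of $G$ forces each $N_G(v_j)$ to be a maximal $K_3$-free subset of $\V(H)$, hence $N_G(v_j) \in \mM(H)$. Define $N = \set{N_G(v_1), \ldots, N_G(v_k)}$. Condition (b) of step 2.2 follows from Proposition \ref{proposition: K_(q - 2) subseteq N_G(u) cap N_G(v)} applied to pairs of non-adjacent vertices in $A$, and condition (c) follows from Proposition \ref{proposition: alpha(G) leq t Leftrightarrow alpha(H - bigcup_(v in A') N_G((v)) leq t - abs(A')} with $t = k$. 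Condition (a) follows from Proposition \ref{proposition: N_G(u) neq N_H(v)}: since $G$ is non-Sperner, $N_G(v_j) \not\subseteq N_H(v)$ for any $v \in \V(H)$, so in particular $N_G(v_j) \neq N_H(v)$.

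The delicate point, and the main thing to verify carefully, is that $N$ is indeed a $k$-element subset (not merely a multiset) of $\mM(H)$. If two distinct $v_j, v_h \in A$ satisfied $N_G(v_j) = N_G(v_h)$, then trivially $N_G(v_j) \subseteq N_G(v_h)$, making $G$ a Sperner graph, contradicting our assumption. So $N$ has exactly $k$ distinct elements, matching the description in step 2.2. With $G = G(N)$ a non-Sperner maximal $K_4$-free graph, the checks in step 2.3 are passed and $G$ is added to $\mB$. This closes the second direction and completes the proof.
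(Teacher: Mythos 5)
Your proof is correct and follows essentially the same two-direction argument as the paper, invoking the same three propositions for conditions (a), (b), (c); your extra observation that the non-Sperner hypothesis forces the $N_G(v_j)$ to be pairwise distinct (so that $N$ is genuinely a $k$-element subset of $\mM(H)$ rather than a multiset) is a point the paper leaves implicit. One small slip in the forward direction: a graph $H \in \mA'$ is $K_4$-free ($\omega(H) < 4$), not $K_3$-free; the conclusion $\omega(G) < 4$ follows because $\omega(H) < 4$ and each $N_G(v_j) = M_{i_j}$ is a $K_3$-free set.
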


\begin{proof}
	The proof follows the proof of Lemma \ref{lemma: algorithm A3}.
	Suppose that in step 2.3 of Algorithm \ref{algorithm: A3} the graph $G$ is added to $\mB$. In the same way as in the proof of Lemma \ref{lemma: algorithm A3}, we obtain $G \in \mH_{max}(2_r, 3; 4; n)$ and $\alpha(G) = k$. The check at the end of step 2.3 guarantees that $G$ is not a Sperner graph.
	
	Let $G$ be a maximal $K_4$-free non-Sperner graph, $\alpha(G) = k$, and $A = \set{v_1, ..., v_k}$ be an independent set of vertices of $G$ such that $H = G - A \in \mA'$. We will prove that, after the execution of step 2.3 of Algorithm \ref{algorithm: A5}, $G \in \mB$. Let $N = \set{N_G(v_1), ..., N_G(v_k)}$. In the same way as in the proof of Lemma \ref{lemma: algorithm A3} we show that $N$ fulfills the conditions (b) and (c) in step 2.2. Since $G$ is not a Sperner graph, from Proposition \ref{proposition: N_G(u) neq N_H(v)} it follows that $N$ fulfills (a). Thus, we showed that $N$ fulfills all conditions in step 2.2, and since $G = G(N)$ is a maximal $K_4$-free non-Sperner graph, in step 2.3 $G$ is added to $\mB$.
\end{proof}

\begin{theorem}
	\label{theorem: algorithm A5}
	After the execution of Algorithm \ref{algorithm: A5}, the obtained set $\mB$ coincides with the set of all non-Sperner graphs with independence number $k$ in $\mH_{max}(2_r, 3; 4; n)$.
\end{theorem}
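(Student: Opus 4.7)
The plan is to reduce the theorem to Lemma \ref{lemma: algorithm A5}, which already characterises the contents of $\mB$ after step 2.3. The only pieces of the algorithm that act after step 2.3 are step 3 (removal of isomorphic copies, which does not affect the set up to isomorphism) and step 4 (which discards graphs failing $G \arrowsv (2_r, 3)$). So the argument will essentially consist of showing that step 4 exactly adds the missing requirement $G \in \mH_v(2_r, 3; 4; n)$, and then invoking Proposition \ref{proposition: G - A in mH_(+K_(q - 1))(a_1 - 1, a_2, ..., a_s; q; n - abs(A))} for the reverse inclusion. The proof will mirror the structure of the proof of Theorem \ref{theorem: algorithm A3}.

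For the forward inclusion, I would start from an arbitrary $G$ that is in $\mB$ at the end of the algorithm. Lemma \ref{lemma: algorithm A5} guarantees that $G$ is a maximal $K_4$-free non-Sperner graph with $\alpha(G) = k$, and moreover admits an independent $A \subseteq \V(G)$ with $\abs{A} = k$ such that $G - A \in \mA'$. The check removing graphs with $G \not\arrowsv (2_r, 3)$ in step 4 then yields $G \arrowsv (2_r, 3)$, so $G \in \mH_v(2_r, 3; 4; n)$ and, combined with maximality, $G \in \mH_{max}(2_r, 3; 4; n)$.

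For the reverse inclusion, let $G$ be any non-Sperner graph in $\mH_{max}(2_r, 3; 4; n)$ with $\alpha(G) = k$, and let $A \subseteq \V(G)$ be an independent set with $\abs{A} = k$. Applying Proposition \ref{proposition: G - A in mH_(+K_(q - 1))(a_1 - 1, a_2, ..., a_s; q; n - abs(A))} with $a_1 = 2$ gives $G - A \in \mH_{+K_3}(2_{r - 1}, 3; 4; n - k)$, and since $\alpha(G - A) \leq \alpha(G) = k$ we in fact have $G - A \in \mA'$. Now Lemma \ref{lemma: algorithm A5} guarantees that $G$ is added to $\mB$ in step 2.3. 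Because $G \arrowsv (2_r, 3)$, the graph survives step 4, so $G \in \mB$ at the end.

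There is no real obstacle in this argument; Lemma \ref{lemma: algorithm A5} has already absorbed all the combinatorial work (the three conditions (a), (b), (c) in step 2.2 encoding respectively non-Spernerness relative to $H$, maximality, and the independence number bound). The only care needed is to verify that the independence set $A \subseteq \V(G)$ chosen in the reverse direction is allowed to be any set of size $k$, which is legitimate since the algorithm ranges over all $H \in \mA'$ and, for each, all valid $k$-element subsets of $\mM(H)$.
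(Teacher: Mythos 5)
Your proof is correct and follows essentially the same route as the paper's: both directions reduce to Lemma \ref{lemma: algorithm A5}, with step 4 supplying the arrowing condition in one direction and Proposition \ref{proposition: G - A in mH_(+K_(q - 1))(a_1 - 1, a_2, ..., a_s; q; n - abs(A))} supplying $H \in \mA'$ in the other. Your explicit remark that $\alpha(G-A) \leq k$ is needed for membership in $\mA' = \mH_{+K_3}^k(2_{r-1}, 3; 4; n-k)$ is a small detail the paper leaves implicit, but otherwise the arguments coincide.
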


\begin{proof}
	Suppose that, after the execution of Algorithm \ref{algorithm: A5}, $G \in \mB$. According to Lemma \ref{lemma: algorithm A5}, $G$ is a maximal $K_4$-free non-Sperner graph with independence number $k$. From step 4 it follows that $G \in \mH_{max}(2_r, 3; 4; n)$. 
	
	Conversely, let $G$ be an arbitrary non-Sperner graph with independence number $k$ in $\mH_{max}(2_r, 3; 4; n)$. Let $A \subseteq \V(G)$ be an independent set of vertices of $G$, $\abs{A} = k$ and $H = G - A$. According to Proposition \ref{proposition: G - A in mH_(+K_(q - 1))(a_1 - 1, a_2, ..., a_s; q; n - abs(A))}, $H \in \mA'$, and from Lemma \ref{lemma: algorithm A5} it follows that, after the execution of step 2.3, $G \in \mB$. Clearly, after step 4, $G$ remains in $\mB$.
\end{proof}

We performed various tests to our implementation of Algorithm \ref{algorithm: A5}. For example, we used the Algorithm \ref{algorithm: A5} to reproduce all 12227 graphs in $\mH_v(2, 2, 3; 4; 14)$, which were obtained in \cite{CR06}.

\vspace{1em}
Theorem \ref{theorem: algorithm A5} is published in \cite{Bik17}. Algorithm \ref{algorithm: A5} is a slightly modified version of Algorithm 2.4 in \cite{Bik17}.

\section{Proof of Theorem \ref{theorem: 20 leq F_v(2, 2, 2, 3; 4) leq 22} and Theorem \ref{theorem: 20 leq F_v(2, 3, 3; 4) leq 24}}

\vspace{1em}
\subsection*{Proof of Theorem \ref{theorem: 20 leq F_v(2, 2, 2, 3; 4) leq 22}}

1. Proof of the inequality $F_v(2, 2, 2, 3; 4) \geq 19$.

It is enough to prove that $\mH_v(2, 2, 2, 3; 4; 18) = \emptyset$. From $R(4, 4) = 18$ it follows that there are no graphs with independence number less than 4 in $\mH_v(2, 2, 2, 3; 4; 18)$, and from Proposition \ref{proposition: G - A in mH_(+K_(q - 1))(a_1 - 1, a_2, ..., a_s; q; n - abs(A))} and $F_v(2, 2, 3; 4) = 14$ \cite{CR06} we derive that there are no graphs with independence number more than 4 in this set. From $F_v(2, 2, 2, 3; 4) \geq 18$ and Proposition \ref{proposition: Sperner graphs in mH_v(2_r, 3; 4; n)} it follows that there are no Sperner graphs in $\mH_v(2, 2, 2, 3; 4; 18)$. It remains to be proved that there are no non-Sperner graphs with independence number 4 in $\mH_{max}(2, 2, 2, 3; 4; 18)$:

We execute Algorithm \ref{algorithm: A5} ($n = 18;\ r = 3;\ k = 4$) with input the set $\mA$ of all 584 graphs in $\mH_{max}^4(2, 2, 3; 4; 14)$. Let us remind, that all 12227 graphs in $\mH_v(2, 2, 3; 4; 14)$ were obtained in \cite{CR06}. After the execution of step 3, 130923 graphs remain in the set $\mB$. None of these graphs belong to $\mH_v(2, 2, 2, 3; 4)$, and therefore after step 4 we have $\mB = \emptyset$. Now, from Theorem \ref{theorem: algorithm A5} we conclude that there are no non-Sperner graphs with independence number 4 in $\mH_{max}(2, 2, 2, 3; 4; 18)$, which finishes the proof.\\

2. Proof of the inequality $F_v(2, 2, 2, 3; 4) \geq 20$.

It is enough to prove that $\mH_v(2, 2, 2, 3; 4; 19) = \emptyset$. Again, from $R(4, 4) = 18$ it follows that there are no graphs with independence number less than 4 in $\mH_v(2, 2, 2, 3; 4; 19)$, and from Proposition \ref{proposition: G - A in mH_(+K_(q - 1))(a_1 - 1, a_2, ..., a_s; q; n - abs(A))} and $F_v(2, 2, 3; 4) = 14$ \cite{CR06} we derive that there are no graphs with independence number more than 5 in this set. From $F_v(2, 2, 2, 3; 4) \geq 19$ and Proposition \ref{proposition: Sperner graphs in mH_v(2_r, 3; 4; n)} it follows that there are no Sperner graphs in $\mH_v(2, 2, 2, 3; 4; 19)$. It remains to be proved that there are no non-Sperner graphs with independence number 4 or 5 in $\mH_{max}(2, 2, 2, 3; 4; 19)$:\\

First we prove that there are no non-Sperner graphs with independence number 5 in $\mH_{max}(2, 2, 2, 3; 4; 19)$:

We execute Algorithm \ref{algorithm: A5} ($n = 19;\ r = 3;\ k = 5$) with input the set $\mA$ of all 591 graphs in $\mH_{max}^5(2, 2, 3; 4; 14)$ known from \cite{CR06}. After the execution of step 3, 27433657 graphs remain in the set $\mB$. Since none of these graphs are in $\mH_v(2, 2, 2, 3; 4)$, after step 4 we have $\mB = \emptyset$. Now, from Theorem \ref{theorem: algorithm A5} we conclude that there are no non-Sperner graphs with independence number 5 in $\mH_{max}(2, 2, 2, 3; 4; 19)$.\\

It remains to be proved that there are no non-Sperner graphs with independence number 4 in $\mH_{max}(2, 2, 2, 3; 4; 19)$:

Using the \emph{nauty} program \cite{MP13} we generate all 11-vertex non-isomorphic graphs and among them we find all 353 graphs in $\mH_{max}^4(2, 3; 4; 11)$.

We execute Algorithm \ref{algorithm: A5} ($n = 15;\ r = 2;\ k = 4$) with input $\mA = \mH_{max}^4(2, 3; 4; 11)$. By Theorem \ref{theorem: algorithm A5}, we obtain all 165614 non-Sperner graphs in $\mH_{max}(2, 2, 3; 4; 15)$ with independence number 4.

According to Proposition \ref{proposition: maximal Sperner graphs}, all Sperner graphs in $\mH_{max}(2, 2, 3; 4; 15)$ are obtained by duplicating a vertex in the graphs in $\mH_{max}(2, 2, 3; 4; 14)$. In this way, we find all 4603 Sperner graphs with independence number 4 in $\mH_{max}(2, 2, 3; 4; 15)$.

There are exactly 640 15-vertex $K_4$-free graphs with independence number less than 4, which are available on \cite{McK_r}. Among them there are 35 graphs in $\mH_{max}(2, 2, 3; 4; 15)$.

Thus, we found all 170252 graphs in $\mH_{max}^4(2, 2, 3; 4; 15)$.

We execute Algorithm \ref{algorithm: A5} ($n = 19;\ r = 3;\ k = 4$) with input $\mA = \mH_{max}^4(2, 2, 3; 4; 15)$. After the execution of step 3, 347307340 graphs remain in the set $\mB$. Similarly to the previous case, none of these graphs are in $\mH_v(2, 2, 2, 3; 4)$, and after step 4 we have $\mB = \emptyset$. By Theorem \ref{theorem: algorithm A5}, there are no non-Sperner graphs with independence number 4 in $\mH_{max}(2, 2, 2, 3; 4; 19)$, which finishes the proof.

All computations were completed in about a week on a personal computer.\\

3. Proof of the inequality $F_v(2, 2, 2, 3; 4) \leq 22$.

We need to construct a 22-vertex graph in $\mH_v(2, 2, 2, 3; 4)$. First, we find a large number of 24-vertex and 23-vertex maximal graphs in $\mH_v(2, 2, 2, 3; 4)$.

All 352366 24-vertex graphs $G$ with $\omega(G) \leq 3$ and $\alpha(G) \leq 4$ were found by McKay, Radziszowski and Angeltveit (see \cite{McK_r}). Among these graphs there are 3903 maximal graphs in $\mH_v(2, 2, 2, 3; 4; 24)$.

By removing one vertex from the obtained 24-vertex maximal graphs we find 6 graphs in $\mH_v(2, 2, 2, 3; 4; 23)$. Let us note that the removal of two vertices from the 24-vertex maximal graphs does not produce any graphs in $\mH_v(2, 2, 2, 3; 4; 22)$. Out of the 6 obtained graphs in $\mH_v(2, 2, 2, 3; 4; 23)$, 5 are maximal. One more maximal graph is obtained by adding one edge to the 6th graph, thus we have 6 maximal graphs in $\mH_v(2, 2, 2, 3; 4; 23)$. By applying Procedure \ref{procedure: extending a set of maximal graphs in mH_v(a_1, ..., a_s; q; n)} to these graphs, we find 192 more maximal graphs in $\mH_v(2, 2, 2, 3; 4; 23)$ 

By removing one vertex from the obtained 23-vertex maximal graphs we find a maximal graph in $\mH_v(2, 2, 2, 3; 4; 22)$, which is shown in Figure \ref{figure: H_v(2, 2, 2, 3; 4; 22)}. By removing one edge from the maximal graph in $\mH_v(2, 2, 2, 3; 4; 22)$, we obtain two more graphs in $\mH_v(2, 2, 2, 3; 4; 22)$.

\subsection*{Proof of Theorem \ref{theorem: 20 leq F_v(2, 3, 3; 4) leq 24}} 

1. Proof of the inequality $F_v(2, 3, 3; 4) \geq 20$.

Since $F_v(2, 3, 3; 4) \geq F_v(2, 2, 2, 3; 4)$, the lower bound $F_v(2, 3, 3; 4) \geq 20$ follows from Theorem \ref{theorem: 20 leq F_v(2, 2, 2, 3; 4) leq 22}.\\

2. Proof of the inequality $F_v(2, 3, 3; 4) \leq 24$.

All vertex transitive graphs with up to 31 vertices are known and can be found in \cite{Roy_c}. With the help of a computer we check which of these graphs belong to $\mH_v(2, 3, 3; 4)$. In this way, we find one 24-vertex graph, one 28-vertex graph and six 30-vertex graphs in $\mH_v(2, 3, 3; 4)$. The only 24-vertex transitive graph in $\mH_v(2, 3, 3; 4)$ is given in Figure \ref{figure: G_233}. It does not have proper subgraphs in $\mH_v(2, 3, 3; 4)$, but by adding edges to this graph we obtain 18 more graphs in $\mH_v(2, 3, 3; 4; 24)$, two of which are maximal $K_4$-free graphs. \qed

\vspace{1em}
Theorem \ref{theorem: 20 leq F_v(2, 2, 2, 3; 4) leq 22} is published in \cite{Bik17}. Theorem \ref{theorem: 20 leq F_v(2, 3, 3; 4) leq 24} is published in \cite{BN16}. The lower bound $F_v(2, 3, 3; 4) \geq 20$ in Theorem \ref{theorem: 20 leq F_v(2, 3, 3; 4) leq 24} follows from Theorem \ref{theorem: 20 leq F_v(2, 2, 2, 3; 4) leq 22}, but it is proved first in \cite{BN16} without using Theorem \ref{theorem: 20 leq F_v(2, 2, 2, 3; 4) leq 22}.

\begin{figure}
	\centering
	\begin{subfigure}[b]{\textwidth}
		\centering
		\includegraphics[trim={0 0 0 490},clip,height=220px,width=220px]{./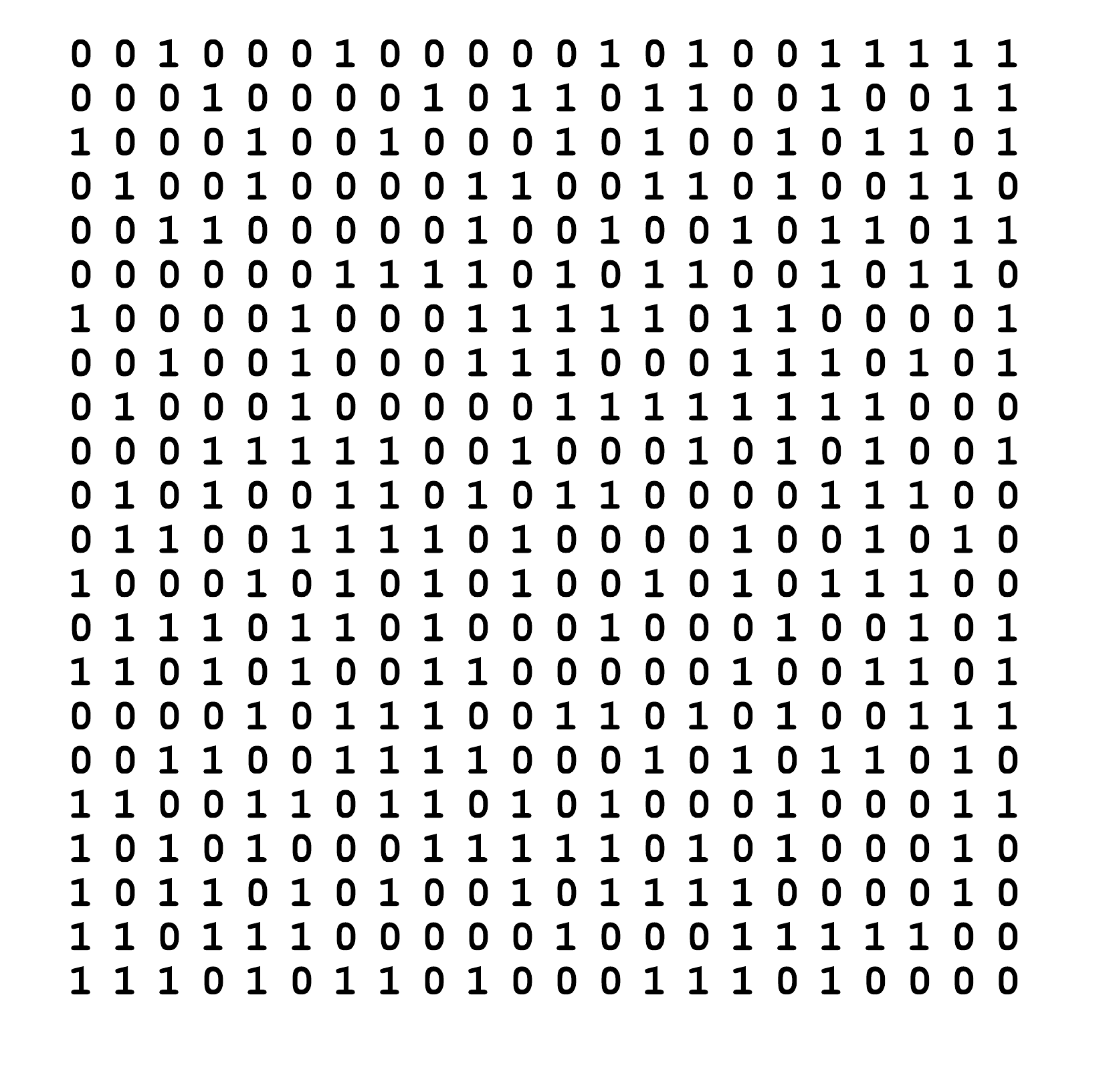}
		\label{figure: G_2223}
	\end{subfigure}%
	\vspace{-0.5em}
	\caption{22-vertex graph in $\mH_v(2, 2, 2, 3; 4)$}
	\label{figure: H_v(2, 2, 2, 3; 4; 22)}
	
	\vspace{1em}
	
	\centering
	\includegraphics[trim={0 0 0 490},clip,height=240px,width=240px]{./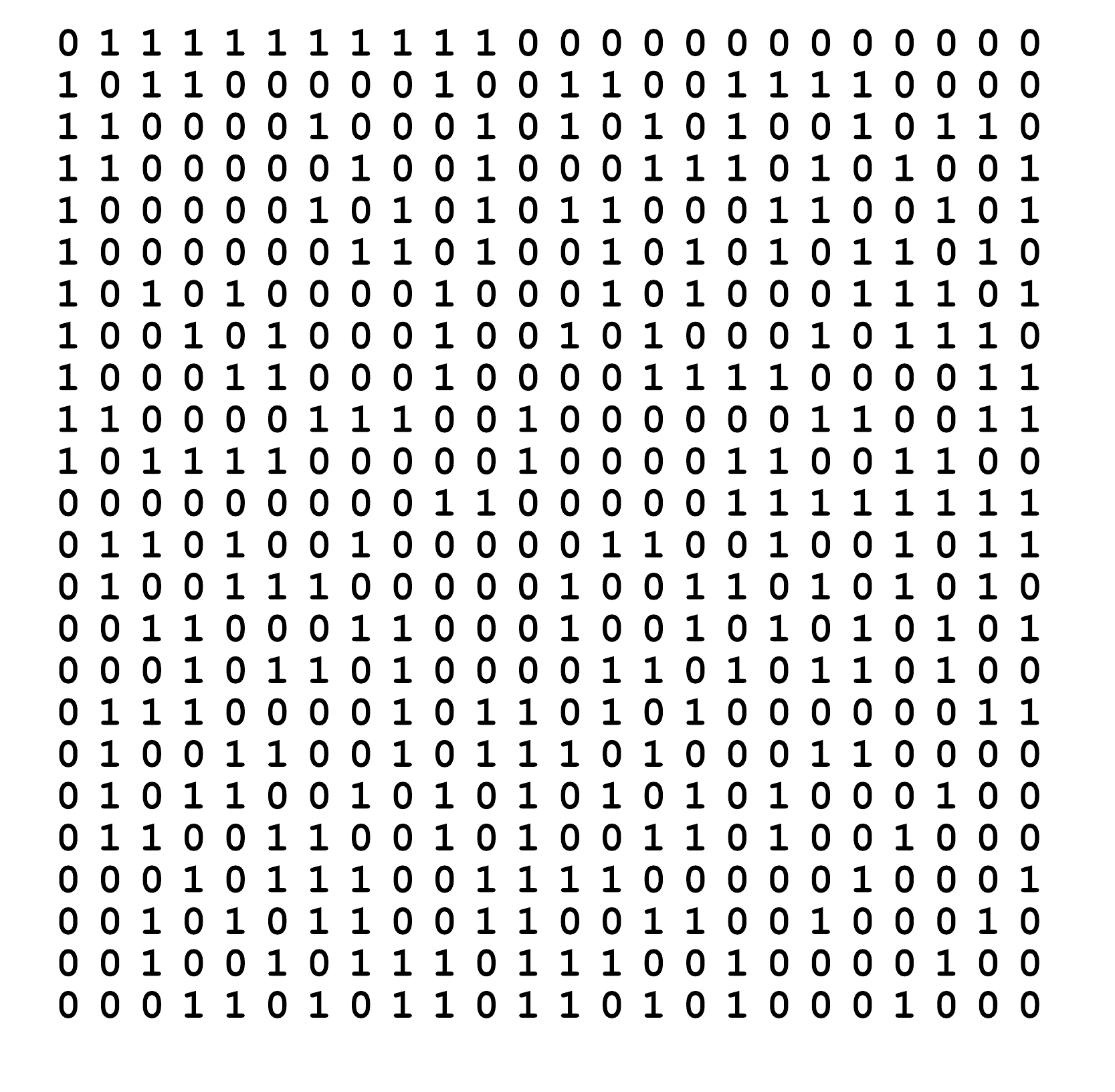}
	\vspace{-0.5em}
	\caption{24-vertex transitive graph in $\mH_v(2, 3, 3; 4)$}
	\label{figure: G_233}
\end{figure}

\chapter{Bounds on some numbers of the form $F_v(a_1, ..., a_s; q)$ where \hbox{$q = \max\set{a_1, ..., a_s} + 1$}}

\vspace{-1em}

The computation of the numbers of this form is very hard. The numbers $F_v(2, 2, p; p + 1), \ p \leq 4$, and $F_v(3, p; p + 1), \ p \leq 5$, are already computed (see the text after (\ref{equation: F_v(a_1, ..., a_s, m - 1) = ...})). We proved that $F_v(2, 2, 5; 6) = 16$ (Theorem \ref{theorem: F_v(2, 2, 5; 6) = 16}), $F_v(2, 2, 6; 7) = 17$ (Theorem \ref{theorem: F_v(2, 2, 6; 7) = 17 and abs(mH_v(2, 2, 6; 7; 17)) = 3}), $F_v(3, 6; 7) = 18$ (Theorem \ref{theorem: F_v(3, 6; 7) = 18}), and $F_v(2, 2, 7; 8) = 20$ (Theorem \ref{theorem: F_v(2, 2, 7; 8) = 20}). The only other known number of this form is $F_v(2, 2, 2, 2; 3) = 22$, \cite{JR95}.

We also obtained the bounds $20 \leq F_v(3, 7; 8) \leq 21$ (Theorem \ref{theorem: 20 leq F_v(3, 7; 8) leq 21}). In the previous chapter we proved that $20 \leq F_v(2, 2, 2, 3; 4) \leq 22$ (Theorem \ref{theorem: 20 leq F_v(2, 2, 2, 3; 4) leq 22}) и $20 \leq F_v(2, 3, 3; 4) \leq 24$ (Theorem \ref{theorem: 20 leq F_v(2, 3, 3; 4) leq 24}).

The numbers $F_v(p, p; p + 1)$ are of significant interest. Obviously, $F_v(2, 2; 3) = 5$. It is known that $F_v(3, 3; 4) = 14$, \cite{Nen81a} and \cite{PRU99}, and in \cite{XLS10} it is proved that $17 \leq F_v(4, 4; 5) \leq 23$. For now, there is no good upper bound for $F_v(5, 5; 6)$. In \cite{XS10} it is proved that
\begin{equation}
\label{equation: F_v(p, p; p + 1) geq 4p - 1}
F_v(p, p; p + 1) \geq 4p - 1.
\end{equation}

Let us pose the following question:

\emph{Is it true, that the sequence $F_v(p, p; p + 1), \ p \geq 2$, is increasing ?}

\vspace{0.5em}

In this chapter we will obtain new lower bounds on the numbers $F_v(4, 4; 5)$, $F_v(5, 5; 6)$, $F_v(6, 6; 7)$, and $F_v(7, 7; 8)$.

Let $G \in \mH_v(2_r, p; p + 1)$ and $A \subseteq \V(G)$ be an independent set. Then obviously, $G - A \in \mH_v(2_{r - 1}, p; p + 1)$, and therefore
\begin{equation}
\label{equation: F_v(2_r, p; p + 1) geq F_v(2_(r - 1), p; p + 1) + alpha(r, p), r geq 2}
F_v(2_r, p; p + 1) \geq F_v(2_{r - 1}, p; p + 1) + \alpha(r, p), \ r \geq 2,
\end{equation}
where $\alpha(r, p) = \max\set{\alpha(G) : G \in \mH_{extr}(2_r, p; p + 1)}$.\\
From (\ref{equation: F_v(2_r, p; p + 1) geq F_v(2_(r - 1), p; p + 1) + alpha(r, p), r geq 2}) it follows easily
\begin{equation}
\label{equation: F_v(2_r, p; p + 1) geq F_v(2, 2, p; p + 1) + sum_(i = 3)^(r)alpha(i, p), r geq 3}
F_v(2_r, p; p + 1) \geq F_v(2, 2, p; p + 1) + \sum_{i = 3}^{r}\alpha(i, p), \ r \geq 3.
\end{equation}
Since $\alpha(i, p) \geq 2$, from (\ref{equation: F_v(2_r, p; p + 1) geq F_v(2, 2, p; p + 1) + sum_(i = 3)^(r)alpha(i, p), r geq 3}) we obtain
\begin{equation}
\label{equation: F_v(2_r, p; p + 1) geq F_v(2, 2, p; p + 1) + 2(r - 2), r geq 3}
F_v(2_r, p; p + 1) \geq F_v(2, 2, p; p + 1) + 2(r - 2), \ r \geq 3.
\end{equation}
From (\ref{equation: F_v(2_r, p; p + 1) geq F_v(2, 2, p; p + 1) + 2(r - 2), r geq 3}) and Theorem \ref{theorem: F_v(2_(m - p), p; q) leq F_v(a_1, ..., a_s; q) leq wFv(m)(p)(q)} we see that
\begin{equation}
\label{equation: F_v(p, p; p + 1) geq F_v(2_(p - 1), p; p + 1) geq F_v(2, 2, p; p + 1) + 2p - 6, p geq 3}
F_v(p, p; p + 1) \geq F_v(2_{p - 1}, p; p + 1) \geq F_v(2, 2, p; p + 1) + 2p - 6, \ p \geq 3.
\end{equation}
According to (\ref{equation: m + p + 2 leq F_v(a_1, ..., a_s; m - 1) leq m + 3p}), $F_v(2, 2, p; p + 1) \geq 2p + 4$. If $F_v(2, 2, p; p + 1) = 2p + 4$, then the inequality (\ref{equation: F_v(p, p; p + 1) geq 4p - 1}) gives a better bound for $F_v(p, p; p + 1)$ than the inequality (\ref{equation: F_v(p, p; p + 1) geq F_v(2_(p - 1), p; p + 1) geq F_v(2, 2, p; p + 1) + 2p - 6, p geq 3}). It is interesting to note that it is not known whether the equality $F_v(2, 2, p; p + 1) = 2p + 4$ holds for any $p$. If $F_v(2, 2, p; p + 1) = 2p + 5$, then the bounds for $F_v(p, p; p + 1)$ from (\ref{equation: F_v(p, p; p + 1) geq 4p - 1}) and (\ref{equation: F_v(p, p; p + 1) geq F_v(2_(p - 1), p; p + 1) geq F_v(2, 2, p; p + 1) + 2p - 6, p geq 3}) coincide, and if $F_v(2, 2, p; p + 1) > 2p + 5$, then the inequality (\ref{equation: F_v(p, p; p + 1) geq F_v(2_(p - 1), p; p + 1) geq F_v(2, 2, p; p + 1) + 2p - 6, p geq 3}) gives a better bound for $F_v(p, p; p + 1)$.

In the case $p = 5$, using the graphs from Theorem \ref{theorem: abs(mH_v(2, 2, 5; 6; 16)) = 147}, an even better bound for $F_v(5, 5; 6)$ can be obtained. From $F_v(2, 2, 5; 6) = 16$ and (\ref{equation: F_v(2_r, p; p + 1) geq F_v(2, 2, p; p + 1) + 2(r - 2), r geq 3}) it follows that $F_v(2_r, 5; 6) \geq  18, \ r \geq 3$. Since the Ramsey number $R(3, 6) = 18$, we have $\alpha(r, 5) \geq 3, \ r \geq 3$. Now, from (\ref{equation: F_v(2_r, p; p + 1) geq F_v(2, 2, p; p + 1) + sum_(i = 3)^(r)alpha(i, p), r geq 3}) we derive
\begin{equation*}
F_v(2_r, 5; 6) \geq F_v(2, 2, 5; 6) + 3(r - 2) = 10 + 3r.
\end{equation*}
From this inequality we see that $F_v(2, 2, 2, 5; 6) \geq 19$. We will prove that $F_v(2, 2, 2, 5; 6) \geq 20$. Suppose that $G \in \mH_v(2, 2, 2, 5; 6; 19)$. Since $R(3, 6) = 18$, we have $\alpha(G) \geq 3$. From $F_v(2, 2, 5; 6) = 16$ and Proposition \ref{proposition: G - A arrowsv (a_1, ..., a_(i - 1), a_i - 1, a_(i + 1_, ..., a_s)} it follows that $\alpha(G) \leq 3$. It remains to be proved that there are no graphs with independence number 3 in $\mH_{max}(2, 2, 2, 5; 6; 19)$. Let us remind, that in the proof of Theorem \ref{theorem: abs(mH_v(2, 2, 5; 6; 16)) = 147} we found all 37 graphs in $\mH_{max}(2, 2, 5; 6; 16)$. From Table \ref{table: H_v(2, 2, 5; 6; 16) properties} we see that $\mH_{max}(2, 2, 5; 6; 16) = \mH_{max}^3(2, 2, 5; 6; 16)$. By executing Algorithm \ref{algorithm: A3} ($n = 19;\ k = 3;\ t = 3$) with input $\mA = \mH_{max}^3(2, 2, 5; 6; 16)$, we obtain $\mB = \emptyset$. According to Theorem \ref{theorem: algorithm A3}, there are no graphs in $\mH_{max}(2, 2, 2, 5; 6; 19)$ with independence number 3. Thus, we proved $\mH_v(2, 2, 2, 5; 6; 19) = \emptyset$, and therefore $F_v(2, 2, 2, 5; 6) \geq 20$. Since $\alpha(4, 5) \geq 3$, from (\ref{equation: F_v(2_r, p; p + 1) geq F_v(2_(r - 1), p; p + 1) + alpha(r, p), r geq 2}) ($k = 4, p = 5$) and Theorem \ref{theorem: F_v(2_(m - p), p; q) leq F_v(a_1, ..., a_s; q) leq wFv(m)(p)(q)} we obtain
\begin{theorem}
	\label{theorem: F_v(5, 5; 6) geq F_v(2, 2, 2, 2, 5; 6) geq 23}
	$F_v(5, 5; 6) \geq F_v(2, 2, 2, 2, 5; 6) \geq 23$.
\end{theorem}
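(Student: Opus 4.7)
My plan is to read the inequality $F_v(5,5;6) \geq F_v(2,2,2,2,5;6)$ directly from Theorem \ref{theorem: F_v(2_(m - p), p; q) leq F_v(a_1, ..., a_s; q) leq wFv(m)(p)(q)} applied with $a_1 = a_2 = 5$, so that $m = 9$ and $m - p = 4$. Thus everything reduces to establishing $F_v(2_4, 5; 6) \geq 23$, which I intend to extract from the already-verified bound $F_v(2,2,2,5;6) \geq 20$ together with the general inequality (\ref{equation: F_v(2_r, p; p + 1) geq F_v(2_(r - 1), p; p + 1) + alpha(r, p), r geq 2}).

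The key step is to show that $\alpha(4, 5) \geq 3$. To do this, I first use inequality (\ref{equation: F_v(2_r, p; p + 1) geq F_v(2, 2, p; p + 1) + 2(r - 2), r geq 3}) with $r = 4$, $p = 5$, combined with $F_v(2,2,2,5;6) \geq 20$, to deduce
\begin{equation*}
F_v(2_4, 5; 6) \geq F_v(2,2,2,5;6) + 2 \geq 22.
\end{equation*}
Hence every extremal graph $G \in \mH_{extr}(2_4, 5; 6)$ has at least $22$ vertices. Since $\omega(G) \leq 5$ and $22 > 18 = R(3,6)$, it follows that $\alpha(G) \geq 3$, so by definition $\alpha(4, 5) = \max\set{\alpha(G) : G \in \mH_{extr}(2_4, 5; 6)} \geq 3$.

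Plugging this into (\ref{equation: F_v(2_r, p; p + 1) geq F_v(2_(r - 1), p; p + 1) + alpha(r, p), r geq 2}) with $r = 4$, $p = 5$, I obtain
\begin{equation*}
F_v(2_4, 5; 6) \geq F_v(2_3, 5; 6) + \alpha(4, 5) \geq 20 + 3 = 23,
\end{equation*}
and combining with Theorem \ref{theorem: F_v(2_(m - p), p; q) leq F_v(a_1, ..., a_s; q) leq wFv(m)(p)(q)} finishes the proof. In contrast to the preceding bound $F_v(2,2,2,5;6) \geq 20$, whose verification required the computer-assisted application of Algorithm \ref{algorithm: A3} to all $37$ graphs in $\mH_{max}(2,2,5;6;16)$, no further computation is needed here: the only subtlety is ensuring that the Ramsey-type argument for $\alpha(4,5) \geq 3$ is driven by a strong enough prior lower bound on $F_v(2_4, 5; 6)$, and the doubled application of (\ref{equation: F_v(2_r, p; p + 1) geq F_v(2, 2, p; p + 1) + 2(r - 2), r geq 3}) and (\ref{equation: F_v(2_r, p; p + 1) geq F_v(2_(r - 1), p; p + 1) + alpha(r, p), r geq 2}) delivers exactly this.
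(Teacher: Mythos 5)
Your proof is correct and follows essentially the same route as the paper: reduce to $F_v(2_4,5;6)$ via Theorem \ref{theorem: F_v(2_(m - p), p; q) leq F_v(a_1, ..., a_s; q) leq wFv(m)(p)(q)}, use $R(3,6)=18$ to force $\alpha(4,5)\geq 3$, and add that to the computer-verified bound $F_v(2,2,2,5;6)\geq 20$ through (\ref{equation: F_v(2_r, p; p + 1) geq F_v(2_(r - 1), p; p + 1) + alpha(r, p), r geq 2}). The only slip is a citation: your intermediate step $F_v(2_4,5;6)\geq F_v(2,2,2,5;6)+2$ comes from (\ref{equation: F_v(2_r, p; p + 1) geq F_v(2_(r - 1), p; p + 1) + alpha(r, p), r geq 2}) with $\alpha(4,5)\geq 2$, not from the displayed form of (\ref{equation: F_v(2_r, p; p + 1) geq F_v(2, 2, p; p + 1) + 2(r - 2), r geq 3}), which only yields $\geq 20$ --- though either bound clears the Ramsey threshold of $18$ that you actually need.
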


Now we proceed to bound the number $F_v(6, 6; 7)$. We will prove the following more general result:
\begin{theorem}
	\label{theorem: F_v(a_1, ..., a_s; 7) geq F_v(2_(m - 6), 6; 7) geq 3m - 5}
	Let $a_1, ..., a_s$ be positive integers such that $\max\set{a_1, ..., a_s} = 6$ and $m = \sum\limits_{i=1}^s (a_i - 1) + 1 \geq 9$. Then,
	$$F_v(a_1, ..., a_s; 7) \geq F_v(2_{m - 6}, 6; 7) \geq 3m - 5.$$
	In particular, $F_v(6, 6; 7) \geq 28$.
\end{theorem}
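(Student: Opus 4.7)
The first inequality $F_v(a_1,\dots,a_s;7) \geq F_v(2_{m-6},6;7)$ is immediate from Theorem \ref{theorem: F_v(2_(m - p), p; q) leq F_v(a_1, ..., a_s; q) leq wFv(m)(p)(q)} applied with $p=6$, $q=7$. Writing $r = m-6 \geq 3$, it remains to show that $F_v(2_r,6;7) \geq 3r + 13$ for every $r \geq 3$; the special case $F_v(6,6;7) \geq 28$ then follows from the chain $F_v(6,6;7) \geq F_v(2_5,6;7) \geq 28$ by the same theorem with $m = 11$. The plan is to prove $F_v(2_r,6;7) \geq 3r + 13$ by induction on $r$, with two computational base cases and a Ramsey-type inductive step driven by $R(3,7) = 23$.

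The inductive step for $r \geq 5$ is clean: by the induction hypothesis, any extremal $G \in \mH_v(2_r,6;7)$ satisfies $|V(G)| \geq F_v(2_{r-1},6;7) \geq 3(r-1) + 13 \geq 25 \geq R(3,7)$, and since $\omega(G) < 7$, this forces $\alpha(G) \geq 3$. Taking an independent set $I$ of size $3$ and applying Proposition \ref{proposition: G - A arrowsv (a_1, ..., a_(i - 1), a_i - 1, a_(i + 1_, ..., a_s)} gives $G - I \in \mH_v(2_{r-1},6;7)$, hence $|V(G)| \geq F_v(2_{r-1},6;7) + 3 \geq 3r + 13$, as needed.

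The base cases $r = 3$ and $r = 4$ (i.e.\ $F_v(2,2,2,6;7) \geq 22$ and $F_v(2,2,2,2,6;7) \geq 25$) will be established by computer-assisted arguments modeled on the proofs of Theorem \ref{theorem: F_v(2, 2, 6; 7) = 17 and abs(mH_v(2, 2, 6; 7; 17)) = 3} and Theorem \ref{theorem: abs(mH_v(2, 2, 6; 7; 18)) = 76515}. Starting from $\mH_{max}(4;7;n)$ and iteratively applying Algorithm \ref{algorithm: A3}, we build the chain $\mH_{max}^t(5;7;n) \to \mH_{max}^t(6;7;n) \to \mH_{max}^t(2,6;7;n) \to \mH_{max}^t(2,2,6;7;n) \to \mH_{max}^t(2,2,2,6;7;n)$ and, for the second base case, one further step to $\mH_{max}^t(2,2,2,2,6;7;n)$. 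The case split on $\alpha(G)$ is controlled by Theorem \ref{theorem: alpha(G) leq V(G) + m + p - 1}(b): for $G \in \mH_v(2_r,6;7;3r+12)$ with $r=3,4$, this theorem restricts the possible values of $\alpha(G)$ to a small range, reducing the problem to running Algorithm \ref{algorithm: A3} for each admissible $t = \alpha(G)$ and verifying that the resulting set $\mB$ is empty.

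The hard part will be controlling the size of the intermediate $(+K_6)$-graph families in the $r = 4$ base case: by analogy with Table \ref{table: finding all graphs in H_v(2, 2, 6; 7; 18)}, the set $\mH_{+K_6}^2(2,2,6;7;n)$ for the relevant $n$ may grow into the hundreds of millions, and the subsequent step is where the computation threatens to become infeasible. To keep it tractable I expect to rely on (i) the sharper upper bound on $\alpha$ from Theorem \ref{theorem: alpha(G) leq V(G) + m + p - 1}(b) to reduce the number of independence-number branches, (ii) the pruning $\Delta(G) \leq |V(G)| - 4$ noted after Algorithm \ref{algorithm: A3}, and (iii) the fact that we only need to certify emptiness rather than enumerate the full families. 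Once the two base cases are in hand, the induction closes and the theorem follows.
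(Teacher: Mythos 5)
Your overall skeleton (sandwich theorem for the first inequality, then induction on $r = m-6$ with a Ramsey step driven by $R(3,7)=23$ and a computational base case) is the same as the paper's, but the way you set up the induction creates a gap that the paper avoids. Because you bound $\abs{\V(G)}$ from below by $F_v(2_{r-1},6;7) \geq 3(r-1)+13$, your Ramsey step only fires for $r \geq 5$, forcing a second base case $F_v(2_4,6;7)\geq 25$, i.e.\ $\mH_v(2,2,2,2,6;7;24)=\emptyset$ --- a 24-vertex computation that, as you yourself note, is almost certainly out of reach. This extra base case is unnecessary: for $r=4$, an extremal $G \in \mH_v(2_4,6;7)$ satisfies $\abs{\V(G)} \geq F_v(2_3,6;7) + 1 \geq 23 = R(3,7)$ (remove a single vertex, which is an independent set, and apply Proposition \ref{proposition: G - A arrowsv (a_1, ..., a_(i - 1), a_i - 1, a_(i + 1_, ..., a_s)}), so $\alpha(G)\geq 3$ already at $r=4$ and the inductive step closes from $r\geq 4$ with only the $r=3$ base case. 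This is exactly what the paper does: it bounds the number of vertices of the extremal graph itself, not the previous Folkman number, by $23$.

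The second problem is the feasibility of your $r=3$ base case as described. Building the chain $\mH_{max}^2(6;7;15) \to \mH_{max}^2(2,6;7;17) \to \mH_{max}^2(2,2,6;7;19) \to \mH_{max}^2(2,2,2,6;7;21)$ with Algorithm \ref{algorithm: A3} is not realistic: the paper reports that already $\mH_{+K_6}^2(2,6;7;16)$ contains about $9.2\times 10^8$ graphs and took two weeks, and your chain needs the analogous sets one and three vertices further out. The paper sidesteps the $\alpha=2$ branch entirely by checking the known catalogue of all $1\,118\,436$ graphs $G$ on $21$ vertices with $\alpha(G)<3$ and $\omega(G)<7$ from \cite{McK_r}, and it handles $\alpha=3$ and $\alpha=4$ by reusing the already-computed sets $\mH_{max}(2,2,6;7;18)$ and $\mH_{max}(2,2,6;7;17)$ together with the bound $\alpha(G)\leq 21 - F_v(2,2,6;7) = 4$ (which is sharper than the $\alpha(G)\leq 6$ you get from Theorem \ref{theorem: alpha(G) leq V(G) + m + p - 1}(b), and which your plan would have to discover by running two extra empty branches). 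With these two repairs --- the sharper inductive step and the Ramsey-catalogue treatment of $\alpha=2$ --- your argument becomes the paper's proof.
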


\begin{proof}
According to Theorem \ref{theorem: F_v(2_(m - p), p; q) leq F_v(a_1, ..., a_s; q) leq wFv(m)(p)(q)} from this paper, $F_v(a_1, ..., a_s; 7) \geq F_v(2_{m - 6}, 6; 7)$. We will prove by induction that $F_v(2_{m - 6}, 6; 7) \geq 3m - 5, m \geq 9$. 

The base case is $m = 9$, i.e. we have to prove that $F_v(2, 2, 2, 6; 7) \geq 22$. We will show that $\mH_v(2, 2, 2, 6; 7; 21) = \emptyset$. From $F_v(2, 2, 6; 7) = 17$ and Proposition \ref{proposition: G - A arrowsv (a_1, ..., a_(i - 1), a_i - 1, a_(i + 1_, ..., a_s)} it follows that there are no graphs in $\mH_v(2, 2, 2, 6; 7; 21)$ with independence number greater than 4. 

Let us remind that all graphs in $\mH_v(2, 2, 6; 7; 17)$ were obtained in the proof of Theorem \ref{theorem: F_v(2, 2, 6; 7) = 17 and abs(mH_v(2, 2, 6; 7; 17)) = 3} and all graphs in $\mH_v(2, 2, 6; 7; 18)$ were obtained in the proof of Theorem \ref{theorem: abs(mH_v(2, 2, 6; 7; 18)) = 76515}.

According to Theorem \ref{theorem: F_v(2, 2, 6; 7) = 17 and abs(mH_v(2, 2, 6; 7; 17)) = 3}, $\mH_{max}(2, 2, 6; 7; 17) = \set{G_1}$ from Figure \ref{figure: H_v(2, 2, 6; 7; 17)}. By executing Algorithm \ref{algorithm: A3} ($n = 21;\ k = 4;\ t = 4$) with input $\mA = \mH_{max}^4(1, 2, 2, 6; 7; 17) = \mH_{max}^4(2, 2, 6; 7; 17) = \set{G_1}$, we obtain $\mB = \emptyset$. According to Theorem \ref{theorem: algorithm A3}, there are no graphs in $\mH_{max}(2, 2, 2, 6; 7; 21)$ with independence number 4.

In the proof of Theorem \ref{theorem: abs(mH_v(2, 2, 6; 7; 18)) = 76515} we found all 392 graphs in $\mH_{max}(2, 2, 6; 7; 18)$. From Table \ref{table: H_v(2, 2, 6; 7; 18) properties} we see that $\mH_{max}(2, 2, 6; 7; 18) = \mH_{max}^3(2, 2, 6; 7; 18)$. By executing Algorithm \ref{algorithm: A3} ($n = 21;\ k = 3;\ t = 3$) with input $\mA = \mH_{max}^3(1, 2, 2, 6; 7; 18) = \mH_{max}^3(2, 2, 6; 7; 18)$, we obtain $\mB = \emptyset$. According to Theorem \ref{theorem: algorithm A3}, there are no graphs in $\mH_{max}(2, 2, 2, 6; 7; 21)$ with independence number 3.

It remains to be proved that there are no graphs in $\mH_{max}(2, 2, 2, 6; 7; 21)$ with independence number 2. All 21-vertex graphs $G$ for which $\alpha(G) < 3$ and $\omega(G) < 7$ are known and are available on \cite{McK_r}. There are 1 118 436 such graphs $G$, and with the help of the computer we check that none of these graphs belong to $\mH_v(2, 2, 2, 6; 7)$.

Thus, we proved $\mH_v(2, 2, 2, 6; 7; 21) = \emptyset$ and $F_v(2, 2, 2, 6; 7) \geq 22$.

Now, suppose that for all $m'$ such that $9 \leq m' < m$ we have $F_v(2_{m' - 6}, 6; 7) \geq 3m' - 5$. Let $G \in \mH_v(2_{m - 6}, 6; 7)$ and $\abs{\V(G)} = F_v(2_{m - 6}, 6; 7)$. From the base case it follows that $F_v(2_{m - 6}, 6; 7) > 22$. Since the Ramsey number $R(3, 7) = 23$, we have $\alpha(G) \geq 3$. According to (\ref{equation: F_v(2_r, p; p + 1) geq F_v(2_(r - 1), p; p + 1) + alpha(r, p), r geq 2}),
$$F_v(2_{m - 6}, 6; 7) = \abs{\V(G)} \geq F_v(2_{m - 7}, 6; 7) + 3 \geq 3(m - 1) - 5 + 3 = 3m - 5.$$
\end{proof}

\vspace{-1.5em}

\begin{theorem}
	\label{theorem: F_v(a_1, ..., a_s; 7) leq F_v(6, 6; 7) leq 60}
	Let $a_1, ..., a_s$ be positive integers such that $\max\set{a_1, ..., a_s} = 6$ and $m = \sum\limits_{i=1}^s (a_i - 1) + 1$. Then:
	\vspace{1em}\\
	(a) $22 \leq F_v(a_1, ..., a_s; 7) \leq F_v(4, 6; 7) \leq 35$, if $m = 9$.
	\vspace{1em}\\
	(b) $28 \leq F_v(a_1, ..., a_s; 7) \leq F_v(6, 6; 7) \leq 70$, if $m = 11$.
\end{theorem}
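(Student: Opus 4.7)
The two lower bounds are immediate consequences of Theorem~\ref{theorem: F_v(a_1, ..., a_s; 7) geq F_v(2_(m - 6), 6; 7) geq 3m - 5}: under the standing assumption $\max\{a_1, \ldots, a_s\} = 6$ that theorem gives $F_v(a_1, \ldots, a_s; 7) \geq 3m - 5$, which evaluates to $22$ for $m = 9$ and to $28$ for $m = 11$. The ``middle'' inequalities $F_v(a_1, \ldots, a_s; 7) \leq F_v(4, 6; 7)$ in case~(a) and $F_v(a_1, \ldots, a_s; 7) \leq F_v(6, 6; 7)$ in case~(b) are proved by a combinatorial descent on canonical tuples using~(\ref{equation: G arrowsv (a_1, ..., a_s) Rightarrow G arrowsv (a_1, ..., a_(i - 1), t, a_i - t + 1, a_(i + 1), ..., a_s)}). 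A quick enumeration shows that the only canonical tuples with $m = 9$ and $\max = 6$ are $(4, 6)$, $(2, 3, 6)$, $(2, 2, 2, 6)$, and each is reached from $(4, 6)$ by a finite sequence of elementary splits $a_i \mapsto (t, a_i - t + 1)$; hence $G \arrowsv (4, 6)$ forces $G \arrowsv (a_1, \ldots, a_s)$ for every such tuple, from which the desired monotonicity of $F_v$ follows. The seven canonical tuples with $m = 11$ and $\max = 6$ descend in exactly the same way from $(6, 6)$.

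The substantive content of the theorem is therefore the two explicit upper bounds $F_v(4, 6; 7) \leq 35$ and $F_v(6, 6; 7) \leq 70$, which require exhibiting concrete $K_7$-free arrowing graphs of those orders, and this is the main obstacle. My plan is to mimic the constructive step used in the proof of Theorem~\ref{theorem: F_v(2, 2, 7; 8) = 20}: scan the database~\cite{Roy_c} of vertex-transitive graphs of order up to $35$ (respectively $70$) and test each $K_7$-free candidate $G$ for the arrowing $(4, 6)$ (respectively $(6, 6)$) by a direct enumeration of two-colorings of $V(G)$, pruned aggressively by the necessary condition from Proposition~\ref{proposition: G - A arrowsv (a_1, ..., a_(i - 1), a_i - 1, a_(i + 1_, ..., a_s)}. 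Whenever a larger transitive witness is found, I would shrink it by successive vertex deletions combined with iterated application of Procedure~\ref{procedure: extending a set of maximal graphs in mH_v(a_1, ..., a_s; q; n)}, continuing until the order drops to $35$ or $70$. As an alternative seed I would try amplifications of the $18$-vertex graph $\Gamma_3$ from Figure~\ref{figure: H_v(3, 6; 7; 18) cap H_v(4, 5; 7; 18)} via joins with small independent sets chosen to preserve $K_7$-freeness.

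The hard part is precisely the existence of a seed of sufficiently small order. The conditions $\arrowsv (4, 6)$ and $\arrowsv (6, 6)$ are considerably stronger than the $\arrowsv (2, 2, p)$ conditions exploited in earlier chapters, and there is no \emph{a priori} reason that any vertex-transitive graph of order at most $35$ or $70$ should satisfy them; if the initial database search fails at those orders, the fallback is to begin from a larger arrowing graph and run the shrinking loop longer, at much higher computational cost. The specific numerical values $35$ and $70$ in the theorem should consequently be read as the smallest orders that our construction actually produces, rather than as intrinsically motivated bounds.
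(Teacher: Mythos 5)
There is a genuine gap: you never actually prove the two explicit upper bounds $F_v(4, 6; 7) \leq 35$ and $F_v(6, 6; 7) \leq 70$, which you yourself identify as ``the substantive content of the theorem.'' What you offer instead is a search plan (scan the vertex-transitive graphs of order up to $35$, resp.\ $70$, then shrink larger witnesses), together with an explicit admission that the plan may fail and that the values $35$ and $70$ are not intrinsically motivated. That is a research programme, not a proof. Moreover the plan is unlikely to be executable as described: certifying $G \arrowsv (6, 6)$ for a $70$-vertex graph by ``direct enumeration of two-colorings of $V(G)$'' is hopeless even with pruning, and nothing in the thesis suggests that the transitive-graph database contains such witnesses at those orders.

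The missing idea is that the bounds $35$ and $70$ come from a purely theoretical product construction, namely Kolev's inequality (cited in the paper as \cite{Kol08}):
$$F_v(a_1, ..., a_s; q + 1) \cdot F_v(b_1, ..., b_s; t + 1) \geq F_v(a_1 b_1, ..., a_s b_s; qt + 1),$$
valid for $q \geq \max\set{a_1, ..., a_s}$ and $t \geq \max\set{b_1, ..., b_s}$. Taking $(a_1, a_2) = (2, 2)$ with $q = 2$ and $(b_1, b_2) = (2, 3)$ with $t = 3$ gives $F_v(4, 6; 7) \leq F_v(2, 2; 3) \cdot F_v(2, 3; 4) = 5 \cdot 7 = 35$, using $F_v(2, 3; 4) = 7$ from Theorem \ref{theorem: F_v(a_1, ..., a_s; m) = m + p}; taking $(b_1, b_2) = (3, 3)$ with $t = 3$ gives $F_v(6, 6; 7) \leq F_v(2, 2; 3) \cdot F_v(3, 3; 4) = 5 \cdot 14 = 70$. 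This explains the factorizations $35 = 5 \times 7$ and $70 = 5 \times 14$ and requires no computation at all. Your treatment of the lower bounds (via Theorem \ref{theorem: F_v(a_1, ..., a_s; 7) geq F_v(2_(m - 6), 6; 7) geq 3m - 5}) and of the middle inequalities (descent of all canonical tuples with $m = 9$, resp.\ $m = 11$, from $(4, 6)$, resp.\ $(6, 6)$, via the splitting implication (\ref{equation: G arrowsv (a_1, ..., a_s) Rightarrow G arrowsv (a_1, ..., a_(i - 1), t, a_i - t + 1, a_(i + 1), ..., a_s)})) agrees with the paper and is correct.
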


\begin{proof}
The lower bounds in (a) and (b) follow from Theorem \ref{theorem: F_v(a_1, ..., a_s; 7) geq F_v(2_(m - 6), 6; 7) geq 3m - 5}. It is easy to see that $F_v(a_1, ..., a_s; 7) \leq F_v(4, 6; 7)$ if $m = 9$, and $F_v(a_1, ..., a_s; 7) \leq F_v(6, 6; 7)$ if $m = 11$. To finish the proof we will use the following inequality proved by Kolev in \cite{Kol08}:
$$F_v(a_1, ..., a_s; q + 1) . F_v(b_1, ..., b_s; t + 1) \geq F_v(a_1.b_1, ..., a_s.b_s; qt + 1),$$
where $q \geq \max\set{a_1, ..., a_s}$ and $t \geq \max\set{b_1, ..., b_s}$.

From $F_v(2, 3; 4) = 7$ (Theorem \ref{theorem: F_v(a_1, ..., a_s; m) = m + p}) and $F_v(2, 2; 3) = 5$ it follows

$F_v(4, 6; 7) \leq F_v(2, 2; 3) . F_v(2, 3; 4) = 35.$

Since $F_v(2, 2; 3) = 5$ and $F_v(3, 3; 4) = 14$, \cite{Nen81a} and \cite{PRU99}, it follows that

$F_v(6, 6; 7) \leq F_v(2, 2; 3) . F_v(3, 3; 4) = 70.$
\end{proof}

Let $a_1, ..., a_s$ be positive integers and $m$ and $p$ be defined by (\ref{equation: m and p}). If $p = 7$, by Theorem \ref{theorem: F_v(2_(m - p), p; q) leq F_v(a_1, ..., a_s; q) leq wFv(m)(p)(q)} we have 
\begin{equation}
\label{equation: F_v(a_1, ..., a_s; q) geq F_v(2_(m - 8), 7; 8)}
F_v(a_1, ..., a_s; 8) \geq F_v(2_{m - 8}, 7; 8), \ m \geq 8.
\end{equation}
Since $F_v(2, 2, 7; 8) = 20$, from (\ref{equation: F_v(a_1, ..., a_s; q) geq F_v(2_(m - 8), 7; 8)}) and (\ref{equation: F_v(2_r, p; p + 1) geq F_v(2, 2, p; p + 1) + 2(r - 2), r geq 3}) we obtain
\begin{equation}
\label{equation: F_v(a_1, ..., a_s; 8) geq 2m + 2}
F_v(a_1, ..., a_s; 8) \geq 2m + 2.
\end{equation}
In particular, when $m = 13$ we have $F_v(a_1, ..., a_s; 8) \geq 28$. Since the Ramsey number $R(3, 8) = 28$, it follows that $\alpha(i, 7) \geq 3$, when $i \geq 6$. Now, from (\ref{equation: F_v(2_r, p; p + 1) geq F_v(2, 2, p; p + 1) + sum_(i = 3)^(r)alpha(i, p), r geq 3}) it follows easily that
\begin{theorem}
	\label{theorem: F_v(a_1, ..., a_s; 8) geq 3m - 10}
	If $m \geq 13$, and $\max\set{a_1, ..., a_s} = 7$, then
	$$F_v(a_1, ..., a_s; 8) \geq 3m - 10.$$
	In particular, $F_v(7, 7; 8) \geq 29$.
\end{theorem}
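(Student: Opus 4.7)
The plan is to reduce the claim to bounding the two-heavy Folkman number $F_v(2_{m-7}, 7; 8)$ and then to pick up an extra unit of savings at each step of the induction once Ramsey-theoretic information becomes available. First I would apply Theorem \ref{theorem: F_v(2_(m - p), p; q) leq F_v(a_1, ..., a_s; q) leq wFv(m)(p)(q)} with $p = 7$ and $q = 8$, reducing the task to showing that $F_v(2_r, 7; 8) \geq 3r + 11$ for every $r = m - 7 \geq 6$.

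The heart of the argument is to establish that $\alpha(i, 7) \geq 3$ for all $i \geq 6$, where $\alpha(i, 7) = \max\set{\alpha(G) : G \in \mH_{extr}(2_i, 7; 8)}$. Combining $F_v(2, 2, 7; 8) = 20$ from Theorem \ref{theorem: F_v(2, 2, 7; 8) = 20} with the accumulation inequality (\ref{equation: F_v(2_r, p; p + 1) geq F_v(2, 2, p; p + 1) + 2(r - 2), r geq 3}) gives $F_v(2_i, 7; 8) \geq 2i + 16$, which is at least $28$ as soon as $i \geq 6$. Since $R(3, 8) = 28$, every $K_8$-free graph on $28$ or more vertices must contain an independent triple, so any extremal graph in $\mH_v(2_i, 7; 8)$ with $i \geq 6$ automatically satisfies $\alpha \geq 3$, yielding the desired bound on $\alpha(i, 7)$.

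With this improvement in hand, I would invoke the telescoping inequality (\ref{equation: F_v(2_r, p; p + 1) geq F_v(2, 2, p; p + 1) + sum_(i = 3)^(r)alpha(i, p), r geq 3}) with $p = 7$, splitting the sum according to the two regimes and using the trivial bound $\alpha(i, 7) \geq 2$ for $i = 3, 4, 5$ together with the strengthened bound $\alpha(i, 7) \geq 3$ for $6 \leq i \leq r$:
$$F_v(2_r, 7; 8) \;\geq\; F_v(2, 2, 7; 8) + \sum_{i=3}^{5} \alpha(i, 7) + \sum_{i=6}^{r} \alpha(i, 7) \;\geq\; 20 + 2 \cdot 3 + 3(r - 5) \;=\; 3r + 11,$$
valid for $r \geq 6$. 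Substituting $r = m - 7$ produces $F_v(a_1, \ldots, a_s; 8) \geq 3(m-7) + 11 = 3m - 10$ whenever $m \geq 13$; specializing to $(a_1, a_2) = (7, 7)$, where $m = 13$, yields $F_v(7, 7; 8) \geq 29$.

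The only genuinely delicate point is that the Ramsey-based jump from $\alpha \geq 2$ to $\alpha \geq 3$ only becomes available starting at $i = 6$, and this is precisely what forces the hypothesis $m \geq 13$ rather than the more uniform $m \geq 12$. Pushing the threshold down to $i = 5$ would tighten the conclusion to $3m - 9$ but would require a direct computer-assisted enumeration of the extremal graphs in $\mH_v(2_5, 7; 8)$, most naturally via Algorithm \ref{algorithm: A3} or \ref{algorithm: A4}, and so falls outside the purely analytic chain of inequalities used here.
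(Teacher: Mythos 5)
Your proposal is correct and follows essentially the same route as the paper: reduce to $F_v(2_{m-7},7;8)$ via Theorem \ref{theorem: F_v(2_(m - p), p; q) leq F_v(a_1, ..., a_s; q) leq wFv(m)(p)(q)}, use $F_v(2,2,7;8)=20$ and the accumulation bound to get $F_v(2_i,7;8)\geq 2i+16\geq 28$ for $i\geq 6$, invoke $R(3,8)=28$ to conclude $\alpha(i,7)\geq 3$ for $i\geq 6$, and telescope to obtain $3r+11=3m-10$. The only difference is cosmetic: you make the split of the sum at $i=6$ and the final arithmetic fully explicit, whereas the paper leaves that last step to the reader.
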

It is clear that when $3m - 10 \geq R(4, 8)$ these bounds for $F_v(a_1, ..., a_s; 8)$ can be improved considerably. In comparison, note that by (\ref{equation: F_v(p, p; p + 1) geq 4p - 1}), we have $F_v(7, 7; 8) \geq 27$, and according to (\ref{equation: F_v(a_1, ..., a_s; 8) geq 2m + 2}), $F_v(7, 7; 8) \geq 28$.\\

At the end of this chapter we will prove the following theorem for the number $F_v(4, 4; 5)$:
\begin{theorem}
	\label{theorem: F_v(4, 4; 5) geq F_v(2, 3, 4; 5) geq F_v(2, 2, 2, 4; 5) geq 19}
	$F_v(4, 4; 5) \geq F_v(2, 3, 4; 5) \geq F_v(2, 2, 2, 4; 5) \geq 19$.
\end{theorem}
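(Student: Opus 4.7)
The plan is to reduce the three-fold inequality to a single computer-assisted bound and then establish that bound by iterated application of Algorithm~\ref{algorithm: A3}, in the spirit of Theorem~\ref{theorem: F_v(2, 2, 6; 7) = 17 and abs(mH_v(2, 2, 6; 7; 17)) = 3} and Theorem~\ref{theorem: F_v(2, 2, 7; 8) = 20}. Applying (\ref{equation: G arrowsv (a_1, ..., a_s) Rightarrow G arrowsv (a_1, ..., a_(i - 1), 2, a_i - 1, a_(i + 1), ..., a_s)}) to split one of the two $4$'s as $(2,3)$ yields $\mH_v(4,4;5) \subseteq \mH_v(2,3,4;5)$, and splitting the $3$ in $(2,3,4)$ as $(2,2)$ yields $\mH_v(2,3,4;5) \subseteq \mH_v(2,2,2,4;5)$; this gives the first two inequalities immediately. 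It therefore suffices to prove $F_v(2,2,2,4;5) \geq 19$, i.e., $\mH_v(2,2,2,4;5;18) = \emptyset$.

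Suppose, toward a contradiction, that $G \in \mH_v(2,2,2,4;5;18)$ exists. Here $m = 7$ and $p = 4$, so $\abs{\V(G)} = 18 < m + 3p = 19$ and Theorem~\ref{theorem: alpha(G) leq V(G) + m + p - 1}(b) gives $\alpha(G) \leq 7$. On the other hand $\omega(G) < 5$ together with $R(3,5) = 14 \leq 18$ forces $\alpha(G) \geq 3$, so $\alpha(G) \in \set{3,4,5,6,7}$.

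For each $t \in \set{3,4,5,6,7}$ the plan is to show that no graph in $\mH_{max}(2,2,2,4;5;18)$ has independence number exactly $t$, by three successive applications of Algorithm~\ref{algorithm: A3}. Starting from the family $\mH_{max}^t(4;5;14-t)$, obtained by filtering all non-isomorphic graphs of order $14-t$ generated by \emph{nauty} \cite{MP13}, one computes in order
\begin{equation*}
\mH_{max}^t(2,4;5;16-t) \ \longrightarrow \ \mH_{max}^t(2,2,4;5;18-t) \ \longrightarrow \ \set{G \in \mH_{max}^t(2,2,2,4;5;18) : \alpha(G) = t},
\end{equation*}
using $k=2$ in the first two passes (justified by Remark~\ref{remark: algorithm A3 k = 2}) and $k=t$ in the third. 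By Theorem~\ref{theorem: algorithm A3} each final output coincides with the set of maximal $K_5$-free graphs $G$ on $18$ vertices with $G \arrowsv (2,2,2,4)$ and $\alpha(G)=t$; if all five outputs are empty, then $\mH_{max}(2,2,2,4;5;18) = \emptyset$, whence $\mH_v(2,2,2,4;5;18) = \emptyset$, since every graph in the latter set embeds in some maximal one.

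The main obstacle will be the intermediate combinatorial explosion, most severe in the $t=3$ and $t=4$ branches, where the $(+K_4)$-graph families sitting above $\mH_{max}^t(2,2,4;5;18-t)$ may become very large; analogous steps in the proof of Theorem~\ref{theorem: F_v(2, 2, 7; 8) = 20} required roughly two weeks of machine time. The clique-intersection test and the subset-independence test built into Algorithm~\ref{algorithm: A3} should keep the enumeration feasible, and correctness can be cross-checked by enumerating each final family in two different orderings of the added independent vertices and comparing the outputs, as was done for $\mH_v(2,2,6;7;18)$ in Theorem~\ref{theorem: abs(mH_v(2, 2, 6; 7; 18)) = 76515}. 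A variant using Algorithm~\ref{algorithm: A4} is unnecessary here, since $\mH_{max}^t(2,2,4;4;17) = \emptyset$ (a graph with $G \arrowsv (2,2,4)$ must contain $K_4$), so no member of $\mH_{max}(2,2,2,4;5;18)$ can have a cone vertex.
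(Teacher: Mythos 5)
Your proposal is correct and follows essentially the same strategy as the paper's proof: reduce everything to $F_v(2,2,2,4;5) \geq 19$ via the splitting implication (\ref{equation: G arrowsv (a_1, ..., a_s) Rightarrow G arrowsv (a_1, ..., a_(i - 1), t, a_i - t + 1, a_(i + 1), ..., a_s)}), bound the independence number of a hypothetical $G \in \mH_{max}(2,2,2,4;5;18)$, and eliminate each independence-number branch with iterated runs of Algorithm \ref{algorithm: A3}. The only substantive difference is that the paper sharpens the upper bound to $\alpha(G) \leq 5$ using the known value $F_v(2,2,4;5) = 13$ from \cite{Nen01c} together with Proposition \ref{proposition: G - A arrowsv (a_1, ..., a_(i - 1), a_i - 1, a_(i + 1_, ..., a_s)}, so only the branches $t = 3, 4, 5$ are run (the $t=5$ branch being seeded by the unique graph $Q \in \mH_v(2,2,4;5;13)$ from \cite{XLS10}), whereas your bound $\alpha(G) \leq 7$ obtained through the inclusion $\mH_v(2,2,2,4;5;18) \subseteq \mH_v(2,2,2,4;6;18)$ forces two additional (harmless but costly) branches $t = 6, 7$.
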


\begin{proof}
The inequalities $F_v(4, 4; 5) \geq F_v(2, 3, 4; 5) \geq F_v(2, 2, 2, 4; 5)$ follow from (\ref{equation: G arrowsv (a_1, ..., a_s) Rightarrow G arrowsv (a_1, ..., a_(i - 1), t, a_i - t + 1, a_(i + 1), ..., a_s)}). It remains to be proved that $F_v(2, 2, 2, 4; 5) \geq 19$. Suppose that $\mH_{max}(2, 2, 2, 4; 5; 18) \neq \emptyset$ and let $G \in \mH_{max}(2, 2, 2, 4; 5; 18)$. Since the Ramsey number $R(3, 5) = 14$, $\alpha(G) \geq 3$. In \cite{Nen01c} it is proved that $F_v(2, 2, 4; 5) = 13$.  From Proposition \ref{proposition: G - A arrowsv (a_1, ..., a_(i - 1), a_i - 1, a_(i + 1_, ..., a_s)} and the equality $F_v(2, 2, 4; 5) = 13$ it follows that $\alpha(G) \leq 5$. It remains to be proved that there are no graphs with independence number 3, 4, or 5 in $\mH_{max}(2, 2, 2, 4; 5; 18)$.

In \cite{XLS10} it is proved $\mH_v(2, 2, 4; 5; 13) = \set{Q}$, where $Q$ is the unique 13-vertex $K_5$-free graph with independence number 2. 

By executing Algorithm \ref{algorithm: A3}($n = 18;\ k = 5;\ t = 5$) with input $\mA = \mH_{max}^5(1, 2, 2, 4; 5; 13) = \mH_{max}^5(2, 2, 4; 5; 13) = \set{Q}$, we obtain $\mB = \emptyset$. According to Theorem \ref{theorem: algorithm A3}, there are no graphs in $\mH_{max}(2, 2, 2, 4; 5; 18)$ with independence number 5.\\

\vspace{-1em}

Now we will prove that there are no graphs in $\mH_{max}(2, 2, 2, 4; 5; 18)$ with independence number 4:

Using the \emph{nauty} program \cite{MP13} we generate all 8-vertex non-isomorphic graphs and among them we find all 7 graphs in $\mH_{max}^4(3; 5; 8)$ 

We execute Algorithm \ref{algorithm: A3}($n = 10;\ k = 2;\ t = 4$) with input $\mA = \mH_{max}^4(3; 5; 8)$ to obtain all graphs in $\mB = \mH_{max}^4(4; 5; 10)$. (see Remark \ref{remark: algorithm A3 k = 2})

We execute Algorithm \ref{algorithm: A3}($n = 12;\ k = 2;\ t = 4$) with input $\mA = \mH_{max}^4(1, 4; 5; 10) = \mH_{max}^4(4; 5; 10)$ to obtain all graphs in $\mB = \mH_{max}^4(2, 4; 5; 12)$.

We execute Algorithm \ref{algorithm: A3}($n = 14;\ k = 2;\ t = 4$) with input $\mA = \mH_{max}^4(1, 2, 4; 5; 12) = \mH_{max}^4(2, 4; 5; 12)$ to obtain all graphs in $\mB = \mH_{max}^4(2, 2, 4; 5; 14)$.

By executing Algorithm \ref{algorithm: A3}($n = 18;\ k = 4;\ t = 4$) with input $\mA = \mH_{max}^4(1, 2, 2, 4; 5; 14) = \mH_{max}^4(2, 2, 4; 5; 14)$, we obtain $\mB = \emptyset$. According to Theorem \ref{theorem: algorithm A3}, there are no graphs in $\mH_{max}(2, 2, 2, 4; 5; 18)$ with independence number 4.\\

\vspace{-1em}

The last step is to prove that there are no graphs in $\mH_{max}(2, 2, 2, 4; 5; 18)$ with independence number 3:

Using the \emph{nauty} program \cite{MP13} we generate all 11-vertex non-isomorphic graphs and among them we find all 11 graphs in $\mH_{max}^3(3; 5; 9)$ 

We execute Algorithm \ref{algorithm: A3}($n = 11;\ k = 2;\ t = 3$) with input $\mA = \mH_{max}^3(3; 5; 9)$ to obtain all graphs in $\mB = \mH_{max}^3(4; 5; 11)$. (see Remark \ref{remark: algorithm A3 k = 2})

We execute Algorithm \ref{algorithm: A3}($n = 13;\ k = 2;\ t = 3$) with input $\mA = \mH_{max}^3(1, 4; 5; 11) = \mH_{max}^3(4; 5; 11)$ to obtain all graphs in $\mB = \mH_{max}^3(2, 4; 5; 13)$.

We execute Algorithm \ref{algorithm: A3}($n = 15;\ k = 2;\ t = 3$) with input $\mA = \mH_{max}^3(1, 2, 4; 5; 13) = \mH_{max}^3(2, 4; 5; 13)$ to obtain all graphs in $\mB = \mH_{max}^3(2, 2, 4; 5; 15)$.

By executing Algorithm \ref{algorithm: A3}($n = 18;\ k = 3;\ t = 3$) with input $\mA = \mH_{max}^3(1, 2, 2, 4; 5; 15) = \mH_{max}^3(2, 2, 4; 5; 15)$, we obtain $\mB = \emptyset$. According to Theorem \ref{theorem: algorithm A3}, there are no graphs in $\mH_{max}(2, 2, 2, 4; 5; 18)$ with independence number 3.

We proved that $\mH_{max}(2, 2, 2, 4; 5; 18) = \emptyset$, and therefore $F_v(2, 2, 2, 4; 5) \geq 19$.
\end{proof}

The number of graphs obtained in each step of the proof is given in Table \ref{table: finding all graphs in H_v(2, 2, 2, 4; 5; 18)}. Because of the large number of graphs in $\mH_{+K_4}^3(2, 4; 5; 13)$, the computer needed about a month to complete the computations.
	
	\begin{table}
		\centering
	\resizebox{0.8\textwidth}{!}{
		\begin{tabular}{ | l | l | r | r | }
			\hline
			{\parbox{10em}{set}}&
			{\parbox{6em}{\small independence\\ number}}&
			{\parbox{4em}{maximal\\ graphs}}&
			{\parbox{6em}{\hfill $(+K_4)$-graphs}}\\
			\hline
				$\mH_v(3; 5; 8)$				& $\leq 4$				& 7						& 274				\\
				$\mH_v(4; 5; 10)$				& $\leq 4$				& 44					& 65 422			\\
				$\mH_v(2, 4; 5; 12)$			& $\leq 4$				& 1 059					& 18 143 174		\\
				$\mH_v(2, 2, 4; 5; 14)$		& $\leq 4$				& 13					& 71				\\
				$\mH_v(2, 2, 2, 4; 5; 18)$	& $= 4$					& 0						&					\\
				\hline
				$\mH_v(3; 5; 9)$				& $\leq 3$				& 11					& 2 252				\\
				$\mH_v(4; 5; 11)$				& $\leq 3$				& 135					& 1 678 802			\\
				$\mH_v(2, 4; 5; 13)$			& $\leq 3$				& 11 439				& 2 672 047 607		\\
				$\mH_v(2, 2, 4; 5; 15)$		& $\leq 3$				& 1 103					& 78 117			\\
				$\mH_v(2, 2, 2, 4; 5; 18)$	& $= 3$					& 0						&					\\
				\hline
				$\mH_v(2, 2, 2, 4; 5; 18)$	& 						& 0						&					\\
				\hline
			\end{tabular}
		}
		\caption{Steps in finding all maximal graphs in $\mH_v(2, 2, 2, 4; 5; 18)$}
		\label{table: finding all graphs in H_v(2, 2, 2, 4; 5; 18)}
	\end{table}
	
The upper bound $F_v(4, 4; 5) \leq 23$ is proved in \cite{XLS10} with the help of a 23-vertex transitive graph. We were not able to obtain any other graphs in $\mH_v(4, 4; 5; 23)$, which leads us to believe that $F_v(4, 4; 5) = 23$. We did find a large number of 23-vertex graphs in $\mH_v(2, 2, 2, 4; 5)$, but so far we have not obtained smaller graphs in this set.

\vspace{1em}
Theorem \ref{theorem: F_v(5, 5; 6) geq F_v(2, 2, 2, 2, 5; 6) geq 23} is published in \cite{BN15a}. Theorem \ref{theorem: F_v(a_1, ..., a_s; 7) geq F_v(2_(m - 6), 6; 7) geq 3m - 5} and Theorem \ref{theorem: F_v(a_1, ..., a_s; 7) leq F_v(6, 6; 7) leq 60} are published in \cite{BN17a}. Theorem \ref{theorem: F_v(a_1, ..., a_s; 8) geq 3m - 10} and Theorem \ref{theorem: F_v(4, 4; 5) geq F_v(2, 3, 4; 5) geq F_v(2, 2, 2, 4; 5) geq 19} are published in \cite{BN17b}.

\part{Edge Folkman Numbers}

\chapter{Definition of the edge Folkman numbers and some known results}

\vspace{-1em}

The expression $G \arrowse (3, 3)$ means that in every coloring of the edges of the graph $G$ in two colors there is a monochromatic triangle.

It is well known that $K_6 \arrowse (3, 3)$.

Denote:

$\mH_e(3, 3) = \set{G : G \arrowse (3, 3)}$.

$\mH_e(3, 3; q) = \set{G : G \arrowse (3, 3) \mbox{ and } \omega(G) < q}$.

$\mH_e(3, 3; q; n) = \set{G : G \in \mH_e(3, 3; q) \mbox{ and } \abs{\V(G)} = n}$.

The edge Folkman number $F_e(3, 3; q)$ is defined with

$F_e(3, 3; q) = \min{\set{\abs{\V(G)} : G \in \mH_e(3, 3; q)}}$\\

\vspace{-1em}

The equality $R(3, 3) = 6$ means that $K_6 \arrowse (3, 3)$ and $K_5 \not\arrowse (3, 3)$. It follows that $F_e(3, 3; q) = 6, q \geq 7$.

In \cite{EH67} Erd\"os and Hajnal posed the following problem:
\begin{center}
	\label{question: Erdos and Hajnal}
	\emph{Does there exist a graph $G \arrowse (3, 3)$ with $\omega(G) < 6$ ?}
\end{center}

The first example of a graph which gives a positive answer to this question was given by van Lint. The complement of this graph is shown in Figure \ref{figure: vanLint}. Van Lint did not publish this result himself, but the graph was included in \cite{GS71}. Later, Graham \cite{Gra68} constructed the smallest possible example of such a graph, namely $K_3+C_5$. Thus, he proved that $F_e(3, 3; 6) = 8$. It is easy to see that the van Lint graph contains $K_3+C_5$ (it is the subgraph induced by the black vertices in Figure \ref{figure: vanLint}).

\begin{figure}
	\centering
	\includegraphics[height=160px,width=160px]{./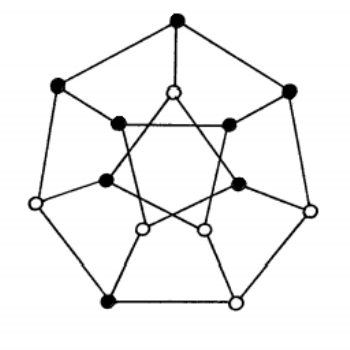}
	\caption{The van Lint graph from \cite{GS71}}\label{figure: vanLint}
\end{figure}

Nenov \cite{Nen81a} constructed a 15-vertex graph in $\mH_e(3, 3; 5)$ in 1981, thus proving $F_e(3, 3; 5) \leq 15$. This graph is obtained from the graph $\Gamma$ shown in Figure \ref{figure: Nenov_14} by adding a new vertex which is adjacent to all vertices of $\Gamma$. In 1999 Piwakowski, Radziszowski, and Urbanski \cite{PRU99} completed the computation of the number $F_e(3, 3; 5)$ by proving with the help of a computer that $F_e(3, 3; 5) \geq 15$. In \cite{PRU99} they also obtained all graphs in $\mH_e(3, 3; 5; 15)$.

\vspace{1em}

\begin{figure}[h]
	\centering
	\includegraphics[height=210px,width=210px]{./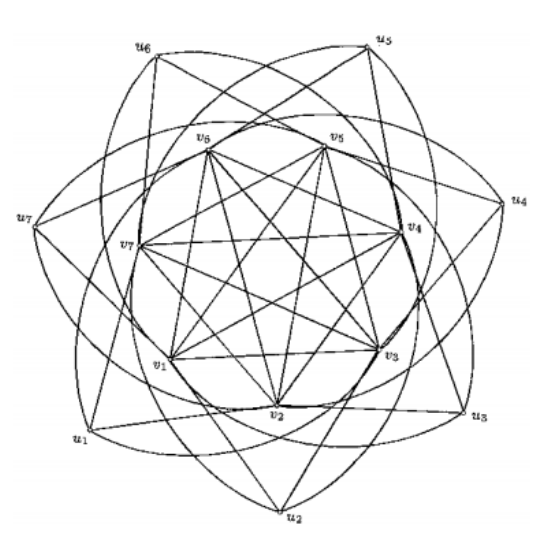}
	\caption{The Nenov graph $\Gamma$ from \cite{Nen81a}}
	\label{figure: Nenov_14}
\end{figure}

Folkman constructed a graph $G \arrowse (3, 3)$ with $\omega(G) = 3$ \cite{Fol70}. The exact value of the number $F_e(3, 3; 4)$ is not known. It is known that $19 \leq F_e(3, 3; 4) \leq 786$, \cite{RX07} \cite{LRX14}. In Chapter 9 we improve the lower bound on this number by proving $F_e(3, 3; 4) \geq 20$.

A more detailed view on results related to the numbers $F_e(3, 3; q)$ is given in the book \cite{Soi08}, and also in the papers \cite{RX07}, \cite{Gra12}, \cite{LRX14} and \cite{RX16}.\\

Let $a_1, ..., a_s$ be positive integers. The expression $G \arrowse (a_1, ..., a_s)$ means that in every coloring of $\E(G)$ in $s$ colors ($s$-coloring) there exists $i \in \set{1, ..., s}$ such that there is a monochromatic $a_i$-clique of color $i$.

Define:

$\mH_e(a_1, ..., a_s) = \set{ G : G \arrowse (a_1, ..., a_s) }.$

$\mH_e(a_1, ..., a_s; q) = \set{ G : G \arrowse (a_1, ..., a_s) \mbox{ and } \omega(G) < q }.$

$\mH_e(a_1, ..., a_s; q; n) = \set{ G : G \in \mH_e(a_1, ..., a_s; q) \mbox{ and } \abs{\V(G)} = n }.$

The edge Folkman numbers $F_e(a_1, ..., a_s; q)$ are defined by the equality:
\begin{equation*}
F_e(a_1, ..., a_s; q) = \min\set{\abs{\V(G)} : G \in \mH_e(a_1, ..., a_s; q)}.
\end{equation*}

In general, very little is known about the numbers $F_e(a_1, ..., a_s; q)$.

In Chapter 8 we study the minimal (inclusion-wise) graphs in $\mH_e(3, 3)$. In Chapter 9 we obtain the new lower bound $F_e(3, 3; 4) \geq 20$.

\chapter{Minimal graphs in $\mH_e(3, 3)$}

Obviously, if $H \arrowse (p, q)$, then its every supergraph $G \arrowse (p, q)$.
\begin{definition}
	\label{definition: minimal graph in mH_e(p, q)}
	We say that $G$ is a minimal graph in $\mH_e(p, q)$ if $G \arrowse (p, q)$ and $H \not\arrowse (p, q)$ for each proper subgraph $H$ of $G$.
\end{definition}

It is easy to see that $K_6$ is a minimal graph in $\mH_e(3, 3)$ and there are no minimal graphs in $\mH_e(3, 3)$ with 7 vertices. The only minimal 8-vertex graph is the Graham graph $K_3+C_5$, and there is only one minimal 9-vertex graph, obtained by Nenov \cite{Nen79} (see Figure \ref{figure: Nenov_9}).

\vspace{1em}

\begin{figure}[h]
	\begin{minipage}{.45\textwidth}
		\centering
		\includegraphics[trim={0 470 0 0},clip,height=153px,width=153px]{./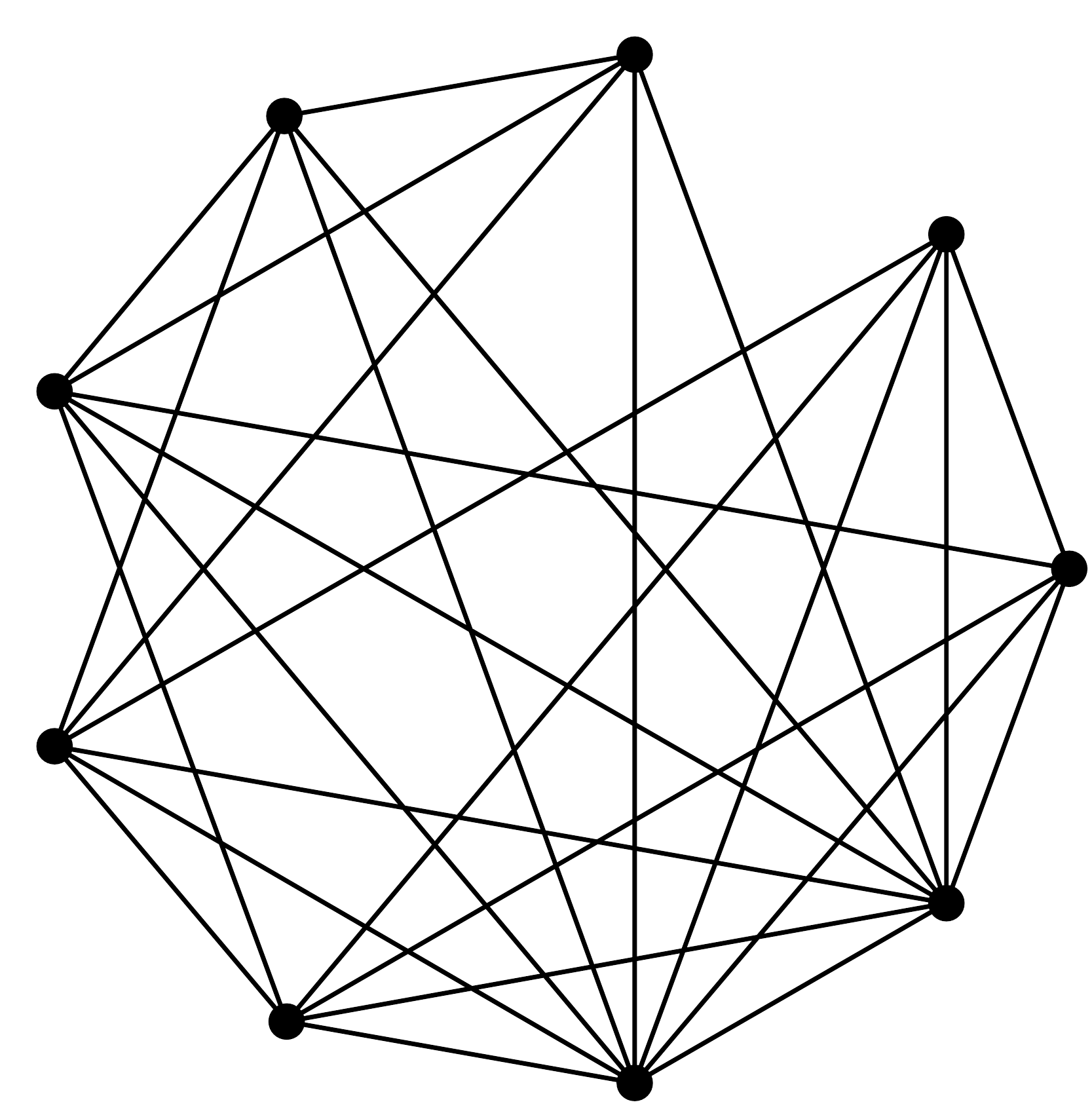}
		\vspace{1.5em}
		\caption{9-vertex minimal\hfill\break graph in $\mH_e(3, 3)$}
		\label{figure: Nenov_9}
	\end{minipage}\hfill
	\begin{minipage}{.45\textwidth}
		\centering
		\includegraphics[trim={0 470 0 0},clip,height=170px,width=170px]{./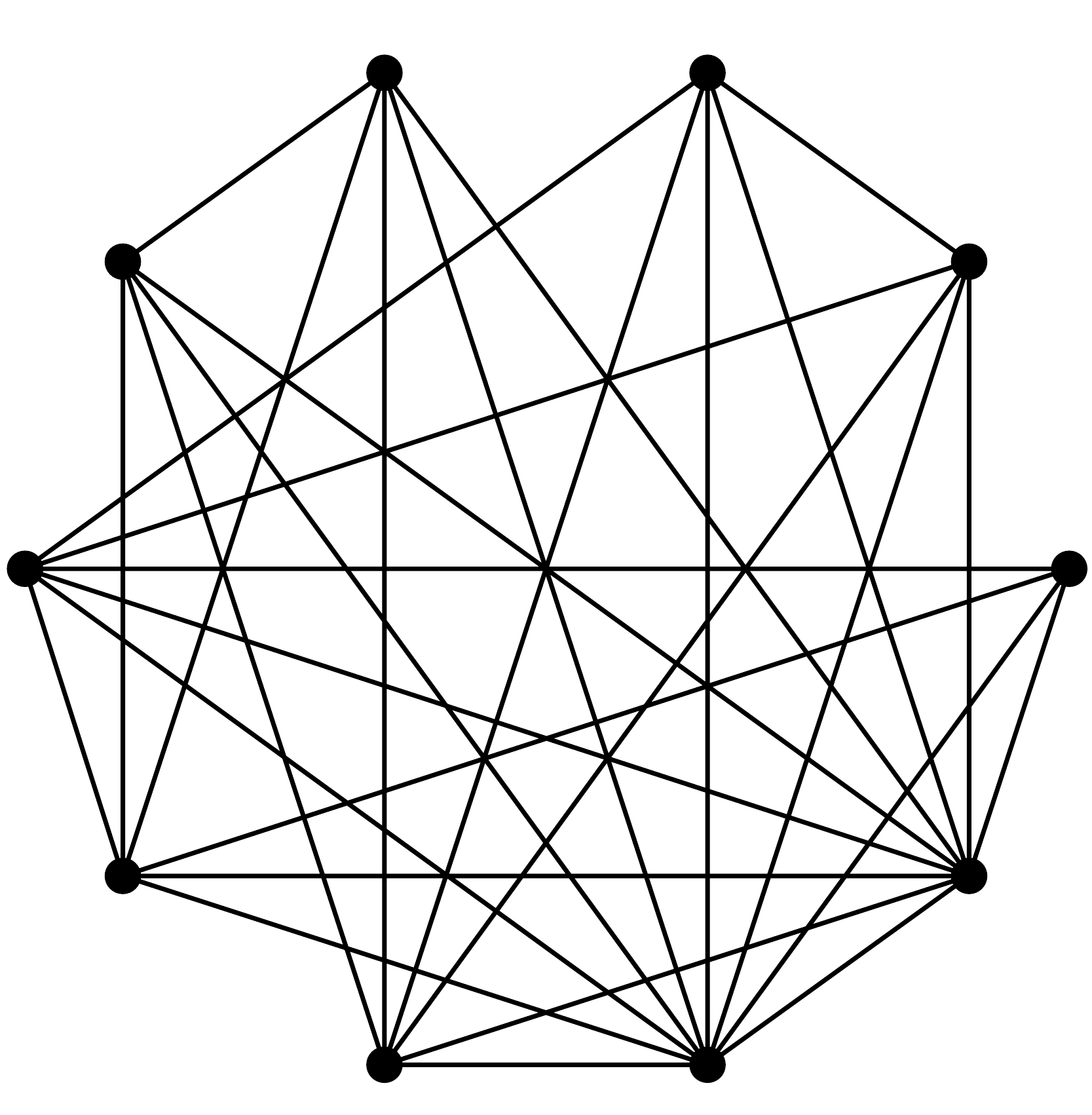}
		\vspace{0.5em}
		\caption{10-vertex minimal\hfill\break graph in $\mH_e(3, 3)$}
		\label{figure: Nenov_10}
	\end{minipage}
\end{figure}

For each pair of positive integers $p \geq 3$, $q \geq 3$ there exist infinitely many minimal graphs in $\mH_e(p, q)$ \cite{BEL76}, \cite{FL06}. The simplest infinite sequence of minimal graphs in $\mH_e(3, 3)$ are the graphs $K_3+C_{2r+1}, r \geq 1$. This sequence contains the already mentioned graphs $K_6$ and $K_3+C_5$. It was obtained by Nenov and Khadzhiivanov in \cite{NK79}. Later, this sequence was reobtained in \cite{BR80}, \cite{GSS95}, and \cite{Sza77}.

Three 10-vertex minimal graphs in $\mH_e(3, 3)$ are known. One of them is $K_3+C_7$ from the sequence $K_3+C_{2r+1}, r \geq 1$. The other two were obtained by Nenov in \cite{Nen80c} (the second graph is given in Figure \ref{figure: Nenov_10} and the third is a subgraph of $K_1+\overline{C_9}$).

In the following sections, we obtain new minimal graphs in $\mH_e(3, 3)$. We also obtain some general bounds for the graphs in $\mH_e(3, 3)$.

We will need the following results:

\begin{theorem}
	\label{theorem: delta(G) geq (p-1)^2}
	\cite{BEL76}\cite{FL06}
	Let $G$ be a minimal graph in $\mH_e(p, p)$. Then $\delta(G) \geq (p-1)^2$. In particular, when $p = 3$, we have $\delta(G) \geq 4$.
\end{theorem}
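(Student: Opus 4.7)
The plan is to prove the contrapositive: fix any vertex $v$ of the minimal graph $G$, assume $d = d(v) < (p-1)^2$, and produce a $2$-edge-coloring of $G$ with no monochromatic $K_p$, contradicting $G \arrowse (p,p)$.

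First I would exploit edge-minimality to obtain a good (i.e.\ monochromatic-$K_p$-free) $2$-coloring $\chi$ of $G - v$: picking any edge $e$ incident to $v$, by minimality $G - e$ admits a good coloring, and restricting it to $\E(G-v) \subseteq \E(G-e)$ yields the desired $\chi$. Any extension of $\chi$ to a coloring of $G$ must create a monochromatic $K_p$, and since $\chi$ is already good on $G - v$ this $K_p$ must contain $v$; it therefore corresponds to a monochromatic $K_{p-1}$ in $G[\N(v)]$ (under $\chi$) whose vertices all receive the same-colored edge to $v$.

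The key qualitative observation is that every good $\chi$ on $G - v$ must produce both a red $K_{p-1}$ and a blue $K_{p-1}$ in $G[\N(v)]$: if, say, no red $K_{p-1}$ existed, then coloring every edge at $v$ red would extend $\chi$ to a good coloring of $G$ (no red $K_p$ could form at $v$, and no blue $K_p$ could contain $v$), contradicting $G \arrowse (p,p)$. In the particular case $p = 3$ that is needed further in this chapter, this observation together with a short counting argument already settles the bound: if $d \leq 3$, then $G[\N(v)]$ has at most $\binom{d}{2} \leq 3$ edges, and each such edge $u_i u_j$ of color $c$ forbids only the $2^{d-2}$ extensions that assign color $c$ to both $vu_i$ and $vu_j$, so at most $3 \cdot 2 = 6 < 2^d$ extensions of $\chi$ are bad, leaving a good one.

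For the general bound $(p-1)^2$ one follows the classical approach of Burr--Erd\"os--Lov\'asz and Fox--Lin: edge-minimality is now applied at every edge $e = vu$ separately, producing good colorings $\chi_e$ of $G - e$ in which re-adding $e$ with either color forces a monochromatic $K_p$ through $e$, and hence a monochromatic $K_{p-1}$ in $G[\N(v)]$ containing $u$, in each color. An exchange/recoloring argument then assembles a single good coloring of $G - v$ under which $G[\N(v)]$ contains $p-1$ vertex-disjoint monochromatic $K_{p-1}$'s of the same color, yielding $d \geq (p-1)^2$. The main obstacle lies precisely here: passing from the qualitative existence of one monochromatic $K_{p-1}$ in each color to a packing of $p-1$ disjoint such cliques requires a simultaneous and consistent use of the different colorings $\chi_e$, which is the technical heart of the Burr--Erd\"os--Lov\'asz argument.
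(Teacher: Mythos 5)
First, a point of comparison: the thesis does not prove this theorem at all --- it is quoted from \cite{BEL76} and \cite{FL06} --- so there is no internal proof to measure your attempt against. Your argument for the case $p=3$, which is the only case the thesis actually uses, is complete and correct: minimality gives a good colouring $\chi$ of $G-v$; each edge $u_iu_j$ of $G[\N(v)]$ of colour $c$ rules out exactly the $2^{d-2}$ extensions assigning $c$ to both $vu_i$ and $vu_j$; and $\binom{d}{2}\,2^{d-2} < 2^{d}$ whenever $d\leq 3$, so a good extension of $\chi$ to $G$ survives, contradicting $G \arrowse (3,3)$.

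For general $p$, however, your write-up has a genuine (and acknowledged) gap exactly at the step that produces the bound $(p-1)^2$, and the route you sketch --- separate colourings $\chi_e$ for each edge at $v$ followed by an unspecified ``exchange/recoloring argument'' --- is both more complicated than necessary and not actually carried out. The standard argument needs only the single good colouring $\chi$ of $G-v$. Because $\chi$ cannot be extended, for \emph{every} subset $S \subseteq \N(v)$ (the vertices to be joined to $v$ in red), either $S$ contains a red $K_{p-1}$ or $\N(v)\setminus S$ contains a blue $K_{p-1}$; this is your ``key qualitative observation'' applied to all $S$, not only to $S=\N(v)$ and $S=\emptyset$. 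Now greedily extract pairwise disjoint red $(p-1)$-cliques: taking $S=\N(v)$ yields $R_1$ (the empty set contains no blue $K_{p-1}$); having found $R_1,\dots,R_i$, take $S=\N(v)\setminus(R_1\cup\dots\cup R_i)$, and observe that if $S$ contained no red $K_{p-1}$ then $R_1\cup\dots\cup R_i$ would contain a blue $K_{p-1}$ --- impossible for $i<p-1$, since a blue clique meets each red clique $R_j$ in at most one vertex. Hence the process runs for at least $p-1$ steps, giving $p-1$ disjoint red $(p-1)$-cliques inside $\N(v)$ and so $d(v)\geq (p-1)^2$. Without this (or an equivalent) argument, your proposal only establishes $d(v)\geq p-1$ in general; as written it proves the theorem solely for $p=3$.
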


\begin{proposition}
	\label{proposition: minimal graphs in mH_e(p, q) are not Sperner}
	If $G$ is a minimal graph in $\mH_e(p, q)$, then $G$ is not a Sperner graph.
\end{proposition}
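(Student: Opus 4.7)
The plan is to argue by contradiction: suppose $G$ is a minimal graph in $\mH_e(p,q)$ and suppose that, as a Sperner graph, it has two distinct vertices $u,v$ with $N_G(u)\subseteq N_G(v)$. I will exhibit a coloring that shows $G-u$, a proper subgraph of $G$, still satisfies $G-u\arrowse (p,q)$, contradicting minimality.

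The first observation is that $u$ and $v$ must be non-adjacent: otherwise $v\in N_G(u)\subseteq N_G(v)$, a loop, which is impossible. In particular $v\notin N_G(u)$. This non-adjacency is the crucial ``room'' for the substitution that follows, and is the only really delicate point of the argument.

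The main step is a coloring extension. By minimality, $G-u$ admits a $2$-coloring $\chi'$ of $E(G-u)$ with no monochromatic $K_p$ of colour $1$ and no monochromatic $K_q$ of colour $2$. I would extend $\chi'$ to a coloring $\chi$ of $E(G)$ by assigning, to each edge $[u,w]$ of $G$, the same colour that $\chi'$ gives to $[v,w]$; this is well-defined because $w\in N_G(u)\subseteq N_G(v)$, so $[v,w]\in E(G-u)$. Since $G\arrowse(p,q)$, $\chi$ must contain a forbidden monochromatic clique $K$ of some colour $c$. If $u\notin K$, then $K$ is a clique in $G-u$ and is already monochromatic under $\chi'$, contradicting the choice of $\chi'$. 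If $u\in K$, set $W=K\setminus\{u\}$; note $v\notin W$, because $W\subseteq N_G(u)$ while $v\notin N_G(u)$, and form $K^{\ast}=W\cup\{v\}$. Then $K^{\ast}$ is a clique in $G-u$ of the same size as $K$, since $W\subseteq N_G(u)\subseteq N_G(v)$, and $K^{\ast}$ is monochromatic of colour $c$ under $\chi$: the edges inside $W$ keep their colour, and each edge $[v,w]$ with $w\in W$ has, by construction of $\chi$, the same colour as $[u,w]$, namely $c$. Because $\chi$ and $\chi'$ agree on $E(G-u)$, this places a forbidden monochromatic clique inside $\chi'$, the desired contradiction.

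There is no real obstacle here beyond bookkeeping; the heart of the argument is the single line $v\notin W$, which is precisely where the non-adjacency of $u$ and $v$ is consumed. The same substitution works verbatim for an arbitrary number of colours, so the proof would also yield that no minimal graph in $\mH_e(a_1,\ldots,a_s)$ is Sperner, should that generality be useful later.
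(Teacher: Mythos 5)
Your proposal is correct and follows exactly the paper's approach: extend a good $2$-coloring of $G-u$ by giving each edge $[u,w]$ the colour of $[v,w]$, then observe the result is still a good colouring of $G$. The paper asserts this last claim without detail, while you verify it by the substitution $u\mapsto v$ (using the non-adjacency of $u$ and $v$), so your write-up is simply a more explicit version of the same argument.
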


\begin{proof}
	Suppose the opposite is true, and let $u, v \in \V(G)$ be such that $N_G(u) \subseteq N_G(v)$. We color the edges of $G - u$ with two colors in such a way that there is no monochromatic $p$-clique of the first color and no monochromatic $q$-clique of the second color. After that, for each vertex $w \in N_G(u)$ we color the edge $[u, w]$ with the same color as the edge $[v, w]$. We obtain a 2-coloring of the edges of $G$ with no monochromatic $p$-cliques of the first color and no monochromatic $q$-cliques of the second color.
\end{proof}

Since $F_e(3, 3; 5) = 15$ \cite{Nen81a}\cite{PRU99}, every graph $G \in \mH_e(3, 3)$ with no more than 14 vertices contains a 5-clique. There exist 14-vertex graphs in $\mH_e(3, 3)$ containing only a single 5-clique, an example of such a graph is given in Figure \ref{figure: 14_1xk5}. The graph in Figure \ref{figure: 14_1xk5} is obtained with the help of the only 15-vertex bicritical graph in $\mH_e(3, 3)$ with clique number 4 from \cite{PRU99}. First, by removing a vertex from the bicritical graph, we obtain 14-vertex graphs without 5 cliques. After that, by adding edges to the obtained graphs, we find a 14-vertex graph in $\mH_e(3, 3)$ with a single 5-clique whose subgraph is the minimal graph in $\mH_e(3, 3)$ in Figure \ref{figure: 14_1xk5}. Let us note that in \cite{PRU99} the authors obtain all 15-vertex graphs in $\mH_e(3, 3)$ with clique number 4, and with the help of these graphs, one can find more examples of 14-vertex minimal graphs in $\mH_e(3, 3)$.

\begin{figure}[h]
	\centering
	\includegraphics[trim={0 470 0 0},clip,height=160px,width=160px]{./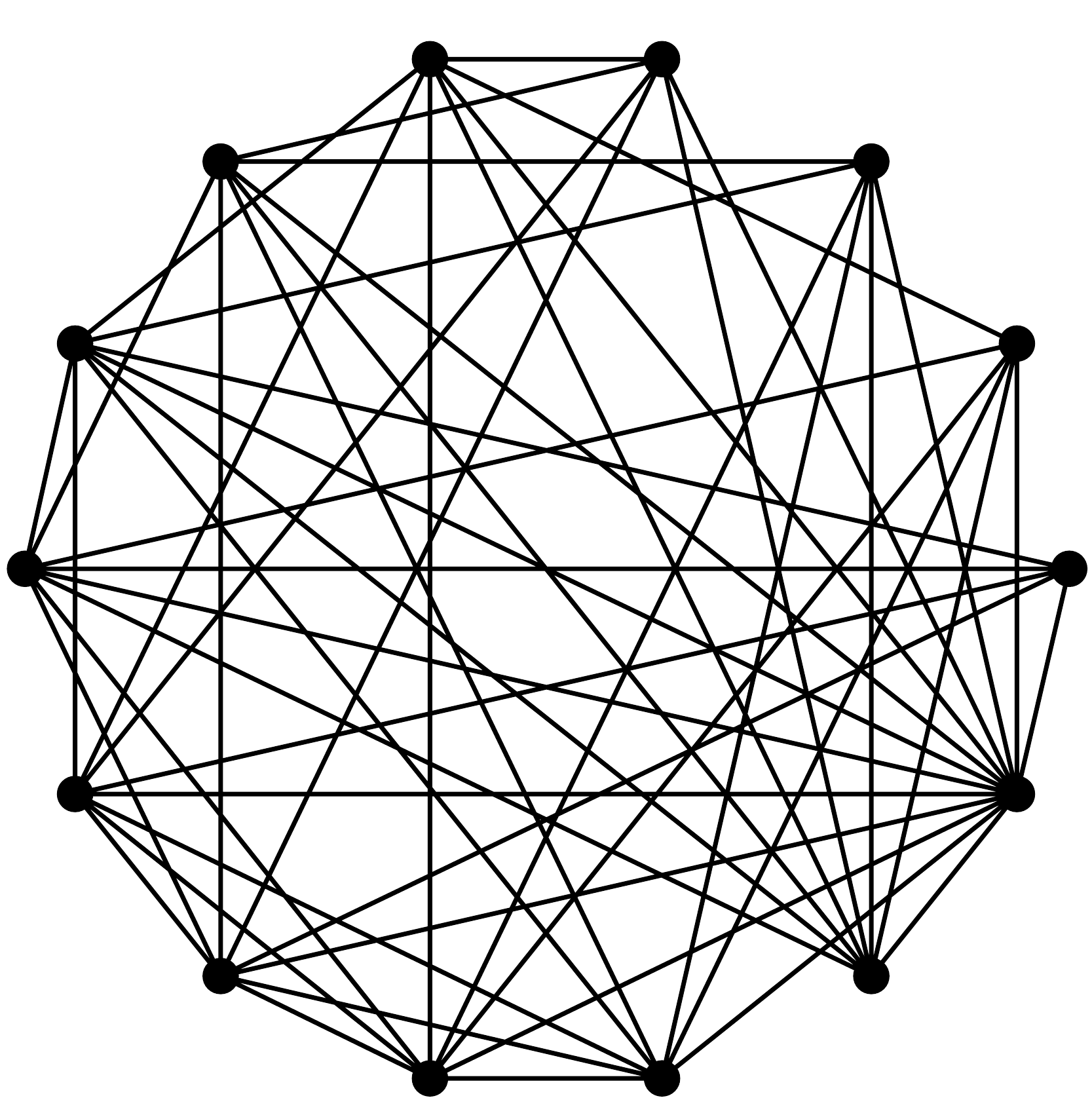}
	\caption{14-vertex minimal graph in $\mH_e(3, 3)$ with a single 5 clique}
	\label{figure: 14_1xk5}
\end{figure}

\begin{theorem}
	\label{theorem: chi(G) geq R(p, q)}
	\cite{Lin72}
	Let $G \arrowse (p, q)$. Then $\chi(G) \geq R(p, q)$. In particular, if $G \arrowse (3, 3)$, then $\chi(G) \geq 6$.
\end{theorem}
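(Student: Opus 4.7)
The plan is to prove the contrapositive: assuming $\chi(G) < R(p,q)$, I will construct a $2$-edge-coloring of $G$ with no monochromatic $p$-clique in the first color and no monochromatic $q$-clique in the second color, contradicting $G \arrowse (p,q)$.

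First I would set $k = \chi(G)$ and fix a proper vertex coloring $\V(G) = V_1 \cup \cdots \cup V_k$, so that each $V_i$ is an independent set in $G$. Since $k < R(p,q)$, by the definition of the Ramsey number there exists a $2$-coloring of the edges of the complete graph $K_k$ on the vertex set $\set{1,\dots,k}$ which contains no monochromatic $p$-clique in the first color and no monochromatic $q$-clique in the second color; fix such a coloring and call it $c$.

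Next I would transfer $c$ to the edges of $G$: for each edge $[u,v] \in \E(G)$, the endpoints $u$ and $v$ lie in different color classes $V_i$ and $V_j$ (because the vertex coloring is proper), so I color $[u,v]$ with the color $c(\set{i,j})$. The key verification is that this avoids the forbidden monochromatic cliques. Suppose, for contradiction, that $A \subseteq \V(G)$ induces a monochromatic $p$-clique in the first color. Since $A$ is a clique in $G$ and each $V_i$ is independent, the vertices of $A$ lie in $p$ pairwise distinct classes $V_{i_1}, \dots, V_{i_p}$. By construction every edge between two such classes receives the color $c(\set{i_a,i_b})$, so $\set{i_1,\dots,i_p}$ forms a monochromatic $p$-clique of the first color in $K_k$ under $c$, contradicting the choice of $c$. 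An identical argument rules out a monochromatic $q$-clique in the second color.

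Hence the assumption $\chi(G) < R(p,q)$ contradicts $G \arrowse (p,q)$, so $\chi(G) \geq R(p,q)$. The particular case $p = q = 3$ follows from $R(3,3) = 6$. I do not anticipate any real obstacle here; the only delicate point is checking that a monochromatic $p$-clique in $G$ forces $p$ \emph{distinct} color classes, which is immediate from properness of the vertex coloring.
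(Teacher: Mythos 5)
Your argument is correct: pulling back a Ramsey-critical $2$-coloring of $K_{\chi(G)}$ along a proper vertex coloring is the standard proof of Lin's theorem, and the key observation that a clique of $G$ meets $\chi(G)$-many \emph{distinct} color classes is exactly what makes it work. Note that the thesis does not prove this statement at all — it is quoted from \cite{Lin72} as a known result — so there is no internal proof to compare against; your write-up fills that gap with the canonical argument.
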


\begin{corollary}
	\label{corollary: chi(H) geq 5}
	Let $G \arrowse (3, 3)$, $v_1, ..., v_s$ be independent vertices of $G$, and $H = G - \set{v_1, ..., v_s}$. Then $\chi(H) \geq 5$.
\end{corollary}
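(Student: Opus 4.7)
The plan is to reduce the statement to Theorem \ref{theorem: chi(G) geq R(p, q)} by a direct chromatic-number argument. I would argue by contradiction: assume $\chi(H) \leq 4$, so that $\V(H)$ admits a partition into four independent sets $V_1, V_2, V_3, V_4$. The key observation is that the removed vertices $v_1, \ldots, v_s$ already form an independent set of $G$ by hypothesis, so setting $V_5 = \set{v_1, \ldots, v_s\}$ yields a partition
$$\V(G) = V_1 \cup V_2 \cup V_3 \cup V_4 \cup V_5$$
of $\V(G)$ into at most five independent sets. Hence $\chi(G) \leq 5$.

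On the other hand, by Theorem \ref{theorem: chi(G) geq R(p, q)} applied with $p = q = 3$, the assumption $G \arrowse (3, 3)$ forces $\chi(G) \geq R(3, 3) = 6$, contradicting the bound $\chi(G) \leq 5$ obtained above. This contradiction forces $\chi(H) \geq 5$, as required.

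There is essentially no obstacle here; the corollary is a one-line consequence of Lin's chromatic lower bound once one notices that appending one extra color class for the independent set $\set{v_1, \ldots, v_s}$ converts any proper coloring of $H$ into a proper coloring of $G$ using exactly one more color. No case analysis, no computer search, and no structural information about $H$ beyond its chromatic number is needed, so the write-up should consist of just a few lines.
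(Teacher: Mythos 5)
Your proof is correct and is exactly the intended derivation: the paper states this as an immediate corollary of Theorem \ref{theorem: chi(G) geq R(p, q)} without writing out the argument, and the implicit reasoning is precisely yours — a proper $4$-coloring of $H$ plus one extra color class for the independent set $\set{v_1, \ldots, v_s}$ would give $\chi(G) \leq 5$, contradicting $\chi(G) \geq R(3,3) = 6$. Nothing to add.
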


\begin{theorem}
	\label{theorem: alpha(G(v)) leq d(v) - 3}
	Let $G$ be a minimal graph in $\mH_e(3, 3)$. Then for each vertex $v \in \V(G)$ we have $\alpha(G(v)) \leq d(v) - 3$.
\end{theorem}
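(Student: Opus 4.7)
The plan is to assume for contradiction that $\alpha(G(v)) \geq d(v) - 2$, pick an independent set $I \subseteq N_G(v)$ with $|I| \geq d(v) - 2$, set $W = N_G(v) \setminus I$ so $|W| \leq 2$, and then explicitly construct a 2-edge-coloring of $G$ with no monochromatic triangle, contradicting $G \arrowse (3, 3)$. Since $\delta(G) \geq 4$ by Theorem~\ref{theorem: delta(G) geq (p-1)^2}, $v$ has at least one incident edge $e$, and $G - v$ is a subgraph of $G - e$, which fails to arrow $(3, 3)$ by minimality of $G$. Hence $G - v \not\arrowse (3, 3)$, so I can fix a 2-coloring $\eta$ of $\E(G - v)$ without monochromatic triangle. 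The whole problem reduces to extending $\eta$ to the $d(v)$ edges at $v$ without creating a monochromatic triangle through $v$.

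Every triangle through $v$ has the form $\{v, x, y\}$ with $[x, y] \in \E(G)$ and $x, y \in N_G(v)$, so, since $I$ is independent, $\{x, y\} \cap W \neq \emptyset$. The cases $|W| = 0$ (no triangle through $v$) and $|W| = 1$ are immediate: in the latter, with $W = \{w\}$, the only triangles through $v$ are $\{v, w, u\}$ with $u \in N_G(w) \cap I$, and setting $[v, w]$ red and $[v, u]$ blue or red according as $\eta([w, u])$ is red or blue kills them all. The main obstacle is the case $|W| = 2$, which is where all the content of the bound $\alpha(G(v)) \leq d(v) - 3$ lives.

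Write $W = \{w_1, w_2\}$ and $I_j = N_G(w_j) \cap I$. I would choose a color $c$ with either $[w_1, w_2] \notin \E(G)$ or $\eta([w_1, w_2]) \neq c$ (which is always possible with two colors), color $[v, w_1] = [v, w_2] = c$, and for each $u \in I$ set $[v, u] = \bar{c}$ whenever there exists $j \in \{1, 2\}$ with $u \in I_j$ and $\eta([w_j, u]) = c$, and $[v, u] = c$ otherwise. The choice of $c$ prevents $\{v, w_1, w_2\}$ from being monochromatic. Every remaining triangle through $v$ is of the form $\{v, w_j, u\}$ with $u \in I_j$, because no triangle $\{v, u, u'\}$ with $u, u' \in I$ exists; such a triangle has edge colors $c$, $[v, u]$, $\eta([w_j, u])$, and it cannot be monochromatic in $c$ by the rule for $[v, u]$, nor in $\bar{c}$ because $[v, w_j] = c$. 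Together with $\eta$ leaving $G - v$ free of monochromatic triangles, the extension is a valid 2-coloring of $G$, yielding the desired contradiction. The only technical point is that the rule for $[v, u]$ must be well-defined when $u \in I_1 \cap I_2$, which is automatic from its existential form in $j$.
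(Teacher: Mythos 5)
Your proof is correct and follows essentially the same route as the paper: assume an independent set $I$ of size at least $d(v)-2$ in $G(v)$, take a $(3,3)$-free $2$-coloring of $G-v$ (which exists by minimality), color the at most two edges from $v$ to $W = N_G(v)\setminus I$ with a common color $c$ chosen so that $\{v\}\cup W$ is not monochromatic, and extend to the edges into $I$. The only difference is that your per-vertex rule for coloring $[v,u]$, $u\in I$, is more elaborate than necessary — the paper simply colors every such edge $\bar c$, which already kills all triangles $\{v,w_j,u\}$ since they contain both a $c$-edge and a $\bar c$-edge at $v$.
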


\begin{proof}
	Suppose the opposite is true, and let $A \subseteq N_G(v)$ be an independent set in $G(v)$ such that $|A| = d(v) - 2$. Let $a, b \in N_G(v) \setminus A$. Consider a 2-coloring of the edges of $G - v$ in which there are no monochromatic triangles. We color the edges $[v, a]$ and $[v, b]$ with the same color in such a way that there is no monochromatic triangle (if $a$ and $b$ are adjacent, we chose the color of $[v, a]$ and $[v, b]$ to be different from the color of $[a, b]$, and if $a$ and $b$ are not adjacent, then we chose an arbitrary color for $[v, a]$ and $[v, b]$). We color the remaining edges incident to $v$ with the other color, which is different from the color of $[v, a]$ and $[v, b]$. Since $N_G(v) \setminus \{a, b\} = A$ is and independent set, we obtain a 2-coloring of the edges of $G$ without monochromatic triangles, which is a contradiction.
\end{proof}

\begin{corollary}
	\label{corollary: if d(v) = 4, then G(v) = K_4} 
	Let $G$ be a minimal graph in $\mH_e(3, 3)$ and $d(v) = 4$ for some vertex $v \in \V(G)$. Then $G(v) = K_4$.
\end{corollary}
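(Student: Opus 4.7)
The plan is to derive this as an immediate consequence of Theorem \ref{theorem: alpha(G(v)) leq d(v) - 3}. Since $G$ is a minimal graph in $\mH_e(3, 3)$ and $v \in \V(G)$, that theorem applies and yields
\begin{equation*}
\alpha(G(v)) \leq d(v) - 3 = 4 - 3 = 1.
\end{equation*}

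From $\alpha(G(v)) \leq 1$ I would conclude that $G(v)$ contains no pair of non-adjacent vertices, i.e. every two vertices of $N_G(v)$ are joined by an edge in $G$. Since $|N_G(v)| = d(v) = 4$, this means $G(v)$ is a complete graph on $4$ vertices, so $G(v) = K_4$.

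There is no real obstacle here: the result is a direct numerical specialization of the preceding theorem combined with the trivial observation that a graph with independence number at most $1$ is complete. Hence a one-line proof invoking Theorem \ref{theorem: alpha(G(v)) leq d(v) - 3} suffices.
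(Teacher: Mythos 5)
Your proof is correct and is exactly the intended derivation: the paper states this as an immediate corollary of Theorem \ref{theorem: alpha(G(v)) leq d(v) - 3} with no separate argument, and your specialization $\alpha(G(v)) \leq 4 - 3 = 1$ together with the observation that a $4$-vertex graph with independence number at most $1$ is $K_4$ is precisely what is meant.
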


\vspace{1em}
Theorem \ref{theorem: alpha(G(v)) leq d(v) - 3} is published in \cite{Bik16}.

\section{Algorithm A6}

The following algorithm is appropriate for finding all minimal graphs in $\mH_e(3, 3)$ with a small number of vertices. 

\begin{namedalgorithm}{A6}
	\label{algorithm: A6}
	Let $n$ be a fixed positive integer such that $7 \leq n \leq 14$.
	
	The output of the algorithm is the set $\mB$ of all $n$-vertex minimal graphs in $\mH_e(3, 3)$, 
	
	1. Generate all $n$-vertex non-isomorphic graphs with minimum degree at least 4, and denote the obtained set by $\mB$.
	
	2. Remove from $\mB$ all Sperner graphs.
	
	3. Remove from $\mB$ all graphs with clique number not equal to 5.
	
	4. Remove from $\mB$ all graphs with chromatic number less than 6.
	
	5. Remove from $\mB$ all graphs which are not in $\mH_e(3, 3)$.
	
	6. Remove from $\mB$ all graphs which are not minimal graphs in $\mH_e(3, 3)$.
\end{namedalgorithm}

\begin{theorem}
	\label{theorem: algorithm A6}
	Fix $n \in \set{7, ..., 14}$. Then after executing Algorithm \ref{algorithm: A6}, $\mB$ coincides with the set of all $n$-vertex minimal graphs in $\mH_e(3, 3)$.
\end{theorem}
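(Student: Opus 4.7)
The plan is to prove the two inclusions separately: (i) \emph{soundness}, that every graph surviving all six steps is an $n$-vertex minimal graph in $\mH_e(3,3)$; and (ii) \emph{completeness}, that every $n$-vertex minimal graph in $\mH_e(3,3)$ survives all six steps. Soundness is essentially free: step 1 enumerates a superset of the candidate graphs (all $n$-vertex graphs with $\delta \geq 4$, up to isomorphism), and step 6 explicitly discards everything that is not a minimal graph in $\mH_e(3,3)$, so whatever remains in $\mB$ lies in the target set.

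For completeness I would fix an arbitrary minimal $G \in \mH_e(3,3)$ with $|\V(G)| = n$, where $7 \leq n \leq 14$, and check one by one that $G$ passes every filter. It is not removed in step 1 because $\delta(G) \geq (3-1)^2 = 4$ by Theorem \ref{theorem: delta(G) geq (p-1)^2}; it is not removed in step 2 by Proposition \ref{proposition: minimal graphs in mH_e(p, q) are not Sperner}; it is not removed in step 4 because $\chi(G) \geq R(3,3) = 6$ by Theorem \ref{theorem: chi(G) geq R(p, q)}; and it is not removed in steps 5 and 6 by hypothesis.

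The only step requiring real work is step 3, which demands $\omega(G) = 5$. For the lower bound, since $F_e(3,3;5) = 15$ and $|\V(G)| \leq 14$, the graph $G$ cannot be $K_5$-free, so $\omega(G) \geq 5$. For the upper bound, suppose $\omega(G) \geq 6$; then $G$ contains a copy of $K_6$, and because $|\V(G)| \geq 7 > 6$ this copy is a \emph{proper} subgraph of $G$. Since $K_6 \arrowse (3,3)$, this proper subgraph already lies in $\mH_e(3,3)$, contradicting the minimality of $G$. Hence $\omega(G) = 5$, and $G$ survives step 3.

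The main (and essentially only) obstacle is this $\omega(G) = 5$ argument, which is where both endpoints of the range $7 \leq n \leq 14$ are used: the upper bound $n \leq 14$ through $F_e(3,3;5) = 15$ forces $\omega(G) \geq 5$, while the lower bound $n \geq 7$ makes any $K_6 \subseteq G$ a proper subgraph and lets minimality rule out $\omega(G) \geq 6$. Combining the property-by-property check for completeness with the immediate soundness observation yields the claimed equality of $\mB$ with the set of all $n$-vertex minimal graphs in $\mH_e(3,3)$.
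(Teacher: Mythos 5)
Your proposal is correct and follows essentially the same route as the paper's proof: soundness from steps 1 and 6, and completeness by checking each filter via Theorem \ref{theorem: delta(G) geq (p-1)^2}, Proposition \ref{proposition: minimal graphs in mH_e(p, q) are not Sperner}, Theorem \ref{theorem: chi(G) geq R(p, q)}, and the clique-number argument combining $F_e(3,3;5)=15$ with minimality. Your treatment of step 3 is in fact slightly more explicit than the paper's, which merely asserts $G \not\supseteq K_6$ from $|\V(G)| \geq 7$ without spelling out that a contained $K_6$ would be a proper subgraph already in $\mH_e(3,3)$.
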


\begin{proof}
	From step 1 it becomes clear, that the set $\mB$ contains only $n$-vertex graphs. Step 6 guaranties that $\mB$ contains only minimal graphs in $\mH_e(3, 3)$.
	
	Let $G$ be an arbitrary $n$-vertex minimal graph in $\mH_e(3, 3)$. We will prove that, after the execution of Algorithm \ref{algorithm: A6}, $G \in \mB$. By Theorem \ref{theorem: delta(G) geq (p-1)^2}, $\delta(G) \geq 4$, and by Proposition \ref{proposition: minimal graphs in mH_e(p, q) are not Sperner}, $G$ is not a Sperner graph. From $\abs{V(G)} \geq 7$ it follows that $G \not\supseteq K_6$. Since $\abs{V(G)} \leq 14$, by $F_e(3, 3; 5) = 15$, \cite{Nen81a}\cite{PRU99}, we obtain $\omega(G) = 5$. By Theorem \ref{theorem: chi(G) geq R(p, q)}, $\chi(G) \geq 6$. Therefore, after step 4, $G \in \mB$.
\end{proof}

We will use Algorithm \ref{algorithm: A6} to obtain all minimal graphs in $\mH_e(3, 3)$ with up to 12 vertices. Algorithm \ref{algorithm: A6} is not appropriate in the cases $n \geq 13$, because the number of graphs generated in step 1 is too large. To find the 13-vertex minimal graphs in $\mH_e(3, 3)$, we will use Algorithm \ref{algorithm: A7-M}, which is defined below.

\vspace{1em}
Theorem \ref{theorem: algorithm A6} and Algorithm \ref{algorithm: A6} are published in \cite{Bik16}.

\section{Minimal graphs in $\mH_e(3, 3; n)$ for $n \leq 12$}

\begin{table}
	\centering
	\vspace{-2em}
	\resizebox{0.85\textwidth}{!}{
		\begin{tabular}{ | l r | l r | l r | l r | l r | l r | }
			\hline
			\multicolumn{2}{|c|}{\parbox{5em}{$\abs{\E(G)}$ \hfill $\#$}}&
			\multicolumn{2}{|c|}{\parbox{5em}{$\delta(G)$ \hfill $\#$}}&
			\multicolumn{2}{|c|}{\parbox{5em}{$\Delta(G)$ \hfill $\#$}}&
			\multicolumn{2}{|c|}{\parbox{5em}{$\alpha(G)$ \hfill $\#$}}&
			\multicolumn{2}{|c|}{\parbox{5em}{$\chi(G)$ \hfill $\#$}}&
			\multicolumn{2}{|c|}{\parbox{5em}{$\abs{Aut(G)}$ \hfill $\#$}}\\
			\hline
			30	& 1				& 4	& 1				& 9	& 6				& 2	& 3				& 6	& 6				& 4		& 2		\\
			31	& 1				& 5	& 4				& 	& 					& 3	& 3				& 	& 					& 8		& 2		\\
			32	& 2				& 6	& 1				& 	& 					& 	& 					& 	& 					& 16	& 1		\\
			33	& 1				& 	& 					& 	& 					& 	& 					& 	& 					& 84	& 1		\\
			34	& 1				& 	& 					& 	& 					& 	& 					& 	& 					& 		&			\\
			\hline
		\end{tabular}
	}
	\vspace{-0.5em}
	\caption{Some properties of the 10-vertex minimal graphs in $\mH_e(3, 3)$}
	\label{table: 10-vertex graphs properties}
	\vspace{1em}
	\resizebox{0.85\textwidth}{!}{
		\begin{tabular}{ | l r | l r | l r | l r | l r | l r | }
			\hline
			\multicolumn{2}{|c|}{\parbox{5em}{$\abs{\E(G)}$ \hfill $\#$}}&
			\multicolumn{2}{|c|}{\parbox{5em}{$\delta(G)$ \hfill $\#$}}&
			\multicolumn{2}{|c|}{\parbox{5em}{$\Delta(G)$ \hfill $\#$}}&
			\multicolumn{2}{|c|}{\parbox{5em}{$\alpha(G)$ \hfill $\#$}}&
			\multicolumn{2}{|c|}{\parbox{5em}{$\chi(G)$ \hfill $\#$}}&
			\multicolumn{2}{|c|}{\parbox{5em}{$\abs{Aut(G)}$ \hfill $\#$}}\\
			\hline
			35	& 6				& 4	& 5				& 8	& 1				& 2	& 4				& 6	& 73				& 1		& 20		\\
			36	& 13				& 5	& 58				& 10& 72				& 3	& 66				& 	& 					& 2		& 29		\\
			37	& 23				& 6	& 10				& 	& 					& 4	& 3				& 	& 					& 4		& 14		\\
			38	& 25				& 	& 					& 	& 					& 	& 					& 	& 					& 6		& 1		\\
			39	& 5				& 	& 					& 	& 					& 	& 					& 	& 					& 8		& 4		\\
			41	& 1				& 	& 					& 	& 					& 	& 					& 	& 					& 12	& 1		\\
			& 					& 	& 					& 	& 					& 	& 					& 	& 					& 16	& 3		\\
			& 					& 	& 					& 	& 					& 	& 					& 	& 					& 24	& 1		\\
			\hline
		\end{tabular}
	}
	\vspace{-0.5em}
	\caption{Some properties of the 11-vertex minimal graphs in $\mH_e(3, 3)$}
	\label{table: 11-vertex graphs properties}
	\vspace{1em}
	\resizebox{0.85\textwidth}{!}{
		\begin{tabular}{ | l r | l r | l r | l r | l r | l r | }
			\hline
			\multicolumn{2}{|c|}{\parbox{5em}{$\abs{\E(G)}$ \hfill $\#$}}&
			\multicolumn{2}{|c|}{\parbox{5em}{$\delta(G)$ \hfill $\#$}}&
			\multicolumn{2}{|c|}{\parbox{5em}{$\Delta(G)$ \hfill $\#$}}&
			\multicolumn{2}{|c|}{\parbox{5em}{$\alpha(G)$ \hfill $\#$}}&
			\multicolumn{2}{|c|}{\parbox{5em}{$\chi(G)$ \hfill $\#$}}&
			\multicolumn{2}{|c|}{\parbox{5em}{$\abs{Aut(G)}$ \hfill $\#$}}\\
			\hline
			38	& 5				& 4	& 129				& 8	& 43				& 2	& 124				& 6	& 3 041			& 1		& 1 792	\\
			39	& 27				& 5	& 2 178			& 9	& 1 196			& 3	& 2 431			& 	& 					& 2		& 851		\\
			40	& 144				& 6	& 611				& 11& 1 802			& 4	& 485				& 	& 					& 4		& 286		\\
			41	& 418				& 7	& 123				& 	& 					& 5	& 1				& 	& 					& 6		& 1		\\
			42	& 1 014			& 	& 					& 	& 					& 	& 					& 	& 					& 8		& 67		\\
			43	& 459				& 	& 					& 	& 					& 	& 					& 	& 					& 12	& 16		\\
			44	& 224				& 	& 					& 	& 					& 	& 					& 	& 					& 16	& 18		\\
			45	& 351				& 	& 					& 	& 					& 	& 					& 	& 					& 24	& 6		\\
			46	& 299				& 	& 					& 	& 					& 	& 					& 	& 					& 32	& 1		\\
			47	& 84				& 	& 					& 	& 					& 	& 					& 	& 					& 36	& 1		\\
			48	& 16				& 	& 					& 	& 					& 	& 					& 	& 					& 96	& 1		\\
			& 					& 	& 					& 	& 					& 	& 					& 	& 					& 108	& 1		\\
			\hline
		\end{tabular}
	}
	\vspace{-0.5em}
	\caption{Some properties of the 12-vertex minimal graphs in $\mH_e(3, 3)$}
	\label{table: 12-vertex graphs properties}
	\vspace{1em}
	\resizebox{0.85\textwidth}{!}{
		\begin{tabular}{ | l r | l r | l r | l r | l r | l r | }
			\hline
			\multicolumn{2}{|c|}{\parbox{5em}{$\abs{\E(G)}$ \hfill $\#$}}&
			\multicolumn{2}{|c|}{\parbox{5em}{$\delta(G)$ \hfill $\#$}}&
			\multicolumn{2}{|c|}{\parbox{5em}{$\Delta(G)$ \hfill $\#$}}&
			\multicolumn{2}{|c|}{\parbox{5em}{$\alpha(G)$ \hfill $\#$}}&
			\multicolumn{2}{|c|}{\parbox{5em}{$\chi(G)$ \hfill $\#$}}&
			\multicolumn{2}{|c|}{\parbox{5em}{$\abs{Aut(G)}$ \hfill $\#$}}\\
			\hline
			41	& 4				& 4	& 13 725			& 8	& 16				& 2	& 13				& 6	& 306 622			& 1		& 251 976	\\
			42	& 44				& 5	& 191 504			& 9	& 61 678			& 3	& 218 802			& 7	& 13				& 2		& 46 487	\\
			43	& 220				& 6	& 85 932			& 10& 175 108			& 4	& 86 721			& 	& 					& 3		& 10		\\
			44	& 1 475			& 7	& 15 391			& 12& 69 833			& 5	& 1 097			& 	& 					& 4		& 6 851	\\
			45	& 7 838			& 8	& 83				& 	& 					& 6	& 2				& 	& 					& 6		& 83		\\
			46	& 28 805			& 	& 					& 	& 					& 	& 					& 	& 					& 8		& 916		\\
			47	& 33 810			& 	& 					& 	& 					& 	& 					& 	& 					& 12	& 129		\\
			48	& 26 262			& 	& 					& 	& 					& 	& 					& 	& 					& 16	& 106		\\
			49	& 39 718			& 	& 					& 	& 					& 	& 					& 	& 					& 24	& 44		\\
			50	& 62 390			& 	& 					& 	& 					& 	& 					& 	& 					& 32	& 12		\\
			51	& 59 291			& 	& 					& 	& 					& 	& 					& 	& 					& 36	& 3		\\
			52	& 34 132			& 	& 					& 	& 					& 	& 					& 	& 					& 40	& 1		\\
			53	& 10 878			& 	& 					& 	& 					& 	& 					& 	& 					& 48	& 11		\\
			54	& 1 680			& 	& 					& 	& 					& 	& 					& 	& 					& 72	& 3		\\
			55	& 86				& 	& 					& 	& 					& 	& 					& 	& 					& 96	& 2		\\
			56	& 2				& 	& 					& 	& 					&	& 					& 	& 					& 144	& 1		\\
			\hline
		\end{tabular}
	}
	\vspace{-0.5em}
	\caption{Some properties of the 13-vertex minimal graphs in $\mH_e(3, 3)$}
	\label{table: 13-vertex graphs properties}
\end{table}

We execute Algorithm \ref{algorithm: A6} for $n = 7, 8, 9, 10, 11, 12$, and we find all minimal graphs in $\mH_e(3, 3)$ with up to 12 vertices except $K_6$. In this way, we obtain the known results: there is no minimal graph in $\mH_e(3, 3)$ with 7 vertices, the Graham graph $K_3+C_5$ is the only minimal 8-vertex graph, and there exists only one minimal 9-vertex graph, the Nenov graph from \cite{Nen79} (see Figure \ref{figure: Nenov_9}). We also obtain the following new results:

\begin{theorem}
	\label{theorem: 10-vertex minimal graphs in mH_e(3, 3)}
	There are exactly 6 minimal 10-vertex graphs in $\mH_e(3, 3)$. These graphs are given in Figure \ref{figure: 10}, and some of their properties are listed in Table \ref{table: 10-vertex graphs properties}.
\end{theorem}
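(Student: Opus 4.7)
My proof plan is to apply Algorithm \ref{algorithm: A6} with $n = 10$, whose correctness is guaranteed by Theorem \ref{theorem: algorithm A6}. First I would use \emph{nauty} \cite{MP13} to enumerate all non-isomorphic 10-vertex graphs with $\delta(G) \geq 4$; this gives the initial set $\mB$ in step 1. The justification for the minimum degree bound is Theorem \ref{theorem: delta(G) geq (p-1)^2}, and Proposition \ref{proposition: minimal graphs in mH_e(p, q) are not Sperner} justifies discarding Sperner graphs in step 2, so no 10-vertex minimal graph is lost by either filter.

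For step 3, observe that every 10-vertex $G \in \mH_e(3, 3)$ must contain $K_5$, because $F_e(3, 3; 5) = 15$ \cite{Nen81a}\cite{PRU99} and $10 < 15$; on the other hand, a minimal $G \in \mH_e(3, 3)$ of order $10 > 6$ cannot contain $K_6$, for otherwise the proper subgraph $K_6$ would already arrow $(3, 3)$. Hence $\omega(G) = 5$, so step 3 preserves all 10-vertex minimal graphs. The chromatic filter in step 4 is justified by Theorem \ref{theorem: chi(G) geq R(p, q)}, since $R(3, 3) = 6$.

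After these cheap structural filters I would perform the two expensive tests: step 5 checks each candidate for $G \arrowse (3, 3)$, which I would implement by a backtracking search over 2-edge-colorings that aborts the first time a monochromatic triangle is forced; step 6 tests minimality by removing each edge in turn and running the same arrow check on $G - e$. The output, after isomorph removal, is the claimed set of 6 graphs; its properties (listed in Table \ref{table: 10-vertex graphs properties}) would then be computed directly and used to verify consistency (e.g.\ each graph has $\omega = 5$, $\chi \geq 6$, $\delta \in \{4, 5, 6\}$, and satisfies Theorem \ref{theorem: alpha(G(v)) leq d(v) - 3}).

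The main obstacle is the scale of step 1: the number of 10-vertex graphs with $\delta \geq 4$ is very large, so the algorithm's practicality hinges on applying the Sperner, clique-number and chromatic-number filters as early and as cheaply as possible, before invoking the arrow-test of step 5 on only a small residue. A secondary technical point is that step 6 must be performed on graphs that already passed step 5, so the inductive definition of minimality is correctly realized; this is not a real difficulty but must be explicitly checked for correctness of the implementation, for example by independently verifying that each output graph indeed has no proper subgraph in $\mH_e(3, 3)$ by a separate pass through all $2^{|\E(G)|}$ spanning subgraphs of small size, or by comparing to the three previously known 10-vertex minimal graphs (two from \cite{Nen80c} and $K_3 + C_7$ from the sequence of \cite{NK79}) which must reappear among the six.
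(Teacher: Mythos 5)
Your proposal is correct and is essentially identical to the paper's proof: the paper establishes the theorem precisely by executing Algorithm \ref{algorithm: A6} with $n = 10$, justifying the filters exactly as you do (Theorem \ref{theorem: delta(G) geq (p-1)^2} for the degree bound, Proposition \ref{proposition: minimal graphs in mH_e(p, q) are not Sperner} for the Sperner filter, $F_e(3,3;5)=15$ for $\omega(G)=5$, and Theorem \ref{theorem: chi(G) geq R(p, q)} for the chromatic filter). Your suggested cross-check against the three previously known 10-vertex minimal graphs and the paper's later independent re-derivation via Algorithm \ref{algorithm: A7-M} play the same confirming role.
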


\begin{theorem}
	\label{theorem: 11-vertex minimal graphs in mH_e(3, 3)}
	There are exactly 73 minimal 11-vertex graphs in $\mH_e(3, 3)$. Some of their properties are listed in Table \ref{table: 11-vertex graphs properties}. Examples of 11-vertex minimal graphs in $\mH_e(3, 3)$ are given in Figure \ref{figure: 11_a4} and Figure \ref{figure: 11_a2}.
\end{theorem}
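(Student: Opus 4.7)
The plan is to prove Theorem \ref{theorem: 11-vertex minimal graphs in mH_e(3, 3)} in the same manner as Theorem \ref{theorem: 10-vertex minimal graphs in mH_e(3, 3)}, by executing Algorithm \ref{algorithm: A6} with parameter $n = 11$ and then verifying the tabulated invariants on the resulting set. By Theorem \ref{theorem: algorithm A6}, after the algorithm terminates, $\mathcal{B}$ contains exactly the $11$-vertex minimal graphs in $\mathcal{H}_e(3, 3)$, so proving the theorem reduces to running the algorithm and counting.

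First I would use \emph{nauty} \cite{MP13} to generate all $11$-vertex non-isomorphic graphs with $\delta(G) \geq 4$; this is the standard entry point that keeps step 1 feasible, since Theorem \ref{theorem: delta(G) geq (p-1)^2} guarantees every minimal graph in $\mathcal{H}_e(3, 3)$ satisfies $\delta(G) \geq 4$. Next I would discard the Sperner graphs (Proposition \ref{proposition: minimal graphs in mH_e(p, q) are not Sperner}), retain only those with $\omega(G) = 5$ (using $F_e(3, 3; 5) = 15$ from \cite{Nen81a} and \cite{PRU99} to rule out $\omega(G) \leq 4$, and $n = 11 < 12$ to rule out $\omega(G) \geq 6$), and then keep only those with $\chi(G) \geq 6$ (Theorem \ref{theorem: chi(G) geq R(p, q)}). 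These cheap structural filters should shrink the pool dramatically before the expensive arrowing test.

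The hard part will be step 5, checking $G \arrowse (3, 3)$ for each surviving candidate, since this is in principle a search over all $2$-colorings of $\E(G)$. I would accelerate it in the usual way: fix a triangle of $G$, branch on the $2^3$ colorings of its edges (pruning the two monochromatic ones), and then extend recursively, backtracking whenever a monochromatic triangle appears or its completion becomes forced. Step 6 (minimality) is then handled by testing, for each $G$ that survives step 5, whether $G - e \not\arrowse (3, 3)$ for every $e \in \E(G)$, reusing the same colorer.

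Finally, I would compute for each of the resulting graphs its edge count, minimum and maximum degree, independence number, chromatic number, and the order of its automorphism group (via \emph{nauty}), and tabulate the distributions to match Table \ref{table: 11-vertex graphs properties}; in particular, the totals in each column must equal $73$, which serves as a consistency check. As independent verification, I would also confirm that the sequence $K_3 + C_{2r+1}$ from \cite{NK79} at $r = 4$ (i.e. $K_3 + C_9$, having $11$ vertices) appears in the output, and I would reproduce examples such as those in Figure \ref{figure: 11_a4} and Figure \ref{figure: 11_a2} among the $73$ graphs.
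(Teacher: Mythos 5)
Your proposal is correct and follows essentially the same route as the paper: the theorem is proved by executing Algorithm \ref{algorithm: A6} with $n = 11$, whose output is guaranteed by Theorem \ref{theorem: algorithm A6} to be exactly the set of $11$-vertex minimal graphs in $\mH_e(3, 3)$, and then counting and tabulating (the paper records the per-step counts in Table \ref{table: steps in finding all minimal graphs in mH_e(3, 3) with up to 12 vertices}, and later cross-validates via Algorithm \ref{algorithm: A7-M} together with Theorem \ref{theorem: minimal graphs in mH_e(3, 3) G for which alpha(G) = 2}). One tiny quibble: the reason $\omega(G) \geq 6$ is excluded is minimality (a minimal graph on more than $6$ vertices cannot contain $K_6$), not the bound $n = 11 < 12$; this does not affect your argument since that filter is already built into Algorithm \ref{algorithm: A6}.
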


\begin{theorem}
	\label{theorem: 12-vertex minimal graphs in mH_e(3, 3)}
	There are exactly 3041 minimal 12-vertex graphs in $\mH_e(3, 3)$. Some of their properties are listed in Table \ref{table: 12-vertex graphs properties}. Examples of 12-vertex minimal graphs in $\mH_e(3, 3)$ are given in Figure \ref{figure: 12_a5} and Figure \ref{figure: 12_aut}.
\end{theorem}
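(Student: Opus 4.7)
The plan is to execute Algorithm \ref{algorithm: A6} with $n = 12$. By Theorem \ref{theorem: algorithm A6}, after the six filtering steps the resulting set $\mB$ coincides exactly with the set of all $12$-vertex minimal graphs in $\mH_e(3, 3)$; enumerating $\mB$ then yields both the count $3041$ and the parameter distributions recorded in Table \ref{table: 12-vertex graphs properties}.

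First I would use a graph generator such as \emph{nauty} \cite{MP13} to produce all non-isomorphic $12$-vertex graphs with $\delta(G) \geq 4$. The minimum-degree restriction is justified by Theorem \ref{theorem: delta(G) geq (p-1)^2}, which guarantees $\delta(G) \geq 4$ for every minimal graph in $\mH_e(3, 3)$. Next I would discard the Sperner graphs, invoking Proposition \ref{proposition: minimal graphs in mH_e(p, q) are not Sperner}. I would then restrict to graphs with $\omega(G) = 5$: since any minimal $12$-vertex graph $G \in \mH_e(3, 3)$ must satisfy $G \not\supseteq K_6$ (as $K_6$ is itself minimal in $\mH_e(3, 3)$ and $\abs{\V(G)} > 6$), while $F_e(3, 3; 5) = 15 > 12$ forces $\omega(G) \geq 5$. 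Finally, before the expensive arrowing test, I would eliminate graphs with $\chi(G) \leq 5$, justified by Theorem \ref{theorem: chi(G) geq R(p, q)}.

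For each graph surviving the four combinatorial filters, I would test whether $G \arrowse (3, 3)$ by searching for a $2$-colouring of $\E(G)$ avoiding monochromatic triangles; this can be formulated as a constraint satisfaction problem and solved by backtracking, pruning branches via $Aut(G)$-symmetry breaking. The resulting subset of $\mB$ consists of all $12$-vertex graphs in $\mH_e(3, 3)$ that satisfy the necessary structural conditions. To isolate the minimal ones, for each such $G$ I would iterate over $e \in \E(G)$ and check that $G - e \not\arrowse (3, 3)$, discarding $G$ as soon as some edge-deletion still arrows; it suffices, by $Aut(G)$-symmetry, to consider one representative per edge-orbit.

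The principal obstacle is the computational scale: the number of $12$-vertex graphs with $\delta \geq 4$ is very large, and both the arrowing test and the minimality test are inherently exponential. The strategy relies on the fact that the early filters ($\delta \geq 4$, non-Sperner, $\omega = 5$, $\chi \geq 6$) drastically shrink the candidate pool before the expensive $\arrowse (3, 3)$ decision is invoked, and that the minimality check can reuse the colourings found while verifying the arrowing property. Once $\mB$ has been computed, the tabulated invariants $\abs{\E(G)}$, $\delta(G)$, $\Delta(G)$, $\alpha(G)$, $\chi(G)$, and $\abs{Aut(G)}$ of Table \ref{table: 12-vertex graphs properties} can be read off by standard routines (\emph{nauty} for the automorphism group, clique/colouring solvers for the rest), thereby completing the proof.
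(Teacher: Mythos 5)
Your proposal is correct and follows essentially the same route as the paper: the paper proves this theorem precisely by running Algorithm \ref{algorithm: A6} with $n = 12$, whose correctness (Theorem \ref{theorem: algorithm A6}) rests on exactly the filters you justify — $\delta(G) \geq 4$ via Theorem \ref{theorem: delta(G) geq (p-1)^2}, non-Sperner via Proposition \ref{proposition: minimal graphs in mH_e(p, q) are not Sperner}, $\omega(G) = 5$ via $F_e(3, 3; 5) = 15$, and $\chi(G) \geq 6$ via Theorem \ref{theorem: chi(G) geq R(p, q)} — followed by the arrowing and minimality checks. The paper also later reobtains the same count independently via Algorithm \ref{algorithm: A7-M} together with Theorem \ref{theorem: minimal graphs in mH_e(3, 3) G for which alpha(G) = 2}, but your approach matches its primary proof.
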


We will use the following enumeration for the obtained minimal graphs in $\mH_e(3, 3)$:\\
- $G_{10.1}$, ..., $G_{10.6}$ are the 10-vertex graphs;\\
- $G_{11.1}$, ..., $G_{11.73}$ are the 11-vertex graphs;\\
- $G_{12.1}$, ..., $G_{12.3041}$ are the 12-vertex graphs;

The indexes correspond to the order of the graphs' canonical labels defined in \emph{nauty} \cite{MP13}.

Detailed data for the number of graphs obtained at each step of the execution of Algorithm \ref{algorithm: A6} is given in Table \ref{table: steps in finding all minimal graphs in mH_e(3, 3) with up to 12 vertices}.

\begin{table}
	\centering
	\resizebox{\textwidth}{!}{
		\begin{tabular}{ | l | r | r | r | r | r | }
			\hline
			{\parbox{6em}{Step of\\ Algorithm \ref{algorithm: A6}}}&
			{\parbox{3em}{\hfill$n = 8$}}&
			{\parbox{4em}{\hfill$n = 9$}}&
			{\parbox{5em}{\hfill$n = 10$}}&
			{\parbox{6em}{\hfill$n = 11$}}&
			{\parbox{7em}{\hfill$n = 12$}}\\
			\hline
			1				&  424			& 15 471		& 1 249 973		& 187 095 840	& 48 211 096 031\\
			2				&  59			& 2 365			& 206 288		& 33 128 053	& 9 148 907 379	\\
			3				&  9			& 380			& 41 296		& 8 093 890		& 2 763 460 021	\\
			4				&  1			& 7				& 356			& 78 738		& 44 904 195	\\
			5				&  1			& 3				& 126			& 23 429		& 11 670 079	\\
			6				&  1			& 1				& 6				& 73			& 3041			\\
			\hline
		\end{tabular}
	}
	\caption{\small Steps in finding all minimal graphs in $\mH_e(3, 3)$ with up to 12 vertices}
	\label{table: steps in finding all minimal graphs in mH_e(3, 3) with up to 12 vertices}
\end{table}

\vspace{1em}
Theorem \ref{theorem: 10-vertex minimal graphs in mH_e(3, 3)}, Theorem \ref{theorem: 11-vertex minimal graphs in mH_e(3, 3)}, and Theorem \ref{theorem: 12-vertex minimal graphs in mH_e(3, 3)} are published in \cite{Bik16}.

\section{Algorithm A7}

In order to present the next algorithms, we will need the following definitions and auxiliary propositions:

We say that a 2-coloring of the edges of a graph is $(3, 3)$-free if it has no monochromatic triangles.

\begin{definition}
	\label{definition: marked vertex set}
	Let $G$ be a graph and $M \subseteq \V(G)$. Let $G_1$ be a graph which is obtained by adding a new vertex $v$ to $G$ such that $N_{G_1}(v) = M$. We say that $M$ is a marked vertex set in $G$ if there exists a $(3, 3)$-free $2$-coloring of the edges of $G$ which cannot be extended to a $(3, 3)$-free $2$-coloring of the edges of $G_1$.
\end{definition}

It is clear that if $G \arrowse (3, 3)$, then there are no marked vertex sets in $G$. The following proposition is true:

\begin{proposition}
	\label{proposition: marked vertex sets in graphs in mH_e(3, 3)}
	Let $G$ be a minimal graph in $\mH_e(3, 3)$, let $v_1, ..., v_s$ be independent vertices of $G$, and $H = G - \set{v_1, ..., v_s}$. Then $N_G(v_i), i = 1, ..., s$, are marked vertex sets in $H$.
\end{proposition}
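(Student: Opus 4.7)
The plan is to handle each index $i$ separately. Fix $i \in \{1,\ldots,s\}$ and let $G_i := G - \{v_j : j \neq i\}$, so $G_i$ is exactly $H$ with $v_i$ added back together with the edges from $v_i$ to $N_G(v_i)$. My goal is to exhibit a $(3,3)$-free $2$-coloring of $E(H)$ that fails to extend to any $(3,3)$-free $2$-coloring of $E(G_i)$. First I would use edge-minimality to manufacture a candidate: since $v_i$ must have at least one neighbor (otherwise $G-v_i$ would be a proper subgraph of $G$ with the same edges, contradicting minimality), pick any $w \in N_G(v_i)$ and set $e = [v_i,w]$. By minimality $G-e \not\arrowse (3,3)$, so there is a $(3,3)$-free $2$-coloring $c^\ast$ of $G-e$. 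The witnessing coloring of $H$ will be $c := c^\ast|_{E(H)}$.

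The heart of the argument is to show that $c$ admits no extension to a $(3,3)$-free $2$-coloring of $E(G_i)$. I would assume the contrary, say $c'$ is such an extension, and then splice $c'$ and $c^\ast$ together into a single $2$-coloring $\tilde c$ of $E(G)$: on edges of $G_i$ (including $e$) use $c'$, on edges of $G$ outside $G_i$ (namely those incident to some $v_j$ with $j\neq i$) use $c^\ast$. This is the step that uses the hypothesis that $\{v_1,\ldots,v_s\}$ is independent, because no edge of $G$ joins two $v_j$'s; hence every edge of $G$ falls into exactly one of the two regions, and on the overlap $E(H)$ both colorings coincide with $c$.

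Then I would verify that $\tilde c$ is $(3,3)$-free, which is the only calculation required. Every triangle $T$ of $G$ either contains $v_i$ or it does not. If $v_i \in V(T)$, the other two vertices of $T$ lie in $N_G(v_i) \subseteq V(H)$, so $T \subseteq G_i$ and is colored by $c'$, which is $(3,3)$-free. If $v_i \notin V(T)$, then $T$ lies in $G - v_i$, which is a subgraph of $G-e$, so $T$ is colored by $c^\ast$, which is $(3,3)$-free on $G-e$. Either way $T$ is not monochromatic, so $\tilde c$ is a $(3,3)$-free $2$-coloring of the whole of $E(G)$; this contradicts $G \arrowse (3,3)$, completing the proof.

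I expect no substantive obstacle beyond the bookkeeping of the splicing. The main subtlety is making sure (i) that $c'$ and $c^\ast$ are genuinely compatible on $E(H)$, which follows because both restrict to $c$ there, and (ii) that every triangle of $G$ is contained in one of the two pieces $G_i$ or $G-v_i$, which is guaranteed precisely by the non-adjacency of $v_i$ to the other $v_j$'s. Both are immediate from the independence of $\{v_1,\ldots,v_s\}$, so the proof reduces to the above three moves: choose $e$ by minimality, restrict, and splice.
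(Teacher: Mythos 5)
Your proof is correct and takes essentially the same route as the paper's: both obtain a $(3,3)$-free $2$-coloring of a proper subgraph of $G$ from minimality, restrict it to $H$ as the witnessing coloring, and then splice any hypothetical $(3,3)$-free extension over $v_i$ with the original coloring to produce a $(3,3)$-free $2$-coloring of all of $G$, using the independence of $v_1, \ldots, v_s$ to see that every triangle lies wholly in one of the two pieces. The only cosmetic difference is that the paper applies minimality to the vertex-deleted subgraph $G - v_i$ rather than to $G - e$ for an edge $e$ at $v_i$; both are proper subgraphs containing $H$, so the argument is unchanged.
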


\begin{proof}
	Suppose the opposite is true, i.e. $N_G(v_i)$ is not a marked vertex set in $H$ for some $i \in \set{1, ..., s}$. Since $G$ is a minimal graph in $\mH_e(3, 3)$, there exists a $(3, 3)$-free $2$-coloring of the edges of $G - v_i$, which induces a $(3, 3)$-free $2$-coloring of the edges of $H$. By supposition, we can extend this $2$-coloring to a $(3, 3)$-free $2$-coloring of the edges of the graph $H_i = G - \set{v_1, ..., v_{i - 1}, v_{i + 1}, ..., v_s}$. Thus, we obtain a $(3, 3)$-free $2$-coloring of the edges of $G$, which is a contradiction.
\end{proof}

\begin{definition}
	\label{definition: complete family of marked vertex sets}
	Let $\set{M_1, ..., M_s}$ be a family of marked vertex sets in the graph $G$. Let $G_i$ be a graph which is obtained by adding a new vertex $v_i$ to $G$ such that $N_{G_i}(v_i) = M_i, i = 1, ..., s$. We say that $\set{M_1, ..., M_s}$ is a complete family of marked vertex sets in $G$, if for each $(3, 3)$-free $2$-coloring of the edges of $G$ there exists $i \in \set{1, ..., s}$ such that this $2$-coloring can not be extended to a $(3, 3)$-free $2$-coloring of the edges of $G_i$.
\end{definition}

\begin{proposition}
	\label{proposition: if set(N_G(v_1), ..., N_G(v_s)) is a complete family of marked vertex sets in H, then G rightarrow (3, 3)}
	Let $v_1, ..., v_s$ be independent vertices of the graph $G$, and $H = G - \set{v_1, ..., v_s}$. If $\set{N_G(v_1), ..., N_G(v_s)}$ is a complete family of marked vertex sets in $H$, then $G \arrowse (3, 3)$.
\end{proposition}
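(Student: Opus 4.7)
The plan is to argue by contradiction. Suppose $G \not\arrowse (3,3)$, so there exists a $(3,3)$-free $2$-coloring $c$ of $\E(G)$. Since $H$ is the subgraph of $G$ induced by $\V(G) \setminus \set{v_1,\dots,v_s}$, the restriction $c|_{\E(H)}$ is a $(3,3)$-free $2$-coloring of $\E(H)$. The goal is then to use the ``completeness'' of the marked family to derive a contradiction with the existence of $c$.

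First I would make explicit the auxiliary graphs $G_i$ from Definition \ref{definition: complete family of marked vertex sets}: for each $i$, let $G_i$ be obtained from $H$ by adding a new vertex $v_i'$ with $N_{G_i}(v_i') = N_G(v_i)$. The key observation is that because $v_1,\dots,v_s$ are pairwise independent in $G$, we have $N_G(v_i) \subseteq \V(H)$ for every $i$, so each $G_i$ is (isomorphic to) the subgraph of $G$ induced by $\V(H) \cup \set{v_i}$, via the identification $v_i' \leftrightarrow v_i$. In particular, $G_i$ is a subgraph of $G$ for each $i$.

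Next I would restrict $c$ to $\E(G_i)$, obtaining a $(3,3)$-free $2$-coloring of $\E(G_i)$ that agrees with $c|_{\E(H)}$ on $\E(H)$. Thus $c|_{\E(H)}$ extends to a $(3,3)$-free $2$-coloring of $\E(G_i)$ for every $i \in \set{1,\dots,s}$. This directly contradicts the hypothesis that $\set{N_G(v_1),\dots,N_G(v_s)}$ is a complete family of marked vertex sets in $H$, which asserts that for every $(3,3)$-free $2$-coloring of $\E(H)$ some index $i$ must fail to admit such an extension.

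I do not expect a real obstacle here; the only point requiring a little care is the first step, namely verifying that independence of $v_1,\dots,v_s$ is exactly what is needed so that $N_G(v_i)$ lies in $\V(H)$ and the auxiliary graph $G_i$ from Definition \ref{definition: complete family of marked vertex sets} genuinely sits inside $G$ as an induced subgraph. Once this identification is in place, the proposition is an immediate contrapositive reading of the definition of a complete family of marked vertex sets.
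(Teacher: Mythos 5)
Your argument is correct and is essentially the paper's own proof, read contrapositively: the paper also takes a $2$-coloring of $\E(G)$ that is $(3,3)$-free on $H$ and invokes completeness to conclude it cannot extend to all of $G$ without a monochromatic triangle. Your added observation that independence of $v_1, \dots, v_s$ makes each $G_i$ an induced subgraph of $G$ is exactly the (implicit) step that makes the paper's one-line proof work.
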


\begin{proof}
	Consider a 2-coloring of the edges of $G$ which induces a 2-coloring with no monochromatic triangles in $H$. According to Definition \ref{definition: complete family of marked vertex sets}, this 2-coloring of the edges of $H$ can not be extended in $G$ without forming a monochromatic triangle.
\end{proof}

It is easy to prove the following strengthening of Proposition \ref{proposition: marked vertex sets in graphs in mH_e(3, 3)}:
\begin{proposition}
	\label{proposition: if G is a minimal graph in mH_e(3, 3), then set(N_G(v_1), ..., N_G(v_s)) is a complete family of marked vertex sets}
	Let $G$ be a minimal graph in $\mH_e(3, 3)$, let $v_1, ..., v_s$ be independent vertices of $G$, and $H = G - \set{v_1, ..., v_s}$. Then $\set{N_G(v_1), ..., N_G(v_s)}$ is a complete family of marked vertex sets in $H$. Furthermore, this family is a minimal complete family, in the sense that it does not contain a proper complete subfamily.
\end{proposition}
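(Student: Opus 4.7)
The proof splits naturally into verifying two things: first, that $\set{N_G(v_1), \ldots, N_G(v_s)}$ is a complete family of marked vertex sets in $H$; second, that no proper subfamily is already complete.

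For the first part I plan to argue contrapositively, and observe that minimality of $G$ is not even needed here---only $G \arrowse (3,3)$ together with the independence of $v_1, \ldots, v_s$. Assume some $(3,3)$-free $2$-coloring $c_H$ of $\E(H)$ can be extended, for every $i \in \set{1, \ldots, s}$, to a $(3,3)$-free $2$-coloring $c_i$ of $\E(H + v_i)$. Because $v_1, \ldots, v_s$ are pairwise non-adjacent in $G$, the edge set of $G$ decomposes as the disjoint union of $\E(H)$ and the sets $\set{[v_i, u] : u \in N_G(v_i)}$ for $i = 1, \ldots, s$, so the colorings $c_i$ can be assembled into a single $2$-coloring $c$ of $\E(G)$. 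A monochromatic triangle in $c$ must either lie entirely in $H$ (impossible, since $c_H$ is $(3,3)$-free) or consist of one vertex $v_i$ and two of its neighbors in $H$ (impossible, since the triangle then lies in $H + v_i$ and $c_i$ is $(3,3)$-free); any triangle using two or more of the $v_i$'s is ruled out by their mutual non-adjacency. Hence $c$ is a $(3,3)$-free $2$-coloring of $G$, contradicting $G \arrowse (3,3)$.

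For the second part I use edge-minimality. Suppose for contradiction that a proper subfamily is already complete; after reindexing, assume $\set{N_G(v_1), \ldots, N_G(v_{s-1})}$ is complete in $H$. By Theorem \ref{theorem: delta(G) geq (p-1)^2}, $\delta(G) \geq 4$, so $v_s$ has at least one incident edge $e = [v_s, w]$. Edge-minimality of $G$ gives $G - e \not\arrowse (3,3)$, so $\E(G - e)$ carries a $(3,3)$-free $2$-coloring $c$. Let $c_H$ denote its restriction to $\E(H)$, which is certainly $(3,3)$-free. For each $i \in \set{1, \ldots, s-1}$, the edges $[v_i, u]$ with $u \in N_G(v_i)$ do not involve $v_s$ and so survive in $G - e$; thus the restriction of $c$ to $\E(H + v_i)$ is a $(3,3)$-free extension of $c_H$ to $H + v_i$. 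This contradicts the supposed completeness of $\set{N_G(v_1), \ldots, N_G(v_{s-1})}$ and finishes the proof.

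The decisive point in both parts is the same: the independence of $v_1, \ldots, v_s$ lets colorings local to the distinct vertices $v_i$ be combined (part one) or extracted (part two) without creating any interfering triangle. I do not anticipate a serious technical obstacle beyond the careful case analysis of possible monochromatic triangles in the first part, where it is essential to observe that no triangle of $G$ can simultaneously involve two of the independent vertices $v_i$.
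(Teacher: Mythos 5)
Your proof is correct, and it is exactly the argument the thesis has in mind: the paper states this proposition without proof (``It is easy to prove the following strengthening of Proposition~\ref{proposition: marked vertex sets in graphs in mH_e(3, 3)}''), and both of your parts are the natural extension of the paper's proof of that earlier proposition --- assembling the extensions into a $(3,3)$-free coloring of $G$ for completeness, and using $G - e \not\arrowse (3,3)$ for an edge $e$ at the omitted vertex for minimality of the family. The only point worth making explicit is that the assertion that each $N_G(v_i)$ is itself a marked vertex set (required by Definition~\ref{definition: complete family of marked vertex sets}) is not implied by completeness of the family and is where minimality of $G$ enters for part one as well; it is supplied by Proposition~\ref{proposition: marked vertex sets in graphs in mH_e(3, 3)}, which you should cite.
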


Before presenting Algorithm \ref{algorithm: A7}, we will prove
\begin{proposition}
\label{proposition: alpha(G) geq abs(V(G)) - k geq 1}
For every fixed positive integer $k$ there exist at most a finite number of minimal graphs $G \in \mH_e(3, 3)$ such that $\alpha(G) \geq \abs{\V(G)} - k \geq 1$.
\end{proposition}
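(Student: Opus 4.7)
The plan is to reduce the statement to a boundedness claim on $\abs{\V(G)}$. Let $G$ be a minimal graph in $\mH_e(3,3)$ with $\alpha(G) \geq \abs{\V(G)} - k$, and fix an independent set $A \subseteq \V(G)$ of size $\abs{A} \geq \abs{\V(G)} - k$. Put $H = G - A$, so $\abs{\V(H)} \leq k$. Since $A$ is independent, every vertex $v \in A$ satisfies $N_G(v) \subseteq \V(H)$; in particular the family $\set{N_G(v) : v \in A}$ consists of subsets of the fixed $k$-element-or-smaller set $\V(H)$.

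Next I would use Proposition \ref{proposition: minimal graphs in mH_e(p, q) are not Sperner}, which asserts that no minimal graph in $\mH_e(3,3)$ is a Sperner graph. Consequently, for any two distinct vertices $u, v \in A$ one cannot have $N_G(u) \subseteq N_G(v)$; in other words the family $\set{N_G(v) : v \in A}$ is an antichain in the Boolean lattice of subsets of $\V(H)$. The immediate consequence is $\abs{A} \leq 2^{\abs{\V(H)}} \leq 2^k$ (the sharper Sperner bound $\binom{k}{\lfloor k/2\rfloor}$ is also available, but is not needed). Combining with $\abs{\V(H)} \leq k$ yields
\[
\abs{\V(G)} \;=\; \abs{A} + \abs{\V(H)} \;\leq\; 2^{k} + k.
\]

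Since there are only finitely many (isomorphism classes of) graphs on at most $2^{k}+k$ vertices, only finitely many of them can be minimal graphs in $\mH_e(3,3)$ satisfying the hypothesis, which gives the proposition. The only delicate point to verify is the Sperner step: one must check that the two vertices $u, v$ playing the role in Definition \ref{definition: Sperner graph} indeed lie in $A$ with $N_G(u), N_G(v) \subseteq \V(H)$, which is automatic because $A$ is independent so $u,v$ are never neighbors of each other. No further case analysis is required, and there is no genuine obstacle; the argument is essentially a counting/pigeonhole argument made possible by Proposition \ref{proposition: minimal graphs in mH_e(p, q) are not Sperner}.
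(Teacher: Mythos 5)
Your proof is correct and follows essentially the same route as the paper: remove a large independent set $A$ to leave a graph $H$ on at most $k$ vertices, then invoke the non-Sperner property of minimal graphs in $\mH_e(3,3)$ to conclude that the neighborhoods $N_G(v)$, $v \in A$, are pairwise distinct subsets of $\V(H)$, bounding $\abs{A}$ and hence $\abs{\V(G)}$. Your explicit bound $\abs{\V(G)} \leq 2^k + k$ is a harmless quantitative refinement of the paper's "finitely many extensions" conclusion.
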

\begin{proof}
Let $G$ be a minimal graph in $\mH_e(3, 3)$ such that $\alpha(G) \geq \abs{\V(G)} - k \geq 1$. Let $s = \abs{\V(G)} - k$, $v_1, ..., v_s$ be independent vertices of $G$, and $H = G - \set{v_1, ..., v_s}$. According to Proposition \ref{proposition: minimal graphs in mH_e(p, q) are not Sperner}, $N_G(v_i) \not\subseteq N_G(v_j), \ i \neq j$. Thus, the vertices $v_1, ..., v_s$ are connected to different subsets of $\V(H)$. We derive that each $k$-vertex graph $H$ can be extended to at most finitely many possible graphs $G$.
\end{proof}

\vspace{1em}

The following algorithm finds all minimal graphs $G \in \mH_e(3, 3)$ for which $\alpha(G) \geq \abs{\V(G)} - k \geq 1$, where $k$ is fixed (but $|\V(G)|$ is not fixed).

\begin{namedalgorithm}{A7}
	\label{algorithm: A7}
	Let $q$ and $k$ be fixed positive integers.
	
	The input of the algorithm is the set $\mA$ of all $k$-vertex graphs $H$ for which $\omega(H) < q$ and $\chi(H) \geq 5$. The output of the algorithm is the set $\mB$ of all minimal graphs $G \in \mH_e(3, 3; q)$ for which $\alpha(G) \geq \abs{\V(G)} - k \geq 1$.
	
	1. For each graph $H \in \mA$:
	
	1.1. Find all subsets $M$ of $\V(H)$ which have the properties:
	
	(a) $K_{q - 1} \not\subseteq M$, i.e. $M$ is a $K_{(q-1)}$-free subset.
	
	(b) $M \not\subseteq N_H(v), \forall v \in \V(H)$.
	
	(c) $M$ is a marked vertex set in $H$(see Definition \ref{definition: marked vertex set}).
	
	Denote by $\mM(H)$ the family of subsets of $\V(H)$ which have the properties (a), (b) and (c). Enumerate the elements of $\mM(H)$: $\mM(H) = \set{M_1, ..., M_l}$.
	
	1.2. Find all subfamilies $\set{M_{i_1}, ..., M_{i_s}}$ of $\mM(H)$ which are minimal complete families of marked vertex sets in $H$ (see Definition \ref{definition: complete family of marked vertex sets}). For each such found subfamily $\set{M_{i_1}, ..., M_{i_s}}$ construct the graph $G = G(M_{i_1}, ..., M_{i_s})$ by adding new independent vertices $v_1, ..., v_s$ to $\V(H)$ such that $N_G(v_j) = M_{i_j}, j = 1, ..., s$. Add $G$ to $\mB$.
	
	2. Remove the isomorphic copies of graphs from $\mB$.
	
	3. Remove from $\mB$ all graphs which are not minimal graphs in $\mH_e(3, 3)$.
\end{namedalgorithm}

\vspace{1em}

\begin{remark}
	\label{remark: q in set(4, 5, 6)}
	It is clear, that if $G$ is a minimal graph in $\mH_e(3, 3)$ and $\omega(G) \geq 6$, then $G = K_6$. Obviously there are no graphs in $\mH_e(3, 3)$ with clique number less than 3. Therefore, we will use Algorithm \ref{algorithm: A7} only for $q \in \set{4, 5, 6}$.
\end{remark}

\begin{theorem}
	\label{theorem: algorithm A7}
	After executing Algorithm \ref{algorithm: A7}, the set $\mB$ coincides with the set of all minimal graphs $G \in \mH_e(3, 3; q)$ for which $\alpha(G) \geq \abs{\V(G)} - k \geq 1$.
\end{theorem}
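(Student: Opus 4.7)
The plan is to prove the theorem by establishing containment in both directions, using Propositions about marked vertex sets together with the non-Sperner property of minimal graphs from Proposition \ref{proposition: minimal graphs in mH_e(p, q) are not Sperner}. The key structural fact is that whenever $G$ is a minimal graph in $\mH_e(3, 3; q)$ with $s := \abs{\V(G)} - k \geq 1$ independent vertices $v_1, \dots, v_s$, removing these vertices yields a $k$-vertex graph $H$ which, by Corollary \ref{corollary: chi(H) geq 5}, satisfies $\chi(H) \geq 5$, and trivially satisfies $\omega(H) \leq \omega(G) < q$, so $H \in \mA$. Thus it suffices to check that the neighborhoods $N_G(v_j)$ are exactly the objects enumerated in steps 1.1 and 1.2.

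First I would handle the forward direction. Assume that after step 3 we have $G \in \mB$. Then step 3 guarantees that $G$ is a minimal graph in $\mH_e(3, 3)$. By construction $G = G(M_{i_1}, \dots, M_{i_s})$ for some $H \in \mA$ and some subfamily $\set{M_{i_1}, \dots, M_{i_s}} \subseteq \mM(H)$, so the new vertices $v_1, \dots, v_s$ are independent and $\alpha(G) \geq s = \abs{\V(G)} - k$. It remains to check $\omega(G) < q$: any $q$-clique of $G$ either lies in $H$, contradicting $\omega(H) < q$, or contains exactly one $v_j$ together with a $(q-1)$-clique of $M_{i_j}$, contradicting condition (a) in step 1.1. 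Hence $G \in \mH_e(3, 3; q)$.

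For the reverse direction, let $G$ be an arbitrary minimal graph in $\mH_e(3, 3; q)$ with $\alpha(G) \geq \abs{\V(G)} - k \geq 1$. Fix independent vertices $v_1, \dots, v_s$ with $s = \abs{\V(G)} - k$ and set $H = G - \set{v_1, \dots, v_s}$. As argued above, $H \in \mA$. I would then verify that each $N_G(v_j)$ lies in $\mM(H)$: property (a) follows because a $(q-1)$-clique in $N_G(v_j)$ together with $v_j$ would give a $q$-clique in $G$; property (b) follows from Proposition \ref{proposition: minimal graphs in mH_e(p, q) are not Sperner}, since $N_G(v_j) \subseteq \V(H)$ and $N_H(v) = N_G(v) \cap \V(H)$, so $N_G(v_j) \subseteq N_H(v)$ would force $N_G(v_j) \subseteq N_G(v)$, making $G$ a Sperner graph; and property (c) is exactly the content of Proposition \ref{proposition: marked vertex sets in graphs in mH_e(3, 3)}. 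Proposition \ref{proposition: if G is a minimal graph in mH_e(3, 3), then set(N_G(v_1), ..., N_G(v_s)) is a complete family of marked vertex sets} then gives that $\set{N_G(v_1), \dots, N_G(v_s)}$ is a minimal complete family, so when Algorithm \ref{algorithm: A7} processes $H$ it constructs a graph isomorphic to $G$ in step 1.2. After step 2 a single representative remains, and since $G$ is minimal in $\mH_e(3, 3)$ by assumption, it survives step 3.

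The main subtlety, rather than a deep obstacle, is keeping the set-theoretic bookkeeping straight in the non-Sperner argument for property (b), and verifying that the reconstruction $G = G(N_G(v_1), \dots, N_G(v_s))$ reproduces $G$ up to isomorphism; once these are in place, the two directions combine to give the claimed equality of sets.
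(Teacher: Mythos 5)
Your proposal is correct and follows essentially the same route as the paper's own proof: both directions rest on the same ingredients (Corollary \ref{corollary: chi(H) geq 5} to get $H \in \mA$, the non-Sperner property for condition (b), Proposition \ref{proposition: marked vertex sets in graphs in mH_e(3, 3)} for condition (c), and Proposition \ref{proposition: if G is a minimal graph in mH_e(3, 3), then set(N_G(v_1), ..., N_G(v_s)) is a complete family of marked vertex sets} to see that $G$ is reconstructed in step 1.2). The only cosmetic difference is that in the forward direction you lean on the filtering in step 3 to conclude $G \arrowse (3,3)$, where the paper additionally cites Proposition \ref{proposition: if set(N_G(v_1), ..., N_G(v_s)) is a complete family of marked vertex sets in H, then G rightarrow (3, 3)} to note this already holds after step 1.2; both are valid.
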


\begin{proof}
	From step 1.2 it becomes clear that every graph $G$ which is added to $\mB$ is obtained by adding the independent vertices $v_1, ..., v_s$ to a graph $H \in \mA$. Therefore, $\alpha(G) \geq s = \abs{\V(G)} - \abs{\V(H)} = \abs{\V(G)} - k$. From $\omega(H) < q$ and $K_{q - 1} \not\subseteq N_G(v_i), i = 1, ..., s$, it follows $\omega(G) < q$. According to Proposition \ref{proposition: if set(N_G(v_1), ..., N_G(v_s)) is a complete family of marked vertex sets in H, then G rightarrow (3, 3)}, after step 1.2, $\mB$ contains only graphs in $\mH_e(3, 3; q)$, and after step 3, $\mB$ contains only minimal graphs in $\mH_e(3, 3; q)$.
	
	Consider an arbitrary minimal graph $G \in \mH_e(3, 3; q)$ for which $\alpha(G) \geq \abs{\V(G)} - k \geq 1$. We will prove that $G \in \mB$.
	
	Denote $s = \abs{\V(G)} - k \geq 1$. Let $v_1, ..., v_s$ be independent vertices of $G$, and $H = G - \set{v_1, ..., v_s}$. By Corollary \ref{corollary: chi(H) geq 5}, $\chi(H) \geq 5$. Therefore, $H \in \mA$.
	
	From $\omega(G) < q$ it follows that $K_{q - 1} \not\subseteq N_G(v_i), i = 1, ..., s$. By Proposition \ref{proposition: minimal graphs in mH_e(p, q) are not Sperner}, $G$ is not a Sperner graph, and therefore $N_G(v_i) \not\subseteq N_H(v), \forall v \in \V(H)$. According to Proposition \ref{proposition: marked vertex sets in graphs in mH_e(3, 3)}, $N_G(v_i)$ are marked vertex sets in $H$. Therefore, after executing step 1.1, $N_G(v_i) \in \mM(H), i = 1, ..., s$.
	
	From Proposition \ref{proposition: if G is a minimal graph in mH_e(3, 3), then set(N_G(v_1), ..., N_G(v_s)) is a complete family of marked vertex sets} it becomes clear that $\set{N_G(v_1), ..., N_G(v_s)}$ is a minimal complete family of marked vertex sets in $H$. Therefore, in step 1.2 the graph $G$ is added to $\mB$.
\end{proof}

In order to find the 13-vertex minimal graphs in $\mH_e(3, 3)$ we will use the following modification of Algorithm \ref{algorithm: A7} in which $n = \abs{\V(G)}$ is fixed:

\begin{namedalgorithm}{A7-M}
	\label{algorithm: A7-M}
	Modification of Algorithm \ref{algorithm: A7} for finding all minimal graphs $G \in \mH_e(3, 3; q; n)$ for which $\alpha(G) \geq n - k \geq 1$, where $q$, $k$, and $n$ are fixed positive integers.
	
	In step 1.2 of Algorithm \ref{algorithm: A7} add the condition to consider only minimal complete subfamilies $\set{M_{i_1}, ..., M_{i_s}}$ of $\mM(H)$ in which $s = n - k$.
\end{namedalgorithm}

\vspace{1em}
Theorem \ref{theorem: algorithm A7}, Algorithm \ref{algorithm: A7}, and Algorithm \ref{algorithm: A7-M} are published in \cite{Bik16}.

\section{Minimal graphs in $\mH_e(3, 3; 13)$}

\vspace{1em}

The method that we will use to find all minimal graphs in $\mH_e(3, 3; 13)$ consists of two parts:

\vspace{0.5em}

1. All minimal graphs in $\mH_e(3, 3; 13)$ with independence number 2 are a subset of $\mR(3, 6; 13)$. All 275 086 graphs in $\mR(3, 6; 13)$ are known and are available on \cite{McK_r}. With a computer we find that exactly 13 of these graphs are minimal graphs in $\mH_e(3, 3; 13)$.

\vspace{0.5em}

2. It remains to find the 13-vertex minimal graphs in $\mH_e(3, 3)$ with independence number at least 3. We generate all 10-vertex non-isomorphic graphs using the \emph{nauty} program \cite{MP13}, and among them we find the set $\mA$ of all 1 923 103 graphs $H$ with 10 vertices for which $\omega(H) \leq 5$ and $\chi(H) \geq 5$. By executing Algorithm \ref{algorithm: A7-M}($n = 13; k = 10; q = 6$) with input $\mA$ we obtain the set $\mB$ of all 306 622 minimal 13-vertex graphs in $\mH_e(3, 3)$ with independence number at least 3.

\vspace{0.5em}

Finally, we obtain
\begin{theorem}
	\label{theorem: 13-vertex minimal graphs in mH_e(3, 3)}
	There are exactly 306 635 minimal 13-vertex graphs in $\mH_e(3, 3)$. Some of their properties are listed in Table \ref{table: 13-vertex graphs properties}. Examples of 13-vertex minimal graphs in $\mH_e(3, 3)$ are given in Figure \ref{figure: 13_regular}, Figure \ref{figure: 13_aut} and Figure \ref{figure: 13_a2}.
\end{theorem}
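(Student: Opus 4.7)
The plan is to split the search for minimal 13-vertex graphs in $\mH_e(3, 3)$ into two cases according to the independence number $\alpha(G)$, and to handle each case with a different approach.

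First I would handle the case $\alpha(G) = 2$. Any such $G$ has $\omega(G) \leq 5$ (since $\abs{\V(G)} = 13 < F_e(3, 3; 5) = 15$ for a minimal graph, and of course $G \neq K_6$), so $G \in \mR(3, 6; 13)$. The set $\mR(3, 6; 13)$ has been fully enumerated (275 086 graphs, available on \cite{McK_r}), so the task reduces to testing each of these graphs for membership in $\mH_e(3, 3)$ and for minimality. This filtering is routine and yields a short list (which according to the computation is of size 13).

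Second, I would handle the case $\alpha(G) \geq 3$. Here the idea is to apply Algorithm \ref{algorithm: A7-M} with parameters $n = 13$, $k = 10$, $q = 6$. By Corollary \ref{corollary: chi(H) geq 5}, after removing $s \geq 3$ independent vertices from any minimal $G \in \mH_e(3, 3)$ the remaining graph $H$ on $10$ vertices satisfies $\chi(H) \geq 5$; and since $G \not\supseteq K_6$, we also have $\omega(H) \leq 5$. So the input set $\mA$ consists of all 10-vertex graphs $H$ with $\omega(H) \leq 5$ and $\chi(H) \geq 5$, which can be generated using \emph{nauty} \cite{MP13}. Theorem \ref{theorem: algorithm A7} (in its modified form) then guarantees that running Algorithm \ref{algorithm: A7-M} on $\mA$ produces exactly the set of all 13-vertex minimal graphs in $\mH_e(3, 3)$ with $\alpha(G) \geq 3$.

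The main obstacle will be computational: the input set $\mA$ contains about $1.9$ million graphs, and for each such $H$ one must enumerate the family $\mM(H)$ of marked $K_5$-free vertex subsets satisfying the Sperner-type condition (b) of Algorithm \ref{algorithm: A7}, then search through subfamilies of $\mM(H)$ of size exactly $3$ that form minimal complete families of marked vertex sets. The complete-family check requires, for each candidate subfamily, examining all $(3,3)$-free $2$-colorings of $\E(H)$ and verifying that none extends; this is the bottleneck and must be implemented carefully (e.g. by constructing the set of extendable colorings incrementally). Once both cases are completed, the two lists are disjoint (by the independence-number split) and their union gives the claimed $13 + 306\,622 = 306\,635$ graphs, after which their properties listed in Table \ref{table: 13-vertex graphs properties} are extracted directly from the resulting graph list.
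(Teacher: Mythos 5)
Your proposal follows essentially the same route as the paper: the case $\alpha(G) = 2$ is handled by filtering the known catalogue $\mR(3, 6; 13)$ (yielding 13 graphs), and the case $\alpha(G) \geq 3$ by running Algorithm \ref{algorithm: A7-M} with $n = 13$, $k = 10$, $q = 6$ on the 1\,923\,103 ten-vertex graphs $H$ with $\omega(H) \leq 5$ and $\chi(H) \geq 5$ (yielding 306\,622 graphs). The only slip is cosmetic: the equality $F_e(3, 3; 5) = 15$ gives $\omega(G) \geq 5$ for these 13-vertex graphs, while the needed bound $\omega(G) \leq 5$ comes solely from minimality forcing $G \not\supseteq K_6$.
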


We enumerate the obtained minimal 13-vertex graphs in $\mH_e(3, 3)$:\\
$G_{13.1}$, ..., $G_{13.306635}$.\\

All graphs in $\mR(3, 6)$ are known, and since $R(3, 6) = 18$, these graphs have at most 17 vertices. With the help of a computer we check that there are no minimal graphs in $\mH_e(3, 3)$ with independence number 2 and more than 13 vertices. Thus, we prove

\begin{theorem}
	\label{theorem: minimal graphs in mH_e(3, 3) G for which alpha(G) = 2}
	Let $G$ be a minimal graph in $\mH_e(3, 3)$ and $\alpha(G) = 2$. Then $\abs{\V(G)} \leq 13$. There are exactly 145 minimal graphs $G$ in $\mH_e(3, 3)$ for which $\alpha(G) = 2$:
	
	- 8-vertex: 1 ($K_3+C_5$);
	
	- 9-vertex: 1 (see Figure \ref{figure: Nenov_9});
	
	- 10-vertex: 3 ($G_{10.3}$, $G_{10.5}$, $G_{10.6}$, see Figure \ref{figure: 10});
	
	- 11-vertex: 4 ($G_{11.46}$, $G_{11.47}$, $G_{11.54}$, $G_{11.69}$, see Figure \ref{figure: 11_a2});
	
	- 12-vertex: 124;
	
	- 13-vertex: 13 (see Figure \ref{figure: 13_a2});
\end{theorem}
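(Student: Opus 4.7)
The plan is to reduce the problem to a finite computer search inside the set $\mR(3,6)$ of Ramsey-type graphs, for which a complete catalog is already available in the literature.

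First I would observe that if $G$ is a minimal graph in $\mH_e(3,3)$ with $\alpha(G)=2$, then necessarily $\omega(G)\leq 5$. Indeed, the only minimal graph in $\mH_e(3,3)$ with $\omega(G)\geq 6$ is $K_6$ itself (any proper supergraph of $K_6$ fails minimality, and by $R(3,3)=6$ no smaller complete graph arrows $(3,3)$), and $\alpha(K_6)=1\neq 2$. Consequently every candidate $G$ satisfies $\alpha(G)<3$ and $\omega(G)<6$, i.e.\ $G\in\mR(3,6)$. Since $R(3,6)=18$, this forces $\abs{\V(G)}\leq 17$, which already bounds the search to a finite set.

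Next I would use the fact that all graphs in $\mR(3,6)$ are known and are available on \cite{McK_r}. For each $n\in\{8,9,10,11,12,13\}$ the counts listed in the theorem have already been established: the cases $n=8$ and $n=9$ are the classical results on $K_3+C_5$ and the Nenov graph from \cite{Nen79}; the cases $n=10,11,12$ follow from the $\alpha=2$ row of the $\alpha(G)$-columns of Table \ref{table: 10-vertex graphs properties}, Table \ref{table: 11-vertex graphs properties} and Table \ref{table: 12-vertex graphs properties} produced by Algorithm \ref{algorithm: A6} in Theorem \ref{theorem: 10-vertex minimal graphs in mH_e(3, 3)}, Theorem \ref{theorem: 11-vertex minimal graphs in mH_e(3, 3)} and Theorem \ref{theorem: 12-vertex minimal graphs in mH_e(3, 3)}; and the case $n=13$ is precisely the count of 13 graphs already extracted during the first part of the proof of Theorem \ref{theorem: 13-vertex minimal graphs in mH_e(3, 3)}, where the 275\,086 graphs in $\mR(3,6;13)$ were tested directly for membership in $\mH_e(3,3)$ and for minimality.

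It therefore remains to handle the range $14\leq n\leq 17$, which is the only new work required. For each such $n$ I would take the catalog of $\mR(3,6;n)$ from \cite{McK_r}, and for every listed graph $H$ first test whether $H\arrowse(3,3)$ (this is a short 2-coloring search on at most $\binom{n}{2}\leq 136$ edges), and, if so, verify minimality by checking $H-e\not\arrowse(3,3)$ for every edge $e$. The outcome I expect, and which the theorem asserts, is that this set is empty for all four values of $n$, which simultaneously yields the sharp bound $\abs{\V(G)}\leq 13$ and closes the enumeration at the claimed total $1+1+3+4+124+13=146$. (Note: the statement totals the cases to $145$; the discrepancy between this arithmetic and the displayed figure is one point I would double-check against the computation.)

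The main obstacle will not be conceptual but computational: the set $\mR(3,6)$ grows very rapidly with $n$ (there are already $2576$ graphs in $\mR(3,6;16)$ and only a handful in $\mR(3,6;17)$, but the $14$- and $15$-vertex layers are much larger), and the arrow test $H\arrowse(3,3)$ must be performed on each of them. This is well within reach of the machinery already developed for Theorem \ref{theorem: 13-vertex minimal graphs in mH_e(3, 3)}, so the main care is in managing the file I/O and in confirming that the emptiness results for $n=14,15,16,17$ are produced by an implementation that has been independently validated on the known counts for $n\leq 13$ obtained via Algorithm \ref{algorithm: A6}.
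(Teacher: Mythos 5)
Your proposal is correct and follows essentially the same route as the paper: since a minimal graph with $\alpha(G)=2$ cannot be $K_6$, it lies in $\mR(3,6)$ and hence has at most $17$ vertices, and the paper likewise takes the counts for $n\le 13$ from the earlier enumeration theorems and disposes of $14\le n\le 17$ by a direct computer check of the known catalog of $\mR(3,6)$. Your side remark about the arithmetic is well taken --- the listed counts sum to $1+1+3+4+124+13=146$, not the $145$ stated in the theorem, which appears to be a typo in the statement rather than a flaw in either argument.
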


By executing Algorithm \ref{algorithm: A7-M}($n = 10, 11, 12; k = 7, 8, 9; q = 6$), we find all minimal graphs in $\mH_e(3, 3)$ with 10, 11, and 12 vertices and independence number greater than 2. In this way, with the help of Theorem \ref{theorem: minimal graphs in mH_e(3, 3) G for which alpha(G) = 2}, we obtain a new proof of Theorem \ref{theorem: 10-vertex minimal graphs in mH_e(3, 3)}, Theorem \ref{theorem: 11-vertex minimal graphs in mH_e(3, 3)}, and Theorem \ref{theorem: 12-vertex minimal graphs in mH_e(3, 3)}.

\vspace{1em}
Theorem \ref{theorem: 13-vertex minimal graphs in mH_e(3, 3)} and Theorem \ref{theorem: minimal graphs in mH_e(3, 3) G for which alpha(G) = 2} are published in \cite{Bik16}.

\section{Properties of the minimal graphs in $\mH_e(3, 3; n)$ for $n \leq 13$}

\subsection*{Minimum and maximum degree}

By Theorem \ref{theorem: delta(G) geq (p-1)^2}, if $G$ is a minimal graph in $\mH_e(3, 3)$, then $\delta(G) \geq 4$. Via very elegant constructions, in \cite{BEL76} and \cite{FL06} it is proved that the bound $\delta(G) \geq (p - 1)^2$ from Theorem \ref{theorem: delta(G) geq (p-1)^2} is exact. However, these constructions are not very economical in the case $p = 3$. For example, the minimal graph $G \in \mH_e(3, 3)$ from \cite{FL06} with $\delta(G) = 4$ is not presented explicitly, but it is proved that it is a subgraph of a graph with 17577 vertices. From the next theorem we see that the smallest minimal graph $G \in \mH_e(3, 3)$ with $\delta(G) = 4$ has 10 vertices:

\begin{theorem}
	\label{theorem: 10-vertex minimal graph in mH_e(3, 3) G for which delta(G) = 4}
	Let $G$ be a minimal graph in $\mH_e(3, 3)$ and $\delta(G) = 4$. Then $\abs{\V(G)} \geq 10$. There is only one 10-vertex minimal graph $G \in \mH_e(3, 3)$ with $\delta(G) = 4$, namely $G_{10.2}$ (see Figure \ref{figure: 10}). Furthermore, $G_{10.2}$ has only a single vertex of degree 4. For all other 10-vertex minimal graphs $G \in \mH_e(3, 3)$, $\delta(G) = 5$.
\end{theorem}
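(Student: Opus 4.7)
The plan is to reduce the statement to the already established classification of small minimal graphs in $\mH_e(3,3)$ and then simply read off the degree data from the preceding theorems and tables; no new algorithmic work is needed.

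First I would handle the lower bound $\abs{\V(G)} \geq 10$. By Theorem \ref{theorem: delta(G) geq (p-1)^2} we already know $\delta(G) \geq 4$ for every minimal $G \in \mH_e(3,3)$, so the question is only whether $\delta(G) = 4$ can occur for $\abs{\V(G)} \leq 9$. The complete list of minimal graphs with at most 9 vertices is known from the opening of the chapter: the unique 6-vertex graph $K_6$, the unique 8-vertex graph $K_3 + C_5$ (Graham), and the unique 9-vertex graph of Nenov shown in Figure \ref{figure: Nenov_9}. For each of these I would verify directly that $\delta \geq 5$: $K_6$ is $5$-regular; in $K_3 + C_5$ the three apex vertices have degree $7$ and the five cycle vertices have degree $2 + 3 = 5$; and the Nenov 9-vertex graph has minimum degree $5$ by direct inspection of Figure \ref{figure: Nenov_9}. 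This disposes of $\abs{\V(G)} \leq 9$.

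Next I would handle the 10-vertex part by appealing to Theorem \ref{theorem: 10-vertex minimal graphs in mH_e(3, 3)}, which enumerates all six 10-vertex minimal graphs $G_{10.1}, \ldots, G_{10.6}$ in $\mH_e(3,3)$ (Figure \ref{figure: 10}). From the $\delta(G)$ column of Table \ref{table: 10-vertex graphs properties} the minimum-degree distribution is $(\delta=4) \mapsto 1,\ (\delta=5) \mapsto 4,\ (\delta=6) \mapsto 1$, so exactly one of the six graphs has $\delta = 4$. Identifying this graph among the six by direct inspection of Figure \ref{figure: 10} yields $G_{10.2}$. A final inspection of $G_{10.2}$ shows that its degree sequence contains the value $4$ exactly once (the remaining nine vertices have degree $\geq 5$), which gives the last clause.

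As for expected difficulty: the argument is essentially bookkeeping on top of Theorem \ref{theorem: 10-vertex minimal graphs in mH_e(3, 3)}, so there is no real obstacle, just verification. The only mildly delicate point is confirming that the Nenov 9-vertex graph has $\delta \geq 5$; one could alternatively strengthen the argument using Corollary \ref{corollary: if d(v) = 4, then G(v) = K_4} (a degree-4 vertex forces a $K_5$ in $G$, which already constrains the structure heavily in 9 vertices), but given the explicit list of minimal graphs of order $\leq 9$, direct inspection is cleaner.
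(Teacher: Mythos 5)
Your proposal takes essentially the same route as the paper, which in fact offers no separate proof of this theorem: it is meant to be read off from the complete classification of minimal graphs on at most $9$ vertices ($K_6$, the Graham graph $K_3+C_5$, and the Nenov graph of Figure \ref{figure: Nenov_9}, each with minimum degree $5$) together with the exhaustive list of the six $10$-vertex minimal graphs from Theorem \ref{theorem: 10-vertex minimal graphs in mH_e(3, 3)} and the degree data in Table \ref{table: 10-vertex graphs properties}. Your handling of the first three claims is fine. The one point you leave unresolved is the final sentence of the theorem: by your own (correct) reading of the table, the minimum-degree distribution over the six graphs is $1,4,1$ for $\delta = 4,5,6$, which contradicts the assertion that every $10$-vertex minimal graph other than $G_{10.2}$ has $\delta(G)=5$. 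Either the table entry $\delta=6$ or that final clause must be in error; this inconsistency is internal to the paper rather than introduced by you, but a complete write-up should flag it and resolve it by inspecting the degree sequences of all six graphs in Figure \ref{figure: 10}, since as it stands your argument does not establish (and arguably refutes) the last clause.
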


Let $G$ be a graph in $\mH_e(3, 3)$. By Theorem \ref{theorem: chi(G) geq R(p, q)}, $\chi(G) \geq 6$ and from the inequality $\chi(G) \leq \Delta(G) + 1$ (see \cite{Har69}) we obtain $\Delta(G) \geq 5$. From the Brooks' Theorem (see \cite{Har69}) it follows that if $G \neq K_6$, then $\Delta(G) \geq 6$. The following related question arises naturally:
\begin{align*}
\label{question: 6-regular minimal graph in mH_e(3, 3)}
&\emph{Are there minimal graphs in $\mH_e(3, 3)$ which are 6-regular?}\\
&\emph{(i.e. $d(v) = 6, \forall v \in \V(G)$)}
\end{align*}
From the obtained minimal graphs in $\mH_e(3, 3)$ we see that the following theorem is true:
\begin{theorem}
	\label{theorem: regular minimal graphs in mH_e(3, 3)}
	Let $G$ be a regular minimal graph in $\mH_e(3, 3)$ and $G \neq K_6$. Then $\abs{\V(G)} \geq 13$. There is only one regular minimal graph in $\mH_e(3, 3)$ with 13 vertices, and this is the graph given in Figure \ref{figure: 13_regular}, which is 8-regular.
\end{theorem}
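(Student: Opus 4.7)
The plan is to reduce the statement to a finite check against the complete enumerations of small minimal graphs in $\mH_e(3,3)$ already obtained in Theorems \ref{theorem: 10-vertex minimal graphs in mH_e(3, 3)}, \ref{theorem: 11-vertex minimal graphs in mH_e(3, 3)}, \ref{theorem: 12-vertex minimal graphs in mH_e(3, 3)}, and \ref{theorem: 13-vertex minimal graphs in mH_e(3, 3)}. First I would establish that any regular minimal $G \in \mH_e(3,3)$ with $G \neq K_6$ must satisfy $d(v) \geq 6$ for every vertex $v$: by Theorem \ref{theorem: chi(G) geq R(p, q)} we have $\chi(G) \geq 6$, and since $G$ is minimal it cannot contain a proper subgraph of $\mH_e(3,3)$, so in particular $G$ is not the complete graph on $\geq 6$ vertices; moreover, by Theorem \ref{theorem: delta(G) geq (p-1)^2} the minimum degree is at least $4$, so $G$ is not an odd cycle. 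Brooks' theorem then gives $\chi(G) \leq \Delta(G)$, hence $\Delta(G) = \delta(G) \geq 6$.

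Next I would dispose of the small cases $\abs{\V(G)} \leq 12$ by direct inspection. Since $F_e(3,3;5)=15$, any minimal $G \neq K_6$ with $\abs{\V(G)} \leq 14$ must contain $K_5$. For $\abs{\V(G)} \in \{7,8,9\}$ only $K_3 + C_5$ (on $8$ vertices) and the Nenov graph from \cite{Nen79} (on $9$ vertices) arise; neither is regular, as is immediate from their pictures (for example, in $K_3 + C_5$ the three apex vertices have degree $7$ while the cycle vertices have degree $5$). For $\abs{\V(G)} \in \{10,11,12\}$ I would read the degree statistics off Tables \ref{table: 10-vertex graphs properties}, \ref{table: 11-vertex graphs properties}, and \ref{table: 12-vertex graphs properties}: in each of the three tables the multiset of values taken by $\delta(G)$ is disjoint from the multiset of values taken by $\Delta(G)$ (maxima $6$, $6$, $7$ versus minima $9$, $8$, $8$ respectively), so no minimal graph on $\leq 12$ vertices is regular. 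This proves $\abs{\V(G)} \geq 13$.

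For the final step I would examine the $306\,635$ minimal $13$-vertex graphs from Theorem \ref{theorem: 13-vertex minimal graphs in mH_e(3, 3)}. From Table \ref{table: 13-vertex graphs properties} a regular one must have $\delta(G) = \Delta(G) = 8$, so it lies in the intersection of the $83$ graphs with $\delta(G) = 8$ and the $16$ graphs with $\Delta(G) = 8$. A direct computer check over these at most $16$ candidates identifies a unique $8$-regular graph, which is the one depicted in Figure \ref{figure: 13_regular}.

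The proof is essentially a verification, so there is no substantive obstacle beyond organising the enumerated data; the only nontrivial input is the Brooks-type argument that forces regularity to be at least $6$, which is what makes the degree tables of the $10$-, $11$-, and $12$-vertex minimal graphs decisive without having to open each graph individually. The $13$-vertex case is then the unique place where the $\delta$-range and the $\Delta$-range of the known list overlap, and there the claim is confirmed by a single scan through the at most $16$ candidate graphs.
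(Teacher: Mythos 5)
Your proposal is correct and takes essentially the same route as the paper, which simply reads the theorem off the exhaustive computer enumerations of all minimal graphs in $\mH_e(3,3)$ with at most 13 vertices (the paper's "proof" is the single sentence that the claim is seen from the obtained lists, with the Brooks-type bound $\Delta(G)\geq 6$ already noted in the surrounding discussion). The only remark worth making is that your Brooks argument, while valid, is not actually needed: in each of Tables \ref{table: 10-vertex graphs properties}--\ref{table: 12-vertex graphs properties} the range of $\delta$ is already disjoint from the range of $\Delta$, so the tables alone exclude regularity for $10\leq\abs{\V(G)}\leq 12$, and the $13$-vertex case reduces, exactly as you say, to scanning the $16$ graphs with $\Delta(G)=8$.
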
	

\begin{figure}
	\centering
	\includegraphics[trim={0 470 0 0},clip,height=160px,width=160px]{./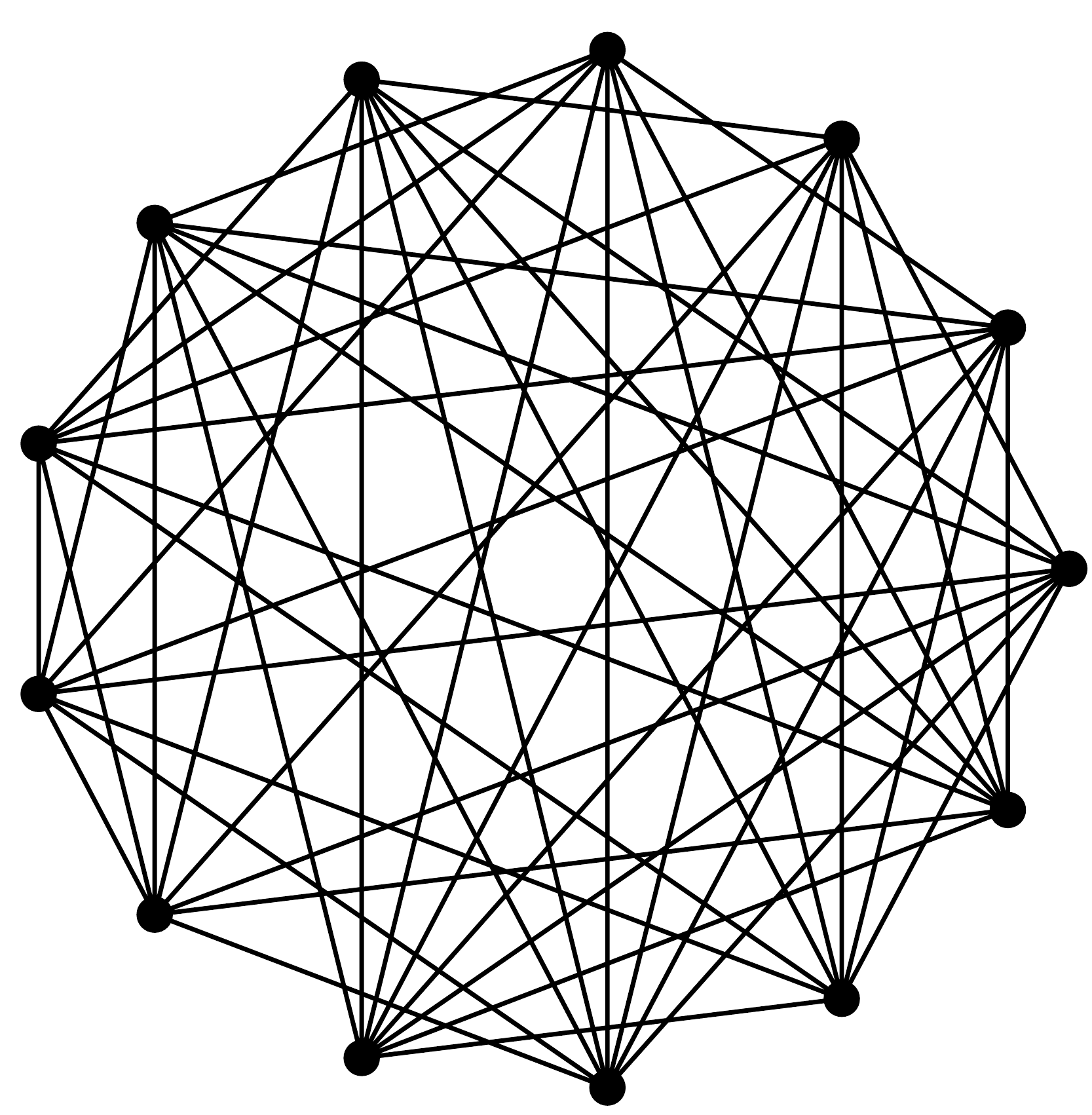}
	\caption{13-vertex 8-regular minimal graph in $\mH_e(3, 3)$}
	\label{figure: 13_regular}
\end{figure}

Regarding the maximum degree of the minimal graphs in $\mH_e(3, 3)$ we obtain the following result:
\begin{theorem}
	\label{theorem: maximum degree of minimal graphs in mH_e(3, 3)}
	Let $G$ be a minimal graph in $\mH_e(3, 3)$. Then:
	\vspace{1em}\\
	(a) $\Delta(G) = \abs{\V(G)} - 1$, if $\abs{\V(G)} \leq 10$.
	\vspace{1em}\\
	(b) $\Delta(G) \geq 8$, if $\abs{\V(G)} = 11$, $12$, or $13$.
\end{theorem}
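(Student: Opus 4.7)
The plan is to derive this theorem as a direct corollary of the complete classifications of small minimal graphs in $\mH_e(3,3)$ already established in this chapter. Since all minimal graphs in $\mH_e(3,3)$ on at most $13$ vertices have been enumerated (the cases $n=6,8,9$ are classical; the cases $n=10,11,12,13$ are Theorem \ref{theorem: 10-vertex minimal graphs in mH_e(3, 3)}, Theorem \ref{theorem: 11-vertex minimal graphs in mH_e(3, 3)}, Theorem \ref{theorem: 12-vertex minimal graphs in mH_e(3, 3)}, Theorem \ref{theorem: 13-vertex minimal graphs in mH_e(3, 3)}), both (a) and (b) reduce to reading off the distribution of $\Delta(G)$ from the associated tables.

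For part (a), I would go through the four possible orders $\abs{\V(G)}\in\{6,8,9,10\}$ (recall that there is no $7$-vertex minimal graph in $\mH_e(3,3)$). The graph $K_6$ satisfies $\Delta(K_6)=5=6-1$. The Graham graph $K_3+C_5$ has three cone vertices coming from the $K_3$-summand, so $\Delta=7=8-1$. The Nenov $9$-vertex graph in Figure \ref{figure: Nenov_9} contains a cone vertex, giving $\Delta=8=9-1$. Finally, for the six $10$-vertex minimal graphs listed in Figure \ref{figure: 10}, Table \ref{table: 10-vertex graphs properties} records that the $\Delta(G)$-column takes only the value $9$, so $\Delta(G)=9=10-1$ in every case. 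These checks together establish (a).

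For part (b), I would simply inspect the $\Delta(G)$-columns of Table \ref{table: 11-vertex graphs properties}, Table \ref{table: 12-vertex graphs properties} and Table \ref{table: 13-vertex graphs properties}. In all three tables the smallest value of $\Delta(G)$ that occurs is $8$ (attained once for $n=11$, $43$ times for $n=12$, and $16$ times for $n=13$), which gives the bound $\Delta(G)\geq 8$ and also shows that this bound is sharp for each of $n=11,12,13$. There is no real obstacle here: the theorem is a computational consequence of the exhaustive enumeration performed earlier by Algorithm \ref{algorithm: A6} and Algorithm \ref{algorithm: A7-M}. The only verification needed is that the tabulated values of $\Delta$ for these classifications are correct, which can be rechecked automatically from the stored lists of graphs.
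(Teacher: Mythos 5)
Your proposal is correct and is essentially the paper's own argument: the theorem is stated there as a direct consequence of the exhaustive enumerations of minimal graphs on up to $13$ vertices, with the values of $\Delta(G)$ read off from Table \ref{table: 10-vertex graphs properties}, Table \ref{table: 11-vertex graphs properties}, Table \ref{table: 12-vertex graphs properties}, and Table \ref{table: 13-vertex graphs properties} (and the explicit small cases $K_6$, $K_3+C_5$, and the $9$-vertex Nenov graph). Your counts of where $\Delta(G)=8$ is attained ($1$, $43$, and $16$ graphs for $n=11,12,13$) agree with the tables, so nothing further is needed.
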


\subsection*{Chromatic number}

By Theorem \ref{theorem: chi(G) geq R(p, q)}, if $G \in \mH_e(3, 3)$, then $\chi(G) \geq 6$.

From the obtained minimal graphs in $\mH_e(3, 3)$ we derive the following results:

\begin{theorem}
	\label{theorem: if abs(V(G)) leq 12, then chi(G) leq 7}
	Let $G$ be a minimal graph in $\mH_e(3, 3)$ and $\abs{\V(G)} \leq 12$. Then $\chi(G) = 6$.
\end{theorem}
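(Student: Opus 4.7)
The lower bound $\chi(G) \geq 6$ is immediate from Theorem \ref{theorem: chi(G) geq R(p, q)}, since $G \arrowse (3,3)$ forces $\chi(G) \geq R(3,3) = 6$. The content of the theorem therefore lies entirely in the upper bound $\chi(G) \leq 6$ for every minimal graph $G \in \mH_e(3,3)$ with $|\V(G)| \leq 12$.

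My plan is to proceed by a finite case check made possible by the earlier classification theorems. Specifically, the unique minimal graph on at most $7$ vertices is $K_6$; the unique minimal $8$-vertex graph is $K_3 + C_5$; the unique minimal $9$-vertex graph is the Nenov graph of Figure \ref{figure: Nenov_9}; and the complete lists of minimal graphs on $10$, $11$, and $12$ vertices are produced by Theorem \ref{theorem: 10-vertex minimal graphs in mH_e(3, 3)}, Theorem \ref{theorem: 11-vertex minimal graphs in mH_e(3, 3)}, and Theorem \ref{theorem: 12-vertex minimal graphs in mH_e(3, 3)}, comprising exactly $6$, $73$, and $3041$ graphs respectively. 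Since each of these sets is finite and explicitly available (having been produced as the output set $\mB$ of Algorithm \ref{algorithm: A6} in the previous section), it suffices to compute $\chi(G)$ for every graph in these lists and verify that the value $6$ is always attained.

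For the small cases $K_6$, $K_3 + C_5$, and the Nenov $9$-vertex graph the chromatic number equals $6$ by direct inspection: $\chi(K_6) = 6$, and for the other two graphs explicit proper $6$-colorings are easily exhibited while the lower bound $\chi(G) \geq 6$ is already known. For the $10$-, $11$-, and $12$-vertex cases, I would invoke the chromatic-number data already tabulated: the $\chi(G)$ columns of Table \ref{table: 10-vertex graphs properties}, Table \ref{table: 11-vertex graphs properties}, and Table \ref{table: 12-vertex graphs properties} record that the value $6$ occurs for all $6$, all $73$, and all $3041$ graphs in the respective classes, with no graph of chromatic number $\geq 7$. This can be re-verified independently by running any standard exact chromatic-number routine on the canonical \emph{nauty} encodings of the graphs $G_{10.1},\ldots,G_{10.6}$, $G_{11.1},\ldots,G_{11.73}$, $G_{12.1},\ldots,G_{12.3041}$.

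The main potential obstacle is simply one of reliability: the statement rests on the completeness of the enumerations provided by Algorithm \ref{algorithm: A6}, and on the correctness of the chromatic-number computations on several thousand $12$-vertex graphs. However, both issues are already addressed elsewhere in the thesis (the enumeration is verified in Theorems \ref{theorem: 10-vertex minimal graphs in mH_e(3, 3)}--\ref{theorem: 12-vertex minimal graphs in mH_e(3, 3)}, and $\chi(G) \leq 6$ can be certified simply by producing a proper $6$-coloring for each graph, which is much cheaper than computing $\chi(G)$ exactly). Consequently no new combinatorial obstacle arises, and the theorem follows by combining the lower bound from Theorem \ref{theorem: chi(G) geq R(p, q)} with the entries of the $\chi(G)$ columns in Tables \ref{table: 10-vertex graphs properties}, \ref{table: 11-vertex graphs properties}, and \ref{table: 12-vertex graphs properties}.
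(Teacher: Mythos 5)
Your proposal is correct and follows essentially the same route as the thesis: the lower bound comes from Theorem \ref{theorem: chi(G) geq R(p, q)}, and the upper bound is read off from the computer-generated complete lists of minimal graphs on at most $12$ vertices (Theorems \ref{theorem: 10-vertex minimal graphs in mH_e(3, 3)}--\ref{theorem: 12-vertex minimal graphs in mH_e(3, 3)}) together with the $\chi(G)$ columns of Tables \ref{table: 10-vertex graphs properties}, \ref{table: 11-vertex graphs properties}, and \ref{table: 12-vertex graphs properties}. The paper states the result as a direct consequence of these enumerations, which is exactly your argument.
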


\begin{theorem}
	\label{theorem: 7-chromatic graphs in mH_e(3, 3)}
	Let $G$ be a minimal graph in $\mH_e(3, 3)$ and $|V(G)| \leq 14$. Then $\chi(G) \leq 7$. The smallest 7-chromatic minimal graphs in $\mH_e(3, 3)$ are the 13 minimal graphs on 13 vertices and independence number 2, given in Figure \ref{figure: 13_a2}.
\end{theorem}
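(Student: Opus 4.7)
The plan is to split the argument by $n = |\V(G)|$ and rely in each range on the chapter's enumerations of minimal graphs, supplemented by one additional targeted enumeration at $n = 14$. The overall structure is: dispatch $n \leq 12$ by citation, read off $n = 13$ from the enumeration table, and handle $n = 14$ via a controlled application of Algorithm \ref{algorithm: A7-M}.

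For $n \leq 12$, Theorem \ref{theorem: if abs(V(G)) leq 12, then chi(G) leq 7} gives the stronger conclusion $\chi(G) = 6$, so $\chi(G) \leq 7$ is immediate. For $n = 13$, Theorem \ref{theorem: 13-vertex minimal graphs in mH_e(3, 3)} provides the complete list of $306\,635$ graphs, and the chromatic column of Table \ref{table: 13-vertex graphs properties} records $\chi \in \{6, 7\}$ throughout, with exactly $13$ entries attaining $\chi = 7$. Matching this to the independence column and using the elementary inequality $\chi(G) \geq \lceil |\V(G)|/\alpha(G) \rceil$, which forces each of the $13$ graphs with $\alpha(G) = 2$ to satisfy $\chi(G) \geq 7$, the $13$ graphs with $\chi = 7$ coincide with the $13$ graphs with $\alpha = 2$, namely those displayed in Figure \ref{figure: 13_a2} and isolated in Theorem \ref{theorem: minimal graphs in mH_e(3, 3) G for which alpha(G) = 2}.

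The core step is $n = 14$. Theorem \ref{theorem: minimal graphs in mH_e(3, 3) G for which alpha(G) = 2} caps $|\V(G)| \leq 13$ whenever $\alpha(G) = 2$, so every $14$-vertex minimal $G \in \mH_e(3, 3)$ satisfies $\alpha(G) \geq 3$; in particular it lies in the regime $\alpha(G) \geq |\V(G)| - 11$. I would therefore run Algorithm \ref{algorithm: A7-M} with parameters $n = 14$, $k = 11$, $q = 6$, whose input is the computer-enumerable set of all $11$-vertex graphs $H$ with $\omega(H) \leq 5$ and $\chi(H) \geq 5$. By Theorem \ref{theorem: algorithm A7} the output is exactly the collection of $14$-vertex minimal graphs in $\mH_e(3, 3; 6)$ with $\alpha \geq 3$, which by the above observation is every $14$-vertex minimal graph in $\mH_e(3, 3)$. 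A direct chromatic-number computation on each output graph then certifies $\chi(G) \leq 7$ uniformly, completing the first claim.

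The second claim---that the thirteen graphs of Figure \ref{figure: 13_a2} are the smallest $7$-chromatic minimal graphs in $\mH_e(3, 3)$---follows at once: Theorem \ref{theorem: if abs(V(G)) leq 12, then chi(G) leq 7} rules out $\chi \geq 7$ for $n \leq 12$, and the $n = 13$ analysis above identifies exactly those $13$ graphs as the $7$-chromatic ones. I expect the main technical obstacle to be the computational cost of the $n = 14$ enumeration: the set of $11$-vertex host graphs is very large, and the marked-vertex-set and complete-family tests inside Algorithm \ref{algorithm: A7-M} are combinatorially expensive. Keeping the search tractable will likely require aggressive pruning using $\omega(G) = 5$, the degree bound $\delta(G) \geq 4$ from Theorem \ref{theorem: delta(G) geq (p-1)^2}, the non-Sperner condition of Proposition \ref{proposition: minimal graphs in mH_e(p, q) are not Sperner}, and the non-containment of any previously enumerated minimal graph on $\leq 13$ vertices.
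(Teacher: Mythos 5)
Your handling of $n \leq 12$ and $n = 13$ matches the paper: the $n\leq 12$ case is Theorem \ref{theorem: if abs(V(G)) leq 12, then chi(G) leq 7}, and for $n=13$ the paper likewise combines the bound $\chi(G) \geq \lceil 13/\alpha(G)\rceil = 7$ for the $\alpha = 2$ graphs with a computer check that none of the enumerated $13$-vertex minimal graphs of independence number $\geq 3$ is $7$-chromatic. Where you diverge is the case $n = 14$, and this is the crux. The paper does \emph{not} enumerate $14$-vertex minimal graphs at all. Instead it argues by contradiction: a minimal graph $G \in \mH_e(3,3)$ with $|\V(G)| \leq 14$ has $\omega(G) = 5$ (it contains $K_5$ because $F_e(3,3;5)=15$, and it is not $K_6$), so if $\chi(G) \geq 8$ it would be an $8$-chromatic $K_6$-free graph on at most $14$ vertices; by the classification in \cite{Nen10} the only such graph is $K_1 + Q$ with $\overline{Q}$ as in Figure \ref{figure: Q_13}, and $K_1+Q$ lies in $\mH_e(3,3)$ but is not minimal. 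This one structural citation disposes of $\chi \geq 8$ for all $n \leq 14$ simultaneously and costs nothing computationally.

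Your replacement for that step --- running Algorithm \ref{algorithm: A7-M} with $n=14$, $k=11$, $q=6$ to enumerate every $14$-vertex minimal graph and then computing chromatic numbers --- is logically valid (your reduction to $\alpha(G)\geq 3$ via Theorem \ref{theorem: minimal graphs in mH_e(3, 3) G for which alpha(G) = 2} is correct), but it is not a computation that can actually be carried out in this setting. The $n=13$ run already required the roughly $1.9$ million $10$-vertex hosts with $\omega \leq 5$ and $\chi \geq 5$; passing to $11$-vertex hosts multiplies the pool by nearly two orders of magnitude (there are about $10^9$ graphs on $11$ vertices before filtering), and the per-host cost of finding all marked vertex sets and all minimal complete families grows as well. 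The thesis stops its full enumeration of minimal graphs at $13$ vertices precisely because of this scaling, and only ever touches $14$-vertex minimal graphs through Algorithm \ref{algorithm: A7} with large independence number (small $k$). So the missing idea in your proposal is the external input from \cite{Nen10}: without it, the $n=14$ case as you have set it up would stall, and you would need either that classification or some comparably strong structural restriction to bring the search back into reach.
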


\begin{proof}
	Suppose the opposite is true, i.e. $\chi(G) \geq 8$. Then, according to \cite{Nen10}, $G = K_1 + Q$, where $\overline{Q}$ is the graph shown in Figure \ref{figure: Q_13}. The graph $K_1 + Q$ is a graph in $\mH_e(3, 3)$, but it is not minimal. By Theorem \ref{theorem: if abs(V(G)) leq 12, then chi(G) leq 7}, there are no 7-chromatic minimal graphs in $\mH_e(3, 3)$ with less than 13 vertices. The graphs in Figure \ref{figure: 13_a2} are 13-vertex minimal graphs in $\mH_e(3, 3)$ with independence number 2, and therefore these graphs are 7-chromatic. With a computer we find that among the 13-vertex graphs in $\mH_e(3, 3)$ with independence number greater than 2 there are no 7-chromatic graphs.
\end{proof}

\vspace{-1em}

\begin{figure}[h]
	\centering
	\includegraphics[height=195px,width=195px]{./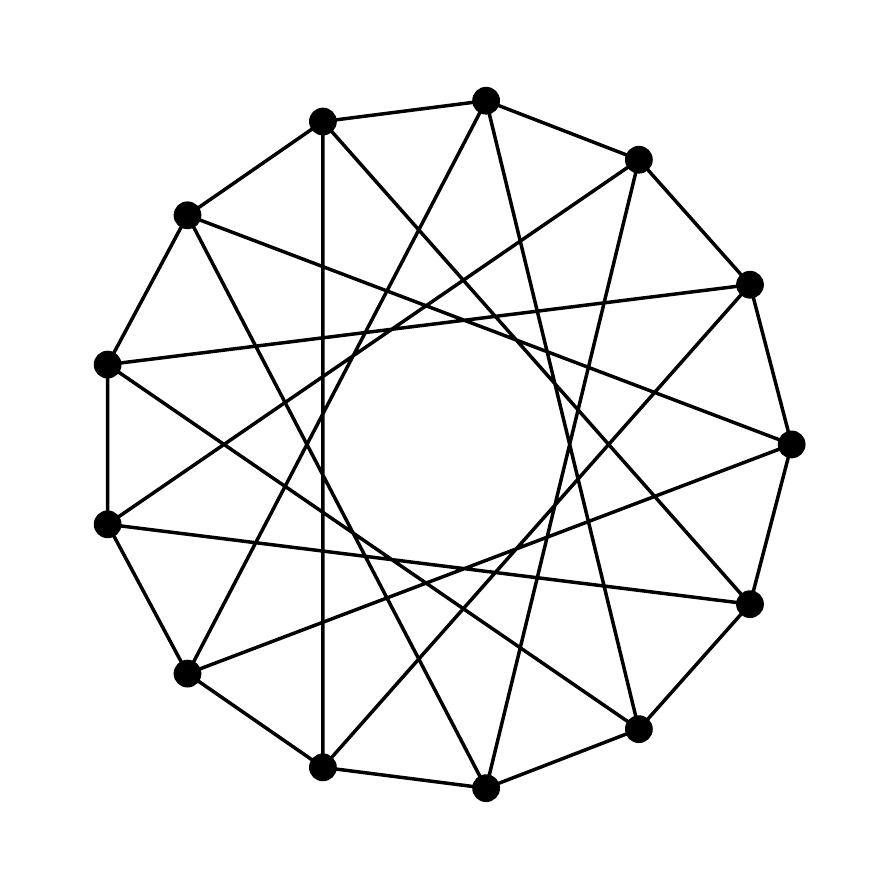}
	\caption{Graph $\overline{Q}$}
	\label{figure: Q_13}
\end{figure}

\subsection*{Multiplicities}

\begin{definition}
	\label{definition: K_3-multiplicity}
	Denote by $M(G)$ the minimum number of monochromatic triangles in all 2-colorings of $\E(G)$. The number $M(G)$ is called a $K_3$-multiplicity of the graph $G$.
\end{definition}

In \cite{Goo59} the $K_3$-multiplicities of all complete graphs are computed, i.e. $M(K_n)$ is computed for all positive integers $n$. Similarly, the $K_p$-multiplicity of a graph is defined \cite{HP74}. The following works are dedicated to the computation of the multiplicities of some concrete graphs: \cite{Jac80}, \cite{Jac82}, \cite{RS77}, \cite{BR90}, \cite{PR01}.

With the help of a computer, we check the $K_3$-multiplicities of the obtained minimal graphs in $\mH_e(3, 3)$ and we derive the following results:

\begin{theorem}
	\label{theorem: if \abs(V(G)) leq 13 and G neq K_6, then M(G) = 1}
	If $G$ is a minimal graph in $\mH_e(3, 3)$, $\abs{\V(G)} \leq 13$, and $G \neq K_6$, then $M(G) = 1$.
\end{theorem}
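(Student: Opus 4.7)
The plan is to combine the explicit enumeration of small minimal graphs in $\mH_e(3,3)$ with a structural observation about minimality that cheaply produces $2$-colorings with few monochromatic triangles, and then verify the claim by computer.

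First, the lower bound $M(G) \geq 1$ is immediate: since $G \in \mH_e(3,3)$, every $2$-coloring of $\E(G)$ contains a monochromatic triangle, so $M(G) \geq 1$. For the upper bound I would exploit minimality directly. Fix any edge $e = [u,v] \in \E(G)$; since $G$ is a minimal graph in $\mH_e(3,3)$ we have $G - e \not\arrowse (3,3)$, and hence there exists a $(3,3)$-free $2$-coloring $\varphi$ of $\E(G-e)$. Extending $\varphi$ to $\E(G)$ by assigning some color to $e$ yields a $2$-coloring of $G$ whose monochromatic triangles must all contain $e$: they are exactly the triangles $\{u,v,w\}$ with $w \in N_G(u) \cap N_G(v)$ and $\varphi([u,w]) = \varphi([v,w]) = \varphi(e)$. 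The total number of such triangles is a function of $\varphi$, of the chosen color of $e$, and of the edge $e$ itself, and the goal is to make it equal to $1$ by a suitable choice of this triple.

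Thus the proof reduces to exhibiting, for each minimal graph $G$ in question, an edge $e$, a $(3,3)$-free $2$-coloring $\varphi$ of $G-e$, and a color for $e$ producing exactly one monochromatic triangle. The list of graphs to handle is finite: by Theorems \ref{theorem: 10-vertex minimal graphs in mH_e(3, 3)}, \ref{theorem: 11-vertex minimal graphs in mH_e(3, 3)}, \ref{theorem: 12-vertex minimal graphs in mH_e(3, 3)}, and \ref{theorem: 13-vertex minimal graphs in mH_e(3, 3)}, together with the unique $8$-vertex graph $K_3+C_5$ and the unique $9$-vertex graph of Figure \ref{figure: Nenov_9}, there are roughly $3 \cdot 10^{5}$ minimal graphs $G \neq K_6$ with $\abs{\V(G)} \leq 13$. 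For each such $G$ I would run the following check: loop over edges $e \in \E(G)$, perform a backtracking search on $G-e$ for a $(3,3)$-free coloring $\varphi$, and for each candidate $\varphi$ compute, for both possible colors of $e$, the number of monochromatic triangles through $e$; stop as soon as a triple achieving the value $1$ is found.

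The main obstacle is computational cost, driven almost entirely by the $306635$ minimal graphs on $13$ vertices. This should still be manageable because $(3,3)$-freeness prunes the backtracking very aggressively (most partial colorings die early when a triangle is completed monochromatically), and I need only a single witness per graph rather than an enumeration. If the direct search turns out to be too slow on a stubborn subclass, I would bias the edge choice toward edges $e$ lying in many triangles of $G$ — such edges provide the most flexibility for tuning the triangle count through $e$ — or reuse the infrastructure for $(3,3)$-free colorings already developed in Algorithm \ref{algorithm: A7}. A final remark is that the hypothesis $G \neq K_6$ is genuinely needed, since Goodman's theorem gives $M(K_6) = 2$, so the same verification applied to $K_6$ would (correctly) fail.
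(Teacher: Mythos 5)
Your proposal is correct and is essentially the paper's proof: the paper establishes this theorem by a computer check of the $K_3$-multiplicities of the enumerated minimal graphs on at most $13$ vertices (Theorems \ref{theorem: 10-vertex minimal graphs in mH_e(3, 3)}--\ref{theorem: 13-vertex minimal graphs in mH_e(3, 3)}), which is exactly the finite verification you describe. Your edge-deletion refinement is a sound and complete way to organize that search --- any $2$-coloring with a unique monochromatic triangle restricts to a $(3,3)$-free coloring of $G-e$ for each edge $e$ of that triangle, so a witness exists in your restricted space iff $M(G)=1$ --- and it parallels the vertex-deletion argument the paper itself uses in Proposition \ref{proposition: if G is a minimal graph in mH_e(3, 3), G neq K_6 and delta(G) leq 5, then M(G) = 1}.
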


We suppose the following hypothesis is true:
\begin{hypothesis}
	\label{hypothesis: if G is a minimal graph in mH_e(3, 3) and G neq K_6, then M(G) = 1}
	If $G$ is a minimal graph in $\mH_e(3, 3)$ and $G \neq K_6$, then $M(G) = 1$.
\end{hypothesis}

In support to this hypothesis we prove the following:

\begin{proposition}
	\label{proposition: if G is a minimal graph in mH_e(3, 3), G neq K_6 and delta(G) leq 5, then M(G) = 1}
	If $G$ is a minimal graph in $\mH_e(3, 3)$, $G \neq K_6$ and $\delta(G) \leq 5$, then $M(G) = 1$.
\end{proposition}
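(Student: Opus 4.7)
The plan is to construct, for the given $G$, a $2$-coloring of $E(G)$ containing exactly one monochromatic triangle; together with $G \arrowse (3,3)$ (which forces $M(G) \geq 1$), this will give $M(G) = 1$. I will pick a vertex $v \in \V(G)$ with $d(v) \leq 5$, so that by Theorem~\ref{theorem: delta(G) geq (p-1)^2} we have $d(v) \in \set{4,5}$. Since $G$ is minimal in $\mH_e(3,3)$ and $G \neq K_6$, the graph $G$ is $K_6$-free, hence $\omega(G(v)) \leq 4$; by Theorem~\ref{theorem: alpha(G(v)) leq d(v) - 3}, $\alpha(G(v)) \leq d(v) - 3$. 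By minimality, $G - e$ admits a $(3,3)$-free $2$-coloring for any edge $e$ incident to $v$, and its restriction to $G - v$ yields a $(3,3)$-free $2$-coloring $\chi$ of $G - v$.

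Any extension of $\chi$ to $G$ creates monochromatic triangles only through $v$: such a triangle consists of $v$ together with two vertices $u, u' \in N(v)$ adjacent in $G(v)$ with $[v,u]$, $[v,u']$, $[u,u']$ all carrying the same color. Writing $R = \set{u \in N(v) : [v,u] \text{ red}}$ and $B = N(v) \setminus R$, the number of monochromatic triangles in the resulting $2$-coloring of $G$ equals
\[
s(\chi, R) := \#\set{\text{red edges of } G(v) \text{ with both ends in } R} + \#\set{\text{blue edges of } G(v) \text{ with both ends in } B}.
\]
Thus the task reduces to exhibiting a $\chi$ and an $R$ with $s(\chi, R) = 1$. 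The bound $s(\chi, R) \geq 1$ is automatic, since $s = 0$ would produce a $(3,3)$-free $2$-coloring of $G$, contradicting $G \arrowse (3,3)$; only the matching upper bound needs work, and it is a property of the $2$-colored graph $G(v)$ alone.

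If $d(v) = 4$, Corollary~\ref{corollary: if d(v) = 4, then G(v) = K_4} forces $G(v) = K_4$, and up to swapping colors the $(3,3)$-free $2$-colorings of $K_4$ are of just two types: (A) one color forms a $C_4$ on $N(v)$ while the other forms the two diagonals, and (B) each color forms a $P_4$. In case (A), taking $R$ to be two consecutive vertices on the red $C_4$ puts exactly one red edge inside $R$ and no blue edge inside $B$; in case (B), a single suitably chosen vertex $R = \set{u}$ leaves exactly one blue edge inside $B$ and no red edge inside $R$. Both give $s = 1$, and coloring each edge $[v, u_i]$ according to the partition finishes this case.

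For $d(v) = 5$, $G(v)$ is a $5$-vertex graph with $\alpha \leq 2$ and $\omega \leq 4$, that is, a member of $\mR(3,5;5) \setminus \set{K_5}$; there are only a small number of such graphs up to isomorphism, their complements being the non-empty triangle-free graphs on $5$ vertices. For each such $G(v)$ and each $(3,3)$-free $2$-coloring of its edges, I expect to produce a partition $N(v) = R \cup B$ with $s = 1$ by taking $R$ to be a short red path, a red edge, a single vertex, or the complement of such a set, so that exactly one monochromatic edge of the matching color lands on the relevant side. The main obstacle is the size of this case analysis: the combination of possible shapes of $G(v)$ with the admissible $(3,3)$-free $2$-colorings of each yields a non-trivial list of subcases, most conveniently dispatched by a small computer check. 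Once a score-$1$ partition is exhibited in every admissible situation, extending $\chi$ along that partition gives a $2$-coloring of $G$ with exactly one monochromatic triangle, and the proposition follows.
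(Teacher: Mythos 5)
Your overall strategy is the same as the paper's: fix $v$ with $d(v) \leq 5$, start from a $(3,3)$-free $2$-coloring of $G - v$ (which exists by minimality), and extend it over the edges at $v$ so that exactly one monochromatic triangle appears; the case $d(v) = 4$ is handled correctly and completely, and your classification of the $(3,3)$-free colorings of $K_4$ into the $C_4$-plus-matching type and the two-paths type, with the exhibited partitions of $N(v)$, is sound.

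The gap is the case $d(v) = 5$, which you do not actually prove: you describe the family of possible $2$-colored graphs $G(v)$ and then write that you ``expect'' to find a score-$1$ partition in each subcase, deferring to an unexecuted case analysis or computer check. Since this is the substantive half of the proposition, the argument is incomplete as it stands. The paper closes this case with a short uniform argument that avoids any enumeration: since $G \neq K_6$ is minimal, $\omega(G) \leq 5$, so $N_G(v)$ contains two non-adjacent vertices $a, b$; the three remaining neighbours must span an edge of each color (otherwise coloring $[v,a],[v,b]$ with one color and the other three $v$-edges with the other would extend $\chi$ to a $(3,3)$-free coloring of $G$, contradicting $G \arrowse (3,3)$), hence they span exactly one edge of some color, say the first; coloring $[v,a],[v,b]$ with the second color and the other three $v$-edges with the first then produces exactly one monochromatic triangle. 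If you replace your deferred case analysis with this observation (or actually carry out and record the finite check you allude to), your proof is complete.
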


\begin{proof}
	Let $v \in \V(G)$ and $d(v) \leq 5$. Consider a 2-coloring of $\E(G-v)$ without monochromatic triangles. We will color the edges incident to $v$ with two colors in such a way that we will obtain a 2-coloring of $\E(G)$ with exactly one monochromatic triangle. To achieve this, we consider the following two cases:
	
	\emph{Case 1.} $d(v) = 4$. By Corollary \ref{corollary: if d(v) = 4, then G(v) = K_4}, $G(v) = K_4$. Let $N_v = \{a, b, c, d\}$ and suppose that $[a, b]$ is colored with the first color. Then $[c, d]$ is also colored with the first color (otherwise, by coloring $[v, a]$ and $[v, b]$ with the second color and $[v, c]$ and $[v, d]$ with the fist color, we obtain a 2-coloring of $\E(G)$ without monochromatic triangles). Thus, $[a, b]$ and $[c, d]$ are colored in the first color. We color $[v, a]$ and $[v, b]$ with the first color and $[v, c]$ and $[v, d]$ with the second color. We obtain a 2-coloring of $\E(G)$ with exactly one monochromatic triangle $[v, a, b]$.
	
	\emph{Case 2.} $d(v) = 5$. Since $\omega(G) \leq 5$, in $N_G(v)$ there are two non-adjacent vertices $a$ and $b$. From $G \arrowse (3, 3)$ it follows easily that in $G(v) - \{a, b\}$ there is an edge of the first color and an edge of the second color. Therefore, we can suppose that in $G(v) - \{a, b\}$ there is exactly one edge of one of the colors, say the first color. We color $[v, a]$ and $[v, b]$ with the second color and the other three edges incident to $v$ with the first color. We obtain a 2-coloring of $\E(G)$ with exactly one monochromatic triangle.
\end{proof}

In the end, also in support to the hypothesis, let us note that $M(K_3 + C_{2r + 1}) = 1, \ r \geq 2$ \cite{NK79}.

\subsection*{Automorphism groups}

Denote by $Aut(G)$ the automorphism group of the graph $G$. We use the \emph{nauty} programs \cite{MP13} to find the number of automorphisms of the obtained minimal graphs in $\mH_e(3, 3)$ with 10, 11, 12, and 13 vertices. Most of the obtained graphs have small automorphism groups (see Table \ref{table: 10-vertex graphs properties}, Table \ref{table: 11-vertex graphs properties}, Table \ref{table: 12-vertex graphs properties}, and Table \ref{table: 13-vertex graphs properties}). We list the graphs with at least 60 automorphisms:

- The graphs in the form $K_3+C_{2r+1}$: $\abs{Aut(K_3+C_5)} = 60$. $\abs{Aut(K_3+C_7)} = 84$, $\abs{Aut(K_3+C_9)} = 108$;

- $\abs{Aut(G_{12.2240})} = 96$ (see Figure \ref{figure: 12_aut});

- $\abs{Aut(G_{13.255653})} = 144$, $\abs{Aut(G_{13.248305})} = 96$, $\abs{Aut(G_{13.304826})} = 96$, $\abs{Aut(G_{13.113198})} = 72$, $\abs{Aut(G_{13.175639})} = 72$, $\abs{Aut(G_{13.302168})} = 72$ (see Figure \ref{figure: 13_aut});

\vspace{1em}
Theorem \ref{theorem: 10-vertex minimal graph in mH_e(3, 3) G for which delta(G) = 4}, Theorem \ref{theorem: regular minimal graphs in mH_e(3, 3)}, Theorem \ref{theorem: maximum degree of minimal graphs in mH_e(3, 3)}, Theorem \ref{theorem: if abs(V(G)) leq 12, then chi(G) leq 7}, Theorem \ref{theorem: 7-chromatic graphs in mH_e(3, 3)}, and Theorem \ref{theorem: if \abs(V(G)) leq 13 and G neq K_6, then M(G) = 1} are published in \cite{Bik16}.

\section{Upper bounds on the independence number of the minimal graphs in $\mH_e(3, 3)$}

Regarding the maximal possible value of the independence number of the minimal graphs in $\mH_e(3, 3)$, the following theorem holds:

\begin{theorem}
	\label{theorem: alpha(G) leq abs(V(G)) - 7}
	\cite{Nen80b}
	If $G$ is a minimal graph in $\mH_e(3, 3)$, $G \neq K_6$ and $G \neq K_3+C_5$, then $\alpha(G) \leq |V(G)| - 7$. There is a finite number of graphs for which equality is reached.
\end{theorem}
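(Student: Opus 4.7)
The plan is to set $I\subseteq\V(G)$ to be a maximum independent set of size $\alpha:=\alpha(G)$, let $H:=G-I$, $h:=\abs{\V(H)}=\abs{\V(G)}-\alpha$, and prove $h\geq 7$ unless $G=K_3+C_5$. Minimality of $G$ together with $G\neq K_6$ implies $\omega(G)\leq 5$, for otherwise a copy of $K_6$ would be a proper subgraph of $G$ lying in $\mH_e(3,3)$, contradicting minimality. Corollary \ref{corollary: chi(H) geq 5} applied to the independent set $I$ gives $\chi(H)\geq 5$, so $h\geq 5$. It then suffices to rule out $h=5$ and to show that $h=6$ forces $G=K_3+C_5$.

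To exclude $h=5$: a 5-chromatic graph on 5 vertices must be $K_5$, so $H=K_5$. Each $v\in I$ satisfies $N_G(v)\subsetneq\V(H)$, else $\set{v}\cup\V(K_5)$ would induce a $K_6$ in $G$; meanwhile Theorem \ref{theorem: delta(G) geq (p-1)^2} forces $\abs{N_G(v)}\geq 4$, and hence $\abs{N_G(v)}=4$. By Proposition \ref{proposition: marked vertex sets in graphs in mH_e(3, 3)} this 4-subset would have to be a marked set in $K_5$. However, up to isomorphism $K_5$ admits a unique $(3,3)$-free 2-coloring of its edges, namely the decomposition into two edge-disjoint Hamilton 5-cycles; after deleting any one vertex, both color classes restrict on the remaining $K_4$ to paths $P_4$, and a short four-variable case check shows that the four new edges from an added vertex adjacent to all four remaining vertices can always be bicolored so as to avoid a monochromatic triangle. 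Hence no 4-subset of $\V(K_5)$ is marked, and $h\geq 6$.

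For $h=6$, the same observations show that $H$ is a 5-chromatic $K_6$-free graph on six vertices; a direct verification (Brooks' theorem plus the fact that 4-chromatic $K_4$-free graphs require at least $F_v(2,2,2,2;4)=11$ vertices) forces $H\supseteq K_5$, so $H$ is obtained from $K_5$ by adjoining one extra vertex $u$ connected to some $0\leq t\leq 4$ of the $K_5$-vertices. For each of the five resulting candidates, one enumerates the subsets $M\subseteq\V(H)$ that are $K_5$-free, of size at least 4, and marked in $H$, then lists the minimal complete families of such sets in the sense of Definition \ref{definition: complete family of marked vertex sets}; Proposition \ref{proposition: minimal graphs in mH_e(p, q) are not Sperner} forces the members of such a family to be pairwise incomparable. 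A case-by-case check shows that the only extension yielding a graph minimal in $\mH_e(3,3)$ is $G=K_3+C_5$, realized by $H=K_3+(K_2\cup K_1)$ (the case $t=3$), which is exactly the subgraph obtained from $K_3+C_5$ by removing two non-adjacent vertices of its $C_5$-factor. Therefore $h\geq 7$ whenever $G\notin\set{K_6,K_3+C_5}$, giving $\alpha(G)\leq\abs{\V(G)}-7$. Finiteness of the extremal case $\alpha(G)=\abs{\V(G)}-7$ is automatic: then $h=7$, there are only finitely many 5-chromatic $K_6$-free graphs on seven vertices, and each admits only finitely many minimal complete families of marked sets, so only finitely many minimal $G$ arise.

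The main obstacle is the $h=6$ analysis. The step $h\geq 6$ reduces to the rigidity of the unique $(3,3)$-free coloring of $K_5$ and is essentially a single direct check; but pushing further to $h\geq 7$ requires a complete classification of the 5-chromatic $K_6$-free graphs on six vertices together with a detailed inspection of the marked-set configurations they support, so that $K_3+C_5$ can be isolated as the only realizable minimal extension in that regime.
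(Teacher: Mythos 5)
The thesis does not actually prove this theorem; it is quoted from Nenov's paper \cite{Nen80b}, so there is no internal proof to compare your argument against. Your overall strategy --- write $H = G - I$ for a maximum independent set $I$, use Corollary \ref{corollary: chi(H) geq 5} to get $\abs{\V(H)} \geq 5$, and analyse the cases $\abs{\V(H)} = 5, 6$ via marked vertex sets and complete families --- is the natural one, and is essentially the machinery that Algorithm \ref{algorithm: A7} formalizes. The case $\abs{\V(H)} = 5$ is handled completely and correctly: $H = K_5$, each $v \in I$ must attach to a marked $4$-subset, and the pentagon/pentagram coloring of $K_5$ always extends over a new vertex joined to four of its vertices, so no such subset is marked. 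The finiteness statement at the end is also fine (it is Proposition \ref{proposition: alpha(G) geq abs(V(G)) - k geq 1} with $k = 7$).

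The gap is in the case $\abs{\V(H)} = 6$, which you yourself identify as the main obstacle and which is exactly where the content of the theorem lies. You correctly reduce to the five host graphs $H_t$ ($K_5$ plus one vertex joined to $t \leq 4$ of its vertices), but the decisive claim --- that among all minimal complete families of marked, $K_5$-free subsets of size at least $4$ in these five graphs, only the $t = 3$ configuration producing $K_3 + C_5$ yields a minimal graph in $\mH_e(3,3)$ --- is asserted as ``a case-by-case check shows'' without the check being carried out. That finite verification (enumerating the $(3,3)$-free colorings of each $H_t$, the marked subsets, the minimal complete families, and the minimality of the resulting extensions) is the entire proof that $\alpha(G) = \abs{\V(G)} - 6$ is impossible, so as written the argument is a correct reduction plus an unproved assertion. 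A smaller slip: your justification that $\chi(H) = 5$ and $\abs{\V(H)} = 6$ force $K_5 \subseteq H$ invokes $F_v(2,2,2,2;4) = 11$, which concerns $K_4$-free graphs; the relevant fact is $F_v(2,2,2,2;5) = 7$ (a special case of Theorem \ref{theorem: F_v(a_1, ..., a_s; m) = m + p}), or simply the observation that the non-edges of a $5$-chromatic $6$-vertex graph must pairwise intersect and cannot form a triangle, hence share a common vertex.
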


According to Theorem \ref{theorem: algorithm A7}, by executing Algorithm \ref{algorithm: A7}($q = 6; k = 8$), we obtain the set $\mB$ of all minimal graphs $G \in \mH_e(3, 3; 6)$ with $\alpha(G) \geq \abs{\V(G)} - 8 \geq 1$. Since the only minimal graphs in $\mH_e(3, 3)$ with less than 9 vertices are $K_6$ and $K_3 + C_5$, from Theorem \ref{theorem: alpha(G) leq abs(V(G)) - 7} it follows that $\mB$ consists of all minimal graphs $G \in \mH_e(3, 3)$ for which $\alpha(G) = \abs{\V(G)} - 7$ or $\alpha(G) = \abs{\V(G)} - 8$. Thus, we derive the following additions to Theorem \ref{theorem: alpha(G) leq abs(V(G)) - 7}:

\begin{theorem}
	\label{theorem: graphs in mH_e(3, 3) G for which alpha(G) = abs(V(G)) - 7}
	There are exactly 11 minimal graphs $G \in \mH_e(3, 3)$ for which $\alpha(G) = \abs{\V(G)} - 7$:
	
	-  9-vertex: 1 (Figure \ref{figure: Nenov_9});
	
	- 10-vertex: 3 ($G_{10.1}$, $G_{10.2}$, $G_{10.4}$, see Figure \ref{figure: 10});
	
	- 11-vertex: 3 ($G_{11.1}$, $G_{11.2}$, $G_{11.21}$, see Figure \ref{figure: 11_a4});
	
	- 12-vertex: 1 ($G_{12.163}$, see Figure \ref{figure: 12_a5});
	
	- 13-vertex: 2 ($G_{13.}$, $G_{13.}$, see Figure \ref{figure: 13_a6}) ;
	
	- 14-vertex: 1 (see Figure \ref{figure: 14_a7});
\end{theorem}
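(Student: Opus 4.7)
The plan is to reduce the problem to a single execution of Algorithm \ref{algorithm: A7} and then filter its output. By Theorem \ref{theorem: alpha(G) leq abs(V(G)) - 7}, every minimal graph $G \in \mH_e(3, 3)$ other than $K_6$ and $K_3 + C_5$ satisfies $\alpha(G) \leq |\V(G)| - 7$, with only finitely many graphs achieving equality. Since $\alpha(K_6) = 1 \neq -1$ and $\alpha(K_3 + C_5) = 2 \neq 1$, neither exception contributes to the list we seek, so the problem reduces to enumerating the finite extremal family from Nenov's bound.

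First I would execute Algorithm \ref{algorithm: A7} with parameters $q = 6$ and $k = 8$, feeding it the set $\mA$ of all 8-vertex graphs $H$ with $\omega(H) < 6$ and $\chi(H) \geq 5$, which can be generated with \emph{nauty} \cite{MP13}. By Theorem \ref{theorem: algorithm A7}, the output $\mB$ consists of exactly the minimal graphs $G \in \mH_e(3, 3; 6)$ with $\alpha(G) \geq |\V(G)| - 8 \geq 1$. Combined with the upper bound from Theorem \ref{theorem: alpha(G) leq abs(V(G)) - 7}, each element of $\mB$ satisfies $\alpha(G) \in \set{|\V(G)| - 7, |\V(G)| - 8}$. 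The restriction $q = 6$ loses no extremal graph, since the only minimal graph in $\mH_e(3, 3)$ containing $K_6$ is $K_6$ itself, which is already discarded.

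Next I would filter $\mB$ by retaining only those graphs with $\alpha(G) = |\V(G)| - 7$, and then match each surviving graph against the classifications in Theorem \ref{theorem: 10-vertex minimal graphs in mH_e(3, 3)}, Theorem \ref{theorem: 11-vertex minimal graphs in mH_e(3, 3)}, Theorem \ref{theorem: 12-vertex minimal graphs in mH_e(3, 3)}, and Theorem \ref{theorem: 13-vertex minimal graphs in mH_e(3, 3)}, together with the 9-vertex Nenov graph from Figure \ref{figure: Nenov_9} and the 14-vertex graph from Figure \ref{figure: 14_a7}. The surviving graphs should distribute as one of order 9, three of order 10 (identified as $G_{10.1}, G_{10.2}, G_{10.4}$), three of order 11 (identified as $G_{11.1}, G_{11.2}, G_{11.21}$), one of order 12 ($G_{12.163}$), two of order 13, and one of order 14, confirming the count of eleven extremal graphs.

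The main obstacle will be Step 1.2 of Algorithm \ref{algorithm: A7}, namely the enumeration of minimal complete families of marked vertex sets in each base graph $H \in \mA$. For each such $H$ one must identify the family $\mM(H)$ and then test many subfamilies for the complete-family property by examining how the $(3,3)$-free 2-colorings of $\E(H)$ extend. However, because $k = 8$ is modest, the search space remains tractable, and no new theoretical ingredient beyond Theorem \ref{theorem: alpha(G) leq abs(V(G)) - 7} and Theorem \ref{theorem: algorithm A7} is needed. The proof thus becomes a controlled computer enumeration whose correctness can be independently double-checked against the previously established inventories of minimal graphs in $\mH_e(3, 3)$ with up to 13 vertices.
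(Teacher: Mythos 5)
Your proposal is correct and follows essentially the same route as the paper: the author likewise runs Algorithm \ref{algorithm: A7} with $q = 6$ and $k = 8$, invokes Theorem \ref{theorem: algorithm A7} to conclude that the output is exactly the set of minimal graphs $G \in \mH_e(3,3;6)$ with $\alpha(G) \geq \abs{\V(G)} - 8 \geq 1$, and then uses Theorem \ref{theorem: alpha(G) leq abs(V(G)) - 7} (together with the fact that the only minimal graphs on fewer than 9 vertices are $K_6$ and $K_3+C_5$) to split the output into the graphs attaining $\alpha(G) = \abs{\V(G)} - 7$ and those attaining $\abs{\V(G)} - 8$. Your handling of the two exceptional graphs and of the restriction to $q = 6$ is a minor rephrasing of the paper's argument and is equally valid.
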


\begin{theorem}
	\label{theorem: graphs in mH_e(3, 3) G for which alpha(G) = abs(V(G)) - 8}
	There are exactly 8633 minimal graphs $G \in \mH_e(3, 3)$ for which $\alpha(G) = \abs{\V(G)} - 8$. The largest of these graphs has 26 vertices, and it is given in Figure \ref{figure: 26_a18}. There is only one minimal graph $G \in \mH_e(3, 3)$ for which $\alpha(G) = \abs{\V(G)} - 8$ and $\omega(G) < 5$, and it is the 15-vertex graph $K_1 + \Gamma$ from \cite{Nen81a} (see Figure \ref{figure: Nenov_14}).
\end{theorem}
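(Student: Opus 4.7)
The plan is to follow the same template that produced Theorem \ref{theorem: graphs in mH_e(3, 3) G for which alpha(G) = abs(V(G)) - 7}, namely to invoke Algorithm \ref{algorithm: A7} with $k=8$. Finiteness of the target set is guaranteed by Proposition \ref{proposition: alpha(G) geq abs(V(G)) - k geq 1}. Since every minimal graph $G \in \mH_e(3,3)$ with $\omega(G) \geq 6$ is $K_6$, we may restrict attention to $\omega(G) \leq 5$, and since $F_e(3,3;5)=15$ every minimal graph with $\omega(G)<5$ has at least $15$ vertices; in particular, if $\alpha(G) = \abs{\V(G)}-8$ and $\omega(G) < 5$, then $\alpha(G) \geq 7$.

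First, I would prepare the two input families required by Algorithm \ref{algorithm: A7}. Using \emph{nauty} I would generate all $8$-vertex graphs $H$ with $\chi(H) \geq 5$, split them according to the clique bound, and produce the set $\mA_6$ of those with $\omega(H) < 6$ and the set $\mA_5$ of those with $\omega(H) < 5$. Then I would run Algorithm \ref{algorithm: A7} once with $(q,k)=(6,8)$ and input $\mA_6$, obtaining the set $\mB_6$ of all minimal graphs $G \in \mH_e(3,3;6)$ with $\alpha(G) \geq \abs{\V(G)} - 8 \geq 1$, and again with $(q,k)=(5,8)$ and input $\mA_5$, obtaining $\mB_5$. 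By Theorem \ref{theorem: algorithm A7}, $\mB_6$ contains every minimal $G \in \mH_e(3,3)$ with $\omega(G) \leq 5$ and $\alpha(G) \geq \abs{\V(G)} - 8 \geq 1$; and $\mB_5 \subseteq \mB_6$ consists exactly of those members of $\mB_6$ that are additionally $K_5$-free.

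To finish, I would remove from $\mB_6$ the $11$ graphs listed in Theorem \ref{theorem: graphs in mH_e(3, 3) G for which alpha(G) = abs(V(G)) - 7} (those with $\alpha(G) = \abs{\V(G)} - 7$); what remains are precisely the minimal graphs in $\mH_e(3,3)$ with $\alpha(G) = \abs{\V(G)} - 8$. I would then tally the output, verify by direct comparison of canonical labels that the count is $8633$, that the maximum order is $26$ and is attained by a unique graph (the one depicted in Figure \ref{figure: 26_a18}), and that $\mB_5$ minus the $K_5$-free entries of the previous theorem consists of a single graph, which I would identify with $K_1 + \Gamma$ (using $\Gamma$ from Figure \ref{figure: Nenov_14}) by exhibiting an explicit isomorphism.

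The main obstacle is computational rather than conceptual. Step $1.1$ of Algorithm \ref{algorithm: A7} enumerates subsets $M \subseteq \V(H)$ that are $K_{q-1}$-free, non-dominated, and marked; testing the \emph{marked} property requires ranging over all $(3,3)$-free $2$-colorings of $\E(H)$, and for many $8$-vertex graphs the number of such colorings is already large. Step $1.2$ is worse: one must search the Boolean lattice of subfamilies of $\mM(H)$ for minimal complete ones, and the number of candidate subfamilies explodes when $\abs{\mM(H)}$ is moderate. Careful pruning is essential: the coloring-compatibility condition should be maintained incrementally as subsets are added to the partial family, candidates isomorphic under $Aut(H)$ should be pooled, and the minimality check for the final graph $G$ (step $3$) together with isomorphism rejection (step $2$) must be performed with canonical forms from \emph{nauty}. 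With these optimisations the computation is comparable in scale to the ones carried out earlier in the thesis, and the resulting list can then be sifted to read off the three numerical claims of the theorem.
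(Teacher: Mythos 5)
Your proposal follows the paper's own route: it invokes Algorithm \ref{algorithm: A7} with $q = 6$, $k = 8$, uses Theorem \ref{theorem: algorithm A7} together with Theorem \ref{theorem: alpha(G) leq abs(V(G)) - 7} to conclude that the output consists exactly of the minimal graphs with $\alpha(G) = \abs{\V(G)} - 7$ or $\abs{\V(G)} - 8$, and then discards the $11$ graphs of Theorem \ref{theorem: graphs in mH_e(3, 3) G for which alpha(G) = abs(V(G)) - 7} before tallying. The extra run with $q = 5$ is an inessential variation (the paper simply filters the single output set by clique number to identify $K_1 + \Gamma$), so the approach is essentially identical and correct.
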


\begin{corollary}
	\label{corollary: alpha(G) leq abs(V(G)) - 9, G in mH_e(3, 3; 6)}
	Let $G$ be a minimal graph in $\mH_e(3, 3)$ and $\abs{\V(G)} \geq 27$. Then $\alpha(G) \leq \abs{\V(G)} - 9$.
\end{corollary}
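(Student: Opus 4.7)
The plan is to derive this corollary directly from the two preceding classification results, combined with Theorem \ref{theorem: alpha(G) leq abs(V(G)) - 7}, which asserts that every minimal graph $G$ in $\mH_e(3,3)$ other than $K_6$ and $K_3+C_5$ satisfies $\alpha(G) \leq \abs{\V(G)} - 7$. Since the hypothesis $\abs{\V(G)} \geq 27$ forces $G \neq K_6$ and $G \neq K_3+C_5$, this bound is available.

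I would argue by contradiction. Suppose $G$ is a minimal graph in $\mH_e(3,3)$ with $\abs{\V(G)} \geq 27$ and $\alpha(G) \geq \abs{\V(G)} - 8$. Together with Theorem \ref{theorem: alpha(G) leq abs(V(G)) - 7}, only two possibilities remain: either $\alpha(G) = \abs{\V(G)} - 7$ or $\alpha(G) = \abs{\V(G)} - 8$. In the first case, Theorem \ref{theorem: graphs in mH_e(3, 3) G for which alpha(G) = abs(V(G)) - 7} provides the complete (finite) list of such graphs, the largest of which has $14$ vertices, contradicting $\abs{\V(G)} \geq 27$. In the second case, Theorem \ref{theorem: graphs in mH_e(3, 3) G for which alpha(G) = abs(V(G)) - 8} gives the full list, whose largest member has $26$ vertices, again contradicting $\abs{\V(G)} \geq 27$. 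Both cases are impossible, so $\alpha(G) \leq \abs{\V(G)} - 9$.

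There is no real combinatorial obstacle here: the corollary is a clean packaging of the two preceding enumerative theorems into a uniform bound valid for all sufficiently large minimal graphs in $\mH_e(3,3)$. The substantive work — the exhaustive computer enumeration via Algorithm \ref{algorithm: A7}, and in particular verifying that the $26$-vertex graph depicted in Figure \ref{figure: 26_a18} is the largest minimal graph with $\alpha(G) = \abs{\V(G)} - 8$ — has already been performed in the previous section. The only thing the proof itself must do is combine the three bounds and observe that the numerical thresholds $14$ and $26$ both fall strictly below the threshold $27$, so that neither extremal family reaches into the range where the corollary applies.
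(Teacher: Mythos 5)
Your argument is correct and is exactly how the paper derives this corollary: it is stated as an immediate consequence of Theorem \ref{theorem: alpha(G) leq abs(V(G)) - 7} together with the enumerations in Theorem \ref{theorem: graphs in mH_e(3, 3) G for which alpha(G) = abs(V(G)) - 7} (largest graph on $14$ vertices) and Theorem \ref{theorem: graphs in mH_e(3, 3) G for which alpha(G) = abs(V(G)) - 8} (largest graph on $26$ vertices). Your case split and the observation that both thresholds fall below $27$ match the paper's reasoning.
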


\begin{figure}
	\centering
	\includegraphics[trim={0 0 0 490},clip,height=140px,width=140px]{./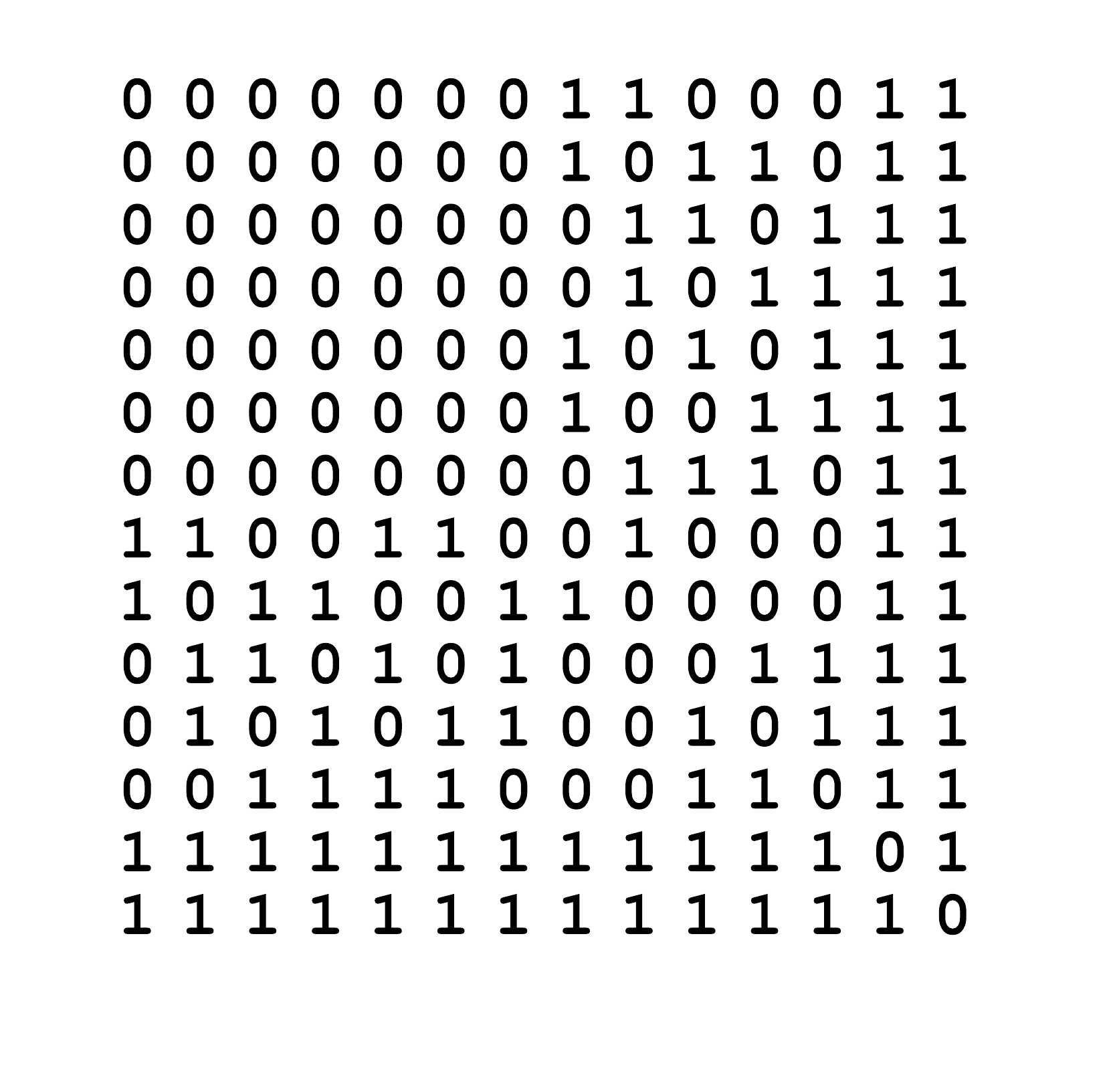}
	\caption{\small 14-vertex minimal graph in $\mH_e(3, 3)$ with independence number 7}
	\label{figure: 14_a7}
	\vspace{2em}
	\centering
	\includegraphics[trim={0 0 0 490},clip,height=208px,width=208px]{./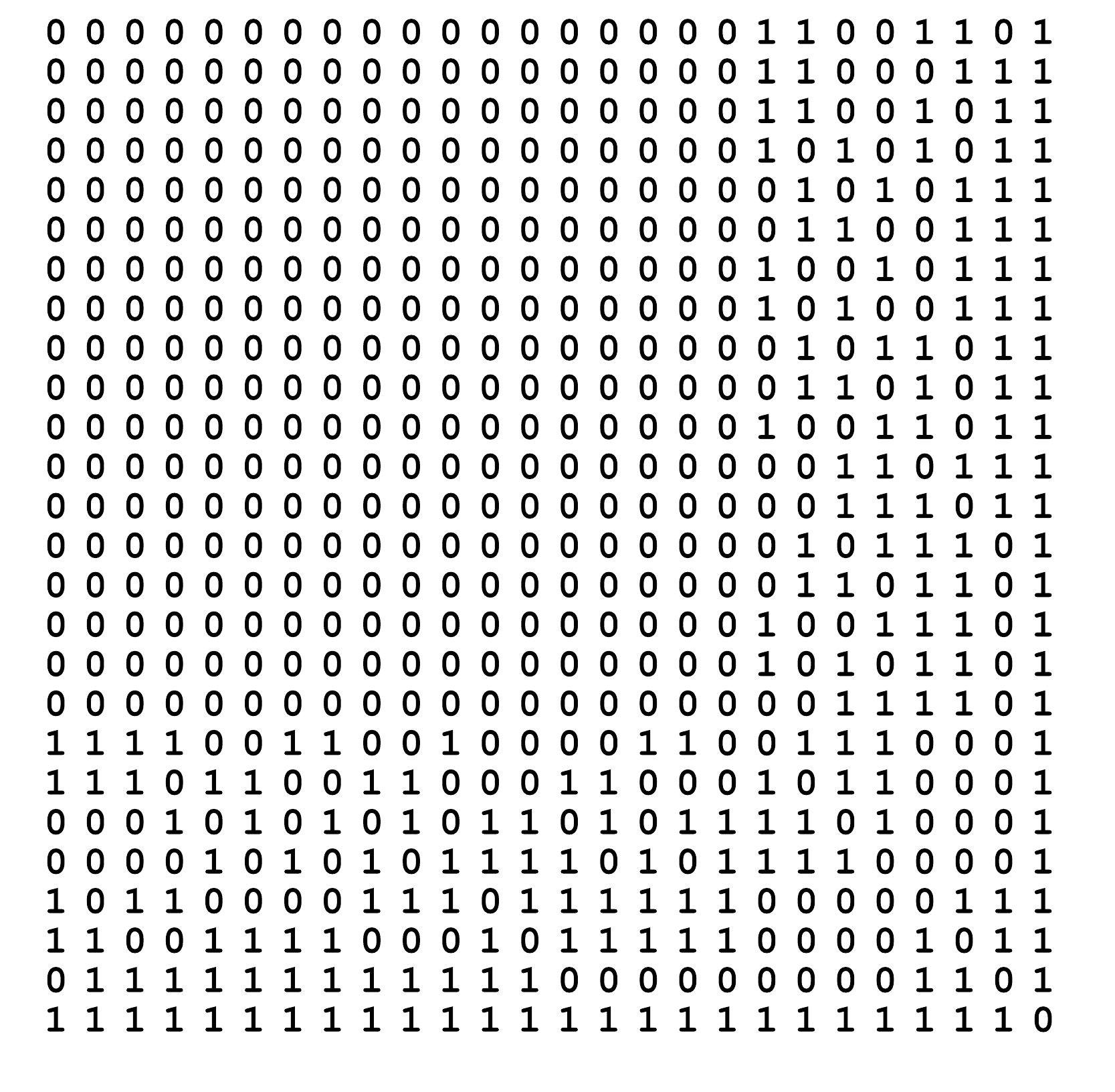}
	\caption{\small 26-vertex minimal graph in $\mH_e(3, 3)$ with independence number 18}
	\label{figure: 26_a18}
\end{figure}

According to Theorem \ref{theorem: algorithm A7}, by executing Algorithm \ref{algorithm: A7}($q = 5; k = 9$), we obtain the set $\mB$ of all minimal graphs $G \in \mH_e(3, 3; 5)$ for which $\alpha(G) \geq \abs{\V(G)} - 9 \geq 1$. Since $F_e(3, 3; 5) = 15$ \cite{Nen81a} \cite{PRU99}, all graphs in $\mH_e(3, 3; 5)$ have at least 15 vertices. We already proved that the only minimal graph $G \in \mH_e(3, 3; 5)$ with $\alpha(G) \geq \abs{\V(G)} - 8$ is the graph $K_1 + \Gamma$ from \cite{Nen81a} (see Theorem \ref{theorem: graphs in mH_e(3, 3) G for which alpha(G) = abs(V(G)) - 8}). Therefore, $\mB$ consists of $K_1 + \Gamma$ and all minimal graphs $G \in \mH_e(3, 3; 5)$ for which $\alpha(G) = \abs{\V(G)} - 9$. Thus, we proved the following theorem:

\begin{theorem}
	\label{theorem: graphs in mH_e(3, 3) G for which omega (G) < 5 and alpha(G) = abs(V(G)) - 9}
	There are exactly 8903 minimal graphs $G \in \mH_e(3, 3)$ for which $\omega(G) < 5$ and $\alpha(G) = \abs{\V(G)} - 9$. The largest of these graphs has 29 vertices, and it is given in Figure \ref{figure: 29_w4_a20}.
\end{theorem}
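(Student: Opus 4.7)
The plan is to apply Algorithm \ref{algorithm: A7} with parameters $q = 5$ and $k = 9$; by Theorem \ref{theorem: algorithm A7}, its output is the set $\mB$ of all minimal graphs $G \in \mH_e(3, 3; 5)$ satisfying $\alpha(G) \geq \abs{\V(G)} - 9 \geq 1$. First I would use \emph{nauty} \cite{MP13} to generate all 9-vertex non-isomorphic graphs $H$ with $\omega(H) < 5$ and $\chi(H) \geq 5$, which is exactly the required input set $\mA$ (the constraint $\chi(H) \geq 5$ is justified by Corollary \ref{corollary: chi(H) geq 5}, since the 9 vertices of $H$ play the role of the remaining vertices after removing an independent set from $G$).

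Next, for each $H \in \mA$, I would enumerate the family $\mM(H)$ of subsets $M \subseteq \V(H)$ that are $K_4$-free, not contained in any neighborhood $N_H(v)$, and marked in the sense of Definition \ref{definition: marked vertex set}; then I would search through minimal complete subfamilies of $\mM(H)$ and, for each such subfamily $\{M_{i_1},\ldots,M_{i_s}\}$, construct the graph $G$ obtained by attaching $s$ new independent vertices with the prescribed neighborhoods. After reducing modulo isomorphism and verifying minimality in $\mH_e(3,3)$, the resulting set is $\mB$.

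To convert $\mB$ into the collection claimed in the theorem, I would invoke Theorem \ref{theorem: graphs in mH_e(3, 3) G for which alpha(G) = abs(V(G)) - 8}, according to which the only minimal graph $G \in \mH_e(3, 3; 5)$ with $\alpha(G) \geq \abs{\V(G)} - 8$ is the graph $K_1 + \Gamma$ from Figure \ref{figure: Nenov_14}. Since $F_e(3, 3; 5) = 15$ guarantees that $\abs{\V(G)} - 9 \geq 1$ holds automatically for every $G \in \mH_e(3, 3; 5)$, the set described in the statement is precisely $\mB \setminus \{K_1+\Gamma\}$. Counting its elements and determining the member of maximum order should then yield exactly 8903 graphs, the largest having 29 vertices and coinciding with the graph in Figure \ref{figure: 29_w4_a20}.

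The main obstacle is computational rather than conceptual. Enumerating $\mM(H)$ and then the minimal complete subfamilies of $\mM(H)$ over roughly a million 9-vertex candidates $H$ is prone to combinatorial explosion, because each marked-set check requires examining extensions of $(3,3)$-free 2-colorings of $\E(H)$, and completeness of a family is an even more delicate $2$-coloring condition. Making step 1.2 of Algorithm \ref{algorithm: A7} tractable will require aggressive pruning: eliminating $H$ with too few colorings up to automorphism, filtering candidate subsets by size and by the neighborhood-containment condition early, and organizing the search of subfamilies so that non-minimal or incomplete families are cut off as soon as possible.
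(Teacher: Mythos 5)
Your proposal follows exactly the paper's own argument: execute Algorithm \ref{algorithm: A7} with $q = 5$, $k = 9$ on the 9-vertex graphs $H$ with $\omega(H) < 5$ and $\chi(H) \geq 5$, note that $F_e(3,3;5) = 15$ makes the condition $\abs{\V(G)} - 9 \geq 1$ automatic, and remove the single graph $K_1 + \Gamma$ identified by Theorem \ref{theorem: graphs in mH_e(3, 3) G for which alpha(G) = abs(V(G)) - 8} to isolate the graphs with $\alpha(G) = \abs{\V(G)} - 9$. This is the same decomposition and the same supporting results the paper uses, so the proposal is correct.
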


\begin{corollary}
	\label{corollary: alpha(G) leq abs(V(G)) - 10, G in mH_e(3, 3; 5)}
	Let $G$ be a minimal graph in $\mH_e(3, 3)$ such that $\omega(G) < 5$ and $\abs{\V(G)} \geq 30$. Then $\alpha(G) \leq \abs{\V(G)} - 10$.
\end{corollary}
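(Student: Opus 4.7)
The plan is to deduce the corollary directly from the structural enumeration results already established in the chapter, specifically Theorem \ref{theorem: alpha(G) leq abs(V(G)) - 7}, Theorem \ref{theorem: graphs in mH_e(3, 3) G for which alpha(G) = abs(V(G)) - 7}, Theorem \ref{theorem: graphs in mH_e(3, 3) G for which alpha(G) = abs(V(G)) - 8}, and Theorem \ref{theorem: graphs in mH_e(3, 3) G for which omega (G) < 5 and alpha(G) = abs(V(G)) - 9}. The idea is to argue by exclusion: for a minimal graph $G \in \mH_e(3, 3)$ with $\omega(G) < 5$ and $\abs{\V(G)} \geq 30$, I will rule out the three remaining possibilities $\alpha(G) = \abs{\V(G)} - 7$, $\alpha(G) = \abs{\V(G)} - 8$, and $\alpha(G) = \abs{\V(G)} - 9$ by comparing the required vertex count against the upper bounds supplied by those theorems.

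First I would note that by Theorem \ref{theorem: alpha(G) leq abs(V(G)) - 7}, for any such $G$ (which is certainly neither $K_6$ nor $K_3+C_5$ since $\omega(G)<5$ and $\abs{\V(G)}\geq 30$), we have $\alpha(G) \leq \abs{\V(G)} - 7$. So the only possible deficits $\abs{\V(G)} - \alpha(G)$ below $10$ are $7$, $8$, and $9$. Next I would invoke Theorem \ref{theorem: graphs in mH_e(3, 3) G for which alpha(G) = abs(V(G)) - 7}: every minimal graph in $\mH_e(3, 3)$ with $\alpha(G) = \abs{\V(G)} - 7$ has at most $14$ vertices, which is incompatible with $\abs{\V(G)} \geq 30$. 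Then by Theorem \ref{theorem: graphs in mH_e(3, 3) G for which alpha(G) = abs(V(G)) - 8}, the only minimal graph with $\omega(G) < 5$ and $\alpha(G) = \abs{\V(G)} - 8$ is the $15$-vertex graph $K_1 + \Gamma$, again excluded by the hypothesis $\abs{\V(G)} \geq 30$.

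Finally, I would apply Theorem \ref{theorem: graphs in mH_e(3, 3) G for which omega (G) < 5 and alpha(G) = abs(V(G)) - 9}: the largest minimal graph $G \in \mH_e(3, 3)$ with $\omega(G) < 5$ and $\alpha(G) = \abs{\V(G)} - 9$ has $29$ vertices (the graph in Figure \ref{figure: 29_w4_a20}), so this case too is ruled out when $\abs{\V(G)} \geq 30$. Combining these three exclusions with $\alpha(G) \leq \abs{\V(G)} - 7$ from Theorem \ref{theorem: alpha(G) leq abs(V(G)) - 7}, the only remaining possibility is $\alpha(G) \leq \abs{\V(G)} - 10$, which is the desired conclusion.

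There is no real obstacle here: the corollary is a bookkeeping consequence of the enumerations already carried out via Algorithm \ref{algorithm: A7}. The threshold $30$ in the statement is sharp precisely because the enumeration exhibits a $29$-vertex extremal example with $\omega(G) < 5$ and $\alpha(G) = \abs{\V(G)} - 9$, so the bound $\abs{\V(G)} \geq 30$ cannot be lowered without changing the conclusion. The only genuine work — the classification of the minimal graphs attaining deficits $7$, $8$, and $9$ — has already been done earlier, so the proof itself reduces to citing the three theorems in order.
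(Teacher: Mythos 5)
Your proof is correct and follows essentially the same route as the paper, which states the corollary as an immediate consequence of the enumeration theorems: the deficit-$7$ and deficit-$8$ cases are excluded because the relevant $K_5$-free minimal graphs have at most $15$ vertices, and the deficit-$9$ case because Theorem \ref{theorem: graphs in mH_e(3, 3) G for which omega (G) < 5 and alpha(G) = abs(V(G)) - 9} caps those graphs at $29$ vertices. Your additional remark that the threshold $30$ is sharp because of the $29$-vertex example in Figure \ref{figure: 29_w4_a20} is consistent with the paper.
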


\begin{figure}
	\centering
	\includegraphics[trim={0 0 0 490},clip,height=232px,width=232px]{./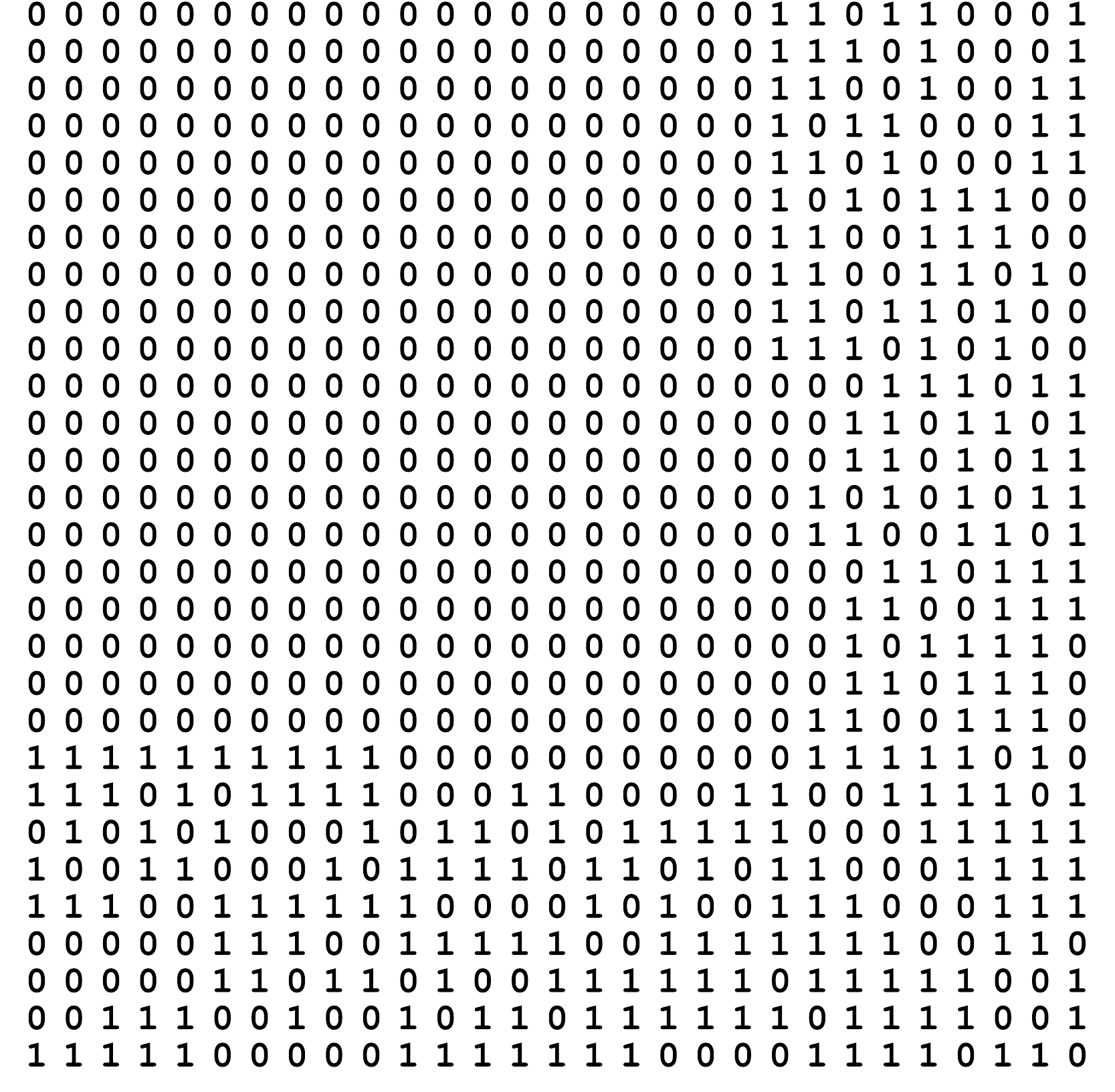}
	\caption{\small 29-vertex minimal graph in $\mH_e(3, 3)$ with independence number 20}
	\label{figure: 29_w4_a20}
\end{figure}

\vspace{1em}
Theorem \ref{theorem: graphs in mH_e(3, 3) G for which alpha(G) = abs(V(G)) - 7}, Theorem \ref{theorem: graphs in mH_e(3, 3) G for which alpha(G) = abs(V(G)) - 8}, and Theorem \ref{theorem: graphs in mH_e(3, 3) G for which omega (G) < 5 and alpha(G) = abs(V(G)) - 9} are published in \cite{Bik16}.

\section{Lower bounds on the minimum degree of the minimal graphs in $\mH_e(3, 3)$}

According to Proposition \ref{proposition: marked vertex sets in graphs in mH_e(3, 3)}, if $G$ is a minimal graph in $\mH_e(3, 3)$, then for each vertex $v$ of $G$, $N_G(v)$ is a marked vertex set in $G - v$, and therefore $N_G(v)$ is a marked vertex set in $G(v)$.

It is easy to see that if $W \subseteq \V(G)$ and $\abs{W} \leq 3$, or $\abs{W} = 4$ and $G[W] \neq K_4$, then $W$ is not a marked vertex set in $G$. A $(3, 3)$-free 2-coloring of $K_4$ which cannot be extended to a $(3, 3)$-free 2-coloring of $K_5$ is shown in Figure \ref{figure: N_4_coloring}. Therefore, the only 4-vertex graph $N$ such that $\V(N)$ is a marked vertex set in $N$ is $K_4$.

With the help of a computer, we derive that there are exactly 3 graphs $N$ with 5 vertices such that $K_4 \not\subset N$ and $\V(N)$ is a marked vertex set in $N$. Namely, they are the graphs $N_{5.1}$, $N_{5.2}$, and $N_{5.3}$ given in Figure \ref{figure: N_5_1 N_5_2 N_5_3}. Let us note that $N_{5.1} \subset N_{5.2} \subset N_{5.3}$. From these results we derive
\begin{theorem}
	\label{theorem: delta(G) geq 5, G in mH_e(3, 3; 5)}
	Let $G$ be a minimal graph in $\mH_e(3, 3; 5)$. Then $\delta(G) \geq 5$. If $v \in \V(G)$ and $d(v) = 5$, then $G(v) = N_{5.i}$ for some $i \in \set{1, 2, 3}$ (see Figure \ref{figure: N_5_1 N_5_2 N_5_3}).
\end{theorem}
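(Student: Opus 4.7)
The argument splits into (1) the degree bound and (2) the classification of the induced neighborhood at a degree-$5$ vertex.

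\emph{Step 1 (Minimum degree).} I would start from Theorem \ref{theorem: delta(G) geq (p-1)^2}, which already gives $\delta(G) \geq 4$ for every minimal graph in $\mH_e(3,3)$, and show that degree $4$ is forbidden under the extra hypothesis $\omega(G) < 5$. Indeed, if $v \in \V(G)$ has $d(v) = 4$, then by Corollary \ref{corollary: if d(v) = 4, then G(v) = K_4} we have $G(v) = K_4$, so $\{v\} \cup N_G(v)$ induces a $K_5$ in $G$, contradicting $G \in \mH_e(3, 3; 5)$. Hence $\delta(G) \geq 5$.

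\emph{Step 2 (Reduction to a marked-set condition on $N := G(v)$).} Now fix $v \in \V(G)$ with $d(v) = 5$ and write $N := G(v)$. First note that $N$ is $K_4$-free, for otherwise $\{v\}$ together with a $K_4 \subseteq N$ would yield a $K_5$ in $G$, again contradicting $\omega(G) < 5$. Next, I apply Proposition \ref{proposition: marked vertex sets in graphs in mH_e(3, 3)} with $s = 1$ to conclude that $N_G(v)$ is a marked vertex set in $G - v$: there is a $(3,3)$-free $2$-coloring of $\E(G - v)$ that cannot be extended to $\E(G)$. The crucial observation is that every monochromatic triangle of $G$ containing $v$ has the form $\{v, a, b\}$ with $a, b \in N_G(v)$ and $ab \in \E(N)$, so the obstruction to the extension depends only on the colors inside $N$. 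Therefore the restricted coloring is a $(3,3)$-free $2$-coloring of $\E(N)$ that does not extend to $K_1 + N$, which is exactly the statement that $\V(N)$ is a marked vertex set in $N$ itself (Definition \ref{definition: marked vertex set}).

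\emph{Step 3 (Finite classification).} It remains to determine all $5$-vertex $K_4$-free graphs $N$ for which $\V(N)$ is a marked vertex set in $N$. This is a finite problem: there are only a small number of isomorphism classes of such graphs $N$, and for each one the extension question is a direct combinatorial check. Concretely, given a $(3,3)$-free $2$-coloring of $\E(N)$ with red subgraph $N_R$ and blue subgraph $N_B$, the coloring extends to $K_1 + N$ iff there is a partition $\V(N) = R \sqcup B$ such that $R$ is independent in $N_R$ and $B$ is independent in $N_B$; so $\V(N)$ is marked iff some $(3,3)$-free coloring of $\E(N)$ admits no such partition. The hard part — and the only nontrivial obstacle — is carrying out this enumeration and verifying that exactly three graphs survive the test, namely $N_{5.1}, N_{5.2}, N_{5.3}$ of Figure \ref{figure: N_5_1 N_5_2 N_5_3}. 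This can be done by a short computer check (or, for small graphs, by hand using the fact that the only $(3,3)$-free $2$-colorings of $K_5$ split the edges into two copies of $C_5$). Once the classification is established, the theorem follows directly from Step 2.
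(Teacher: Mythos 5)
Your proposal matches the paper's argument essentially step for step: the degree bound comes from excluding $K_4$ neighborhoods (Corollary \ref{corollary: if d(v) = 4, then G(v) = K_4} plus $\omega(G)<5$), the reduction shows via Proposition \ref{proposition: marked vertex sets in graphs in mH_e(3, 3)} that $\V(G(v))$ must be a marked vertex set in $G(v)$ itself, and the final classification of the $K_4$-free $5$-vertex graphs with this property is a finite (computer) check yielding $N_{5.1}, N_{5.2}, N_{5.3}$. The only cosmetic difference is your aside about the two-$C_5$ decomposition of $K_5$, which is not actually used; otherwise this is the paper's proof.
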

The bound $\delta(G) \geq 5$ from Theorem \ref{theorem: delta(G) geq 5, G in mH_e(3, 3; 5)} is exact. For example, the graph $K_1 + \Gamma$ from \cite{Nen81a} (see Figure \ref{figure: Nenov_14}) has 7 vertices $v$ such that $d(v) = 5$ and $G(v) = N_{5.3}$.

Also with the help of a computer, we derive that the smallest graphs $N$ such that $K_3 \not\subset N$ and $\V(N)$ is a marked vertex set in $N$ have 8 vertices, and there are exactly 7 such graphs. Namely, they are the graphs $N_{8.i}, i = 1, ..., 7$ given in Figure \ref{figure: N_8_1 N_8_2 N_8_3 N_8_4 N_8_5 N_8_6 N_8_7}. Among them the minimal graphs are $N_{8.1}$, $N_{8.2}$, and $N_{8.3}$, and the remaining 4 graphs are their supergraphs. Thus, we derive the following
\begin{theorem}
	\label{theorem: delta(G) geq 8, G in mH_e(3, 3; 4)}
	Let $G$ be a minimal graph in $\mH_e(3, 3; 4)$. Then $\delta(G) \geq 8$. If $v \in \V(G)$ and $d(v) = 8$, then $G(v) = N_{8.i}$ for some $i \in \set{1, ..., 7}$ (see Figure \ref{figure: N_8_1 N_8_2 N_8_3 N_8_4 N_8_5 N_8_6 N_8_7}).
\end{theorem}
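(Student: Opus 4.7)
The plan is to reduce the statement to a finite computational check about marked vertex sets in small $K_3$-free graphs, paralleling exactly the structure of the proof of Theorem \ref{theorem: delta(G) geq 5, G in mH_e(3, 3; 5)}. The key observation is that for a minimal $G \in \mH_e(3, 3; 4)$ and any $v \in \V(G)$, Proposition \ref{proposition: marked vertex sets in graphs in mH_e(3, 3)} (applied to the single independent vertex $v$) tells us that $N_G(v)$ is a marked vertex set in $G - v$; restricting attention to the subgraph induced on $N_G(v)$, the set $\V(G(v))$ is a marked vertex set in $G(v)$ itself. Moreover, since $\omega(G) < 4$, no vertex of $G$ can lie on a triangle inside its own neighbourhood, so $G(v)$ is $K_3$-free.

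Hence the task reduces to the following purely combinatorial problem: determine the minimum order of a $K_3$-free graph $N$ such that $\V(N)$ is a marked vertex set in $N$, and classify all such $N$ of that minimum order. I would handle this by the following computational search. For each candidate $K_3$-free graph $N$ on $t$ vertices, with $t = 4, 5, 6, 7, 8$ in turn, enumerate every $(3,3)$-free $2$-coloring of $\E(N)$ (up to symmetry, using the automorphism group), and for each such coloring test whether it extends to a $(3,3)$-free $2$-coloring of the graph $N_1 = K_1 + N$ obtained by joining a new vertex $v$ to every vertex of $N$. The extension test is essentially a $2$-SAT instance: coloring the edge $[v, w]$ with color $c$ is forbidden jointly with coloring $[v, u]$ with color $c$ whenever $[u, w] \in \E(N)$ carries color $c$, so satisfiability can be decided quickly. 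A graph $N$ is marked iff at least one of the enumerated colorings fails to extend. The claim to verify by this search is that no such $N$ exists for $t \leq 7$, while for $t = 8$ exactly seven such graphs arise, namely $N_{8.1},\dots,N_{8.7}$ of Figure \ref{figure: N_8_1 N_8_2 N_8_3 N_8_4 N_8_5 N_8_6 N_8_7}.

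Once this computational fact is established, the theorem follows immediately. If $v \in \V(G)$, then $G(v)$ is a $K_3$-free graph on $d(v)$ vertices whose own vertex set is a marked vertex set in it, so $d(v) \geq 8$; and if equality $d(v) = 8$ holds then $G(v)$ is an $8$-vertex $K_3$-free graph with $\V(G(v))$ marked in $G(v)$, hence $G(v) \in \set{N_{8.1},\dots,N_{8.7}}$.

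The main obstacle is organizing the enumeration efficiently enough to be trustworthy, since the number of $K_3$-free graphs on $7$ or $8$ vertices and, worse, the number of $(3,3)$-free $2$-colorings on each of them, grows rapidly; to keep the search tractable one should factor through graph isomorphism with \emph{nauty}, prune colorings by symmetry, and exploit the $2$-SAT structure of the extendability test. Two sanity checks I would perform are that the analogous search with the weaker hypothesis $K_4 \not\subset N$ (instead of $K_3 \not\subset N$) reproduces Theorem \ref{theorem: delta(G) geq 5, G in mH_e(3, 3; 5)}, and that at least one known minimal graph in $\mH_e(3,3;4)$ (e.g.\ any graph arising as a $d(v)=8$ vertex in a known example) has $G(v)$ indeed isomorphic to one of the $N_{8.i}$.
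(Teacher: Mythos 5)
Your proposal matches the paper's own argument essentially step for step: both deduce from Proposition \ref{proposition: marked vertex sets in graphs in mH_e(3, 3)} that $\V(G(v))$ is a marked vertex set in the $K_3$-free graph $G(v)$, and both then rely on a computer search showing that the smallest triangle-free graphs $N$ with $\V(N)$ marked in $N$ have $8$ vertices and are exactly $N_{8.1},\dots,N_{8.7}$. Your additional detail on how to organize the extension test and your sanity checks are sensible but do not change the route.
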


\begin{figure}
	\centering
	\includegraphics[trim={0 470 0 0},clip,height=120px,width=120px]{./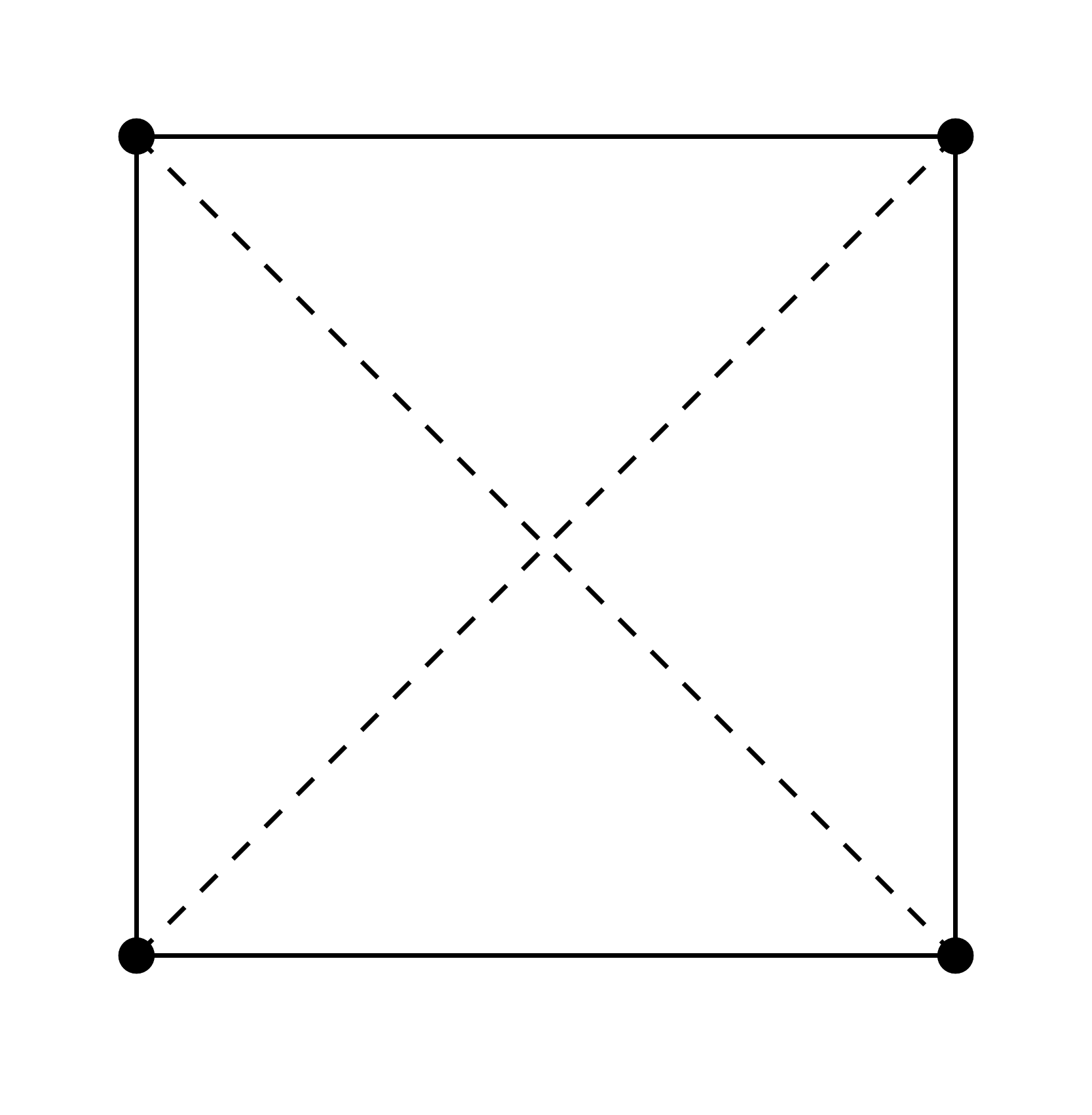}
	\vspace{-0.5em}
	\caption{$(3, 3)$-free 2-coloring of the edges of $K_4$}
	\label{figure: N_4_coloring}
	
	\vspace{1em}
	
	\begin{subfigure}{.3\textwidth}
		\centering
		\includegraphics[trim={0 470 0 0},clip,height=100px,width=100px]{./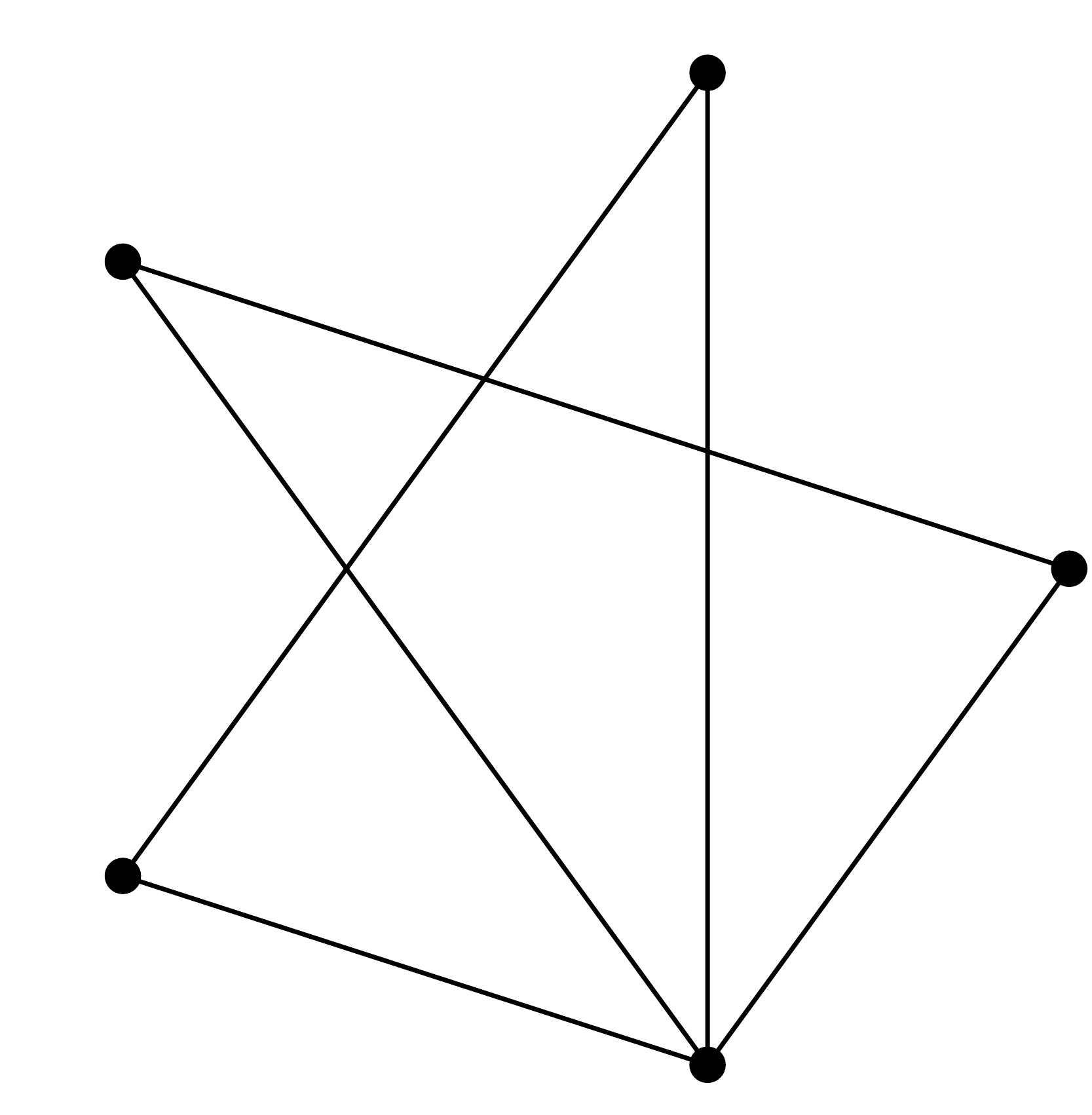}
		\caption*{$N_{5.1}$}
		\label{figure: N_5_1}
	\end{subfigure}\hfill
	\begin{subfigure}{.3\textwidth}
		\centering
		\includegraphics[trim={0 470 0 0},clip,height=100px,width=100px]{./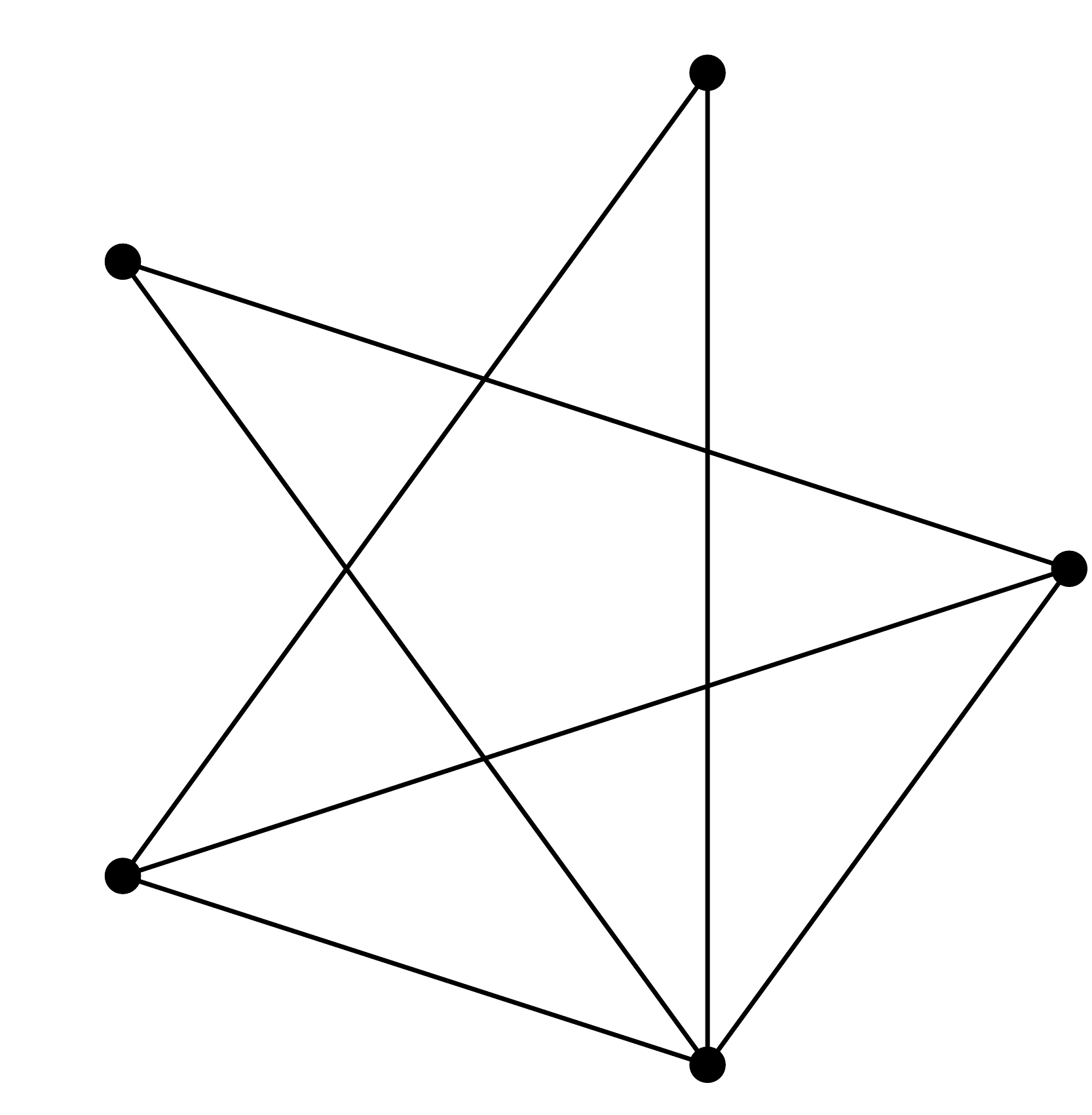}
		\caption*{$N_{5.2}$}
		\label{figure: N_5_2}
	\end{subfigure}\hfill
	\begin{subfigure}{.3\textwidth}
		\centering
		\includegraphics[trim={0 470 0 0},clip,height=100px,width=100px]{./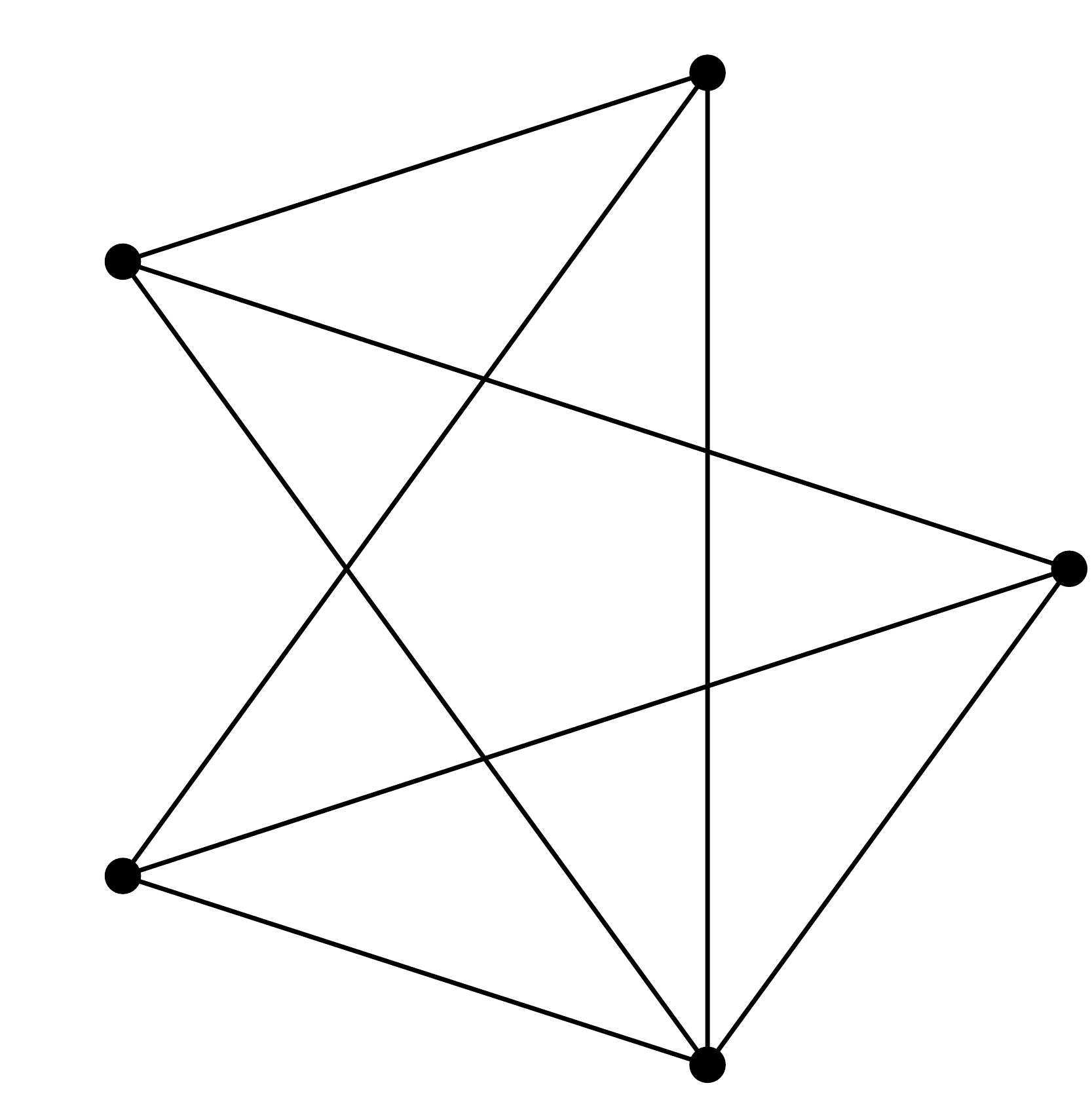}
		\caption*{$N_{5.3}$}
		\label{figure: N_5_3}
	\end{subfigure}
	\caption{The graphs $N_{5.1}$, $N_{5.2}$, $N_{5.3}$}
	\label{figure: N_5_1 N_5_2 N_5_3}

	\vspace{1em}

	\begin{subfigure}{.25\textwidth}
		\centering
		\includegraphics[trim={0 470 0 0},clip,height=80px,width=80px]{./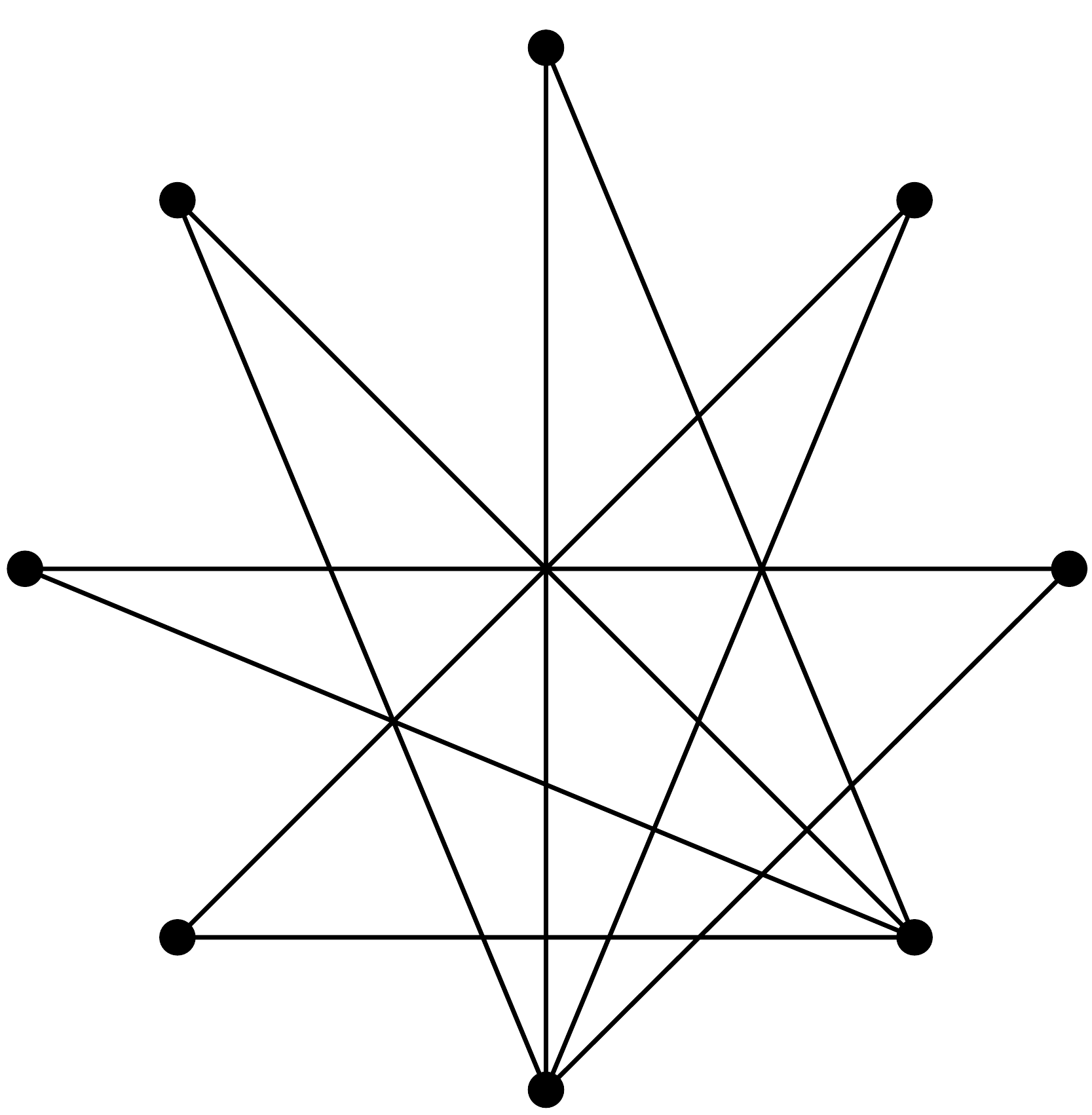}
		\caption*{$N_{8.1}$}
		\label{figure: N_8_1}
	\end{subfigure}\hfill
	\begin{subfigure}{.25\textwidth}
		\centering
		\includegraphics[trim={0 470 0 0},clip,height=80px,width=80px]{./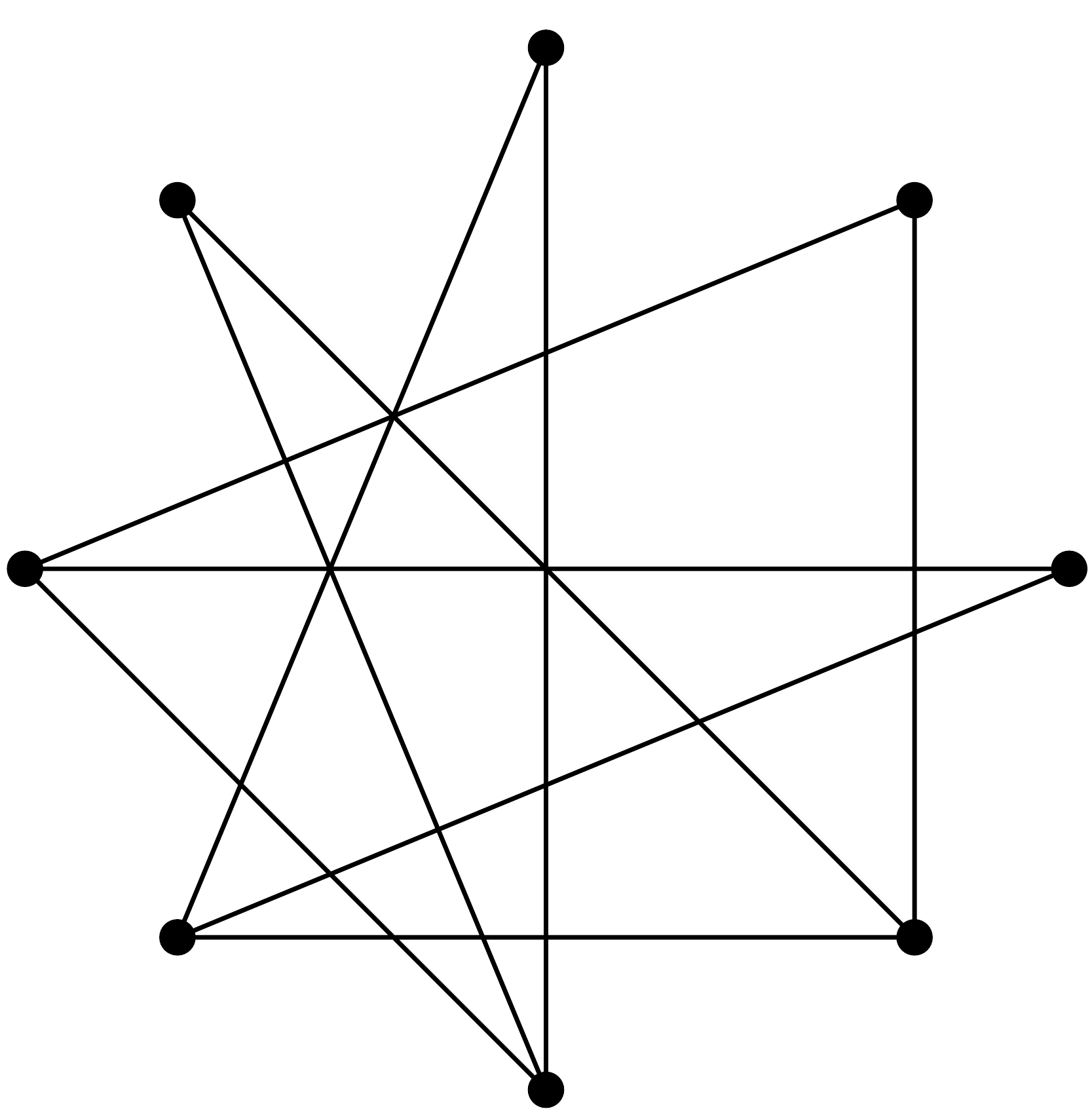}
		\caption*{$N_{8.2}$}
		\label{figure: N_8_2}
	\end{subfigure}\hfill
	\begin{subfigure}{.25\textwidth}
		\centering
		\includegraphics[trim={0 470 0 0},clip,height=80px,width=80px]{./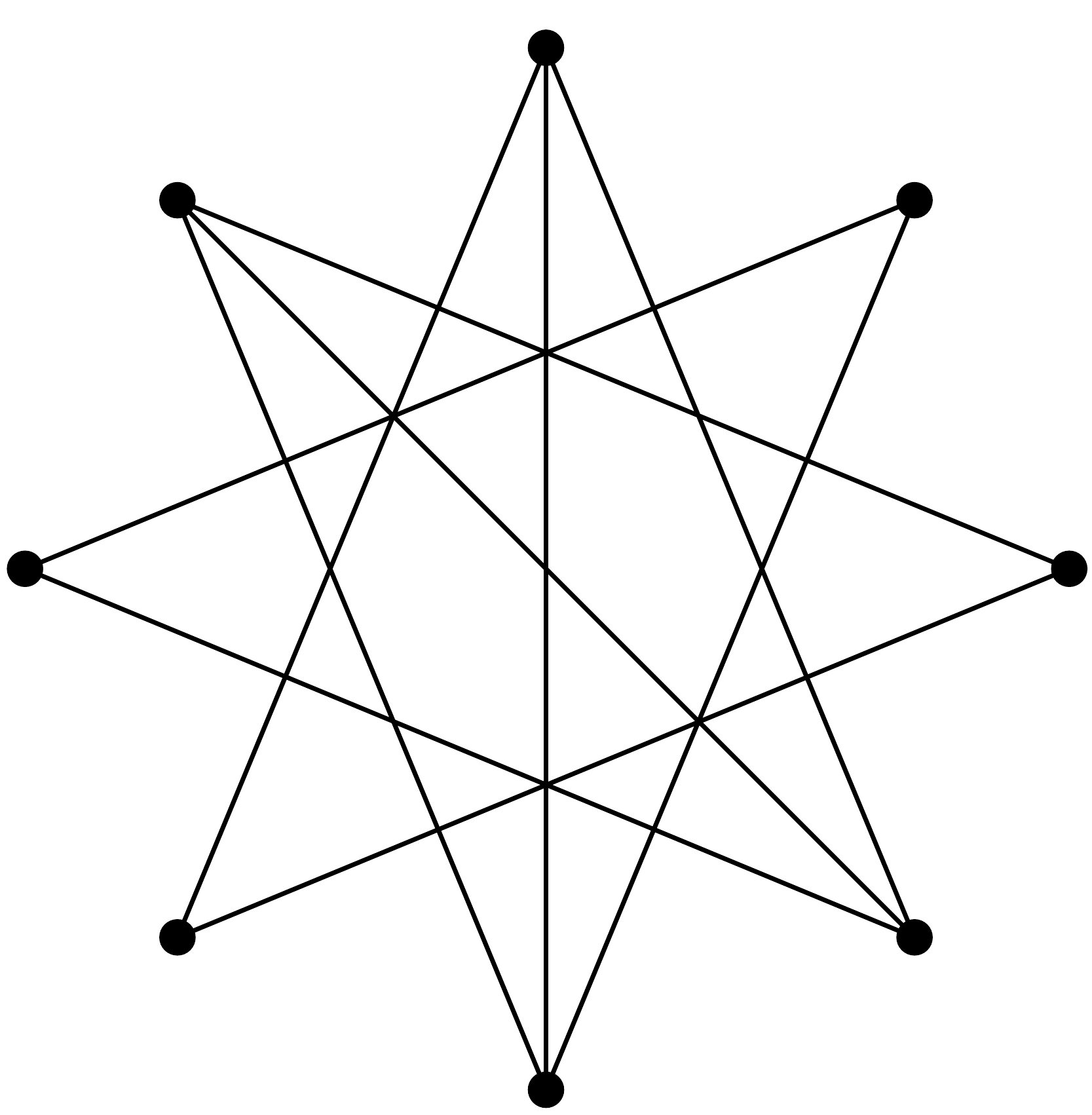}
		\caption*{$N_{8.3}$}
		\label{figure: N_8_3}
	\end{subfigure}\\
	
	\vspace{0.5em}
	
	\begin{subfigure}{.22\textwidth}
		\centering
		\includegraphics[trim={0 470 0 0},clip,height=75px,width=75px]{./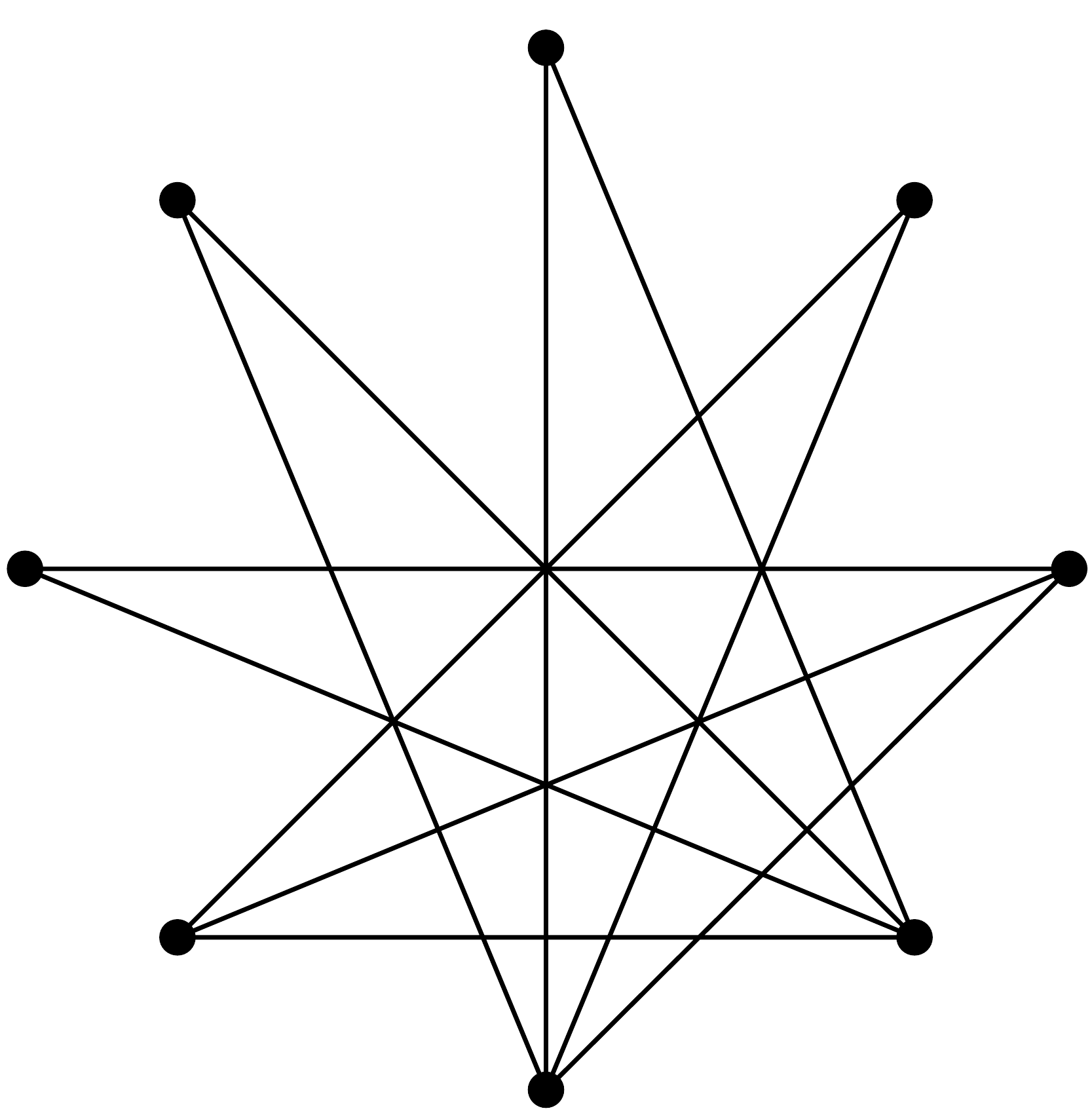}
		\caption*{$N_{8.4}$}
		\label{figure: N_8_4}
	\end{subfigure}\hfill
	\begin{subfigure}{.22\textwidth}
		\centering
		\includegraphics[trim={0 470 0 0},clip,height=75px,width=75px]{./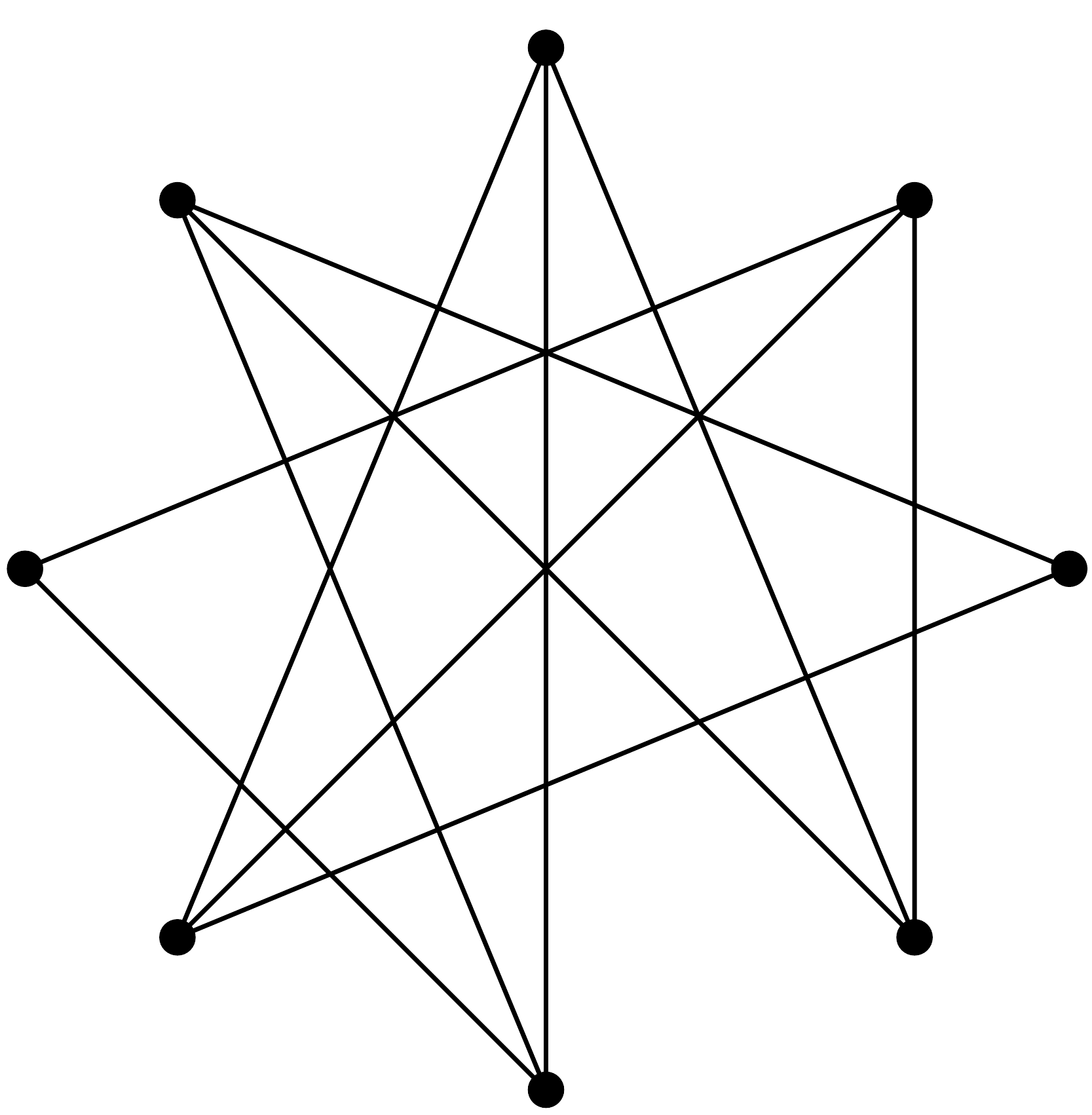}
		\caption*{$N_{8.5}$}
		\label{figure: N_8_5}
	\end{subfigure}\hfill
	\begin{subfigure}{.22\textwidth}
		\centering
		\includegraphics[trim={0 470 0 0},clip,height=75px,width=75px]{./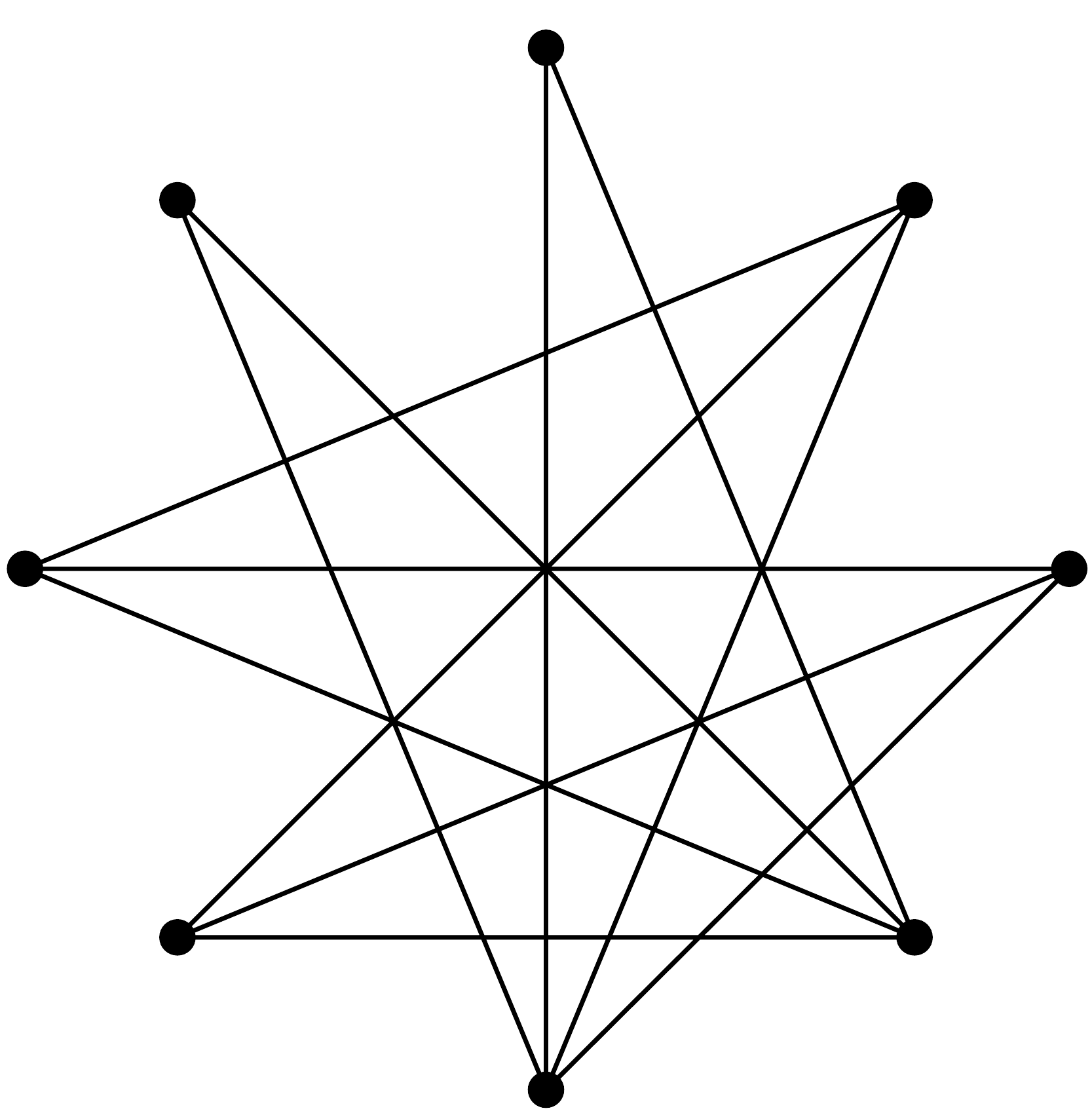}
		\caption*{$N_{8.6}$}
		\label{figure: N_8_6}
	\end{subfigure}\hfill
	\begin{subfigure}{.22\textwidth}
		\centering
		\includegraphics[trim={0 470 0 0},clip,height=75px,width=75px]{./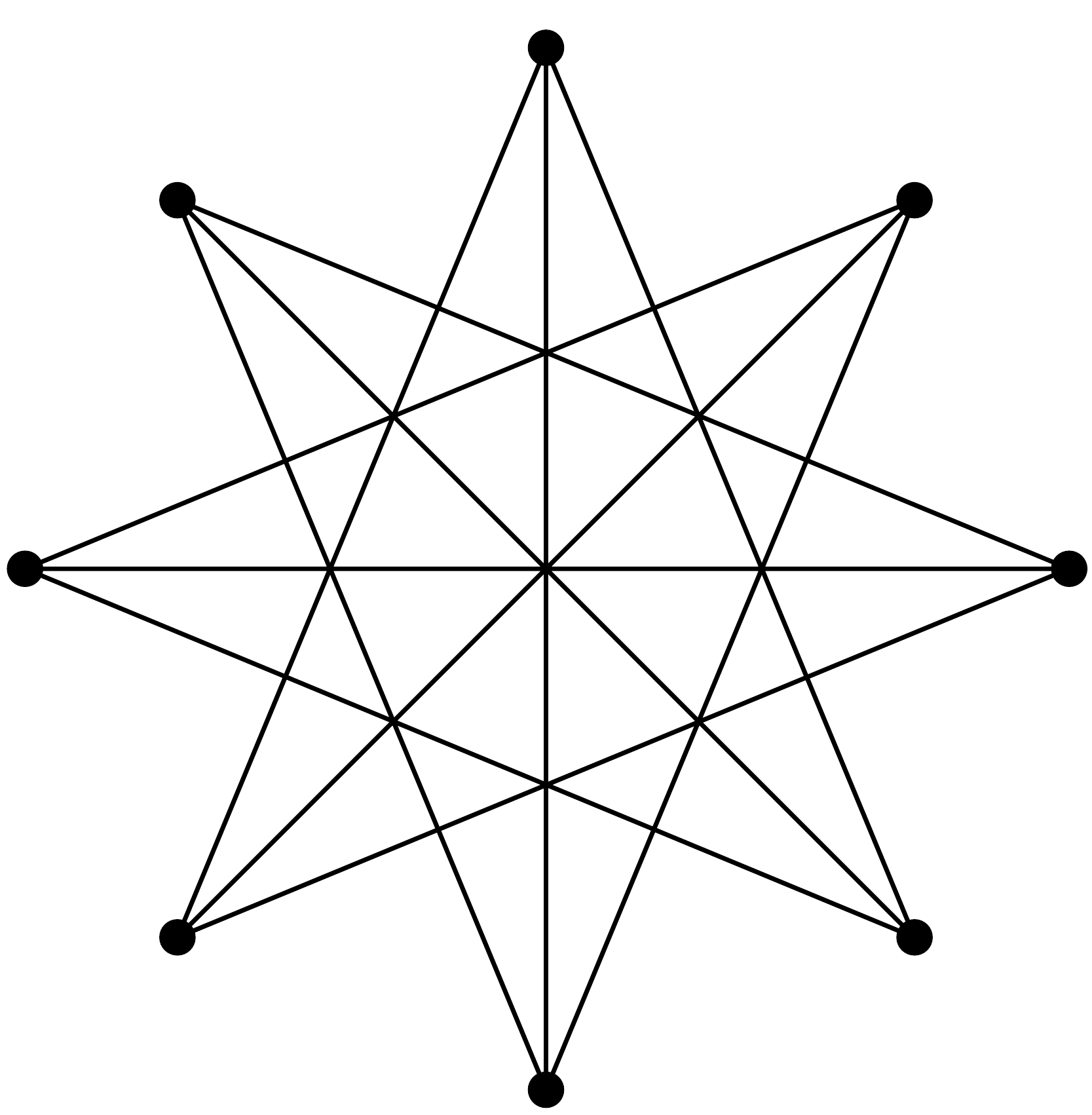}
		\caption*{$N_{8.7}$}
		\label{figure: N_8_7}
	\end{subfigure}
	\caption{The graphs $N_{8.i}, i = 1, ..., 7$}
	\label{figure: N_8_1 N_8_2 N_8_3 N_8_4 N_8_5 N_8_6 N_8_7}
\end{figure}

\vspace{1em}
Theorem \ref{theorem: delta(G) geq 5, G in mH_e(3, 3; 5)} and Theorem \ref{theorem: delta(G) geq 8, G in mH_e(3, 3; 4)} are published in \cite{Bik16}.

\begin{figure}[h]
	\captionsetup{justification=centering}
	\begin{subfigure}{.33\textwidth}
		\centering
		\includegraphics[trim={0 0 0 490},clip,height=100px,width=100px]{./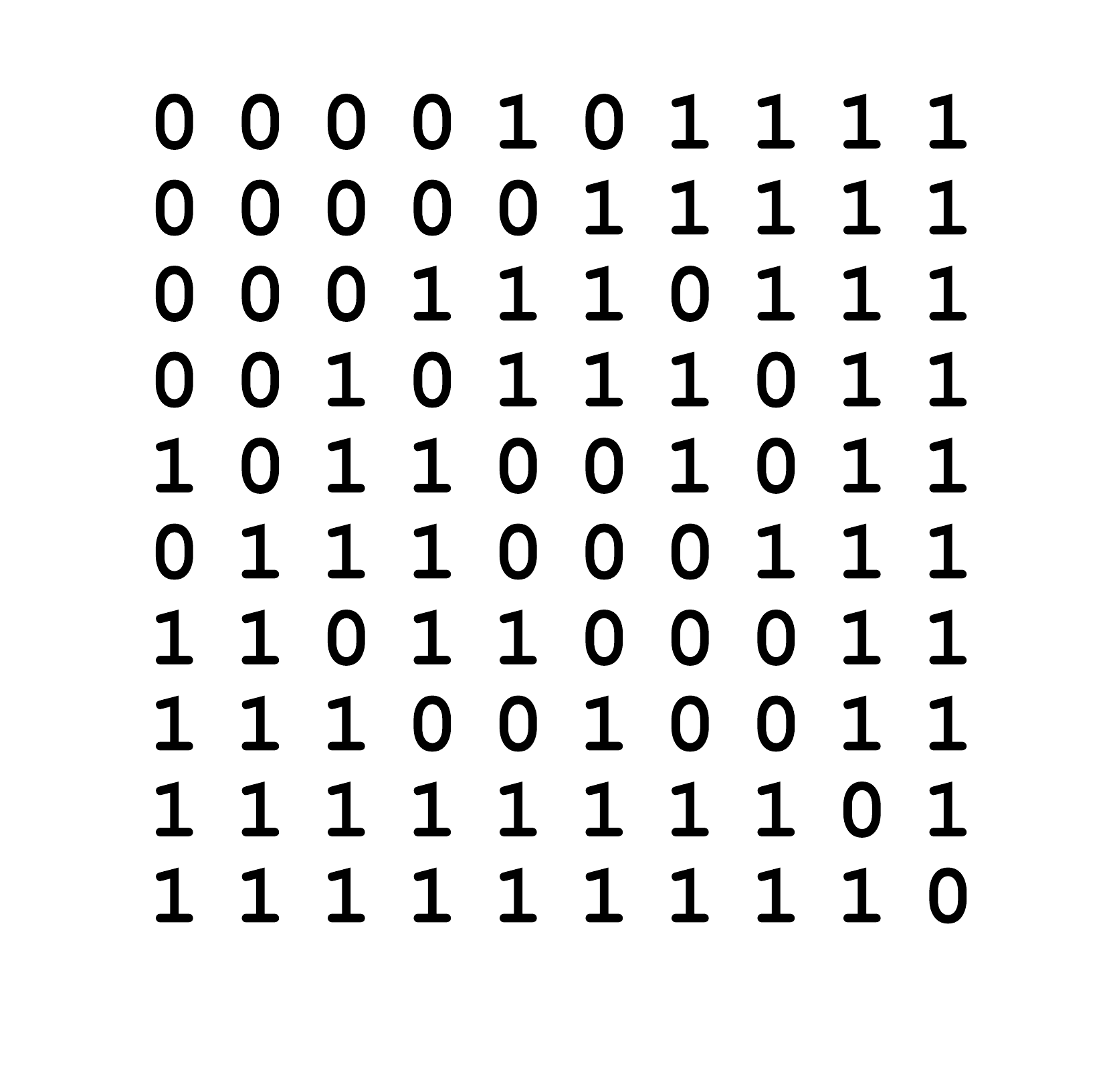}
		\vspace{-1em}
		\caption*{$G_{10.1}$}
		\label{figure: 10_1}
	\end{subfigure}\hfill
	\begin{subfigure}{.33\textwidth}
		\centering
		\includegraphics[trim={0 0 0 490},clip,height=100px,width=100px]{./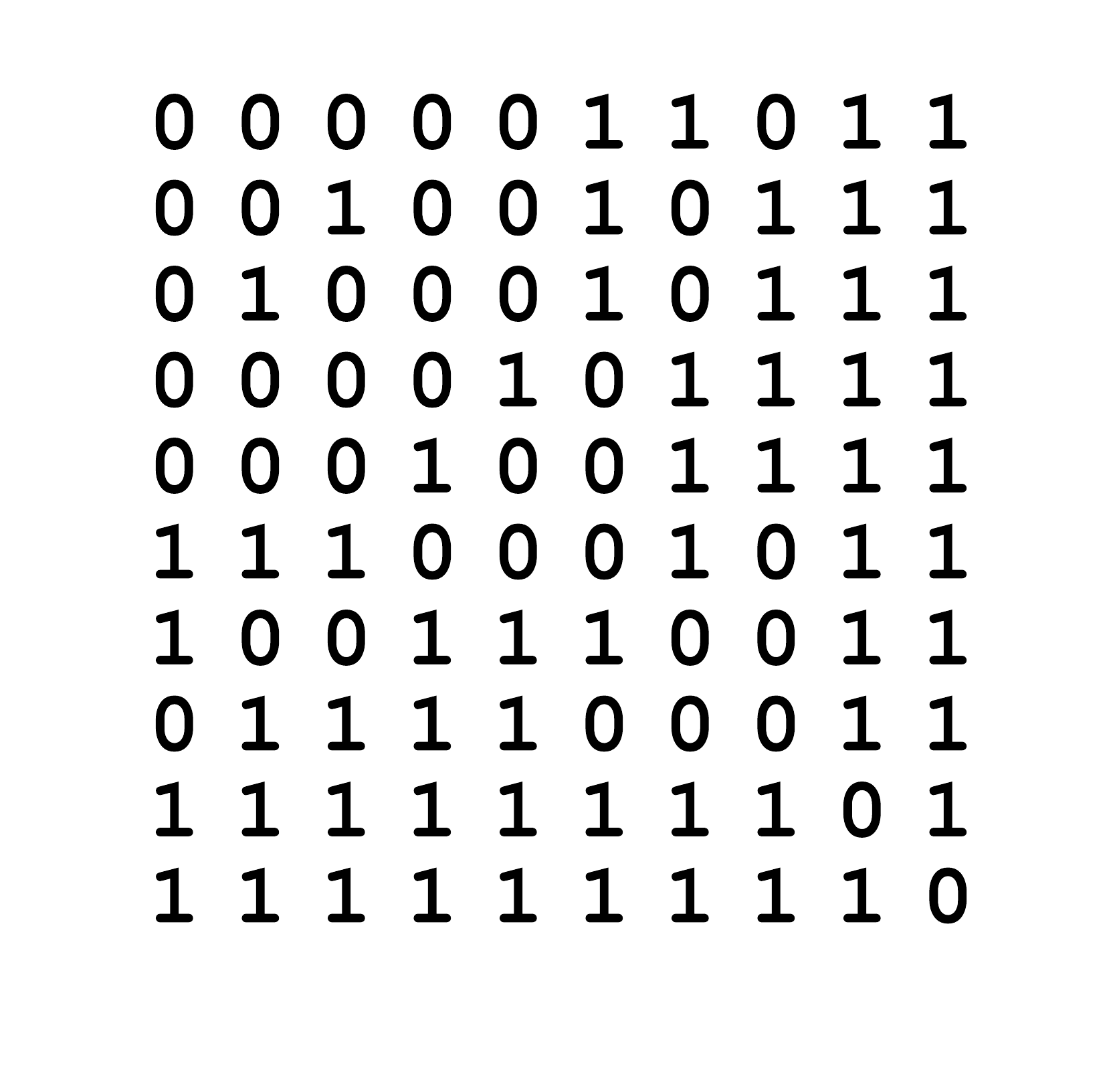}
		\vspace{-1em}
		\caption*{$G_{10.2}$}
		\label{figure: 10_2}
	\end{subfigure}\hfill
	\begin{subfigure}{.33\textwidth}
		\centering
		\includegraphics[trim={0 0 0 490},clip,height=100px,width=100px]{./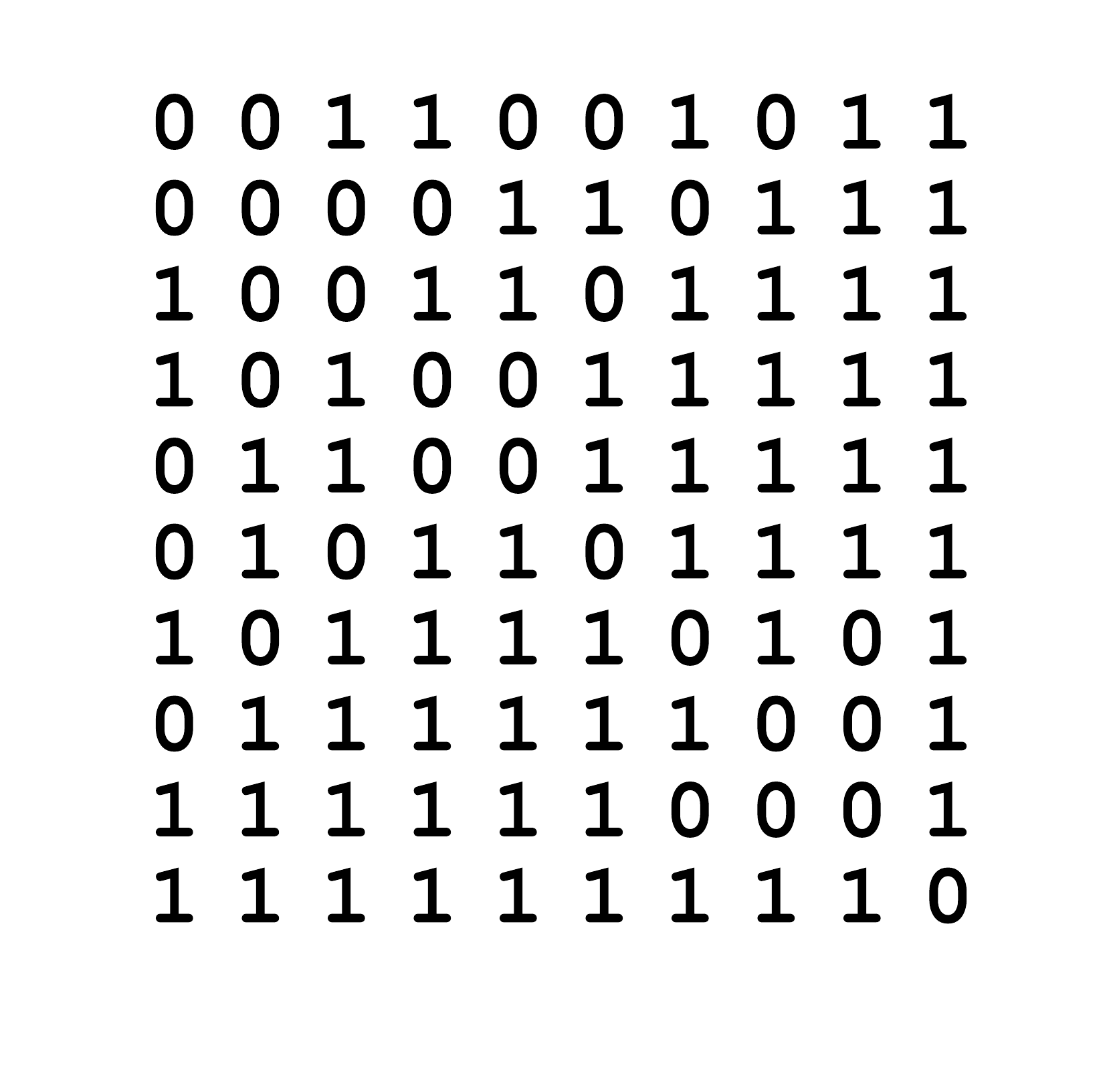}
		\vspace{-1em}
		\caption*{$G_{10.3}$}
		\label{figure: 10_3}
	\end{subfigure}
	
	\vspace{1em}
	\begin{subfigure}{.33\textwidth}
		\centering
		\includegraphics[trim={0 0 0 490},clip,height=100px,width=100px]{./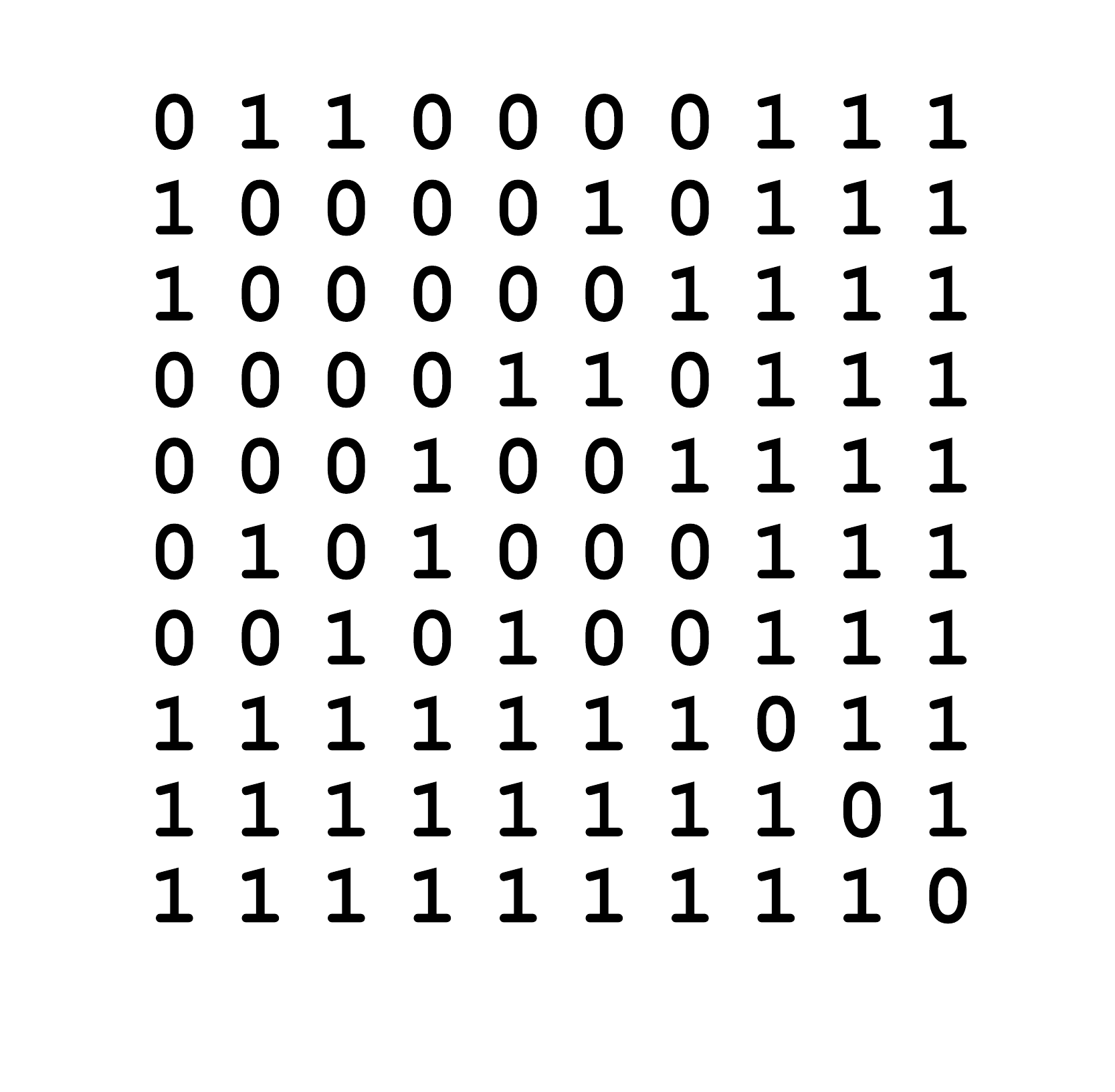}
		\vspace{-1em}
		\caption*{$G_{10.4}$}
		\label{figure: 10_4}
	\end{subfigure}\hfill
	\begin{subfigure}{.33\textwidth}
		\centering
		\includegraphics[trim={0 0 0 490},clip,height=100px,width=100px]{./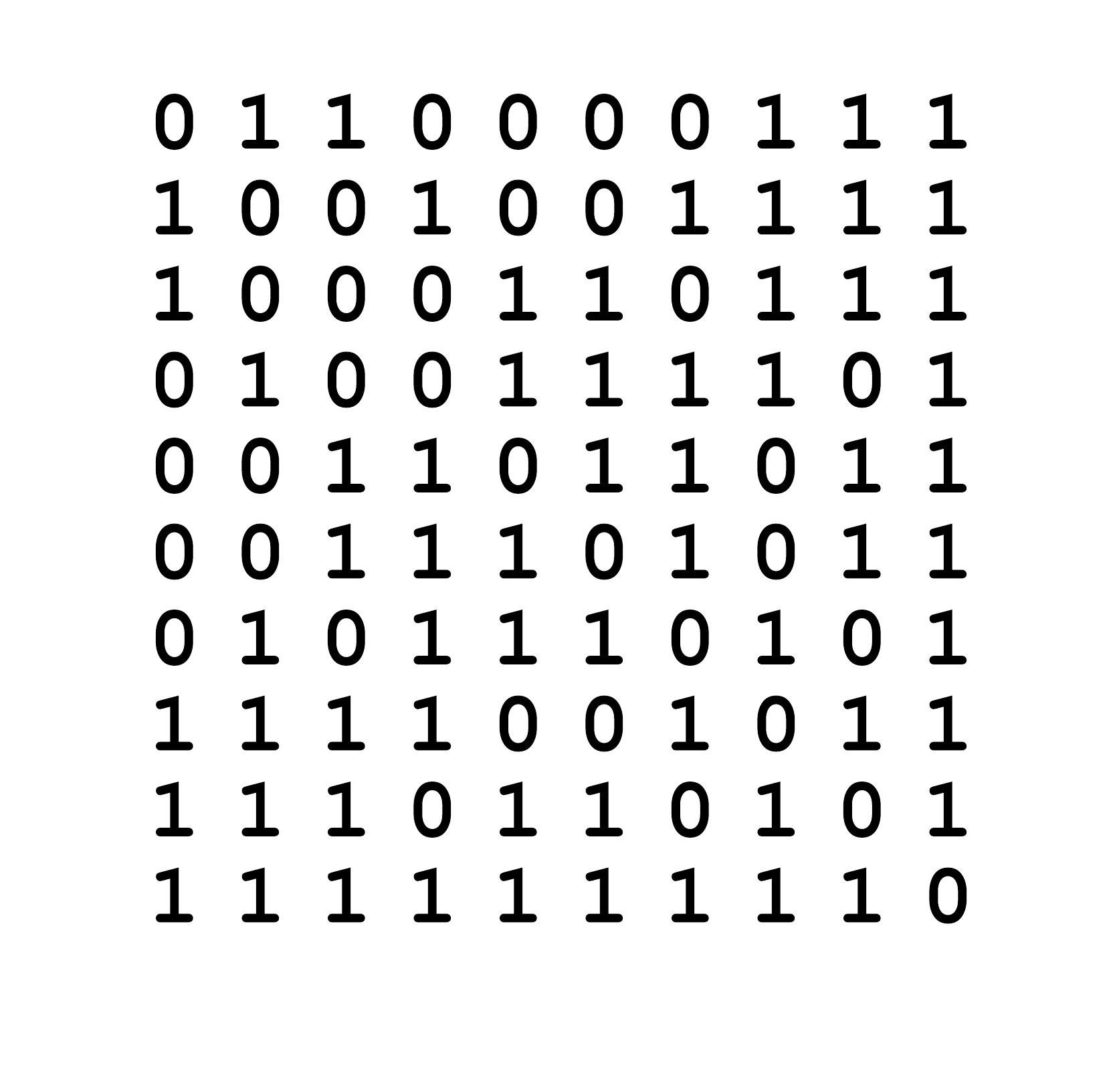}
		\vspace{-1em}
		\caption*{$G_{10.5}$}
		\label{figure: 10_5}
	\end{subfigure}\hfill
	\begin{subfigure}{.33\textwidth}
		\centering
		\includegraphics[trim={0 0 0 490},clip,height=100px,width=100px]{./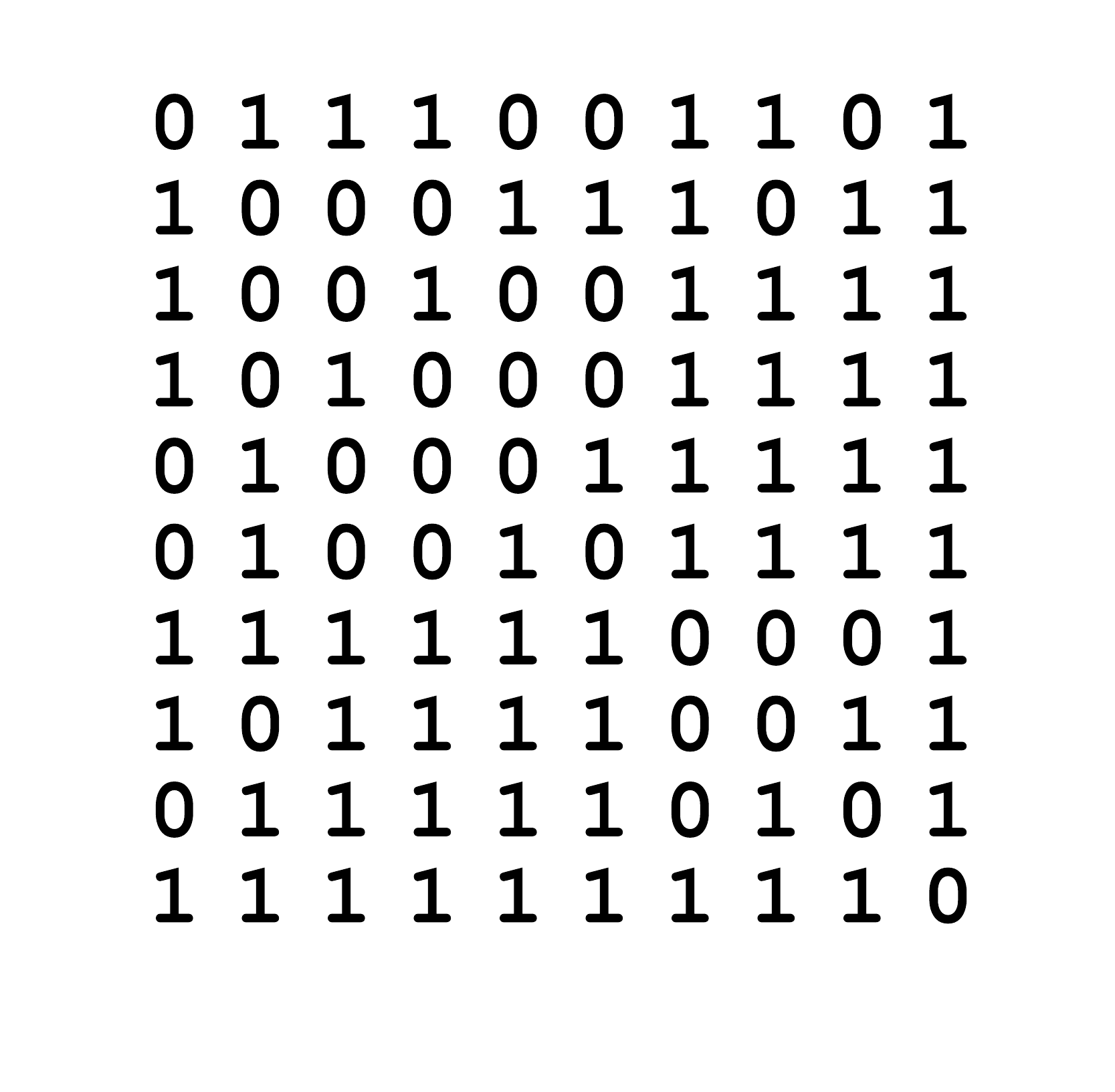}
		\vspace{-1em}
		\caption*{$G_{10.6}$}
		\label{figure: 10_6}
	\end{subfigure}
	
	\vspace{1em}
	\caption{All 6 10-vertex minimal graphs in $\mH_e(3, 3)$}
	\label{figure: 10}
\end{figure}

\begin{figure}[h]
	\captionsetup{justification=centering}
	\begin{subfigure}{.33\textwidth}
		\centering
		\includegraphics[trim={0 0 0 490},clip,height=110px,width=110px]{./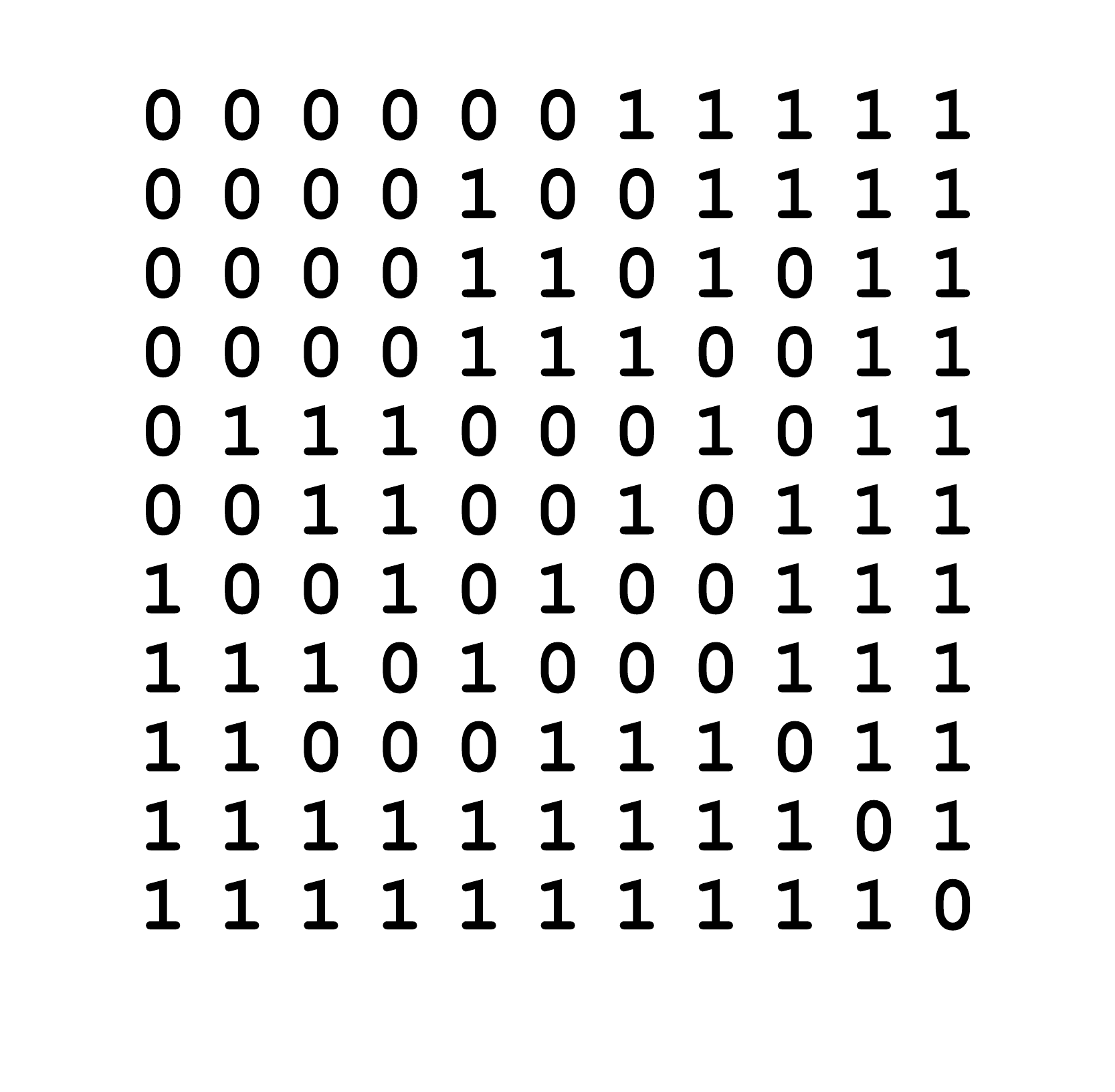}
		\vspace{-1em}
		\caption*{$G_{11.1}$}
		\label{figure: 11_1}
	\end{subfigure}\hfill
	\begin{subfigure}{.33\textwidth}
		\centering
		\includegraphics[trim={0 0 0 490},clip,height=110px,width=110px]{./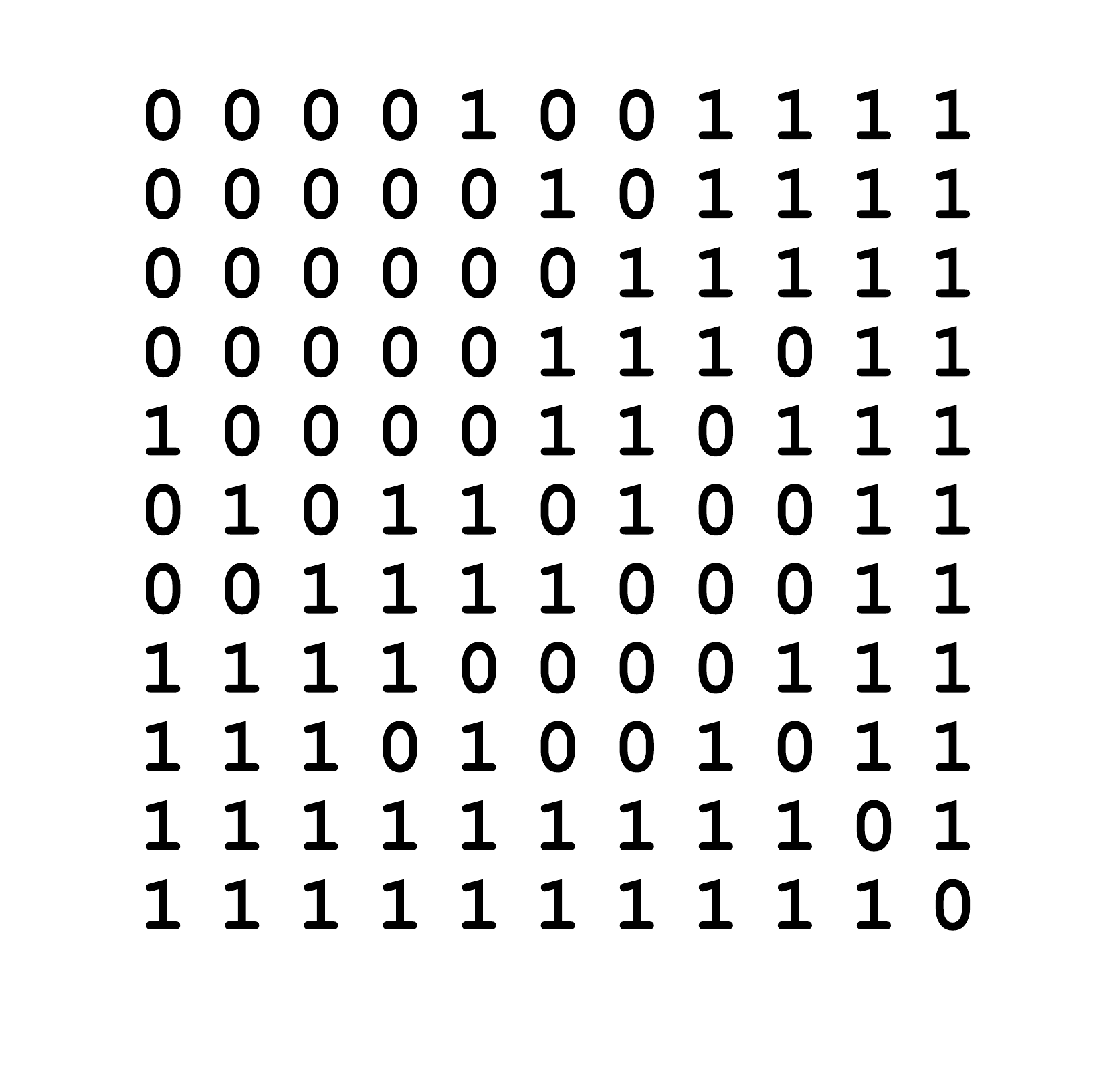}
		\vspace{-1em}
		\caption*{$G_{11.2}$}
		\label{figure: 11_2}
	\end{subfigure}\hfill
	\begin{subfigure}{.33\textwidth}
		\centering
		\includegraphics[trim={0 0 0 490},clip,height=110px,width=110px]{./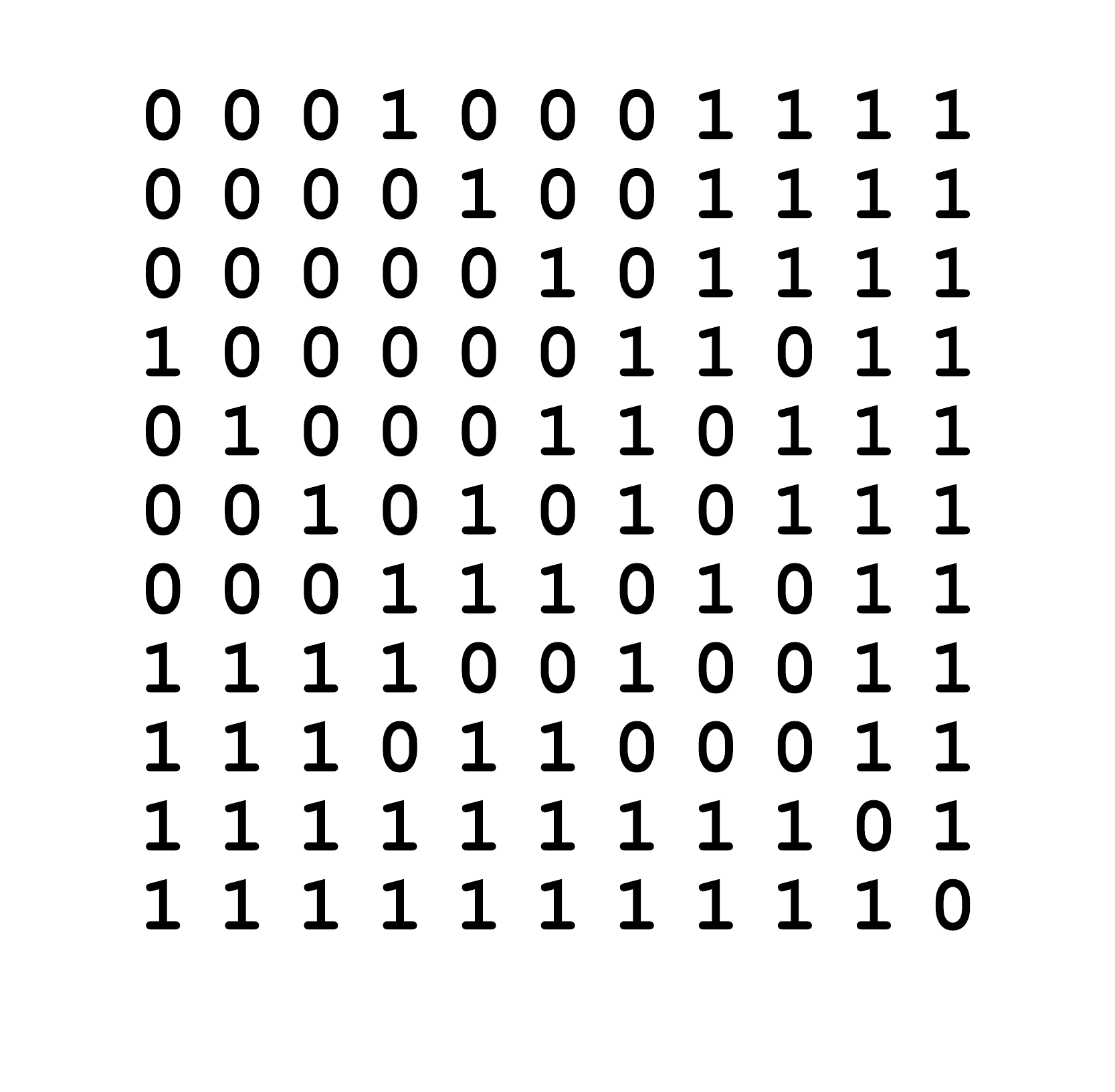}
		\vspace{-1em}
		\caption*{$G_{11.21}$}
		\label{figure: 11_21}
	\end{subfigure}
	
	\vspace{1em}
	\caption{11-vertex minimal graphs in $\mH_e(3, 3)$\\ with independence number 4}
	\label{figure: 11_a4}
\end{figure}

\begin{figure}[h]
	\captionsetup{justification=centering}
	\begin{subfigure}{.45\textwidth}
		\centering
		\includegraphics[trim={0 0 0 490},clip,height=110px,width=110px]{./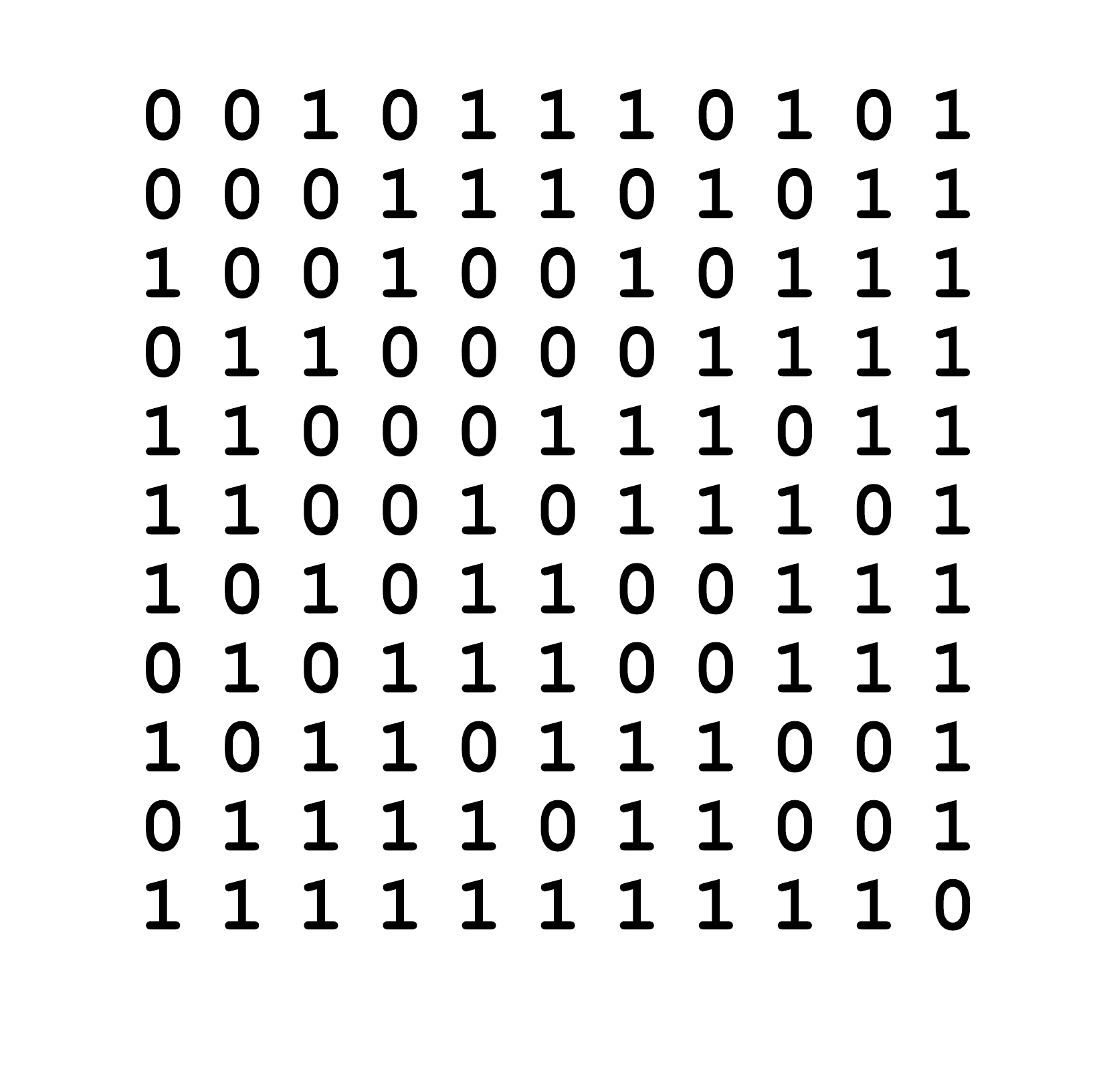}
		\vspace{-1em}
		\caption*{$G_{11.46}$}
		\label{figure: 11_46}
	\end{subfigure}\hfill
	\begin{subfigure}{.45\textwidth}
		\centering
		\includegraphics[trim={0 0 0 490},clip,height=110px,width=110px]{./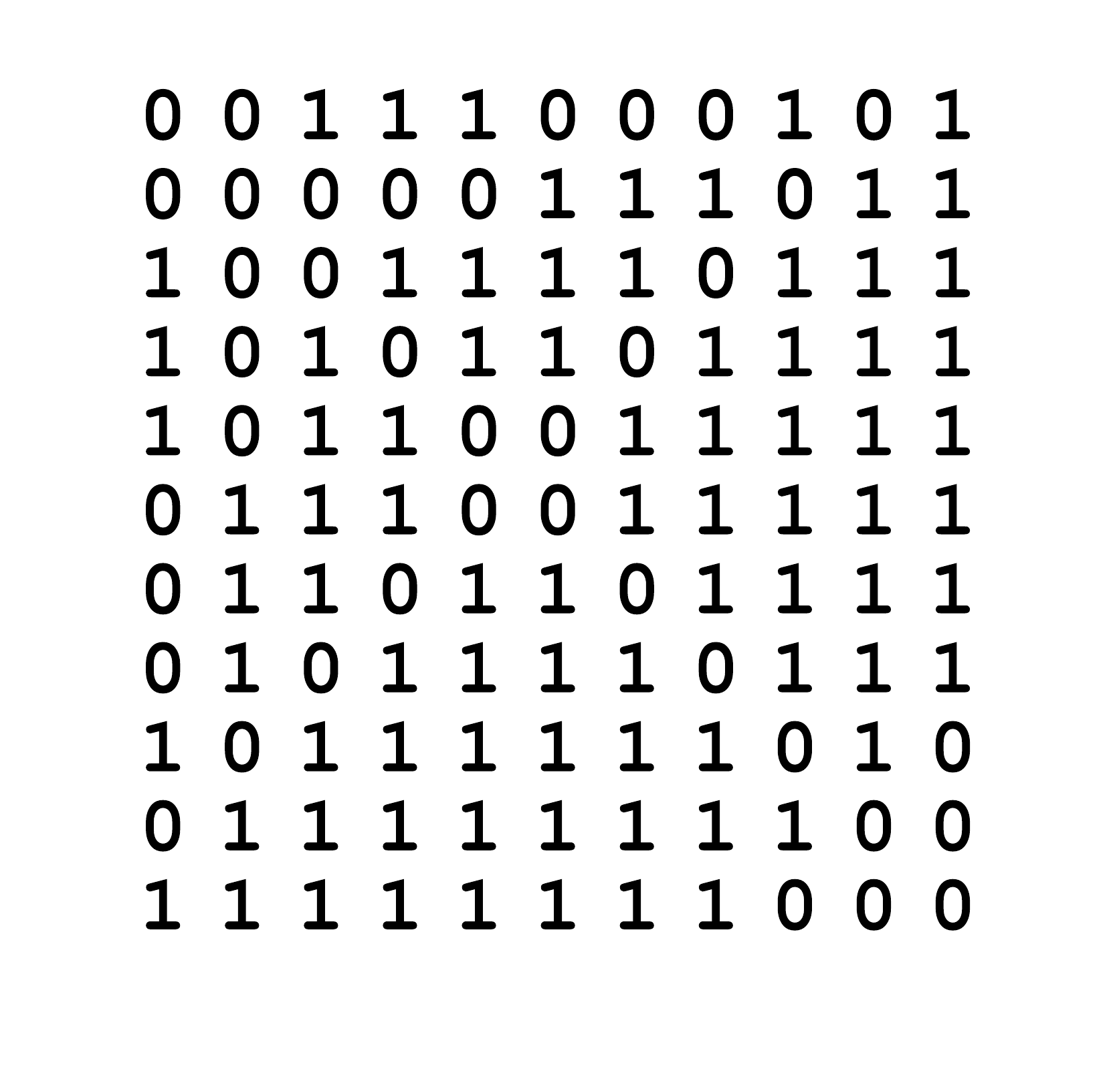}
		\vspace{-1em}
		\caption*{$G_{11.47}$}
		\label{figure: 11_47}
	\end{subfigure}
	
	\vspace{1em}
	\begin{subfigure}{.45\textwidth}
		\centering
		\includegraphics[trim={0 0 0 490},clip,height=110px,width=110px]{./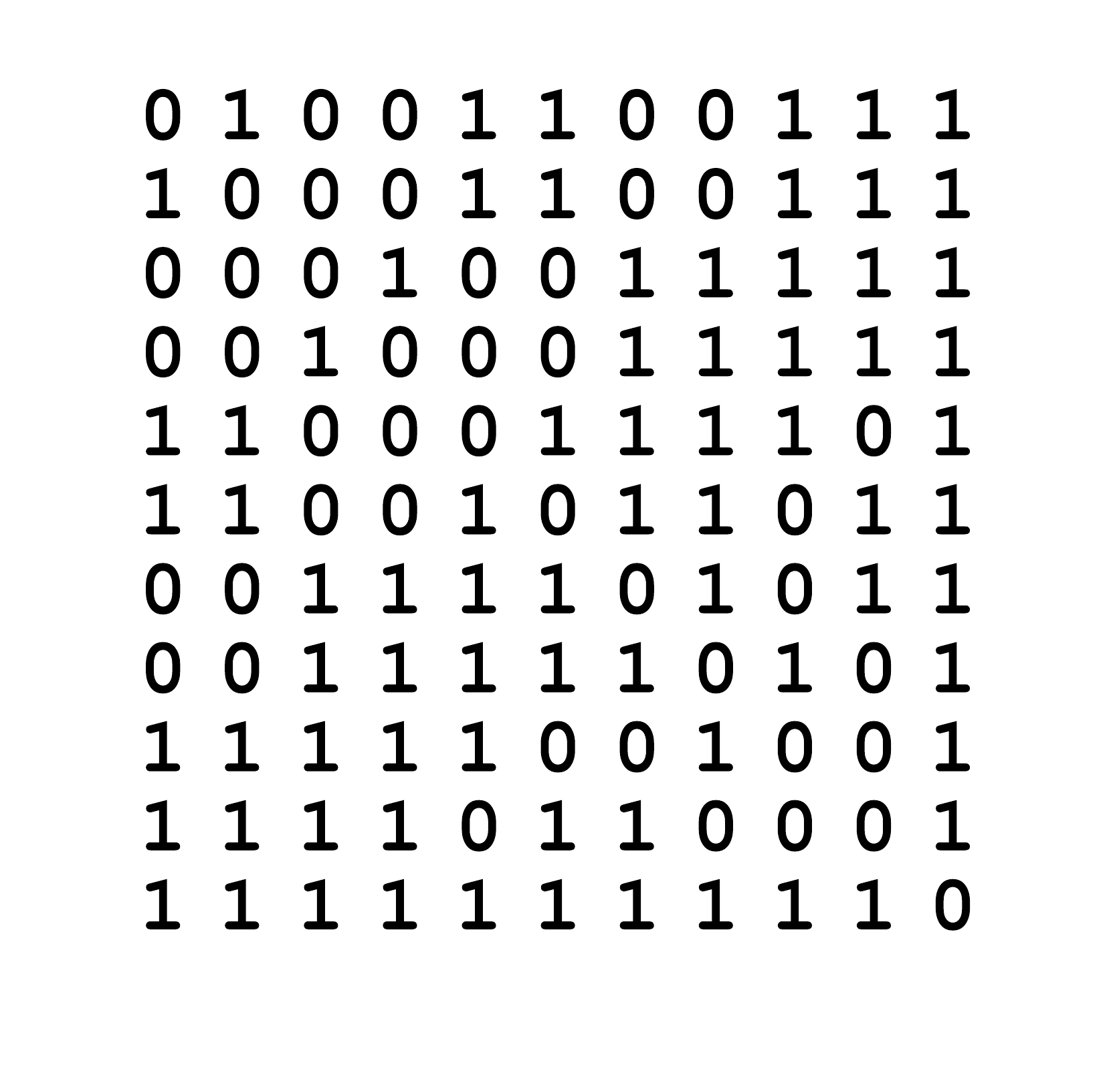}
		\vspace{-1em}
		\caption*{$G_{11.54}$}
		\label{figure: 11_54}
	\end{subfigure}\hfill
	\begin{subfigure}{.45\textwidth}
		\centering
		\includegraphics[trim={0 0 0 490},clip,height=110px,width=110px]{./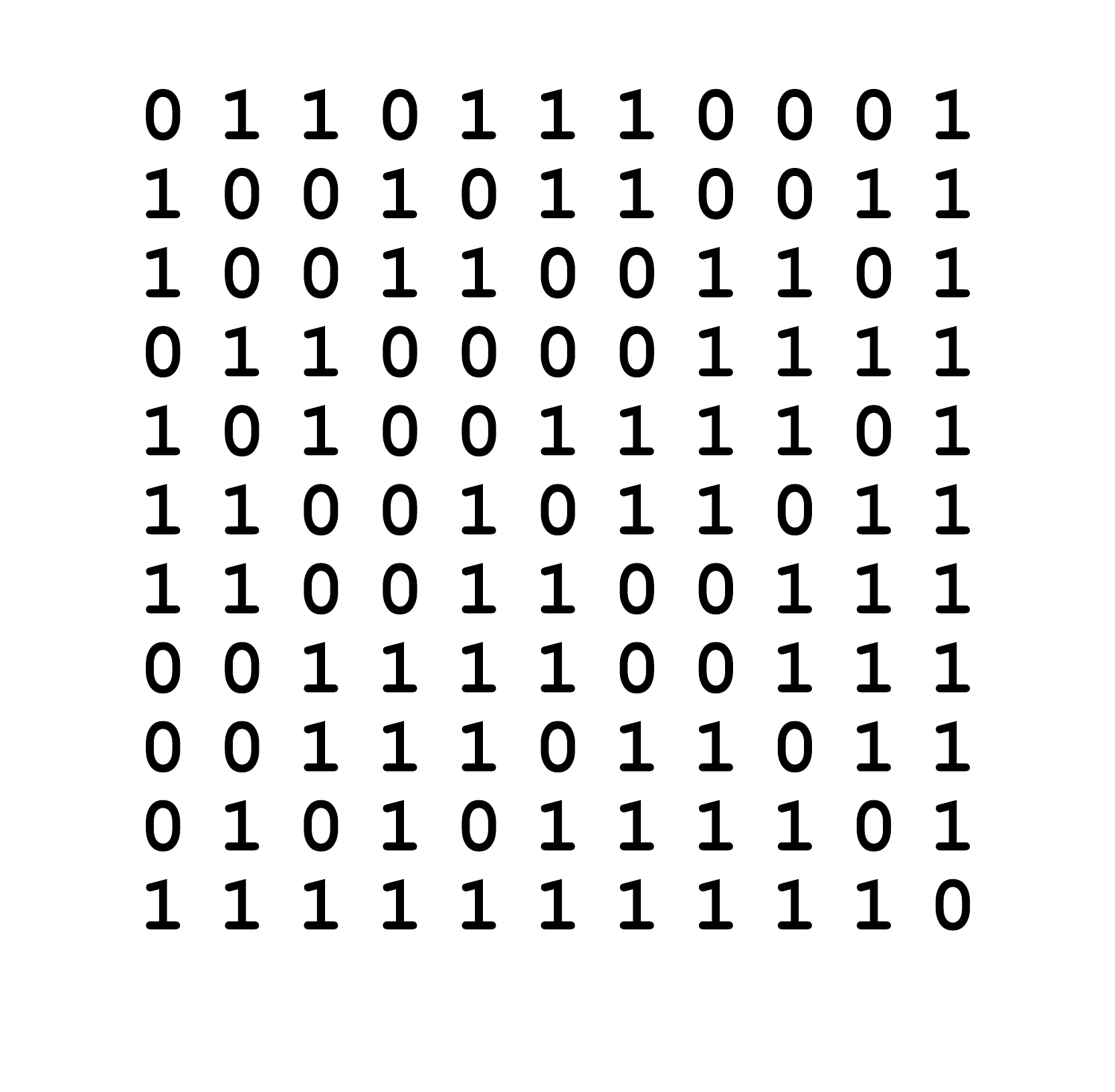}
		\vspace{-1em}
		\caption*{$G_{11.69}$}
		\label{figure: 11_69}
	\end{subfigure}
	
	\vspace{1em}
	\caption{11-vertex minimal graphs in $\mH_e(3, 3)$\\ with independence number 2}
	\label{figure: 11_a2}
\end{figure}

\begin{figure}[h]
	\captionsetup{justification=centering}
	\begin{minipage}{.45\textwidth}
		\centering
		\begin{subfigure}{\textwidth}
			\centering
			\includegraphics[trim={0 0 0 490},clip,height=120px,width=120px]{./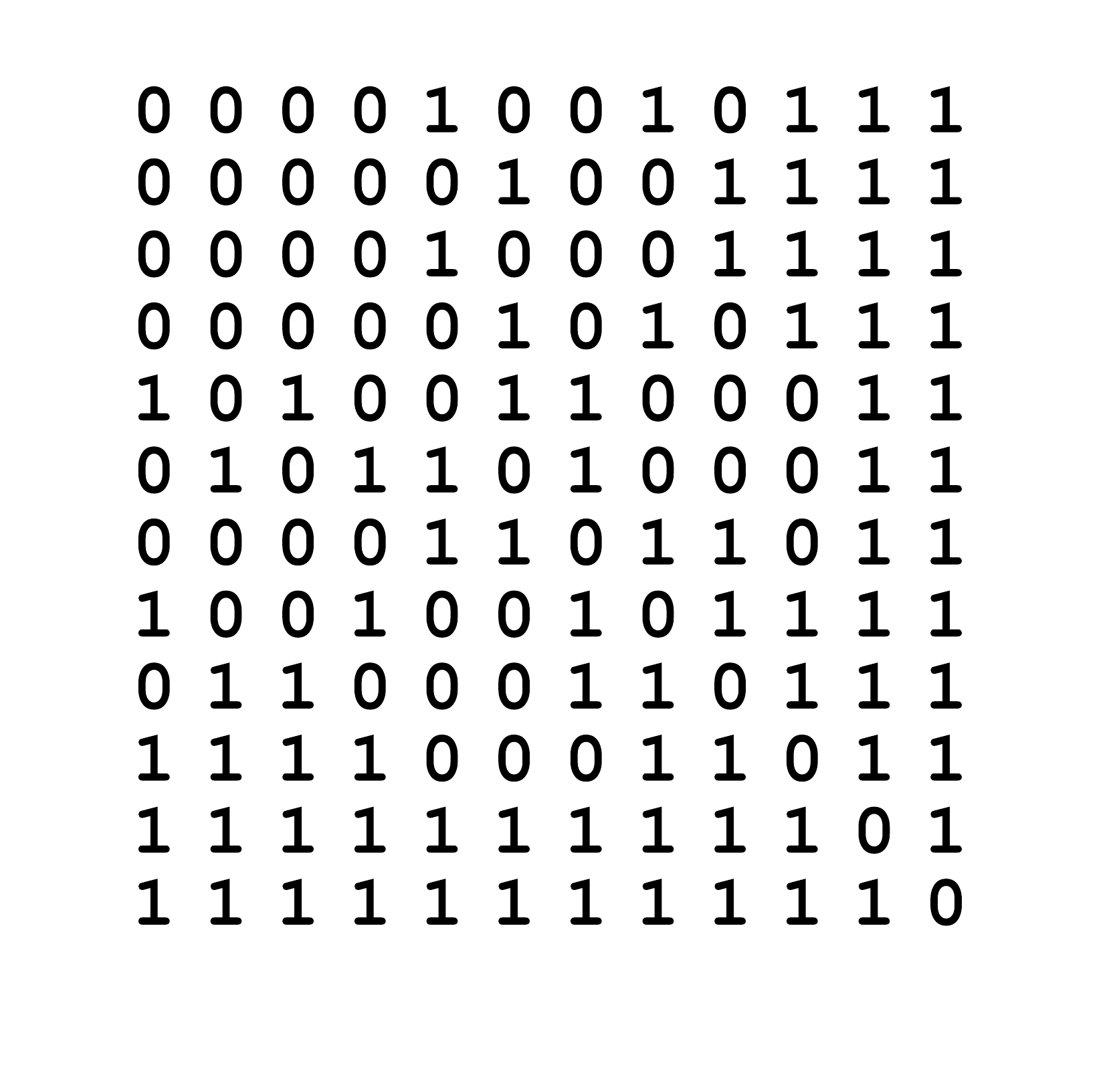}
			\vspace{-1em}
			\caption*{$G_{12.163}$}
			\label{figure: 12_163}
		\end{subfigure}
		
		\vspace{1em}
		\caption{12-vertex minimal graphs in $\mH_e(3, 3)$\\ with independence number 5}
		\label{figure: 12_a5}
	\end{minipage}\hfill
	\begin{minipage}{.45\textwidth}
		\centering
		\begin{subfigure}{\textwidth}
			\centering
			\includegraphics[trim={0 0 0 490},clip,height=120px,width=120px]{./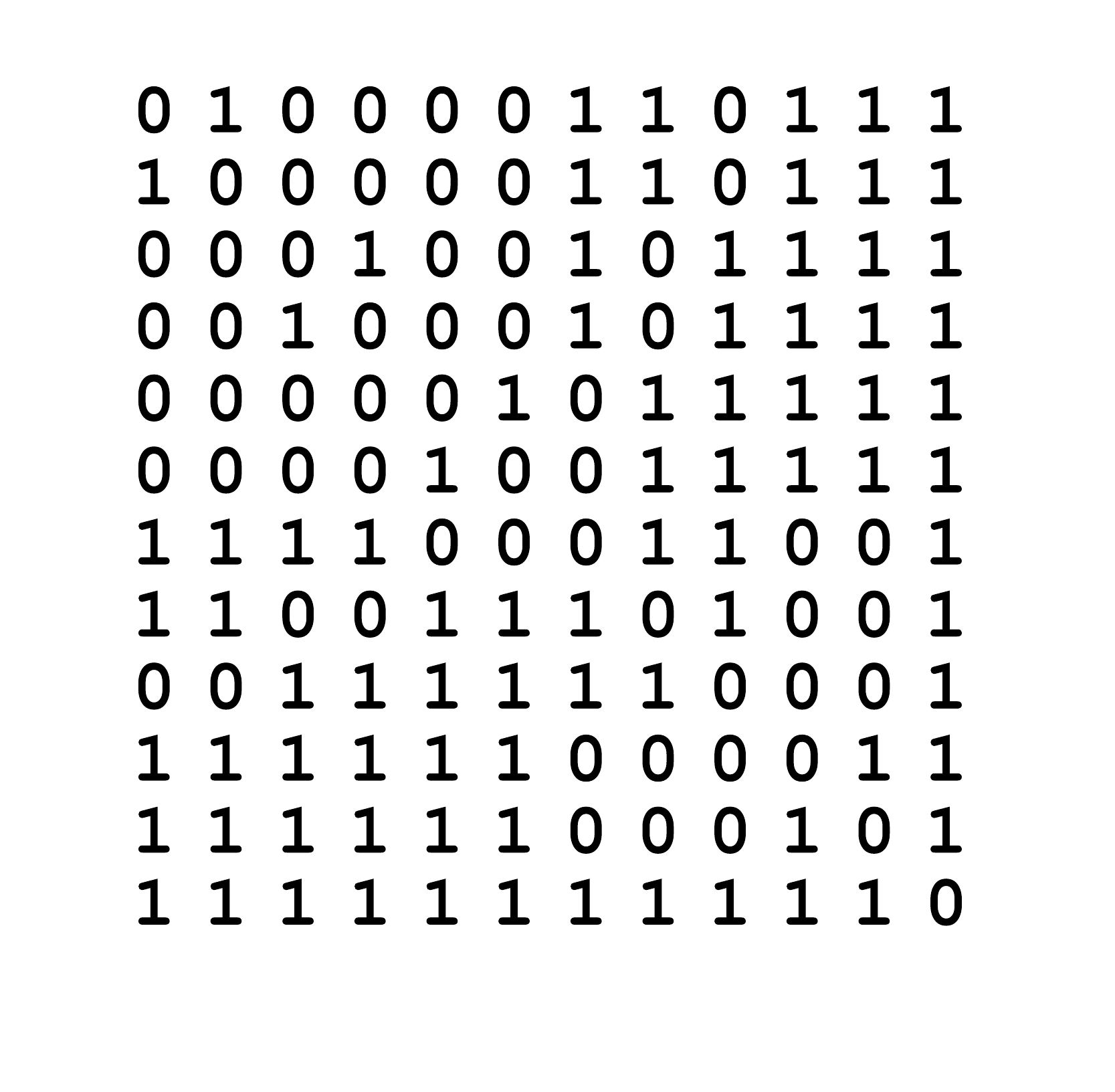}
			\vspace{-1em}
			\caption*{$G_{12.2240}$}
			\label{figure: 12_2240}
		\end{subfigure}
		
		\vspace{1em}
		\caption{12-vertex minimal graphs in $\mH_e(3, 3)$\\ with 96 automorphisms}
		\label{figure: 12_aut}
	\end{minipage}\hfill
\end{figure}

\begin{figure}[h]
	\captionsetup{justification=centering}
	\begin{subfigure}{.45\textwidth}
		\centering
		\includegraphics[trim={0 0 0 490},clip,height=130px,width=130px]{./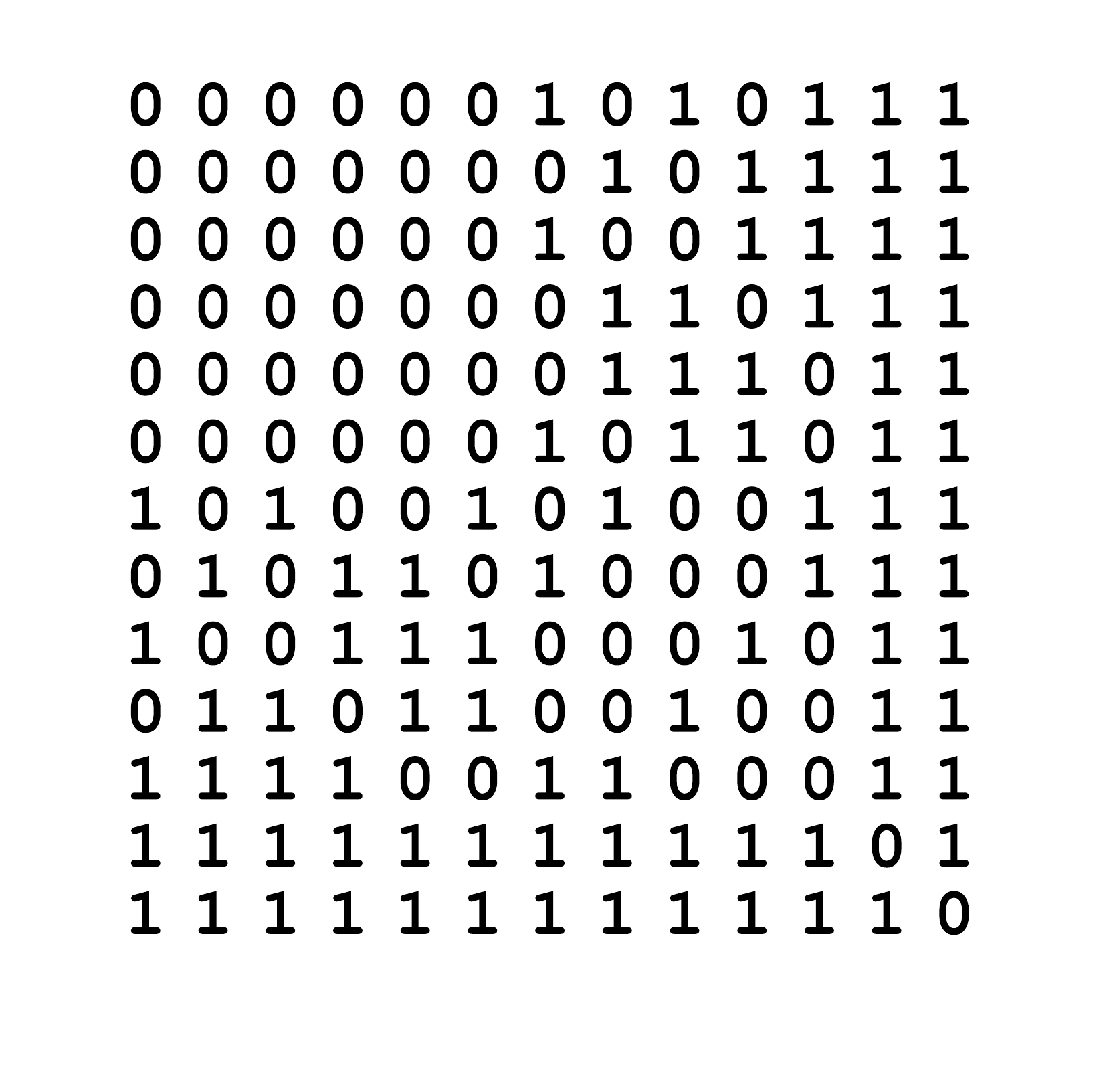}
		\vspace{-1em}
		\caption*{$G_{13.1}$}
		\label{figure: 13_1}
	\end{subfigure}\hfill
	\begin{subfigure}{.45\textwidth}
		\centering
		\includegraphics[trim={0 0 0 490},clip,height=130px,width=130px]{./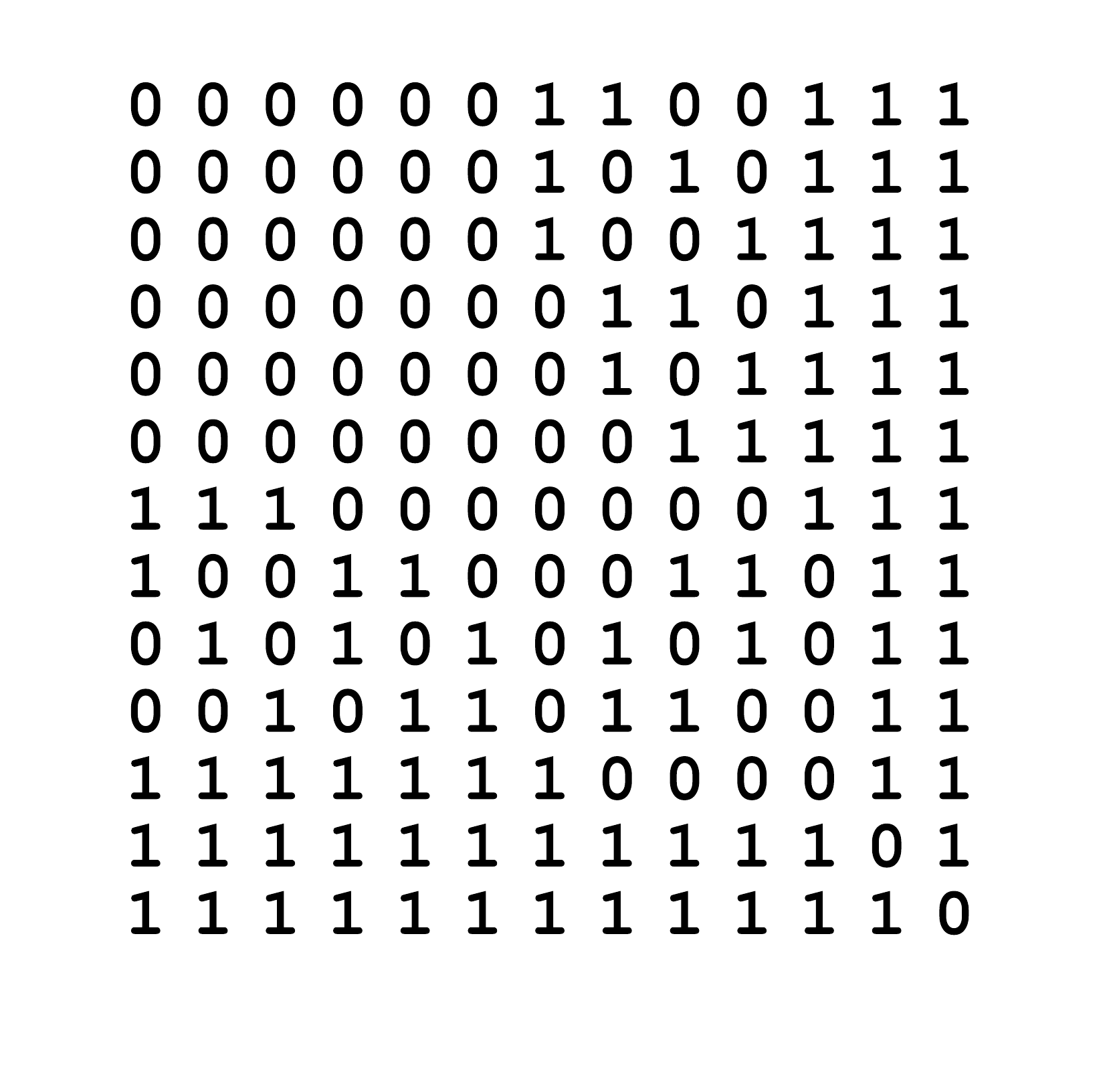}
		\vspace{-1em}
		\caption*{$G_{13.2}$}
		\label{figure: 13_2}
	\end{subfigure}
	
	\vspace{1em}
	\caption{13-vertex minimal graphs in $\mH_e(3, 3)$\\ with independence number 6}
	\label{figure: 13_a6}
\end{figure}

\begin{figure}[h]
	\captionsetup{justification=centering}
	\begin{subfigure}{.33\textwidth}
		\centering
		\includegraphics[trim={0 0 0 490},clip,height=130px,width=130px]{./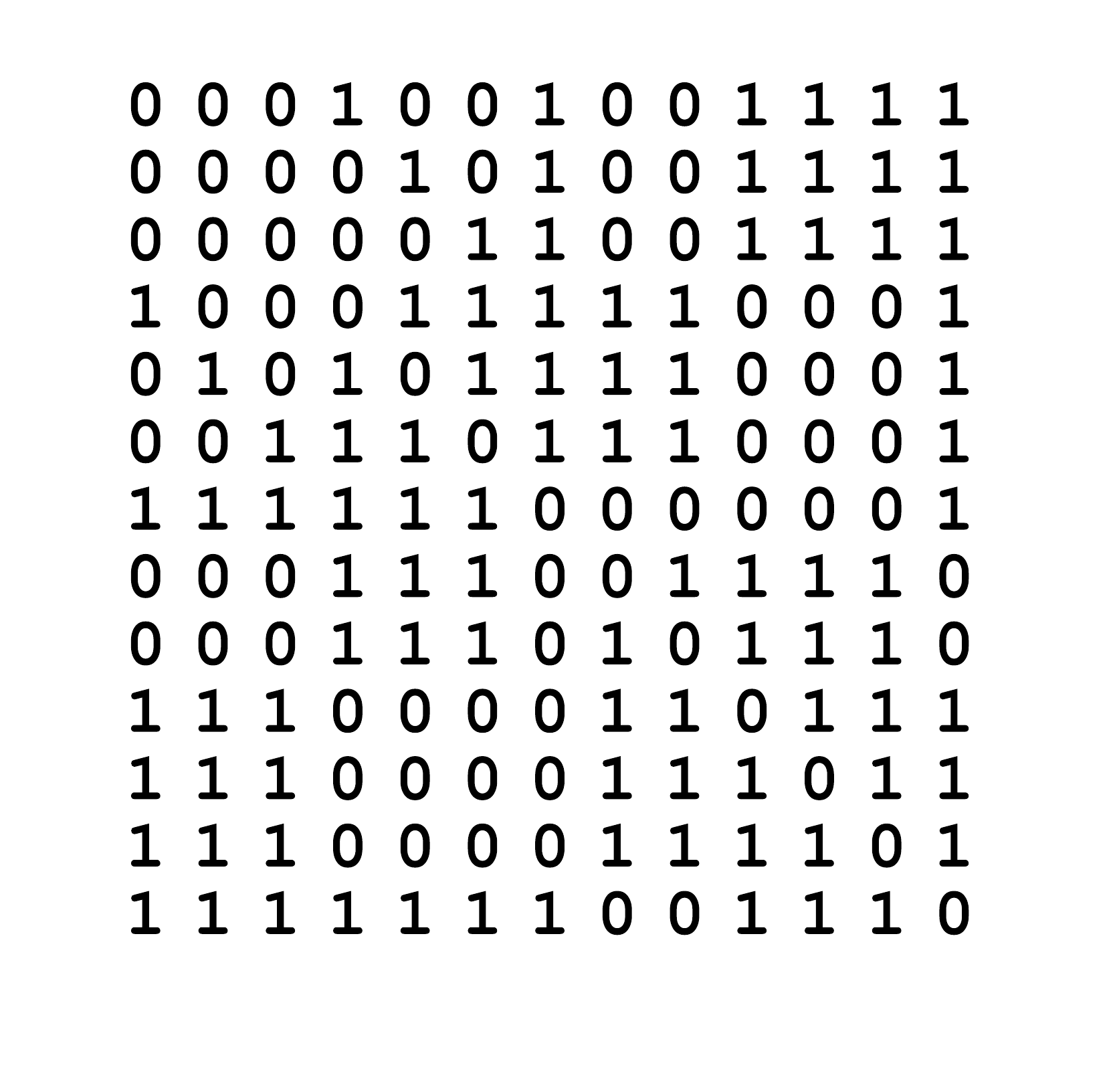}
		\vspace{-2em}
		\caption*{$G_{13.113198}$}
		\label{figure: 13_113198}
	\end{subfigure}\hfill
	\begin{subfigure}{.33\textwidth}
		\centering
		\includegraphics[trim={0 0 0 490},clip,height=130px,width=130px]{./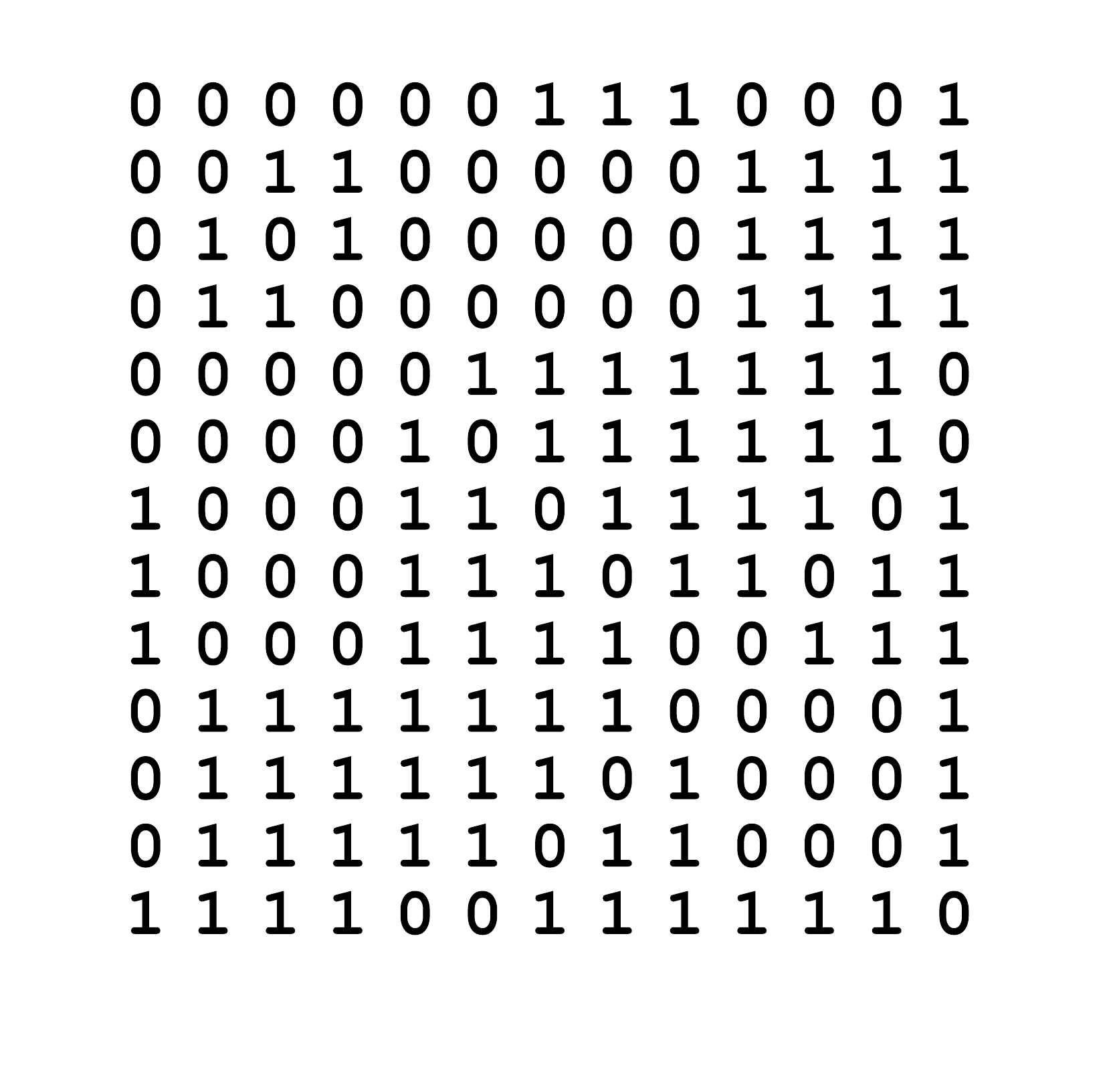}
		\vspace{-2em}
		\caption*{$G_{13.175639}$}
		\label{figure: 13_175639}
	\end{subfigure}\hfill
	\begin{subfigure}{.33\textwidth}
		\centering
		\includegraphics[trim={0 0 0 490},clip,height=130px,width=130px]{./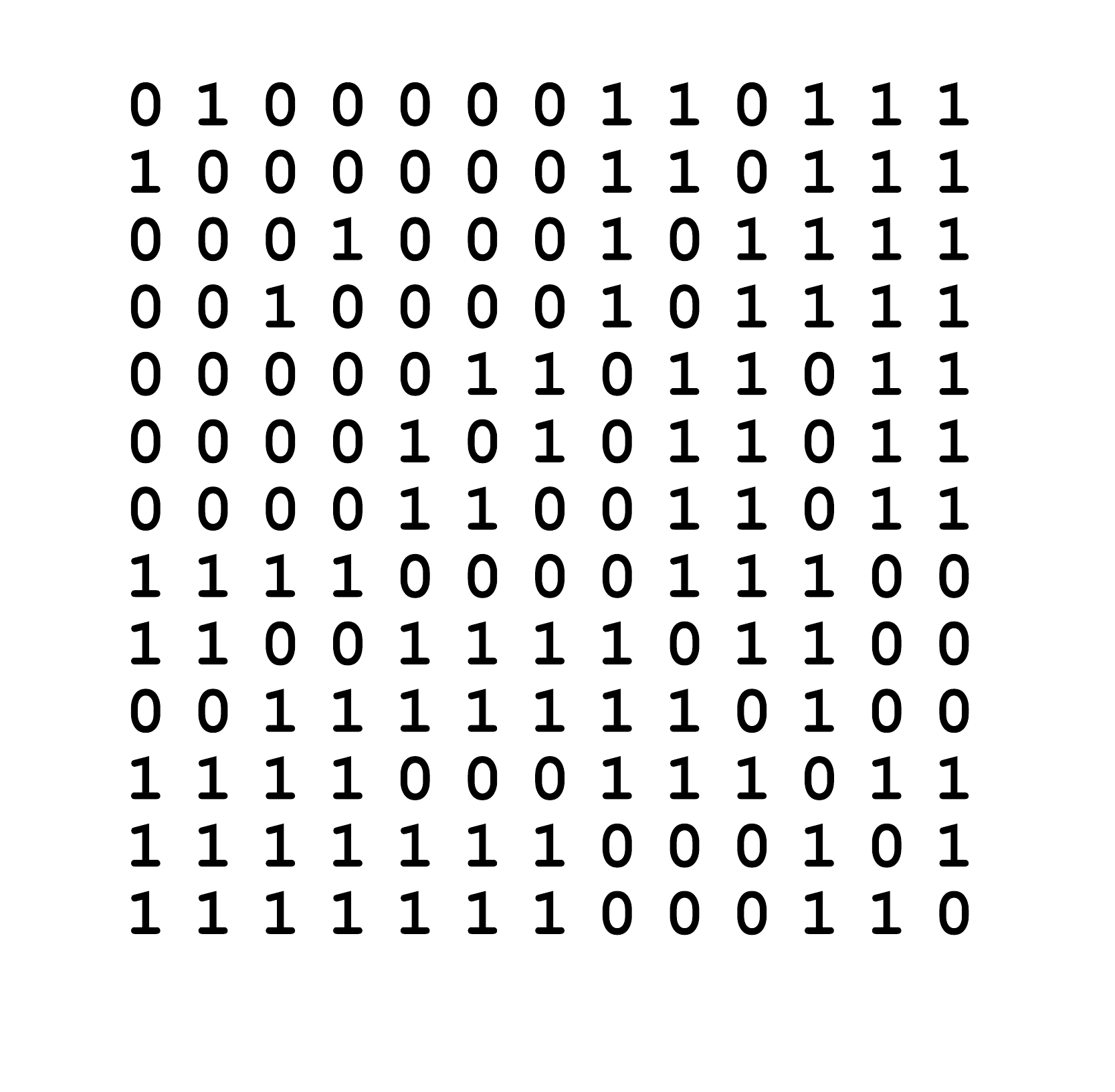}
		\vspace{-2em}
		\caption*{$G_{13.248305}$}
		\label{figure: 13_248305}
	\end{subfigure}
	
	\vspace{1em}
	\begin{subfigure}{.33\textwidth}
		\centering
		\includegraphics[trim={0 0 0 490},clip,height=130px,width=130px]{./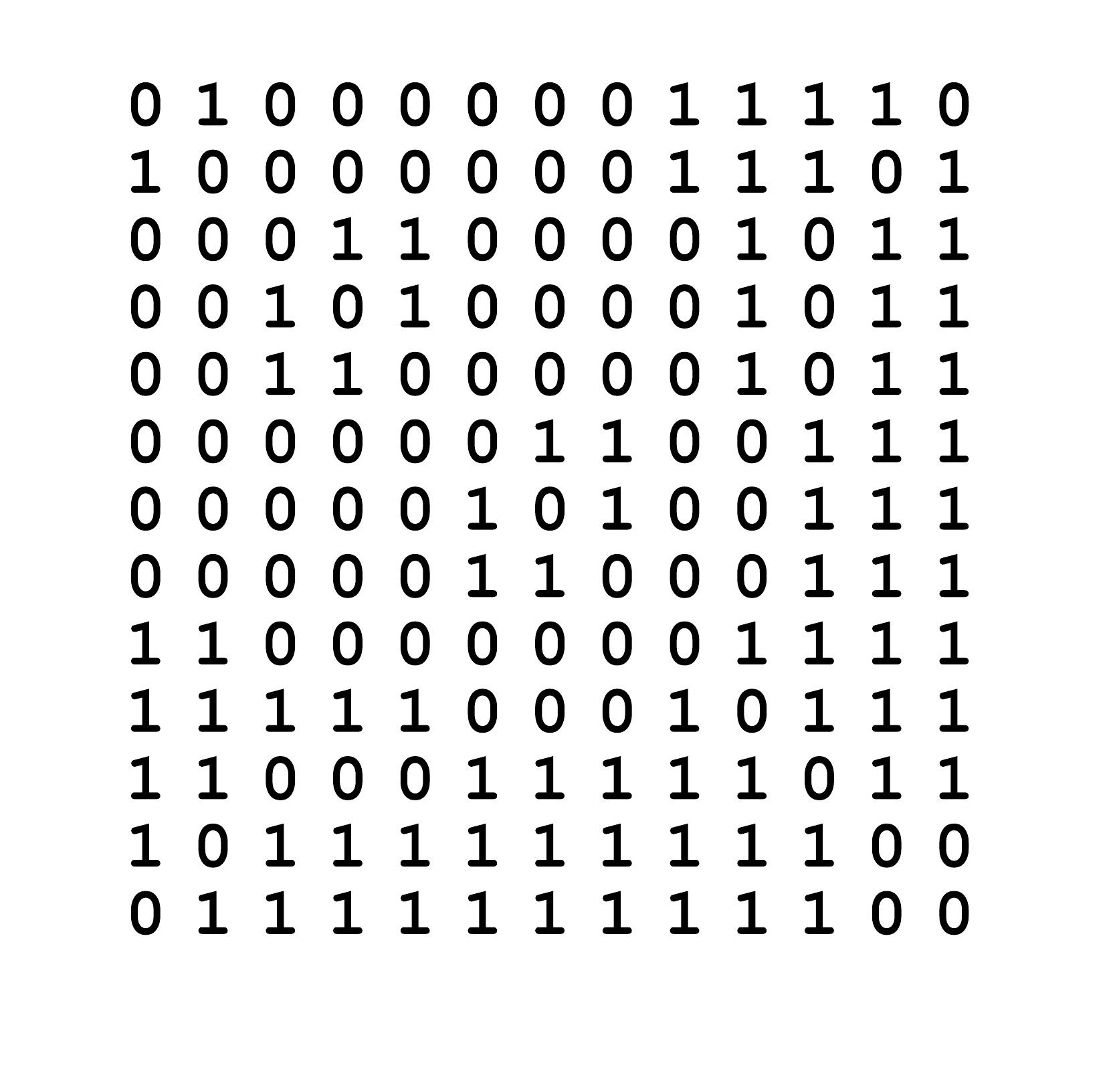}
		\vspace{-2em}
		\caption*{$G_{13.255653}$}
		\label{figure: 13_255653}
	\end{subfigure}\hfill
	\begin{subfigure}{.33\textwidth}
		\centering
		\includegraphics[trim={0 0 0 490},clip,height=130px,width=130px]{./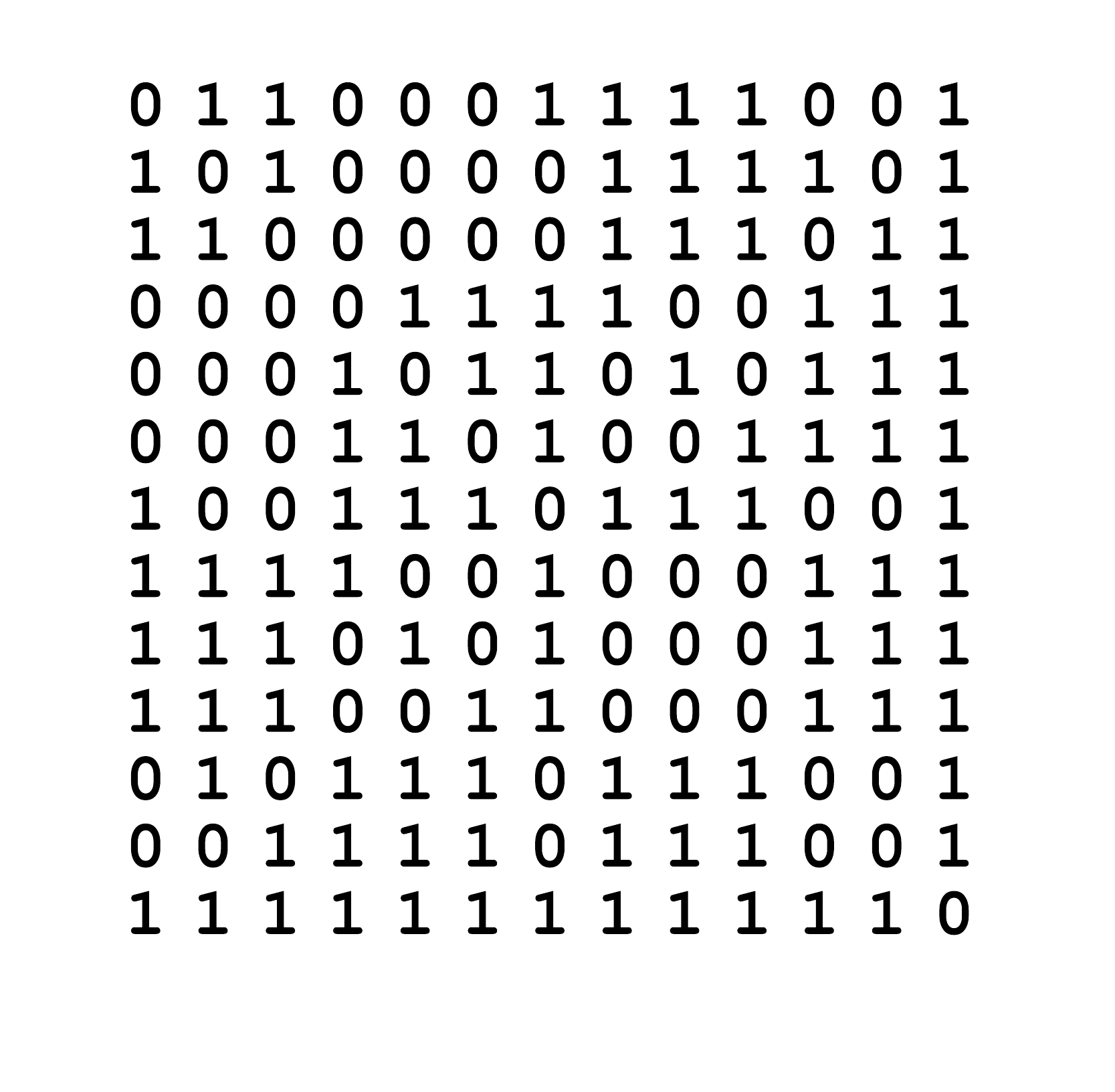}
		\vspace{-2em}
		\caption*{$G_{13.302168}$}
		\label{figure: 13_302168}
	\end{subfigure}\hfill
	\begin{subfigure}{.33\textwidth}
		\centering
		\includegraphics[trim={0 0 0 490},clip,height=130px,width=130px]{./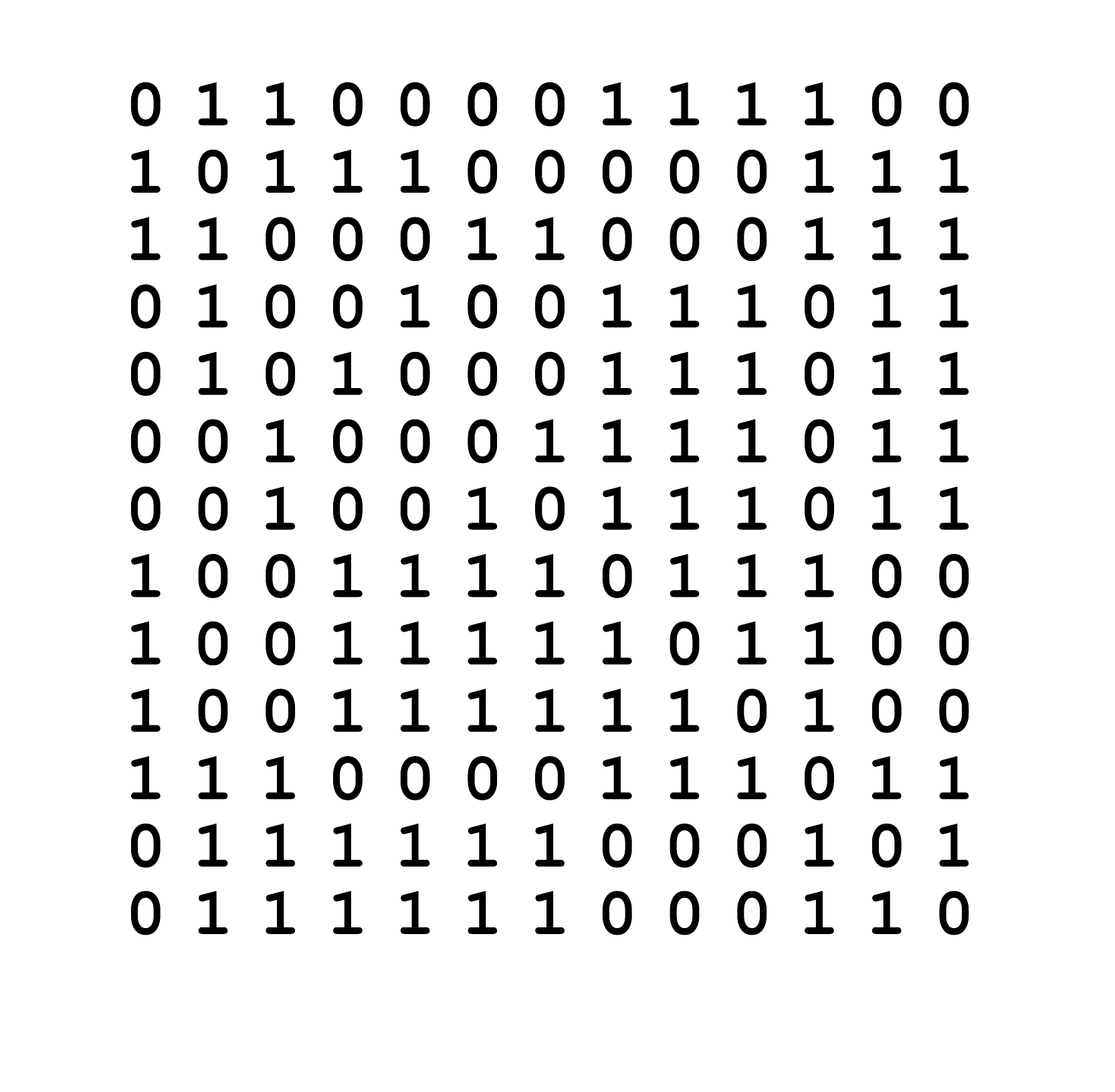}
		\vspace{-2em}
		\caption*{$G_{13.304826}$}
		\label{figure: 13_304826}
	\end{subfigure}
	
	\vspace{1em}
	\caption{13-vertex minimal graphs in $\mH_e(3, 3)$\\ with a large number of automorphisms}
	\label{figure: 13_aut}
\end{figure}

\begin{figure}[h]
	\captionsetup{justification=centering}
	\begin{subfigure}{.33\textwidth}
		\centering
		\includegraphics[trim={0 0 0 490},clip,height=100px,width=100px]{./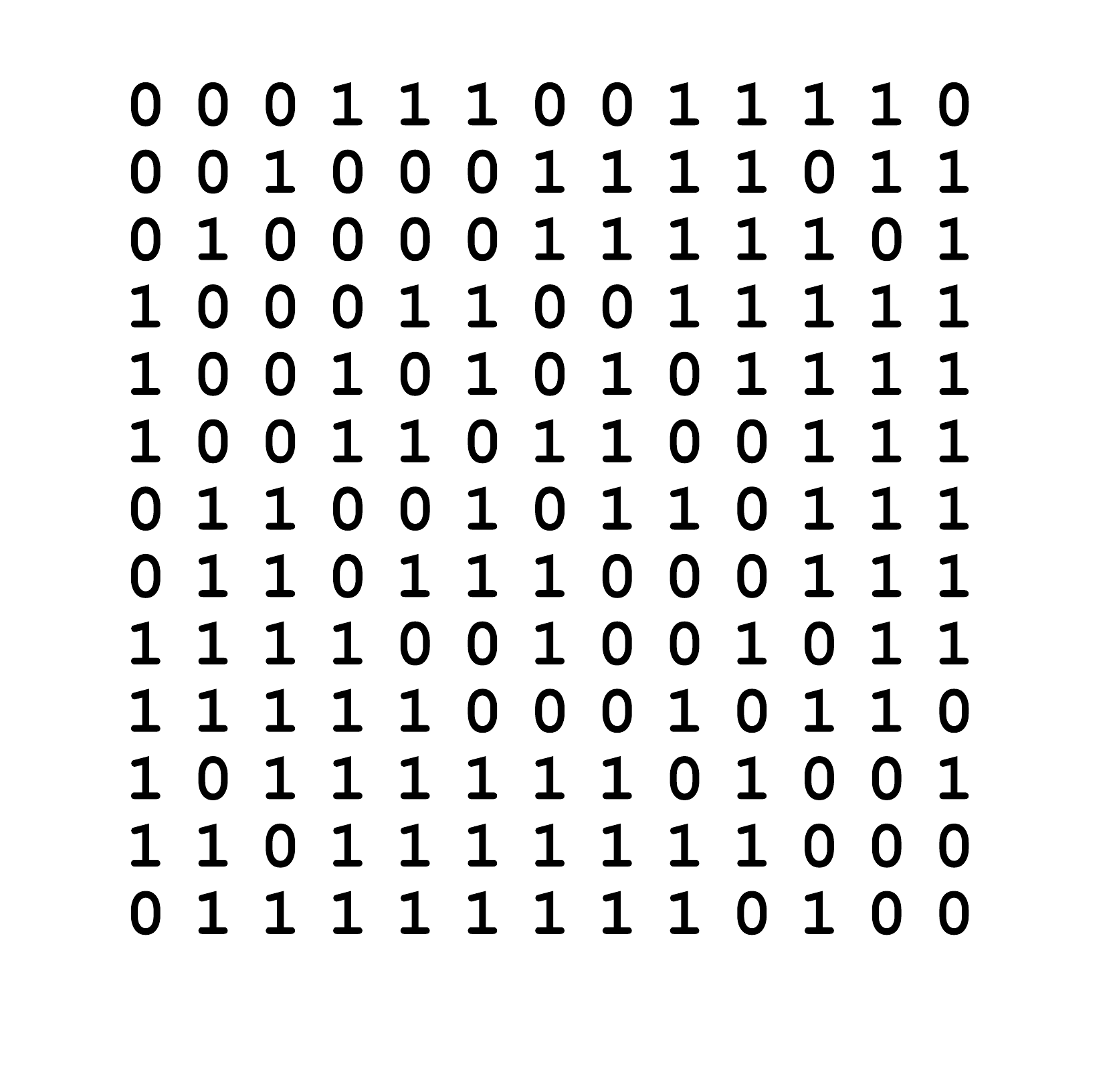}
		\vspace{-1.5em}
		\caption*{$G_{13.193684}$}
		\label{figure: 13_193684}
	\end{subfigure}\hfill
	\begin{subfigure}{.33\textwidth}
		\centering
		\includegraphics[trim={0 0 0 490},clip,height=100px,width=100px]{./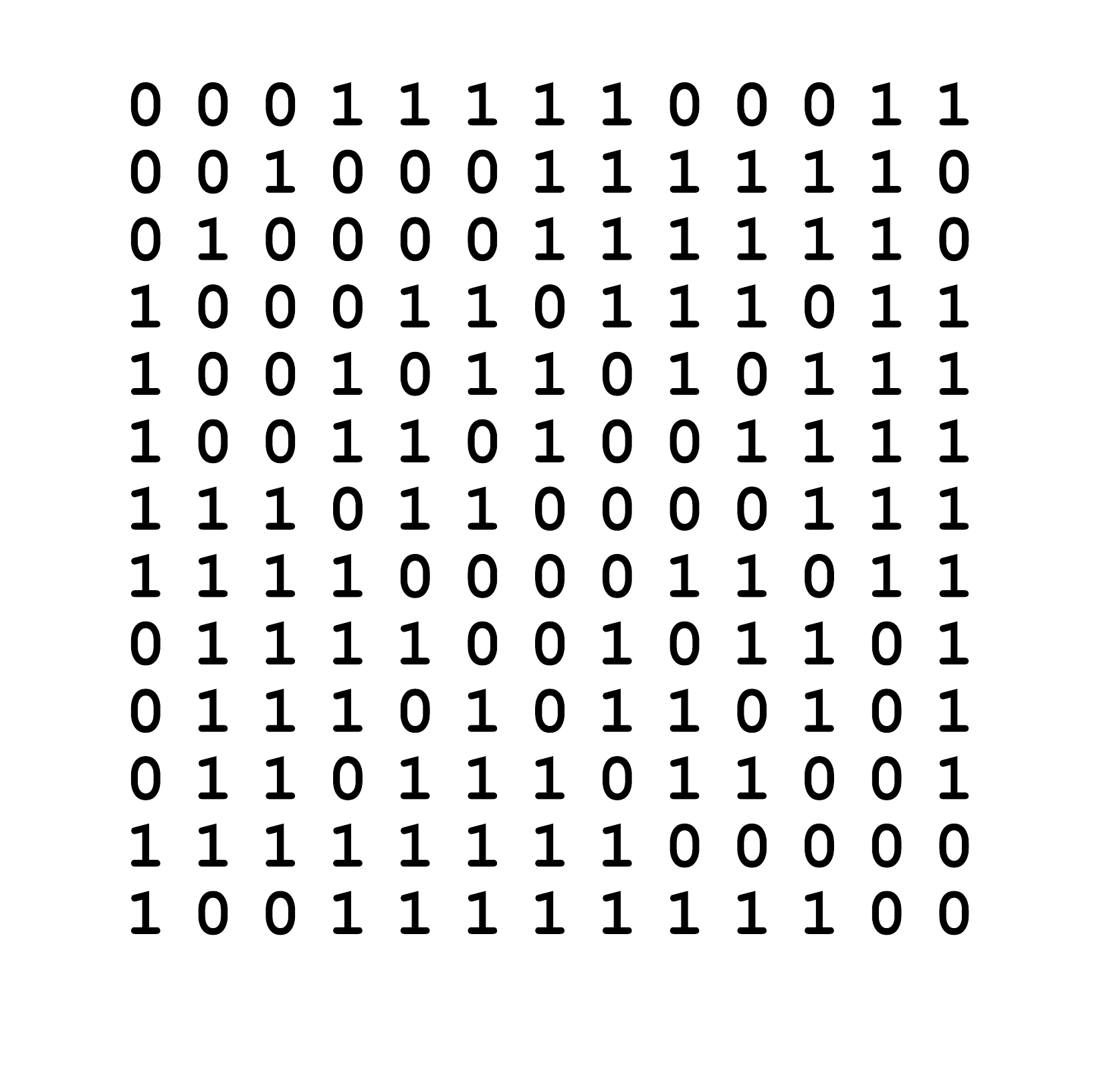}
		\vspace{-1.5em}
		\caption*{$G_{13.193760}$}
		\label{figure: 13_193760}
	\end{subfigure}\hfill
	\begin{subfigure}{.33\textwidth}
		\centering
		\includegraphics[trim={0 0 0 490},clip,height=100px,width=100px]{./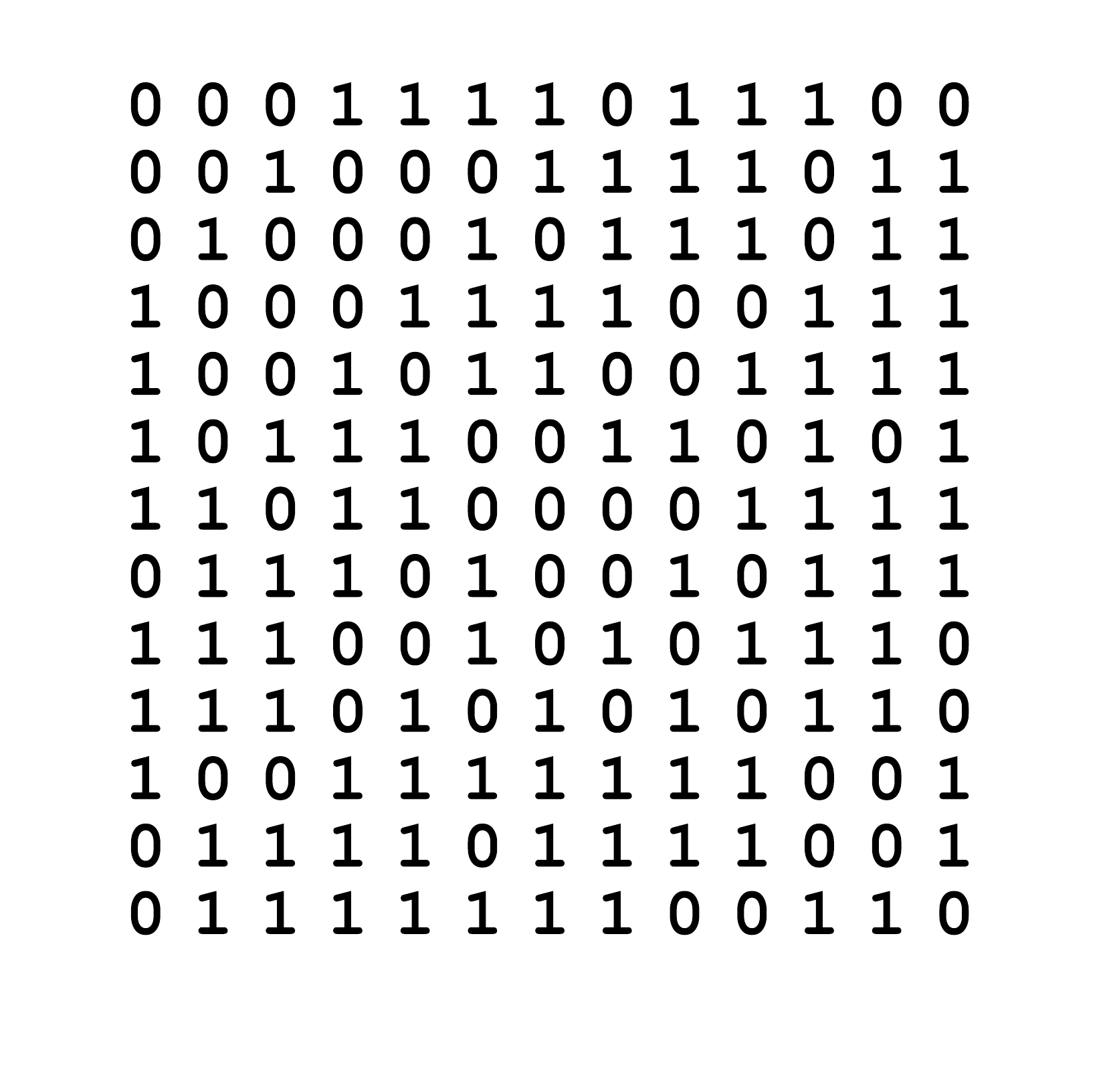}
		\vspace{-1.5em}
		\caption*{$G_{13.193988}$}
		\label{figure: 13_193988}
	\end{subfigure}
	
	\begin{subfigure}{.33\textwidth}
		\centering
		\includegraphics[trim={0 0 0 490},clip,height=100px,width=100px]{./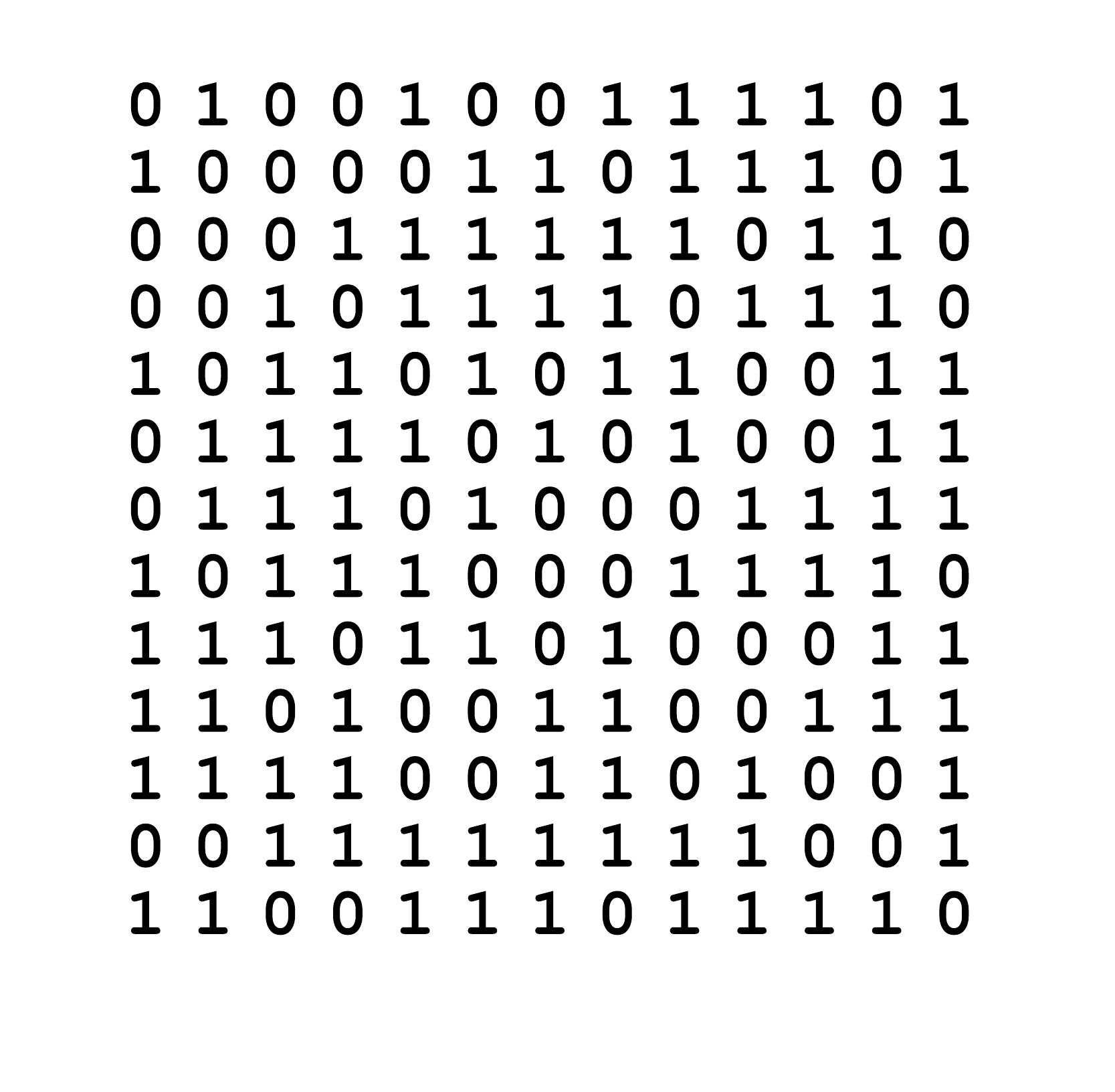}
		\vspace{-1.5em}
		\caption*{$G_{13.265221}$}
		\label{figure: 13_265221}
	\end{subfigure}\hfill
	\begin{subfigure}{.33\textwidth}
		\centering
		\includegraphics[trim={0 0 0 490},clip,height=100px,width=100px]{./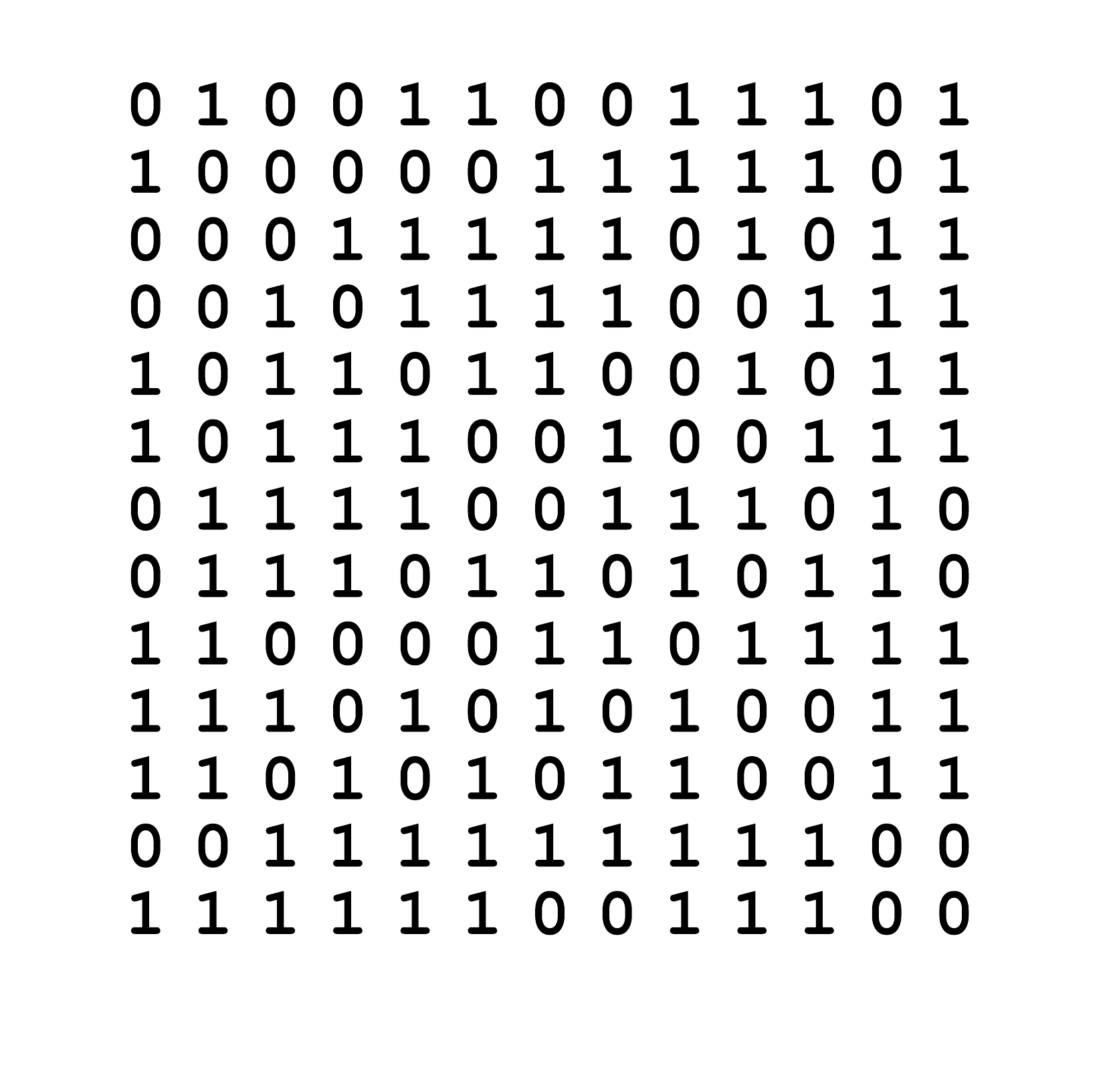}
		\vspace{-1.5em}
		\caption*{$G_{13.265299}$}
		\label{figure: 13_265299}
	\end{subfigure}\hfill
	\begin{subfigure}{.33\textwidth}
		\centering
		\includegraphics[trim={0 0 0 490},clip,height=100px,width=100px]{./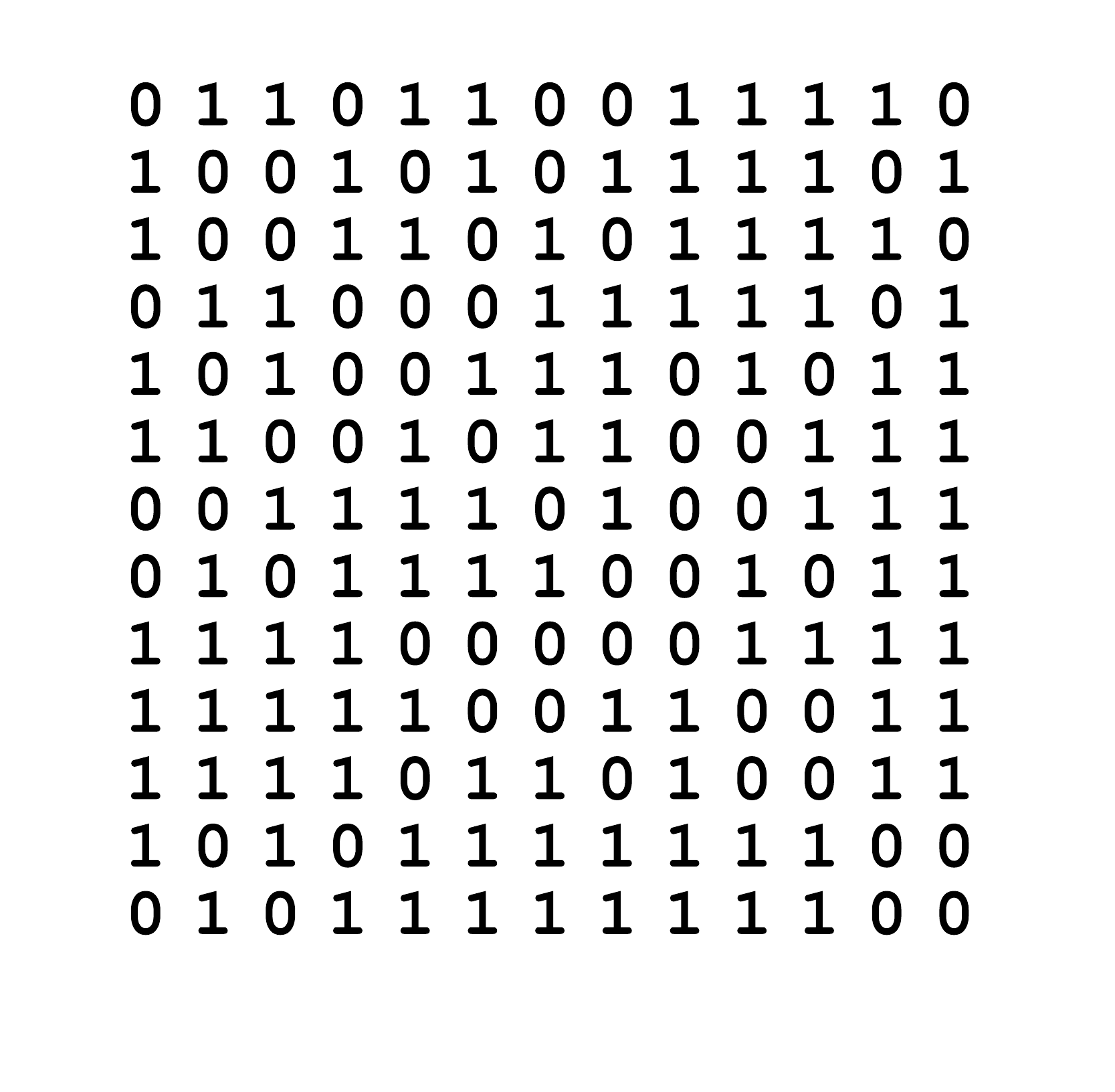}
		\vspace{-1.5em}
		\caption*{$G_{13.299797}$}
		\label{figure: 13_299797}
	\end{subfigure}
	
	\begin{subfigure}{.33\textwidth}
		\centering
		\includegraphics[trim={0 0 0 490},clip,height=100px,width=100px]{./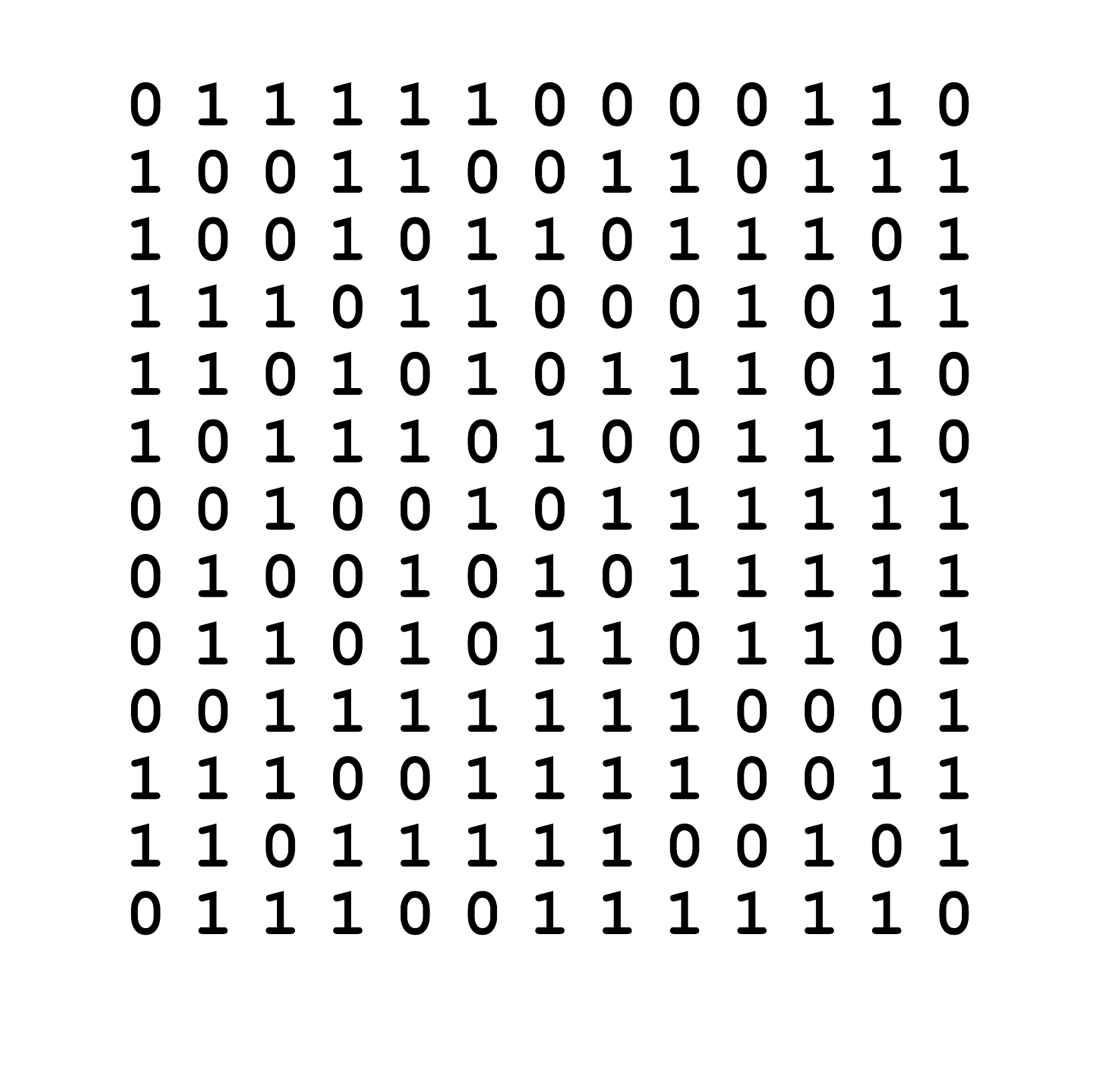}
		\vspace{-1.5em}
		\caption*{$G_{13.301368}$}
		\label{figure: 13_301368}
	\end{subfigure}\hfill
	\begin{subfigure}{.33\textwidth}
		\centering
		\includegraphics[trim={0 0 0 490},clip,height=100px,width=100px]{./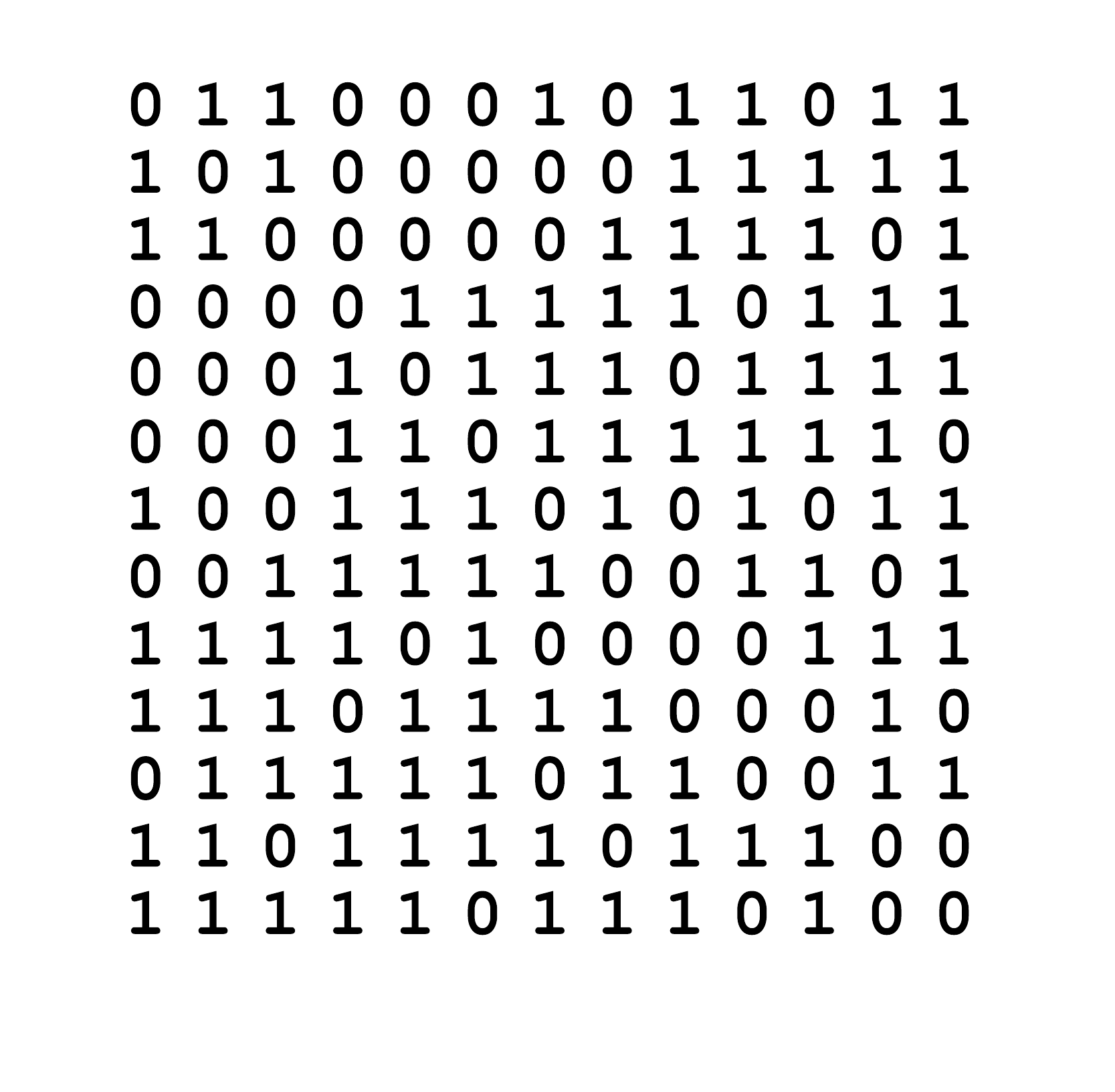}
		\vspace{-1.5em}
		\caption*{$G_{13.302151}$}
		\label{figure: 13_302151}
	\end{subfigure}\hfill
	\begin{subfigure}{.33\textwidth}
		\centering
		\includegraphics[trim={0 0 0 490},clip,height=100px,width=100px]{./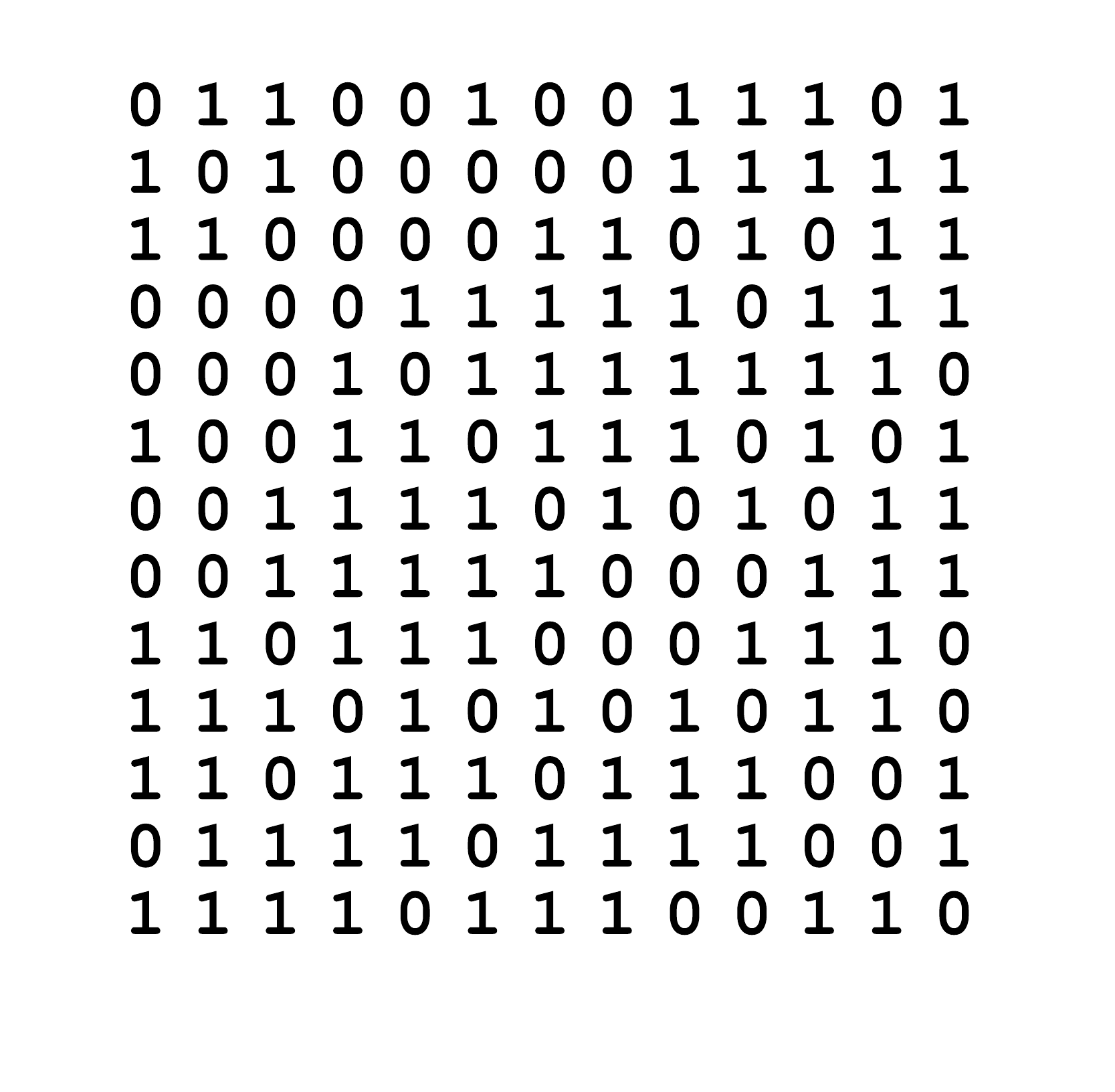}
		\vspace{-1.5em}
		\caption*{$G_{13.302764}$}
		\label{figure: 13_302764}
	\end{subfigure}
	
	\begin{subfigure}{.33\textwidth}
		\centering
		\includegraphics[trim={0 0 0 490},clip,height=100px,width=100px]{./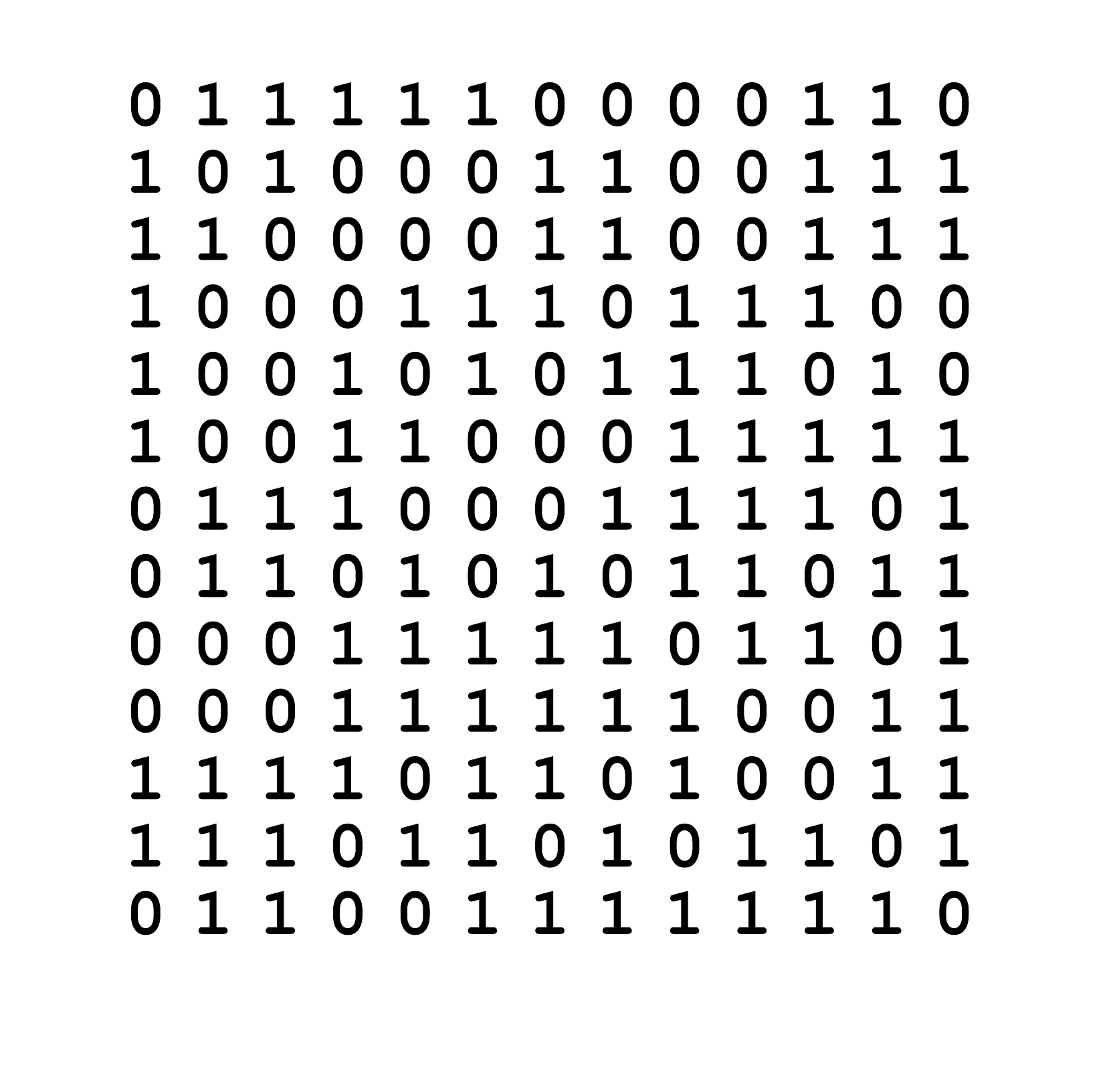}
		\vspace{-1.5em}
		\caption*{$G_{13.305857}$}
		\label{figure: 13_305857}
	\end{subfigure}\hfill
	\begin{subfigure}{.33\textwidth}
		\centering
		\includegraphics[trim={0 0 0 490},clip,height=100px,width=100px]{./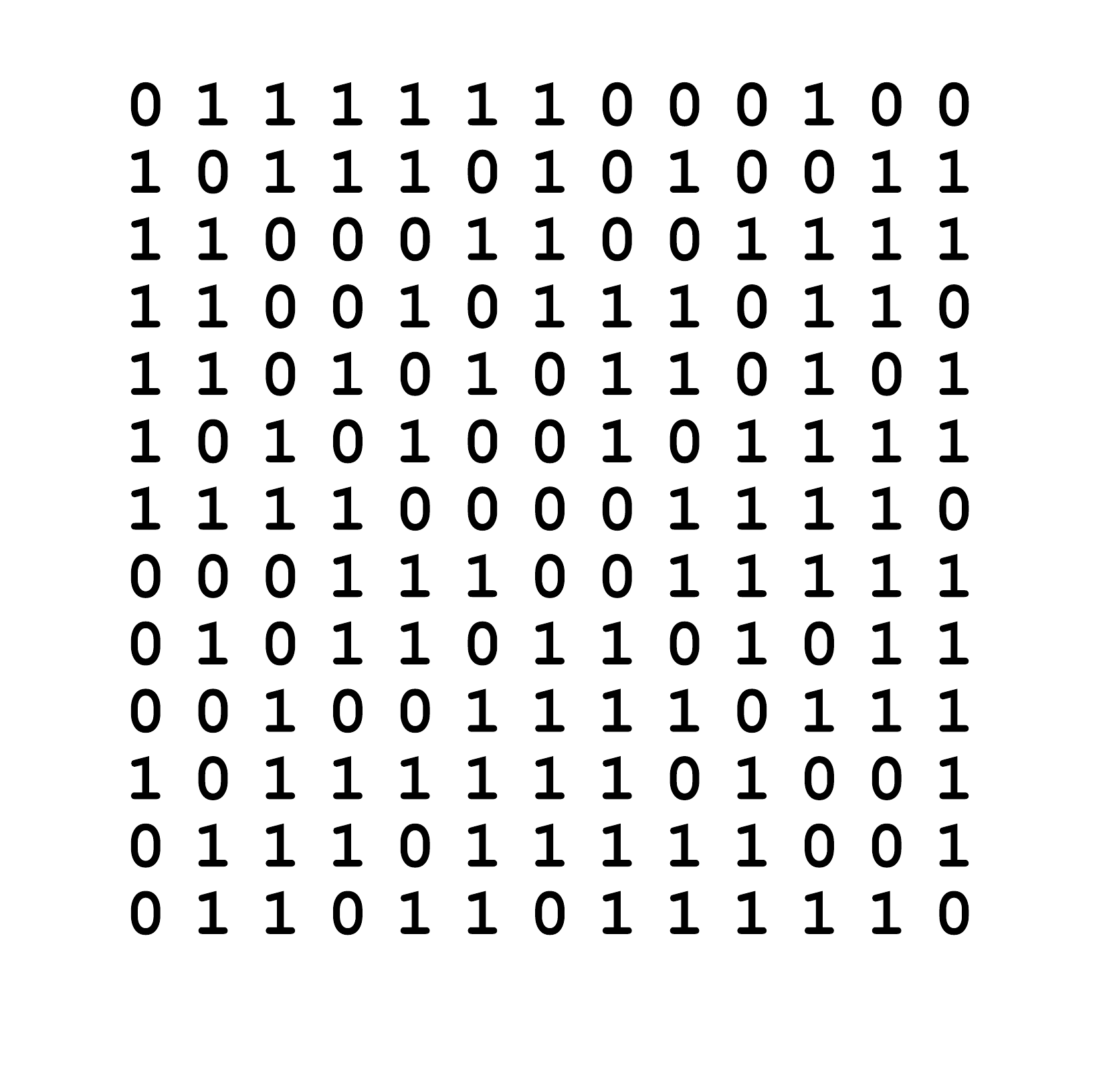}
		\vspace{-1.5em}
		\caption*{$G_{13.306448}$}
		\label{figure: 13_306448}
	\end{subfigure}\hfill
	\begin{subfigure}{.33\textwidth}
		\centering
		\includegraphics[trim={0 0 0 490},clip,height=100px,width=100px]{./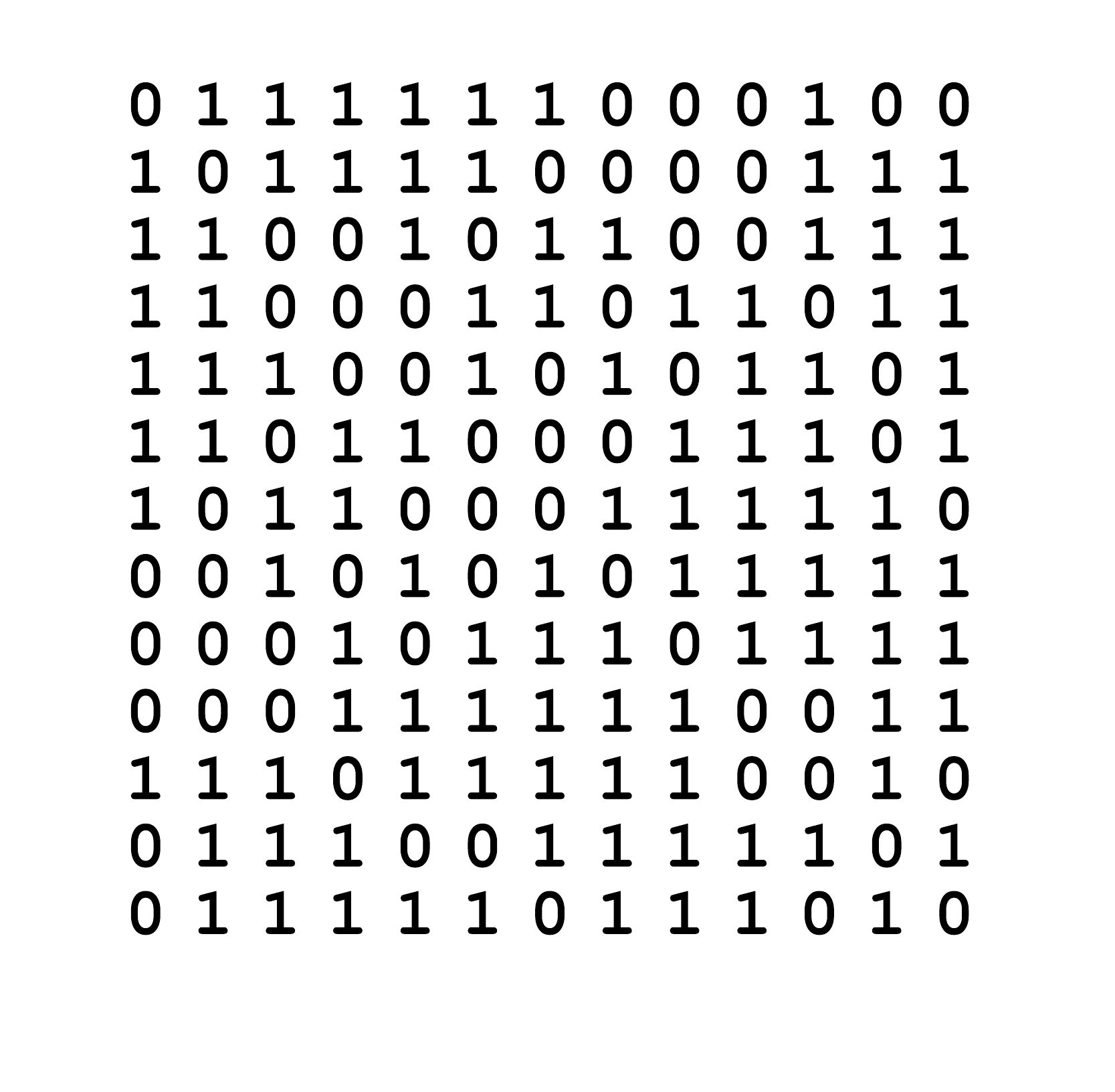}
		\vspace{-1.5em}
		\caption*{$G_{13.306460}$}
		\label{figure: 13_306460}
	\end{subfigure}
	
	\begin{subfigure}{\textwidth}
		\centering
		\includegraphics[trim={0 0 0 490},clip,height=100px,width=100px]{./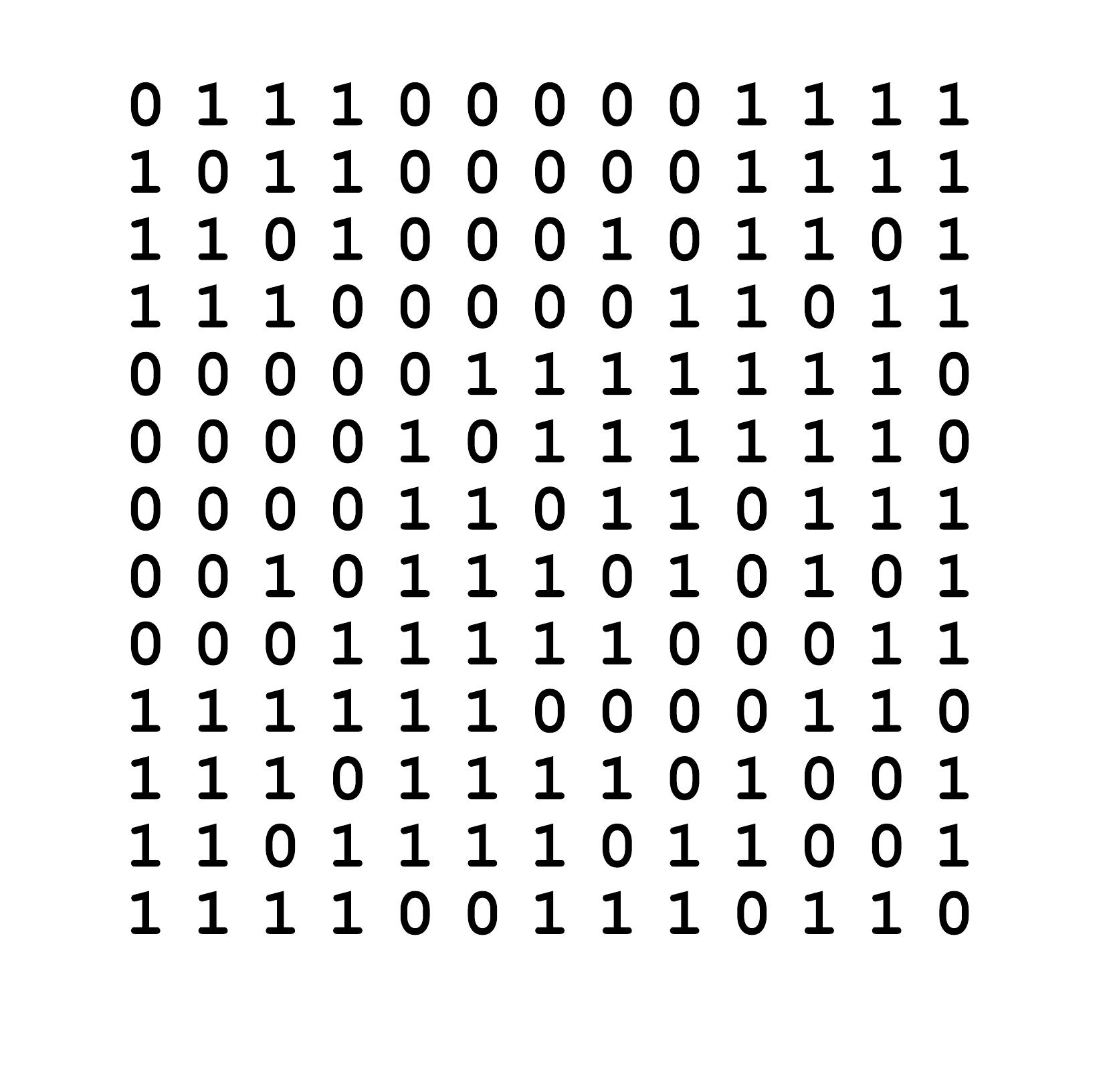}
		\vspace{-1.5em}
		\caption*{$G_{13.306470}$}
		\label{figure: 13_306470}
	\end{subfigure}
	
	\vspace{1em}
	\caption{13-vertex minimal graphs in $\mH_e(3, 3)$\\ with independence number 2}
	\label{figure: 13_a2}
\end{figure}

\chapter{New lower bound on $F_e(3, 3; 4)$}

\vspace{1em}

The graph $G$ obtained by the construction of Folkman \cite{Fol70}, for which $G \arrowse (3, 3)$ and $\omega(G) = 3$, has a very large number of vertices. Because of this, in 1975 Erd\"os \cite{Erd75} posed the problem to prove the inequality $F_e(3, 3; 4) < 10^{10}$. In 1986 Frankl and R\"odl \cite{FR86} almost solved this problem by showing that $F_e(3, 3; 4) < 7.02 \times 10^{11}$. In 1988 Spencer \cite{Spe88} proved the inequality $F_e(3, 3; 4) < 3 \times 10^9$ by using probabilistic methods. In 2008 Lu \cite{Lu08} constructed a 9697-vertex graph in $\mH_e(3, 3; 4)$, thus considerably improving the upper bound on $F_e(3, 3; 4)$. Soon after that, Lu`s result was improved by Dudek and R\"odl \cite{DR08b}, who proved $F_e(3, 3; 4) \leq 941$. The best known upper bound on this number is $F_e(3, 3; 4) \leq 786$, obtained in 2012 by Lange, Radziszowski, and Xu \cite{LRX14}. Exoo conjectured that the 127-vertex graph $G_{127}$, used by Hill and Irwing \cite{HI82} to prove the bound $R(4, 4, 4) \geq 128$, has the property $G_{127} \arrowse (3, 3)$. This conjecture was studied in \cite{RX07} and \cite{RX16}. It is still unknown whether $G_{127} \arrowse (3, 3)$.

In 1972 Lin \cite{Lin72} proved that $F_e(3, 3; 4) \geq 11$. The lower bound was improved by Nenov \cite{Nen83}, who showed in 1981 that $F_e(3, 3; 4) \geq 13$. In 1984 Nenov \cite{Nen84} proved that every 5-chromatic $K_4$-free graph has at least 11 vertices, from which it is easy to derive that $F_e(3, 3; 4) \geq 14$. From $F_e(3, 3; 5) = 15$ \cite{Nen81a}\cite{PRU99} it follows easily, that $F_e(3, 3; 4) \geq 16$. The best lower bound known on $F_e(3, 3; 4)$ was obtained in 2007 by Radziszowski and Xu \cite{RX07}, who proved with the help of a computer that $F_e(3, 3; 4) \geq 19$. According to Radziszowski and Xu \cite{RX07}, any method to improve the bound $F_e(3, 3; 4) \geq 19$ would likely be of significant interest. 

We improve the lower bound on the number $F_e(3, 3; 4)$ by proving
\begin{theorem}
	\label{theorem: F_e(3, 3; 4) geq 20}
	$F_e(3, 3; 4) \geq 20$.
\end{theorem}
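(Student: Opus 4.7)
The plan is to show $\mH_e(3,3;4;19) = \emptyset$. Suppose for contradiction that $G$ is a minimal element of $\mH_e(3,3;4;19)$. By Theorem \ref{theorem: delta(G) geq 8, G in mH_e(3, 3; 4)}, $\delta(G) \geq 8$, and whenever $d(v) = 8$ the induced neighborhood $G(v)$ is one of the seven explicit $8$-vertex graphs $N_{8.1},\ldots,N_{8.7}$ shown in Figure \ref{figure: N_8_1 N_8_2 N_8_3 N_8_4 N_8_5 N_8_6 N_8_7}. Together with the independence-number and chromatic-number obstructions collected in Theorem \ref{theorem: chi(G) geq R(p, q)} and Corollary \ref{corollary: chi(H) geq 5}, these facts pin any hypothetical $G$ into a very narrow structural window that I would like to rule out by exhaustive computer search.

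I would first develop Algorithm A8 as an edge-arrowing analogue of Algorithms A3 and A4: fix a $K_4$-free base graph $H$ and a set $A$ of independent vertices to be attached, enumerate all valid neighborhood assignments $N_G(v) \subseteq V(H)$ for $v \in A$ via the same maximal-$K_{q-1}$-free-subset technique (Propositions \ref{proposition: K_(q - 2) subseteq N_G(u) cap N_G(v)} and \ref{proposition: alpha(G) leq t Leftrightarrow alpha(H - bigcup_(v in A') N_G((v)) leq t - abs(A')}), and after each extension verify $G \arrowse (3,3)$ rather than $G \arrowsv (a_1,\ldots,a_s)$. In place of the vertex-Folkman test, one tests (cf.\ Definition \ref{definition: complete family of marked vertex sets} and Proposition \ref{proposition: if set(N_G(v_1), ..., N_G(v_s)) is a complete family of marked vertex sets in H, then G rightarrow (3, 3)}) that the chosen family of neighborhoods is a complete family of marked vertex sets in $H$ with respect to $(3,3)$-free $2$-colorings of $E(H)$. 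The isomorph rejection via \emph{nauty} and the $n$-by-$n$ inductive build-up used throughout Chapters 2--4 both carry over.

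With Algorithm A8 in hand I would split on $\delta(G)$. If some vertex $v$ has $d(v)=8$, then $G(v) = N_{8.i}$ for some $i \in \{1,\ldots,7\}$; starting from the $9$-vertex $K_4$-free graph $K_1 + N_{8.i}$ I would grow outward one vertex at a time up to order $19$, each step constrained by $G \arrowse (3,3)$, by $\omega(G) < 4$, and by the minimum-degree condition of Theorem \ref{theorem: delta(G) geq 8, G in mH_e(3, 3; 4)}. If instead $\delta(G) \geq 9$, then $\abs{E(G)} \geq 86$ and for every $v$ the neighborhood $G(v)$ is a $K_3$-free graph on at least nine vertices whose $2$-colorings must all extend incompatibly through $v$; this again pins down $G(v)$ to a small list of candidates, which serve as the seed graphs for a second run of Algorithm A8. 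In both cases the goal is to show the search tree is completely pruned before reaching nineteen vertices.

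The main obstacle is the sheer combinatorial cost. Determining whether a given candidate $H$ has a $(3,3)$-free $2$-coloring that cannot be extended through a proposed new vertex requires, in the worst case, examination of exponentially many edge $2$-colorings of $H$, and the branching factor at each extension step is large. The plan's feasibility depends on propagating constraints from the fixed seed $N_{8.i}$ inward very aggressively, on re-using the cone-vertex reduction underlying Algorithm A4 in the edge setting, and on ordering the checks so that cheap structural tests (clique number, minimum degree, Sperner property via Proposition \ref{proposition: minimal graphs in mH_e(p, q) are not Sperner}) eliminate the bulk of branches before any edge-arrowing test is invoked. If this engineering is done carefully, the earlier bound $F_e(3,3;4) \geq 19$ of \cite{RX07} should be recoverable in under a second and the new bound $F_e(3,3;4) \geq 20$ obtainable in a few hours of computation.
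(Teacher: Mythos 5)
There is a genuine gap: your search has no workable invariant at the intermediate orders. You propose to grow a candidate $G$ one vertex at a time from a $9$-vertex seed $K_1 + N_{8.i}$ up to order $19$, ``each step constrained by $G \arrowse (3,3)$'' --- but since $F_e(3,3;4) \geq 19$, no $K_4$-free graph on fewer than $19$ vertices arrows $(3,3)$, so that constraint is vacuous at every intermediate step. What remains ($\omega < 4$, a minimum-degree condition, Sperner rejection) is far too weak to prune the tree of $K_4$-free supergraphs on up to $19$ vertices. Your fallback branch $\delta(G) \geq 9$ is also unsupported: Theorem \ref{theorem: delta(G) geq 8, G in mH_e(3, 3; 4)} classifies only the degree-$8$ neighborhoods, and nothing in the paper (or in your argument) pins the triangle-free neighborhoods of degree $\geq 9$ down to ``a small list of candidates.''

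The missing idea is the decomposition the paper actually uses. From $R(4,4)=18$ and $F_e(3,3;5)=15$ together with Proposition \ref{proposition: K_1 + H in mH_e(3, 3; 5; n - abs(A) + 1)} one gets $\mH_e(3,3;4;19) = \mL(19;0;4) \cup \mL(19;0;5)$, where $\mL(n;p)$ is the class of $K_4$-free $n$-vertex graphs $G$ with $K_p + G \arrowse (3,3)$. Removing a maximum independent set $A$ from $G$ leaves a graph $H = G - A$ satisfying $K_1 + H \arrowse (3,3)$; this property \emph{is} non-vacuous on $14$ and $15$ vertices and is the invariant that propagates through the levels $\mL(11;2) \to \mL(15;1) \to \mL(19;0)$, anchored on the $153$ graphs in $\mL(14;1)$ known from the $F_e(3,3;5)=15$ computation of \cite{PRU99}. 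Algorithm A8 then adds an entire independent set of size $k \in \{4,5\}$ in one shot (not one vertex at a time), and its final test is the direct check $K_p + G \arrowse (3,3)$ rather than the marked-vertex-set machinery of Algorithm \ref{algorithm: A7}, which the paper reserves for enumerating minimal graphs in $\mH_e(3,3)$. The minimum-degree bound $\delta(G) \geq 8$ and the neighborhood classification $N_{8.i}$ that you place at the center of your argument appear in the paper only as an optional speed-up (Remark \ref{remark: delta(G) geq 8}), not as the backbone of the proof.
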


\vspace{1em}

\section{Algorithm A8}

Let $G \in \mH_e(3, 3; 4; n)$, $A \subseteq \V(G)$ be an independent set of vertices of $G$, $\abs{A} = k$, and $H = G - A$. Then obviously, $G$ is a subgraph of $\overline{K}_k + H$, therefore $\overline{K}_k + H \in \mH_e(3, 3; 5; n)$, and it is easy to see that $K_1 + H \in \mH_e(3, 3; 5; n - k + 1)$. By this reasoning, in \cite{RX07} it is proved, but not explicitly formulated (see the proofs of Theorem 2 and Theorem 3), the following
\begin{proposition}
	\label{proposition: K_1 + H in mH_e(3, 3; 5; n - abs(A) + 1)}
	Let $G \in \mH_e(3, 3; 4; n)$, $A \subseteq \V(G)$ be an independent set of vertices of $G$, and $H = G - A$. Then $K_1 + H \in \mH_e(3, 3; 5; n - \abs{A} + 1)$.
\end{proposition}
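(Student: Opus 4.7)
The plan is to verify the three defining conditions of $\mH_e(3,3;5;n-\abs{A}+1)$ for $K_1+H$: the vertex count, the $K_5$-freeness, and the arrow property $K_1+H \arrowse (3,3)$. The vertex count is immediate, since $\abs{\V(H)}=n-\abs{A}$ gives $\abs{\V(K_1+H)}=n-\abs{A}+1$. For the clique condition, $H$ is an induced subgraph of $G$, so $\omega(H)\le \omega(G)\le 3$; adjoining the apex of $K_1$ increases the clique number by exactly one, giving $\omega(K_1+H)\le 4<5$.

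The real content is the arrow property. My strategy is to translate an arbitrary 2-coloring of $\E(K_1+H)$ into a 2-coloring of $\E(G)$, apply $G\arrowse(3,3)$, and pull the resulting monochromatic triangle back to $K_1+H$. Concretely, given a 2-coloring $c$ of $\E(K_1+H)$ with apex $v$, I would define a 2-coloring $c'$ of $\E(G)$ by setting $c'=c$ on the edges of $H\subseteq G$, and for each $G$-edge $[a,w]$ with $a\in A$ (so necessarily $w\in \V(H)$, since $A$ is independent) setting $c'([a,w])=c([v,w])$. Every edge of $G$ receives a color because $A$ is independent, so the only edges incident to $A$ go from $A$ into $\V(H)$.

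Applying $G\arrowse(3,3)$ to $c'$ yields a monochromatic triangle $T\subseteq G$. Since $A$ is independent, $T$ meets $A$ in at most one vertex, so two cases arise. If $T\subseteq \V(H)$, then by construction $T$ is already monochromatic in $\E(K_1+H)$ under $c$, and we are done. Otherwise $T=\set{a,w_1,w_2}$ with $a\in A$ and $w_1,w_2\in \V(H)$; if $\alpha$ is the common color of $[a,w_1],[a,w_2],[w_1,w_2]$ in $c'$, then by the definition of $c'$ we have $c([v,w_1])=c([v,w_2])=\alpha$ and $c([w_1,w_2])=\alpha$, so the triangle $\set{v,w_1,w_2}\subseteq K_1+H$ is monochromatic under $c$.

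The main obstacle is really just the last paragraph: one has to notice that independence of $A$ lets the apex $v$ simultaneously impersonate every vertex of $A$ without conflict, so the transported triangle always lifts back through $v$. Once this observation is in place, the argument reduces to the trivial case split above, and the proposition follows.
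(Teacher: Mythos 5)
Your proof is correct and is essentially the argument the paper has in mind: the paper notes that $G \subseteq \overline{K}_{\abs{A}} + H$ and that the $\abs{A}$ apex vertices can be collapsed to one since they share the neighborhood $\V(H)$, which is exactly your observation that the single apex $v$ can impersonate every vertex of the independent set $A$ when transporting a coloring between $K_1+H$ and $G$. You have simply written out explicitly the step the paper dismisses as "easy to see," including the correct case split on how a monochromatic triangle of $G$ meets $A$.
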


With the help of Proposition \ref{proposition: K_1 + H in mH_e(3, 3; 5; n - abs(A) + 1)} in \cite{RX07} the authors prove that every graph in $\mH_e(3, 3; 4; 18)$ can be constructed by adding 4 independent vertices to a 14-vertex graph $H$ such that $K_1 + H \in \mH_e(3, 3; 5; 15)$. All 659 graphs in $\mH_e(3, 3; 5; 15)$ were obtained in \cite{PRU99}, and among them 153 graphs are of the form $K_1 + H$. With the help of a computer in \cite{RX07} it is proved that, by extending the 14-vertex graphs $H$ with 4 independent vertices, it is not possible to obtain a graph in $\mH_e(3, 3; 4; 18)$. By Proposition \ref{proposition: K_1 + H in mH_e(3, 3; 5; n - abs(A) + 1)}, $\mH_e(3, 3; 4; 18) = \emptyset$ and $F_e(3, 3; 4) \geq 19$.

This method is not suitable for proving Theorem \ref{theorem: F_e(3, 3; 4) geq 20}, because not all graphs in $\mH_e(3, 3; 5; 16)$ are known and their number is too large. Because of this, we first prove that if $\G \in \mH_e(3, 3; 4; 19)$, then $G$ can be obtained by adding 4 independent vertices to some of 1 139 033 appropriately selected 15-vertex graphs, which we obtain in advance, or by adding 5 independent vertices to some of the 14-vertex graphs known from \cite{PRU99}, mentioned above. With the help of a new computer algorithm we check that these extensions do not lead to the construction of a graph in $\mH_e(3, 3; 4; 19)$ and we derive that $\mH_e(3, 3; 4; 19) = \emptyset$ and $F_e(3, 3; 4) \geq 20$. 

For convenience, we will use the following notations:

$\mL(n; p) = \set{G : \abs{\V(G)} = n, \omega(G) < 4 \mbox{ and } K_p + G \arrowse (3, 3)}$

$\mL(n; p; k) = \set{G \in \mL(n; p) : \alpha(G) = k}$

From $R(3, 4) = 9$ it follows that
\begin{equation}
\label{equation: mL(n; p; k) = emptyset, if n geq 9 and k leq 2}
\mL(n; p; k) = \emptyset, \mbox{ if $n \geq 9$ and $k \leq 2$, }
\end{equation}

and from $R(4, 4) = 18$ it follows that
\begin{equation}
\label{equation: mL(n; p; k) = emptyset, if n geq 18 and k leq 3}
\mL(n; p; k) = \emptyset, \mbox{ if $n \geq 18$ and $k \leq 3$. }
\end{equation}

In \cite{PRU99} it is proved that $\mL(n; 1) \neq \emptyset$ if and only if $n \geq 14$, and all 153 graphs in $\mL(14; 1)$ are found. Further, we will use the following fact:
\begin{equation}
\label{equation: mL(14; 1; k) neq emptyset Leftrightarrow k in set(4, 5, 6, 7)}
\mL(14; 1; k) \neq \emptyset \Leftrightarrow k \in \set{4, 5, 6, 7}, \cite{PRU99}.
\end{equation}

From Theorem \ref{theorem: chi(G) geq R(p, q)} it follows
\begin{equation}
\label{equation: G in mL(n; p) Rightarrow chi(G) geq 6 - p}
G \in \mL(n; p) \Rightarrow \chi(G) \geq 6 - p.
\end{equation}

Obviously, $\mH_e(3, 3; 4; n) = \mL(n; 0)$. Let $G \in \mH_e(3, 3; 4)$. From the equality $F_e(3, 3; 5) = 15$ \cite{Nen81a}\cite{PRU99} and Proposition \ref{proposition: K_1 + H in mH_e(3, 3; 5; n - abs(A) + 1)} it follows that $\alpha(G) \leq \abs{\V(G)} - 14$. Therefore, either $\abs{\V(G)} \geq 20$ or $\alpha(G) \leq 5$. From $R(4, 4) = 18$ it follows that $\alpha(G) \geq 4$. Thus, we obtain
\begin{equation}
\label{equation: mH_e(3, 3; 4; 19) = mL(19; 0) = mL(19; 0; 4) cup mL(19; 0; 5)}
\mH_e(3, 3; 4; 19) = \mL(19; 0) = \mL(19; 0; 4) \cup \mL(19; 0; 5).
\end{equation}

We will need the following proposition, which follows easily from Proposition \ref{proposition: K_1 + H in mH_e(3, 3; 5; n - abs(A) + 1)}:
\begin{proposition}
	\label{proposition: H in mL(n - abs(A), p + 1)}
	Let $G \in \mL(n; p)$, $A \subseteq \V(G)$ be an independent set of vertices of $G$, and $H = G - A$. Then $H \in \mL(n - \abs{A}; p + 1)$.
\end{proposition}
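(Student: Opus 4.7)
The first two defining properties of $\mL(n - \abs{A}; p+1)$ for $H$ are immediate: $\abs{\V(H)} = n - \abs{A}$ by construction, and $\omega(H) \leq \omega(G) < 4$ since $H$ is induced in $G$. The substantive step is to verify $K_{p+1} + H \arrowse (3,3)$, and my plan is to mimic the argument behind Proposition \ref{proposition: K_1 + H in mH_e(3, 3; 5; n - abs(A) + 1)}: where one cone vertex was added there, here I add one extra cone vertex on top of the $p$ already present.

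Label the vertices of $K_{p+1}$ as $u_0, u_1, \dots, u_p$ and identify $\set{u_1, \dots, u_p}$ with the vertex set of the $K_p$ joined to $G$ in $K_p + G$. Given a 2-edge-coloring $c$ of $K_{p+1} + H$ with no monochromatic triangle, I will construct a 2-edge-coloring $c'$ of $K_p + G$ with no monochromatic triangle, contradicting $K_p + G \arrowse (3,3)$. On every edge of $K_p + H$ put $c' := c$; for each $a \in A$ and every neighbor $w$ of $a$ in $K_p + G$ (so either $w = u_i$ for some $1 \le i \le p$, or $w \in \V(H)$ with $[a,w] \in \E(G)$), set $c'(a, w) := c(u_0, w)$. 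There are no edges within $A$ to color since $A$ is independent in $G$.

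A monochromatic triangle $T$ in $c'$ must meet $A$ in at most one vertex, again by independence of $A$. If $T \subseteq \V(K_p + H)$, then $T$ is already monochromatic under $c$ inside $K_{p+1} + H$. Otherwise $T = \set{a, x, y}$ with $a \in A$ and $x, y \in \V(K_p + H)$; by construction $\set{u_0, x, y}$ is then monochromatic under $c$, the required edges $[u_0, x]$ and $[u_0, y]$ being present because $u_0$ is a cone vertex of $K_{p+1}$, and $[x, y]$ being inherited from $K_p + H \subseteq K_{p+1} + H$. Either case contradicts the choice of $c$, forcing $K_{p+1} + H \arrowse (3,3)$.

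The only bookkeeping to verify is that every edge $c'$ colors really lies in $K_p + G$ and that each matched edge exists in $K_{p+1} + H$; both are direct from the join construction, so there is no genuine obstacle. The $p = 0$ case recovers (the arrowing part of) Proposition \ref{proposition: K_1 + H in mH_e(3, 3; 5; n - abs(A) + 1)}, which is why the statement is flagged as an easy consequence.
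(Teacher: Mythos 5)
Your proof is correct and is essentially the paper's argument: the paper disposes of this proposition by saying it ``follows easily from'' Proposition \ref{proposition: K_1 + H in mH_e(3, 3; 5; n - abs(A) + 1)}, whose underlying mechanism is exactly your step of replicating the apex vertex $u_0$'s edge colors onto each vertex of the independent set $A$ and using independence to rule out triangles with two vertices in $A$. The bookkeeping you carry out (that every edge of $K_p+G$ gets colored, and that each matched triple $\set{u_0,x,y}$ is a genuine triangle of $K_{p+1}+H$) is accurate, so nothing is missing.
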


We denote by $\mL_{max}(n; p; k)$ the set of all maximal $K_4$-free graphs in $\mL(n; p; k)$, i.e. the graphs $G \in \mL(n; p; k)$ for which $\omega(G + e) = 4$ for every $e \in \E(\overline{G})$. Since every graph in $\mL(19; 0)$ is contained in a maximal $K_4$-free graph in $\mL(19; 0)$, according to (\ref{equation: mH_e(3, 3; 4; 19) = mL(19; 0) = mL(19; 0; 4) cup mL(19; 0; 5)}) to prove the Theorem \ref{theorem: F_e(3, 3; 4) geq 20} it is enough to prove that $\mL_{max}(19; 0; 4) = \emptyset$ and $\mL_{max}(19; 0; 5) = \emptyset$. In the proofs of these inequalities we will use Algorithm \ref{algorithm: A8}, formulated below. 

We denote by $\mL_{+K_3}(n; p; k)$ the set of all $(+K_3)$-graphs in $\mL(n; p; k)$ (see Definition \ref{definition: (+K_p)}). Let $G \in \mL_{max}(n; p; k)$. Let $A \subseteq \V(G)$ be an independent set of vertices of $G$, $\abs{A} = k$ and $H = G - A$. According to Proposition \ref{proposition: H in mL(n - abs(A), p + 1)}, $H \in \mL(n - k, p + 1)$. Since $G$ is a maximal $K_4$-free graph, from Proposition \ref{proposition: H is a (+K_(q - 1))-graph} it follows that $H$ is $(+K_3)$-graph. From $\alpha(G) = k$ it follows that $\alpha(H) \leq k$. Therefore, $H \in \mL_{+K_3}(n - k; p + 1; k')$ for some $k' \leq k$. Thus, we proved the following
\begin{proposition}
	\label{proposition: H in bigcup(k' leq k)mL_(+K_3)(n - k; p + 1; k')}
	Let $G \in \mL_{max}(n; p; k)$. Let $A \subseteq \V(G)$ be an independent set of vertices of $G$, $\abs{A} = k$, and $H = G - A$. Then,
	
	$H \in \bigcup_{k' \leq k}\mL_{+K_3}(n - k; p + 1; k')$.
\end{proposition}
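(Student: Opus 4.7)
The plan is to unpack the conclusion into three separate claims and verify each using results already established in Section 9.1: (i) $H \in \mL(n-k;\, p+1)$; (ii) $H$ is a $(+K_3)$-graph; (iii) $\alpha(H) \leq k$. Once these three facts are in hand, the conclusion $H \in \bigcup_{k' \leq k}\mL_{+K_3}(n - k;\, p + 1;\, k')$ follows by simply taking $k' = \alpha(H)$.

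First, I would apply Proposition \ref{proposition: H in mL(n - abs(A), p + 1)} to $G \in \mL_{max}(n; p; k) \subseteq \mL(n; p)$ and the independent set $A$ with $|A| = k$. This directly yields $H \in \mL(n-k;\, p+1)$, which is claim (i). This step is mechanical: it rests on the fact (proved earlier via Proposition \ref{proposition: K_1 + H in mH_e(3, 3; 5; n - abs(A) + 1)}) that passing from $G \arrowse$-type property with a cone $K_p$ to $H$ together with a cone $K_{p+1}$ only requires removing an independent set.

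Second, I would invoke Proposition \ref{proposition: H is a (+K_(q - 1))-graph} with $q = 4$. Since $G \in \mL_{max}(n; p; k)$ is a maximal $K_4$-free graph and $A$ is independent, that proposition tells us that $H = G - A$ is a $(+K_3)$-graph in the sense of Definition \ref{definition: (+K_p)}, giving claim (ii). Combined with (i), this already places $H$ in $\mL_{+K_3}(n-k;\, p+1)$.

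For (iii), I would simply observe that every independent set of $H$ is also an independent set of $G$, so $\alpha(H) \leq \alpha(G) = k$. Setting $k' := \alpha(H)$ then gives $H \in \mL_{+K_3}(n-k;\, p+1;\, k')$ with $k' \leq k$, which is exactly the claimed membership in the union. I do not anticipate any genuine obstacle here: the proposition is essentially a bookkeeping corollary that assembles Proposition \ref{proposition: H in mL(n - abs(A), p + 1)}, Proposition \ref{proposition: H is a (+K_(q - 1))-graph}, and the monotonicity of the independence number under vertex deletion. The only mild care needed is tracking the shift of the parameters $n \mapsto n-k$ and $p \mapsto p+1$ correctly, which matches the hypotheses of the cited propositions exactly.
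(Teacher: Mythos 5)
Your proposal is correct and follows essentially the same route as the paper: the paper's proof likewise combines Proposition \ref{proposition: H in mL(n - abs(A), p + 1)} for membership in $\mL(n-k;\,p+1)$, Proposition \ref{proposition: H is a (+K_(q - 1))-graph} with $q=4$ for the $(+K_3)$ property, and $\alpha(H)\leq\alpha(G)=k$ to place $H$ in the union. No discrepancies to report.
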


If $G \in \mL(n; p; k)$ is a Sperner graph, i.e. $N_G(u) \subseteq N_G(v)$ for some $u, v \in \V(G)$, then $G - u \in \mL(n - 1; p; k')$, for $k - 1 \leq k' \leq k$. Therefore, every Sperner graph $G \in \mL(n; p; k)$ is obtained by adding one vertex to some graph $H \in \mL(n - 1; p; k')$, $k - 1 \leq k' \leq k$. In the special case, when $G$ is a Sperner graph and $G \in \mL_{max}(n; p; k)$, from $N_G(u) \subseteq N_G(v)$ it follows that $N_G(u) = N_G(v)$. Therefore $G - u \in \mL_{max}(n - 1; p; k')$, $k - 1 \leq k' \leq k$, i.e. $G$ is obtained by duplicating a vertex in some graph $H \in \mL_{max}(n - 1; p; k')$. All non-Sperner graphs in $\mL_{max}(n; p; k)$ are obtained very efficiently with the help of Algorithm \ref{algorithm: A8}, formulated below, which is based on Proposition \ref{proposition: H in bigcup(k' leq k)mL_(+K_3)(n - k; p + 1; k')} and the following
\vspace{-0.5em}
\begin{proposition}
	\label{proposition: alpha(G) = k Leftrightarrow alpha(H - bigcup_(v in A') N_G((v)) leq k - abs(A')}
	Let $A$ be an independent set of vertices of $G$, $\abs{A} = k$, and $H = G - A$. Then, 
	
	$\alpha(G) = k \Leftrightarrow \alpha(H - \bigcup_{v \in A'} N_G(v)) \leq k - \abs{A'}, \ \forall A' \subseteq A$.
\end{proposition}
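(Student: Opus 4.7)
The plan is to derive this proposition as an essentially immediate corollary of the previously established Proposition \ref{proposition: alpha(G) leq t Leftrightarrow alpha(H - bigcup_(v in A') N_G((v)) leq t - abs(A')}, which asserts the analogous equivalence with $\alpha(G) \le t$ in place of $\alpha(G) = k$ (for any $t \ge |A|$). The only observation needed to bridge the two statements is that, since $A$ itself is an independent set of vertices of $G$ of cardinality $k$, one automatically has $\alpha(G) \ge k$. Consequently, the condition $\alpha(G) = k$ is equivalent to $\alpha(G) \le k$, and we may invoke the earlier proposition with $t = k$.

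More concretely, I would argue as follows. First I would verify the forward implication: assuming $\alpha(G) = k$, in particular $\alpha(G) \le k$, so Proposition \ref{proposition: alpha(G) leq t Leftrightarrow alpha(H - bigcup_(v in A') N_G((v)) leq t - abs(A')} applied with $t = k$ yields $\alpha(H - \bigcup_{v \in A'} N_G(v)) \le k - |A'|$ for every $A' \subseteq A$. For the converse, assuming these inequalities hold for all $A' \subseteq A$, the same earlier proposition (again with $t = k$) gives $\alpha(G) \le k$; combined with the trivial lower bound $\alpha(G) \ge |A| = k$ coming from $A$ itself, this forces $\alpha(G) = k$.

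I do not expect any real obstacle here: the work has already been done in Proposition \ref{proposition: alpha(G) leq t Leftrightarrow alpha(H - bigcup_(v in A') N_G((v)) leq t - abs(A')}, and the present statement only upgrades an inequality to an equality by using the free lower bound $\alpha(G) \ge |A|$. The only point requiring a line of care is to make sure $t = k$ is a legitimate choice in the earlier proposition, which it is because the hypothesis there was $t \ge |A|$ and here $|A| = k$.
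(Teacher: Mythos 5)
Your argument is correct and is essentially identical to the paper's own proof, which simply observes that the proposition is the special case $t = \abs{A} = k$ of Proposition \ref{proposition: alpha(G) leq t Leftrightarrow alpha(H - bigcup_(v in A') N_G((v)) leq t - abs(A')}. Your only addition is to spell out the (implicit) fact that $\alpha(G) \geq \abs{A} = k$, which is exactly the bridge needed to upgrade $\alpha(G) \leq k$ to $\alpha(G) = k$.
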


\begin{proof}
Proposition \ref{proposition: alpha(G) = k Leftrightarrow alpha(H - bigcup_(v in A') N_G((v)) leq k - abs(A')} is the special case $t = \abs{A} = k$ of Proposition \ref{proposition: alpha(G) leq t Leftrightarrow alpha(H - bigcup_(v in A') N_G((v)) leq t - abs(A')}.
\end{proof}

Now we formulate:

\vspace{-0.5em}

\begin{namedalgorithm}{A8}
	\label{algorithm: A8}
	Let $n$, $p$ and $k$ be positive integers.
		
	The input of the algorithm is the set $\mA = \bigcup_{k' \leq k}\mL_{max}(n - k; p + 1; k')$.
	
	The output of the algorithm is the set $\mB$ of all non-Sperner graphs in $\mL_{max}(n; p; k)$.
	
	\emph{1.} By removing edges from the graphs in $\mA$ obtain the set
	
	$\mA' = \bigcup_{k' \leq k}\mL_{+K_3}(n - k; p + 1; k')$.
	
	\emph{2.} For each graph $H \in \mA'$:
	
	\emph{2.1.} Find the family $\mM(H) = \set{M_1, ..., M_l}$ of all maximal $K_3$-free subsets of $\V(H)$.
	
	\emph{2.2.} Find all $k$-element subsets $N = \set{M_{i_1}, ..., M_{i_k}}$ of $\mM(H)$ which fulfill the conditions:
	
	(a) $M_{i_j} \neq N_H(v)$ for every $v \in \V(H)$ and for every $M_{i_j} \in N$.
	
	(b) $K_2 \subseteq M_{i_j} \cap M_{i_h}$ for every $M_{i_j}, M_{i_h} \in N$.
	
	(c) $\alpha(H - \bigcup_{M_{i_j} \in N'} M_{i_j}) \leq k - \abs{N'}$ for every $N' \subseteq N$.
	
	\emph{2.3.} For each of the found in step 2.2 $k$-element subsets $N = \set{M_{i_1}, ..., M_{i_k}}$ of $\mM(H)$ construct the graph $G = G(N)$ by adding new independent vertices $v_1, ..., v_k$ to $\V(H)$ such that $N_G(v_j) = M_{i_j}, j = 1, ..., k$. If $G$ is not a Sperner graph and $\omega(G + e) = 4, \forall e \in \E(\overline{G})$, then add $G$ to $\mB$.
	
	\emph{3.} Remove the isomorphic copies of graphs from $\mB$.
	
	\emph{4.} Remove from $\mB$ all graphs with chromatic number less than $6 - p$.
	
	\emph{5.} Remove from $\mB$ all graphs $G$ for which $K_p + G \not\arrowse (3, 3)$.
\end{namedalgorithm}

We will prove the correctness of Algorithm \ref{algorithm: A8} with the help of the following
\begin{lemma}
\label{lemma: algorithm A8}
After the execution of step 2.3 of Algorithm \ref{algorithm: A8}, the obtained set $\mB$ coincides with the set of all maximal $K_4$-free non-Sperner graphs $G$ with $\alpha(G) = k$ which have an independent set of vertices $A \subseteq \V(G), \abs{A} = k$ such that $G - A \in \mA'$.
\end{lemma}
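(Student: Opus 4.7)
The plan is to verify both inclusions, following the same template as Lemma \ref{lemma: algorithm A5} (and Lemma \ref{lemma: algorithm A3}), specialized to $q = 4$ so that the inclusion $K_{q-2} \subseteq M_{i_j} \cap M_{i_h}$ from Proposition \ref{proposition: K_(q - 2) subseteq N_G(u) cap N_G(v)} becomes precisely condition (b) of step 2.2.

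For the forward inclusion, I would suppose that $G$ is added to $\mB$ in step 2.3. By construction $G = G(N)$ for some $k$-element subset $N = \set{M_{i_1}, \dots, M_{i_k}}$ of $\mM(H)$ satisfying (a)--(c), where $H = G - \set{v_1, \dots, v_k} \in \mA'$. Since $\omega(H) < 4$ and every $N_G(v_j) = M_{i_j}$ is a $K_3$-free subset of $\V(H)$, no new $4$-clique is created, so $\omega(G) < 4$. The vertices $v_1, \dots, v_k$ form an independent set, hence $\alpha(G) \geq k$; combining condition (c) with Proposition \ref{proposition: alpha(G) = k Leftrightarrow alpha(H - bigcup_(v in A') N_G((v)) leq k - abs(A')} yields $\alpha(G) \leq k$, so $\alpha(G) = k$. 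The explicit check at the end of step 2.3 guarantees that $G$ is maximal $K_4$-free and non-Sperner, while the set $A = \set{v_1, \dots, v_k}$ witnesses the decomposition $G - A = H \in \mA'$.

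For the reverse inclusion, I would start from a maximal $K_4$-free non-Sperner graph $G$ with $\alpha(G) = k$ and an independent $A = \set{v_1, \dots, v_k} \subseteq \V(G)$ with $H = G - A \in \mA'$, and exhibit $N = \set{N_G(v_1), \dots, N_G(v_k)}$ as the subset recovered at step 2.2. Maximality of $G$ as a $K_4$-free graph forces each $N_G(v_j)$ to be a maximal $K_3$-free subset of $\V(H)$, placing $N_G(v_j)$ in $\mM(H)$. Condition (a) follows from Proposition \ref{proposition: N_G(u) neq N_H(v)} because $G$ is non-Sperner; condition (b) follows from Proposition \ref{proposition: K_(q - 2) subseteq N_G(u) cap N_G(v)} applied with $q = 4$ to the non-adjacent pair $v_j, v_h$; and condition (c) follows from $\alpha(G) = k$ via Proposition \ref{proposition: alpha(G) = k Leftrightarrow alpha(H - bigcup_(v in A') N_G((v)) leq k - abs(A')}. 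Hence $N$ is enumerated in step 2.2, $G = G(N)$ up to relabeling of the added vertices, and since $G$ is already known to be non-Sperner and maximal $K_4$-free, the check at the end of step 2.3 passes and $G$ is added to $\mB$.

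There is no real obstacle here beyond careful bookkeeping: the only slightly delicate point is ensuring that the chosen representatives $N_G(v_j)$ are genuine elements of the enumerated family $\mM(H)$ and are not accidentally identified with neighborhoods of vertices already in $H$; this is exactly what condition (a) rules out, and it is enforced by the non-Sperner hypothesis via Proposition \ref{proposition: N_G(u) neq N_H(v)}. With these ingredients in place, the equality of the two sets follows immediately.
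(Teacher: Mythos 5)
Your proposal is correct and follows essentially the same route as the paper's own proof: both directions rest on the same three propositions (the independence-number criterion, the non-Sperner neighborhood condition, and the $K_{q-2}$ intersection property specialized to $q=4$), and the verification of conditions (a)--(c) for $N=\set{N_G(v_1),\dots,N_G(v_k)}$ is identical. The only cosmetic difference is that you split $\alpha(G)=k$ into the two inequalities, whereas the paper invokes the equivalence in Proposition \ref{proposition: alpha(G) = k Leftrightarrow alpha(H - bigcup_(v in A') N_G((v)) leq k - abs(A')} directly.
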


\begin{proof}
Suppose that in step 2.3 of Algorithm \ref{algorithm: A8} the graph $G$ is added to $\mB$. Then $G = G(N)$ and $G - \set{v_1, ..., v_k} = H \in \mA'$, where $N$, $v_1, ..., v_k$, and $H$ are the same as in step 2.3. By $H \in \mA'$, we have $\omega(H) < 4$. Since $N_G(v_j), j = 1, ..., k$, are $K_3$-free sets, it follows that $\omega(G) < 4$. From the condition (c) in step 2.2 and Proposition \ref{proposition: alpha(G) = k Leftrightarrow alpha(H - bigcup_(v in A') N_G((v)) leq k - abs(A')} it follows that $\alpha(G) = k$. The two checks at the end of step 2.3 guarantee that $G$ is a maximal $K_4$-free non-Sperner graph. 

Let $G$ be a maximal $K_4$-free non-Sperner graph, $\alpha(G) = k$, and $A = \set{v_1, ..., v_k}$ be an independent set of vertices of $G$ such that $H = G - A \in \mA'$. We will prove that, after the execution of step 2.3 of Algorithm \ref{algorithm: A8}, $G \in \mB$. Since $G$ is a maximal $K_4$-free graph, $N_G(v_i), i = 1, ..., k$, are maximal $K_3$-free subsets of $V(H)$, and therefore $N_G(v_i) \in \mM(H), i = 1, ..., k$ (see step 2.1). Let $N = \set{N_G(v_1), ..., N_G(v_k)}$. Since $G$ is not a Sperner graph, $N$ is a $k$-element subset of $\mM(H)$, and according to Proposition \ref{proposition: N_G(u) neq N_H(v)}, $N$ fulfills the condition (a) in step 2.2. By Proposition \ref{proposition: K_(q - 2) subseteq N_G(u) cap N_G(v)}, $N$ fulfills the condition (b), and by Proposition \ref{proposition: alpha(G) = k Leftrightarrow alpha(H - bigcup_(v in A') N_G((v)) leq k - abs(A')}, $N$ also fulfills (c). Thus, we showed that $N$ fulfills all conditions in step 2.2, and since $G = G(N)$ is a maximal $K_4$-free non-Sperner graph, in step 2.3 $G$ is added to $\mB$.
\end{proof}

\begin{theorem}
	\label{theorem: algorithm A8}
	After the execution of Algorithm \ref{algorithm: A8}, the obtained set $\mB$ coincides with the set of all non-Sperner graphs in $\mL_{max}(n; p; k)$.
\end{theorem}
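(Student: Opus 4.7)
The plan is to mimic the two-direction argument used for Theorem \ref{theorem: algorithm A3} and Theorem \ref{theorem: algorithm A5}, with Lemma \ref{lemma: algorithm A8} doing the combinatorial heavy lifting for step~2.3, and then verifying that the filtering in steps 3--5 neither removes a legitimate element of $\mL_{max}(n;p;k)$ nor leaves behind anything spurious.

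For the forward direction, I would pick an arbitrary $G \in \mB$ after the full execution. By Lemma \ref{lemma: algorithm A8}, $G$ is already a maximal $K_4$-free non-Sperner graph on $n$ vertices with $\alpha(G)=k$, so $\omega(G)<4$. Step~5 guarantees $K_p + G \arrowse (3,3)$, hence $G \in \mL(n;p;k)$, and maximality as a $K_4$-free graph then places $G$ in $\mL_{max}(n;p;k)$. Steps~3 and~4 can only shrink $\mB$, so they do not spoil this inclusion. Note that step~4 is merely a speed-up filter rather than a correctness requirement: by (\ref{equation: G in mL(n; p) Rightarrow chi(G) geq 6 - p}), any $G$ surviving step~5 already has $\chi(G) \geq 6-p$, so step~4 would never discard it anyway.

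For the reverse direction, let $G$ be an arbitrary non-Sperner graph in $\mL_{max}(n;p;k)$. Choose an independent set $A \subseteq \V(G)$ with $\abs{A}=k$, and set $H = G-A$. Proposition \ref{proposition: H in bigcup(k' leq k)mL_(+K_3)(n - k; p + 1; k')} immediately yields $H \in \bigcup_{k' \leq k}\mL_{+K_3}(n-k;p+1;k') = \mA'$, which is exactly the input required to invoke Lemma \ref{lemma: algorithm A8}; thus after step~2.3 we have $G \in \mB$. It remains to check that $G$ survives steps~3--5. Step~3 only deduplicates, so a single representative of the isomorphism class of $G$ remains. Step~4 is passed because (\ref{equation: G in mL(n; p) Rightarrow chi(G) geq 6 - p}) gives $\chi(G) \geq 6-p$, and step~5 is passed because $G \in \mL(n;p)$ by hypothesis, i.e. $K_p + G \arrowse (3,3)$.

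I expect no substantive obstacle here, since the real content of the algorithm is encapsulated in Lemma \ref{lemma: algorithm A8}; the theorem is essentially a bookkeeping statement wrapping that lemma with the three cosmetic filters of steps~3--5. The only subtlety worth double-checking is that the independent set $A$ used in the reverse direction can always be chosen so that $H = G-A$ lies in $\mA'$ (and not merely in some larger set), but this is guaranteed by Proposition \ref{proposition: H in bigcup(k' leq k)mL_(+K_3)(n - k; p + 1; k')} combined with $\alpha(G)=k$, which forces $\alpha(H) \leq k$ and hence $H \in \mL_{+K_3}(n-k;p+1;k')$ for some $k' \leq k$.
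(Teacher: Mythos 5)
Your proposal is correct and follows essentially the same route as the paper's own proof: both directions rest on Lemma \ref{lemma: algorithm A8} for step 2.3, on Proposition \ref{proposition: H in bigcup(k' leq k)mL_(+K_3)(n - k; p + 1; k')} to place $H = G - A$ in $\mA'$ for the converse, and on (\ref{equation: G in mL(n; p) Rightarrow chi(G) geq 6 - p}) to see that step 4 never discards a legitimate graph. Your added remark that step 4 is only a speed-up filter is accurate but does not change the argument.
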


\begin{proof}
	Suppose that, after the execution of Algorithm \ref{algorithm: A8}, $G \in \mB$. According to Lemma \ref{lemma: algorithm A8}, $G$ is a maximal $K_4$-free non-Sperner graph and $\alpha(G) = k$. Now, from step 5 it follows that $G \in \mL_{max}(n; p; k)$. 
	
	Conversely, let $G$ be an arbitrary non-Sperner graph in $\mL_{max}(n; p; k)$. Let $A \subseteq \V(G)$ be an independent set of vertices of $G$, $\abs{A} = k$, and $H = G - A$. According to Proposition \ref{proposition: H in bigcup(k' leq k)mL_(+K_3)(n - k; p + 1; k')}, $H \in \mA'$. Now, from Lemma \ref{lemma: algorithm A8} we obtain that, after the execution of step 2.3, the graph $G$ is included in the set $\mB$. By (\ref{equation: G in mL(n; p) Rightarrow chi(G) geq 6 - p}), after the execution of step 4, $G$ remains in $\mB$. It is clear that after step 5, $G$ also remains in $\mB$.
\end{proof}

\begin{remark}
	\label{remark: delta(G) geq 8}
	Since $\mL(18; 0) = \emptyset$, from Theorem \ref{theorem: delta(G) geq 8, G in mH_e(3, 3; 4)} it follows easily that for each graph $G \in \mL(19, 0)$ we have $\delta(G) \geq 8$. Using this result we can improve Algorithm \ref{algorithm: A8} in the case $n = 19, p = 0$ in the following way:
	
	1. In step 1 we remove from the set $\mA'$ the graphs with minimum degree less than $8 - k$.
	
	2. In step 2.2 we add the following conditions for the subset $N$:
	
	(d) $\abs{M_{i_j}} \geq 8$ for every $M_{i_j} \in N$.
	
	(e) If $N' \subseteq N$, then $d_H(v) \geq 8 - k + \abs{N'}$ for every $v \not\in \bigcup_{M_{i_j} \in N'} M_{i_j}$.
	
	This way it is guaranteed that in step 2.3 only graphs $G$ for which $\delta(G) \geq 8$ are added to the set $\mB$.
\end{remark}

\vspace{1em}
Theorem \ref{theorem: algorithm A8} is published in \cite{BN16}. Algorithm \ref{algorithm: A8} is a slightly modified version of Algorithm 2.7 in \cite{BN16}.

\section{Proof of Theorem \ref{theorem: F_e(3, 3; 4) geq 20}}

According to (\ref{equation: mH_e(3, 3; 4; 19) = mL(19; 0) = mL(19; 0; 4) cup mL(19; 0; 5)}), it is enough to prove that $\mL_{max}(19; 0; 5) = \emptyset$ and $\mL_{max}(19; 0; 4) = \emptyset$.\\

1. Proof of $\mL_{max}(19; 0; 5) = \emptyset$.

With a computer check we find all 8 maximal graphs among the graphs in $\mL(14; 1)$, which are known from \cite{PRU99}. All of these graphs have independence number 4. Denote 

$\mA_3 = \mL_{max}(14; 1; 4) = \bigcup_{k' \leq 5} \mL_{max}(14; 1; k')$.

We execute Algorithm \ref{algorithm: A8}($n = 19$, $p = 0$, $k = 5$) with the set $\mA = \mA_3$ as an input. In step 1 we obtain all 85 graphs in $\mL_{+K_3}(14; 1; 4)$ and all 28 graphs in $\mL_{+K_3}(14; 1; 5)$. In step 2.3, 502 901 graphs are added to the set $\mB$, 251 244 of which remain in $\mB$ after the isomorph rejection in step 3. After step 4, 31 graphs with chromatic number 6 remain in $\mB$. In the end, after executing step 5 we obtain $\mB = \emptyset$. Since $\mL(18; 0) = \emptyset$, there are no Sperner graphs in $\mL(19; 0)$, and by Theorem \ref{theorem: algorithm A8} we obtain $\mL_{max}(19; 0; 5) = \emptyset$. \\

2. Proof of $\mL_{max}(19; 0; 4) = \emptyset$.

Using \emph{nauty} \cite{MP13} we generate all 11-vertex graphs with a computer and among them we find all 102 graphs in $\mL_{max}(11; 2; 3)$ and all 270 graphs in $\mL_{max}(11; 2; 4)$. Let us denote $\mA_1 = \mL_{max}(11; 2; 3) \cup \mL_{max}(11; 2; 4)$. By (\ref{equation: mL(n; p; k) = emptyset, if n geq 9 and k leq 2}),

$\mA_1 = \bigcup_{k' \leq 4} \mL_{max}(11; 2; k')$.

We execute Algorithm \ref{algorithm: A8} ($n = 15$, $p = 1$, $k = 4$) with the set $\mA = \mA_1$ as an input. In step 1 we obtain all 362 439 graphs in $\mL_{+K_3}(11; 2; 3)$ and all 7 949 015 graphs in $\mL_{+K_3}(11; 2; 4)$. According to Theorem \ref{theorem: algorithm A8}, after the execution of the algorithm we find all 5750 non-Sperner graphs in $\mL_{max}(15; 1; 4)$. Among the graphs in $\mL(14; 1)$, which are known from \cite{PRU99}, there are 8 maximal $K_4$-free graphs and they all have independence number 4. By adding a new vertex to each of these 8 graphs which duplicates some of their vertices, we obtain all 20 non-isomorphic Sperner graphs in $\mL_{max}(15; 1; 4)$ (see Proposition \ref{proposition: maximal Sperner graphs}). Thus, all 5770 graphs in $\mL_{max}(15; 1; 4)$ are obtained. All graphs $G$ for which $\omega(G) < 4$ and $\alpha(G) < 4$ are known and can be found in \cite{McK_r}. There are 640 such 15-vertex graphs, among which we find the only 2 graphs in $\mL_{max}(15; 1; 3)$. Let $\mA_2 = \mL_{max}(15; 1; 3) \cup \mL_{max}(15; 1; 4)$. By (\ref{equation: mL(n; p; k) = emptyset, if n geq 9 and k leq 2}),

$\mA_2 = \bigcup_{k' \leq 4} \mL_{max}(15; 1; k')$.

We execute Algorithm \ref{algorithm: A8} ($n = 19$, $p = 0$, $k = 4$) with the set $\mA = \mA_2$ as an input. In step 1 we obtain all 1 139 023 graphs in $\mL_{+K_3}(15; 1; 4)$ and all 5 graphs in $\mL_{+K_3}(15; 1; 3)$. In step 2.3, 2 551 314 graphs are added to the set $\mB$, 2 480 352 of which remain in $\mB$ after the isomorph rejection in step 3. After step 4, 2 597 graphs with chromatic number 6 remain in $\mB$. In the end, after executing step 5 we obtain $\mB = \emptyset$. Since $\mL(18, 0) = \emptyset$, there are no Sperner graphs in $\mL(19, 0)$, and by Theorem \ref{theorem: algorithm A8} we obtain $\mL_{max}(19; 0; 4) = \emptyset$. \qed

\begin{remark}
	\label{remark: mL(15; 1) and mH_v(3, 3; 4; 15)}
	
	It is easy to see that if $G \arrowsv (3, 3)$, then $K_1 + G \arrowse (3, 3)$. Thus, we derive
	\begin{equation*}
	\mH_v(3, 3; 4; n) \subseteq \mL(n; 1).
	\end{equation*}
	
	Let us note that $\mH_v(3, 3; 4; 14) = \mL(14; 1)$ \cite{PRU99}, but $\mH_v(3, 3; 4; 15) \neq \mL(15; 1)$. We found all 2 081 234 graphs $\mL(15; 1)$. In the proof of Theorem \ref{theorem: F_e(3, 3; 4) geq 20} we already found the only 2 graphs in $\mL_{max}(15; 1; 3)$ and all 5770 graphs in $\mL_{max}(15; 1; 4)$. With the help of Algorithm \ref{algorithm: A8}, similarly to the case $k = 4$, we find all graphs in $\mL_{max}(15; 1; k), \ k \geq 5$. We obtain all 826 graphs $\mL_{max}(15; 1; 5)$, all 12 graphs in $\mL_{max}(15; 1; 6)$, and $\mL_{max}(15; 1; k) = \emptyset, \ k \geq 7$. Thus, we obtain all 6 610 maximal $K_4$-free graphs in $\mL(15; 1)$. By removing edges from them, we find all 2 081 234 graphs in $\mL(15; 1)$. Some properties of these graphs are listed in Table \ref{table: mL(15; 1) statistics}. Among the graphs in $\mL(15; 1)$ there are exactly 20 graphs, which are not in $\mH_v(3, 3; 4; 15)$. Properties of these 20 graphs are given in Table \ref{table: mL(15; 1) setminus mH_v(3, 3; 4; 15) statistics}, and one of these graphs (which has 51 edges) is given in Figure \ref{figure: L_15_1}.
\end{remark}

\vspace{1em}
Theorem \ref{theorem: F_e(3, 3; 4) geq 20} is published in \cite{BN16}.

\begin{figure}
	\centering
	\includegraphics[height=300px,width=150px]{./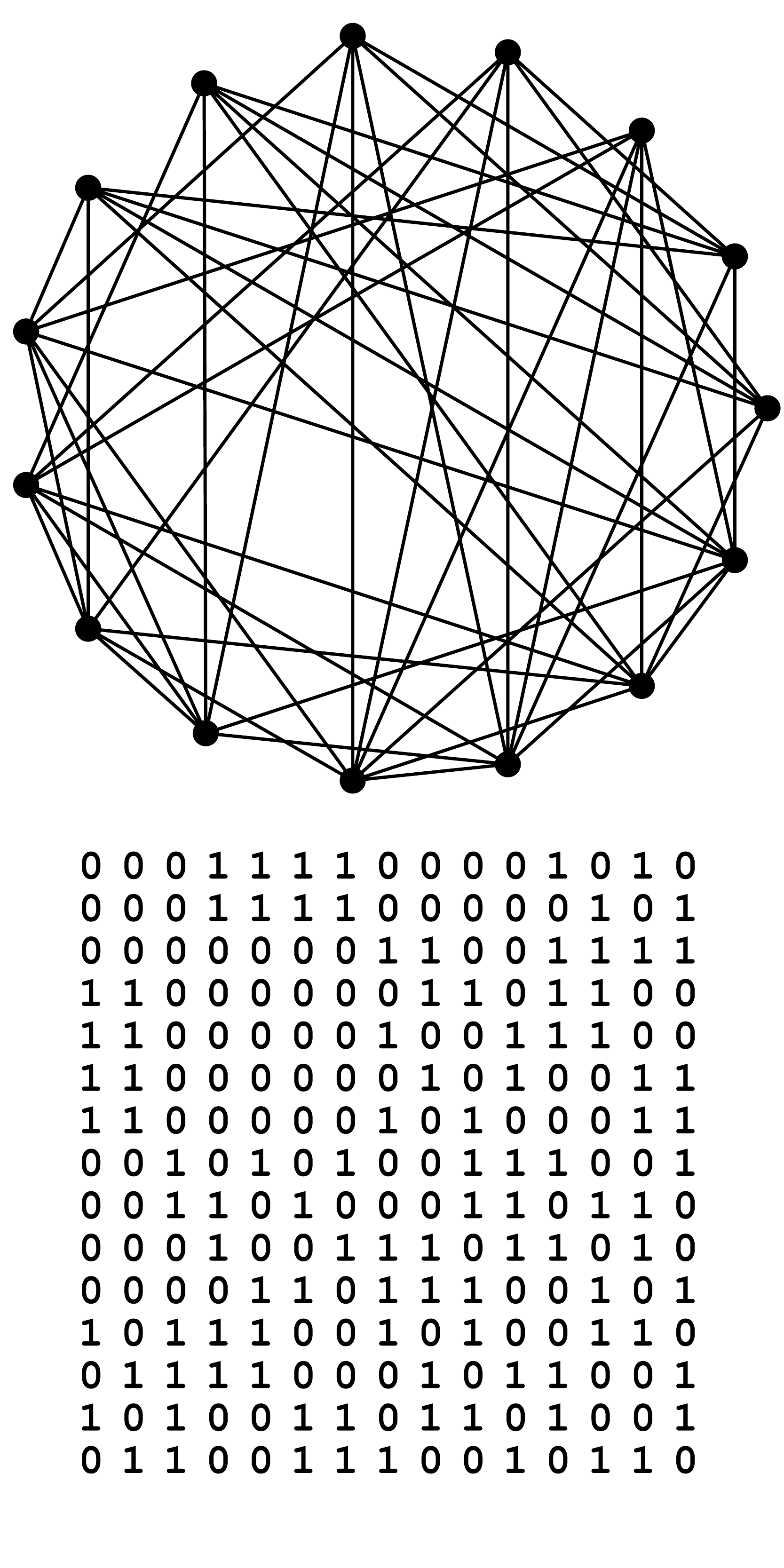}
	\caption{Example of a graph in $\mL(15; 1) \setminus \mH_v(3, 3; 4; 15)$}
	\label{figure: L_15_1}
\end{figure}

\begin{table}
	\centering
	\resizebox{0.9\textwidth}{!}{
		\begin{tabular}{ | l r | l r | l r | l r | l r | }
			\hline
			\multicolumn{2}{|c|}{\parbox{6em}{$\abs{\E(G)}$ \hfill $\#$}}&
			\multicolumn{2}{|c|}{\parbox{6em}{$\delta(G)$ \hfill $\#$}}&
			\multicolumn{2}{|c|}{\parbox{6em}{$\Delta(G)$ \hfill $\#$}}&
			\multicolumn{2}{|c|}{\parbox{6em}{$\alpha(G)$ \hfill $\#$}}&
			\multicolumn{2}{|c|}{\parbox{6em}{$\abs{Aut(G)}$ \hfill $\#$}}\\
			\hline
			42			&  1		& 0			& 153		& 7			& 65		& 3			& 5			& 1			& 2 052 543	\\
			43			&  4		& 1			& 1 629		& 8			& 675 118	& 4			& 1 300 452	& 2			& 27 729	\\
			44			&  44		& 2			& 10 039	& 9			& 1 159 910	& 5			& 747 383	& 3			& 9			\\
			45			&  334		& 3			& 34 921	& 10		& 165 612	& 6			& 32 618	& 4			& 850		\\
			46			&  2 109	& 4			& 649 579	& 11		& 80 529	& 7			& 766		& 6			& 22		\\
			47			&  9 863	& 5			& 1 038 937	& 			& 			& 8			& 10		& 8			& 55		\\
			48			&  35 812	& 6			& 339 395	& 			& 			& 			& 			& 10		& 2			\\
			49			&  101 468	& 7			& 6 581		& 			& 			& 			& 			& 12		& 11		\\
			50			&  223 881	& 			& 			& 			& 			& 			& 			& 14		& 4			\\
			51			&  378 614	& 			& 			& 			& 			& 			& 			& 16		& 4			\\
			52			&  478 582	& 			& 			& 			& 			& 			& 			& 20		& 1			\\
			53			&  436 693	& 			& 			& 			& 			& 			& 			& 24		& 4			\\
			54			&  273 824	& 			& 			& 			& 			& 			& 			& 			& 			\\
			55			&  110 592	& 			& 			& 			& 			& 			& 			& 			& 			\\
			56			&  26 099	& 			& 			& 			& 			& 			& 			& 			& 			\\
			57			&  3 150	& 			& 			& 			& 			& 			& 			& 			& 			\\
			58			&  160		& 			& 			& 			& 			& 			& 			& 			& 			\\
			59			&  4		& 			& 			& 			& 			& 			& 			& 			& 			\\
			\hline
		\end{tabular}
	}
	\caption{Some properties of the graphs in $\mL(15; 1)$}
	\label{table: mL(15; 1) statistics}
	
	\vspace{4em}
	
	\centering
	\resizebox{0.9\textwidth}{!}{
		\begin{tabular}{ | l r | l r | l r | l r | l r | }
			\hline
			\multicolumn{2}{|c|}{\parbox{6em}{$\abs{\E(G)}$ \hfill $\#$}}&
			\multicolumn{2}{|c|}{\parbox{6em}{$\delta(G)$ \hfill $\#$}}&
			\multicolumn{2}{|c|}{\parbox{6em}{$\Delta(G)$ \hfill $\#$}}&
			\multicolumn{2}{|c|}{\parbox{6em}{$\alpha(G)$ \hfill $\#$}}&
			\multicolumn{2}{|c|}{\parbox{6em}{$\abs{Aut(G)}$ \hfill $\#$}}\\
			\hline
			47			&  2		& 4			& 7			& 8			& 20		& 4			& 5			& 1			& 5			\\
			48			&  5		& 5			& 10		& 			& 			& 5			& 15		& 2			& 12		\\
			49			&  7		& 6			& 3			& 			& 			& 			& 			& 4			& 3			\\
			50			&  3		& 			& 			& 			& 			& 			& 			& 			& 			\\
			51			&  1		& 			& 			& 			& 			& 			& 			& 			& 			\\
			52			&  2		& 			& 			& 			& 			& 			& 			& 			& 			\\
			\hline
		\end{tabular}
	}
	\caption{Some properties of the graphs in $\mL(15; 1) \setminus \mH_v(3, 3; 4; 15)$}
	\label{table: mL(15; 1) setminus mH_v(3, 3; 4; 15) statistics}
\end{table}

\subsection*{Computations}

All computations were done on a personal computer. Using one processing core, the time needed to execute Algorithm \ref{algorithm: A8} in the case ($n = 19$, $p = 0$, $k = 5$) is about half a minute, and in the case ($n = 19$, $p = 0$, $k = 4$) it is about one hour. Note that in the first case among the 31 6-chromatic graphs obtained after step 4 of Algorithm \ref{algorithm: A8}, 11 have a minimum degree of 8 or more. In the second case, the total number of 6-chromatic graphs is 2597, 794 of which have a minimum degree of 8 or more. Using the improvements of Algorithm \ref{algorithm: A8} described in Remark \ref{remark: delta(G) geq 8}, the time needed for computations is reduced more than 10 times in the first case and almost 2 times in the second case.

Let us note that the result $\mL_{max}(19; 0; 5) = \emptyset$ can be obtained with the algorithm from \cite{RX07}, but a lot more computations have to be performed by the computer.

We performed various tests to check the correctness of our implementation of Algorithm \ref{algorithm: A8}. One such test was to reproduce all 153 graphs in $\mL(14; 1)$, which are known from \cite{PRU99}. First, by executing Algorithm \ref{algorithm: A8} with $n = 14, p = 1, k \geq 3$, we obtain the 8 graphs in $\mL_{max}(14; 1; 4)$, and $\mL_{max}(14; 1; k) = \emptyset, k \neq 4$. By removing edges from the 8 maximal graphs, we obtain all 153 graphs in $\mL(14; 1)$. Among the 2 081 234 graphs in $\mL(15; 1)$, there are exactly 153 graphs with one isolated vertex (see Remark \ref{remark: mL(15; 1) and mH_v(3, 3; 4; 15)} and Table \ref{table: mL(15; 1) statistics}). Thus, we obtain once again, indirectly, all graphs in $\mL(14; 1)$.

We used Algorithm \ref{algorithm: A8} to give another proof of the bound $F_e(3, 3; 4) \geq 19$. Similarly to (\ref{equation: mH_e(3, 3; 4; 19) = mL(19; 0) = mL(19; 0; 4) cup mL(19; 0; 5)}), it is easy to see that $\mH_e(3, 3; 4; 18) = \mL(18; 0) = \mL(18; 0; 4)$. By executing Algorithm \ref{algorithm: A8} ($n = 18$, $p = 0$, $k = 4$) with input $\mA = \mL_{max}(14; 1; 4) = \bigcup_{k' \leq 4} \mL_{max}(14; 1; k')$, we obtain $\mB = \emptyset$. Since there are no non-Sperner graphs in $\mL(18; 0)$, by Theorem \ref{theorem: algorithm A8} we derive $\mL_{max}(18; 0; 4) = \emptyset$, and therefore $F_e(3, 3; 4) \geq 19$. The computation time in this case is less than a second.

\addcontentsline{toc}{chapter}{Bibliography}

\nocite{*}

\bibliographystyle{plain}

\bibliography{main}

\begin{thebibliography}{100}

\bibitem{Bik13}
A.~Bikov.
\newblock {A computer research on critical Ramsey graphs (in Bulgarian)}, 2013.
\newblock Master Thesis, Sofia University "St. Kliment Ohridski".

\bibitem{Bik16}
A.~Bikov.
\newblock Small minimal $(3, 3)$-ramsey graphs.
\newblock {\em Ann. Univ. Sofia Fac. Math. Inform.}, 103:123--147, 2016.
\newblock Preprint: arxiv:1604.03716, April 2016.

\bibitem{Bik17}
A.~Bikov.
\newblock New bounds on the vertex {F}olkman number ${F}_v(2, 2, 2, 3; 4)$.
\newblock {\em Mathematics and Education. Proceedings of the 46th Spring
  Conference of the Union of Bulgarian Mathematicians}, 46:137--144, 2017.
\newblock Preprint arXiv:1611.06418, November 2016.

\bibitem{BN15b}
A.~Bikov and N.~Nenov.
\newblock Modified vertex {F}olkman numbers.
\newblock {\em Mathematics and Education. Proceedings of the 45th Spring
  Conference of the Union of Bulgarian Mathematicians}, 45:113--123, 2016.
\newblock Preprint: arxiv:1511.02125, November 2015.

\bibitem{BN16}
A.~Bikov and N.~Nenov.
\newblock The edge {F}olkman number ${F}_e(3, 3; 4)$ is greater than 19.
\newblock {\em GEOMBINATORICS}, 27(1):5--14, 2017.
\newblock Preprint: arxiv:1609.03468, September 2016.

\bibitem{BN17b}
A.~Bikov and N.~Nenov.
\newblock Lower bounding the {F}olkman numbers ${F}_v(a_1, ..., a_s; m-1)$.
\newblock {\em Ann. Univ. Sofia Fac. Math. Inform.}, 104:39--53, 2017.
\newblock Preprint: arxiv:1711.01535, November 2017.

\bibitem{BN17a}
A.~Bikov and N.~Nenov.
\newblock On the vertex {F}olkman numbers ${F}_v(a_1, ..., a_s; m - 1)$ when
  $\max\{a_1, ..., a_s\} = 6$ or $7$.
\newblock To appear in the {\em Journal of Combinatorial Mathematics and
  Combinatorial Computing}, preprint: arxiv:1512.02051, April 2017.

\bibitem{BN15a}
A.~Bikov and N.~Nenov.
\newblock The vertex {F}olkman numbers ${F}_v(a_1,...,a_s; m - 1) = m + 9$, if
  $\max\{a_1,...,a_s\} = 5$.
\newblock {\em Journal of Combinatorial Mathematics and Combinatorial
  Computing}, 103:171--198, 2017.
\newblock Preprint: arxiv:1503.08444, August 2015.

\bibitem{BCGM_HoG}
G.~Brinkmann, K.~Coolsaet, J.~Goedgebeur, and H.~M\'elot.
\newblock House of Graphs, \url{https://hog.grinvin.org/}.

\bibitem{BR90}
J.~Brown and V.~R{\"o}dl.
\newblock {A new construction of $k$-{F}olkman graphs}.
\newblock {\em Ars Comb.}, 29:265--269, 1990.

\bibitem{BEL76}
S.~Burr, P.~Erd{\"o}s, and L.~Lov{\'a}sz.
\newblock On graphs of {R}amsey type.
\newblock {\em Ars. Combinatoria}, 1(1):167--190, 1976.

\bibitem{BR80}
S.~Burr and V.~Rosta.
\newblock {On the Ramsey multiplicities of Graphs - Problems and recent
  results}.
\newblock {\em Journal of Graph Theory}, 4:347--361, 1980.

\bibitem{Chv79}
V.~Chv{\'a}tal.
\newblock The minimality of the {M}ycielski graph.
\newblock {\em Lecture Notes in Mathematics}, 406:243--246, 1979.

\bibitem{Col05}
J.~Coles.
\newblock {Algorithms for bounding Folkman numbers}, 2005.
\newblock Master Thesis, RIT,
  \url{https://ritdml.rit.edu/bitstream/handle/1850/2765/JColesThesis2004.pdf}.

\bibitem{CR06}
J.~Coles and S.~Radziszowski.
\newblock Computing the {F}olkman number ${F}_v(2,2,3;4)$.
\newblock {\em Journal of Combinatorial Mathematics and Combinatorial
  Computing}, 58:13--22, 2006.

\bibitem{DLSX13}
F.~Deng, M.~Liang, Z.~Shao, and X.~Xu.
\newblock Upper bounds for the vertex {F}olkman number ${F}_v(3, 3, 3; 4)$ and
  ${F}_v(3, 3, 3; 5)$.
\newblock {\em ARS Combinatoria}, 112:249--256, 2013.

\bibitem{DR08a}
A.~Dudek and V.~R{\"o}dl.
\newblock New upper bound on vertex {F}olkman numbers.
\newblock {\em Lecture Notes in Computer Science}, 4557:473--478, 2008.

\bibitem{DR08b}
A.~Dudek and V.~R{\"o}dl.
\newblock On the {F}olkman number ${F}(2, 3, 4)$.
\newblock {\em Experimental Mathematics}, 17:63--67, 2008.

\bibitem{Erd75}
P.~Erd{\"o}s.
\newblock Problems and results on finite and infinite graphs.
\newblock In {\em Recent Advances in Graph Theory, Proc. Second Czechoslovak
  Sympos.}, pages 183--192, April 1975.

\bibitem{EH67}
P.~Erd{\"o}s and A.~Hajnal.
\newblock {Research problem 2-5}.
\newblock {\em J. Combin. Theory}, 2:104, 1967.

\bibitem{EG18}
G.~Exoo and J.~Goedgebeur.
\newblock Bounds for the smallest k-chromatic graphs of given girth.
\newblock Preprint: arxiv:1805.06713, May 2018.

\bibitem{Fol70}
J.~{F}olkman.
\newblock Graphs with monochromatic complete subgraphs in every edge coloring.
\newblock {\em SIAM Journal on Applied Mathematics}, 18:19--24, 1970.

\bibitem{FL06}
J.~Fox and K.~Lin.
\newblock {The minimum degree of Ramsey-minimal graphs}.
\newblock {\em J. of Graph Theory}, 54(2):167--177, 2006.

\bibitem{FR86}
P.~Frankl and V.~R{\"o}dl.
\newblock {Large triangle-free subgraphs in graphs without $K_4$}.
\newblock {\em Graphs and Combinatorics}, 2:135--144, 1986.

\bibitem{GSS95}
A.~Galuccio, M.~Simonovits, and G.~Simonyi.
\newblock {On the structure of co-critical graphs}.
\newblock {\em Graph theory, combinatorics and algorithms}, Vol 1, 2
  (Kalamazoo, MI, 1992):1053--1071, 1995.

\bibitem{Goe17}
J.~Goedgebeur.
\newblock On minimal triangle-free 6-chromatic graphs.
\newblock Preprint: arxiv:1707.07581, August 2017.

\bibitem{Goo59}
A.~Goodman.
\newblock {On sets of acquaintances and strangers at any party}.
\newblock {\em Amer. Math. Monthly}, 66:778--783, 1959.

\bibitem{GB15}
R.~Graham and S.~Butler.
\newblock {\em {Rudiments of Ramsey Theory: Second Edition}}.
\newblock AMS and CBMS, 2015.

\bibitem{Gra68}
R.~L. Graham.
\newblock {On edgewise 2-colored graphs with monochromatic triangles containing
  no complete hexagon}.
\newblock {\em J. Combin. Theory}, 4:300, 1968.

\bibitem{Gra12}
R.~L. Graham.
\newblock Some graph theory problems i would like to see solved.
\newblock In {\em SIAM My Favorite Graph Theory Conjectures}, 2012.

\bibitem{GS71}
R.~L. Graham and J.~H. Spencer.
\newblock {On small graphs with forced monochromatic triangles}.
\newblock {\em Lecture Notes in Math.}, 186:137--141, 1971.
\newblock Recent Trends in Graph Theory.

\bibitem{Har69}
F.~Harary.
\newblock {\em {Graph Theory}}.
\newblock Addison - Wesley, 1969.

\bibitem{HP74}
F.~Harary and G.~Prins.
\newblock {Generalized Ramsey theory for graphs. IV: The Ramsey multiplicity of
  a graph}.
\newblock {\em Networks}, 4:163--173, 1974.

\bibitem{HI82}
R.~Hill. and R.W. Irwing.
\newblock {On group partitions associated with lower bounds for symmetric
  Ramsey numbers}.
\newblock {\em European Journal of Combinatorics}, 3:35--50, 1982.

\bibitem{Jac80}
M.~Jacobson.
\newblock {A note on Ramsey multiplicity}.
\newblock {\em Discrete Math.}, 29:201--203, 1980.

\bibitem{Jac82}
M.~Jacobson.
\newblock {A note on Ramsey multiplicity for stars}.
\newblock {\em Discrete Math.}, 42:63--66, 1982.

\bibitem{JR95}
T.~Jensen and G.~Royle.
\newblock Small graphs with chromatic number 5: a computer research.
\newblock {\em Journal of Graph Theory}, 19:107--116, 1995.

\bibitem{KWR17}
J.~Kaufmann, H.~Wickus, and S.~Radziszowski.
\newblock {On some edge Folkman numbers small and large}.
\newblock In preparation, 2017.

\bibitem{Kol08}
N.~Kolev.
\newblock A multiplicative inequality for vertex {F}olkman numbers.
\newblock {\em Discrete Mathematics}, 308:4263--4266, 2008.

\bibitem{KN06b}
N.~Kolev and N.~Nenov.
\newblock The {F}olkman number ${F}_e(3,4;8)$ is equal to 16.
\newblock {\em Comptes rendus de l'Academie bulgare des Sciences},
  59(1):25--30, 2006.

\bibitem{KN06c}
N.~Kolev and N.~Nenov.
\newblock New recurrent inequality on a class of vertex {F}olkman numbers.
\newblock In {\em Proceedings of the 35th Spring Conference of the Union of
  Bulgarian Mathematicians}, pages 164--168, April 2006.

\bibitem{KN06a}
N.~Kolev and N.~Nenov.
\newblock New upper bound for a class of vertex {F}olkman numbers.
\newblock {\em The Electronic Journal of Combinatorics}, 13, 2006.

\bibitem{LRX14}
A.~Lange, S.~Radziszowski, and X.~Xu.
\newblock Use of {MAX-CUT} for {R}amsey arrowing of triangles.
\newblock {\em Journal of Combinatorial Mathematics and Combinatorial
  Computing}, 88:61--71, 2014.

\bibitem{LR11}
J.~Lathrop and S.~Radziszowski.
\newblock Computing the {F}olkman number ${F}_v(2, 2, 2, 2, 2; 4)$.
\newblock {\em Journal of Combinatorial Mathematics and Combinatorial
  Computing}, 78:213--222, 2011.

\bibitem{LL17}
Y.~Li and Q.~Lin.
\newblock {On generalized Folkman numbers}.
\newblock {\em Taiwanese J. Math.}, 21:1--9, 2017.

\bibitem{LL15}
Q.~Lin and Y.~Li.
\newblock {A Folkman linear family}.
\newblock {\em SIAM J. Discrete Math.}, 29:1988--1998, 2015.

\bibitem{Lin72}
S.~Lin.
\newblock {On Ramsey numbers and $K_r$-coloring of graphs}.
\newblock {\em J. Combin. Theory Ser. B}, 12:82--92, 1972.

\bibitem{Lu08}
L.~Lu.
\newblock Explicit construction of small {F}olkman graphs.
\newblock {\em SIAM Journal on Discrete Mathematics}, 21:1053--1060, 2008.

\bibitem{LRU01}
T.~Luczak, A.~Ruci{\'n}ski, and S.~Urba{\'n}ski.
\newblock On minimal vertex {F}olkman graphs.
\newblock {\em Discrete Mathematics}, 236:245--262, 2001.

\bibitem{LU96}
T.~Luczak and S.~Urba{\'n}ski.
\newblock A note on restricted vertex {R}amsey numbers.
\newblock {\em Periodica Mathematica Hungarica}, 33:101--103, 1996.

\bibitem{McK_c}
B.D. McKay.
\newblock Combinatorial data, \url{http://users.cecs.anu.edu.au/~bdm/data/}.

\bibitem{McK_r}
B.D. McKay.
\newblock {R}amsey graphs,
  \url{http://users.cecs.anu.edu.au/~bdm/data/ramsey.html}.

\bibitem{MP13}
B.D. McKay and A.~Piperino.
\newblock Practical graph isomorphism, {II}.
\newblock {\em J. Symbolic Computation}, 60:94--112, 2013.
\newblock Preprint version at \href{http://arxiv.org/abs/1301.1493}{arxiv.org}.

\bibitem{Myc55}
J.~Mycielski.
\newblock Sur le coloriage des graphes.
\newblock {\em Colloquium Mathematicum}, 3:161--162, 1955.

\bibitem{Nen79}
N.~Nenov.
\newblock {Up to isomorphism there exist only one minimal $t$-graph with nine
  vertices. (in Russian)}.
\newblock {\em God. Sofij. Univ. Fak. Mat. Mekh.}, 73:169--184, 1979.

\bibitem{Nen80a}
N.~Nenov.
\newblock {On the existence of a minimal $t$-graph with a given number of
  vertices. (in Russian)}.
\newblock {\em Serdica}, 6:270--274, 1980.

\bibitem{Nen80b}
N.~Nenov.
\newblock {On the independence number of minimal $t$-graphs. (in Russian)}.
\newblock In {\em Mathematics and education in mathematics, Proc. $9^{th}$
  Spring Conf. Union Bulg. Math}, pages 74--78, Sunny Beach / Bulgaria, 1980.

\bibitem{Nen80c}
N.~Nenov.
\newblock {\em {Ramsey graphs and some constants related to them.
  (Bulgarian)}}.
\newblock PhD thesis, University of Sofia, 1980.

\bibitem{Nen81a}
N.~Nenov.
\newblock {An example of a 15-vertex Ramsey (3, 3)-graph with clique number 4.
  (in Russian)}.
\newblock {\em C. A. Acad. Bulg. Sci.}, 34:1487--1489, 1981.

\bibitem{Nen81b}
N.~Nenov.
\newblock {Certain remarks on Ramsey multiplicities. (in Russian)}.
\newblock In {\em Mathematics and education in mathematics, Proc. $10^{th}$
  Spring Conf. Union Bulg. Math}, pages 176--179, Sunny Beach / Bulgaria, 1981.

\bibitem{Nen83}
N.~Nenov.
\newblock On the {Z}ykov numbers and some its applications to {R}amsey theory.
\newblock {\em Serdica Bulgariacae Mathematicae}, 9:161--167, 1983.
\newblock (in Russian).

\bibitem{Nen84}
N.~Nenov.
\newblock The chromatic number of any 10-vertex graph without 4-cliques is at
  most 4.
\newblock {\em Comptes rendus de l'Academie bulgare des Sciences}, 37:301--304,
  1984.
\newblock (in Russian).

\bibitem{Nen85}
N.~Nenov.
\newblock Application of the corona-product of two graphs in {R}amsey theory.
\newblock {\em Ann. Univ. Sofia Fac. Math. Inform.}, 79:349--355, 1985.
\newblock (in Russian).

\bibitem{Nen98}
N.~Nenov.
\newblock On the small graphs with chromatic number 5 without 4-cliques.
\newblock {\em Discrete Mathematics}, 188:297--298, 1998.

\bibitem{Nen00}
N.~Nenov.
\newblock On a class of vertex {F}olkman graphs.
\newblock {\em Ann. Univ. Sofia Fac. Math. Inform.}, 94:15--25, 2000.

\bibitem{Nen01b}
N.~Nenov.
\newblock Computation of the vertex {F}olkman numbers ${F}(2, 2, 2, 3; 5)$ and
  ${F}(2, 3, 3; 5)$.
\newblock {\em Ann. Univ. Sofia Fac. Math. Inform.}, 95:71--82, 2001.

\bibitem{Nen01a}
N.~Nenov.
\newblock A generalization of a result of {D}irac.
\newblock {\em Ann. Univ. Sofia Fac. Math. Inform.}, 95:59--69, 2001.

\bibitem{Nen01c}
N.~Nenov.
\newblock {On the 3-coloring vertex Folkman number $F(2, 2, 4)$}.
\newblock {\em Serdica Mathematical Journal}, 27:131--136, 2001.

\bibitem{Nen02a}
N.~Nenov.
\newblock Lower bound for a number of vertices of some vertex {F}olkman graphs.
\newblock {\em Comptes rendus de l'Academie bulgare des Sciences},
  55(4):33--36, 2002.

\bibitem{Nen02b}
N.~Nenov.
\newblock On a class of vertex {F}olkman numbers.
\newblock {\em Serdica Mathematical Journal}, 28:219--232, 2002.

\bibitem{Nen09}
N.~Nenov.
\newblock On the vertex {F}olkman numbers ${F}_v(2,...,2;q)$.
\newblock {\em Serdica Mathematical Journal}, 35:251--272, 2009.
\newblock Preprint: arXiv:0903.3812 March 2009.

\bibitem{Nen10}
N.~Nenov.
\newblock {Chromatic number of graphs and edge {F}olkman numbers}.
\newblock {\em C. A. Acad. Bulg. Sci.}, 63(8):1103--1110, 2010.

\bibitem{Nen07}
N.~Nenov.
\newblock On the vertex {F}olkman numbers ${F}_v(2, ..., 2; r-1)$ and ${F}_v(2,
  ..., 2; r-2)$.
\newblock {\em Ann. Univ. Sofia Fac. Math. Inform.}, 101:5--17, Submitted in
  2007, 2013.
\newblock Preprint: arXiv:0903.3151 March 2009.

\bibitem{NK79}
N.~Nenov and N.~Khadzhiivanov(Hadziivanov).
\newblock {On edgewise 2-colored graphs containing a monochromatic triangle.
  (in Russian)}.
\newblock {\em Serdica}, 5:303--305, 1979.

\bibitem{NK85}
N.~Nenov and N.~Khadzhiivanov(Hadziivanov).
\newblock {Every Ramsey graph without 5-cliques has more than 11 vertices. (in
  Russian)}.
\newblock {\em Serdica}, 11:341--356, 1985.

\bibitem{NR76}
J.~Nesetril and V.~R{\"o}dl.
\newblock {The Ramsey property for graphs with forbidden complete subgraphs}.
\newblock {\em J. Combin. Theory, Ser. B}, 20:243--249, 1976.

\bibitem{PR01}
K.~Piwakowski and S.~Radziszowski.
\newblock {The Ramsey Multiplicity of $K_4$}.
\newblock {\em Ars. Combinatorica}, LX:131--136, 2001.

\bibitem{PRU99}
K.~Piwakowski, S.~Radziszowski, and S.~Urbanski.
\newblock Computation of the {F}olkman number ${F}_e(3, 3; 5)$.
\newblock {\em Journal of Graph Theory}, 32:41--49, 1999.

\bibitem{Rad14}
S.~Radziszowski.
\newblock Small {R}amsey numbers.
\newblock {\em The Electronic Journal of Combinatorics}, Dynamic Survey
  revision 14, January 12 2014.

\bibitem{Rad17}
S.~Radziszowski.
\newblock {Computers in Ramsey Theory. Testing, Constructions and
  Nonexistence}.
\newblock Computers in Scientific Discovery 8 Mons, Belgium, August 24, 2017.

\bibitem{RX07}
S.~Radziszowski and X.~Xiaodong.
\newblock {On the Most Wanted Folkman Graph}.
\newblock {\em Geombinatorics}, XVI(4):367--381, 2007.

\bibitem{RX16}
S.~Radziszowski and X.~Xu.
\newblock On some open questions for {R}amsey and {F}olkman numbers.
\newblock {\em Graph Theory, Favorite Conjectures and Open Problems}, 1:43--62,
  2016.

\bibitem{RXL17a}
S.~Radziszowski, X.~Xu, and M.~Liang.
\newblock {Some Folkman problems. Chromatic vertex Folkman numbers, Existence
  and non-existence, Computational challenges}.
\newblock CanaDAM, Toronto, 13 June, 2017.

\bibitem{RXL17b}
S.~Radziszowski, X.~Xu, and M.~Liang.
\newblock {Some Folkman problems. Existence and non-existence of generalized
  Folkman numbers, Computational challenges}.
\newblock GGTW, Ghent, 16 August, 2017.

\bibitem{Ram30}
P.~Ramsey.
\newblock {On a problem of formal logic}.
\newblock {\em Proc. London Math. Soc.}, 30:264--268, 1930.

\bibitem{RS77}
V.~Rosta and L.~Suranyi.
\newblock {A note on the Ramsy-multiplicity of the circuit}.
\newblock {\em Period. Math. Hung.}, 7:223--227, 1977.

\bibitem{Roy_c}
G.~Royle.
\newblock Combinatorial data,
  \url{http://staffhome.ecm.uwa.edu.au/~00013890/data.html}.

\bibitem{SLHX11}
Z.~Shao, M.~Liang, J.~He, and X.~Xu.
\newblock New lower bounds for two multicolor vertex {F}olkman numbers.
\newblock In {\em International Conference on Computer and Management (CAMAN)},
  pages 1--3, 2011.

\bibitem{SLPX12}
Z.~Shao, M.~Liang, L.~Pan, and X.~Xu.
\newblock Computation of the {F}olkman number ${F}_v(3, 5; 6)$.
\newblock {\em Journal of Combinatorial Mathematics and Combinatorial
  Computing}, 81:11--17, 2012.

\bibitem{SXL09}
Z.~Shao, X.~Xu, and H.~Luo.
\newblock Bounds for two multicolor vertex {F}olkman numbers.
\newblock {\em Application Research of Computers}, 3:834--835, 2009.
\newblock (in Chinese).

\bibitem{SXP09}
Z.~Shao, X.~Xu, and L.~Pan.
\newblock New upper bounds for vertex {F}olkman numbers ${F}_v(3, k; k + 1)$.
\newblock {\em Utilitas Mathematica}, 80:91--96, 2009.

\bibitem{SXP12}
Z.~Shao, X.~Xu, and L.~Pan.
\newblock {Computation of the vertex Folkman number $F_v(3,5;6)$}.
\newblock {\em J. of Comb. Math. and Comb. Computing}, 81:11--18, 2012.

\bibitem{Soi08}
A.~Soifer.
\newblock {\em The Mathematical Coloring Book}.
\newblock Springer, 2008.

\bibitem{Spe88}
J.~Spencer.
\newblock Three hundred million points suffice.
\newblock {\em Journal of Combinatorial Theory, Series A}, 49:210--217, 1988.
\newblock Also see erratum by M.~Hovey in 50:323.

\bibitem{Sza77}
T.~Szab{\'o}.
\newblock {On nearly regular co-critical graphs}.
\newblock {\em Discrete Math.}, 160:279--281, 1977.

\bibitem{Wes01}
D.~West.
\newblock {\em Introduction to Graph Theory}.
\newblock Prentice Hall, Inc., Upper Saddle River, 2 edition, 2001.

\bibitem{XLR17b}
X.~Xu, M.~Liang, and S.~Radziszowski.
\newblock A note on upper bounds for some generalized {F}olkman numbers.
\newblock {\em Discussiones Mathematicae Graph Theory}, in press. Preprint:
  arxiv:1708.00125, August 2017.

\bibitem{XLR17a}
X.~Xu, M.~Liang, and S.~Radziszowski.
\newblock On the nonexistence of some generalized {F}olkman numbers.
\newblock {\em Graphs Combin.}, to appear. Preprint: arxiv:1705.06268, May
  2017.

\bibitem{XLR18}
X.~Xu, M.~Liang, and S.~Radziszowski.
\newblock Chromatic vertex {F}olkman numbers.
\newblock Preprint: arxiv:1612.08136v2, May 2018.

\bibitem{XLS10}
X.~Xu, H.~Luo, and Z.~Shao.
\newblock Upper and lower bounds for ${F}_v(4, 4; 5)$.
\newblock {\em Electronic Journal of Combinatorics}, 17, 2010.

\bibitem{XS10}
X.~Xu and Z.~Shao.
\newblock On the lower bound for ${F}_v(k, k; k + 1)$ and ${F}_e(3, 4; 5)$.
\newblock {\em Utilitas Mathematica}, 81:187--192, 2010.

\end{thebibliography}

\end{document}